\titleformat{\subsection}{\normalfont\scshape\filcenter}{\thesubsection}{1em}{}
\titleformat{\subsubsection}[runin]{\normalfont\bfseries}{\thesubsubsection. }{.2em}{}[.]
\newcommand\thefontsize[1]{{#1 The current font size is: \f@size pt\par}}
\newtheorem{lem}{Lemma}[section]
\newtheorem{define}{Definition}[section]
\newtheorem{remark}{Remark}[section]
\newtheorem{cor}[lem]{Corollary}
\newtheorem{THM}{Theorem}
\newtheorem{Question}{Question \ignorespaces}
\newtheorem{prop}[lem]{Proposition}
\newcommand{\R}{{\mathbb R}}
\newcommand{\N}{{\mathbb{N}}}
\newcommand{\Z}{{\mathbb Z}}
\newcommand{\e}{\epsilon}
\newcommand{\pt}{\widetilde{P}}
\newcommand{\al}{\alpha}
\newcommand{\ma}{\mathcal{A}}
\newcommand{\mm}{\mathcal{M}}
\newcommand{\mab}{\overline{\mathcal{A}}}
\newcommand{\mpp}{\mathcal{P}}
\newcommand{\mb}{\mathcal{B}}
\newcommand{\lra}{\leftrightarrow}
\newcommand{\ka}{\kappa}
\newcommand{\D}{\Delta}
\newcommand{\ab}{\bar{\alpha}}
\newcommand{\ph}{\hat{P}}
\newcommand{\J}{\mathscr{J}}
\newcommand{\p}{\partial}
\newcommand{\fib}{\bar{\varphi}}
\newcommand{\bb}{\bar{\beta}}
\newcommand{\Se}{\mathscr{S}}
\newcommand{\tf}{T(f,\vec{P},\vec{S})}
\newcommand{\tfr}{T(f,\vec{R},\vec{R}^*)}
\newcommand{\goodchi}{\protect\raisebox{2pt}{$\chi$}}
\newcommand{\oh}{\hat{\omega}}
\newcommand{\ot}{\widetilde{\omega}}
\newcommand{\bpt}{\mathfrak{B}_{CZ}}
\newcommand{\bkpt}{\mathfrak{B}_{key}}
\DeclareMathOperator{\diam}{diam}
\DeclareMathOperator{\dist}{dist}
\DeclareMathOperator{\supp}{supp}
\DeclareMathOperator{\inte}{int}
\DeclareMathOperator{\ctr}{ctr}
\DeclareMathOperator{\Cl}{Cl}
\numberwithin{equation}{section}
\title{Approximate Extension in Sobolev Space}
\author{Marjorie K. Drake}
\date{} 
\begin{document}
\maketitle

\begin{abstract}
    Let $L^{m,p}(\mathbb{R}^n)$ be the homogeneous Sobolev space for $p \in (n,\infty)$, $\mu$ be a Borel regular measure on $\mathbb{R}^n$, and $L^{m,p}(\mathbb{R}^n) + L^p(d\mu)$ be the space of Borel measurable functions with finite seminorm $\|f\|_{L^{m,p}(\mathbb{R}^n) + L^p(d\mu)} := \\\inf_{f_1 +f_2 = f} \left\{  \|f_1\|_{L^{m,p}(\mathbb{R}^n)}^p + \int_{\mathbb{R}^n} |f_2|^p d\mu  \right\}^{1/p}$. We construct a linear operator $T:L^{m,p}(\mathbb{R}^n) + L^p(d\mu) \to L^{m,p}(\mathbb{R}^n)$, that nearly optimally decomposes every function in the sum space: $\|Tf\|_{L^{m,p}(\mathbb{R}^n)}^p + \int_{\mathbb{R}^n} |Tf-f|^p d\mu \leq C \|f\|_{L^{m,p}(\mathbb{R}^n) + L^p(d\mu)}^p$ with $C$ dependent on $m$, $n$, and $p$ only. For $E \subset \mathbb{R}^n$, let $L^{m,p}(E)$ denote the space of all restrictions to $E$ of functions $F \in L^{m,p}(\mathbb{R}^n)$, equipped with the standard trace seminorm. For $p \in (n, \infty)$, we construct a linear extension operator $T:L^{m,p}(E) \to L^{m,p}(\mathbb{R}^n)$ satisfying $Tf|_E = f|_E$ and $\|Tf\|_{L^{m,p}(\mathbb{R}^n)} \leq C \|f\|_{L^{m,p}(E)}$, where $C$ depends only on $n$, $m$, and $p$. We show these operators can be expressed through a collection of linear functionals whose supports have bounded overlap. 
\end{abstract}


\tableofcontents

\section{Introduction}
\label{sec:intro}

The homogeneous Sobolev space $L^{m,p}(\R^n)$, consists of all functions $F : \R^n \rightarrow \R$ whose distributional partial derivatives of order $m$ belong to $L^p(\R^n)$. We define the $L^{m,p}(\R^n)$ seminorm by
\begin{align*}
\|F\|_{L^{m,p}(\R^n)} = \max_{ |\al| = m}   \| \p^\al F \|_{L^p(\R^n)}  \qquad (F \in L^{m,p}(\R^n)).
\end{align*}
Given a Borel regular measure $\mu$ on $\R^n$, let $L^p(d \mu)$ be the space of Borel measurable functions $g : \R^n \rightarrow \R$ with finite norm $\| g \|_{L^p(d \mu)} := \left( \int_{\R^n} g(x) d \mu \right)^{1/p} < \infty$.  Let $L^{m,p}(\R^n) + L^p(d\mu)$ be the space of Borel measurable functions $f : \R^n \rightarrow \R$ with finite seminorm 
\[
\|f\|_{L^{m,p}(\R^n) + L^p(d\mu)} := \inf_{f_1 +f_2 = f} \big\{ \|f_1\|_{L^{m,p}(\R^n)}^p + \| f_2 \|_{L^p(d \mu)}^p \big\}^{1/p} < \infty.
\]
We consider the topic of nearly optimal decomposition of functions in this sum space. Precisely:
\begin{Question} 
Can we construct a linear operator $T:L^{m,p}(\R^n) + L^p(d\mu) \to L^{m,p}(\R^n)$ satisfying $ \|Tf\|_{L^{m,p}(\R^n)}+ \|f-Tf\|_{L^p(d\mu)} \leq C \|f\|_{L^{m,p}(\R^n) + L^p(d\mu)}$, where $C$ is independent of $f$? \label{q01}
\end{Question}
\begin{Question}
Can we estimate $\|f\|_{L^{m,p}(\R^n) + L^p(d\mu)}$? \label{q02}
\end{Question}
We answer these questions with our first theorem. Let $C^{m}_{loc}(\R^n)$ denote the space of all functions $F : \R^n \rightarrow \R$ with continuous derivatives up to order $m$. The function space $C^{m}(\R^n)$ consists of functions with continuous, bounded derivatives up to order $m$, with  norm: $\|F\|_{C^{m}(\R^n)} = \max_{ |\al| \leq m} \sup_{x \in \R^n} \left\{ | \p^\al F(x) | \right\}$. Let $\mpp$ denote the space of real-valued $(m-1)^{\text {rst}}$ degree polynomials on $\R^n$. For $K \subset \R^n$, a \emph{Whitney field} on $K$ is a tuple of polynomials $(P_x)_{x \in K}$ with $P_x \in \mpp$ for all $x \in K$. The space of Whitney fields on $K$ is denoted by
\[
Wh(K): = \big\{ \vec{P}: \vec{P} = (P_x)_{x \in K}, \; P_x \in \mpp \text{ for all } x \in K \big\}.
\]
If $F$ is a $C^{m-1}$-function defined on a neighborhood of a point $y \in \R^n$, then we write $J_y(F)$ (the ``jet" of $F$ at $y$ ) for the $(m-1)^{\text {rst}}$ degree Taylor polynomial of $F$ at $y$.

If $n < p < \infty$, then the Sobolev embedding theorem implies that $L^{m,p}(\R^n) \subset C_{loc}^{m-1}(\R^n)$. We define a semi-norm on Whitney fields, $\vec{S} \in Wh(K)$, 
\begin{align}
\|\vec{S}\|_{L^{m,p}(K)} = \inf \{ \|F\|_{L^{m,p}(\R^n)}: \; F \in L^{m,p}(\R^n), \; J_x(F) = S_x \text{ for all } x \in K \}.
\end{align}
By definition, if there does not exist $F \in L^{m,p}(\R^n)$ with $J_x (F) = S_x$ for all $x \in K$, then $\| \vec{S} \|_{L^{m,p}(K)} = + \infty$. 

Below, we write $\Cl(X)$ to denote the closure of a subset $X \subset \mathbb{R}^n$, and we write $\supp(\mu) \subset \R^n$ to denote the support of a measure $\mu$.

\begin{THM} Let $\mu$ be a compactly supported Borel regular measure on $\R^n$, $m \in \N$, and $n<p<\infty$. Then there exists a linear operator $T: L^{m,p}(\R^n) + L^p(d\mu) \to L^{m,p}(\R^n)$ and a map $M: L^{m,p}(\R^n) + L^p(d\mu) \to \R$ satisfying for all $f \in L^{m,p}(\R^n) + L^p(d\mu)$:
\begin{align}
& \|f\|_{L^{m,p}(\R^n) + L^p(d\mu)} \leq \|Tf\|_{L^{m,p}(\R^n)} + \|Tf-f\|_{L^p(d\mu)}  \leq C \cdot  \|f\|_{L^{m,p}(\R^n) + L^p(d\mu)}; \nonumber \\
& c \cdot Mf \leq \|Tf\|_{L^{m,p}(\R^n)}+\|Tf-f\|_{L^p(d\mu)} \leq C \cdot Mf; \text{ and}  \nonumber \\
&Mf = \Big(\|\vec{S}(f)\|_{L^{m,p}(K)}^p+\sum_{\ell \in \N} \big( \int_{A_\ell} |\zeta_\ell(f)-f |^p d\mu + |\psi_{\ell}(f) |^p \big) \Big)^{1/p}, \label{l3}
\end{align}
where $K \subset \Cl(\supp(\mu))$, $\vec{S}:L^{m,p}(\R^n) + L^p(d\mu) \to Wh(K)$ is a linear map, and for each $\ell \in \N$,  $A_\ell \subset \supp (\mu)$ is a Borel set, $\zeta_{\ell}: L^{m,p}(\R^n) + L^p(d\mu) \to L^p(d\mu)$ and $\psi_{\ell}:L^{m,p}(\R^n) + L^p(d\mu) \to \R$ are linear maps. The constants $c$ and $C$ depend on $m$, $n$, and $p$ but are independent of $f$ and $\mu$.
\label{amainthm}
\end{THM}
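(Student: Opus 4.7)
The strategy is to reduce the approximate-decomposition problem to the linear trace/extension theorem for $L^{m,p}$ promised as the paper's second main result. I would build $T$ as a composition $T = T_{\text{ext}} \circ \vec{S}$, where $\vec{S}:L^{m,p}(\R^n)+L^p(d\mu) \to Wh(K)$ assigns to each $f$ a Whitney field on a distinguished set $K \subset \Cl(\supp \mu)$ encoding the ``smooth part'' of $f$, and $T_{\text{ext}}:L^{m,p}(K) \to L^{m,p}(\R^n)$ is the linear extension operator. The two nontrivial pieces, then, are (a) constructing $\vec{S}$ linearly and (b) verifying that this produces a near-infimal decomposition.

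I would begin with a Calder\'on--Zygmund cover of $\R^n$ by Whitney cubes $\{Q_\ell\}_\ell$ of bounded overlap, labeling each $Q_\ell$ as \emph{heavy} when $\mu(Q_\ell^*)$ (for a fixed dilate $Q_\ell^*$) is large enough relative to $|Q_\ell|$ to support a stable least-squares projection onto $\mpp$, and \emph{light} otherwise. On every heavy cube, let $P_\ell(f) \in \mpp$ minimize $\int_{Q_\ell^*} |f - P|^p\,d\mu$ over $P \in \mpp$; this is linear in $f$. For each heavy cube pick a representative $x_\ell \in Q_\ell \cap \Cl(\supp\mu)$, set $K := \{x_\ell\}_\ell$ and $\vec{S}(f) := (P_\ell(f))_{x_\ell \in K}$, and put $Tf := T_{\text{ext}}(\vec{S}(f))$.

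To establish the upper bound $\|Tf\|_{L^{m,p}}^p + \|Tf-f\|_{L^p(d\mu)}^p \leq C \|f\|_{L^{m,p}+L^p(d\mu)}^p$, I would test against any admissible decomposition $f = f_1 + f_2$. Since $p>n$, Sobolev embedding makes $J_{x_\ell}(f_1)$ well-defined, so $(J_{x_\ell}(f_1))_{x_\ell \in K}$ is a competing Whitney field with trace seminorm at most $\|f_1\|_{L^{m,p}}$. Linearity gives $P_\ell(f) - J_{x_\ell}(f_1) = [P_\ell(f_1) - J_{x_\ell}(f_1)] + P_\ell(f_2)$: the first summand is controlled by a local Bramble--Hilbert-type estimate against $\|f_1\|_{L^{m,p}(Q_\ell^*)}$; the second by a stable regression bound yielding pointwise control of the polynomial coefficients by $\diam(Q_\ell)^{-m-n/p} \|f_2\|_{L^p(d\mu|_{Q_\ell^*})}$, with derivative bounds from Markov's inequality for polynomials. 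Bounded overlap of the $Q_\ell^*$ then delivers $\|\vec{S}(f)\|_{L^{m,p}(K)} \leq C(\|f_1\|_{L^{m,p}} + \|f_2\|_{L^p(d\mu)})$. For the $L^p(d\mu)$ error I would proceed cube-by-cube: on $Q_\ell$, $|Tf - P_\ell(f)|$ is bounded in sup-norm via $\diam(Q_\ell)^m$ times a local $L^{m,p}$-seminorm of $Tf$, while $\|P_\ell(f) - f\|_{L^p(d\mu|_{Q_\ell^*})}$ is at most the best-fit residual, which is $\leq \|f_2\|_{L^p(d\mu|_{Q_\ell^*})}$ by optimality. Light cubes contribute negligibly because $\mu(Q_\ell^*)$ is small by definition. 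The reverse inequality $\|f\|_{L^{m,p}+L^p(d\mu)} \leq \|Tf\|_{L^{m,p}} + \|Tf-f\|_{L^p(d\mu)}$ is automatic from the split $f = Tf + (f - Tf)$.

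The linear-functional formula for $Mf$ then follows by combining two ingredients: (i) the proof of the trace/extension theorem for $L^{m,p}(K)$ produces, as a by-product, a bounded-overlap representation $\|\vec{S}\|_{L^{m,p}(K)}^p \approx \sum_\nu |\Lambda_\nu \vec{S}|^p$ via linear functionals $\Lambda_\nu$ of bounded support-overlap, so composing $\Lambda_\nu \circ \vec{S}$ yields the scalar functionals $\psi_\ell$; (ii) the local residuals $\zeta_\ell(f) := (f - P_\ell(f))\chi_{Q_\ell^*}$ reproduce $\|Tf - f\|_{L^p(d\mu)}^p$ after cube-summation with bounded overlap. The expected main obstacle is securing a stable regression bound for $P_\ell$ with constants uniform across heavy cubes of wildly different scales and positions: the heaviness threshold must be calibrated to force quantitative non-degeneracy of $\mu|_{Q_\ell^*}$ for polynomial approximation, reminiscent of the ``principal subsets'' machinery of Fefferman--Luli, and harmonizing this with the CZ classification and the external trace-extension theorem is where most of the technical labor will sit.
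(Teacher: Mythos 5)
Two gaps, the second of which is fatal. First, the map $P_\ell$ you define as the minimizer of $\int_{Q_\ell^*}|f-P|^p\,d\mu$ over $P\in\mpp$ is \emph{not} linear in $f$ when $p\neq 2$: best $L^p$-polynomial approximation is a nonlinear operation, and you cannot simultaneously assert ``minimize the $p$-th moment'' and ``this is linear in $f$.'' The paper never uses the true minimizer; it proves the existence of a \emph{linear near-minimizer} by a separate device (the Linear Map Lemma, Lemma \ref{alinearmap}), and your plan would need the same replacement before the bounded-overlap sums can run.

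Second, and more fundamentally, a heavy/light dichotomy is too coarse for $m\geq 2$. The constraint $\mu|_{Q_\ell^*}$ imposes on a prospective jet can be non-degenerate along some coefficients and degenerate along others: if $\mu$ concentrates on a line segment it determines the value and tangential derivatives of a polynomial jet but not transverse ones; if it charges a single point with infinite mass it determines only the value. No scalar mass threshold classifies such cubes correctly --- they are not heavy (the regression for the missing coefficients is ill-posed at every threshold) and they are not light (the $\mu$-constraint is real and the $L^p(d\mu)$ error cannot be discarded). This intermediate structure is exactly what the paper tracks via the shape of $\sigma_J(x,\mu)\subset\mpp$, the labels $\ma\subset\mm$, and the $(\ma,x,\epsilon,\delta)$-basis machinery; the CZ decomposition stops when the restricted measure carries a strictly \emph{easier label}, not when a mass threshold is crossed, and the whole Extension Theorem is proved by induction on the order of labels. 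Without this apparatus the plan is incomplete already for $m=2$. (Separately, appealing to the paper's trace/extension theorem is circular, since that theorem is derived from Theorem \ref{amainthm}; and even the independently available Fefferman--Israel--Luli operator acts on the function trace space $L^{m,p}(E)$, not on Whitney fields $Wh(K)$, so you still need a Whitney-field extension step --- the paper's Lemma \ref{whitlm} together with Brudnyi's characterization, Corollary \ref{brlm} --- rather than the $L^{m,p}(E)$ theorem as such.)
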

We introduce the notion of \emph{constructibility} in Theorems \ref{constructthm} and \ref{finitethm} to further describe the structure of the operator $T$ and map $M$. 

As an application of Theorem \ref{amainthm}, we construct a linear extension operator from the trace space $L^{m,p}(E)$ ($E\subset \R^n$ arbitrary) to $L^{m,p}(\R^n)$ for $p \in (n,\infty)$. Let $(\mathbb{X}(\R^n),\|\cdot\|_{\mathbb{X}(\R^n)})$ be a complete semi-normed linear space of continuous functions. For $E \subset \R^n$, let $\mathbb{X}(E)$ be the space of restrictions to $E$ of functions in $\mathbb{X}(\R^n)$, equipped with the trace semi-norm:
\begin{align*}
    &\mathbb{X}(E) := \{f:E \to \R: \exists F \in \mathbb{X}(\R^n), \; F|_E = f \}, \text{ with} \\
    &\|f\|_{\mathbb{X}(E)} := \inf \{\|F\|_{\mathbb{X}(\R^n)}: F \in \mathbb{X}(\R^n) \text{ and } F|_E=f \}. 
\end{align*}
A function $F : \R^n \rightarrow \R$ satisfying $F|_E = f$ is an \emph{extension} of $f$. A linear map $T: \mathbb{X}(E) \to \mathbb{X}(\R^n)$ satisfying $Tf|_E = f$ and $\|Tf\|_{\mathbb{X}(\R^n)} \leq C \|f\|_{\mathbb{X}(E)}$, where $C$ is independent of $f$, is a \emph{bounded linear extension operator}. We pose the following questions about the trace and extension problems in $\mathbb{X}(\R^n)$: 
\begin{Question}
Given $E \subset \R^n$, does there exist a bounded linear extension operator $T: \mathbb{X}(E) \to \mathbb{X}(\R^n)$ satisfying $Tf|_E = f|_E$ and $\|Tf\|_{\mathbb{X}(\R^n)} \leq C \|f\|_{\mathbb{X}(E)}$ for all $f:E \to \R$, where $C$ is independent of $E$ and $f$?
\label{niq1}
\end{Question}
\begin{Question}
Can we estimate $\|f\|_{\mathbb{X}(E)}$? \label{niq2}
\end{Question}
For $\mathbb{X}(\R^n)=L^{m,p}(\R^n)$, $n<p<\infty$, and $E$ arbitrary, C. Fefferman, A. Israel, and G.K. Luli prove the existence of a bounded linear extension operator, as in Question 3, in \cite{arie3} and \cite{arie4}. When  $E$ is finite, they introduce the concept of \emph{assisted bounded depth} to describe the structure of the operator $T$ and they give an approximate formula for the trace norm $\| f \|_{\mathbb{X}(E)}$ in this case. When $E$ is arbitrary, they prove the existence of a bounded linear extension operator $T$ by taking the Banach limit of operators extending $f$ from a sequence of finite subsets of $E$. Consequently, their extension operator $T$ loses all of its structural properties when $E$ is arbitrary. To prove Theorem \ref{introtracethm} below, we provide a direct construction of a bounded linear extension operator $T$, valid when $E$ is arbitrary.  In Theorem \ref{arbethm}, we describe the structure of the extension operator and an approximate formula for the trace norm through the notion of constructibility, which is a generalization of the notion of assisted bounded depth to the setting when $E$ is arbitrary.

The question of optimal decomposition in the sum space $L^{m,p}(\R^n) + L^p(d\mu)$ can be phrased as the problem of \emph{approximate extension and interpolation} in Sobolev spaces, a topic of independent interest. Suppose an experimenter collects data defining a function $f: E \to \R$ ($E$ finite). Rather than assume $f$ is the restriction to the set $E$ of a function in the space $L^{m,p}(\R^n)$, we assume $f$ lies \emph{near} a function in the space $L^{m,p}(\R^n)$. Let the function $\mu: E \to [0,\infty]$ represent the confidence in data collected; where the experimenter has high confidence in the data, we expect $\mu$ to be very large, and where the experimenter lacks confidence, we expect $\mu$ to approach $0$. Then we wish to estimate
\begin{align}
\inf_{F \in L^{m,p}(\R^n)} \big\{ \|F\|_{L^{m,p}(\R^n)}^p + \sum_{x \in E} |F(x)-f(x)|^p \mu(x) \big\} \label{insum}
\end{align}
(subject to the convention that if $|F(x) - f(x)| = 0$ and $\mu(x) = \infty$, then $|F(x)-f(x)|^p \mu(x) = 0 \cdot \infty =0$), and to construct a function $Tf \in L^{m,p}(\R^n)$ satisfying,
\begin{align*}
\|Tf\|_{L^{m,p}(\R^n)}^p + \sum_{x \in E} |Tf(x)-f(x)|^p \mu(x) \leq C \cdot \inf_{F \in L^{m,p}(\R^n)} \big\{ \|F\|_{L^{m,p}(\R^n)}^p + \sum_{x \in E} |F(x)-f(x)|^p \mu(x) \big\}.
\end{align*}
In this setting, we can interpret Theorem \ref{amainthm} as giving a construction for a near-optimal approximate extension $Tf$ of the function $f$ subject to the confidence, represented by $\mu$, and an approximate formula for the optimal value of (\ref{insum}). When $\mu: E \to \R$ is defined as $\mu(x) \equiv \infty$ for all $x \in E$, the expression (\ref{insum}) is finite only if $F|_E = f$ (i.e., $F$ is an exact interpolant of $f$ on $E$). This problem then encompasses the problem of interpolation in Sobolev spaces, which was studied by Israel, Luli, and Fefferman in \cite{arie6}, \cite{arie7}, and \cite{arie8}. 

We use Theorem \ref{amainthm} to construct a bounded linear extension operator for $L^{m,p}(\R^n)$ ($n<p<\infty$): Let $E \subset \R^n$ be a bounded Borel set, and $f: E \to \R$ be Borel measurable.  Define a Borel measure $\mu_E$ on $\R^n$ so that, for all Borel sets $A \subset \R^n$,
\begin{equation}\label{muE:defn}
    \mu_E(A) = \left\{
    \begin{aligned}
    &\infty \quad \mbox{if} \;  A \cap E \neq \emptyset, \\
    &0  \quad \; \mbox{if} \;  A \cap E = \emptyset. \\
    \end{aligned}
    \right.
\end{equation}
We apply Theorem \ref{amainthm} to the measure $\mu_E$ to produce a linear operator $T$ and map $M$. We make a few observations. We shall identify $f : E \rightarrow \R$ with a function on $\R^n$, using its extension by zero. First, note that $f \in L^{m,p}(E)$ if and only if $f \in L^{m,p}(\R^n) + L^p(d\mu_E)$; furthermore,  $\|f\|_{L^{m,p}(E)} = \|f\|_{L^{m,p}(\R^n) + L^p(d\mu_E)}$. We also have that $\|F\|_{L^{m,p}(\R^n)} + \| F - f \|_{L^p(d \mu_E)}$ is finite if and only if $F \in L^{m,p}(\R^n)$ and $F= f$ on $E$, and in this case we have $\| F - f \|_{L^p(d \mu_E)} = 0$. Therefore, the map $T:L^{m,p}(\R^n) + L^p(d\mu_E) \to L^{m,p}(\R^n)$, given in Theorem \ref{amainthm}, is a bounded linear extension operator on the trace space $L^{m,p}(E)$. Further, because $\mu_E(\{x\})= \infty$ for $x \in E$, $Mf$ is finite if and only if $\zeta_\ell(f) -f \equiv 0 \in L^p(d\mu_E)$ for all $\ell \in \N$. Consequently, by applying Theorem \ref{amainthm}, we obtain the following result:
\begin{THM}
\label{introtracethm}
Let $E \subset \R^n$ be a compact set, $m \in \N$, and $n<p<\infty$. There exists a bounded linear extension operator $T:L^{m,p}(E) \to L^{m,p}(\R^n)$ and a map $M: L^{m,p}(E) \to \R$ satisfying for all $f \in L^{m,p}(E)$, 
\begin{align*}
& Tf = f \mbox{ on } E\\
& \|f\|_{L^{m,p}(E)} \leq \|Tf\|_{L^{m,p}(\R^n)}  \leq C \cdot  \|f\|_{L^{m,p}(E)}; \\
& c \cdot Mf \leq \|Tf\|_{L^{m,p}(\R^n)} \leq C \cdot Mf; \text{ and}  \nonumber \\
&Mf = \big(\sum_{\ell \in \N} |\psi_{\ell}(f) |^p + \|\vec{S}(f)\|_{L^{m,p}(K)}^p\big)^{1/p}, 
\end{align*}
where $K\subset E$, $\vec{S}:L^{m,p}(E) \to Wh(K)$ is a linear map, and for each $\ell \in \N$, $\psi_\ell: L^{m,p}(E) \to \R$ is a bounded linear functional.  The constants $c$ and $C$ depend on $m$, $n$, and $p$ but are independent of $f$ and $E$.
\end{THM}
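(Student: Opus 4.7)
The plan is to derive Theorem~\ref{introtracethm} as a direct application of Theorem~\ref{amainthm}, using the singular Borel regular measure $\mu_E$ on $\R^n$ that assigns $+\infty$ to every Borel set meeting $E$ and $0$ otherwise. I would first verify that $\mu_E$ is Borel regular (immediate from its two-valued nature) and record the central identification of seminorms, $\|f\|_{L^{m,p}(E)} = \|f\|_{L^{m,p}(\R^n) + L^p(d\mu_E)}$, after identifying $f : E \to \R$ with its extension by zero. This identity rests on the observation that in any decomposition $f = f_1 + f_2$ for which $\|f_2\|_{L^p(d\mu_E)}$ is finite, the integrability condition forces $f_2 \equiv 0$ on $E$, so $f_1$ is automatically an extension of $f$; conversely any $L^{m,p}(\R^n)$-extension $F$ yields an admissible decomposition $f = F + (f-F)$ whose $L^p(d\mu_E)$-term vanishes.

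I would then apply Theorem~\ref{amainthm} to $\mu_E$ to obtain a linear map $T$ and a map $M$ satisfying the decomposition estimates. Since $\|Tf - f\|_{L^p(d\mu_E)}$ is finite (being dominated by $C \|f\|_{L^{m,p}(E)}$), the same two-valued argument as above forces $Tf = f$ on $E$. Hence $T$ serves as a bounded linear extension operator: the upper bound $\|Tf\|_{L^{m,p}(\R^n)} \leq C \|f\|_{L^{m,p}(E)}$ is exactly the decomposition bound of Theorem~\ref{amainthm} specialized to $\mu_E$, while the lower bound $\|f\|_{L^{m,p}(E)} \leq \|Tf\|_{L^{m,p}(\R^n)}$ follows from the definition of the trace seminorm because $Tf$ extends $f$.

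For the formula for $Mf$, I would observe that $Mf$ being finite forces $\|\zeta_\ell(f)\|_{L^p(d\mu_E)} < \infty$ for each $\ell$; once more the two-valued nature of $\mu_E$ forces $\zeta_\ell(f) \equiv 0$ on $E$, so these terms contribute nothing and may be dropped from the sum. This leaves the stated formula $Mf = \big(\sum_{\ell \in \N} |\psi_\ell(f)|^p + \|\vec{S}(f)\|_{L^{m,p}(K)}^p\big)^{1/p}$, with $K \subset \Cl(\supp \mu_E) = \Cl(E)$ and all linearity properties of $\vec{S}$ and the $\psi_\ell$ inherited from Theorem~\ref{amainthm}.

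The main technical obstacle is the compact support hypothesis in Theorem~\ref{amainthm}: $\mu_E$ has support $\Cl(E)$, which need not be compact when $E$ is an arbitrary Borel set. For bounded $E$ the reduction above is immediate. For unbounded $E$, I would handle the issue by localization — covering $\R^n$ by a bounded-overlap family of balls of comparable size, invoking the bounded case on each piece $E \cap B$ to obtain local extensions, and patching them together with a smooth partition of unity of bounded depth. Because the constants in Theorem~\ref{amainthm} depend only on $m,n,p$ and the geometric overlap of the cover is a dimensional constant, linearity of $T$ and the structural form of $M$ are preserved with constants still depending only on $m, n, p$; verifying that the collection of functionals $\psi_\ell$ aggregated across the patches remains summable in the claimed sense is the most delicate bookkeeping step.
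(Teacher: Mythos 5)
Your core reduction matches the paper exactly: introduce the two-valued measure $\mu_E$, use the identity $\|f\|_{L^{m,p}(E)} = \|f\|_{L^{m,p}(\R^n)+L^p(d\mu_E)}$, observe that finiteness of $\|Tf-f\|_{L^p(d\mu_E)}$ forces $Tf|_E = f$, and drop the $\zeta_\ell$ terms from $Mf$ because their finiteness forces them to vanish $\mu_E$-a.e.\ on $E$. This is precisely the paper's argument in the paragraph preceding the theorem statement, so on that level your proposal is correct and identical in route.

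Where you go beyond the paper is in flagging the compact-support hypothesis of Theorem~\ref{amainthm}. You are right that this is a real constraint: $\supp(\mu_E) = \Cl(E)$, so the direct application of Theorem~\ref{amainthm} is only licensed when $E$ is bounded. The paper's expository paragraph does not address this, and the theorem statement allows arbitrary Borel $E$. However, your proposed fix --- covering $\R^n$ by a bounded-overlap family of balls, extending locally from $E\cap B$, and gluing with a partition of unity --- is not nearly routine enough to stand as stated. The difficulties are concrete: (i) the local extensions $T_j(f|_{E\cap B_j})$ agree on $E$ but need not agree anywhere off $E$, so the glued function's $L^{m,p}$ norm picks up terms involving the differences $T_j f - T_k f$ on overlaps, and bounding these requires a Whitney-type estimate on jet discrepancies that is essentially the hard part of the theory, not an afterthought; (ii) the structural form of $Mf$ in the statement contains a single term $\|\vec{S}(f)\|_{L^{m,p}(K)}^p$ for a single set $K\subset\Cl(E)$, and you would have to show that the $\vec{S}_j$'s and $K_j$'s produced patch-by-patch can be assembled into one such object with the right $L^p$ summability, which is genuine bookkeeping that you acknowledge but do not carry out; (iii) the bounded-overlap property of the functionals $\psi_\ell$ aggregated across patches is asserted but not established. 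A cleaner route to handle unbounded $E$ would exploit the $\mu$-independence of the constants in Theorem~\ref{amainthm}: exhaust $E$ by bounded sets $E_R = E\cap B(0,R)$, apply the bounded case to get $T_R, M_R$ with uniform constants, and pass to a limit --- but this too requires an argument (e.g.\ a compactness or Banach-limit step) that is not free, and indeed the paper criticizes exactly such Banach-limit constructions for destroying structure. So either the paper implicitly restricts to bounded $E$, or it has the same unaddressed gap you identified; in either case your localization sketch, as written, is an idea rather than a proof of the unbounded case.
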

Given a real normed linear space $\mathbb{X}$, we denote the dual space of $\mathbb{X}$ by $\mathbb{X}^* := \{f:\mathbb{X}\to \R : \; f $ is a bounded linear functional $\}$.

When $E$ is finite, Fefferman, Israel, and Luli \cite{arie3, arie4} introduce the notion of assisted bounded depth to describe how the value of their extension and its derivatives up to order $m-1$ rely on a collection of linear functionals contained in $L^{m,p}(E)^*$. Below, in Theorems \ref{constructthm} and \ref{finitethm}, we express the operator $T$ and the map $M$ through a collection of functionals in $(L^{m,p}(\R^n) + L^p(d\mu) )^*$ whose supports have bounded overlap, generalizing the notion of assisted bounded depth to the setting of the optimal decomposition problem (Questions \ref{q01} and \ref{q02}). In Theorem \ref{arbethm}, we translate this to the setting of the extension problem (Questions \ref{niq1} and \ref{niq2}), where the operator $T$ and the map $M$ can be expressed through a collection of functionals in $L^{m,p}(E)^*$ whose supports have bounded overlap.

Recall the support of a functional $\omega \in (L^{m,p}(\R^n) + L^p(d\mu))^*$ is defined as
\begin{align*}
    \supp&(\omega):= \left( \bigcup \left\{ E \subset \R^n: \begin{aligned}
    &E \text{ is open, and for all } f \in L^{m,p}(\R^n) + L^p(d\mu) \\ &\text{satisfying } \supp(f) \subset E, \; \omega(f)=0. 
    \end{aligned}  \right\} \right)^c.
\end{align*}

We write $|S|$ for the cardinality of a set $S$. A collection $\Pi$ of sets has \emph{$A$-bounded overlap} ($A \geq 1$) provided that $|\{\pi \in \Pi: x \in \pi \} |\leq A$ for all $x$. With this notation, we now state the refined version of Theorem \ref{amainthm}:

\begin{THM}
The linear operator $T$ and map $M$ in Theorem \ref{amainthm} can be chosen to be \underline{${\Omega}$-constructible}, in the following sense: 

There exists a collection of linear functionals $\Omega = \{\omega_r\}_{r \in \Upsilon} \subset (L^{m,p}(\R^n) + L^p(d\mu))^*$, such that the collection of sets $\{\supp(\omega_r)\}_{r \in \Upsilon}$ has $C'$-bounded overlap, and for each $x \in \R^n$, there exists a finite subset $\Upsilon_x \subset \Upsilon$ and a collection of polynomials $\{v_{r,x}\}_{r \in \Upsilon_x} \subset \mpp$ such that $|\Upsilon_x| \leq C$ and
\begin{align}
    J_x Tf = \sum_{r \in \Upsilon_x}  \omega_r (f) \cdot v_{r,x}. \label{l4}
\end{align}

Recall that the map $M$ is defined in \eqref{l3} in terms of linear maps $(\zeta_\ell)_{\ell \in \mathbb{N}}$, $(\psi_\ell)_{\ell \in \mathbb{N}}$, and $\vec{S} = (S_x)_{x \in K}$. Then the following holds:

\textbf{(1)} For each $\ell \in \N$ and $y \in \supp(\mu)$, there exists a finite subset $\bar{\Upsilon}_{\ell,y} \subset {\Upsilon}$ and constants $\{\eta^{\ell}_{s,y}\}_{s \in \bar{\Upsilon}_{\ell,y}} \subset \R$ such that $|\bar{\Upsilon}_{\ell,y}| \leq C$, and the map $f \mapsto \zeta_\ell(f)(y)$ has the form
\begin{align*}
&\zeta_\ell(f)(y) = \sum_{s \in \bar{\Upsilon}_{\ell,y}} \eta^{\ell}_{s,y} \cdot \omega_{s}(f). 
\end{align*}

\textbf{(2)} For each $\ell \in \N$, there exists a finite subset $\bar{\Upsilon}_{\ell} \subset {\Upsilon}$ and constants $\{\eta^\ell_{s}\}_{s \in \bar{\Upsilon}_{\ell}} \subset \R$ such that $|\bar{\Upsilon}_{\ell}| \leq C$, and the map $\psi_\ell$ has the form 
\begin{align*}
&\psi_\ell(f) = \sum_{s \in \bar{\Upsilon}_{\ell}} \eta_{s}^\ell \cdot \omega_{s}(f).
\end{align*}

\textbf{(3)} For $y \in K$, there exist $\{\omega^\al_{y}\}_{\al \in \mm} \subset \Omega$ satisfying for all $\al \in \mm$, $\supp(\omega^\al_{y}) \subset \{y\}$, and the map $f \mapsto S_y(f)$ has the form
\begin{align*}
S_y(f) = \sum_{\al \in \mm} \omega^\al_{y}(f) \cdot v_{\al}, 
\end{align*}
where $\{v_\al\}_{\al \in \mm}$ is a basis for $\mpp$.

The constants $C$ and $C'$ depend on $m$, $n$, and $p$ but are independent of $f$ and $\mu$.
\label{constructthm}
\end{THM}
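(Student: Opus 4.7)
The plan is to audit the construction of the operator $T$ and map $M$ promised by Theorem \ref{amainthm} and extract from it a single family of local functionals $\Omega = \{\omega_r\}_{r\in\Upsilon}$ witnessing the claimed constructibility. I expect the construction proceeds via a Calderón--Zygmund-type decomposition $\{Q_\nu\}_\nu$ of $\R^n$ adapted to $\mu$, with a subordinate partition of unity $\{\theta_\nu\}$ satisfying $\supp\theta_\nu \subset c_1 Q_\nu$, and local near-optimal decomposition operators $T_\nu$ on each cube defined using only local data supported on a fixed dilate $c_2 Q_\nu$. The assembled operator is $Tf = \sum_\nu \theta_\nu\, T_\nu f$. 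Each $T_\nu f$ should be determined by $O(1)$ scalar functionals of $f$: polynomial coefficients obtained from averaging $f$ against bump functions, plus $\mu$-integrations of $f$ against test functions supported in $c_2 Q_\nu$. Reindexing all these functionals across $\nu$, together with jet-evaluation functionals at each $y\in K$, yields $\Omega$.

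Given this, each required claim follows by inspection. Bounded overlap of $\{\supp(\omega_r)\}_{r\in\Upsilon}$ reduces to the standard bounded overlap of the dilates $\{c_2 Q_\nu\}_\nu$, combined with the fact that each $Q_\nu$ contributes only boundedly many functionals. For \eqref{l4}, fix $x \in \R^n$; only $O(1)$ of the $\theta_\nu$ are nonzero near $x$, and the Leibniz rule expresses $J_x Tf$ as a bounded sum of products of $J_x\theta_\nu$ (a fixed polynomial depending on $x$) with $J_x(T_\nu f)$ (a bounded linear combination of local $\omega_r(f)$ with polynomial coefficients); regrouping gives the representation with $|\Upsilon_x|\le C$. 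For parts \textbf{(1)}--\textbf{(2)}, $\zeta_\ell(f)(y)$ and $\psi_\ell(f)$ correspond to the local error and auxiliary scalars that appear when decomposing $\|Tf-f\|_{L^p(d\mu)}^p$ and $\|Tf\|_{L^{m,p}(\R^n)}^p$ into contributions from individual cubes, and each is by construction a bounded linear combination of the $\omega_r$. For \textbf{(3)}, the Whitney field $S_y$ on $K \subset \Cl(\supp(\mu))$ is a jet evaluation, which by the Sobolev embedding $L^{m,p}(\R^n) \subset C^{m-1}_{loc}(\R^n)$ for $p>n$ extends continuously to the sum space with support contained in $\{y\}$; taking $\omega^\al_y$ to be the coordinates of this jet against a fixed basis $\{v_\al\}_{\al\in\mm}$ of $\mpp$ yields the required form.

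The main obstacle will be establishing that the local operators $T_\nu$ are genuinely expressible through only $O(1)$ functionals of $f$, i.e., that the near-optimal decomposition on each cube has bounded depth. In the pure extension setting of \cite{arie3,arie4}, this is the \emph{assisted bounded depth} property, proved by induction on the Calderón--Zygmund tree via stopping-time conditions keyed to the local geometry of $\supp(\mu)$. I would adapt that inductive framework to the sum-space setting by tracking, alongside the jet data, an $L^p(d\mu)$-component of the decomposition; the additional functionals required are $\mu$-integrals against locally supported bumps, and induction along the CZ tree should keep the depth bounded. Care will be needed to ensure the overlap constant $C'$ remains uniform in $\mu$ and that the constructions on adjacent cubes combine consistently, so that the sum at each point $x$ does not inflate $|\Upsilon_x|$.
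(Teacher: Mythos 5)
Your proposal captures the broad skeleton — a Calder\'on--Zygmund decomposition, a subordinate partition of unity, local near-optimal operators on each cube, and a reindexing of local functionals into a single family $\Omega$ with bounded overlap coming from the geometry of the dilated cubes — and this is indeed the high-level strategy of the paper. You also correctly identify that the heart of the matter is establishing bounded depth of the local operators, and that the natural route is to adapt the assisted-bounded-depth induction of Fefferman--Israel--Luli along the CZ tree, which is what the paper does (via the ``Main Lemma for $\ma$'' and the hypotheses \textbf{(AL4)--(AL6)} providing $\Omega_i'$-constructibility of the local operators $T_i$). However, there are two genuine gaps.

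The more serious one is in your treatment of part \textbf{(3)}. You claim that for $y \in K$, the map $f \mapsto S_y(f)$ is ``a jet evaluation, which by the Sobolev embedding $L^{m,p}(\R^n) \subset C^{m-1}_{loc}(\R^n)$ ... extends continuously to the sum space with support contained in $\{y\}$.'' This is not a Sobolev embedding fact: elements of $L^{m,p}(\R^n)+L^p(d\mu)$ are not continuous and have no canonical jet at a point. The set $K$ is precisely the set $K_p$ of \emph{keystone points} — the points not covered by the CZ cubes — and the well-definedness and locality of $S_y$ there is a rigidity phenomenon. The paper shows (Lemma \ref{kptlm} and Lemma \ref{kptlm_loc}) that at $y \in K_p$ \emph{any} near-optimal $K_p$-coherent extension $H$ of $f$ has the \emph{same} jet $J_y H$, and moreover this jet is already determined by $f$ restricted to $B(y,r)$ for any $r>0$; this requires an argument by contradiction using Lemma \ref{newdir} to show that otherwise a cube containing $y$ would be OK. Without this, you cannot conclude $\supp(\omega^\alpha_y)\subset\{y\}$. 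The notion of keystone point is also absent from \cite{arie3,arie4} (there $E$ is finite and the CZ decomposition is finite), so the adaptation you propose is not merely bookkeeping — this is the genuinely new technical difficulty.

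The second gap: the formula $Mf = \big(\sum_\ell(\|\zeta_\ell(f)\|_{L^p(d\mu)}^p+|\psi_\ell(f)|^p)+\|\vec{S}(f)\|_{L^{m,p}(K)}^p\big)^{1/p}$ in Theorem \ref{amainthm} depends only on $f$, but the inductive construction produces an operator $T(f,P_0)$ and functional $M(f,P_0)$ depending on an auxiliary $(m-1)$-jet $P_0\in\mpp$. Passing from $(f,P_0)$ to $f$ alone requires choosing $P_0 = \xi(f)$ near-optimally, and showing that this choice is linear in $f$ and preserves constructibility; this is done in Section \ref{afinsection} via the Linear Map Lemmas \ref{alinearmap}--\ref{alinearmap2}. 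Your proposal does not address this step or explain why the additional functionals $\omega_\gamma$ defining $\xi$ can be absorbed into $\Omega$ while preserving bounded overlap and $|\Upsilon_x|\le C$.
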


When $\mu$ is a finite measure (i.e., $\mu(\R^n) < \infty$) the formulas for $M$ and $T$ are simpler. Precisely,
\begin{THM} Let $\mu$ be a \underline{finite} Borel regular measure on $\R^n$ with compact support. Then the map $M: L^{m,p}(\R^n) + L^p(d\mu) \to \R$ in Theorem \ref{amainthm} satisfies: 
\begin{align*}
Mf = \big(\sum_{\ell = 1}^{K} \left( \int_{A_\ell} |\zeta_\ell(f)-f|^p d\mu + |\psi_{\ell}(f) |^p + |\psi_{\ell}(f) |^p \right) \big)^{1/p}, 
\end{align*}
where for each $\ell$, $A_\ell \subset \supp(\mu)$ is a Borel set, $\zeta_\ell: L^{m,p}(\R^n) + L^p(d\mu) \to L^p(d\mu)$, and $\psi_\ell: L^{m,p}(\R^n) + L^p(d\mu) \to \R$ are linear maps. 
Further the collection $\Omega \subset (L^{m,p}(\R^n) + L^p(d\mu))^*$ used to express $M$ and $T$ in Theorem \ref{constructthm} consists of a finite collection of linear functionals.\label{finitethm}
\end{THM}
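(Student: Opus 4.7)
The plan is to derive Theorem \ref{finitethm} as a specialization of Theorems \ref{amainthm} and \ref{constructthm}, exploiting two features of a finite compactly supported $\mu$: $\supp(\mu)$ is bounded, and no point of $\R^n$ has infinite $\mu$-mass in every neighborhood.

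First, I would show that the Whitney field term $\|\vec{S}(f)\|_{L^{m,p}(K)}^p$ in the formula for $Mf$ may be removed by taking $K=\emptyset$. The set $K \subset \Cl(\supp(\mu))$ in Theorem \ref{amainthm} encodes the locus where $\mu$ forces near-exact interpolation of $f$ by $Tf$; this is exactly the mechanism through which Theorem \ref{introtracethm} follows from Theorem \ref{amainthm} in the extension-measure case, where $\mu_E(\{x\}) = +\infty$ for $x \in E$. When $\mu$ is finite, every point has a neighborhood of finite $\mu$-mass, so no such exact-matching locus exists, and the construction can be run with $K = \emptyset$. By the convention of the Whitney field seminorm, $\|\vec{S}(f)\|_{L^{m,p}(\emptyset)}^p = 0$, so the term drops out of $Mf$.

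Second, I would show that $\sum_{\ell \in \N}$ collapses to a finite sum. By Theorem \ref{constructthm} each $\zeta_\ell$ and $\psi_\ell$ is a finite linear combination of functionals $\omega_r \in \Omega$, where $\{\supp(\omega_r)\}_{r \in \Upsilon}$ has $C'$-bounded overlap. Since $\supp(\mu)$ is compact, the subset $\Upsilon_0 \subset \Upsilon$ of indices with $\supp(\omega_r) \cap \supp(\mu) \neq \emptyset$ is finite. Any $\zeta_\ell$ built only from $\omega_r$ with $r \notin \Upsilon_0$ must vanish as an element of $L^p(d\mu)$, since such $\omega_r$ do not see $\supp(\mu)$. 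A parallel analysis uses a bounded geometric ``buffer'' around $\supp(\mu)$ to show that all nontrivial $\psi_\ell$ are carried by a finite enlargement $\Upsilon_1 \supset \Upsilon_0$, still finite by bounded overlap. One then renumbers the nontrivial $\zeta_\ell, \psi_\ell$ as $\ell = 1, \dots, K$ and replaces $\Omega$ by the finite subcollection indexed by $\Upsilon_1$; the representation \eqref{l4} for $J_x Tf$ survives because $Tf$ can be taken to vanish outside a bounded enlargement of $\supp(\mu)$, and then only the $\omega_r$ with $r \in \Upsilon_1$ appear with nonzero coefficient.

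The main obstacle will be controlling the $\psi_\ell$'s. The $\zeta_\ell$'s live in $L^p(d\mu)$ and thus localize naturally to $\supp(\mu)$, but the $\psi_\ell$'s are scalar functionals that could a priori encode behavior of $f$ far from $\supp(\mu)$. I will need to inspect the proof of Theorem \ref{amainthm} to verify that the $\psi_\ell$'s arise from the same Calder\'on--Zygmund decomposition cubes as the $\zeta_\ell$'s, localized to a bounded enlargement of $\supp(\mu)$; otherwise the sum could remain infinite despite $\mu$ being compactly supported. Once this localization is established, the finiteness of $K$ and of $\Omega$ follow simultaneously from the bounded-overlap hypothesis of Theorem \ref{constructthm}.
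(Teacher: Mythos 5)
Your intuition behind the first step is correct: the paper shows (inequality \eqref{kpinf}) that $x \in K_p$ forces $\mu(B(x,\eta)) = \infty$ for every $\eta > 0$, which is impossible for finite $\mu$, so $K_p = \emptyset$. But your second step contains a genuine gap. Bounded overlap of $\{\supp(\omega_r)\}_{r\in\Upsilon}$ together with compactness of $\supp(\mu)$ does \emph{not} imply that only finitely many of the $\supp(\omega_r)$ meet $\supp(\mu)$: the supports could be singletons $\{x_j\}$ accumulating inside $\supp(\mu)$, giving overlap bound $1$ but infinitely many nonempty intersections. In the paper's construction this is a live possibility, since the functionals $\omega_y^\alpha$ tied to keystone points (the collection $\Omega_0$ in the proof of Lemma \ref{tconstlm}) have support $\{y\}$ and the CZ cubes are not a priori of bounded-below sidelength. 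The ``buffer'' argument you sketch for the $\psi_\ell$'s inherits the same problem, and you flag yourself that you have not verified the localization it requires.

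The mechanism the paper actually uses is Lemma \ref{nottrivial}: if $\mu$ is finite, the Calder\'on--Zygmund decomposition $CZ^\circ$ is a \emph{finite} partition of $Q^\circ$ into dyadic cubes. This single fact supplies everything at once, and is what you should be invoking. It gives $K_p = Q^\circ \setminus \bigcup_{i\in I}Q_i = \emptyset$, so the Whitney field term drops out. It gives that the index set $I$ is finite, and since the maps $\lambda_\ell$ and $\phi_\ell$ appearing in \eqref{amffin3} are indexed by cubes $Q_i \in CZ^\circ$, by neighbor pairs $i \lra i'$, and by the (inductively finite) families $\{\lambda_\ell^i\},\{\phi_\ell^i\}$ arising from the local extension theorems for $\mu|_{1.1Q_i}$, the countable sum collapses to a finite one. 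It gives that $\Omega = \Omega_0 \cup \Omega_1 \cup \Omega_2 \cup \Omega_3$ defined in \eqref{Om} is finite: $\Omega_0 = \emptyset$ since $K_p = \emptyset$; $\Omega_1$ is indexed by the finitely many keystone cubes; $\Omega_2 = \bigcup_{i\in I}\Omega'_i$ is a finite union of collections each finite by the inductive hypothesis applied to the finite restricted measures; and the functionals in $\Omega_3$ contribute to the representation of $T$ and $M$ only through keystone points, so they may be discarded when $K_p = \emptyset$. The paper's proof of Theorem \ref{finitethm} is essentially a two-line citation of Lemma \ref{nottrivial}, and that lemma is the right pivot; your proposal tries to re-derive the conclusion from the bounded-overlap property alone, which is strictly weaker than what is available and is insufficient.
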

As promised, we next adapt the notion of constructibility to describe the structure of the linear extension operator in Theorem \ref{introtracethm}.
\begin{THM}
\label{arbethm}
The linear operator $T$ in Theorem \ref{introtracethm} is $\Omega$-constructible. Precisely, there exists a collection of linear functionals $\Omega = \{\omega_r\}_{r \in \Upsilon} \subset L^{m,p}(E)^*$, such that the collection of sets $\{\supp(\omega_r)\}_{r \in \Upsilon}$ has $C'$-bounded overlap, and for each $x \in \R^n$, there exists a finite subset $\Upsilon_x \subset \Upsilon$ and a collection of polynomials $\{v_{r,x}\}_{r \in \Upsilon_x} \subset \mpp$ such that $|\Upsilon_x| \leq C$ and
\begin{align}
    J_x Tf = \sum_{r \in \Upsilon_x}  \omega_r (f) \cdot v_{r,x}. \label{forabdgen}
\end{align}
And the map $M$ in Theorem \ref{introtracethm} is $\Omega$-constructible:

\textbf{(1)} For each $\ell \in \N$, there exists a finite subset $\bar{\Upsilon}_{\ell} \subset {\Upsilon}$ and constants $\{\eta^\ell_{s}\}_{s \in \bar{\Upsilon}_{\ell}} \subset \R$ such that $|\bar{\Upsilon}_{\ell}| \leq C$, and the map $\psi_\ell$ has the form 
\begin{align*}
&\psi_\ell(f) = \sum_{s \in \bar{\Upsilon}_{\ell}} \eta_{s}^\ell \cdot \omega_{s}(f).
\end{align*}

\textbf{(2)} For $y \in K$, there exist $\{\omega^\al_{y}\}_{\al \in \mm} \subset \Omega$ satisfying for all $\al \in \mm$, $\supp(\omega^\al_{y}) \subset \{y\}$, and the map $f \mapsto S_y(f)$ has the form
\begin{align*}
S_y(f) = \sum_{\al \in \mm} \omega^\al_{y}(f) \cdot v_{\al}, 
\end{align*}
where $\{v_\al\}_{\al \in \mm}$ is a basis for $\mpp$. 

The constants $C$ and $C'$ depend on $m$, $n$, and $p$ but are independent of $f$ and $E$.
\end{THM}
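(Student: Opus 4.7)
The plan is to derive Theorem \ref{arbethm} as a direct specialization of Theorem \ref{constructthm} applied to the measure $\mu_E$ introduced just before Theorem \ref{introtracethm}. First I would invoke Theorem \ref{constructthm} with $\mu = \mu_E$, producing a linear operator $T$, a map $M$, and a collection $\Omega = \{\omega_r\}_{r \in \Upsilon} \subset (L^{m,p}(\R^n) + L^p(d\mu_E))^*$ whose supports have $C'$-bounded overlap and in terms of which $J_x Tf$, the maps $\zeta_\ell$, $\psi_\ell$, and the Whitney field $\vec{S}$ are all built as sparse linear combinations. Since $\supp(\mu_E) = \Cl(E)$, the set $K$ of Theorem \ref{constructthm} satisfies $K \subset \Cl(E)$, matching the requirement of Theorem \ref{introtracethm}.

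Next I would transport these data along the canonical identification noted in the discussion preceding Theorem \ref{introtracethm}: extension by zero gives an isometric identification of $L^{m,p}(E)$ with $L^{m,p}(\R^n) + L^p(d\mu_E)$, so each $\omega_r$ descends to an element of $L^{m,p}(E)^*$ with the same support in $\R^n$ and hence the same bounded-overlap constant. The key simplification particular to $\mu_E$ is that $\mu_E(\{x\}) = \infty$ for every $x \in E$. For any $f \in L^{m,p}(E)$ one has $Mf \leq c^{-1}\|Tf\|_{L^{m,p}(\R^n)} \leq (C/c)\|f\|_{L^{m,p}(E)} < \infty$, so each summand $\|\zeta_\ell(f)\|_{L^p(d\mu_E)}$ is finite; since $\mu_E$ places infinite mass on every point of $E$, this forces $\zeta_\ell(f) \equiv 0$ in $L^p(d\mu_E)$. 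The $\zeta_\ell$ contributions therefore drop out of formula (\ref{l3}), leaving precisely the form $Mf = \big(\sum_\ell |\psi_\ell(f)|^p + \|\vec{S}(f)\|_{L^{m,p}(K)}^p\big)^{1/p}$ demanded by Theorem \ref{introtracethm}. The representation (\ref{forabdgen}) of $J_x Tf$, the sparse expression of each $\psi_\ell$, and the point-supported decomposition of $S_y$ then transfer verbatim from parts (\ref{l4}), (2), and (3) of Theorem \ref{constructthm}, and the bounded-cardinality constants $|\Upsilon_x|, |\bar{\Upsilon}_\ell| \leq C$ are inherited unchanged.

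The main point requiring care is that the notion of support is interpreted consistently on both sides of the identification $L^{m,p}(E) \cong L^{m,p}(\R^n) + L^p(d\mu_E)$. Concretely, I would verify that for an open set $U \subset \R^n$, the class of $f \in L^{m,p}(E)$ whose zero-extensions are supported in $U$ produces the same vanishing condition on each $\omega_r$ as the class of sum-space test functions supported in $U$. The forward direction is immediate because a zero-extended trace function with support in $U$ \emph{is} a sum-space function with support in $U$; the reverse direction uses that any sum-space test function supported in $U$ admits a decomposition $f = f_1 + f_2$ with $\supp(f_1), \supp(f_2) \subset U$, where $f_2$ can be arranged to vanish on $E$ so that $\omega_r(f_2) = 0$ is forced by the trace-space vanishing condition, yielding $\omega_r(f) = \omega_r(f_1) = 0$. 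Granting this routine bookkeeping, every bounded-overlap and point-support conclusion of Theorem \ref{constructthm} applies directly to give Theorem \ref{arbethm}.
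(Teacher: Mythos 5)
Your proposal is correct and follows the same route the paper implicitly takes: apply Theorem \ref{constructthm} to $\mu = \mu_E$, observe that finiteness of $Mf$ together with $\mu_E(\{x\}) = \infty$ for $x \in E$ forces $\zeta_\ell(f) \equiv 0$ in $L^p(d\mu_E)$, and transport the representation formulas across the identification of $L^{m,p}(E)$ with the sum space. The paper never writes out this transfer explicitly --- Theorem \ref{arbethm} is stated without a dedicated proof and is understood to follow from Theorem \ref{constructthm} by the observations preceding Theorem \ref{introtracethm} --- so you are reconstructing exactly the intended argument. One small remark: for the bounded-overlap conclusion you only need the ``forward'' inclusion (trace-space test functions supported in $U$, extended by zero, are sum-space test functions supported in $U$), which gives $\supp_{L^{m,p}(E)^*}(\omega_r) \subset \supp_{(L^{m,p}(\R^n)+L^p(d\mu_E))^*}(\omega_r)$ and hence inherits the $C'$-bounded overlap; the reverse direction you sketch is not needed and, as written, would require more care (one cannot in general arrange $\supp(f_1) \subset U$ with $f_1 \in L^{m,p}(\R^n)$). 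This extra step does not affect the validity of the proof.
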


Let $E \subset \R^n$ be finite. A linear map which is $\Omega$-constructible as defined in (\ref{forabdgen}) has $\Omega$-assisted bounded depth (see \cite{arie3} for the definition of assisted bounded depth operators). Consequently, the extension operator we construct in Theorem \ref{arbethm} has the same good structural property as the extension operator in Theorem 3 of \cite{arie3} when $E$ is finite. 

A pair of complete semi-normed spaces $(A,B)$ is said to be compatible if there exists a Hausdorff topological vector space $\mathscr{H}$ into which both $A$ and $B$ can be continuously embedded (see \cite{brbr}, p. 153). For a compatible couple $(A,B)$, the sum $A + B$ is the set of elements $f \in \mathscr{H}$ that can be represented as the sum, $f = f_1 + f_2$, of elements $f_1 \in A$ and $f_2 \in B$. Under the norm $\| f \|_{A+B} = \inf_{f_1+f_2 = f} \{ \|f_1\|_A + \|f_2\|_B \}$, $A+B$ is a complete semi-normed linear space. We can generate interpolation spaces for the couple $(A,B)$ via the real method, by calculating the $K-$functional:
\begin{align*}
K\big(t; f: (A,B) \big) := \inf \left\{ \|f_1\|_{A} + t \cdot \|f_2\|_{B}: f = f_1 + f_2 \right\}.
\end{align*}

The Banach couple, $(A,B)$, is \emph{$C,K$-linearized} if there exists a constant $C$ independent of $t$ such that for all $t>0$, for all $f \in A+B$, there exists an almost optimal decomposition $f = F_1^t + F_2^t$, where $F_1^t, F_2^t$ depend linearly on $f$:
\begin{align*}
\|F_1^t\|_{A}+ t\|F_2^t\|_{B} \leq C \cdot \|f \|_{K(t; f: (A,B) )}.
\end{align*}
The pair $(L^{m,p}(\R^n), L^p(d\mu))$ is a compatible couple, and 
\[
\|f\|_{L^{m,p}(\R^n) + L^p(d\mu)} \leq K\Big(1; f: (L^{m,p}(\R^n), L^p(d\mu)) \Big) \leq C \|f\|_{L^{m,p}(\R^n) + L^p(d\mu)} ,
\]
where $C=C(n,p)$.

Furthermore, 
\[
\|f\|_{L^{m,p}(\R^n) + L^p(t^pd\mu)} \leq K\Big(t; f: (L^{m,p}(\R^n), L^p(d\mu)) \Big) \leq C \|f\|_{L^{m,p}(\R^n) + L^p(t^pd\mu)},
\]
where $C=C(n,p)$.

Theorem \ref{amainthm} immediately implies: \\

\begin{THM} Let $\mu$ be a compactly supported Borel regular measure on $\R^n$. Then the Banach couple $(L^{m,p}(\R^n), L^p(d\mu))$ is $C,K$-linearized, with $C$ independent of $\mu$.\\ \label{bancouplethm}
\end{THM}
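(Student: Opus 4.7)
The plan is to reduce the claim to a single application of Theorem \ref{amainthm} for each $t > 0$, exploiting the fact that the constant furnished by that theorem is independent of the underlying measure. Fix $t > 0$ and consider the rescaled measure $\mu_t := t^p \mu$; since $\mu$ is compactly supported and Borel regular, so is $\mu_t$, and Theorem \ref{amainthm} applied to $\mu_t$ produces a linear operator $T_t : L^{m,p}(\R^n) + L^p(d\mu_t) \to L^{m,p}(\R^n)$ satisfying
\[
\|T_t f\|_{L^{m,p}(\R^n)} + \|T_t f - f\|_{L^p(d\mu_t)} \leq C \cdot \|f\|_{L^{m,p}(\R^n) + L^p(d\mu_t)},
\]
with the same constant $C = C(m,n,p)$ for every $t > 0$.

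Next I would set $F_1^t := T_t f$ and $F_2^t := f - T_t f$. Both maps depend linearly on $f$ by the linearity of $T_t$, and trivially $f = F_1^t + F_2^t$. Using the scaling identity $\|g\|_{L^p(t^p d\mu)} = t \|g\|_{L^p(d\mu)}$, the displayed bound rewrites as
\[
\|F_1^t\|_{L^{m,p}(\R^n)} + t\,\|F_2^t\|_{L^p(d\mu)} \leq C \cdot \|f\|_{L^{m,p}(\R^n) + L^p(t^p d\mu)}.
\]
Combining this with the equivalence stated just before Theorem \ref{bancouplethm},
\[
\|f\|_{L^{m,p}(\R^n) + L^p(t^p d\mu)} \leq K\big(t; f: (L^{m,p}(\R^n), L^p(d\mu))\big),
\]
produces precisely the $C,K$-linearization inequality, with $C$ independent of $t$, $f$, and $\mu$.

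The argument is a clean corollary of Theorem \ref{amainthm}; there is no substantial technical obstacle. The one point that genuinely deserves attention is that the constant in Theorem \ref{amainthm} must be independent of the measure, so that rescaling $\mu \mapsto t^p \mu$ does not degrade the estimate as $t$ varies over $(0, \infty)$. This measure-uniformity is explicitly asserted in the statement of Theorem \ref{amainthm}, which is what makes the present corollary essentially automatic.
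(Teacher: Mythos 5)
Your proof is correct and is precisely the argument the paper has in mind: the paper labels Theorem~\ref{bancouplethm} as an immediate consequence of Theorem~\ref{amainthm} together with the equivalence $\|f\|_{\class + L^p(t^p d\mu)} \simeq K(t;f:(\class,L^p(d\mu)))$ stated just before it, and your application of Theorem~\ref{amainthm} to the rescaled measure $t^p\mu$, followed by the scaling identity, is exactly how that implication unfolds. You also correctly identify the key point, namely that the measure-independence of the constant in Theorem~\ref{amainthm} is what makes the rescaling work uniformly in $t$.
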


\subsection{Background}

P. Shvartsman considered Questions \ref{q01} and \ref{q02}, providing a solution when $\mu$ is a $\sigma$-finite Borel measure, $m=1$, and $ p \in (n,\infty)$ in \cite{shv4}. 

H. Whitney gave an answer to Questions \ref{niq1} and \ref{niq2} for the function space $\mathbb{X}(\R^n)=C^m(\R^n)$ in the case $n=1$ (\cite{wh1}). For $\omega \in (0,1]$, the space $C^{m, \omega}(\R^n)$ consists of $C^m(\R^n)$ functions with $\omega$-H\"older continuous $m^{th}$ order derivatives and finite norm: 
\begin{align*}
\|F\|_{C^{m,\omega}(\R^n)} = \|F\|_{C^{m}(\R^n)}+ \max_{ |\al| = m} \sup_{x, y \in \R^n, x \neq y} \left\{ \frac{| \p^\al F(x) - \p^\al F(y) |}{|x-y|^\omega} \right\}.
\end{align*}
Fefferman solved Questions \ref{niq1} and \ref{niq2} for $\mathbb{X}(\R^n)=C^m(\R^n)$ and $\mathbb{X}(\R^n)=C^{m,\omega}(\R^n)$ for $m,n \in \N$, and $\omega \in (0,1]$ in \cite{f1}, \cite{f2}, \cite{f3}, and \cite{f4}. His work built on theory developed by G. Glaeser, Y. Brudnyi, Shvartsman, E. Bierstone, P. Milman, and W. Paw\l{}ucki in \cite{gl}, \cite{brshv0}, \cite{brshv}, and \cite{bmp1}. Fefferman considered questions of approximate extension and utilized this relaxation of the extension problem in the spaces $C^{m}(\R^n)$ and $C^{m,\omega}(\R^n)$ in \cite{f6}-\cite{f2} and \cite{f5}, and in his work with B. Klartag in \cite{f7} and \cite{f8}.

For $\mathbb{X}(\R^n)=L^{m,p}(\R^n)$, Shvartsman used the classical Whitney extension operator to answer Questions \ref{niq1} and \ref{niq2} for $ p \in (n,\infty)$ and $m=1$ in \cite{shv2}. Israel \cite{arie5} and Shvartsman \cite{shv5} independently answered Questions \ref{niq1} and \ref{niq2} when $m=2$, $n<p<\infty$, and $E$ is finite.
Fefferman, Israel, and Luli extended the method of \cite{arie5} to $m \in \N$, $n<p<\infty$ in \cite{arie3} and \cite{arie4}. However, they describe structural properties of the extension operator $T$ only for finite $E \subset \R^n$.

\subsection{Overview of the Proof of Theorem 1}
Let $\mu$ be a Borel regular measure satisfying that  $\supp(\mu) \subset Q$ for a cube $Q \subset \R^n$. In order to construct a linear operator approximately extending a function from the support of $\mu$, we study the freedom we have to define a Sobolev function at each point of $Q$. In light of the Sobolev embedding theorem on $L^{m,p}(\R^n)$ for $p>n$, we can study the set of prospective $(m-1)$-jets of approximate extensions and utilize the inductive framework introduced by Fefferman in \cite{f1}. We define $\J$-functionals to encode how well a Sobolev function approximates a function $f:L^{m,p}(\R^n) + L^p(d\mu) \to \R$ in terms of the measure $\mu$. When the measure $\mu$ is finite, each step of the induction utilizes a \emph{finite} Calder\'on Zygmund (CZ) decomposition of $Q$ that identifies cubes where we can use the inductive hypothesis to solve the approximate extension problem locally. Then we can use the techniques of Israel in \cite{arie5} and Israel, Luli, and Fefferman in \cite{arie3} to ensure global compatibility of these local solutions and patch them together.

When the measure $\mu$ is not finite, the CZ decomposition need not be finite, implying there is a subset of $Q$ we call \emph{keystone points} where we cannot use an inductive hypothesis to produce a local solution. How can we define an approximate extension on this set so that we produce a Sobolev function on $Q$? This is a new problem for extension in Sobolev space, though Whitney navigates the boundary between an infinite decomposition and its complement in his $C^m$ extension theorem in \cite{wh1}. In Section \ref{subsec:kpt}, we identify the set of keystone points $K_p$, and consider its properties. In Section \ref{subsec:kpt_jet}, we show how the measure $\mu$ restricts a prospective approximate extension on $K_p$, relying on a new estimate (Lemma \ref{newestlm}). Then we use a Sobolev function to define an approximate extension on $K_p$ that is compatible with the extension defined on the CZ cubes. In Section \ref{sec:patchQ}, we show that this function defined piecewise on $K_p$ and the CZ cubes is, in fact, a function in $L^{m,p}(Q)$ through a characterization of Sobolev space due to Brudnyi in \cite{br1}. This follows the work of Shvartsman in \cite{shv2}, who used \cite{br1} to give an intrinsic characterization of the trace space $L^{1,p}(E)$ for $E \subset \R^n$. In Sections \ref{sec:decomp_func}-\ref{sec:proof_ml}, we establish that the constructed approximate extension is optimal using properties of the map $M: L^{m,p}(\R^n) + L^p(d\mu) \to \R$ in Theorem \ref{amainthm}.

\section{Notation and Further Theory}
\label{sec:notation}

\subsection{Notation} 
Fix integers $m, n \geq 1$ and a real number $p>n$. Unless we say otherwise, constants written $c, c', C, C'$, etc.\ depend only on $m, n$, and $p$. They are called ``universal" constants. The lower case letters denote small (universal) constants while the upper case letters denote large (universal) constants. Given a parameter $\xi$, we write $c(\xi)$, $C(\xi)$, etc., to denote constants depending only on $m,n,p,$ and $\xi$.

For non-negative quantities $A, B,$ we write $A \simeq B$, $A \lesssim B,$ or $A \gtrsim B$ to indicate that $c B \leq A \leq C B$, $A \leq C B$, or $A \geq c B$, respectively, for universal constants $0 < c < C$. Given a parameter $\xi$, we write $A \simeq_\xi B$, $A \lesssim_\xi B$, or $A \gtrsim_\xi B$ to indicate that $c(\xi) B \leq A \leq C(\xi) B$, $A \leq C(\xi) B$, or $A \geq c(\xi) B$, respectively, for constants $0 < c(\xi) < C(\xi)$.

When $\varphi_y$ is an indexed family of functions, an expression of the form $\partial^\alpha \varphi_y(y)$ will always mean $\partial^\alpha_z \varphi_y(z)|_{z=y}$, and never $\partial^\alpha_z \varphi_z(z)|_{z=y}$

A cube $Q \subset \R^n$ is a set of the form:
\[
Q=a+(-\delta, \delta]^n \quad\left(a \in \R^n, \delta>0\right).
\]
The sidelength of $Q$ is denoted $\delta_Q :=2 \delta$, while the center of $Q$ is denoted $\ctr(Q) := a$. For $\gamma>0$ let $\gamma Q$ be the cube having the same center as $Q$ but with sidelength $\gamma \delta_Q$. A \emph{dyadic cube} $Q \subset \R^n$ has the form:
\begin{align*}
Q=\left(j_{1} \cdot 2^{k},\left(j_{1}+1\right) \cdot 2^{k}\right] \times\left(j_{2} \cdot 2^{k},\left(j_{2}+1\right) \cdot 2^{k}\right] \times &\cdots \times\left(j_{n} \cdot 2^{k},\left(j_{n}+1\right) \cdot 2^{k}\right], \; \left(j_{1}, j_{2}, \ldots, j_{n} \in \mathbb{Z}, \; k \in \mathbb{Z}\right).
\end{align*}
To \emph{bisect} a cube $Q \subset \R^n$ is to partition it into $2^n$ disjoint subcubes of sidelength $\frac{1}{2} \delta_Q$. These subcubes are called the children of $Q$. If $Q \subsetneq Q'$ are dyadic cubes we say that $Q'$ is an ancestor of Q. Every dyadic cube $Q$ has a smallest ancestor called its parent, which we denote by $Q^+$.

A rectangular box in $\R^n$ is a set of the form $R = \prod_{j=1}^n I_j$, where each $I_j \subset \R$ is an interval of length $\delta_j > 0$. We refer to $\delta_1,\dots,\delta_n$ as the sidelengths of $R$. If the sidelengths of $R$ differ by at most a constant factor $\eta \geq 1$ (i.e., $\delta_j \leq \eta \delta_i$ for all $i,j$), then we say $R$ is \emph{$\eta$-non-degenerate}. 

We use the following notation:
\begin{align*}
& |x| := |x|_{\infty} = \max \{|x_1|,\dots,|x_n| \} &&(x = (x_1, \dots, x_n) \in \R^n ); \\
& \dist(x, \Omega) : = \inf \{ |x-y|: y \in \Omega \} &&( \Omega \subset \R^n, \; x \in \R^n); \\
& \dist (\Omega', \Omega):= \inf \{|x-y|: x \in \Omega', \; y \in \Omega \} &&(\Omega, \Omega' \subset \R^n); \\
& B(\Omega, R):= \{ x \in \R^n: \dist(x, \Omega) \leq R \} && (\Omega \subset \R^n, R>0);\\
& \diam(S) := \sup \{|x-y|: x,y \in S \} && (S \subset \R^n); \text{ and} \\
& |S| := \text{cardinality of } S && (S \subset \R^n).
\end{align*}
In particular, since we use the $\ell^\infty$ norm on $\R^n$, we have $\diam(Q) = \delta_Q$ for any cube $Q$.

The analogous metric quantities defined with respect to the Euclidean norm $|x|_2 = (|x_1|^2 + \dots + |x_n|^2)^{1/2}$ are denoted by $\dist_2(x, \Omega)$, $\dist_2(\Omega', \Omega)$, $B_2(\Omega, R)$, and $\diam_2(S)$.

Given a subset $K \subset \R^n$, we write $\inte(K)$ to denote the interior of $K$, and we write $\Cl(K)$ to denote the closure of $K$.

We write $\mm$ for the collection of all multi-indices $\al = (\al_1,\dots,\al_n) \in \Z^n$, with $\al_i \geq 0$ for all $i$, of order $|\al| := \al_1 + \dots + \al_n \leq m-1$. If $\al$ and $\beta$ are multi-indices, then $\delta_{\alpha \beta}$ denotes the Kronecker delta: $\delta_{\alpha \beta} = 1$ if $\alpha = \beta$; $\delta_{\alpha \beta} = 0$ if $\alpha \neq \beta$. If $\al = (\al_1,\dots,\al_n)$ is a multi-index, then $\al ! := \prod_{j=1}^n \al_j !$.

Let $\mpp$ denote the space of real-valued $(m-1)^{\text {rst}}$ degree polynomials on $\R^n$. Then $\mpp$ is a vector space of dimension $D:=\dim(\mpp)$. 

If $F$ is a $C^{m-1}$ function on a neighborhood of a point $y \in \R^n$, then we write $J_y(F) \in \mpp$ (the``jet" of $F$ at $y$) for the $(m-1)^{\text {rst}}$ degree Taylor polynomial of $F$ at $y$, given by
\[
J_y (F)(z) = \sum_{\al \in \mm} \frac{\partial^\al F(y)}{\al!} (z-y)^\al.
\]

For each $x \in \R^n$, the jet product $\odot_x$ on $\mpp$ is
defined by
\[
P \odot_{x} Q:=J_{x}(P \cdot Q) \quad(P, Q \in \mpp).
\]
For $x \in \R^n$, $\delta>0$, define a norm on $\mpp$:
\begin{align}
|P|_{x,\delta} = \Big( \sum_{\al \in \mm} |\p^\al P(x)|^p \cdot \delta^{n +(|\al| - m)p} \Big)^{1/p} \quad (P \in \mpp). \label{pnorm}
\end{align}
For $x' \in \R^n$, we have the Taylor expansion
\[
\p^{\alpha} P(x)=\sum_{ |\gamma| \leq m-1-|\alpha|} \frac{1}{\gamma !} \p^{\alpha+\gamma} P\left(x'\right) \cdot\left(x-x'\right)^{\gamma} \quad(|\alpha| \leq m-1).
\]
Thus, the norms defined in (\ref{pnorm}) satisfy the inequality
\begin{align}
|P|_{x, \delta} \leq C' |P|_{x', \delta} \quad\left(x, x'  \in \R^n,\left|x-x'\right| \leq C \delta\right). \label{movept}
\end{align}
And by computation, for $\delta'>\delta$,
\begin{align}
    |P|_{x, \delta'} \leq |P|_{x, \delta} \leq (\delta'/\delta)^{m-n/p} |P|_{x, \delta'}. \label{normdi}
\end{align}

The homogeneous H\"older space $C^{m-1,1-n/p}(\R^n)$ is the space of $(m-1)$-times differentiable functions $F:\R^n \to \R$, with finite semi-norm,
\begin{align*}
\|F\|_{C^{m-1, 1-n/p}(\R^n)}: = \max_{|\al|=m-1} \sup_{x,y \in \R^n, x \neq y} \frac{|\p^\al F(x) - \p^\al F(y)|}{|x-y|^{1-n/p}}.    
\end{align*}

Given a set $K \subset \R^n$, we let $Wh(K)$ be the space of Whitney fields on $K$, namely, the set of all collections of polynomials $\vec{P} = (P_x)_{x \in K}$, where $P_x \in \mpp$ for all $x \in K$.



Let $\mu$ be a Borel regular measure on $\R^n$.  For a Borel set $S \subset \R^n$, we define the restricted measure $\mu|_S$ by $\mu|_S(A) := \mu(A \cap S)$ for all Borel sets $A \subset \R^n$. 

Write $\mathrm{meas}_\mu(\R^n)$ for the vector space of all equivalence classes of $\mu$-measurable functions $f : \R^n \rightarrow \R$, with functions identified in an equivalence class if they agree on the complement of a set of $\mu$-measure $0$.

For a $\mu$-measurable function $f : \R^n \rightarrow \R$, we define: 
\begin{equation}\label{J_func:eqn}
\|F\|_{\J(f,\mu)} :=  \Big( \|F\|_{L^{m,p}(\R^n)}^p + \int_{\R^n} |F-f|^p d\mu \Big)^{1/p} \qquad F \in L^{m,p}(\R^n).
\end{equation}
We remark that $\| \cdot \|_{\J(f,\mu)}$ is not a norm on $L^{m,p}(\R^n)$. Rather, $\| \cdot \|_{\J(f,\mu)}$ is a $[0,\infty]$-valued functional on $L^{m,p}(\R^n)$, satisfying the convexity condition,
\[
\| \lambda_1 F_1 + \lambda F_2 \|_{\J(\lambda_1 f_1 + \lambda_2 f_2, \mu)} \leq \lambda_1 \|  F_1\|_{\J( f_1, \mu)} +  \lambda_2 \| F_2 \|_{\J( f_2, \mu)}.
\]

We write $\J(\mu): = L^{m,p}(\R^n) + L^p(d\mu)$ for the sum space defined in the introduction. Then $\J(\mu)$ is a complete seminormed vector space. We can characterize the seminorm\footnote{A seminorm on a vector space $X$ is a $[0,\infty)$-valued functional $ \| \cdot \|_X$ on $X$ satisfying the conditions: $\| f_1 + f_2 \|_X \leq \| f_1 \|_X + \| f_2 \|_X$, and $\| \lambda f \|_X = | \lambda| \| f \|_X$ for $\lambda \in \R$. There is no requirement that $\|f \|_X = 0 \implies f = 0$ for a seminorm.} on this space using the functionals in \eqref{J_func:eqn}:
\begin{align*}
&\J(\mu) := \big\{f: f \text{ is } \mu\text{-measurable, and} \; \|f\|_{\J(\mu)} < \infty \big\}, \mbox{ with} \\
&\|f\|_{\J(\mu)} := \inf \Big\{ \|F\|_{\J(f,\mu)}: F \in L^{m,p}(\R^n) \Big\}.
\end{align*}

Next, we introduce localized variants of the above functionals. 

Let $R \subset \R^n$ be a rectangular box, and let $\delta > 0$.

Given $F \in L^{m,p}(R)$, a $\mu$-measurable function $f$, and  $P \in \mpp$, we define
\begin{align}
&\|F,P\|_{\J_*(f, \mu;R)} := \Big( \|F\|_{L^{m,p}(R)}^p + \int_R |F-f|^p d\mu  + \|F-P\|_{L^p(R)}^p/\diam(R)^{mp} \Big)^{1/p}; \text{ and} \nonumber\\
&\|f,P\|_{\J_*(\mu;R)} := \inf \Big\{ \|F,P\|_{\J_*(f, \mu;R)} : F \in L^{m,p}(R) \Big\} \label{Jstar1A:defn}.
\end{align}
Given $F \in L^{m,p}(\R^n)$, a $\mu$-measurable function $f$, and $P \in \mpp$, we define:
\begin{align}
&\|F,P\|_{\J(f, \mu; \delta)} := \Big( \|F\|_{L^{m,p}(\R^n)}^p + \int_{\R^n} |F-f|^p d\mu  + \|F-P\|_{L^p(\R^n)}^p/\delta^{mp} \Big)^{1/p}; \text{ and} \nonumber\\
&\|f,P\|_{\J(\mu; \delta)} := \inf \Big\{ \|F,P\|_{\J(f, \mu; \delta)}: F \in L^{m,p}(\R^n) \Big\}. \label{Jstar1B:defn}
\end{align}
These are $[0,\infty]$-valued functionals on the spaces $L^{m,p}(R) \times \mpp$,  $\mathrm{meas}_\mu(\R^n) \times \mpp$, $L^{m,p}(\R^n)\times \mpp$, and $\mathrm{meas}_\mu(\R^n) \times \mpp$, respectively.  We make use of the fourth functional to define the seminormed vector space:
\begin{align} \label{Jstar_space_2A:defn}
   \J(\mu;\delta) =  \big\{ (f,P):  f \in \J(\mu), \; P \in \mathcal{P}, \; \|f,P\|_{\J(\mu; \delta)} < \infty \big\}.
\end{align}
Generally, $\J(\mu;\delta)$ is a subspace of $\J(\mu) \times \mpp$. Later, we will show that $\J(\mu; \delta) = \J(\mu) \times \mathcal{P}$ if $\supp(\mu)$ is compact (see Lemma \ref{J_space:lem}).

We can make comparisons between the different localized functionals. First, immediately from the definitions, if $\diam(R) \simeq \delta$ and $F \in L^{m,p}(\R^n)$, then
\begin{align}
\|F,P\|_{\J_*(f, \mu;R)} \lesssim \|F,P\|_{\J(f, \mu; \delta)}.\label{j5}
\end{align}
Furthermore, the $\J(\mu;\delta)$ and $\J(f,\mu;\delta)$ functionals are monotone in $\delta$ in the sense that:
\begin{align}
&\|F,P\|_{\J(f, \mu; \delta)} \leq \|F,P\|_{\J(f, \mu; \delta')}, \nonumber\\
&\|f,P\|_{\J(\mu; \delta)} \leq \|f,P\|_{\J(\mu; \delta')} \qquad (\delta \geq \delta').\label{j7}
\end{align}

\subsection{Further Elementary Inequalities} 

The next result is immediate from the definitions of the $\J$-functionals, due to the sublinearity of the $L^{m,p}$-norm, $L^p$-norm, and $L^p(d\mu)$-norm

\begin{lem}[Sublinearity of $\J$-functionals]
Let $\mu$ be a Borel regular measure on $\R^n$, and let $\delta>0$. Then $\| \cdot, \cdot \|_{\J(\cdot, \mu; \delta)}$ and $\| \cdot, \cdot \|_{\J(\mu; \delta)}$  are sublinear: Let $f_1, f_2$ be $\mu$-measurable functions. Given $F_1, F_2 \in L^{m,p}(\R^n)$, $\lambda>0$, $P_1, P_2 \in \mpp$,  
\begin{align}
&\| F_1+ \lambda F_2, P_1 + \lambda P_2\|_{\J(f_1 + \lambda f_2, \mu; \delta)} \leq \|F_1,P_1\|_{\J(f_1, \mu; \delta)} + \lambda \|F_2, P_2\|_{\J(f_2,\mu; \delta)}; \text{ and}  \label{sublinearj}\\
&\| f_1+ \lambda f_2, P_1 + \lambda P_2\|_{\J(\mu; \delta)} \leq \|f_1,P_1\|_{\J(\mu; \delta)} + \lambda \|f_2, P_2\|_{\J(\mu; \delta)}.\label{sublinearj2}
\end{align}
\end{lem}

Our assumption moving forward is that $n<p<\infty$, so we can apply the Sobolev inequality on $L^{m,p}(\R^n)$. We let $U \subset \R^n$ be a domain in $\R^n$. We shall consider the setting where $U$ is either all of $\R^n$, or the union of two $\eta$-non-degenerate rectangular boxes with a common interior point. This includes, for instance, the case when $U$ is a cube, when $\eta = 1$.

\noindent \textbf{Sobolev Inequality.} (see \cite{arie3}) For $F \in L^{m,p}(U)$, there exists a constant $C$ such that
\[
\left|\p^{\alpha}\left(J_{y}(F)-F\right)(x)\right| \leq C \cdot|x-y|^{m-|\alpha|-n / p} \cdot\|F\|_{L^{m,p} (U)} \quad(x, y \in U,|\alpha| \leq m-1).
\]
If $U = \R^n$, the constant $C$ is determined by $m$, $n$, and $p$ alone. If $U$ is the union of  two $\eta$-non-degenerate rectangular boxes with a common interior point, then $C$ is dependent on $\eta$ as well. As an immediate consequence, we have for $F \in L^{m,p}(U)$ and $y \in U$,
\begin{align}
\|F-J_y(F)\|_{L^p(U)}/\diam(U)^m \leq C \|F\|_{L^{m,p}(U)}. \label{SobLp}
\end{align}

Next we establish a relatinship between the $L^p$ norm and the $|\cdot|_{x,\delta}$ norm of polynomials.

\begin{lem}\label{polynorm:lem}
Let $R_1$ and $R_2$ be two $\eta$-non-degenerate rectangular boxes with a common interior point, such that $\diam(R_1) \simeq \diam(R_2) \simeq \delta$. Let $U = R_1 \cup R_2$. Then for all $y \in U$ and $P \in \mpp$,
\begin{align*}
    \|P \|_{L^p(U)}/\delta^m \simeq_{\eta} |P|_{y,\delta} .
\end{align*} \label{lplem}
\end{lem}

\begin{proof}
Observe $\diam(U) \leq \diam(R_1) + \diam(R_2) \leq C \delta$, so the Lebesgue measure of $U$ is at most $C \delta^n$. We begin by writing the polynomial $P$ in a Taylor expansion at the basepoint $y$; then because for all $x \in U$, we have $|x-y| \leq \diam(U) \leq C \delta$, we deduce
\begin{align*}
    \|P\|_{L^p(U)}^p/\delta^{mp} &= \int_U |P(x)|^p dx/\delta^{mp} \\
    &= \int_U \bigg|\sum_{\al \in \mm} \p^\al P(y) (x-y)^\al/\al!\bigg|^p dx /\delta^{mp} \\
    &\leq C \int_U \sum_{\al \in \mm} |\p^\al P(y)|^p \delta^{|\al| p} dx / \delta^{mp} \\
    & \leq C' \sum_{\al \in \mm} |\partial^\al P(y)|^p \delta^{n-(m-|\al|)p}\\
    &\leq C'' |P|_{y,\delta}^p,
\end{align*}
where $C, C', C''$ are universal constants. 

To show the reverse inequality, we may assume by rescaling and translation ($x \mapsto C\cdot \eta x/\delta+y'$) that $y \in Q_0 = (0,1]^n$, $Q_0 \subset R_1$ or $Q_0 \subset R_2$, and $\delta = \eta$ in the statement of the lemma. Without loss of generality, suppose $y \in R_1$. In light of (\ref{normdi}), we have $|P|_{y,\eta}  \simeq_{\eta} |P|_{y,1}$. Because $\mpp$ is a finite dimensional vector space, all norms on $\mpp$ are equivalent; in particular, we have $|P|_{y,1} \simeq \|P\|_{L^p(Q_0)}$. 
Consequently 
\begin{align*}
    |P|_{y,\eta} \simeq_{\eta} |P|_{y,1} \simeq  \|P\|_{L^p(Q_0)} \lesssim_{\eta} \|P\|_{L^p(U)} / \eta^m,
\end{align*}
proving the lemma.

\end{proof}

\begin{lem}
Let $U$ be the union of two $\eta$-non-degenerate rectangular boxes with a common interior point, $U =R_1 \cup R_2$, such that $\diam(R_1) \simeq \diam(R_2) \simeq \delta$. There exists a constant $C$ depending on $m,n,p,\eta$ such that the following holds. 

Let $x \in U$, $F \in L^{m,p}(U)$, and $P \in \mathcal{P}$. For $|\al| \leq m-1$,
\begin{align}
|\p^\al(F-P)(x)| \leq C \left(\|F\|_{L^{m,p}(U)}+\|F-P\|_{L^p(U)}/\delta^m \right)\delta^{m-|\al|-n/p}. \label{sob0}
\end{align}

If $\mu$ is a Borel regular measure on $\R^n$, and $f : \R^n \rightarrow \R$ is a $\mu$-measurable function, then for  $F \in L^{m,p}(U)$,
\begin{equation}
|\p^\al(F-P)(x)| \leq C (\|F,P\|_{\J_*(f,\mu;R_1)}+ \|F,P\|_{\J_*(f,\mu;R_2)})\delta^{m-|\al|-n/p},  \label{sob}
\end{equation}
while for  $F \in L^{m,p}(\R^n)$,
\begin{equation}
|\p^\al(F-P)(x)| \leq C \|F,P\|_{\J(f, \mu; \delta)}\delta^{m-|\al|-n/p}.  \label{sob2}
\end{equation}
\end{lem}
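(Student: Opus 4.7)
The strategy is to deduce (\ref{sob0}) by substituting the jet of $F$ at the evaluation point $x$ for an intermediate approximant, and then to exploit the norm equivalence between $|\cdot|_{x,\delta}$ on $\mpp$ and the scaled $L^p$ norm on $U$ provided by Lemma \ref{lplem}. Inequalities (\ref{sob}) and (\ref{sob2}) will follow from (\ref{sob0}) by inspection of the definitions of the $\J_*$ and $\J$ functionals together with (\ref{j5}).

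To carry out (\ref{sob0}), I first observe that, since $p>n$ and $U$ is a union of two $C_b$-non-degenerate rectangular boxes with a common interior point, the Sobolev embedding theorem gives $F \in C^{m-1}(U)$, so the jet $J_x(F) \in \mpp$ is well defined. By the very definition of the Taylor polynomial, $\p^\al J_x(F)(x) = \p^\al F(x)$ for all $|\al| \leq m-1$, hence
\begin{align*}
|\p^\al(F-P)(x)| = |\p^\al(J_x(F) - P)(x)|.
\end{align*}
Write $Q := J_x(F) - P \in \mpp$. Directly from the definition (\ref{pnorm}) of $|\cdot|_{x,\delta}$, one has
\begin{align*}
|\p^\al Q(x)| \leq |Q|_{x,\delta}\cdot \delta^{m-|\al|-n/p}.
\end{align*}
By Lemma \ref{lplem} applied with $y := x \in U$, $|Q|_{x,\delta} \simeq_{C_b} \|Q\|_{L^p(U)}/\delta^m$. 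Since $\diam(U) \lesssim_{C_b} \delta$, inequality (\ref{SobLp}) yields $\|F - J_x(F)\|_{L^p(U)} \lesssim_{C_b} \delta^m \|F\|_{L^{m,p}(U)}$, so by the triangle inequality
\begin{align*}
\|Q\|_{L^p(U)} \leq \|F - J_x(F)\|_{L^p(U)} + \|F-P\|_{L^p(U)} \lesssim_{C_b} \delta^m \|F\|_{L^{m,p}(U)} + \|F-P\|_{L^p(U)}.
\end{align*}
Combining the last three displays produces (\ref{sob0}).

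Now (\ref{sob}) follows from (\ref{sob0}) once I observe that $\|F\|_{L^{m,p}(U)} \leq \|F\|_{L^{m,p}(R_1)} + \|F\|_{L^{m,p}(R_2)}$ and $\|F-P\|_{L^p(U)} \leq \|F-P\|_{L^p(R_1)} + \|F-P\|_{L^p(R_2)}$, and that each of these pieces is in turn controlled by the corresponding $\|F,P\|_{\J_*(f,\mu;R_i)}$ (using $\diam(R_i) \simeq \delta$ to absorb the denominator $\diam(R_i)^m$ appearing in the definition of $\J_*$). Finally, (\ref{sob2}) follows from (\ref{sob}) by invoking inequality (\ref{j5}) on each of $R_1$ and $R_2$, both of which have diameter comparable to $\delta$.

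The only real obstacle is notational: one must be careful that Lemma \ref{lplem} is applied with $y = x$ and with the $L^p$ norm taken on the full $U = R_1 \cup R_2$ (not just on the box containing $x$), and that the factor $\diam(U) \lesssim_{C_b} \delta$ is what enables (\ref{SobLp}) to yield the required $\delta^m$-scaling. The remainder is routine triangle-inequality manipulation together with unpacking the definitions of the $\J_*$ and $\J$ functionals.
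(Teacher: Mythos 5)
Your proof is correct, and for the main inequality (\ref{sob0}) it takes a genuinely different and more direct route than the paper. The paper proves (\ref{sob0}) by contradiction: it reduces to the case $P = 0$, supposes the estimate fails for a constant $\Lambda$, lets $(y,\beta)$ be the point and multi-index achieving $\sup_{x \in U} \max_{\alpha} |\partial^\alpha F(x)|\,\delta^{|\alpha|+n/p-m}$, and then chains Lemma \ref{lplem} with the Sobolev inequality to force $\|F\|_{L^{m,p}(U)} < 0$, a contradiction, after a careful choice of $\Lambda = (1+C_s)/c'$. Your argument uses the very same two ingredients — the norm equivalence of Lemma \ref{lplem} and the $L^p$-scaling estimate (\ref{SobLp}) — but arranges them directly: you replace $\partial^\alpha(F-P)(x)$ by $\partial^\alpha(J_x(F)-P)(x)$, bound this by $|J_x(F)-P|_{x,\delta}\,\delta^{m-|\alpha|-n/p}$ straight from the definition (\ref{pnorm}), convert to $\|J_x(F)-P\|_{L^p(U)}/\delta^m$ via Lemma \ref{lplem}, and split with the triangle inequality and (\ref{SobLp}). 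This avoids the extremal argument and the case distinction $F|_U = 0$ entirely and is, if anything, cleaner. Your derivations of (\ref{sob}) and (\ref{sob2}) from (\ref{sob0}) match what the paper does (using $U = R_1 \cup R_2$, $\diam(R_i) \simeq \delta$, and then (\ref{j5})).
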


\begin{proof} Let $F \in L^{m,p}(U)$. To prove \eqref{sob0} we will show that
\begin{align}
\sup_{x \in U} \max_{\al \in \mm} |\p^\al F(x)| \delta^{|\al|+n/p-m} \leq C \left(\|F\|_{L^{m,p}(U)}+\|F\|_{L^p(U)}/\delta^m \right). \label{sob00}
\end{align}
By replacing $F$ by $F-P$ in \eqref{sob00}, we deduce \eqref{sob0}. 

By the Sobolev Inequality, and since $\diam(U) \lesssim \delta$, for any $x,y \in U$, we have
\[
| (F - J_y (F))(x)| \leq C_s' \|F\|_{L^{m,p}(U)} \delta^{m-n/p},
\]
and hence by integrating,
\[
\| F - J_y (F)\|_{L^p(U)} \leq C_s \|F\|_{L^{m,p}(U)} \delta^m, 
\]
where $C_s'$ and $C_s$ depend on $m$, $n$, $p$, and $\eta$. If $F|_U = 0$, then both sides of (\ref{sob00}) are zero, and the inequality holds true. Thus we may assume $F|_U \neq 0$.

For sake of contradiction, suppose (\ref{sob00}) does not hold with a constant $C = \Lambda$, for $\Lambda > 0$ to be determined momentarily. Let $y \in U$, $\beta \in \mm$ be the argument of 
\begin{align*}
\sup_{x \in U} \max_{\al \in \mm} |\p^\al F(x)|\delta^{|\al| + n/p -m}.
\end{align*}
Then 
\begin{align}
|\p^\beta F(y)| \delta^{|\beta|+n/p-m} \geq \Lambda \left(\|F\|_{L^{m,p}(U)}+\|F\|_{L^p(U)}/\delta^m \right). \label{sobcontra}
\end{align}
Applying Lemma \ref{lplem} to $J_y(F)$ and from the definition (\ref{pnorm}), we have
\begin{align*}
\|J_y (F) \|_{L^p(U)}/\delta^m \simeq |J_y(F)|_{y,\delta} \simeq |\p^\beta F(y)| \delta^{|\beta| + n/p-m}.
\end{align*}
Consequently, there exists a universal constant $c'$ such that 
\[
\|J_y(F)\|_{L^p(U)}/\delta^m \geq c'  |\p^\beta (F)(y)| \delta^{|\beta| + n/p-m}.
\]
Choose $\Lambda > \frac{1 + C_s}{c'}$. Then from (\ref{sobcontra}) and the Sobolev Inequality, we have
\begin{align*}
|\p^\beta (F)(y)|  \delta^{|\beta|+n/p-m} &> \frac{1 + C_s}{c'} \left(\|F\|_{L^{m,p}(U)}+\|F\|_{L^p(U)}/\delta^m \right) \\
&> \frac{1 + C_s}{c'}\|F\|_{L^{m,p}(U)}+ \frac{1}{c'}\|F\|_{L^p(U)}/\delta^m \\
&\geq \frac{1 + C_s}{c'}\|F\|_{L^{m,p}(U)}+ \frac{1}{c'} \Big(\|J_y(F)\|_{L^p(U)}/\delta^m - \|F-J_y(F)\|_{L^p(U)}/\delta^m \Big) \\
&\geq \frac{1 + C_s}{c'}\|F\|_{L^{m,p}(U)}+ \frac{1}{c'} \Big(\|J_y(F)\|_{L^p(U)}/\delta^m - C_s \|F\|_{L^{m,p}(U)} \Big) \\
&\geq \frac{1}{c'} \|F\|_{L^{m,p}(U)}+ |\p^\beta F(y)| \delta^{|\beta|+n/p-m}.
\end{align*}
This implies $ \|F\|_{L^{m,p}(U)} < 0$, a contradiction. So (\ref{sob00}) must hold, which implies (\ref{sob0}). 

Note that (\ref{sob}) follows from (\ref{sob0}) and the definition of the $\mathcal{J}_*(\cdots)$ functional. Finally,  (\ref{sob2}) follows from (\ref{j5}) and (\ref{sob}).
\end{proof}

\begin{lem}\label{lem:1}
Let $Q$ be a cube, let $\mu$ be a Borel regular measure on $\R^n$ with $\supp(\mu) \subset Q$, let $f : \R^n \to \R$ be $\mu$-measurable, let $P \in \mathcal{P}$, and let $\eta \in [.001,100]$. Let $\theta$ be a $C^\infty$ function satisfying $\supp(\theta) \subset (1+\eta) Q$, $\theta|_Q = 1$, and $|\partial^\alpha \theta(x)| \leq C \delta_Q^{-|\alpha|}$ for all $|\alpha| \leq m$. 

Let $F \in L^{m,p}((1+\eta)Q)$. Define $\bar{F}:= \theta \cdot F + (1 - \theta) P$. Then $\bar{F} \in L^{m,p}(\R^n)$, $\bar{F}|_Q = F|_Q$, and
\begin{align}
&\| \bar{F} \|_{L^{m,p}(\R^n)} \lesssim \| F \|_{L^{m,p}((1+\eta)Q)} + \| F - P \|_{L^p((1+\eta)Q)}/\delta_Q^m, \label{j6a} \\
&\|\bar{F},P\|_{\J(f,\mu;\delta_Q)} \lesssim \|F,P\|_{\J_*(f,\mu; (1+\eta)Q)}  \label{j1}.
\end{align}
\end{lem}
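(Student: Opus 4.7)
The restriction identity $\bar{F}|_Q = F|_Q$ is immediate from $\theta|_Q \equiv 1$, so I focus on the two seminorm estimates. The estimate \eqref{j1} will follow quickly from \eqref{j6a} once we observe that $\supp(\mu)\subset Q$ forces
\[
\int_{\R^n}|\bar{F}-f|^p\,d\mu \;=\; \int_Q |F-f|^p\,d\mu,
\]
and that on $(1+\eta)Q$ we have $\bar{F}-P=\theta(F-P)$ with $|\theta|\leq C$, while $\bar{F}-P\equiv 0$ off $(1+\eta)Q$; thus $\|\bar{F}-P\|_{L^p(\R^n)}\leq C\|F-P\|_{L^p((1+\eta)Q)}$. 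So the heart of the argument is \eqref{j6a}.

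To prove \eqref{j6a}, I rewrite $\bar{F}=P+\theta(F-P)$. Since $P\in\mpp$ has degree at most $m-1$, $\partial^\alpha P\equiv 0$ for $|\alpha|=m$, so for $|\alpha|=m$ the Leibniz rule gives
\[
\partial^\alpha \bar{F} \;=\; \sum_{\beta\leq\alpha}\binom{\alpha}{\beta}\,\partial^{\alpha-\beta}\theta\cdot\partial^{\beta}(F-P).
\]
I estimate each term in $L^p(\R^n)$ separately, using that the integrand is supported in $(1+\eta)Q$. When $|\beta|=m$ (so $\partial^{\alpha-\beta}\theta=\theta$ is bounded), the contribution is controlled by $\|\partial^\alpha F\|_{L^p((1+\eta)Q)}\leq \|F\|_{L^{m,p}((1+\eta)Q)}$. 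For $|\beta|<m$, I use the derivative bound $|\partial^{\alpha-\beta}\theta|\leq C\delta_Q^{-(m-|\beta|)}$ together with the pointwise Sobolev estimate \eqref{sob0} applied on $U=(1+\eta)Q$ (which is a single $C_b$-non-degenerate box, hence qualifies trivially as a union of two such with common interior point, with constants universal since $\eta\in[.001,100]$). This gives
\[
|\partial^\beta(F-P)(x)|\;\leq\; C\bigl(\|F\|_{L^{m,p}((1+\eta)Q)}+\|F-P\|_{L^p((1+\eta)Q)}/\delta_Q^m\bigr)\,\delta_Q^{m-|\beta|-n/p}
\]
for $x\in(1+\eta)Q$. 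Multiplying by the $\theta$-derivative bound and integrating over a set of measure $\simeq\delta_Q^n$ converts the $\delta_Q^{-n/p}$ factor into an $L^p$ bound of the correct scale, contributing $C\bigl(\|F\|_{L^{m,p}((1+\eta)Q)}+\|F-P\|_{L^p((1+\eta)Q)}/\delta_Q^m\bigr)$ to each term. Summing over the finitely many $\beta\leq\alpha$ yields \eqref{j6a}.

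The main subtlety, and the only place the argument is not purely mechanical Leibniz calculus, is the handling of the lower-order terms $|\beta|<m$: these are the terms where the factor $\delta_Q^{-(m-|\beta|)}$ coming from differentiating $\theta$ has to be absorbed, and this absorption is exactly what \eqref{sob0} is designed to do (with the $\delta_Q^{-m}\|F-P\|_{L^p}$ term on the right-hand side accounting for the ``size of $P$'' contribution that the raw Sobolev inequality for $F$ alone cannot supply). Once this is in place, assembling \eqref{j1} from \eqref{j6a}, the support property of $\mu$, and the pointwise bound on $\bar{F}-P$ is a three-line computation.
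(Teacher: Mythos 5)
Your proposal is correct and follows essentially the same route as the paper's own proof: you establish \eqref{j6a} by writing $\bar F - P = \theta(F-P)$, expanding $\partial^\alpha \bar F$ (for $|\alpha|=m$) via Leibniz, and absorbing the $\delta_Q^{-(m-|\beta|)}$ factors from the $\theta$-derivatives using the pointwise Sobolev estimate \eqref{sob0} on $(1+\eta)Q$; you then deduce \eqref{j1} by controlling the three terms of the $\J(f,\mu;\delta_Q)$ functional separately via \eqref{j6a}, the condition $\supp(\mu)\subset Q$, and the boundedness of $\theta$. This matches the paper's argument term for term.
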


\begin{proof}
Proof of (\ref{j6a}): Note that $\bar{F}$ agrees with the $(m-1)^{\text {rst}}$ degree polynomial $P$ on $\R^n \setminus (1 +\eta) Q$. Thus, 
\begin{align*}
    \|\bar{F}\|_{L^{m,p}(\R^n)}^p &= \|\bar{F} \|_{L^{m,p}((1+\eta)Q)}^p \\
    &= \|\bar{F}- P \|_{L^{m,p}((1+\eta)Q)}^p\\
    &= \|(F - P)\theta\|_{L^{m,p}((1+\eta)Q)}^p \\
    &\lesssim \int_{(1+\eta)Q} \sum_{|\al|+|\beta|=m} |\p^\al(F-P)(x)|^p \cdot |\p^\beta \theta(x)|^p dx \\
    &\overset{(\ref{sob0})}{\lesssim} \|F\|_{L^{m,p}((1+\eta)Q)}^p + \|F-P\|_{L^p((1+\eta)Q)}^p/\delta_Q^{mp}.
\end{align*}

Proof of (\ref{j1}): Note that $\bar{F} - P = 0$ on $\R^n \setminus (1 +\eta) Q$, $\bar{F} - P = \theta(P-F)$ on $(1+\eta)Q$, and $\bar{F} = F$ on $Q \supset \supp(\mu)$. Thus, we can use \eqref{j6a} to bound
\begin{align*}
    \|\bar{F},P\|_{\J(f,\mu; \delta_Q)}^p &= \|\bar{F}\|_{L^{m,p}((1+\eta)Q)}^p + \int_{\R^n} |\bar{F} - f|^pd\mu + \|\bar{F} - P\|_{L^p((1+\eta)Q)}^p /\delta_Q^{mp} \\
    &\overset{\eqref{j6a}}{\lesssim} \|F\|_{L^{m,p}((1+\eta)Q)}^p +\|F-P\|_{L^p((1+\eta)Q)}^p/\delta_Q^{mp}+ \int_{\R^n}|F-f|^p d\mu \\
    & \qquad\qquad\qquad\qquad\qquad\qquad\qquad +\|\theta(F -P)\|_{L^p((1+\eta)Q)}^p/\delta_Q^{mp} \\
    &\lesssim \|F\|_{L^{m,p}((1+\eta)Q)}^p + \int_{\R^n}|F-f|^p d\mu +\|F -P\|_{L^p((1+\eta)Q)}^p/\delta_Q^{mp} \\
    &=\|F,P\|_{\J_*(f,\mu;(1+\eta)Q)}^p.
\end{align*}
\end{proof}

\begin{lem}
Suppose $R_1$ and $R_2$ are $\eta$-non-degenerate rectangular boxes, satisfying $R_1 \subset R_2$ and $\diam(R_2) \leq \eta \diam(R_1)$, for $\eta \geq 1$. For any $P \in \mpp$,
\begin{align}
\|P\|_{L^p(R_2)} \simeq_\eta \|P\|_{L^p(R_1)}. \label{cub1}
\end{align}
Here, the constants in $\simeq_\eta$  depend on $m$, $n$, $p$, and $\eta$.
\label{polycubelm}
\end{lem}

\begin{proof}
Let $x \in R_1$. By applying Lemma \ref{lplem} for both $U = R_2$ and $U=R_1$, and inequality (\ref{normdi}), for $P \in \mpp$,
\begin{align*}
    \|P\|_{L^p(R_2)} &\lesssim_\eta \diam(R_2)^m |P|_{x, \diam(R_2)} \lesssim_\eta  \diam(R_1)^m |P|_{x, \diam(R_1)} \lesssim_\eta \|P\|_{L^p(R_1)},
\end{align*}
and since $R_1 \subset R_2$, also $\|P\|_{L^p(R_1)} \leq \|P\|_{L^p(R_2)}$,  proving $\|P\|_{L^p(R_2)} \simeq_\eta \|P\|_{L^p(R_1)}$.
\end{proof}

\begin{lem}
Suppose $R_1$ and $R_2$ are $\eta$-non-degenerate rectangular boxes, satisfying $R_1 \subset R_2$, for $\eta \geq 1$. For any $H \in L^{m,p}(R_2)$ and $P \in \mpp$,
\begin{align}
    \|H-P\|_{L^p(R_2)}/ \diam(R_2)^{m} \lesssim_\eta \|H\|_{L^{m,p}(R_2)} + \|H-P\|_{L^p(R_1)} / \diam(R_1)^{m}. \label{cub2}
\end{align}
Here, the constants in $\lesssim_\eta$ depend on $m$, $n$, $p$, and $\eta$.
\label{polycubelm2}
\end{lem}

\begin{proof}

Let $x \in R_1$. Repeatedly applying the triangle inequality, (\ref{normdi}), (\ref{SobLp}) and Lemma \ref{polynorm:lem}, we have
\begin{align*}
\|H-P\|_{L^p(R_2)}/\diam(R_2)^{m} 
&\leq \|H-J_x H\|_{L^p(R_2)}/\diam(R_2)^{m} 
     + \|J_x H -P\|_{L^p(R_2)}/\diam(R_2)^{m}    \\
&\lesssim_\eta \|H\|_{L^{m,p}(R_2)} 
     + \|J_{x}H - P\|_{L^p(R_2)}/\diam(R_2)^{m}  \\
&\simeq_\eta \|H\|_{L^{m,p}(R_2)} + |J_{x}H - P|_{x,\diam(R_2)}  \\
&\leq \|H\|_{L^{m,p}(R_2)} + |J_{x}H - P|_{x,\diam(R_1)}   \\
&\simeq_\eta \|H\|_{L^{m,p}(R_2)} 
     + \|J_{x}H - P\|_{L^p(R_1)}/\diam(R_1)^m  \\
&\leq \|H\|_{L^{m,p}(R_2)} + \| J_x H  - H \|_{L^p(R_1)}/\diam(R_1)^m
     + \|H - P\|_{L^p(R_1)}/\diam(R_1)^m\\
&\lesssim_\eta \|H\|_{L^{m,p}(R_2)} + \| H \|_{L^{m,p}(R_1)}
     + \|H - P\|_{L^p(R_1)}/\diam(R_1)^m. 
\end{align*}
completing the proof of (\ref{cub2}).
\end{proof}

The next result is an immediate consequence of Stein's Sobolev Extension Theorem for minimally smooth domains (see \cite{st}). 

\noindent \textbf{Sobolev Extension Theorem for Cubes \cite{st}. } Let $Q \subset \R^n$ be a cube. Let $F \in L^{m,p}(Q)$. There exists a linear operator $E: L^{m,p}(Q) \to  L^{m,p}(\R^n) $ satisfyig $EF|_{Q} = F|_{Q}$, and $\|EF\|_{ L^{m,p}(\R^n)} \leq C \|F\|_{L^{m,p}(Q)}$, where $C$ is independent of $m$, $n$, $p$, and $Q$.

\begin{lem} \label{restwh}
Let $Q \subset \R^n$ be a cube and $K \subset Q$. Let $\vec{S} \in Wh(K)$ satisfy $ \|\vec{S} \|_{L^{m,p}(K)} < \infty$. Then $\|\vec{S}\|_{L^{m,p}(K)} \simeq \inf \{ \|F\|_{L^{m,p}(Q)}: \; F \in L^{m,p}(Q), \; J_x(F) = S_x \text{ for all } x \in K \}$. Hence, for all $F \in L^{m,p}(Q)$ satisfying $J_xF = S_x$ for all $x \in K$, we have
\begin{align*}
\|\vec{S}\|_{L^{m,p}(K)} \lesssim \|F\|_{L^{m,p}(Q)}. 
\end{align*}
\end{lem}

\begin{proof}
Let $\e_1>0$; then there exists $F_1 \in L^{m,p}(\R^n)$ satisfying $J_x F_1 = S_x$ for all $x \in K$ and $\|F_1\|_{L^{m,p}(\R^n)} \leq \|\vec{S}\|_{L^{m,p}(K)} + \e_1$. By restricting $F_1$ to $Q$ and letting $\e_1 \to 0$, we deduce $\inf \{ \|F\|_{L^{m,p}(Q)}: \; F \in L^{m,p}(Q), \; J_x(F) = S_x \text{ for all } x \in K \} \leq \|\vec{S}\|_{L^{m,p}(K)}$. Let $\e_2>0$; then there exists $G \in L^{m,p}(Q)$ satisfying $J_xG = S_x$ for all $x \in K$, and $\|G\|_{L^{m,p}(Q)} \leq \inf \{ \|F\|_{L^{m,p}(Q)}: \; F \in L^{m,p}(Q), \; J_x(F) = S_x \text{ for all } x \in K \} + \e_2$. Applying Stein's extension operator to $G$, we deduce
\begin{align*}
\|\vec{S}\|_{L^{m,p}(K)} \leq \|EG\|_{L^{m,p}(\R^n)} &\leq C \|G\|_{L^{m,p}(Q)} \\
&\leq \inf \{ \|F\|_{L^{m,p}(Q)}: \; F \in L^{m,p}(Q), \; J_x(F) = S_x \text{ for all } x \in K \} + \e_2.
\end{align*}
Letting $\e_2 \to 0$, we have $\|\vec{S}\|_{L^{m,p}(K)} \lesssim \inf \{ \|F\|_{L^{m,p}(Q)}: \; F \in L^{m,p}(Q), \; J_x(F) = S_x \text{ for all } x \in K \}$, proving the lemma.
\end{proof}

\begin{lem}
Let $Q\subset \R^n$ be a cube, let $\mu$ be a Borel regular measure on $\R^n$ with $\supp(\mu) \subset Q$, let $f : \R^n \to \R$ be $\mu$-measurable, and let $P \in \mathcal{P}$. Then
\begin{align}
& \|f,P\|_{\J_*(\mu;Q)} \simeq  \|f,P\|_{\J(\mu;\delta_Q)}; \text{ and} \label{j3}\\
&  \|f\|_{\J(\mu)} \simeq \inf_{P \in \mathcal{P}} \|f,P\|_{\J_*(\mu; Q)} \label{j4}.
\end{align}
\end{lem}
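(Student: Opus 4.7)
The plan is to establish the two equivalences by reducing both to the Stein extension together with a cut-off patching in the annulus $2Q \setminus Q$, and then exploiting that $\supp(\mu) \subset Q$ so the $L^p(d\mu)$ terms are insensitive to the ambient extension. For the easy direction of \eqref{j3}, namely $\|f,P\|_{\J_*(\mu;Q)} \lesssim \|f,P\|_{\J(\mu;\delta_Q)}$, any near-minimizer $F \in L^{m,p}(\R^n)$ for the right-hand side is a competitor on the left, and inequality \eqref{j5} immediately gives $\|F,P\|_{\J_*(f,\mu;Q)} \lesssim \|F,P\|_{\J(f,\mu;\delta_Q)}$; infimizing over $F$ closes this direction.

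For the reverse direction of \eqref{j3}, I would start with an $F \in L^{m,p}(Q)$ nearly achieving the $\J_*(\mu;Q)$ infimum. First apply Stein's Sobolev Extension Theorem for cubes to produce $EF \in L^{m,p}(\R^n)$ with $EF|_Q = F|_Q$ and $\|EF\|_{L^{m,p}(\R^n)} \lesssim \|F\|_{L^{m,p}(Q)}$. Next, fix a cutoff $\theta \in C^\infty$ with $\supp(\theta) \subset 2Q$, $\theta|_Q \equiv 1$, and $|\partial^\alpha \theta| \lesssim \delta_Q^{-|\alpha|}$ for $|\alpha| \leq m$, and form $\bar F := \theta \cdot EF + (1-\theta) P$. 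Lemma \ref{lem:1}, applied to $EF \in L^{m,p}(2Q)$ with $\eta=1$, then yields $\bar F \in L^{m,p}(\R^n)$, $\bar F|_Q = F|_Q$, and
\[
\|\bar F, P\|_{\J(f,\mu;\delta_Q)} \lesssim \|EF, P\|_{\J_*(f,\mu; 2Q)}.
\]
I would then bound the right-hand side by $\|F,P\|_{\J_*(f,\mu;Q)}$ using three ingredients: the Stein estimate on $\|EF\|_{L^{m,p}(2Q)}$; the identity $\int_{2Q} |EF-f|^p\, d\mu = \int_Q |F-f|^p\, d\mu$, valid because $\supp(\mu) \subset Q$ and $EF = F$ on $Q$; and inequality \eqref{cub2} of Lemma \ref{polycubelm} with $H = EF$, which controls $\|EF-P\|_{L^p(2Q)}/\delta_Q^m$ by $\|EF\|_{L^{m,p}(2Q)} + \|F-P\|_{L^p(Q)}/\delta_Q^m \lesssim \|F\|_{L^{m,p}(Q)} + \|F-P\|_{L^p(Q)}/\delta_Q^m$. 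Taking the infimum over $F$ finishes \eqref{j3}.

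For \eqref{j4}, the bound $\inf_P \|f,P\|_{\J_*(\mu;Q)} \lesssim \|f\|_{\J(\mu)}$ is obtained by taking any near-optimal $F$ for $\|f\|_{\J(\mu)}$, picking any $x_0 \in Q$, and setting $P := J_{x_0}(F)$; then \eqref{SobLp} gives $\|F-P\|_{L^p(Q)}/\delta_Q^m \lesssim \|F\|_{L^{m,p}(Q)}$, and since $\supp(\mu) \subset Q$ one has $\int_Q |F-f|^p d\mu = \int_{\R^n} |F-f|^p d\mu$, so $\|F,P\|_{\J_*(f,\mu;Q)} \lesssim \|F\|_{\J(f,\mu)}$. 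The reverse bound $\|f\|_{\J(\mu)} \lesssim \inf_P \|f,P\|_{\J_*(\mu;Q)}$ reuses the construction from the previous paragraph: given near-optimal $F \in L^{m,p}(Q)$ and $P \in \mpp$, the function $\bar F = \theta \cdot EF + (1-\theta) P$ lies in $L^{m,p}(\R^n)$ with $\bar F|_Q = F|_Q$, so the $L^p(d\mu)$ term is preserved; Lemma \ref{lem:1} inequality \eqref{j6a} yields $\|\bar F\|_{L^{m,p}(\R^n)} \lesssim \|F\|_{L^{m,p}(Q)} + \|F-P\|_{L^p(Q)}/\delta_Q^m$, and combining gives $\|\bar F\|_{\J(f,\mu)} \lesssim \|F,P\|_{\J_*(f,\mu;Q)}$.

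The main obstacle is the reverse direction of \eqref{j3}: one must produce a global extension of $F$ whose $L^p(\R^n)$-distance from the fixed polynomial $P$ is controlled, not just its Sobolev seminorm. Stein's theorem alone ignores $P$ entirely, so the cutoff trick is essential, forcing $\bar F - P$ to vanish outside $2Q$ and thereby reducing the global $L^p$-distance to the $L^p(2Q)$-distance, which Lemma \ref{polycubelm} then reduces back to data on $Q$.
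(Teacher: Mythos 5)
Your proof is correct and follows essentially the same route as the paper's: for \eqref{j3}, Stein extension of $F$ followed by a cutoff against $P$, then Lemma~\ref{lem:1} and Lemma~\ref{polycubelm}; for \eqref{j4}, the jet $J_{x_0}(F)$ in one direction and the extension in the other. The one minor stylistic difference is in the reverse bound of \eqref{j4}: you reuse the full cutoff construction $\bar F = \theta\cdot EF + (1-\theta)P$, whereas the paper just takes $\bar F = EF$ directly, since $\|\,\cdot\,\|_{\J(f,\mu)}$ involves no $L^p$-distance to $P$ and the bare Stein extension already gives $\|\bar F\|_{\J(f,\mu)}^p = \|EF\|_{L^{m,p}(\R^n)}^p + \int_Q|F-f|^p d\mu \lesssim \|F,P\|_{\J_*(f,\mu;Q)}^p$. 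Your version is not wrong — it just carries the harmless extra term $\|F-P\|_{L^p(Q)}/\delta_Q^m$ through the estimate — but it invokes \eqref{j6a} and \eqref{cub2} where they are not needed.
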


\begin{proof} Proof of (\ref{j3}): Given $F \in L^{m,p}(Q)$, we will show there exists $\bar{F} \in L^{m,p}(\R^n)$ such that $\bar{F}|_{Q}=F|_{Q}$, and
\begin{align}
    \|\bar{F},P\|_{\J(f,\mu;\delta_Q)} \lesssim \|F,P\|_{\J_*(f,\mu;Q)}. \label{j2}
\end{align} 
Define $\widetilde{F} := E(F)$, where $E:L^{m,p}(Q) \to L^{m,p}(\R^n)$ is the Sobolev extension operator for cubes, so $\widetilde{F}|_{Q}=F|_{Q}$, and 
\begin{align}
\|\widetilde{F}\|_{L^{m,p}(1.1Q)}\leq \|\widetilde{F}\|_{L^{m,p}(\R^n)} \lesssim \|F\|_{L^{m,p}(Q)}. \label{extf}    
\end{align}
Let $\bar{F}:= \theta \cdot \widetilde{F} + (1 - \theta) P$, where $\theta$ is a $C^\infty$ function satisfying $\supp(\theta) \subset 1.1 Q$, $\theta|_Q = 1$, and $|\partial^\alpha \theta(x)| \leq C \delta_Q^{-|\alpha|}$ for all $|\alpha| \leq m$. Then from Lemma \ref{lem:1}, we have $\bar{F} \in L^{m,p}(\R^n)$ and $\bar{F} = \widetilde{F}=F$ on $Q \supsetneq \supp(\mu)$. Applying this, (\ref{j1}), (\ref{cub2}) (with $R_1=Q$, $R_2 = 1.1Q$), and (\ref{extf}), we have
\begin{align*}
\|\bar{F},P\|_{\J(f,\mu;\delta_Q)}^p & \overset{(\ref{j1})}{\lesssim} \|\widetilde{F},P\|_{\J_*(f,\mu;1.1Q)}^p\\
&= \| \widetilde{F} \|_{L^{m,p}(1.1Q)}^p + \int_{\R^n}|\widetilde{F}-f|^p d\mu + \| \widetilde{F} - P \|_{L^p(1.1Q)}^p/\delta_Q^{mp}\\ 
& \overset{(\ref{cub2})}{\lesssim} \| \widetilde{F} \|_{L^{m,p}(1.1Q)}^p + \int_{\R^n}|\widetilde{F}-f|^p d\mu + \| \widetilde{F} - P \|_{L^p(Q)}^p/\delta_Q^m \\
& = \| \widetilde{F} \|_{L^{m,p}(1.1Q)}^p + \int_{\R^n}|F-f|^p d\mu + \| F - P \|_{L^p(Q)}^p/\delta_Q^m \\
&\overset{(\ref{extf})}{\lesssim} \| F \|^p_{L^{m,p}(Q)} + \int_{\R^n}|F-f|^p d\mu + \| F - P \|^p_{L^p(Q)}/\delta_Q^m\\
&= \|F,P\|_{\J_*(f,\mu; Q)}^p.
\end{align*}
This proves (\ref{j2}). 

By definition of the $\J(\mu;\delta_Q)$ functional, as an infimum, we have that $\|\bar{F},P\|_{\J(f,\mu;\delta_Q)} \geq \| f, P \|_{\J(\mu;\delta_Q)}$ for any $\bar{F} \in L^{m,p}(\R^n)$. Thus, from (\ref{j2}), we have for any $F \in L^{m,p}(Q)$,
\[
\| f, P \|_{\J(\mu;\delta_Q)} \lesssim  \| F,P \|_{\J_*(f,\mu;Q)}.
\]
Taking the infimum over functions  $F \in L^{m,p}(Q)$, we deduce that
\[
\|f,P\|_{\J(\mu;\delta_Q)} \lesssim \|f,P\|_{\J_*(\mu; Q)}.
\]
On the other hand, using \eqref{j5}, we take the infimum over functions  $F \in  L^{m,p}(\R^n)$, implying that
\[
\inf_{F \in L^{m,p}(\R^n)} \|F,P\|_{\J(f,\mu;\delta_Q)}  \gtrsim \inf_{F \in L^{m,p}(\R^n)} \|F,P\|_{\J_*(f,\mu; Q)},
\]
deducing $\|f,P\|_{\J(\mu;\delta_Q)} \gtrsim \|f,P\|_{\J_*(\mu; Q)}$.

Proof of (\ref{j4}): Let $x \in Q$; by (\ref{SobLp}), we have
\begin{align*}
    \big\{ \inf_{P \in \mpp} \|f,P\|_{\J_*(\mu;Q)}\big\}^p &=  \inf_{P \in \mpp} \big\{ \inf_{F \in L^{m,p}(Q)}  \{ \|F,P\|_{\J_*(f,\mu; Q)} \}^p\big\} \\
    &= \inf_{F \in L^{m,p}(Q)} \big\{ \inf_{P \in \mpp} \{ \|F,P\|_{\J_*(f,\mu; Q)}^p \}\big\} \\
    &\leq \inf_{F \in L^{m,p}(Q)} \{ \|F,J_x(F)\|_{\J_*(f,\mu; Q)}^p\} \\
    &= \inf_{F \in L^{m,p}(Q)} \big\{ \|F\|_{L^{m,p}(Q)}^p + \int_{Q} |F-f|^p d \mu + \| F-J_x(F) \|_{L^p(Q)}^p/\delta_Q^{mp} \big\} \\
    &\lesssim \inf_{F \in L^{m,p}(Q)} \big\{ \| F \|_{L^{m,p}(Q)}^p + \int_{Q} |F-f|^p d \mu \big\}\\
    &\lesssim \inf_{F \in L^{m,p}(\R^n)} \big\{ \| F \|_{L^{m,p}(Q)}^p + \int_{Q} |F-f|^p d \mu \big\}\\
    &\lesssim\|f\|_{\J(\mu)}^p.
\end{align*}

Let $F \in L^{m,p}(Q)$ and $P \in \mpp$. Define $\bar{F}= EF$, where $E:L^{m,p}(Q) \to L^{m,p}(\R^n)$ is the Sobolev extension for cubes, so $\bar{F}|_{Q}=F|_{Q}$ and $\|\bar{F}\|_{L^{m,p}(\R^n)} \lesssim \|F\|_{L^{m,p}(Q)}$. Because $\supp(\mu) \subsetneq Q$, we have
\begin{align*}
\|\bar{F}\|_{\J(f,\mu)}^p &= \|\bar{F}\|_{L^{m,p}(\R^n)}^p + \int_{\R^n} |\bar{F}-f|^p d\mu \\
&\lesssim \|F\|_{L^{m,p}(Q)}^p+\int_Q |F-f|^p d\mu \\
&\lesssim \|F,P\|_{\J(f,\mu;Q)}^p.
\end{align*}
Taking the infimum over $F \in L^{m,p}(Q)$ and $P \in \mpp$, we have $\| f \|_{\J(\mu)} \lesssim \inf_{P \in \mpp} \|f,P\|_{\J_*(\mu; Q)}$.

This completes the proof of (\ref{j4}).
\end{proof}
\begin{cor}
Let $R$ be a $\eta$-non-degenerate rectangular box, let $\mu$ be a Borel measure with $\supp(\mu) \subset R$, let $f : \R^n \to \R$, and let $P \in \mathcal{P}$. Then
\begin{align}
& \|f,P\|_{\J_*(\mu;R)} \simeq_{\eta}  \|f,P\|_{\J(\mu;\diam(R))}. \label{j8}
\end{align}
\end{cor}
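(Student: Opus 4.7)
The plan is to mimic the proof of \eqref{j3} almost verbatim, replacing the cube $Q$ with the $\eta$-non-degenerate rectangular box $R$ and the length scale $\delta_Q$ with $\diam(R)$. Since $R$ is $\eta$-non-degenerate, every sidelength of $R$ is comparable to $\diam(R)$ up to a factor depending only on $\eta$, so all quantitative estimates used in the proof of \eqref{j3} survive with constants depending additionally on $\eta$.

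The direction $\|f,P\|_{\J_*(\mu;R)} \lesssim \|f,P\|_{\J(\mu;\diam(R))}$ is immediate from \eqref{j5}: for any $F \in L^{m,p}(\R^n)$ we have $\|F,P\|_{\J_*(f,\mu;R)} \lesssim \|F,P\|_{\J(f,\mu;\diam(R))}$, and taking the infimum over $F$ gives the bound. For the reverse direction, I would show that given $F \in L^{m,p}(R)$ there exists $\bar F \in L^{m,p}(\R^n)$ with $\bar F|_R = F|_R$ and $\|\bar F,P\|_{\J(f,\mu;\diam(R))} \lesssim_\eta \|F,P\|_{\J_*(f,\mu;R)}$; then taking infima on both sides, combined with the fact that $\J(\mu;\diam(R))$ and $\|f,P\|_{\J(\mu;\diam(R))}$ are defined by infima over $L^{m,p}(\R^n)$, yields the desired $\lesssim_\eta$.

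To build $\bar F$, the cleanest route is an affine rescaling: let $\Phi: \R^n \to \R^n$ be the diagonal affine map sending $R$ onto the cube $Q_0 = (-1/2,1/2]^n$. Since $R$ is $\eta$-non-degenerate, $\Phi$ and $\Phi^{-1}$ have Jacobians and operator norms controlled by powers of $\diam(R)$ with constants depending only on $\eta$. Pushing $F$, $\mu$, $f$, and $P$ forward by $\Phi$ (noting that $P \circ \Phi^{-1}$ is still in $\mpp$), applying \eqref{j3} on the cube $Q_0$, and pulling back gives $\bar F$ with the required estimate. If one prefers an intrinsic argument, one can instead reproduce the proof of \eqref{j3} step by step: apply Stein's extension theorem to $R$ (valid for $\eta$-non-degenerate rectangular boxes, which are minimally smooth with constants depending on $\eta$) to produce $\widetilde F$; multiply by a cutoff $\theta \in C^\infty_c(1.1R)$ equal to $1$ on $R$ with $|\partial^\alpha \theta| \lesssim_\eta \diam(R)^{-|\alpha|}$; set $\bar F := \theta \widetilde F + (1-\theta) P$; and apply the rectangular-box analogs of Lemmas \ref{lplem} and \ref{polycubelm} and Lemma \ref{lem:1}.

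The main technical point is verifying that the three ingredient lemmas retain their form on $\eta$-non-degenerate rectangular boxes with $\eta$-dependent constants. For Lemma \ref{lplem} this is already asserted (that lemma was stated for unions of two non-degenerate boxes); for Lemma \ref{polycubelm} the same proof goes through with $\delta_Q$ replaced by $\diam(R)$; for Stein's extension theorem this is the content of the minimally smooth case. The affine rescaling viewpoint makes each of these automatic, which is why I would prefer that route in writing up the details.
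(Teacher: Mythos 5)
The paper states this corollary without proof, evidently regarding it as a routine adaptation of the argument for \eqref{j3}; your intrinsic argument (Stein's extension for the minimally smooth box $R$, a cutoff supported in $(1.1)R$, and box-versions of Lemmas \ref{lplem}, \ref{polycubelm}, \ref{lem:1}) is exactly that adaptation and is correct. The easy direction via \eqref{j5} is also fine.

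However, the affine rescaling route you advocate as the ``cleanest'' has a gap as written. Under the diagonal affine map $\Phi$ sending $R$ to $Q_0 = (-1/2,1/2]^n$, the three summands of $\|F,P\|_{\J_*(f,\mu;R)}^p$ do \emph{not} scale by a common factor: the term $\|F\|_{L^{m,p}(R)}^p$ and the term $\|F-P\|_{L^p(R)}^p/\diam(R)^{mp}$ both transform by a factor $\simeq_\eta \diam(R)^{mp-n}$ under $F \mapsto F\circ\Phi^{-1}$, but $\int_R |F-f|^p \, d\mu = \int_{Q_0} |F\circ\Phi^{-1} - f\circ\Phi^{-1}|^p \, d(\Phi_*\mu)$ is exactly invariant. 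Consequently, if you only push $\mu$ forward to $\Phi_*\mu$ and apply \eqref{j3} on $Q_0$, pulling back gives a comparison with constants depending on $\diam(R)$, which is not what \eqref{j8} asserts. The fix is to apply \eqref{j3} to the rescaled measure $\diam(R)^{mp-n}\,\Phi_*\mu$ rather than to $\Phi_*\mu$; with that choice all three terms scale by the same factor (up to $\eta$-dependent constants), and pulling back gives \eqref{j8} with constants depending only on $\eta,m,n,p$. Note that using rescaling merely to deduce the box-versions of Lemmas \ref{lplem} and \ref{polycubelm} from their cube-versions is unproblematic, since those statements do not involve $\mu$; the issue is specific to applying \eqref{j3} directly to pushed-forward data. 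Since you already offer the intrinsic argument, which sidesteps this entirely, the overall plan is sound.
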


\begin{lem}\label{J_space:lem}
If $\supp(\mu)$ is compact,
\[
    \J(\mu;\delta) = \J(\mu) \times \mathcal{P}. 
\]
\end{lem}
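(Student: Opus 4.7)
The inclusion $\J(\mu;\delta) \subseteq \J(\mu)\times\mathcal{P}$ is immediate: if $(f,P) \in \J(\mu;\delta)$, then by definition $f \in \J(\mu)$, $P \in \mathcal{P}$, and $\|f,P\|_{\J(\mu;\delta)} < \infty$. So the whole content is the reverse inclusion $\J(\mu)\times\mathcal{P} \subseteq \J(\mu;\delta)$, i.e.\ to show that for every $f \in \J(\mu)$ and every $P \in \mathcal{P}$ we have $\|f,P\|_{\J(\mu;\delta)} < \infty$. The point is that, a priori, the extra term $\|F-P\|_{L^p(\R^n)}^p/\delta^{mp}$ in the $\J(f,\mu;\delta)$-functional could be infinite for any $F \in L^{m,p}(\R^n)$ realizing $\|f\|_{\J(\mu)}$, because $P$ is a polynomial and need not decay at infinity. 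The plan is to exploit compactness of $\supp(\mu)$ to replace such an $F$ by a function $\bar{F}$ that agrees with $F$ on $\supp(\mu)$ (so the $L^p(d\mu)$-term is preserved) and agrees with $P$ outside a bounded set (so the $\|F-P\|_{L^p(\R^n)}$-term becomes finite).

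First I would fix a cube $Q \subset \R^n$ with $\supp(\mu) \subset Q$, which exists because $\supp(\mu)$ is compact. Given $f \in \J(\mu)$, I choose $F \in L^{m,p}(\R^n)$ with $\|F\|_{\J(f,\mu)} \leq 2\|f\|_{\J(\mu)} < \infty$; in particular $\|F\|_{L^{m,p}(\R^n)} < \infty$ and $\int_{\R^n}|F-f|^p d\mu < \infty$. Since $p > n$, the Sobolev embedding ($L^{m,p}(\R^n) \subset C^{m-1}_{loc}(\R^n)$) gives that $F$ is continuous, hence bounded on the compact set $(1+\eta)Q$ for any fixed $\eta \in [.001,100]$; the polynomial $P$ is likewise bounded there. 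Consequently $\|F - P\|_{L^p((1+\eta)Q)} < \infty$, so the hypotheses of Lemma \ref{lem:1} are met with this $F$, $P$, $\mu$, and $f$.

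Now I apply Lemma \ref{lem:1}: let $\theta$ be the standard cutoff with $\supp(\theta) \subset (1+\eta)Q$, $\theta|_Q \equiv 1$, and $|\partial^\alpha \theta| \leq C\delta_Q^{-|\alpha|}$, and set $\bar{F} := \theta F + (1-\theta)P$. The lemma produces $\bar{F} \in L^{m,p}(\R^n)$ with $\bar{F}|_Q = F|_Q$ and with $\|\bar{F}\|_{L^{m,p}(\R^n)} \lesssim \|F\|_{L^{m,p}((1+\eta)Q)} + \|F-P\|_{L^p((1+\eta)Q)}/\delta_Q^m < \infty$. By construction $\bar{F} - P = \theta(F - P)$ is supported in $(1+\eta)Q$, so
\[
\|\bar{F} - P\|_{L^p(\R^n)} = \|\theta(F-P)\|_{L^p((1+\eta)Q)} \leq \|F-P\|_{L^p((1+\eta)Q)} < \infty,
\]
and since $\bar{F} = F$ on $Q \supset \supp(\mu)$ we get $\int_{\R^n}|\bar{F}-f|^p d\mu = \int_{\R^n}|F-f|^p d\mu < \infty$. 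Combining the three pieces,
\[
\|f,P\|_{\J(\mu;\delta)} \leq \|\bar{F},P\|_{\J(f,\mu;\delta)} < \infty,
\]
so $(f,P) \in \J(\mu;\delta)$, which completes the proof.

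There is no real obstacle here; the one thing to watch is the dependence on $\delta$ versus $\delta_Q$. These two scales need not be related, but $\delta$ is fixed and both $\|\bar{F}-P\|_{L^p(\R^n)}$ and the other two norm terms have already been shown finite, so the overall $\J(f,\mu;\delta)$-norm is bounded by a finite constant (depending on $\delta$, $\delta_Q$, $F$, $P$) times something finite, which is all that is needed for membership in $\J(\mu;\delta)$.
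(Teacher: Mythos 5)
Your proof is correct and takes essentially the same route as the paper: fix a cube $Q \supset \supp(\mu)$, take a near-optimal $F$ with $\|F\|_{\J(f,\mu)}<\infty$, and apply the cutoff construction $\bar F = \theta F + (1-\theta)P$ of Lemma \ref{lem:1} to force $\bar F - P$ to vanish outside a bounded set. The one place you diverge is in verifying that $\|\bar F - P\|_{L^p(\R^n)}$ is finite: the paper routes this through the $\J_*(f,\mu;1.1Q)$ functional, applies the Sobolev inequality \eqref{SobLp} to split off $\|F - J_xF\|_{L^p(1.1Q)}$, and finishes by observing that $J_xF - P$ is a polynomial on a bounded domain, whereas you invoke the Sobolev embedding $L^{m,p}(\R^n) \subset C^{m-1}_{loc}(\R^n)$ (for $p>n$) directly to say $F$ is continuous, hence $F-P$ is bounded on $(1+\eta)Q$. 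Both are correct; yours is a hair more direct, the paper's keeps the explicit quantitative bound $\lesssim (1+\delta_Q^{mp}/\delta^{mp})(\|F\|_{\J(f,\mu)}^p + \|J_xF-P\|_{L^p(1.1Q)}^p/\delta_Q^{mp})$, which is more than the lemma actually requires. One small technicality: the estimate $\|\theta(F-P)\|_{L^p} \leq \|F-P\|_{L^p((1+\eta)Q)}$ implicitly uses $|\theta|\leq 1$; the hypotheses of Lemma \ref{lem:1} only give $|\theta|\leq C$, so you should write $\lesssim$ here (or note that one may take $0\leq\theta\leq 1$) — but this does not affect the finiteness conclusion.
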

\begin{proof}
Suppose $\supp(\mu)$ is compact. Then there exists $Q \subset \R^n$ such that $\supp(\mu) \subset Q$. By definition, if $(f,P) \in \J(\mu; \delta)$ then $(f,P) \in \J(\mu) \times \mpp$. Suppose $(f,P)\in \J(\mu) \times \mpp$; we will show there exists $\bar{F} \in L^{m,p}(\R^n)$ such that $\|\bar{F},P\|_{\J(f,\mu;\delta)}<\infty$, implying $(f,P) \in \J(\mu; \delta)$. Because $f \in \J(\mu)$, there exists $F \in L^{m,p}(\R^n)$ such that $\|F\|_{\J(f,\mu)} < \infty$. Let $\theta$ be a $C^\infty$ function satisfying $\supp(\theta) \subset (1.1) Q$, $\theta|_Q = 1$, and $|\partial^\alpha \theta(x)| \leq C \delta_Q^{-|\alpha|}$ for all $|\alpha| \leq m$. Define $\bar{F}:= \theta \cdot F + (1 - \theta) P$. Then from (\ref{j6a}), $\bar{F} \in L^{m,p}(\R^n)$, $\bar{F}|_Q = F|_Q$,
\begin{align*}
\|\bar{F},P\|_{\J(f,\mu;\delta_Q)} &\lesssim \|F,P\|_{\J_*(f,\mu; 1.1Q)}.
\end{align*}
Consequently, for $x \in Q$, we apply (\ref{SobLp}) to deduce
\begin{align*}
    \|\bar{F},P\|_{\J(f,\mu;\delta)}^p &\leq (1+ \delta_Q^{mp}/\delta^{mp})\|\bar{F},P\|_{\J(f,\mu;\delta_Q)}^p \\
    &\lesssim (1+ \delta_Q^{mp}/\delta^{mp}) \|F,P\|_{\J_*(f,\mu; 1.1Q)}^p.\\
    &=(1+ \delta_Q^{mp}/\delta^{mp}) \Big( \|F\|_{L^{m,p}(1.1Q)}^p + \int_{1.1Q} |F-f|^pd\mu + \|F-P\|_{L^p(1.1Q)}^{p}/\delta_Q^{mp} \Big) \\
    &\lesssim (1+ \delta_Q^{mp}/\delta^{mp}) \Big( \|F\|_{L^{m,p}(1.1Q)}^p + \int_{1.1Q} |F-f|^pd\mu + \|F-J_x(F)\|_{L^p(1.1Q)}^{p}/\delta_Q^{mp} \\ &\qquad + \|J_x(F)-P\|_{L^p(1.1Q)}^{p}/\delta_Q^{mp} \Big)\\
    &\lesssim (1+ \delta_Q^{mp}/\delta^{mp}) \Big(\|F\|_{L^{m,p}(1.1Q)}^p + \int_{1.1Q} |F-f|^pd\mu + \|J_x(F)-P\|_{L^p(1.1Q)}^{p}/\delta_Q^{mp} \Big)\\
    &\lesssim (1+ \delta_Q^{mp}/\delta^{mp}) \Big( \|F\|_{\J(f,\mu)}^p + \|J_x(F)-P\|_{L^p(1.1Q)}^{p}/\delta_Q^{mp} \Big) <\infty.
\end{align*}

\end{proof}

\subsubsection{Characterization of Sobolev Space by Local Polynomial Approximation}\label{subsec:char_ss}

\begin{define}[Packing]
Let $Q_0 \subset \R^n$ be a cube. A collection of cubes $\pi$ is a packing of $Q_0$ provided the following conditions hold:
\begin{enumerate}
    \item $Q \subset Q_0$ for all $Q \in \pi$.
    \item $\inte(Q) \cap \inte(Q') = \emptyset$ for all distinct $Q, Q' \in \pi$.
\end{enumerate}
We write $\Pi(Q_0)$ to denote the collection of all packings of $Q_0$. 
\end{define}

\begin{define}[Congruent $\delta$-packing] Given a cube $Q \subset \R^n$, we say $\pi$ is a congruent $\delta$-packing of $Q$ if it is a finite set of disjoint cubes of equal sidelength, $\delta$, contained in $Q$. Let
\begin{align*}
\Pi_{\simeq}(Q) : = \{\pi: \pi \text{ is a congruent } \delta \text{-packing of } Q, \; \delta \leq \delta_Q \}, 
\end{align*}
\end{define}

\begin{define}[Polynomial Approximation Error]
Given $F \in L^p_{loc}(\R^n)$, and a measurable set, $S \subset \R^n$, we define the local approximation error of $F$ for $S$ as
\begin{align*}
    E(F,S) := \inf_{P \in \mpp} \|F - P\|_{L^p(S)}.
\end{align*}
\end{define}

In \cite{br1}, A. Brudnyi characterizes Sobolev Space with the following result (Theorem 4 of Section 4):

\begin{prop}[Brudnyi, A. Yu.] 
Let $F \in L^p(Q)$. Suppose there exists $\lambda>0$ such that
\begin{align*}
    \sup_{\pi \in \Pi_{\simeq}(Q)} \delta_{Q}^{-m} \Big\{ \sum_{\widehat{Q} \in \pi} \big(E(F,\bar{Q})\big)^p \Big\}^{1/p} \leq \lambda.
\end{align*}
Then $F \in L^{m,p}(Q)$, and $\|F\|_{L^{m,p}(Q)} \lesssim \lambda$.
\end{prop}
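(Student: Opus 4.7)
The plan is to build a sequence of smooth approximants $F_k$ converging to $F$ in $L^p(Q)$ with $\|F_k\|_{L^{m,p}(Q)} \lesssim \lambda$ uniformly in $k$; weak compactness in $L^p(Q)$ of the $m$-th order distributional derivatives then delivers $F \in L^{m,p}(Q)$ with $\|F\|_{L^{m,p}(Q)} \lesssim \lambda$. For each $k \geq 0$, let $\pi_k \in \Pi_{\simeq}(Q)$ denote the partition of $Q$ into its $2^{kn}$ congruent dyadic subcubes of sidelength $\delta_k := \delta_Q 2^{-k}$, and on each $Q' \in \pi_k$ choose a near-best polynomial $P_{Q'} \in \mpp$ with $\|F - P_{Q'}\|_{L^p(Q')} \leq 2\, E(F, Q')$. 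The hypothesis, applied at scales $\delta_k$ and $2\delta_k$, controls $\sum_{Q' \in \pi_k} E(F,Q')^p$ and $\sum_{Q^* \in \pi_{k-1}} E(F,Q^*)^p$, and these are the only quantitative inputs needed.

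The key technical step is the \emph{neighbor comparison}: for adjacent $Q_1, Q_2 \in \pi_k$ sharing a face, let $\tilde P$ be a near-best $\mpp$-approximant of $F$ on the $2$-non-degenerate rectangular box $Q_1 \cup Q_2$ of diameter $\simeq \delta_k$. The triangle inequality on each of $Q_1, Q_2$, together with Lemma \ref{lplem} (equivalence of polynomial $L^p$-norms on $Q_1$, $Q_2$, and $Q_1 \cup Q_2$), yields
\[
\|P_{Q_1} - P_{Q_2}\|_{L^p(Q_1)} \lesssim E(F, Q_1) + E(F, Q_2) + E(F, Q_1 \cup Q_2),
\]
and $E(F, Q_1 \cup Q_2)$ is controlled by $E(F, Q^*)$ for a cube $Q^* \supset Q_1 \cup Q_2$ of sidelength $2\delta_k$ lying in a translate of $\pi_{k-1}$. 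A second application of Lemma \ref{lplem} converts this $L^p$-bound into pointwise derivative estimates,
\[
|\p^\gamma(P_{Q_1} - P_{Q_2})(y)| \lesssim \delta_k^{-|\gamma|-n/p}\bigl(E(F,Q_1)+E(F,Q_2)+E(F,Q^*)\bigr) \quad (|\gamma| \leq m-1,\ y \in Q_1).
\]

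Next, take a smooth partition of unity $\{\theta_{Q'}\}_{Q' \in \pi_k}$ with $\supp(\theta_{Q'}) \subset 1.1\, Q'$, $\sum \theta_{Q'} \equiv 1$ on $Q$, and $|\p^\gamma \theta_{Q'}| \lesssim \delta_k^{-|\gamma|}$, and set $F_k := \sum_{Q' \in \pi_k} \theta_{Q'} P_{Q'}$. On each $Q' \in \pi_k$, the decomposition $F_k - P_{Q'} = \sum_{Q'' \sim Q'} \theta_{Q''}(P_{Q''} - P_{Q'})$ runs over the $O(1)$ neighbors of $Q'$, and the Leibniz rule together with the derivative bounds above gives
\[
\|F_k\|_{L^{m,p}(Q')}^p \lesssim \delta_k^{-mp} \sum_{Q'' \sim Q'} \bigl(E(F,Q')^p + E(F,Q'')^p + E(F,Q^*_{Q',Q''})^p\bigr).
\]
Summing over $Q' \in \pi_k$ (using bounded overlap of the neighbor relation and of the super-cubes $Q^*_{Q',Q''}$) and invoking the hypothesis at the relevant scales yields the uniform bound $\|F_k\|_{L^{m,p}(Q)} \lesssim \lambda$. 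Meanwhile $\|F_k - F\|_{L^p(Q)} \to 0$: indeed $\|F_k - F\|_{L^p(Q)}^p \lesssim \sum_{Q' \in \pi_k} E(F,Q')^p$, and the right-hand side tends to zero as the partition refines (by Lebesgue differentiation at a.e.\ point, the polynomial means $P_{Q'(x;k)}(x)$ converge to $F(x)$, and the sum is tame by the hypothesis). Weak sequential $L^p(Q)$-compactness of $\{\p^\alpha F_k\}$ for $|\alpha|=m$, combined with this $L^p$-convergence, identifies the weak limits as $\p^\alpha F$ and yields $\|\p^\alpha F\|_{L^p(Q)} \lesssim \lambda$ for all $|\alpha| = m$.

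The main obstacle is the neighbor comparison: two $\pi_k$-neighbors need not share a dyadic parent in $\pi_{k-1}$ (they can lie on opposite sides of a coarser dyadic boundary), so the polynomial difference $P_{Q_1} - P_{Q_2}$ cannot be bounded through a single common coarser-scale polynomial. Routing the argument through the rectangular box $Q_1 \cup Q_2$ via Lemma \ref{lplem} picks up an auxiliary error $E(F, Q^*)$ at the next coarser scale, forcing a two-scale application of the hypothesis; coordinating this bookkeeping and obtaining the right $\delta_k^{-m}$ cancellation in the final sum is the heart of Brudnyi's argument.
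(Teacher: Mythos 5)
The paper does not prove this Proposition; it is cited as Theorem 4 of Section 4 of Brudnyi's paper \cite{br1}, so there is no in-paper proof to compare against. Your construction --- smooth approximants $F_k := \sum_{Q' \in \pi_k} \theta_{Q'} P_{Q'}$ from near-best local polynomials and a scale-$\delta_k$ partition of unity, uniform $L^{m,p}$ bounds via a neighbor comparison through a common coarser cube $Q^*$, $L^p$-convergence by Lebesgue differentiation, and weak compactness --- is the standard and essentially correct strategy for a Brudnyi-type local polynomial approximation theorem, and the bounded-overlap/coloring bookkeeping for the shifted super-cubes is handled sensibly (one decomposes the finitely-overlapping family $\{Q^*\}$ into $O(1)$ congruent $2\delta_k$-packings to fit the hypothesis).

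However, the last step conceals a genuine gap. After summing over $Q' \in \pi_k$ you arrive at
\begin{align*}
\| F_k \|_{L^{m,p}(Q)}^p \lesssim \delta_k^{-mp}\Big( \sum_{Q' \in \pi_k} E(F,Q')^p + \sum_{Q^*} E(F, Q^*)^p \Big),
\end{align*}
and to get the uniform bound $\|F_k\|_{L^{m,p}(Q)} \lesssim \lambda$ you need the bracket to be $\lesssim \lambda^p \delta_k^{mp}$; that is, you need the scale-local normalization $\delta_{\bar Q}^{-m}$ inside the sum, which is the hypothesis of Corollary \ref{brlm}, not of this Proposition. Reading the Proposition literally, the hypothesis only supplies $\sum_{Q' \in \pi_k} E(F,Q')^p \leq \lambda^p \delta_{Q}^{mp}$, so your estimate yields $\| F_k \|_{L^{m,p}(Q)}^p \lesssim \lambda^p (\delta_Q/\delta_k)^{mp}$, which diverges as $k \to \infty$. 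Your proof therefore actually establishes the Corollary, not the Proposition as written.

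This is arguably a defect of the stated Proposition rather than of your argument: with the $\delta_Q^{-m}$ normalization placed outside the packing sum, the claim is false. Take $n = m = 1$, $p = 2$, $Q = (0,1]$, and $F(x) = \eta^{1/2}\sin(x/\eta)$ for small $\eta > 0$. Then $\|F\|_{L^{1,2}(Q)} = \|F'\|_{L^2(Q)} \simeq \eta^{-1/2}$, while for every congruent $\delta$-packing $\pi$ one has $\sum_{\bar Q \in \pi} E(F,\bar Q)^2 \leq \sum_{\bar Q \in \pi} \|F\|_{L^2(\bar Q)}^2 \leq \|F\|_{L^2(Q)}^2 \simeq \eta$, so the supremum in the Proposition's hypothesis is $\lesssim \eta^{1/2}$, far smaller than $\|F\|_{L^{1,2}(Q)}$. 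Brudnyi's theorem normalizes each term by $|\bar Q|^{-m/n} \simeq \delta_{\bar Q}^{-m}$, exactly as in the Corollary --- which is also the form actually invoked elsewhere in the paper. You should either flag this apparent misprint, or state and prove the result in the form of Corollary \ref{brlm}, which your argument does correctly prove.
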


If $\pi$ is a congruent $\delta$-packing of $Q$ then $\delta_{\bar{Q}} \leq \delta_Q$ for all $\bar{Q} \in \pi$. Therefore, we obtain the following corollary of Brudnyi's result:
\begin{cor} 
Let $F \in L^p(Q)$. Suppose there exists $\lambda>0$ such that
\begin{align*}
    \sup_{\pi \in \Pi_{\simeq}(Q)} \Big\{ \sum_{\bar{Q} \in \pi} \big(E(F,\bar{Q}\big)/ \delta_{\bar{Q}}^m)^p \Big\}^{1/p} \leq \lambda.
\end{align*}
Then $F \in L^{m,p}(Q)$, and $\|F\|_{L^{m,p}(Q)} \lesssim \lambda$. \label{brlm}
\end{cor}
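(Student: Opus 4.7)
The proof will reduce Corollary~\ref{brlm} directly to the preceding proposition of Brudnyi. The key observation is that every congruent $\delta$-packing $\pi_{\simeq} \in \Pi_{\simeq}(Q)$ consists, by definition, of cubes $\bar{Q}$ all of the same sidelength $\delta_{\bar{Q}} = \delta$, with $\delta \leq \delta_Q$. Therefore the common factor $\delta_{\bar{Q}}^{-m} = \delta^{-m}$ may be pulled outside the sum, and the hypothesis of the corollary is equivalent to
\[
\sup_{\pi_{\simeq} \in \Pi_{\simeq}(Q)} \delta^{-m} \Big\{ \sum_{\bar{Q} \in \pi_{\simeq}} \big( E(F,\bar{Q}) \big)^p \Big\}^{1/p} \leq \lambda,
\]
where $\delta = \delta_{\bar{Q}}$ is the sidelength of the cubes constituting the packing $\pi_{\simeq}$.

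The plan is then to compare this with the expression appearing in Brudnyi's proposition, namely $\delta_Q^{-m} \{ \sum_{\bar{Q} \in \pi_{\simeq}} (E(F,\bar{Q}))^p \}^{1/p}$, which uses the fixed sidelength of the ambient cube $Q$ in place of the (generally smaller) sidelength $\delta$ of the cubes of the packing. Since $\delta \leq \delta_Q$, we have $\delta_Q^{-m} \leq \delta^{-m}$, so Brudnyi's quantity is bounded above pointwise (in $\pi_{\simeq}$) by the corollary's quantity. Taking the supremum over $\pi_{\simeq} \in \Pi_{\simeq}(Q)$ preserves this inequality, and hence the hypothesis of Brudnyi's proposition holds with the same constant $\lambda$.

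Applying the proposition immediately yields $F \in L^{m,p}(Q)$ together with $\|F\|_{L^{m,p}(Q)} \lesssim \lambda$, which is exactly the conclusion of the corollary. There is no real obstacle to the argument: the corollary simply rewrites Brudnyi's criterion in a per-cube form in which each $E(F,\bar{Q})$ is normalized by its own sidelength, making the hypothesis invariant under passage to a larger enclosing cube and thereby more convenient for the patching arguments to come.
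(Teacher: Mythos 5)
Your argument is correct and is precisely what the paper does (the paper gives the same justification in a single line after stating the corollary): since each congruent $\delta$-packing has all cubes of one sidelength $\delta \leq \delta_Q$, you have $\delta_Q^{-m} \leq \delta_{\bar Q}^{-m}$, so Brudnyi's quantity is dominated by the corollary's hypothesis, and the proposition applies directly.
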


\subsubsection{Consequence of the classical Whitney extension theorem}

\begin{lem}
Let $F : Q \rightarrow \R$, and suppose there exists a Whitney field $\vec{P} \in Wh(Q)$ and a constant $A \geq 0$ satisfying $P_x(x) = F(x)$ for all $x \in Q$ and $|P_x - P_y|_{x,|x-y|} \leq A$ for all $x,y \in Q$.

Then $F \in C^{m-1,1-n/p}(Q)$, $J_x F = P_x$ for all $x \in Q$, and 
\[
\| F \|_{C^{m-1,1-n/p}(Q)} \leq C A,
\]
for a constant $C$ determined by $m,n,p$.
\label{whitlm}

\end{lem}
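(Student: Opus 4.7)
The plan is to reduce the lemma to the classical Whitney extension theorem for the H\"older class $C^{m-1,1-n/p}$. First, I will unpack the hypothesis $|P_x - P_y|_{x,|x-y|} \leq A$ using the definition \eqref{pnorm}. Since
\[
\sum_{\al \in \mm} |\p^\al(P_x - P_y)(x)|^p \cdot |x-y|^{n+(|\al|-m)p} \leq A^p,
\]
isolating each term yields
\[
|\p^\al (P_x - P_y)(x)| \leq A \cdot |x-y|^{m-|\al|-n/p} \qquad (x,y \in Q, \ |\al| \leq m-1).
\]
Taking $|\al| = m-1$ gives precisely the $(1-n/p)$-H\"older compatibility on the top-order coefficients, while the bounds for $|\al|<m-1$ are the standard Whitney consistency conditions forcing the lower-order coefficients of $P_x$ to match derivatives of the top-order ones.

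Next, since $Q$ is bounded, its closure $\Cl(Q)$ is compact, and the estimate above shows that $x \mapsto P_x$ is uniformly continuous from $Q$ into $\mpp$ equipped with any fixed norm. I will therefore extend $\vec{P}$ by continuity to a Whitney field on $\Cl(Q)$ satisfying the same pointwise conditions (with $A$ replaced by a universal multiple). The classical Whitney extension theorem for the class $C^{m-1,1-n/p}$, applied on the compact set $\Cl(Q)$, then produces a function $\widetilde{F} \in C^{m-1,1-n/p}(\R^n)$ with $J_x \widetilde{F} = P_x$ for every $x \in \Cl(Q)$ and
\[
\|\widetilde{F}\|_{C^{m-1,1-n/p}(\R^n)} \leq C\,A,
\]
for a constant $C = C(m,n,p)$.

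Finally, the hypothesis $P_x(x) = F(x)$ together with the identity $\widetilde{F}(x) = P_x(x)$ for $x \in Q$ gives $\widetilde{F}|_Q = F$. Restricting $\widetilde{F}$ to $Q$ therefore yields $F \in C^{m-1,1-n/p}(Q)$ with $J_x F = P_x$ for all $x \in Q$ and $\|F\|_{C^{m-1,1-n/p}(Q)} \leq \|\widetilde{F}\|_{C^{m-1,1-n/p}(\R^n)} \leq C\,A$, as required. The argument is essentially bookkeeping; the only delicate point is to verify that the aggregated bound $|P_x - P_y|_{x,|x-y|} \leq A$ encodes exactly the pointwise Whitney compatibility conditions for $C^{m-1,1-n/p}$, with only universal constants lost in passing between the two formulations.
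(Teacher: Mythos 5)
Your proof is correct and follows essentially the same route as the paper's: both reduce the statement to the classical Whitney extension theorem for $C^{m-1,\alpha}$ with $\alpha = 1-n/p$, apply it to produce a global extension $G$ (or $\widetilde F$), use $P_x(x) = F(x)$ to identify $G|_Q$ with $F$, and restrict. You are slightly more careful than the paper in two places — unpacking the norm $|\cdot|_{x,|x-y|}$ into the pointwise Whitney compatibility conditions, and passing to $\Cl(Q)$ before invoking the extension theorem (which the paper glosses over, since $Q$ is only half-open) — but these are the same ideas, just spelled out in more detail.
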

\begin{proof}
Observe that the condition $|P_x - P_y|_{x,|x-y|} \leq A$ ($x,y \in Q$) implies that the Whitney field $\vec{P}$ satisfies the hypothesis of the classical Whitney extension theorem (see \cite{gl}, \cite{st}) for $C^{m-1,\alpha}$, $\alpha = 1-n/p$. Thus, there exists a function $G : \R^n \rightarrow \R$ such that $\| G \|_{C^{m-1,\alpha}(\R^n)} \leq C A$, $J_x G = P_x$ for all $x \in Q$. In particular, $G(x) = J_x G(x) = P_x(x) = F(x)$ for all $x \in Q$. Hence, $F = G|_Q \in C^{m-1,\alpha}(Q)$ and $\|F \|_{C^{m-1,\alpha}(Q)} \leq \| G \|_{C^{m-1,\alpha}(\R^n)} \leq C A$. Meanwhile, because $F=G$ on $Q$, we have $J_x F = J_x G = P_x$ for all $x \in Q$.
\end{proof}

\section{Structure of the Proof}
\label{sec:structure_pf}

Here we will take first steps toward the proof of Theorems \ref{amainthm} and \ref{constructthm}. We will state the Extension Theorem for $(\mu,\delta)$, whose proof will occupy much of the remainder of this paper. In Section \ref{sec:proof_maintheorems}. we will show that Theorems \ref{amainthm} and \ref{constructthm} follow from this result.

\subsection{Plan for the Proof}

Let $\mu$ be a Borel measure on $\R^n$ with compact support and let $\delta>0$. We will prove the following theorem:

\begin{prop} [Extension Theorem for $(\mu,\delta)$] \label{aextthmeq}
Suppose $\diam(\supp(\mu)) < \delta$. There exist a linear map $T: \J(\mu;\delta) \to L^{m,p}(\R^n)$, a map $M: \J(\mu;\delta) \to \R_+$, $K \subset \Cl(\supp(\mu))$, a linear map $\vec{S}: \J(\mu) \to Wh(K)$, and countable collections  of Borel sets $\{A_\ell\}_{\ell \in \mathbb{N}} \subset \supp(\mu)$, and of linear maps $\{\phi_\ell\}_{\ell \in \mathbb{N}}$, $\phi_\ell: \J(\mu;\delta) \to \R$, and $\{\lambda_\ell\}_{\ell \in \mathbb{N}}$, $\lambda_\ell: \J(\mu;\delta) \to L^p(d\mu)$, that satisfy for each $(f,P_0) \in \J(\mu;\delta)$,  
\begin{align}
&\textbf{(i) }\|f,P_0\|_{\J(\mu; \delta)} \leq  \|T(f,P_0), P_0\|_{\J(f, \mu; \delta)} \leq C \cdot \|f,P_0\|_{\J(\mu; \delta)};  \label{aet1} \\
&\textbf{(ii) } c \cdot M(f,P_0) \leq \|T(f,P_0), P_0\|_{\J(f, \mu; \delta)} \leq C \cdot M(f,P_0); \text{ and} \label{aet2} \\
&\textbf{(iii) }M(f,P_0) = \Big(\sum_{\ell \in \mathbb{N}} \int_{A_\ell} |\lambda_\ell(f,P_0)-f|^p d\mu + \sum_{\ell \in \mathbb{N}} |\phi_\ell(f,P_0)|^p + \|\vec{S}(f)\|_{L^{m,p}(K)}^p \Big)^{1/p}. \label{aet3}
\end{align} 

The map $T$ is $\Omega'$-constructible in the following sense:

There exists a collection of linear functionals $\Omega'=\{\omega_{s}\}_{s \in \Upsilon} \subset \J(\mu)^*$ such that collection of sets $\{\supp(\omega_s)\}_{s \in \Upsilon}$ has $C$-bounded overlap,  with $\supp(\omega_s) \subset \supp(\mu)$ for each $s \in \Upsilon$. Further, for each $y \in \R^n$, there exists a finite subset $\Upsilon_y \subset \Upsilon$ and a collection of polynomials $\{v_{s,y} \}_{s \in \Upsilon_y} \subset \mpp$ such that $|\Upsilon_y| \leq C$ and 
    \begin{align}
    J_y T(f,P_0) =  \sum_{s \in \Upsilon_y}  \omega_{s} (f) \cdot v_{s,y} +  \ot_y (P_0),
    \label{constructlm}
    \end{align}
where $\ot_y: \mpp \to \mpp$ is a linear map.

Further, the map $M$ is $\Omega'$-constructible:

\textbf{(1)} For each $\ell \in \N$ and $y \in \supp(\mu)$, there exists a finite subset $\bar{\Upsilon}_{\ell,y} \subset {\Upsilon}$ and constants $\{\eta^{\ell}_{s,y}\}_{s \in \bar{\Upsilon}_{\ell,y}} \subset \R$ such that $|\bar{\Upsilon}_{\ell,y}| \leq C$, and the map $(f,P_0) \mapsto \lambda_\ell(f,P_0)(y)$ has the form
\begin{align*}
&\lambda_\ell(f,P_0)(y) = \sum_{s \in \bar{\Upsilon}_{\ell,y}} \eta^{\ell}_{s,y} \cdot \omega_{s}(f) + \widetilde{\lambda}_{y,\ell}(P_0), 
\end{align*}
where $\widetilde{\lambda}_{y,\ell} : \mpp \rightarrow \R$ is a linear functional. 

\textbf{(2)} For each $\ell \in \N$, there exists a finite subset $\bar{\Upsilon}_{\ell} \subset {\Upsilon}$ and constants $\{\eta^\ell_{s}\}_{s \in \bar{\Upsilon}_{\ell}} \subset \R$ such that $|\bar{\Upsilon}_{\ell}| \leq C$, and the map $\phi_\ell$ has the form 
\begin{align*}
&\phi_\ell(f,P_0) = \sum_{s \in \bar{\Upsilon}_{\ell}} \eta_{s}^\ell \cdot \omega_{s}(f) + \widetilde{\lambda}_{\ell}(P_0), 
\end{align*}
where $\widetilde{\lambda}_{\ell} : \mpp \rightarrow \R$ is a linear functional.

\textbf{(3)} For $y \in K$, there exist $\{\omega^\al_{y}\}_{\al \in \mm} \subset \Omega'$ satisfying for all $\al \in \mm$, $\supp(\omega^\al_{y}) \subset \{y\}$, and the map $f \mapsto S_y(f)$ has the form
\begin{align*}
S_y(f) = \sum_{\al \in \mm} \omega^\al_{y}(f) \cdot v_{\al}, 
\end{align*}
where $\{v_\al\}_{\al \in \mm}$ is a basis for $\mpp$.

\end{prop}

\subsubsection{Order Relation on Labels} 
To prove Proposition \ref{aextthmeq}, we study the shape of symmetric, convex subsets of $\mpp$ that vary as we restrict the domain of the measure $\mu$. The shape of $\sigma \subset \mpp$ will be defined by a multi-index set $\ma \subset \mm$. We will sometimes refer to multi-index sets as \emph{labels}.

Given distinct elements $\al=\left(\al_1, \ldots, \al_n\right), \beta=\left(\beta_1, \ldots, \beta_n \right) \in \mm$, let $k \in \{0, \ldots, n\}$ be maximal subject to the condition $\sum_{i=1}^k \al_i \neq \sum_{i=1}^k \beta_i$. We write $\al<\beta$ if
\[
\sum_{i=1}^k \al_i<\sum_{i=1}^k \beta_i.
\]
Given distinct multi-index sets $\ma, \mab \subset \mm$, we write $\ma<\mab$ if the minimal element of the symmetric difference $\ma \Delta \mab$ (under the order $<$ on elements) lies in $\ma$. The following properties hold:
\begin{itemize}
\item If $\al, \beta \in \mm$ and $|\al|<|\beta|$ then $\al<\beta$
\item If $\ma \subsetneq \mab \subset \mm$ then $\mab < \ma$. In particular, the empty set is maximal and the whole set $\mm$ is minimal under the order on multi-index sets.
\end{itemize}

\subsubsection{Polynomial Bases}

A subset $\sigma$ of a vector space $V$ is symmetric provided that $v \in \sigma \implies - v \in \sigma$.

\begin{define}
Given a symmetric, convex set $\sigma \subset \mathcal{P}$, $\ma \subset \mm$, $\e > 0$, $x \in \mathbb{R}^n$, and $\delta>0$, we say \emph{$(P^\al)_{\al \in \ma} \subset \mpp$ forms an $(\ma, x, \e, \delta)$-basis for $\sigma$} if the following are satisfied:
\begin{align*}
&\textbf{(i) } P^\al \in \e \delta ^{n/p + |\al| -m} \sigma \text{ for } \al \in \ma; \\
&\textbf{(ii) } \p^\beta P^\al (x) = \delta_{\al \beta} \text{ for } \al,\beta \in \ma; \text{ and} \\
&\textbf{(iii) } \left| \p^\beta P^\al (x) \right| \leq \e \delta^{|\al| - |\beta| } \text{ for } \al \in \ma, \beta \in \mm \text{ s.t. } \beta > \al.
\end{align*} \label{basisdef}
\end{define}

Evidently, the basis property for symmetric convex sets is monotone in $\sigma, \e, \delta$ in the following sense:
\begin{align}
    &\text{Suppose that } (P^\al)_{\al \in \ma} \text{ forms an } (\ma, x, \e, \delta)\text{-basis for } \sigma. \text{ Then } (P^{\al})_{\al \in \ma} \nonumber \\
    &\text{forms an } (\ma, x, \e', \delta')\text{-basis for } \sigma' \text{ for } \e'\geq \e, \; \sigma' \supset \sigma, \text{ and } 0 < \delta' \leq \delta. \label{monotone} 
\end{align}
\begin{lem}
Suppose $(P^\al)_{\al \in \ma}$ forms an $(\ma, x, \e, \delta)$-basis for a convex set $\sigma \subset \mpp$. Then for $k>1$, $(P^\al)_{\al \in \ma}$ forms an $(\ma, x, k^m\cdot \e, k\cdot \delta)$-basis for $\sigma$. \label{edlm}
\end{lem}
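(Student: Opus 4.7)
The plan is a direct verification of the three conditions in Definition \ref{basisdef} with the enlarged parameters $(\ma, x, k^m \e, k\delta)$. Condition (ii) is preserved trivially since it makes no reference to $\e$ or $\delta$. So the work is entirely in checking (i) and (iii), each of which reduces to a simple exponent comparison.

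For condition (i), I need to show that $P^\al \in k^m \e (k\delta)^{n/p + |\al| - m} \sigma$ for $\al \in \ma$. Expanding, this target scale equals $k^{n/p + |\al|} \cdot \e \delta^{n/p + |\al| - m}$. Since $\sigma$ is symmetric and convex, it contains $0$, and hence $\sigma \subset \lambda \sigma$ for every $\lambda \geq 1$ (via the convex combination $v/\lambda = (1/\lambda) v + (1 - 1/\lambda)\cdot 0$). Because $n/p + |\al| > 0$ and $k > 1$, we have $k^{n/p + |\al|} \geq 1$, so the hypothesis $P^\al \in \e \delta^{n/p + |\al| - m} \sigma$ implies $P^\al$ belongs to the larger dilate.

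For condition (iii), I need $|\p^\beta P^\al(x)| \leq k^m \e (k\delta)^{|\al|-|\beta|}$ whenever $\al \in \ma$, $\beta \in \mm$, $\beta > \al$. The right-hand side equals $k^{m + |\al| - |\beta|} \e \delta^{|\al|-|\beta|}$. Since $\al,\beta \in \mm$ we have $|\al|, |\beta| \leq m-1$, so $m + |\al| - |\beta| \geq m - (m-1) = 1 \geq 0$, and with $k > 1$ the factor $k^{m + |\al| - |\beta|} \geq 1$. The hypothesis $|\p^\beta P^\al(x)| \leq \e \delta^{|\al|-|\beta|}$ then immediately gives (iii).

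There is no real obstacle here; the lemma is essentially a bookkeeping statement that the basis property is robust under coarsening of its scale parameters. The only minor subtlety is ensuring the exponent $m + |\al| - |\beta|$ stays nonnegative, which is exactly why the factor $k^m$ (and not a smaller power of $k$) is needed to absorb the worst case $|\beta| = m-1$, $|\al| = 0$. This explains the precise scaling of $\e$ stated in the lemma.
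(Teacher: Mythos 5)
Your proof is correct and follows the same route as the paper: verify each of the three conditions in Definition \ref{basisdef} by direct exponent comparison, noting condition (ii) is scale-independent and that $k^{n/p+|\al|} \geq 1$ (for (i)) and $k^{m+|\al|-|\beta|} \geq 1$ (for (iii)) absorb the rescaling. The only cosmetic difference is that you spell out the elementary monotonicity $c\sigma \subset c'\sigma$ for $0<c\le c'$ via a convex combination, whereas the paper treats it as implicit (it is part of the monotonicity remark preceding the lemma).
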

\begin{proof}
We verify Property \textbf{(i)} from Definition \ref{basisdef} first: For $\al \in \mm$, 
\[
\e \delta^{n/p+|\al|-m} = (k^m \cdot \e)\delta^{n/p+|\al|} (k \cdot \delta)^{-m} \leq (k^m \cdot \e) (k \cdot \delta)^{n/p+|\al|-m},
\]
implying if $P^\al \in \e \delta ^{n/p + |\al| -m} \sigma$ then $P^\al \in (k^m \cdot \e) (\delta \cdot k)^{n/p+|\al|-m} \sigma$. Property \textbf{(ii)} is independent of $\e$ and $\delta$. Fix $\al, \beta \in \mm$ such that $\beta>\al$. Then we see Property \textbf{(iii)} holds:
\[
|\p^\beta P^\al(x)| \leq \e \delta^{|\al|-|\beta|} = (k^m \cdot \e) (k \cdot \delta)^{|\al| - |\beta|} k^{|\beta|-|\al|-m} \leq (k^m \cdot \e) (k \cdot \delta)^{|\al| - |\beta|},
\]
so $(P^\al)_{\al \in \ma}$ forms an $(\ma, x, k^m\cdot \e, k\cdot \delta)$-basis for $\sigma$.
\end{proof}

Given $x \in \mathbb{R}^n$, $\mu$, and $\delta > 0$, define symmetric convex subsets of $\mathcal{P}$:
\begin{align*}
&\sigma_J(x,\mu) = \left\{ P \in \mathcal{P}: \exists F \in L^{m,p}(\R^n) \text{ s.t. } J_x(F) = P \text{, }\|F\|_{\J(0,\mu)} \leq 1 \right\}.\\
&\sigma(\mu, \delta) = \left\{ P \in \mathcal{P}: \exists F \in L^{m,p}(\R^n) \text{ s.t. } \|F,P\|_{\J(0,\mu; \delta)} \leq 1 \right\}.
\end{align*}

\begin{lem} 
Suppose $\supp(\mu) \subset Q$. Then there exists $C_0>0$ such that for all $x \in Q$, $\sigma_J(x,\mu) \subset C_0 \cdot \sigma(\mu, \delta_Q)$. In particular, if $(P^\al)_{\al \in \ma}$ forms an $(\ma, x, \e/C_0, \delta_Q)$-basis for $\sigma_J(x,\mu)$, then $(P^\al)_{\al \in \ma}$ forms an $(\ma, x, \e, \delta_Q)$-basis for $\sigma(\mu, \delta_Q)$. \label{sjins}
\end{lem}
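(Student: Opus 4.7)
The plan is to promote a representative $F$ for $P \in \sigma_J(x,\mu)$ to a competitor $G$ for $P/C_0 \in \sigma(\mu,\delta_Q)$ by truncating $F - P$ with a cutoff, and then to use the Sobolev inequality on $\R^n$ to control the new term $\|G-P/C_0\|_{L^p(\R^n)}/\delta_Q^m$ that $\sigma(\mu,\delta_Q)$ demands.

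Concretely, let $P \in \sigma_J(x,\mu)$ and pick $F \in L^{m,p}(\R^n)$ with $J_x F = P$ and $\|F\|_{\J(0,\mu)} \le 1$. Let $\theta$ be a smooth cutoff with $\theta \equiv 1$ on $Q$, $\supp(\theta) \subset 1.1 Q$, and $|\partial^\alpha \theta| \lesssim \delta_Q^{-|\alpha|}$. Define
\[
G := \frac{1}{C_0}\bigl[\theta \cdot (F-P) + P\bigr],
\]
so that $G - P/C_0 = \theta(F-P)/C_0$ is supported in $1.1Q$ and $G = F/C_0$ on $Q \supset \supp(\mu)$.

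The key estimate is the Sobolev Inequality on $\R^n$ applied to $F$ at the basepoint $x \in Q$, which (because $J_x F = P$) gives
\[
|\partial^\beta(F-P)(y)| \lesssim |y-x|^{m-|\beta|-n/p}\,\|F\|_{L^{m,p}(\R^n)} \quad (y \in \R^n,\ |\beta| \le m-1).
\]
Since $|y-x| \lesssim \delta_Q$ for $y \in 1.1 Q$, integration over $1.1 Q$ yields
\[
\|\partial^\beta(F-P)\|_{L^p(1.1Q)} \lesssim \delta_Q^{m-|\beta|}\,\|F\|_{L^{m,p}(\R^n)}.
\]
Taking $\beta = 0$ bounds the $L^p$ term: $\|G-P/C_0\|_{L^p(\R^n)}/\delta_Q^m \lesssim \|F\|_{L^{m,p}(\R^n)}/C_0$. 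For the seminorm, Leibniz expands $\partial^m G = C_0^{-1}\sum_{|\alpha|+|\beta|=m} \binom{m}{\alpha,\beta} \partial^\alpha \theta \cdot \partial^\beta(F-P)$; the term with $|\beta|=m$ is harmless since $\partial^m(F-P) = \partial^m F$, and the remaining terms satisfy $\delta_Q^{-|\alpha|}\|\partial^\beta(F-P)\|_{L^p(1.1Q)} \lesssim \|F\|_{L^{m,p}(\R^n)}$. Hence $\|G\|_{L^{m,p}(\R^n)} \lesssim \|F\|_{L^{m,p}(\R^n)}/C_0$. Finally, $\int |G|^p d\mu = C_0^{-p}\int |F|^p d\mu$ because $G = F/C_0$ on $\supp(\mu)$.

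Combining these three estimates gives $\|G,P/C_0\|_{\J(0,\mu;\delta_Q)}^p \le C_1 \|F\|_{\J(0,\mu)}^p/C_0^p \le C_1/C_0^p$. Choosing $C_0 := \max\{1, C_1^{1/p}\}$ yields $\|G,P/C_0\|_{\J(0,\mu;\delta_Q)} \le 1$, so $P/C_0 \in \sigma(\mu,\delta_Q)$; this is the containment $\sigma_J(x,\mu) \subset C_0 \cdot \sigma(\mu,\delta_Q)$. For the final clause, if $(P^\al)_{\al \in \ma}$ is an $(\ma,x,\e/C_0,\delta_Q)$-basis for $\sigma_J(x,\mu)$ then property \textbf{(i)} gives $P^\al \in (\e/C_0)\delta_Q^{n/p+|\al|-m}\sigma_J(x,\mu) \subset \e\,\delta_Q^{n/p+|\al|-m}\sigma(\mu,\delta_Q)$, while properties \textbf{(ii)} and \textbf{(iii)} depend only on $(P^\al)$ and the point $x$ and become only easier when $\e/C_0$ is replaced by $\e \ge \e/C_0$; hence $(P^\al)_{\al \in \ma}$ is an $(\ma,x,\e,\delta_Q)$-basis for $\sigma(\mu,\delta_Q)$.

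The only nontrivial point is the construction of $G$ and the verification that the extra $\|G-P/C_0\|_{L^p}/\delta_Q^m$ term does not blow up, which is exactly what the Sobolev Inequality (applied to $F - J_x F$) controls; the rest is bookkeeping on the cutoff.
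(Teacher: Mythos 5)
Your proof is correct and takes essentially the same route as the paper: the paper packages the cutoff construction $\bar F := \theta F + (1-\theta)P$ and the resulting estimate as Lemma \ref{lem:1} (inequality \eqref{j1}) together with the Sobolev corollary \eqref{SobLp}, whereas you unroll those two steps directly, but the mechanism — truncate $F-P$ with a bump supported in a slight enlargement of $Q$, then bound the three pieces of the $\J(0,\mu;\delta_Q)$-functional via the Sobolev inequality at the basepoint — is identical. The final clause about inheriting the basis from $\sigma_J(x,\mu)$ to $\sigma(\mu,\delta_Q)$ is handled the same way in both.
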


\begin{proof}
Let $P \in \sigma_J(x,\mu)$. Then there exists $F \in L^{m,p}(\R^n)$ such that $J_x(F) = P$ and 
\begin{align*}
\|F\|_{\J(0,\mu)}^p = \|F\|_{L^{m,p}(\R^n)}^p + \int_Q |F|^p d\mu \leq 1.
\end{align*}
By (\ref{j1}) there exists $F' \in L^{m,p}(\R^n)$ satisfying $F'|_Q = F|_Q$ and 
\begin{align*}
\|F',P\|_{\J(0,\mu;\delta_Q)} \leq C \|F, P\|_{\J_*(0, \mu;1.1Q)}.
\end{align*}
By (\ref{SobLp}), we have $\|F-P\|_{L^p(1.1Q)}/\delta_Q^m \leq C \|F\|_{L^{m,p}(1.1Q)}$, and thus
\[
\|F, P\|_{\J_*(0, \mu;1.1Q)} \leq C' \|F\|_{\J(0,\mu)} \leq C'.
\]
Combining the above inequalities, we have $\|F',P\|_{\J(0,\mu;\delta_Q)} \leq C_0$. Therefore, $P \in C_0 \cdot \sigma(\mu, \delta_Q)$. This proves $\sigma_J(x,\mu) \subset C_0 \cdot \sigma(\mu, \delta_Q)$. 

The basis result follows, for if $P^\alpha \in \frac{\epsilon}{C_0} \delta_Q^{m-n/p} \sigma_J(x,\mu)$ ($\alpha \in \ma$) then $P^\alpha \in \epsilon \delta_Q^{m-n/p} \sigma(\mu, \delta_Q)$.
\end{proof}

\subsection{The Induction}

\subsubsection{The Main Lemma}

Fix a collection of multi-indices $ \ma \subset \mm$. Let $C_0 > 0$ be the universal constant defined in Lemma \ref{sjins}. We prove the following by induction with respect to the multi-index set $\ma$.

\begin{lem} [Main Lemma for $\ma$] Fix $\ma \subset \mm$. There exists a constant $\e= \e(\ma)>0$, depending on $\ma, m, n$, and $p$, such that the following holds: Suppose $\mu$ is a Borel regular measure on $\R^n$ with compact support, satisfying $\diam(\supp(\mu)) < \delta$, and suppose:
\begin{align}
\text{For all } x \in \supp(\mu), \; \sigma_J(x,\mu) \text{ has an } (\ma,x,\e/C_0,10\delta)\text{-basis}. \label{amainlm}
\end{align}
Then the Extension Theorem for $(\mu,\delta)$ (Proposition \ref{aextthmeq}) is true.

Furthermore, for $\ma \neq \emptyset$, in the Extension Theorem for $(\mu, \delta)$, one can take $K = \emptyset$, and so the functional $M: \J(\mu;\delta) \to \R_+$ has the form
\begin{align}
M(f,P_0) = \Big(\sum_{\ell \in \mathbb{N}} \int_{A_\ell} |\lambda_\ell(f,P_0)-f|^p d\mu +\sum_{\ell \in \N}|\phi_\ell(f,P_0)|^p \Big)^{1/p} \label{aextthm3plus}.
\end{align}
\label{amainlma}
\end{lem}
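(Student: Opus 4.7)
The plan is to prove Lemma \ref{amainlma} by well-founded induction on the label $\ma$ with respect to the order $<$, choosing $\e(\ma)$ inductively small enough to absorb the finitely many constants arising from strictly smaller $\mab < \ma$. The base case is $\ma = \mm$, which is the minimum under $<$ and the most restrictive condition on $\mu$: an $(\mm, x, \e/C_0, 10\delta)$-basis supplies, via Lemma \ref{sjins}, polynomials in $\sigma(\mu, 10\delta)$ dual to every multi-index at $x$. This says that any element of $\mpp$ of controlled $|\cdot|_{x, 10\delta}$-norm is realized as a jet of an admissible Sobolev function, so there is no geometric obstruction to extending by a single polynomial. I would take $T(f, P_0) := P_0 + \widehat{P}(f, P_0)$, where $\widehat{P}(f, P_0) \in \mpp$ is a continuous linear (e.g.\ least-squares) near-minimizer of $P \mapsto \|P - (f - P_0)\|_{L^p(d\mu)}$ over a fixed finite-dimensional quadrature-based subspace. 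Since $\|P\|_{L^{m,p}(\R^n)} = 0$ for $P \in \mpp$, the basis estimates directly control the $L^p(d\mu)$ and $L^p/\delta^{mp}$ contributions in \eqref{aet1}; one may take $K = \emptyset$, so $M$ has the form \eqref{aextthm3plus}.

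For the inductive step, fix $\ma \neq \mm$ and assume the Main Lemma for every $\mab < \ma$. Enclose $\supp(\mu)$ in a cube $Q$ with $\delta_Q \simeq \delta$ and perform a Calder\'on--Zygmund stopping-time decomposition into dyadic cubes $\{Q_i\}_{i \in I}$: $Q_i$ is the maximal dyadic subcube for which $\mu|_{3Q_i}$ admits an $(\ma_i, \cdot, \e(\ma_i)/C_0, 10\delta_{Q_i})$-basis for some strictly easier $\ma_i < \ma$. Standard arguments yield good geometry, i.e.\ adjacent CZ cubes have comparable sidelengths. Apply the inductive hypothesis to each $(\mu|_{3Q_i}, \delta_{Q_i})$ to produce local operators $T_i$ and maps $M_i$. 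When $\ma \neq \emptyset$, the CZ cubes cover $Q$ up to a set of measure zero, so I would choose a smooth partition of unity $\{\theta_i\}$ subordinate to $\{1.1 Q_i\}$ and patch the jets via $J_x T(f, P_0) := \sum_i \theta_i(x) \cdot J_x T_i(f, P_0)$ (or an analogous jet-patching formula that preserves bounded depth on the functionals). The global $\J$-norm is then controlled by summing the local ones plus cross-terms coming from overlaps of neighboring cubes, which the Sobolev inequality \eqref{sob2} handles via jet-compatibility between $T_i$ and $T_j$ on $1.1 Q_i \cap 1.1 Q_j$. In this regime $K = \emptyset$ and $M$ reduces to \eqref{aextthm3plus}.

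The main obstacle is the case $\ma = \emptyset$, where the CZ decomposition can fail to cover $Q$ and leaves a nonempty keystone set $K_p := Q \setminus \bigcup_i Q_i \subset \Cl(\supp(\mu))$. The inductive hypothesis is unavailable on $K_p$, so I would linearly assign to each $y \in K_p$ a polynomial $S_y(f) \in \mpp$ nearly optimal in the sense that $|S_y(f)|_{y, 10\delta}$ matches the constraint $\sigma_J(y,\mu)$ imposes on admissible jets; this exploits that under the $\ma = \emptyset$ hypothesis, $\sigma_J(y,\mu)$ is constrained to a proper affine subspace of $\mpp$, giving a sharp estimate for $|S_y(f)|_{y,10\delta}$ in terms of $\|f\|_{\J(\mu)}$. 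The field $\vec{S}(f) = (S_y(f))_{y \in K_p}$ also supplies boundary data for each $T_i$ bordering $K_p$. To verify that the patched $G := T(f, P_0)$ lies in $L^{m,p}(Q)$, I would first check a Whitney-type compatibility estimate $|S_x(f) - S_y(f)|_{x, |x-y|} \lesssim \|f\|_{\J(\mu)} + |P_0|_{x,\delta}$ for $x, y \in K_p$, which via Lemma \ref{whitlm} gives $G|_{K_p} \in C^{m-1, 1-n/p}$; and then apply Corollary \ref{brlm}, splitting every congruent packing $\pi_\simeq$ of $Q$ into those $\bar Q$ that lie in one CZ cube, those that straddle several CZ cubes, and those that meet $K_p$, and bounding $\sum_{\bar Q \in \pi_\simeq} (E(G, \bar Q)/\delta_{\bar Q}^m)^p$ by the corresponding local $L^{m,p}$-contributions from the inductive hypothesis plus a keystone contribution $\|\vec{S}(f)\|_{L^{m,p}(K_p)}^p$.

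Finally, $\Omega'$-constructibility propagates inductively: each local $\Omega'_i \subset \J(\mu|_{3Q_i})^*$ has bounded-overlap supports inside $1.1 Q_i$ and lifts naturally to $\J(\mu)^*$; since every point of $\R^n$ meets only $O(1)$ thickened CZ cubes, the union $\bigcup_i \Omega'_i$, together with the point-supported functionals $\{\omega_y^\al\}_{\al \in \mm,\, y \in K_p}$ representing the keystone Whitney field, has $C'$-bounded overlap and realizes $J_x T(f, P_0)$ as a finite sum of the form \eqref{constructlm}, with the $P_0$-dependence packaged into linear maps $\ot_x : \mpp \to \mpp$. The functionals $\phi_\ell$ and $\lambda_\ell$ are inherited as finite linear combinations of the corresponding local maps together with the keystone functionals, giving parts (1)--(3) of the statement with constants depending only on $m, n, p$.
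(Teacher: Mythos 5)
Your high-level architecture --- well-founded induction on labels, a Calder\'{o}n--Zygmund stopping-time decomposition, applying the inductive hypothesis locally, a partition of unity to patch, keystone analysis, and Brudnyi's characterization of Sobolev space to verify $L^{m,p}$-membership --- is exactly the paper's plan. But several of the load-bearing technical steps are either mis-stated or absent.

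\textbf{Base case.} Your candidate $T(f,P_0) := P_0 + \widehat{P}(f,P_0)$ with a nonzero polynomial $\widehat{P} \in \mpp$ cannot work: the third term in $\|T(f,P_0),P_0\|_{\J(f,\mu;\delta)}$ is $\|\widehat{P}\|_{L^p(\R^n)}^p/\delta^{mp} = \infty$. The only polynomial correction compatible with the $\J$-functional is $\widehat{P}\equiv 0$, i.e.\ $T(f,P_0)=P_0$, which is what the paper takes. More importantly, the substance of the base case is not that ``there is no geometric obstruction to extending by a single polynomial''; it is the quantitative fact that the $\mm$-basis controls the constant polynomial $1$ and hence forces $\mu(Q)^{1/p}\leq C\e\,\delta^{n/p-m}$. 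It is only because $\mu(Q)$ is small that $\int_Q|P_0-f|^p\,d\mu$ can be compared to $\int_Q|F-f|^p\,d\mu$ for an arbitrary competitor $F$, yielding $\|P_0,P_0\|_{\J(f,\mu;\delta)}\lesssim\|f,P_0\|_{\J(\mu;\delta)}$. Without that inequality the base case does not close.

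\textbf{Reduction to monotonic $\ma$.} The paper proves, before running the CZ machine, that under \eqref{amainlm} and the standing assumption that no easier $\mab$ works, the label $\ma$ must be monotonic; otherwise it manufactures an $(\mab,y,\e_0/C_0,1)$-basis for $\mab<\ma$ at every $y\in\supp(\mu)$ and contradicts the hypothesis. This fact is used repeatedly: e.g.\ $\ma$ monotonic implies $0\notin\ma$ when $\ma\neq\mm$, which is exactly what makes the auxiliary polynomials vanish at keystone points (Lemma~\ref{lem:auxpolyzeros}) and what makes the coherency argument go through. Your proposal never mentions this reduction and would need it.

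\textbf{Keystone jets.} Your description of the keystone construction --- assign to $y\in K_p$ a polynomial $S_y(f)$ ``nearly optimal'' in the $|\cdot|_{y,10\delta}$-norm --- misses the key structural phenomenon. The keystone jet $R^*_y$ is not merely optimized: it is \emph{forced}. The paper proves (Lemma~\ref{kptlm}) that any $H$ with $\|H\|_{\J(f,\mu)}<\infty$ that is $K_p$-coherent with $P_0$ satisfies $J_y H = R^*_y$ at every keystone point, and the uniqueness argument goes through Lemma~\ref{newdir} and the ``not OK'' condition for every dyadic cube containing $y$. This rigidity is what makes the keystone field well-defined, linear in $(f,P_0)$, and automatically compatible with any candidate extension. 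Replacing uniqueness by a size estimate in $|\cdot|_{y,10\delta}$ would not suffice to guarantee compatibility of the patched function across $K_p$.

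\textbf{Coherency and the optimal Whitney field.} Even for $\ma\neq\emptyset$, the keystone set $K_p$ can be nonempty (only its \emph{Lebesgue measure} vanishes), so you cannot simply ``patch the jets'' over a partition of unity and expect a near-optimal global solution: you still need to (i) build the coherent Whitney field and keystone jets, (ii) prove the coherency estimate (Proposition~\ref{acohlm}), and (iii) construct the optimal field by propagating keystone data along the chains $\Se(Q_i)$ toward keystone cubes and keystone points (Lemma~\ref{ageolm} and Section on the optimal Whitney field). What $\ma\neq\emptyset$ buys is only that the $\|\vec{S}\|_{L^{m,p}(K_p)}^p$ contribution drops out of the formula for $M$, via $|K_p|=0$; it does not let you skip the keystone analysis. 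These steps are the technical core of the proof and your proposal treats them in a single sentence or omits them.
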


Note that condition (\ref{amainlm}) in the Main Lemma for $\ma= \emptyset$ holds vacuously. Thus the Main Lemma for $\ma=\emptyset$ implies the Extension Theorem for $(\mu,\delta)$. Thus we have reduced the proof of the Extension Theorem for $(\mu,\delta)$ to the task of proving the Main Lemma for $\ma$, for each $\ma \subset \mm$. We proceed by induction and establish the following:

\noindent \underline{Base Case: } The Main Lemma for $\mm$ holds. \\
\underline{Induction Step:} Let $\ma \subset \mm$ with $\ma \neq \mm$. Suppose that the Main Lemma for $\ma'$ holds for
each $\ma'<\ma$. Then the Main Lemma for $\ma$ holds.

\subsubsection{Proof of the Base Case}
\begin{proof}
Fix $(\mu,\delta)$ as in the Main Lemma for $\mm$, satisfying (\ref{amainlm}) for $\ma = \mm$. Given that $\diam(\supp(\mu)) < \delta$, we can fix a cube $Q \subset \R^n$ with $\delta_Q = \delta$ and $\supp(\mu) \subset Q$. By (\ref{amainlm}), $\sigma_J(x,\mu)$ has an $(\mm,x,\e/C_0,10 \delta_Q)$-basis for all $x \in \supp(\mu)$. We will fix the choice of $\e(\mm)$ momentarily. By Lemma \ref{sjins},  $\sigma(\mu, 10 \delta_Q)$ has an $(\mm,x,\e,10 \delta_Q)$-basis for all $x \in \supp(\mu)$. Specifically, $1 \in \e (10\delta_Q)^{n/p-m} \cdot \sigma(\mu, 10 \delta_Q)$, implying there exists $G \in L^{m,p}(\R^n)$ satisfying
\begin{align}
\Big(\int_Q|G|^p d\mu \Big)^{1/p} \leq \|G, 1 \|_{\J(0,\mu; 10 \delta_Q)} \leq \e (10\delta_Q)^{n/p-m} \label{base1}.
\end{align}
By (\ref{sob2}), for $x \in Q$, 
\begin{align*} 
|G(x) - 1| &\leq C \|G, 1 \|_{\J(0,\mu; \delta_Q)} \delta_Q^{m-n/p} \\
&\lesssim \|G, 1 \|_{\J(0,\mu; 10 \delta_Q)} \delta_Q^{m-n/p} \\
&\lesssim \e.
\end{align*}
We now fix $\e$ small enough so that the previous inequality implies $|G(x)-1| <1/2$ for $x \in Q$. Then $|G(x)|>1/2$ for $x \in Q$, and $\left(\int_Q|G|^p d\mu \right)^{1/p} \geq C' \mu(Q)^{1/p}$, and by (\ref{base1})
\begin{align}
\mu(Q)^{1/p} \leq C \e \delta_Q^{n/p-m} \label{base2}.
\end{align}

Let $(f,P_0) \in \J(\mu; \delta_Q)$. Define $T(f,P_0) := P_0$. Then for any $F \in L^{m,p}(\R^n)$, we use the assumption $\supp(\mu) \subset Q$ to deduce that
\begin{align*}
\|T(f,P_0),P_0\|_{\J(f, \mu; \delta_Q)} &= \left( \int_Q|P_0 - f|^p d\mu \right)^{1/p}\\
& \leq \left( \int_Q|P_0 - F|^p d\mu \right)^{1/p} + \left( \int_Q|F- f|^p d\mu \right)^{1/p} \\
& \overset{(\ref{sob2})}{\lesssim} \left( \int_Q\|F,P_0\|_{\J(f, \mu; \delta_Q)}^p\delta_Q^{mp-n} d\mu \right)^{1/p} + \left( \int_Q|F- f|^p d\mu \right)^{1/p} \\
& \leq \|F,P_0\|_{\J(f, \mu; \delta_Q)} \delta_Q^{m-n/p} \mu(Q)^{1/p} + \left( \int_Q|F- f|^p d\mu \right)^{1/p} \\
& \overset{(\ref{base2})}{\leq} (1+C \e) \|F,P_0\|_{\J(f, \mu; \delta_Q)}.
\end{align*}
Because $F$ is arbitrary, we can conclude $\|T(f,P_0),P_0\|_{\J(f, \mu; \delta_Q)} \leq C \|f,P_0\|_{\J(\mu; \delta_Q)}$. When $\ma = \mm$, we take $K = \emptyset$, as promised in Lemma \ref{amainlma}, so the functional $M$ has the form (\ref{aextthm3plus}). Define the family $\{\phi_\ell\}_{\ell \in \N}$ by $\phi_\ell(f,P_0) = 0$ for $\ell \geq 1$. 

Define the family $\{\lambda_\ell\}_{\ell \in \N}$, by $\lambda_\ell(f,P_0) = 0$ for $\ell \geq 2$, and $\lambda_1 (f, P_0) =  P_0$, and define the sets $A_\ell = \emptyset$ for $\ell \geq 2$, and $A_1 = \supp (\mu)$. For these families of linear maps, the functional $M$ in (\ref{aextthm3plus}) is given by $M(f,P_0) = \| P_0 -f \|_{L^p(d \mu)}$. Note the above chain of inequalities implies that $P_0 - f \in L^p(d \mu)$, and 
\[
M(f,P_0) = \left( \int_{\R^n}|P_0-f|^p d\mu \right)^{1/p} = \|T(f, P_0),P_0\|_{\J(f,\mu;\delta_Q)}.
\]
Let $\Omega' = \emptyset$; immediately, the collection $\{\supp(\omega)\}_{\omega \in \Omega'}$ has bounded (empty) overlap. For $y \in \supp(\mu)$, define $\ot_y(P):=P(y)$; then $\lambda_1(f,P_0)(y) = \ot_y(P_0)$, indicating $M$ is $\Omega'$-constructible. And $J_yT(f,P_0) = \ot_y(P_0) = P_0$, so $T$ is $\Omega'$-constructible.

\end{proof}
\label{pfbasecase}

\subsubsection{Technical Lemmas} 
The following linear algebra lemmas are adapted from Sections 3 and 4 of \cite{arie3}, relying on (\ref{sublinearj}) and the inequality $\|F\|_{L^{m,p}(\R^n)} \leq \|F,P\|_{\J(0,\mu)}$. \\

\begin{lem}
There exist universal constants $c_1 \in (0,1)$ and $C_1 \geq 1$ so that the following holds. \\
Suppose we are given the following: 
\begin{description}
\setlength{\itemindent}{0em}
\item[(D1)]  Real numbers $\e_1 \in (0,c_1]$ and $\e_2 \in (0,\e_1^{2D+2}]$.
\item[(D2)]  A lengthscale $\delta>0$.
\item[(D3)]  A collection of multi-indices $\ma \subset \mm$.
\item[(D4)]  A Borel regular measure $\mu$ and a bounded, non-empty set $E\subset \R^n$, satisfying $\supp(\mu) \subset E$ and $\diam(E) \leq 10 \delta$.
\item[(D5)]  A family of polynomials $(\pt_x^\al)_{\al \in \ma}$ that forms an $(\ma, x, \e_2, \delta)$-basis for $\sigma_J(x,\mu)$ for each $x \in E$.
\item[(D6)]  A point $x_0 \in E$, a multi-index $\bb \in \mm \setminus \ma$, and $\pt_{x_0}^{\bb} \in \mpp$ satisfying
\begin{align}
&\pt_{x_0}^{\bb} \in \e_2 \delta^{|\bb|+n/p-m} \sigma_J(x_0, \mu) \label{nd4}; \\
&\p^{\bb} \pt_{x_0}^{\bb}(x_0) = 1 \label{nd5}; \\
&|\p^\beta \pt_{x_0}^{\bb}(x_0)| \leq \e_1 \delta^{|\bb|-|\beta|} &&(\beta \in \mm, \; \beta> \bb) \label{nd6}; \text{ and} \\
&|\p^\beta \pt_{x_0}^{\bb}(x_0)| \leq \e_1^{-D} \delta^{|\bb|-|\beta|} &&(\beta \in \mm) \label{nd7}.
\end{align}
\item[(D7)] For all $x \in E$, the basis $(\pt_x^\al)_{\al \in \ma}$ in \textbf{(D5)} satisfies
\begin{align}
|\p^\beta \pt_x^\al(x)| \leq \e_1^{-D-1} \delta^{|\al|-|\beta|} \quad \quad \quad \quad (\al \in \ma, \; \al < \bb, \; \beta \in \mm) \label{nd8}.
\end{align}
\end{description}
Then there exists $\mab<\ma$ so that  for every $x \in E$, $\sigma_J(x,\mu)$ contains an $(\mab, x, C_1 \e_1, \delta)$-basis. \label{newdir} 
\end{lem}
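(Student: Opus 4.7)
The strategy is to augment the basis by adding the direction $\bb$. Choose $\mab := \{\al \in \ma : \al < \bb\} \cup \{\bb\}$; then $\mab < \ma$ holds because the minimum of $\mab \Delta \ma = \{\bb\} \cup \{\al \in \ma : \al > \bb\}$ is $\bb$ (every $\al \in \ma$ with $\al > \bb$ is strictly larger than $\bb$ in the order), and $\bb \in \mab$. The task is to construct, for each $x \in E$, polynomials $\{Q_x^\al\}_{\al \in \mab}$ forming an $(\mab, x, C_1\e_1, \delta)$-basis for $\sigma_J(x,\mu)$.

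The construction uses Sobolev transfer followed by Gaussian elimination. First, by (nd4) pick $F_0 \in L^{m,p}(\R^n)$ with $J_{x_0}(F_0) = \pt_{x_0}^\bb$ and $\|F_0\|_{\J(0,\mu)} \leq 2\e_2\delta^{|\bb|+n/p-m}$, and set the candidate $\ph_x^\bb := J_x(F_0)$ at each $x \in E$; linearity of the jet gives $\ph_x^\bb \in 2\e_2\delta^{|\bb|+n/p-m}\sigma_J(x,\mu)$. The Sobolev Inequality applied on the ball of radius $10\delta$ about $x_0$ yields $|\p^\beta(F_0 - J_{x_0}F_0)(x)| \lesssim \e_2\delta^{|\bb|-|\beta|}$, and coupling this with the Taylor expansion of $\pt_{x_0}^\bb$ around $x_0$ and the bounds (nd5), (nd6), (nd7) gives the pointwise estimates $\p^\bb\ph_x^\bb(x) = 1 + O(\e_1)$, $|\p^\beta\ph_x^\bb(x)| \leq C\e_1\delta^{|\bb|-|\beta|}$ for $\beta > \bb$, and $|\p^\al\ph_x^\bb(x)| \leq C\e_1^{-D}\delta^{|\bb|-|\al|}$ for $\al \in \ma \cap \{<\bb\}$. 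Then define $\widetilde{Q}_x^\bb := \ph_x^\bb - \sum_{\al \in \ma, \al<\bb}\p^\al\ph_x^\bb(x) \cdot \pt_x^\al$, which kills the $\al$-derivatives for $\al \in \ma \cap \{<\bb\}$ by the diagonal property of the old basis; rescale $Q_x^\bb := \widetilde{Q}_x^\bb/\p^\bb\widetilde{Q}_x^\bb(x)$, and set $Q_x^\al := \pt_x^\al - \p^\bb\pt_x^\al(x)\cdot Q_x^\bb$ for $\al \in \ma \cap \{<\bb\}$ to restore the $\bb$-diagonal condition. Property (ii) of Definition \ref{basisdef} is built in; Property (iii) follows from the Step-1 estimates and (D7) substituted into the Gaussian-elimination formulas; and Property (i) is checked by summing $\sigma_J$-norms using (D5).

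The main obstacle is ensuring $|\p^\bb\widetilde{Q}_x^\bb(x)| \geq 1/2$ and the $C_1\e_1$-norm bound on $Q_x^\bb$. A direct expansion gives
\[
\p^\bb\widetilde{Q}_x^\bb(x) = \p^\bb\ph_x^\bb(x) - \sum_{\al \in \ma, \al<\bb}\p^\al\ph_x^\bb(x)\cdot\p^\bb\pt_x^\al(x),
\]
and the naive estimates bound each cross-term only by $\e_1^{-D}\cdot\e_1^{-D-1} = \e_1^{-2D-1}$, swamping the main term $\approx 1$. The resolution is a preprocessing cleanup at $x_0$: replace $\pt_{x_0}^\bb$ by $\widetilde{\pt}_{x_0}^\bb := \pt_{x_0}^\bb - \sum_{\al \in \ma, \al<\bb}\p^\al\pt_{x_0}^\bb(x_0)\cdot\pt_{x_0}^\al$, whose $\al$-derivatives at $x_0$ vanish for $\al \in \ma \cap \{<\bb\}$ and whose $\sigma_J(x_0,\mu)$-norm inflates to $\lesssim \e_1^{-D}\e_2\delta^{|\bb|+n/p-m} = \e_1^{D+2}\delta^{|\bb|+n/p-m}$ using the smallness $\e_2 \leq \e_1^{2D+2}$ from (D1). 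Redefining $F_0$ to realize this cleansed polynomial and redoing the Sobolev transfer forces $|\p^\al\ph_x^\bb(x)| \leq C\e_1^{D+2}\delta^{|\bb|-|\al|}$ for $\al \in \ma \cap \{<\bb\}$, so each cross-term shrinks to $\e_1^{D+2}\cdot\e_1^{-D-1} = \e_1$, and the sum over at most $D$ directions is $O(\e_1)$. The remaining delicacy is tracking $\p^\bb\widetilde{\pt}_{x_0}^\bb(x_0)$ through the cleanup: one must ensure (via case analysis, or by invoking (nd5) and the basis ordering to exclude a degenerate regime where $\pt_{x_0}^\bb$ is essentially spanned by $(\pt_{x_0}^\al)_{\al \in \ma \cap \{<\bb\}}$, in which case $\mab$ would be chosen differently) that this quantity stays bounded away from zero. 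The smallness condition $\e_2 \leq \e_1^{2D+2}$ is what enables the $\e_1^{-D}$-sized coefficients to be absorbed and the new basis to satisfy the $(\mab, x, C_1\e_1, \delta)$-property for appropriate universal $c_1, C_1$.
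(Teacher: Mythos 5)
Your overall plan — choose $\mab := \{\al \in \ma : \al < \bb\} \cup \{\bb\}$, realize $\pt_{x_0}^{\bb}$ by a function $F_0$, transfer the jet to each $x \in E$ by the Sobolev inequality, and then Gaussian-eliminate against the old basis — is exactly the paper's approach, and the first half of your sketch matches the paper's proof. But the second half, starting from ``The main obstacle is ensuring $|\p^\bb\widetilde{Q}_x^\bb(x)| \geq 1/2$,'' is built on a wrong estimate, and the preprocessing you introduce to fix it is both unnecessary and not actually carried to completion.

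The error is in the cross-term bound. In the expansion
\[
\p^\bb\widetilde{Q}_x^\bb(x) = \p^\bb\ph_x^\bb(x) - \sum_{\al \in \ma,\, \al<\bb}\p^\al\ph_x^\bb(x)\cdot\p^\bb\pt_x^\al(x),
\]
you bound $|\p^\bb\pt_x^\al(x)|$ by $\e_1^{-D-1}$ via \textbf{(D7)}. But the relevant indices satisfy $\al \in \ma$ and $\bb > \al$, so property \textbf{(iii)} of the $(\ma,x,\e_2,\delta)$-basis from \textbf{(D5)} (i.e.\ inequality \eqref{nd3} in the rescaled proof) gives the far stronger bound $|\p^\bb\pt_x^\al(x)| \leq \e_2$ (at $\delta=1$). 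Combined with $|\p^\al\ph_x^\bb(x)| \lesssim \e_1^{-D}$ from \eqref{nd7} and the Sobolev transfer, each cross-term is $\lesssim \e_1^{-D}\e_2 \leq \e_1^{D+2} \leq \e_1$, so the sum over at most $D$ indices is $O(\e_1)$ and the diagonal entry satisfies $\p^\bb\widetilde{Q}_x^\bb(x) \in [1/2,2]$ for $\e_1$ small enough. There is no obstacle. \textbf{(D7)} is needed elsewhere — to control the off-diagonal entries $|\p^\beta\widetilde{Q}_x^\bb(x)|$ for general $\beta \in \mm$ where the $\beta > \al$ sharpening is unavailable, yielding the $\e_1^{-2D-1}$-size bound that feeds property \textbf{(iii)} of the new basis — but it is not the right tool for the diagonal.

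Because the obstacle is illusory, the ``resolution'' by preprocessing at $x_0$ is superfluous, and more seriously, you have not closed it: you acknowledge a ``remaining delicacy'' about keeping $\p^\bb\widetilde{\pt}_{x_0}^\bb(x_0)$ bounded away from zero and gesture at a case analysis or a degenerate regime where ``$\mab$ would be chosen differently,'' without carrying it out. In fact that quantity equals $1 - \sum_{\al<\bb}\p^\al\pt_{x_0}^\bb(x_0)\p^\bb\pt_{x_0}^\al(x_0)$, and the same use of \eqref{nd3} shows the correction is $O(\e_1^{D+2})$ — so again no degenerate regime and no case analysis are needed. The fix for your proof is simply to replace the \textbf{(D7)} bound with the \textbf{(D5)}\textbf{(iii)} bound in the diagonal estimate; the preprocessing at $x_0$ and all of the attendant discussion can be deleted.
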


\begin{proof}
Let $c_1$ be a sufficiently small constant, to be determined later. By rescaling it suffices to assume that $\delta=1$. Our hypothesis tells us that $\e_1 \leq c_1$, $\e_2 \leq \e_1^{2 D+2}$, and that $(\pt_x^\al)_{\al \in \ma}$ forms an $(\ma, x, \e_2, 1)$-basis for $\sigma_J(x, \mu),$ for each $x \in E$. That is,
\begin{align}
&\pt_x^\al \in \e_2 \cdot \sigma_J(x, \mu); \label{nd1} \\
&\p^{\beta} \pt_x^\al(x)=\delta_{\al \beta} && (\al, \beta \in \ma) ; \text { and } \label{nd2} \\
&|\p^{\beta} \pt_x^\al(x)| \leq \e_2 && (\al \in \ma, \beta \in \mm, \; \beta>\al \label{nd3}).
\end{align}
By (\ref{nd4}) (for $\delta=1$), there exists $\varphi^{\bar{\beta}} \in L^{m,p}(\R^n)$ satisfying 
\begin{align}
&J_{x_0} \varphi^{\bar{\beta}} = \pt_{x_0}^{\bar{\beta}} \label{la15}; \text{ and} \\
&\| \varphi^{\bar{\beta}} \|_{L^{m,p}(\R^n)} \leq \|\varphi^{\bar{\beta}}\|_{\J(0, \mu)} \leq \e_2 \label{la16}.
\end{align}
Fix $y \in E$ and define $\pt_y^{\bar{\beta}} := J_y \varphi^{\bar{\beta}}$. Then the definition of $\sigma_J(\cdot, \cdot)$ and (\ref{la16}) imply
\begin{align}
\pt_y^{\bar{\beta}} \in \e_2 \cdot \sigma_J(y, \mu) \label{la17}.
\end{align}
We have $|x_0 - y| \leq \diam(E) \leq 10$, so by the Sobolev Inequality, (\ref{la15}), (\ref{la16}), (\ref{nd5}), and (\ref{nd6}),
\begin{align}
|\p^\beta \pt_y^{\bar{\beta}}(y) - \delta_{\beta \bar{\beta}}| &\leq |\p^\beta \varphi^{\bar{\beta}}(y) - \p^\beta J_{x_0}\varphi^{\bar{\beta}}(y)| + |\p^\beta \pt_{x_0}^{\bar{\beta}}(y) - \delta_{\beta \bar{\beta}}| \nonumber \\
&\leq C \e_2 + C\e_1 <C\e_1 \quad \quad (\beta \in \mm, \; \beta \geq \bar{\beta}) \label{la18}.
\end{align}
Similarly, from (\ref{nd7}), we estimate:
\begin{align}
|\p^\beta \pt_y^{\bar{\beta}}(y)| &\leq |\p^\beta \pt_y^{\bar{\beta}}(y) - \delta_{\beta \bar{\beta}}| +1 \nonumber \\
& \leq C \e_1^{-D} \quad \quad (\beta \in \mm) \label{la19}.
\end{align}
Define $\mab = \{ \al \in \ma: \al < \bar{\beta} \} \cup \{ \bar{\beta} \}$. Then because $\bb \notin \ma$, the minimal element of $\ma \D \mab$ is $\bar{\beta}$. Thus, we have $\mab < \ma$, by definition of the order relation on multiindex set. Define 
\[
P_y^{\bar{\beta}} :=\pt_y^{\bar{\beta}} - \sum_{\gamma \in \mab \setminus \{\bar{\beta} \} } \p^\gamma \pt_y^{\bar{\beta}}(y) \pt_y^\gamma.
\]
Notice that $\mab \setminus \{ \bar{\beta} \} \subset \ma$; thus (\ref{nd2}) (for $x=y$) implies that 
\[
\p^\al P_y^{\bar{\beta}} (y) = 0 \quad \quad (\al \in \mab \setminus \{ \bar{\beta} \} ).
\]
And (\ref{la17}), (\ref{la19}), and (\ref{nd1}) imply that 
\[
P_y^{\bar{\beta}} \in (\e_2 + C \e_1^{-D} \e_2) \cdot  \sigma_J (y, \mu) \subset C\e_1^{-D} \e_2 \cdot \sigma_J (y, \mu).
\]
Since $\bar{\beta}$ is the maximal element of $\mab$, it follows that for any $\beta \geq \bar{\beta}$ and any $\gamma \in \mab \setminus \{ \bar{\beta} \}$, we have $\beta > \gamma$. Thus (\ref{la18}), (\ref{la19}), and (\ref{nd3}) imply that
\begin{align}
|\p^\beta P_y^{\bar{\beta}}(y) - \delta_{\beta \bar{\beta}}| \leq C(\e_1 + \e_1^{-D}\e_2) \quad \quad (\beta \in \mm, \; \beta \geq \bar{\beta}).\label{la21}
\end{align}
And (\ref{la19}) and (\ref{nd8}) imply that 
\[
|\p^\beta P_y^{\bar{\beta}}(y)| \leq C \e_1^{-2D-1} \quad \quad (\beta \in \mm).
\]
Recall that $\e_2 \leq \e_1^{D+1}$ and $\e_1 \leq c_1$. We now fix $c_1$ to be a small universal constant, so that (\ref{la21}) yields $\p^{\bar{\beta}} P_y^{\bar{\beta}}(y) \in [1/2,2]$. We then define $\hat{P}_y^{\bar{\beta}} = P_y^{\bar{\beta}} / \p^{\bar{\beta}} P_y^{\bar{\beta}}(y)$. The above properties of $ P_y^{\bar{\beta}}$ give that
\begin{align}
&\hat{P}_y^{\bar{\beta}} \in C \e_1^{-D} \e_2 \cdot \sigma_J(y, \mu); \label{la22} \\
&\p^\beta \hat{P}_y^{\bar{\beta}} (y) = \delta_{\beta \bar{\beta}} &&(\beta \in \mab); \label{la23} \\
&|\p^\beta \hat{P}_y^{\bar{\beta}}(y)| \leq C(\e_1 + \e_1^{-D} \e_2) &&(\beta \in \mm, \; \beta>\bar{\beta}); \text{ and} \label{la24} \\
&|\p^\beta \hat{P}_y^{\bar{\beta}}(y)| \leq C\e_1^{-2D-1} && (\beta \in \mm) \label{la25}.
\end{align}

For each $\al \in \mab \setminus \{ \bar{\beta} \}$, we define $\hat{P}_y^{\al} = \pt_y^\al - \left( \p^{\bar{\beta}} \pt_y^\al(y) \right) \hat{P}_y^{\bar{\beta}}$. Note that $\al < \bar{\beta}$, and hence $|\p^{\bar{\beta}} \pt_y^\al(y)| \leq \e_2 \leq 1$, thanks to (\ref{nd3}). From (\ref{nd1}) and (\ref{la22}), we now obtain
\begin{align}
\hat{P}_y^{\al} \in C' \e_1^{-D} \e_2 \cdot \sigma_J(y, \mu) \label{la26}.
\end{align}
From (\ref{nd3}) and (\ref{la25}),
\begin{align}
|\p^\beta \hat{P}_y^{\al} (y)| \leq |\p^\beta \pt_y^\al(y)| + |\p^{\bar{\beta}} \pt_y^\al(y)| \cdot |\p^\beta \hat{P}_y^{\bar{\beta}}(y)| &\leq \e_2 + \e_2 \cdot C \e_1^{-2D-1} \nonumber \\
&\leq C' \e_1^{-2D-1} \e_2, \; (\beta \in \mm, \; \beta > \al) \label{la27}.
\end{align}
Recall that $\mab = \{ \al \in \ma: \al < \bar{\beta} \} \cup \{ \bar{\beta} \}$. From (\ref{nd2}) and (\ref{la23}), we have
\begin{align}
\p^\beta \hat{P}_y^{\al} (y) = \delta_{\al \beta} \quad \quad (\beta \in \mab) \label{la28}.
\end{align}
By now varying the point $y \in E$, we deduce from (\ref{la22})-(\ref{la24}) and (\ref{la26})-(\ref{la28}) that $\sigma_J(y, \mu)$ contains an $\left(\mab, y, C \cdot (\e_1 + \e_1^{-2D-1} \e_2),1 \right)$-basis for each $y \in E$. Since $\e_2 \leq \e_1^{2D+2}$, the conclusion of the Lemma is immediate.
\end{proof}

\begin{lem}
Let $c_1$ and $C_1$ be the constants from Lemma \ref{newdir}. Suppose we are given data 
\begin{align*}
\Big(\e_1,\e_2,\delta,\ma,\mu, E, \left(\pt_x^\al \right)_{\al \in \ma, x \in E} \Big),
\end{align*}
satisfying \textbf{(D1)-(D5)} of Lemma \ref{newdir}, and the family of polynomials $(\pt_x^\al)_{\al \in \ma, x \in E}$ also satisfies
\begin{align}
\max \left\{|\p^\beta \pt_x^\al (x)|\delta^{|\beta|-|\al|}: x \in E \text{, } \al \in \ma \text{, } \beta \in \mm \right\} \geq \e_1^{-D-1} \label{bigdir1}.
\end{align}
Then there exists $\bb \in \mm \setminus \ma$ so that \textbf{(D7)} is satisfied, and additionally there exist $x_0 \in E$ and $\pt_{x_0}^{\bb} \in \mpp$ so that \textbf{(D6)} is satisfied. Hence, there exists $\mab<\ma$ so that for every $x \in E$, $\sigma_J(x,\mu)$ contains an $(\mab, x, C_1 \e_1, \delta)$-basis. \label{bigdir}
\end{lem}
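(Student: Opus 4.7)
The plan is to extract from (\ref{bigdir1}) a direction $\bb \in \mm \setminus \ma$, a point $x_0 \in E$, and a polynomial $\pt_{x_0}^\bb$ satisfying hypotheses (D6)-(D7) of Lemma \ref{newdir}; invoking that lemma then delivers the stated reduction. First I would define $M(\al) := \max_{x \in E,\, \beta \in \mm} |\p^\beta \pt_x^\al(x)|\delta^{|\beta|-|\al|}$ and put $B := \{\al \in \ma : M(\al) \geq \e_1^{-D-1}\}$, which is nonempty by (\ref{bigdir1}). Set $\al^* := \min_{<} B$. Then any $\bb$ chosen with $\bb < \al^*$ automatically forces (D7): if $\al' \in \ma$ and $\al' < \bb$, then $\al' < \al^*$ so $\al' \notin B$, which gives $|\p^\beta \pt_x^{\al'}(x)| \leq \e_1^{-D-1}\delta^{|\al'|-|\beta|}$ for every $x \in E$, $\beta \in \mm$.

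Next, fix $\al^*$ and set $V(\beta, x) := |\p^\beta \pt_x^{\al^*}(x)|\delta^{|\beta|-|\al^*|}$. Choose $(x_0, \beta_0)$ maximizing $V$ over $E \times \mm$, so $M_* := V(\beta_0, x_0) = M(\al^*) \geq \e_1^{-D-1}$. The basis conditions (D5) for $\al^*$ force $\beta_0 \notin \ma$ and $\beta_0 < \al^*$, since otherwise $V(\beta_0, x_0)$ would lie in $\{0,1\}$ (if $\beta_0 \in \ma$) or be at most $\e_2 \leq \e_1^{2D+2}$ (if $\beta_0 > \al^*$), both contradicting $M_* \geq \e_1^{-D-1}$. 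Now I would iteratively refine $\bb$: starting from $\bb := \beta_0$, whenever some $\beta > \bb$ satisfies $V(\beta, x_0) > \e_1 V(\bb, x_0)$, replace $\bb$ by that $\beta$. Since $(\mm, <)$ is a finite totally ordered set, the process terminates after at most $D$ updates, leaving $V(\beta, x_0) \leq \e_1 V(\bb, x_0)$ for every $\beta > \bb$. A telescoping chain then yields $V(\bb, x_0) > \e_1^k M_* \geq \e_1^{-1} > 1$ for $k \leq D$ updates. The same basis estimates that excluded $\beta_0 \in \ma$ or $\beta_0 > \al^*$ also forbid the iteration from moving $\bb$ into $\ma$ or past $\al^*$ (the candidate $V$-values $0$, $1$, or $\e_2$ cannot dominate $\e_1 V(\bb, x_0) > 1$), so $\bb \notin \ma$ and $\bb < \al^*$ persist, and (D7) is secured by the previous paragraph.

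Finally, define $\lambda := \p^\bb \pt_{x_0}^{\al^*}(x_0)$ and $\pt_{x_0}^\bb := \lambda^{-1} \pt_{x_0}^{\al^*}$. The normalization $\p^\bb \pt_{x_0}^\bb(x_0) = 1$ is built in. The membership $\pt_{x_0}^\bb \in \e_2 \delta^{|\bb|+n/p-m}\sigma_J(x_0, \mu)$ follows from $|\lambda^{-1}| \leq V(\bb, x_0)^{-1}\delta^{|\bb|-|\al^*|} \leq \delta^{|\bb|-|\al^*|}$ (using $V(\bb, x_0) \geq 1$) together with $\pt_{x_0}^{\al^*} \in \e_2 \delta^{|\al^*|+n/p-m}\sigma_J(x_0, \mu)$. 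For $\beta > \bb$, the termination criterion gives $|\p^\beta \pt_{x_0}^\bb(x_0)| = V(\beta, x_0)/V(\bb, x_0)\cdot \delta^{|\bb|-|\beta|} \leq \e_1 \delta^{|\bb|-|\beta|}$. For arbitrary $\beta \in \mm$, the argmax bound $V(\beta, x_0) \leq M_*$ combined with $V(\bb, x_0) \geq \e_1^k M_*$ yields $|\p^\beta \pt_{x_0}^\bb(x_0)| \leq \e_1^{-k}\delta^{|\bb|-|\beta|} \leq \e_1^{-D}\delta^{|\bb|-|\beta|}$. With (D1)-(D7) all verified, Lemma \ref{newdir} produces $\mab < \ma$ such that $\sigma_J(x, \mu)$ contains an $(\mab, x, C_1 \e_1, \delta)$-basis at every $x \in E$. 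The main technical hurdle is the iterative upgrade of $\bb$ that secures the $\e_1$ gap in the bound for $\beta > \bb$: a direct normalization of a raw witness of (\ref{bigdir1}) only produces $|\p^\beta \pt_{x_0}^\bb(x_0)| \leq \delta^{|\bb|-|\beta|}$, missing the critical factor $\e_1$, and it is the walk up the $<$-order (exploiting the $\e_1^{-D-1}$ surplus of (\ref{bigdir1}) against the finite depth $D$ of $\mm$) that opens the gap while the argmax calibration of $(x_0, \beta_0)$ simultaneously preserves the weaker global bound needed for (D6).
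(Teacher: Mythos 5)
Your proof is correct and follows essentially the same strategy as the paper's: pick $\al^*$ minimal in $\ma$ violating the bound, take $(x_0,\beta_0)$ at the argmax, move up the order to a $\bb$ with an $\e_1$-gap for all $\beta > \bb$, and normalize. The only difference is in how $\bb$ is located: the paper lists the elements $\beta_0 < \beta_1 < \dots < \beta_k = \bar{\al}$ and directly selects $r$ maximizing $a_r \e_1^{-r}$, which yields $a_r \geq \e_1^{-1}a_i$ for $i>r$ and $a_r \geq \e_1^k a_0$ in one shot, while your iterative "walk up" produces a $\bb$ satisfying the same two inequalities ($V(\beta,x_0) \leq \e_1 V(\bb,x_0)$ for $\beta > \bb$, and $V(\bb,x_0) \geq \e_1^k V(\beta_0,x_0)$). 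These are two presentations of the same optimization; the bounds obtained are identical and (D6)-(D7) follow in both cases.
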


\begin{proof}
By rescaling it suffices to assume that $\delta=1$. Our hypotheses tell us that $\e_1 \leq c_1$, $\e_2 \leq \e_1^{2 D+2}$, and that $(\pt_x^\al)_{\al \in \ma}$ forms an $(\ma, x, \e_2, 1)$-basis for $\sigma_J(x, \mu)$, for each $x \in E$. That is,
\begin{align}
&\pt_x^\al \in \e_2 \cdot \sigma_J(x, \mu); \label{la1} \\
&\p^{\beta} \pt_x^\al(x)=\delta_{\al \beta} && (\al, \beta \in \ma) ; \text { and } \label{la2} \\
&|\p^{\beta} \pt_x^\al(x)| \leq \e_2 && (\al \in \ma, \beta \in \mm, \; \beta>\al \label{la3}).
\end{align}
For each $\al \in \ma$, we define $Z_\al=\max \left\{ | \p^\beta \pt_x^\al(x) |: x \in E, \beta \in \mm \right\}$. Then hypothesis (\ref{bigdir1}) is equivalent to $\max \left\{Z_\al: \al \in \ma\right\} \geq \e_1^{-D-1}$. Let $\bar{\al} \in \ma$ be the minimal index with $Z_{\bar{\al}} \geq \e_1^{-D-1}$. Thus,
\begin{align}
Z_{\al}<\e_1^{-D-1}, \text { for all } \al \in \ma \text { with } \al<\bar{\al} \label{la4},
\end{align}
and there exist $x_{0} \in E$ and $\beta_0 \in \mm$ with
\begin{align}
\e_{1}^{-D-1} \leq Z_{\bar{\al}}=|\p^{\beta_0} \pt_{x_0}^{\bar{\al}}(x_0)| \label{la5}.
\end{align}
Thus, (\ref{la2}) and (\ref{la3}) imply that $\beta_0 \neq \bar{\al}$ and $\beta_0 \leq \bar{\al},$ respectively. Therefore, $\beta_0<\bar{\al} .$ By definition of $Z_{\bar{\al}}$ we also have
\begin{align}
|\p^{\beta} \pt_y^{\bar{\al}}(y)| \leq |\p^{\beta_0} \pt_{x_0}^{\bar{\al}}(x_{0})|, \text { for all } y \in E_{2} \text { and } \beta \in \mm \label{la6}.
\end{align}
Let the elements of $\mm$ between $\beta_0$ and $\bar{\al}$ be ordered as follows:
\[
\beta_0 < \beta_1< \dots <\beta_k = \bar{\al}.
\]
Note that $k+1 \leq |\mm| = D$. Define
\[
a_i = |\p^{\beta_i} \pt_{x_0}^{\bar{\al}}(x_0)|, \text{    for } i = 0,\dots,k.
\]
Then (\ref{la2}) and (\ref{la5}) imply that $a_k = 1$ and $a_0 \geq \e_1^{-D-1}$. Choose $r \in \{0,\dots,l\}$ with $a_r\e_1^{-r} = \max \{a_l \e_1^{-l}: 0 \leq l \leq k \}$. Note that $a_0 \geq \e_1^{-D-1} > a_k\e_1^{-k}$ which implies $r \neq k$. Moreover, we have
\begin{align}
a_r \geq \e_1^D a_0 \text{ and } a_r \geq \e_1^{-1} a_i \text{ for } i=r+1, \dots, k \label{la7}.
\end{align}
Define $\bar{\beta} = \beta_r \in \mm$. Then (\ref{la7}) states that 
\begin{align}
|\p^{\bar{\beta}}\pt_{x_0}^{\bar{\al}}(x_0)| \geq \e_1^D |\p^{\beta_0}\pt_{x_0}^{\bar{\al}}(x_0)| \geq \e_1^{-1} \label{la8}.
\end{align}
Also, (\ref{la3}) and (\ref{la8}) imply that
\begin{align*}
|\p^{\beta}\pt_{x_0}^{\bar{\al}}(x_0)| \leq \e_2 \leq 1 \leq \e_1 |\p^{\bar{\beta}}\pt_{x_0}^{\bar{\al}}(x_0)|  \quad \quad (\beta \in \mm, \; \beta>\bar{\beta}).
\end{align*}
For $\bar{\beta} < \beta \leq \bar{\al}$, (\ref{la7}) implies that $|\p^{\beta}\pt_{x_0}^{\bar{\al}}(x_0)| \leq \e_1 |\p^{\bar{\beta}}\pt_{x_0}^{\bar{\al}}(x_0)|$. Thus we have,
\begin{align}
|\p^{\beta}\pt_{x_0}^{\bar{\al}}(x_0)| \leq \e_1 |\p^{\bar{\beta}}\pt_{x_0}^{\bar{\al}}(x_0)| \quad \quad (\beta \in \mm, \;\beta>\bar{\beta}).\label{la9}
\end{align}
By (\ref{la8}) we have $|\p^{\bar{\beta}}\pt_{x_0}^{\bar{\al}}(x_0)| > 1$. Hence, (\ref{la2}) and (\ref{la3}) imply that 
\begin{align}
\bar{\beta} < \bar{\al} \text{ and } \bar{\beta} \notin \ma \label{la10}.
\end{align}
Define $\pt_{x_0}^{\bar{\beta}} = \pt_{x_0}^{\bar{\al}} / \p^{\bar{\beta}}\pt_{x_0}^{\bar{\al}}(x_0)$, which satisfies
\begin{align}
&\pt_{x_0}^{\bar{\beta}} \in \e_2 \sigma_J(x_0, \mu) &&\text{from (\ref{la1}) and } |\p^{\bar{\beta}}\pt_{x_0}^{\bar{\al}}(x_0)| \geq 1 \label{la11}; \\
&|\p^{\beta}\pt_{x_0}^{\bar{\beta}}(x_0)| \leq \e_1 &&(\beta \in \mm, \; \beta> \bar{\beta}), \text{ from (\ref{la9})} \label{la12}; \\
&|\p^{\beta}\pt_{x_0}^{\bar{\beta}}(x_0)| \leq \e_1^{-D} && (\beta \in \mm), \text{ from (\ref{la6}) and (\ref{la8}); and } \label{la13} \\
&|\p^{\bar{\beta}}\pt_{x_0}^{\bar{\beta}}(x_0)| = 1 \label{la14}.
\end{align}
For $x \in E$, from (\ref{la4}) and the definition of $Z_\al$, 
\begin{align}
|\p^\beta \pt_x^\al(x)| \leq \e_1^{-D-1} \quad \quad (\al \in \ma, \; \al < \bb, \; \beta \in \mm) \label{la20}
\end{align}
Now the hypotheses of Lemma \ref{newdir} hold with 
\[
\Big(\e_1,\e_2,\delta,\ma,\mu, E, \left(\pt_x^\al \right)_{\al \in \ma, x \in E}, x_0, \bb, \pt_{x_0}^{\bb} \Big)
\]
satisfying \textbf{(D1)-(D7)} due to (\ref{la11})-(\ref{la20}). Hence, there exists $\mab<\ma$ so that for every $x \in E$, $\sigma_J(x,\mu)$ contains an $(\mab, x, C_1 \e_1, \delta)$-basis.
\end{proof}

\begin{define}
Let $S \geq 1$, $\e \in (0,1)$, and $\ma \subset \mm$ be given. A matrix $(B_{\al \beta})_{\al, \beta \in \ma}$ is called $(S, \e)$ near-triangular if 
\begin{align*}
&|B_{\al \beta} - \delta_{\al \beta}| \leq \e && \al, \beta \in \mm, \; \al \leq \beta \text{; and} \\
&|B_{\al \beta}| \leq S && \al \in \ma, \; \beta \in \mm. \\
\end{align*}
\end{define}

\begin{lem}[Lemma 3.4 of \cite{arie3}]
Given $R \geq 1$, there exist constants $c_2 \geq 0$, $C_2 \geq 1$ depending only on $R, m, n$, so that the following holds. Suppose we are given $\e_2 \in (0,c_2]$, $x \in \R^n$, a symmetric convex subset $\sigma \subset \mathcal{P}$ and a family of polynomials $(P^\al)_{\al \in \ma} \subset \mathcal{P}$, such that
\begin{align}
&P^\al \in \e_2 \sigma && \al \in \ma;  \label{lna1} \\
&|\p^\beta P^\al(x) - \delta_{\al \beta}| \leq \e_2 &&\al \in \ma, \beta \in \mm, \beta \geq \al; \text{ and} \\
&|\p^\beta P^\al(x)| \leq R && \al \in \ma, \beta \in \mm. \label{lna3}
\end{align}
Then there exists a $(C_2, C_2\e_2)$ near-triangular matrix $B = (B_{\al \beta})_{\al, \beta \in \ma}$, so that if we define
\begin{align*}
\pt^\al := \sum_{\beta \in \ma} B_{\al \beta} P^\beta \text{, } \al \in \ma,
\end{align*}
then $(\pt^\al)_{\al \in \ma}$ forms an $(\ma, x, C_2\e_2,1)$-basis for $\sigma$. Furthermore $|\p^\beta \pt^\al(x) |  \leq C_2$ for every $\al \in \ma$ and every $\beta \in \mm$. \label{linalg}
\end{lem}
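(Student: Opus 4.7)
The plan is to obtain $B$ by inverting the square matrix $A^\ma := (\p^\beta P^\al(x))_{\al,\beta \in \ma}$ and then check that the resulting polynomials $\pt^\al = \sum_{\beta \in \ma} B_{\al\beta} P^\beta$ inherit the desired structure. By hypothesis, $A^\ma$ is ``approximately lower-triangular'' in the total order on $\ma \subset \mm$: its entries with $\beta > \al$ are bounded by $\e_2$, its diagonal entries lie within $\e_2$ of $1$, and its remaining entries are bounded by $R$. This is exactly the structure in which Gaussian elimination (or equivalently a Neumann-series inversion) produces an inverse with a matching near-triangular shape.

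First I would write $A^\ma = L + U$ where $L$ is the lower-triangular part (including diagonal) and $U$ the strictly upper-triangular part. Since $|L_{\al\al}-1|\le \e_2$ and $|L_{\al\beta}|\le R$ for $\beta < \al$, for $\e_2 \le c_2(R)$ small enough the matrix $L$ is invertible; by back-substitution on a triangular system, $L^{-1}$ is lower-triangular with $|(L^{-1})_{\al\al}-1|\le C\e_2$ and $|(L^{-1})_{\al\beta}| \le C(R)$ for $\beta<\al$. Then $A^\ma = L(I + L^{-1}U)$, and the entries of $L^{-1}U$ are bounded by $C(R)\e_2$; for $\e_2$ small enough the Neumann series converges, giving $(I+L^{-1}U)^{-1} = I + E$ with $|E_{\al\beta}| \le C'(R)\e_2$. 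Setting $B := (A^\ma)^{-1} = L^{-1} + EL^{-1}$ and inspecting the three regions (strict upper, diagonal, strict lower) shows that $B$ is $(C_2, C_2\e_2)$ near-triangular for a constant $C_2 = C_2(R,m,n)$.

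Having constructed $B$, I would verify the four claims about $\pt^\al := \sum_{\beta \in \ma} B_{\al\beta} P^\beta$. The membership $\pt^\al \in C_2\e_2 \,\sigma$ follows immediately from $P^\beta \in \e_2 \sigma$, $|B_{\al\beta}| \le C_2$, and the fact that $\sigma$ is symmetric and convex (absorbing the harmless factor $|\ma| \le D$ into $C_2$). The identity $\p^\beta \pt^\al(x) = \delta_{\al\beta}$ for $\al,\beta\in \ma$ is exactly the matrix equation $BA^\ma = I$. The bound $|\p^\beta \pt^\al(x)| \le C_2$ for general $\beta \in \mm$ is immediate from $|B_{\al\gamma}| \le C_2$ and $|\p^\beta P^\gamma(x)| \le R$.

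The one step requiring actual care is property \textbf{(iii)} of the basis: showing $|\p^\beta \pt^\al(x)| \le C_2 \e_2$ when $\beta \in \mm$ satisfies $\beta > \al$. Expanding $\p^\beta \pt^\al(x) = \sum_{\gamma \in \ma} B_{\al\gamma}\,\p^\beta P^\gamma(x)$, I would split the sum by the position of $\gamma \in \ma$ relative to $\al$. If $\gamma > \al$ then $|B_{\al\gamma}| \le C_2 \e_2$ by near-triangularity, paired with the crude bound $|\p^\beta P^\gamma(x)| \le R$. If $\gamma = \al$, then $\beta > \gamma$ and hypothesis \eqref{lna3} (more precisely the second bulleted hypothesis) gives $|\p^\beta P^\gamma(x)| \le \e_2$. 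If $\gamma < \al$, transitivity of the total order gives $\beta > \al > \gamma$, so again $|\p^\beta P^\gamma(x)| \le \e_2$, while $|B_{\al\gamma}| \le C_2$. In each case the product is $O(\e_2)$, and summing at most $D$ such terms yields the conclusion. This case analysis, resting on the transitivity of the order on $\mm$ together with the fact that the hypothesis $|\p^\beta P^\al(x)| \le \e_2$ for $\beta > \al$ is stated for all $\beta \in \mm$ (not merely $\beta \in \ma$), is the only mildly delicate part of the proof.
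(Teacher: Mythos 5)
The paper itself does not prove this lemma; it is imported verbatim from \cite{arie3} (Lemma 3.4 there), so there is no ``paper's own proof'' to compare against. Your argument, however, is correct, and the route you take --- set $B := (A^{\ma})^{-1}$ where $A^{\ma}_{\al\beta} := \p^\beta P^\al(x)$, invert via the triangular factor $A^{\ma} = L(I+L^{-1}U)$ and a Neumann series, then read off the near-triangular structure of $B$ --- is the natural one given the way the paper sets up ``near-triangular'' matrices precisely for this purpose. The pieces all fit: $B A^{\ma}=I$ gives the jet-normalization (ii); the three-case split on $\gamma$ relative to $\al$ gives (iii), and it is indeed the step that requires the hypothesis $|\p^\beta P^\gamma(x)-\delta_{\gamma\beta}|\leq \e_2$ to be quantified over all $\beta\in\mm$ with $\beta\geq\gamma$ rather than only $\beta \in \ma$; and the bound $|\p^\beta\pt^\al(x)|\leq C_2$ plus the containment $\pt^\al\in C_2\e_2\sigma$ both fall out from $|B_{\al\gamma}|\leq C_2$ together with symmetry and convexity of $\sigma$. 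One small over-claim: invertibility of $L$ does not actually need $\e_2\leq c_2(R)$; as a triangular matrix with diagonal entries in $[1-\e_2,1+\e_2]$, it is invertible as soon as $\e_2<1$, with no dependence on $R$. The dependence of $c_2$ on $R$ enters only where you say it does later, in forcing $\|L^{-1}U\|<1$ so the Neumann series converges. This is a cosmetic point and does not affect correctness.
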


\subsection{The Inductive Hypothesis} \label{sec:ind_hyp}

Fix $\ma \subset \mm$, $\ma \neq \mm$. We will  impose the inductive hypothesis that the Main Lemma holds for all $\ma' < \ma$. Our task is to prove the Main Lemma for $\ma$. The inductive hypothesis will be a standing assumption until we complete the proof of the Main Lemma for $\ma$ in Section \ref{sec:proof_ml}.

We will assume the value of $\e = \e(\ma)$ in the Main Lemma for $\ma$ is less than a small enough constant determined by $m$ and $n$ and eventually determine such a constant. Let $(\mu,\delta)$ be as in the statement of the Main Lemma for $\ma$ (Lemma \ref{amainlma}). By rescaling and translating, we may assume without loss of generality that
\begin{align}
&\diam(\supp(\mu))< \delta = 1/10, \nonumber \\
&\supp(\mu) \subset \frac{1}{10} Q^{\circ},  \quad Q^\circ = (0,1]^n \label{aqsubset}.
\end{align}
From  hypothesis (\ref{amainlm}) in the Main Lemma for $\ma$, with $\delta = 1/10$, we have:
\begin{align}
    \text{For every }x \in \supp(\mu), \;\sigma_J(x,\mu) \text{ has an }(\ma,x, \e/C_0, 1) \text{-basis}. \label{amainlmi}
\end{align}

The inductive hypothesis states that the Main Lemma for $\ma'$ is true for every $\ma' < \ma$. Let $\epsilon(\ma')$ be the constants arising in the Main Lemma for $\ma'$ ($\ma' < \ma$). Define
\begin{align}
\e_0 := \min \{\e(\ma'): \ma' < \ma \}.   \label{e0def}
\end{align}
By the inductive hypothesis, for $\ma' < \ma$ and any Borel regular measure $\widehat{\mu}$ on $\R^n$,
\begin{align}\nonumber
& \mbox{if } \sigma_J(x,\widehat{\mu}) \mbox{ admits an } (\ma', x, \e_0/C_0, 10 \widehat{\delta})\mbox{-basis}, \mbox{for each } x \in \supp(\widehat{\mu}), \nonumber\\
&\mbox{for some } \widehat{\delta} \geq \diam(\supp(\widehat{\mu})),
\mbox{then the Extension Theorem for }(\widehat{\mu}, \widehat{\delta}) \mbox{ holds}.\label{indhyp} 
\end{align}

We will assume that $\e < \e_0$. 

Suppose that there exists $\mab<\ma$ such that $\sigma_J(x, \mu)$ contains an $(\mab, x, \e_0/C_0, 1)-$basis for every $x \in \supp(\mu)$. Then by the validity of the Main Lemma for $\mab$, the Extension Theorem for $(\mu, \delta)$ holds (see Proposition \ref{aextthmeq}). Note that $\mab \neq \emptyset$ because $\emptyset$ is maximal under the order on multiindex sets. Thus, by the Main Lemma for $\mab$, in the conclusion of the Extension Theorem for $(\mu, \delta)$ one can take $K = \emptyset$, and so the functional $M: \J(\mu;\delta) \to \R_+$ has the form \eqref{aextthm3plus}. Therefore, we have proven the Main Lemma for $\ma$ in the case that there exists $\mab$ as above. Therefore, we may now assume:
\begin{align}
\text{For every }&\mab<\ma,\text{ there exists }x \in \supp(\mu)\text{ such that } \nonumber \\
&\sigma_J(x, \mu) \text{ does not contain an }(\mab, x, \e_0/C_0, 1)\text{-basis} \label{amaxa}.
\end{align}

\subsubsection{Auxiliary Polynomials}
\label{subsubsec:aux_poly}

\begin{lem}
Let $\mu$ be a Borel regular measure on $\R^n$ satisfying \eqref{aqsubset}, \eqref{amainlmi}, \eqref{amaxa}. Then for all $x \in 100 Q^\circ$, there exists a family of polynomials  $(P_x^\al)_{\al \in \ma}$ such that
\begin{align}
(P_x^\al)_{\al \in \ma} &\text{ forms an } (\ma,x, C\e/C_0, 1) \text{-basis for } \sigma_J(x,\mu) \text{, and} \label{basis} \\
&|\p^\beta P_x^\al(x)| \leq C \quad (\al \in \ma, \; \beta \in \mm) \label{bound}.
\end{align}
\label{lem:polybasis}
\end{lem}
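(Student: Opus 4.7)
The plan is to combine the no-smaller-basis hypothesis \eqref{amaxa} with the linear-algebra machinery of Lemmas \ref{bigdir} and \ref{linalg} to bootstrap the pointwise basis information on $\supp(\mu)$ into an everywhere-valid basis on $100 Q^\circ$. The key steps are: (i) use Lemma \ref{bigdir} to derive uniform derivative bounds on the $(\ma, x, \e/C_0, 1)$-bases $(\pt_x^\al)$ granted by \eqref{amainlmi} at $x \in \supp(\mu)$; (ii) for any $y \in 100 Q^\circ$, transport a fixed such basis from $x_0 \in \supp(\mu)$ to $y$ by taking $(m-1)$-jets of near-minimizers of the $\J$-functional; (iii) straighten the transported polynomials into an actual basis via Lemma \ref{linalg}.

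For step (i), fix $\e_1 \in (0, c_1]$ small enough that $C_1 \e_1 \leq \e_0/C_0$, and require the Main Lemma's constant $\e = \e(\ma)$ to satisfy $\e/C_0 \leq \e_1^{2D+2}$. Then hypotheses \textbf{(D1)}--\textbf{(D5)} of Lemma \ref{bigdir} hold for $(\e_1, \e/C_0, 1, \ma, \mu, \supp(\mu), (\pt_x^\al))$. If the maximum in \eqref{bigdir1} ever exceeded $\e_1^{-D-1}$, Lemma \ref{bigdir} would yield some $\mab < \ma$ admitting an $(\mab, x, C_1 \e_1, 1)$-basis at every $x \in \supp(\mu)$, which by monotonicity \eqref{monotone} is also an $(\mab, x, \e_0/C_0, 1)$-basis, contradicting \eqref{amaxa}. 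Hence $|\p^\beta \pt_x^\al(x)| < \e_1^{-D-1}$ for all $x \in \supp(\mu)$, $\al \in \ma$, $\beta \in \mm$, a universal bound.

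For step (ii), fix $y \in 100 Q^\circ$ and $x_0 \in \supp(\mu)$; since $\supp(\mu) \subset \tfrac{1}{10} Q^\circ$, we have $|y - x_0| \leq C$ (universal). For each $\al \in \ma$, property \textbf{(i)} of the basis produces $\varphi^\al \in \class$ with $J_{x_0}\varphi^\al = \pt_{x_0}^\al$ and $\|\varphi^\al\|_{\J(0,\mu)} \leq \e/C_0$. Set $\widetilde{P}_y^\al := J_y \varphi^\al$, so by definition $\widetilde{P}_y^\al \in (\e/C_0)\sigma_J(y,\mu)$. The Sobolev inequality gives $|\p^\beta(\varphi^\al - J_{x_0}\varphi^\al)(y)| \lesssim \e/C_0$, so $\p^\beta \widetilde{P}_y^\al(y)$ agrees with $\p^\beta \pt_{x_0}^\al(y)$ up to $O(\e/C_0)$. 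Expanding $\pt_{x_0}^\al$ in Taylor series about $x_0$ and invoking \textbf{(ii)}--\textbf{(iii)} of the basis (terms indexed by elements of $\ma$ vanish exactly via \textbf{(ii)}; those indexed by $\mm \setminus \ma$ with value $> \al$ are bounded by $\e/C_0$ via \textbf{(iii)}) yields
\[
|\p^\beta \widetilde{P}_y^\al(y) - \delta_{\al\beta}| \leq C\e/C_0 \quad (\al \in \ma,\ \beta \in \mm,\ \beta \geq \al), \qquad |\p^\beta \widetilde{P}_y^\al(y)| \leq C \quad (\al \in \ma,\ \beta \in \mm),
\]
the latter using the uniform bound from step (i). Step (iii) then applies Lemma \ref{linalg} at $y$ with $\sigma = \sigma_J(y,\mu)$, $\e_2 = C\e/C_0$ (shrink $\e$ so that $\e_2 \leq c_2$), and $R$ equal to the universal constant above; the resulting polynomials $P_y^\al := \sum_{\beta \in \ma} B_{\al\beta}\widetilde{P}_y^\beta$ form an $(\ma, y, C_2 \e_2, 1)$-basis for $\sigma_J(y,\mu)$ with $|\p^\beta P_y^\al(y)| \leq C_2$. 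By symmetry and convexity of $\sigma_J(y,\mu)$, each $P_y^\al$ lies in a universal multiple of $(\e/C_0)\sigma_J(y,\mu)$, yielding \eqref{basis} and \eqref{bound} after renaming constants.

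The principal obstacle is parameter coordination: $\e_1$ must satisfy $C_1 \e_1 \leq \e_0/C_0$ to force the contradiction with \eqref{amaxa} in step (i), while $\e(\ma)$ must simultaneously be small enough that $\e/C_0 \leq \e_1^{2D+2}$ (to invoke Lemma \ref{bigdir}) and $C\e/C_0 \leq c_2$ (to invoke Lemma \ref{linalg}). Once this triple constraint is arranged, the Sobolev-plus-Taylor transport from $x_0$ to $y$ and the subsequent purification by Lemma \ref{linalg} proceed routinely.
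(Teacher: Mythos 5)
Your proposal is correct and follows essentially the same route as the paper's own proof: derive the uniform bound $|\p^\beta\pt_x^\al(x)| \lesssim 1$ on $\supp(\mu)$ by contradiction via Lemma \ref{bigdir} and \eqref{amaxa}, fix $x_0 \in \supp(\mu)$ and transport the basis to any $y \in 100Q^\circ$ by taking $(m-1)$-jets of the witnessing functions $\varphi^\al$ (controlling the transported derivatives via the Sobolev inequality and a Taylor expansion of $\pt_{x_0}^\al$), then straighten with Lemma \ref{linalg}. The parameter coordination you flag is precisely the paper's, and the final appeal to ``symmetry and convexity'' is unnecessary since Lemma \ref{linalg} already returns the full $(\ma, y, C_2\e_2, 1)$-basis property including membership in $\e_2\sigma$, but this does not affect correctness.
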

\begin{proof}
From (\ref{amainlmi}), for $y \in \supp(\mu)$, there exists $(\pt_y^\al )_{\al \in \ma}$ that form an $(\ma,y,\e/C_0,1)$-basis for $\sigma_J(y,\mu)$. As a consequence of (\ref{amaxa}), we have
\begin{align}
|\p^\beta \pt_y^\al(y)| \leq C  \quad \al \in \ma, \; \beta \in \mm, \; y \in \supp(\mu). \label{bound0}
\end{align}
To see this, suppose (\ref{bound0}) fails for the constant $C = \e_1^{-D-1}$, where \\
$\e_1 \leq \min \{ c_1, \e_0/(C_0C_1) \}$ (and $c_1, C_1$ are the constants from Lemma \ref{bigdir}). That is, suppose
\begin{align*}
\max \left\{ |\p^\beta \pt_y^\al(y)|: y \in \supp(\mu), \; \al \in \ma, \; \beta \in \mm \right\} > \e_1^{-D-1}.
\end{align*}
We may assume that $\e \leq \e_1^{2D+2}$. Then the hypotheses of Lemma \ref{bigdir} hold with parameters
\[
\Big(\e_1,\e_2,\delta,\ma,\mu, E, (\pt_x^\al )_{\al \in \ma, x \in E} \Big): = \Big(\e_1, \e, 1, \ma, \mu, \supp(\mu), (\pt_y^\al)_{\al \in \ma, y \in \supp(\mu)} \Big).
\]
Thus we find $\mab < \ma$ so that $\sigma_J(y, \mu)$ contains an $(\mab, y, C_1\e_1,1)$-basis for each $y \in \supp(\mu)$. Since $C_1\e_1 \leq \e_0/C_0$, this contradicts (\ref{amaxa}), which concludes our proof of (\ref{bound0}).

Now fix $x_0 \in \supp(\mu)$. Then we have shown $\sigma_J(x_0, \mu)$ has an $(\ma,x_0, \e/C_0, 1)$-basis, $\{\pt_{x_0}^\al\}_{\al \in \ma}$ satisfying the inequalities (\ref{bound0}) for $y=x_0$. That is, for $\al \in \ma$,
\begin{align}
&\p^\beta \pt_{x_0}^\al(x_0) = \delta_{\al \beta} &&(\beta \in \ma); \label{aux1}\\
&|\p^\beta \pt_{x_0}^\al(x_0)| \leq \e/C_0 &&(\beta \in \mm, \; \beta>\al); \text{ and} \label{aux2} \\
&|\p^\beta \pt_{x_0}^\al(x_0)| \leq C &&(\beta \in \mm). \label{aux3}
\end{align}
Furthermore, since $\pt_{x_0}^\al \in (\epsilon/C_0) \sigma_J(x_0,\mu)$ for each $\al \in \ma$ there exists
\begin{align}
\varphi^\al \in L^{m,p}(\R^n) \text{ with }J_{x_0}\varphi^\al = \pt_{x_0}^\al \text{ and }\|\varphi^\al\|_{\J(0,\mu)} \leq \e/C_0. \label{mat0}
\end{align}
For $x \in 100 Q^{\circ}$, define 
\begin{equation}\label{mat0.1}
    \hat{P}_x^\al := J_x \varphi^\al.
\end{equation}
Because $\|\varphi^\al\|_{\J(0,\mu)} \leq \e/C_0$, we have
\begin{align}
     \hat{P}_x^\al \in \e/C_0 \cdot \sigma_J(x, \mu) \subset C \epsilon \sigma_J(x,\mu). \label{mat1}
\end{align}
From the Sobolev Inequality, since $\hat{P}_x^\al = J_x\varphi^\alpha$ and $\pt^\alpha_{x_0} = J_{x_0} \varphi^\alpha$, and since $|x-x_0| \leq 100 \delta_{Q^\circ} = 100$, 
\begin{align}
|\p^\beta \hat{P}_x^\al(x) - \delta_{\al \beta}| &\leq |\p^\beta \pt_{x_0}^\al(x) - \delta_{\al \beta}| + |\p^\beta J_{x_0}\varphi^\al(x) - \p^\beta \hat{P}_x^\al(x)| \nonumber \\
& \leq \left|\sum_{0 \leq \gamma \leq m-1-|\beta|} \p^{\beta + \gamma}  \pt_{x_0}^\al(x_0) (x-x_0)^\gamma/\gamma! - \delta_{\al \beta} \right|  + C \| \varphi^\al\|_{L^{m,p}(Q^\circ)} \delta_{Q^\circ}^{m - |\beta| - n/p} \label{aux}.
\end{align}
From (\ref{aux1}), (\ref{aux2}), (\ref{mat0}), (\ref{aux}), and since $|x-x_0| \leq 100$, for $\beta \in \mm$ satisfying $\beta \geq \al$, we have 
\begin{align}
|\p^\beta \hat{P}_x^\al(x) - \delta_{\al \beta}| \leq C \e. \label{mat2}
\end{align}
From (\ref{aux3}), (\ref{mat0}), (\ref{aux}),  and since $|x-x_0| \leq 100$, for $\beta \in \mm$,
\begin{align}
|\p^\beta \hat{P}_x^\al(x) - \delta_{\al \beta}| &\leq C, \mbox{ hence }  |\p^\beta \hat{P}_x^\al(x)| \leq C + 1. \label{mat3}
\end{align}
Fix a constant $R$, determined by $m$, $n$, and $p$, so that $R=C+1$, with $C$ from (\ref{mat3}). We  fix $c_2$ and $C_2$ for the universal constants in Lemma \ref{linalg} determined by this choice of $R$. By taking $\epsilon$ small enough, we may assume $\epsilon_2 := C\e$ in \eqref{mat1}, \eqref{mat2} satisfies $\epsilon_2  < c_2$. Then (\ref{mat1}), (\ref{mat2}), and (\ref{mat3}) allow us to apply Lemma \ref{linalg} to the family of polynomials $(\hat{P}_x^\al)_{\al \in \ma}$, with $\sigma = \sigma_J(x,\mu)$, $\epsilon_2 = C \epsilon$ and $R= C +1$. Thus, there exists a $(C_2, C_2 \cdot C \e)$-near triangular matrix $A^x = (A^x_{\al \beta})_{\al, \beta \in \ma}$ such that if we define $P_x^\al \in \mpp$ as
\begin{align} 
P_x^\al := \sum_{\beta \in \ma} A^x_{\al \beta} \cdot \hat{P}_x^\beta \quad (\al \in \ma), \label{polydef}
\end{align}
then $(P_x^\al)_{\alpha \in \ma}$ forms an $(\ma, x , C_2 C \epsilon,1)$-basis for $\sigma_J(x,\mu)$ and $| \partial^\beta P^\al_x(x)| \leq C_2$ for all $\beta \in \ma$. Thus, the family of polynomials $(P_x^\al)_{\al \in \ma}$ satisfies (\ref{basis})-(\ref{bound}).
\end{proof}

For $\al \in \ma$, $\beta \in \mm$ satisfying $\beta>\al$, $x,y \in 100Q^\circ$, and $P_y^\al \in \mpp$ satisfying (\ref{basis}) and (\ref{bound}), we use the Taylor expansion to bound
\begin{align}
    \big|\p^\beta P^\al_y(x)\big| &= \bigg|\p^\beta \sum_{\gamma \in \mm} \p^\gamma P^\al_y(y)(x-y)^\gamma/\gamma! \bigg| \nonumber \\
    &=\bigg| \sum_{\eta + \beta \in \mm} \p^{\eta+\beta}P_y^\al(y)(x-y)^\eta/\eta! \bigg | \nonumber \\
    & \leq C \e/C_0. \label{movepoly}
\end{align}

\begin{prop}
For each $x \in 100 Q^\circ$, there exists a $(C, C\e)$-near triangular matrix $A^x = (A^x_{\al \beta})_{\al, \beta \in \ma}$, and there exists a corresponding family of functions $(\varphi_x^\al)_{\al \in \ma} \subset L^{m,p}(\R^n)$ given by
\begin{equation} 
\varphi_x^\al := \sum_{\beta \in \ma} A^x_{\al \beta} \cdot \varphi^\beta; \label{phi1}
\end{equation}
where $(\varphi^\al)_{\al \in \ma}$ is a family of functions satisfying
\begin{equation}
\|\varphi^\al\|_{\J(0,\mu)} \leq \e/C_0,
    \label{mat0_new}
\end{equation}
and the family $(\varphi_x^\al)_{\al \in \ma}$ also satisfies
\begin{align}
&J_x\varphi_x^\al = P_x^\al; \text{ and} \label{phi2} \\
&\|\varphi_x^\al\|_{\J(0,\mu)} \leq C\e/C_0. \label{phi3}
\end{align}
\label{phiprop}
\end{prop}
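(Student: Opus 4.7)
The plan is to verify the proposition by directly assembling the pieces already constructed during the proof of Lemma \ref{lem:polybasis}. Specifically, I will reuse the functions $(\varphi^\al)_{\al \in \ma}$ from \eqref{mat0} (which already satisfy \eqref{mat0_new}) and the $(C_2, C_2 C\e)$-near triangular matrix $A^x$ from \eqref{polydef}, and then simply check that the definition \eqref{phi1} yields a family with the claimed jet and norm bounds.

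First, I would set $\varphi_x^\al := \sum_{\beta \in \ma} A^x_{\al\beta} \varphi^\beta$. Since the jet map $J_x : L^{m,p}(\R^n) \to \mpp$ (well-defined by the Sobolev embedding $L^{m,p} \subset C^{m-1}_{loc}$ for $p>n$) is linear, and since $\hat{P}_x^\beta = J_x \varphi^\beta$ by \eqref{mat0.1}, we compute
\[
J_x \varphi_x^\al = \sum_{\beta \in \ma} A^x_{\al\beta} J_x \varphi^\beta = \sum_{\beta \in \ma} A^x_{\al\beta} \hat{P}_x^\beta = P_x^\al,
\]
where the last equality is the definition \eqref{polydef}. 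This proves \eqref{phi2}.

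Next, I would bound $\|\varphi_x^\al\|_{\J(0,\mu)}$ using the sublinearity of the $\J$-functional (a special case of \eqref{sublinearj} with $f_1 = f_2 = 0$ and $P_i = 0$, or equivalently the triangle inequality for the seminorm $\|\cdot\|_{\J(0,\mu)}$). This gives
\[
\|\varphi_x^\al\|_{\J(0,\mu)} \leq \sum_{\beta \in \ma} |A^x_{\al\beta}| \cdot \|\varphi^\beta\|_{\J(0,\mu)}.
\]
By the $(C_2, C_2 C\e)$-near triangularity of $A^x$, the off-diagonal and sub-diagonal entries satisfy $|A^x_{\al\beta}| \leq C_2$ for all $\al \in \ma$, $\beta \in \mm$, while the diagonal entries satisfy $|A^x_{\al\al} - 1| \leq C_2 C \e \leq 1$, so $|A^x_{\al\al}| \leq 2$ as well. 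Combined with $|\ma| \leq D$ and the bound $\|\varphi^\beta\|_{\J(0,\mu)} \leq \e/C_0$ from \eqref{mat0_new}, we obtain $\|\varphi_x^\al\|_{\J(0,\mu)} \leq D \cdot C_2 \cdot \e/C_0 \leq C \e/C_0$, establishing \eqref{phi3}.

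There is no real obstacle here; the proposition is essentially a bookkeeping statement that records, for later use, the approximating functions whose existence was implicit in the proof of Lemma \ref{lem:polybasis}. The only subtlety worth flagging is ensuring that one uses the same $\varphi^\al$ and $A^x$ throughout — in particular, the family $(\varphi^\al)_{\al \in \ma}$ is chosen independently of $x$ (only the matrix $A^x$ and hence $\varphi_x^\al$ depends on $x$) — so that the linear dependence of $\varphi_x^\al$ on $A^x$ is transparent, which will matter in subsequent sections when we track the linear and constructible structure of the extension operator.
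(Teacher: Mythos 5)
Your proposal is correct and follows essentially the same route as the paper: reuse the $(\varphi^\al)$ and the near-triangular matrix $A^x$ from the proof of Lemma \ref{lem:polybasis}, obtain \eqref{phi2} by applying $J_x$ and using \eqref{mat0.1} with \eqref{polydef}, and obtain \eqref{phi3} from sublinearity of $\|\cdot\|_{\J(0,\mu)}$ together with the uniform bound on $|A^x_{\al\beta}|$ supplied by near-triangularity. The small case split you make on diagonal versus off-diagonal entries is unnecessary (the second condition in the definition of $(S,\e)$-near-triangularity already bounds all entries of $A^x$ by $S$), but it does no harm.
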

\begin{proof}
We follow the proof of Lemma \ref{lem:polybasis}, and let $(\varphi^\al)_{\al \in \ma}$ be the family of functions satisfying (\ref{mat0}). Accordingly, we have $\|\varphi^\al\|_{\J(0,\mu)} \leq \e/C_0$, giving \eqref{mat0_new}.

For $x \in 100 Q^\circ$, let $A^x = (A^x_{\al \beta})_{\al, \beta \in \ma}$ be the $(C, C\e)$-near triangular matrix defined just before (\ref{polydef}). Then define $\varphi_x^\al$ ($\al \in \ma$) as in \eqref{phi1}. Then \eqref{phi2} follows by applying $J_x$ to both sides of \eqref{phi1} and applying \eqref{mat0.1} and (\ref{polydef}). 

Note $|A^x_{\al \beta}| \leq C$ for $\al,\beta \in \ma$, since $A^x$ is $(C,C\epsilon)$-near triangular. Thus, \eqref{phi3} follows by sublinearity of the $\J(0,\mu)$ functional; indeed, $\| F \|_{\J(0,\mu)} = ( \| F \|_{L^{m,p}(\R^n)}^p + \| F \|_{L^p(d \mu)}^p)^{1/p}$ is a norm on $L^{m,p}(\R^n) \cap L^p(d \mu)$.
\end{proof}


\subsubsection{Reduction to Monotonic $\ma$} \label{subsubsec:monotonic}
\begin{define} [Monotonic labels] A collection of multi-indices $\ma \subset \mm$ is monotonic if
\[
\al \in \ma \text{ and } |\gamma| \leq m-1-|\al| \text{ implies } \al + \gamma \in \ma.
\]
If the above property fails, we say that $\ma$ is non-monotonic.
\end{define}

In this section we follow Section 4.2 of \cite{arie3} to deduce the monotonicity of $\ma$ using assumption (\ref{amaxa}) and condition (\ref{amainlmi}) for $\ma$. 

For the sake of contradiction, we assume that $\ma$ is non-monotonic. We will show there exists $\mab<\ma$ such that for all $x \in \supp(\mu)$, $\sigma_J(x, \mu)$ contains an $(\mab,x,\e_0/C_0, 1)$-basis, contradicting (\ref{amaxa}). Thus our proof of the Induction Step is reduced to the case of monotonic $\ma$.

Let $\al_0 \in \ma$, $\gamma \in \mm$ satisfy
\begin{align}
0 < |\gamma| \leq m-1-|\al_0| \text{ and } \bar{\al}:= \al_0 + \gamma \in \mm \setminus \ma \label{alchoice}.
\end{align}
Define $\mab = \ma \cup \{\bar{\al} \}$. Note that $\al_0 < \bar{\al}$, and the only element of $\mab \Delta \ma$ is $\bar{\al}$, which is in $\mab$, so $\mab < \ma$ by definition of the order on multiindex sets.

For fixed $y \in \supp(\mu) \subset \frac{1}{10} Q^{\circ}$, we let $(P_y^\al)_{\al \in A}$ satisfy (\ref{basis}) and (\ref{bound}). We now define
\begin{align}
P_y^{\bar{\al}}:=P_y^{\al_0} \odot_y q^y, \quad \text { where } q^y(x):=\frac{\al_0 !}{\bar{\al} !}(x-y)^\gamma \label{pbar}.
\end{align}
Expanding out this product, we have
\[
P_y^{\bar{\al}}(x) = \frac{\al_0 !}{\bar{\al} !} \sum_{|\omega| \leq m-1-|\gamma|} \frac{1}{\omega !} \p^{\omega} P_y^{\al_0}(y)(x-y)^{\omega+\gamma}.
\]
Note that $\omega=\al_0$ arises in the sum above, thanks to (\ref{alchoice}). Also, the terms with $\omega+\gamma>\bar{\al}=\alpha_0+\gamma$ correspond precisely to $\omega>\al_0$, by definition of the order. The following properties are now immediate from (\ref{basis}) and (\ref{bound}):
\begin{align}
&\p^{\bar{\al}} P_y^{\bar{\al}}(y) = 1; \label{pbar1} \\
&|\p^\beta P_y^{\bar{\al}}(y)| \leq C' \e &&(\beta \in \mm, \; \beta>\bar{\al}); \text{ and} \\
&|\p^\beta P_y^{\bar{\al}}(y)| \leq C' &&(\beta \in \mm). \label{pbar3}
\end{align}
From (\ref{basis}), we have $P_y^{\al_0} \in C \e \cdot \sigma_J(y, \mu)$, so there exists $\varphi \in L^{m,p}(\R^n)$ satisfying  
\begin{align}
&\|\varphi\|_{\J(0,\mu)}  = \left(\|\varphi\|_{L^{m,p}(\R^n)}^p + \int_{\R^n}|\varphi|^p d\mu \right)^{1/p} \leq C \e ; \text{ and} \label{fibbd0} \\
&J_y\varphi = P_y^{\al_0}. \label{fijet}
\end{align}
Let $\theta \in C_0^\infty(Q^\circ)$ satisfy $\theta|_{0.99Q^\circ} = 1$ and $|\p^\al \theta (x)| \leq C$ for $x \in Q^\circ$ and $|\al| \leq m$. Define $\fib: \R^n \to \R$ as $\fib:= \theta \cdot (\varphi q^y) + (1-\theta) \cdot P_y^{\bar{\al}}$. Then $J_y\fib = J_y( \varphi q^y) = P_y^{\al_0} \odot_y q^y = P_y^{\bar{\al}}$, since $y \in \frac{1}{10}Q^\circ$. Note that $\| q^y \|_{L^\infty(Q^\circ)} \leq C$. Thus, from (\ref{aqsubset}) and (\ref{fibbd0}) we deduce
\begin{align*}
\int_{\R^n} |\fib|^p d\mu \leq \int_{\frac{1}{10} Q^\circ} |\varphi q^y|^p d\mu \leq C \int_{\frac{1}{10} Q^\circ} |\varphi|^p d\mu \leq C (C \e)^p.
\end{align*}
We will show $\| \fib \|_{L^{m,p}(\R^n)} \leq C \e$, implying that $\| \fib \|_{\J(0, \mu)} \leq C' \e$ and $P_y^{\bar{\al}} \in C' \e \cdot \sigma_J (y, \mu)$. 

Since $\bar{\varphi}$ agrees with an $(m-1)^{\text {rst}}$ degree polynomial on $\R^n \setminus Q^\circ$, and since the partial derivatives of $\theta$ are uniformly bounded, we have
\begin{align}
\| \fib \|_{L^{m,p}(\R^n)} &= \| \fib \|_{L^{m,p}(Q^\circ)}  \nonumber \\
&= \| \fib -  P_y^{\bar{\al}}\|_{L^{m,p}(Q^\circ)} \nonumber \\ 
&= \| \theta \cdot (\varphi q^y - P_y^{\bar{\al}}) \|_{L^{m,p}(Q^\circ)} \nonumber \\
& \leq C \sum_{|\beta| \leq m} \| \p^\beta (\varphi q^y - P_y^{\bar{\al}}) \|_{L^p(Q^\circ)}. \label{fibbd1}
\end{align}
As a consequence of (\ref{pbar}) and (\ref{fijet}), $P_y^{\bar{\al}} = J_y( \varphi q^y)$. Hence, by the Sobolev Inequality, for $x \in Q^\circ$, $|\beta| \leq m$,
\begin{align}
|\p^\beta (\varphi q^y - P_y^{\bar{\al}})(x)| \leq C \| \varphi q^y \|_{L^{m,p}(Q^{\circ})}. \label{fibbd2}
\end{align}
For $x \in Q^\circ$, $|\beta|=m$, because $q^y$ is a degree $|\gamma|$ polynomial, we have
\begin{align}
    |\p^\beta (\varphi q^y)(x)| &\leq \sum_{\omega + \omega' = \beta} |\p^{\omega'} \varphi (x)| \cdot |\p^\omega q^y(x)| \nonumber \\
    &= |\p^\beta \varphi(x)||q^y(x)| +  \sum_{\substack{\omega + \omega' = \beta,\\ 0<|\omega| \leq |\gamma|}} \left( |\p^{\omega'} ( \varphi -P_y^{\al_0})(x)| + |\p^{\omega'}P_y^{\al_0}(x)| \right) \cdot |\p^\omega q^y(x)|. \label{fibbd3}
\end{align}
Furthermore, for $(\omega,\omega')$ as in the sum above, $0 < |\omega| \leq |\gamma|$ implies $m-1 \geq |\omega'| \geq m - |\gamma| > |\al_0|$ (see (\ref{alchoice})). Recall estimate (\ref{movepoly}): $|\p^{\omega'} P_y^{\al_0}(x)| \leq C \e$, and note that by construction, we have $\| \partial^\omega q^y \|_{L^\infty(Q^\circ)} \leq C$. This, in combination with the Sobolev Inequality, (\ref{fibbd0}), and (\ref{fijet}) allows us to further reduce the sum on the right hand side of inequality (\ref{fibbd3}). When $x \in Q^\circ$ and $|\beta| =m$:
\begin{align*}
    \sum_{\substack{\omega + \omega' = \beta,\\ 0<|\omega| \leq |\gamma|}} \left( |\p^{\omega'} ( \varphi -P_y^{\al_0})(x)| + |\p^{\omega'}P_y^{\al_0}(x)| \right) \cdot |\p^\omega q^y(x)| \leq C \cdot \left( \| \varphi \|_{L^{m,p}(Q^{\circ})} + C \e \right) \leq C' \e.
\end{align*}
Substituting this into (\ref{fibbd3}), we have for $x \in Q^\circ$ and $|\beta|=m$, 
\begin{align*}
    |\p^\beta (\varphi q^y)(x)| &\leq |\p^\beta \varphi(x)||q^y(x)| +  C' \e.
\end{align*}
By integrating over $x \in Q^\circ$ and again applying (\ref{fibbd0}) and $\| q^y \|_{L^\infty(Q^\circ)} \leq C$, we deduce that $\|\varphi q^y \|_{L^{m,p}(Q^\circ)} \leq C \e $. Thus, keeping in mind (\ref{fibbd1}) and (\ref{fibbd2}), we have
\begin{align*}
\| \fib \|_{L^{m,p}(\R^n)} &\leq C \| \varphi q^y \|_{L^{m,p}(Q^{\circ})} \nonumber \\
&\leq C'\e.
\end{align*}
Because $J_y \bar{\varphi} = P^{\bar{\al}}_y$, this completes our proof that 
\begin{align}
P_y^{\bar{\al}} \in C' \e \cdot \sigma_J (y, \mu). \label{pbar4}
\end{align}

Due to (\ref{basis}), (\ref{bound}), (\ref{pbar1})-(\ref{pbar3}), and (\ref{pbar4}), the family $(P_y^\al)_{\al \in \mab}$ satisfies (\ref{lna1})-(\ref{lna3}) with $R$ equal to a universal constant, $\e_2 = C' \e$, and $\sigma = \sigma_J(y, \mu)$. We may assume $C' \e < c_2$, where $c_2$ comes from Lemma \ref{linalg}. Then the hypotheses of Lemma \ref{linalg} hold; hence, $\sigma_J(y, \mu)$ contains an $(\mab, y, C_2 C' \e, 1)$-basis for $C_2 \geq 1$, a universal constant. Recall our assumption that $\e$ is less than a small enough constant determined by $m$ and $n$. We may assume $\e < \e_0/(C'C_0C_2)$, so $\sigma_J(y, \mu)$ contains an $(\mab, y, \e_0/C_0, 1)$-basis. Since $y \in \supp(\mu)$ is arbitrary, this contradicts (\ref{amaxa}). 

This completes the proof by contradiction, and establishes that $\ma \subset \mm$ is monotonic.

\section{The Calder\'{o}n-Zygmund Decomposition}
\label{sec:cz_decomp}

Recall we have fixed a multi-index set $\ma \subset \mm$, $\ma \neq \mm$ and a Borel regular measure $\mu$ on $\R^n$ satisfying the conditions (\ref{aqsubset}), (\ref{amainlmi}), and (\ref{amaxa}). By the results of Section \ref{subsubsec:monotonic}, we deduce that $\ma$ is monotonic. Our goal is to prove the Extension Theorem for $(\mu, \delta = 1/10)$. Recall we have defined constants $\e_0$ in (\ref{e0def}) and $C_0$ in Lemma \ref{sjins}.

\subsection{Defining the Decomposition}

\begin{define}[OK Cubes] A dyadic cube $Q \subset Q^{\circ} = (0,1]^n$ is OK if there exists $\mab < \ma $ such that for every $x \in \supp(\mu) \cap 3Q$, $\sigma_J(x,\mu|_{3Q})$ contains an $(\mab,x, \e_0/C_0, 30 \delta_Q)$-basis. \label{ok_defn}
\end{define}

\begin{define}[Calder\'{o}n-Zygmund Cubes]
A dyadic cube $Q \subset Q^\circ$ is CZ if $Q$ is OK and every dyadic cube $Q' \subset Q^\circ$ that properly contains $Q$ is not OK. 
\end{define}

If $Q,Q' \subset Q^\circ$ are CZ and $Q \neq Q'$ then $Q \cap Q' = \emptyset$. Indeed, since CZ cubes are dyadic cubes, either $Q \subsetneq Q'$, $Q' \subsetneq Q$, or $Q \cap Q' \neq \emptyset$. The first case is impossible, since, according to the definition of CZ cubes if $Q$ is CZ and $Q \subsetneq Q'$ then $Q'$ is not OK, hence $Q'$ is not CZ. Similarly, the second case is impossible. Therefore, $Q \cap Q' \neq \emptyset$, as claimed.

We write $CZ^\circ  = \{Q_i\}_{i \in I}$ to denote the collection of all CZ cubes. The index set $I$ may be countable or finite. In contrast to \cite{arie3}, the CZ cubes do not necessarily form a partition of $Q^\circ$; however, if $\mu$ is a finite measure the CZ cubes do form a partition of $Q^\circ$ (see Lemma \ref{nottrivial} below).

\begin{lem} \label{okmonlem}
Suppose $Q, Q' \subset Q^\circ$ are dyadic cubes. Suppose $Q'$ is OK and $3 Q \subset 3 Q'$. Then $Q$ is OK. 
\end{lem}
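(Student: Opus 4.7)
The plan is to transfer the OK-witnessing data from $Q'$ to $Q$ without modification, relying on the monotonicity of the basis property recorded in (\ref{monotone}). Since $Q'$ is OK, there is a multi-index set $\mab < \ma$ and, for each $y \in \supp(\mu) \cap 3Q'$, a family $(P_y^\al)_{\al \in \mab}$ forming an $(\mab, y, \e_0/C_0, 30\delta_{Q'})$-basis for $\sigma_J(y, \mu|_{3Q'})$. My claim will be that the same family, at the same base points, serves as an $(\mab, y, \e_0/C_0, 30\delta_Q)$-basis for $\sigma_J(y, \mu|_{3Q})$ for every $y \in \supp(\mu) \cap 3Q$; since $\mab < \ma$, this yields OK-ness of $Q$ with the same witness.

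The verification consists of three bookkeeping observations. First, $3Q \subset 3Q'$ implies $\supp(\mu) \cap 3Q \subset \supp(\mu) \cap 3Q'$, so the family $(P_y^\al)_{\al \in \mab}$ is already defined at every $y$ we care about. Second, since $\mu|_{3Q}(A) \leq \mu|_{3Q'}(A)$ for every Borel set $A$, we have $\|F\|_{\J(0,\mu|_{3Q})} \leq \|F\|_{\J(0,\mu|_{3Q'})}$ for every $F \in \class$; by the definition of $\sigma_J$, this gives the inclusion $\sigma_J(y, \mu|_{3Q'}) \subset \sigma_J(y, \mu|_{3Q})$. Third, since $3Q$ and $3Q'$ are cubes with $3Q \subset 3Q'$, their sidelengths satisfy $3\delta_Q \leq 3\delta_{Q'}$, hence $30\delta_Q \leq 30\delta_{Q'}$.

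With these in hand, I would invoke (\ref{monotone}) with $\sigma = \sigma_J(y,\mu|_{3Q'})$, $\sigma' = \sigma_J(y,\mu|_{3Q})$, $\e' = \e = \e_0/C_0$, $\delta = 30\delta_{Q'}$, and $\delta' = 30\delta_Q$ to conclude the transfer. This is essentially a one-line argument once the monotonicity principle is on the table; the only real pitfall is keeping the direction of each inclusion straight, since $\sigma_J(y, \cdot)$ is \emph{anti}-monotone in the measure while the basis condition wants $\sigma$ to grow and $\delta$ to shrink. Both happen to point in the favorable direction here, so no serious obstacle arises.
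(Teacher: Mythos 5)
Your proof is correct and follows the same route as the paper: both establish the inclusion $\sigma_J(y,\mu|_{3Q'}) \subset \sigma_J(y,\mu|_{3Q})$ from anti-monotonicity of $\sigma_J$ in the measure, note $\delta_Q \leq \delta_{Q'}$, and then invoke the monotonicity property (\ref{monotone}) of $(\ma,x,\e,\delta)$-bases to transfer the witness unchanged. Your careful remark about the directions being favorable is exactly the one point that needs checking, and you got it right.
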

\begin{proof}
We have $\sigma_J(x, \mu|_{3Q'}) \subset \sigma_J(x, \mu|_{3Q})$; indeed, this follows by the definition of $\sigma_J(\cdot, \cdot)$, and because $\|F \|_{\J(0,\mu|_{3Q})} \leq \|F \|_{\J(0,\mu|_{3Q'})}$ for $3Q \subset 3 Q'$ and $F \in L^{m,p}(\R^n)$. Because $Q'$ is OK, there exists $\mab<\ma$ such that for all $x \in \supp(\mu) \cap 3Q'$, $\sigma_J(x, \mu|_{3Q'})$ contains an $(\mab, x, \e_0/C_0, 30 \delta_{Q'})-$basis. Since $\delta_{Q} \leq \delta_{Q'}$, and thanks to (\ref{monotone}), we have that $\sigma_J(x, \mu|_{3Q})$ contains an $(\mab, x, \e_0/C_0, 30 \delta_{Q})-$basis for any $x \in 3Q$. This indicates $Q$ is OK.
\end{proof}

Because of Lemma \ref{sjins}, for every $Q \in CZ^\circ$, there exists $\mab < \ma $ such that for every $x \in \supp(\mu) \cap 3Q$, $\sigma(\mu|_{3Q}, \delta_Q)$ contains an $(\mab,x, \e_0, \delta_Q)$-basis.

\begin{lem}
Let $Q \subset Q^\circ$ be a dyadic cube. If $\mu(3Q)^{1/p} \leq \frac{\e_0}{C_0} (30 \delta_Q)^{n/p-m}$ then $Q$ is OK.  \label{smallok}
\end{lem}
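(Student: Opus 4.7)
My plan is to verify the OK condition with the minimal possible auxiliary label, $\mab := \mm$. Since we are in the induction step where $\ma \neq \mm$, we have $\ma \subsetneq \mm$, and the paper's rule that $\mab' \supsetneq \ma'$ forces $\mab' < \ma'$ gives $\mm < \ma$. Hence it suffices, for each $x \in \supp(\mu) \cap 3Q$, to exhibit an $(\mm, x, \e_0/C_0, 30\delta_Q)$-basis of $\sigma_J(x, \mu|_{3Q})$. Property (ii) of Definition \ref{basisdef} applied to $\ma=\mm$ uniquely forces this basis to be the monomial basis $P^\al(z) := (z-x)^\al/\al!$ for $\al \in \mm$.

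With this choice, properties (ii) and (iii) of Definition \ref{basisdef} are immediate: (ii) is $\p^\beta P^\al(x) = \delta_{\al\beta}$ by direct computation, and (iii) is vacuously satisfied because $\p^\beta P^\al(x) = 0$ for every $\beta \in \mm$ with $\beta \neq \al$. The only substance of the lemma is property (i): showing that $P^\al \in (\e_0/C_0)(30\delta_Q)^{n/p+|\al|-m}\,\sigma_J(x, \mu|_{3Q})$, or equivalently producing $F^\al \in L^{m,p}(\R^n)$ with $J_x F^\al = P^\al$ and $\|F^\al\|_{\J(0, \mu|_{3Q})} \leq (\e_0/C_0)(30\delta_Q)^{n/p+|\al|-m}$.

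My choice for this is $F^\al := P^\al$ itself, regarded as a polynomial function on all of $\R^n$. Because $\deg P^\al = |\al| \leq m-1$, every $m$-th distributional derivative of $F^\al$ vanishes, so $F^\al \in L^{m,p}(\R^n)$ with $\|F^\al\|_{L^{m,p}(\R^n)} = 0$, while $J_x F^\al = P^\al$ is automatic. Thus $\|F^\al\|_{\J(0, \mu|_{3Q})}$ reduces to the $L^p(d\mu|_{3Q})$-piece. Since $|z - x|_\infty \leq 3\delta_Q$ for $z \in 3Q$, the pointwise bound $|P^\al(z)| \leq (3\delta_Q)^{|\al|}/\al!$ on $3Q$, combined with the measure hypothesis and the identity $3^{|\al|}\cdot 30^{n/p-m} = 30^{|\al|+n/p-m}/10^{|\al|}$, will yield
\[
\|F^\al\|_{\J(0,\mu|_{3Q})} \leq \frac{(3\delta_Q)^{|\al|}}{\al!}\,\mu(3Q)^{1/p} \leq \frac{\e_0}{C_0\,\al!\,10^{|\al|}}(30\delta_Q)^{|\al|+n/p-m} \leq \frac{\e_0}{C_0}(30\delta_Q)^{|\al|+n/p-m},
\]
using $\al!\cdot 10^{|\al|} \geq 1$. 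This is exactly property (i), so $Q$ is OK.

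I do not foresee any real obstacle; the observation driving the proof is that polynomials of degree $\leq m-1$ are \emph{free} in the $L^{m,p}(\R^n)$-seminorm, so each basis polynomial can serve as its own Sobolev extension, and the measure hypothesis was calibrated precisely so that the induced $L^p(d\mu|_{3Q})$-cost fits the allowed budget $(\e_0/C_0)(30\delta_Q)^{n/p+|\al|-m}$ with a favorable factor of $1/(\al!\,10^{|\al|})$ to spare.
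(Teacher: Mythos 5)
Your proof is correct, but it takes a genuinely different route from the paper's. The paper chooses the \emph{minimal} auxiliary label $\mab = \{0\}$ (a singleton), builds the one-element basis $P^0 = 1$ with $F^0 = 1$, observes $\|1\|_{\J(0,\mu|_{3Q})} = \mu(3Q)^{1/p}$, and is done; the verification of the ordering $\{0\} < \ma$ then relies on the monotonicity of $\ma$ (proved earlier in Section 3.3) together with $\ma \neq \mm$ to conclude $0 \notin \ma$, after which the minimality of $0$ in $\mm$ gives $\{0\} < \ma$. You instead choose the \emph{maximal} auxiliary label $\mab = \mm$, which requires exhibiting all $D$ monomial basis polynomials $(z-x)^\al/\al!$ and verifying the budget for each one, absorbing the cost via $(3\delta_Q)^{|\al|} \cdot 30^{n/p-m} = 30^{|\al|+n/p-m}/10^{|\al|}$ and the cushion $\al!\,10^{|\al|} \geq 1$. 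What you gain is that the required ordering $\mm < \ma$ falls out immediately from $\ma \subsetneq \mm$ via the rule that strict supersets are strictly smaller, so you never invoke monotonicity of $\ma$; what the paper gains is economy (one polynomial, no combinatorial slack to track). Both approaches correctly establish that $Q$ is OK, and both calculations check out.
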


\begin{proof}
Suppose $Q \subset Q^\circ$ satisfies $\mu(3Q)^{1/p} \leq \e_0/C_0 (30 \delta_Q)^{n/p-m}$. Let $F^0 = P^0 = 1$. Then for $y \in \supp(\mu) \cap 3Q$, $J_yF^0 = P^0$, and 
\[
\|F^0\|_{\J(0,\mu|_{3Q})} = \left(\| F^0 \|_{L^{m,p}(\R^n)}^p + \| F^0 \|_{L^p(d \mu|_{3Q})}^{p} \right)^{1/p}= \mu(3Q)^{1/p} \leq \frac{\e_0}{C_0} (30\delta_Q)^{n/p-m}.
\]
So for all $y \in \supp(\mu) \cap 3Q$, $P^0 \in \frac{\e_0}{C_0}  (30 \delta_Q)^{n/p -m} \cdot \sigma_J (y, \mu|_{3Q})$, and $(P^0)$ forms a $(\{0\}, y, \e_0/C_0, 30 \delta_Q)$-basis for $\sigma_J(y, \mu|_{3Q})$. Because $0$ is the minimal element of $\mm$ and $0 \notin \ma$ ($\ma$ is monotonic and $\ma \neq \mm$ because $\mm$ is minimal in the order), $\{0\} < \ma$. Hence, $Q$ is OK. 
\end{proof}

\begin{lem}
The decomposition $CZ^\circ$ is not equal to $\{Q^\circ\}$. In particular, each $Q \in CZ^\circ$ has a (unique) dyadic parent $Q^+ \subset Q^\circ$. 

Furthermore, if $\mu$ is finite, the CZ cubes form a non-trivial, finite partition of $Q^{\circ}$ into dyadic cubes. \label{nottrivial}
\end{lem}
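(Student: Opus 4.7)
The plan is to prove the two assertions separately. First, I would show $Q^\circ$ is not OK by contradiction. Since $\supp(\mu) \subset \tfrac{1}{10}Q^\circ \subset 3Q^\circ$ by (\ref{aqsubset}), we have $\mu|_{3Q^\circ} = \mu$ and $\sigma_J(x,\mu|_{3Q^\circ}) = \sigma_J(x,\mu)$ for every $x \in \supp(\mu)$. If $Q^\circ$ were OK, there would exist $\mab < \ma$ such that $\sigma_J(x,\mu)$ admits an $(\mab, x, \e_0/C_0, 30)$-basis for every $x \in \supp(\mu)$. Applying the monotonicity property (\ref{monotone}) with $\delta' = 1 \leq 30 = \delta$ and $\sigma' = \sigma$, this is also an $(\mab, x, \e_0/C_0, 1)$-basis, directly contradicting (\ref{amaxa}). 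Thus $Q^\circ \notin CZ^\circ$, and every $Q \in CZ^\circ$ is a proper subset of $Q^\circ$, hence possesses a unique dyadic parent $Q^+$ with $Q \subsetneq Q^+ \subset Q^\circ$.

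For the second assertion, assume $\mu(\R^n) < \infty$. For each $x \in Q^\circ$ I would consider the unique descending chain $Q^{(0)} = Q^\circ \supsetneq Q^{(1)} \supsetneq Q^{(2)} \supsetneq \cdots$ of dyadic cubes contained in $Q^\circ$ with $\delta_{Q^{(k)}} = 2^{-k}$ and $x \in Q^{(k)}$. Because $p > n$ gives $n/p - m < 0$, the quantity $(30 \cdot 2^{-k})^{n/p-m}$ tends to $+\infty$ as $k \to \infty$, while $\mu(3Q^{(k)})^{1/p}$ remains bounded by $\mu(\R^n)^{1/p}$. Lemma \ref{smallok} then guarantees that $Q^{(k)}$ is OK for all $k \geq K^*$, where $K^*$ depends only on $m, n, p, \e_0, C_0$ and $\mu(\R^n)$ and in particular is independent of $x$.

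Let $k^*(x) \in \{1, \dots, K^*\}$ be the smallest index with $Q^{(k^*(x))}$ OK; this is well-defined because $Q^{(0)} = Q^\circ$ is not OK by the first part. I would then check $Q^{(k^*(x))} \in CZ^\circ$: the dyadic cubes in $Q^\circ$ properly containing $Q^{(k^*(x))}$ are exactly $Q^{(0)}, \dots, Q^{(k^*(x)-1)}$, none of which is OK by minimality. Thus every $x \in Q^\circ$ lies in a CZ cube, which combined with the pairwise disjointness of CZ cubes established earlier yields a partition of $Q^\circ$. Finiteness is immediate: every CZ cube has sidelength at least $2^{-K^*}$, so at most $2^{nK^*}$ such cubes fit inside $Q^\circ$.

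The argument is essentially routine given the tools already established; the only step requiring genuine care is the uniform upper bound $K^*$ for $k^*(x)$, whose existence crucially uses both the finiteness of $\mu$ and the exponent inequality $m > n/p$. I anticipate no serious obstacle beyond this bookkeeping.
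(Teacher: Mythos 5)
Your proof is correct and follows essentially the same path as the paper's: both parts hinge on contradiction with \eqref{amaxa} (via basis monotonicity \eqref{monotone}) for the non-triviality of $CZ^\circ$, and on Lemma \ref{smallok} combined with the finiteness of $\mu$ and the exponent inequality $m>n/p$ for the second part. Your version replaces the paper's auxiliary function $H(\delta)$ with the simpler uniform bound $\mu(3Q)^{1/p}\leq\mu(\R^n)^{1/p}$ and spells out the descending-chain argument showing every $x\in Q^\circ$ lies in a CZ cube, but these are streamlining choices rather than a different route.
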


\begin{proof}
If $Q^{\circ}$ is OK, then there exists $\mab < \ma $ such that for every $x \in \supp(\mu)$, $\sigma_J(x,\mu)$ contains an $(\mab,x, \e_0/C_0, 1)$-basis, contradicting (\ref{amaxa}). So $Q^\circ \notin CZ^\circ$.

Suppose $\mu$ is finite. Define $H: \R_+ \to \R$ as 
\begin{align*}
H(\delta) := \max_{x \in Q^{\circ}}\{ \mu\left(B(x, 3\delta) \right)^{1/p} \cdot \delta^{m-n/p} \}.
\end{align*}
Because $\mu$ is finite, $\lim_{\delta \rightarrow 0^+} H(\delta) = H(0) = 0$. Further, H is non-increasing, so there exists $\delta_1>0$ such that if $\delta < \delta_1$ then $H(\delta) \leq H(\delta_1) < \frac{\e_0}{30^{m-n/p} C_0}$. Let $Q \subset Q^\circ$ satisfy $\delta_Q < \delta_1$, and fix $x \in Q$. Then 
\begin{align*}
\mu(3Q)^{1/p} &\leq \mu(B(x, 3\delta_Q))^{1/p} \\
&\leq H(\delta_Q) \cdot \delta_Q^{n/p-m}\\
&\leq \frac{\e_0}{C_0} (30\delta_Q)^{n/p-m}.
\end{align*}
We apply Lemma \ref{smallok} to deduce that $Q$ is OK, so every dyadic subcube of $Q^{\circ}$ of sidelength smaller than $\delta_1$ is OK. Therefore, $CZ^\circ$ is a finite partition of $Q^\circ$. 
\end{proof}

As a corollary of Lemma \ref{smallok}, we can bound below the measure of the dyadic parent of any cube in $CZ^\circ$:

\begin{cor}
For $Q \in CZ^\circ$, we have 
\begin{align*}
\mu(3Q^+)^{1/p} > (\e_0/C_0) \cdot (30 \delta_Q^+)^{n/p-m}. 
\end{align*}
\end{cor}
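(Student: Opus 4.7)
The plan is to apply the contrapositive of Lemma \ref{smallok} to the dyadic parent $Q^+$. The strategy proceeds in three short steps.

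First, I would invoke Lemma \ref{nottrivial} to justify that for any $Q \in CZ^\circ$ the dyadic parent $Q^+ \subset Q^\circ$ exists: the lemma establishes $CZ^\circ \neq \{Q^\circ\}$, so in particular $Q \subsetneq Q^\circ$, and therefore $Q$ admits a dyadic parent $Q^+$ that is still contained in $Q^\circ$.

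Second, I would use the definition of CZ cubes: since $Q$ is CZ and $Q^+ \subsetneq Q^\circ$ is a dyadic cube properly containing $Q$, the defining property of CZ cubes (every strictly larger dyadic cube contained in $Q^\circ$ fails to be OK) forces $Q^+$ to be not OK.

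Third, I would apply the contrapositive of Lemma \ref{smallok}: that lemma asserts that any dyadic subcube $Q' \subset Q^\circ$ satisfying $\mu(3Q')^{1/p} \leq (\e_0/C_0)(30\delta_{Q'})^{n/p-m}$ must be OK. Applying this with $Q' = Q^+$, the fact that $Q^+$ is \emph{not} OK yields precisely the strict inequality
\[
\mu(3Q^+)^{1/p} > (\e_0/C_0) \cdot (30 \delta_{Q^+})^{n/p-m},
\]
which is the desired conclusion. There is no genuine obstacle here; the result is essentially a one-line contrapositive of the preceding lemma combined with the definition of CZ.
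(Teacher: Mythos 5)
Your proposal is correct and follows exactly the route the paper intends: use Lemma \ref{nottrivial} to ensure the dyadic parent $Q^+ \subset Q^\circ$ exists, note that $Q^+$ is not OK by the definition of a CZ cube, and conclude by the contrapositive of Lemma \ref{smallok}. The paper presents this as an immediate corollary with no written proof, and your three-step argument is precisely the intended justification.
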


Two cubes, $Q_i, Q_j \in CZ^\circ$ are called ``neighbors" if their closures satisfy $\Cl(Q_i) \cap \Cl(Q_j) \neq \emptyset$. (In particular, any CZ cube is neighbors with itself.) We denote this relation by $Q_i \lra Q_j$, or $i \lra j$ ($i,j \in I$).

\begin{lem}[Good Geometry]
If $Q,Q' \in CZ^\circ$ satisfy $Q \lra Q'$, then 
\begin{align}
\frac{1}{2} \delta_Q \leq \delta_{Q'} \leq 2 \delta_Q. \label{goodgeo}
\end{align}
\end{lem}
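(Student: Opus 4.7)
The plan is to argue by contradiction, using the dyadic nature of CZ cubes together with Lemma \ref{okmonlem} and the CZ maximality property. By the symmetric roles of $Q$ and $Q'$, it suffices to rule out $\delta_{Q'} > 2\delta_Q$. Since the sidelengths of dyadic cubes are integer powers of $2$, this strict inequality forces $\delta_{Q'} \geq 4\delta_Q$.

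Under this assumption, the next step is a purely geometric verification that $3Q^+ \subset 3Q'$, where $Q^+$ denotes the dyadic parent of $Q$. Because $Q\subset Q^+$ with $\delta_{Q^+} = 2\delta_Q$, the $\ell^\infty$ distance from $\ctr(Q^+)$ to $\ctr(Q)$ is at most $\delta_Q/2$; and because $\Cl(Q)\cap\Cl(Q')\neq\emptyset$, the $\ell^\infty$ distance from $\ctr(Q)$ to $\ctr(Q')$ is at most $(\delta_Q+\delta_{Q'})/2$. Combining these yields
\[
|\ctr(Q^+)-\ctr(Q')|_\infty \;\leq\; \delta_Q + \tfrac{1}{2}\delta_{Q'}.
\]
The inclusion $3Q^+\subset 3Q'$ holds precisely when the right-hand side is bounded by $\tfrac{3}{2}\delta_{Q'} - \tfrac{3}{2}\cdot 2\delta_Q = \tfrac{3}{2}\delta_{Q'}-3\delta_Q$, which reduces to $\delta_{Q'}\geq 4\delta_Q$. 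This is exactly what we have, so the inclusion follows.

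Having established $3Q^+\subset 3Q'$, I would invoke Lemma \ref{okmonlem} to conclude that, since $Q'$ is OK (as $Q'\in CZ^\circ$), so is $Q^+$. But $Q^+$ is a dyadic cube inside $Q^\circ$ (note $Q^+\subseteq Q^\circ$ by Lemma \ref{nottrivial}, which guarantees $Q\neq Q^\circ$, so the parent lies in $Q^\circ$) and $Q\subsetneq Q^+$. This directly contradicts the defining property of the CZ cube $Q$, namely that no proper dyadic ancestor of $Q$ contained in $Q^\circ$ can be OK. Hence $\delta_{Q'}\leq 2\delta_Q$, and swapping the roles of $Q$ and $Q'$ gives the reverse bound.

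The only subtle point, and likely the main place to take care, is the geometric inclusion $3Q^+\subset 3Q'$: one must be careful to use the $\ell^\infty$ norm convention adopted in the paper and to track the factor $3$ on both sides correctly. Once that inclusion is in hand, the contradiction with CZ maximality via Lemma \ref{okmonlem} is immediate, and the rest is just bookkeeping with dyadic sidelengths.
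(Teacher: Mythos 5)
Your argument is correct and follows the same route as the paper's proof: reduce to $\delta_{Q'} \geq 4\delta_Q$ by dyadicity, show $3Q^+ \subset 3Q'$, then invoke Lemma \ref{okmonlem} to conclude $Q^+$ is OK, contradicting $Q \in CZ^\circ$. Your only addition is spelling out the $\ell^\infty$ computation behind the inclusion $3Q^+ \subset 3Q'$ (which the paper states without detail); just note the phrase ``holds precisely when'' should really be ``holds if,'' since you are verifying a sufficient, not necessary, condition for the containment.
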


\begin{proof}
For the sake of contradiction, suppose that $Q,Q' \in CZ^\circ$ satisfy
\[
\Cl(Q) \cap \Cl(Q') \neq \emptyset \text{ and } 4 \delta_Q \leq \delta_{Q'}.
\]
Since $Q^+,Q'$ are dyadic cubes and $\delta_{Q^+} = 2 \delta_Q \leq \frac{1}{2} \delta_{Q'}$, we have $3Q^+ \subset 3Q'$. Because $Q'$ is OK, by Lemma \ref{okmonlem}, we have that $Q^+$ is OK, contradicting $Q \in CZ^\circ$.
\end{proof}

\begin{lem}[More Good Geometry]
For each $Q \in CZ^{\circ}$, the following properties hold.
\begin{align}
    &\text{If } Q' \in CZ^{\circ} \text{ is such that }(1.3) Q' \cap(1.3) Q \neq \emptyset, \text{ then }Q \lra Q'. \text{ Consequently:}\nonumber \\ 
    &\text{Each point } x \in \R^n \text{ belongs to at most } C(n) \text{ of the cubes } 1.3Q \text{ with }Q \in CZ^{\circ}. \label{g1} \\
    &\text{If } \Cl(Q) \cap \partial Q^{\circ} \neq \emptyset, \text{ then } \delta_Q \geq \frac{1}{20} \delta_{Q^\circ}. \label{g2}
\end{align}

\label{goodgeo2}
\end{lem}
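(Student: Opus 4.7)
The plan is to prove the first assertion of (g1) by combining a dyadic center-distance computation with the monotonicity of the OK-condition under restriction of the measure; to deduce the bounded-overlap statement from it via Lemma \ref{goodgeo}; and to prove (g2) by a direct computation exploiting the localization $\supp(\mu) \subset \tfrac{1}{10}Q^\circ$ from \eqref{aqsubset}.

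For the first assertion of (g1), suppose $Q, Q' \in CZ^\circ$ with $(1.3)Q \cap (1.3)Q' \neq \emptyset$ and, without loss of generality, $\delta_{Q'} \leq \delta_Q$. Let $\hat Q$ be the dyadic ancestor of $Q'$ of sidelength $\delta_Q$. The hypothesis gives $|c_Q - c_{Q'}|_\infty \leq 0.65(\delta_Q + \delta_{Q'})$, and $|c_{\hat Q} - c_{Q'}|_\infty \leq (\delta_Q - \delta_{Q'})/2$, so $|c_Q - c_{\hat Q}|_\infty \leq 1.8\delta_Q$. Since $Q,Q' \in CZ^\circ$ are distinct and CZ cubes are pairwise disjoint, $\hat Q \neq Q$; as $\hat Q, Q$ are distinct same-scale dyadic cubes, their center-distance is a positive integer multiple of $\delta_Q$, forcing $|c_Q - c_{\hat Q}|_\infty = \delta_Q$, i.e., $\hat Q \leftrightarrow Q$. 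If $\delta_{Q'} = \delta_Q$, then $Q' = \hat Q \leftrightarrow Q$. Otherwise, assume for contradiction $Q' \not\leftrightarrow Q$. By the dyadic geometry of $Q'$ within $\hat Q$ away from the shared face $F := \Cl(\hat Q) \cap \Cl(Q)$, we have $\dist(Q', Q) \geq \delta_{Q'}$; combined with $\dist(Q, Q') \leq 0.15(\delta_Q + \delta_{Q'})$ this yields $\delta_{Q'} \leq 3\delta_Q/17$, hence dyadically $\delta_{Q'} \leq \delta_Q/8$. For the parent $Q'^+$ (of sidelength $2\delta_{Q'}$), one then has $|c_{Q'^+} - c_Q|_\infty \leq 0.65\delta_Q + 1.15\delta_{Q'}$, so $|c_{Q'^+} - c_Q|_\infty + 3\delta_{Q'^+}/2 \leq 0.65\delta_Q + 4.15\delta_{Q'} < 1.5\delta_Q$, proving $3Q'^+ \subset 3Q$. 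Since $Q$ is OK, there exists $\mab < \ma$ such that $\sigma_J(x, \mu|_{3Q})$ admits an $(\mab, x, \e_0/C_0, 30\delta_Q)$-basis for all $x \in \supp(\mu) \cap 3Q$. For any $x \in \supp(\mu) \cap 3Q'^+ \subset \supp(\mu) \cap 3Q$, the inclusion $3Q'^+ \subset 3Q$ also gives $\mu|_{3Q'^+} \leq \mu|_{3Q}$, hence $\sigma_J(x, \mu|_{3Q'^+}) \supset \sigma_J(x, \mu|_{3Q})$; the monotonicity \eqref{monotone} then transfers the basis to an $(\mab, x, \e_0/C_0, 30\delta_{Q'^+})$-basis for $\sigma_J(x, \mu|_{3Q'^+})$. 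Thus $Q'^+$ is OK, contradicting the requirement that $Q' \in CZ^\circ$ forces $Q'^+$ to fail OK.

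For the bounded-overlap consequence, fix $x \in \R^n$ and set $\mathcal{F}_x := \{Q \in CZ^\circ : x \in 1.3Q\}$. Any $Q, Q' \in \mathcal{F}_x$ satisfy $x \in 1.3Q \cap 1.3Q'$, so by the first assertion $Q \leftrightarrow Q'$, and Lemma \ref{goodgeo} gives $\delta_Q/\delta_{Q'} \in [1/2, 2]$. Fixing $Q_0 \in \mathcal{F}_x$, the members of $\mathcal{F}_x$ are pairwise disjoint dyadic cubes of sidelength comparable to $\delta_{Q_0}$ contained in $B(\{x\}, 4\delta_{Q_0})$, and a volume count bounds $|\mathcal{F}_x|$ by a constant depending only on $n$. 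For (g2), suppose by contradiction $\Cl(Q) \cap \partial Q^\circ \neq \emptyset$ but $\delta_Q < 1/20 = \delta_{Q^\circ}/20$. Then some coordinate satisfies $c_{Q^+, i} \in [0, 2\delta_Q]$ or $c_{Q^+, i} \in [1 - 2\delta_Q, 1]$, so $3Q^+$ projects in that coordinate into $(-2\delta_Q, 4\delta_Q]$ or $[1 - 4\delta_Q, 1 + 2\delta_Q)$, which is disjoint from $(9/20, 11/20]$ because $4\delta_Q < 1/5 < 9/20$. Since $\supp(\mu) \subset (9/20, 11/20]^n$ by \eqref{aqsubset}, we deduce $3Q^+ \cap \supp(\mu) = \emptyset$; then $\mu|_{3Q^+} \equiv 0$ makes the OK-condition for $Q^+$ vacuous with label $\{0\} < \ma$ (using $\ma \neq \mm$ and $\ma$ monotonic, so $0 \notin \ma$), contradicting the requirement that $Q^+$ be not OK.

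The main subtlety is step 2 of the first assertion of (g1): recognizing that the correct way to exploit $Q' \not\leftrightarrow Q$ together with the $(1.3)$-intersection is to pass up to the parent $Q'^+$ and use the containment $3Q'^+ \subset 3Q$ (which follows from the dyadic refinement bound $\delta_{Q'} \leq \delta_Q/8$) to transfer the OK-condition from $Q$ to $Q'^+$ via monotonicity; attempting a direct dyadic descent through intermediate scales using only Lemma \ref{goodgeo} does not seem to close the argument.
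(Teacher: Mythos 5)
Your proof is correct. Both parts check out, and the arithmetic (the bound $|c_Q - c_{\hat Q}|_\infty \leq 1.8\delta_Q$ is looser than necessary but still below $2\delta_Q$, the reduction $\delta_{Q'} \leq 3\delta_Q/17 < \delta_Q/4$ and hence dyadically $\leq \delta_Q/8$, the containment $3Q'^+ \subset 3Q$, and the projection argument for (g2)) is all sound.

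The route you take for the first assertion of \eqref{g1} differs from the paper's. The paper simply asserts, for non-neighboring CZ cubes, that $\dist(Q,Q') \geq \max\{\delta_Q/2, \delta_{Q'}/2\}$ ``by \eqref{goodgeo}'' and concludes $1.3Q \cap 1.3Q' = \emptyset$; that one-line deduction is terse and doesn't directly reduce to the statement of Lemma~\ref{goodgeo}. You instead prove the contrapositive directly: from $1.3Q \cap 1.3Q' \neq \emptyset$ and $Q' \not\lra Q$, you use the dyadic rigidity of $\ell^\infty$-distances to force $\delta_{Q'} \leq \delta_Q/8$, establish $3Q'^+ \subset 3Q$, and then invoke OK-monotonicity to conclude $Q'^+$ is OK --- this is exactly the mechanism the paper uses to prove Lemma~\ref{goodgeo} itself (via Lemma~\ref{okmonlem}), redeployed one level higher. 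Your version is longer but self-contained and fills in a step the paper leaves implicit; you could shorten step 8 by citing Lemma~\ref{okmonlem} directly rather than re-deriving it from \eqref{monotone}. For \eqref{g2}, your argument is essentially the paper's (the paper routes through $9Q$ and Lemma~\ref{smallok}; you compute the coordinate projection of $3Q^+$ directly and re-derive the vacuous OK-condition inline), so that part is the same approach up to packaging.
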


\begin{proof} Proof of (\ref{g1}): Fix $Q \in CZ^\circ$. Suppose that $Q' \in CZ^\circ$ does not neighbor $Q$. Let $\delta = \max \{ \delta_Q/2, \delta_{Q'}/2 \}$. Then (\ref{goodgeo}) implies that $\dist(Q, Q') \geq \delta$. Thus,
\[
(1.3)Q \cap (1.3)Q' \subset B(Q, (0.3)\delta) \cap B(Q', (0.3)\delta) = \emptyset.
\]

Proof of (\ref{g2}): For the sake of contradiction, suppose $\Cl(Q) \cap \p Q^\circ \neq \emptyset$ and $\delta_Q < \frac{1}{20}\delta_{Q^\circ}$. Therefore, $9Q \subset \R^n \setminus \frac{1}{10}Q^\circ$. Note that $3Q^+ \subset 9Q$. Because $\supp(\mu) \subset \frac{1}{10} Q^\circ$, we have $\mu(3Q^+)  = 0$, so by Lemma \ref{smallok}, $Q^+$ is OK. This is a contradiction.
\end{proof}

\noindent \textbf{Remark. } For $\eta \in [1,100]$ and $Q_i \in CZ^\circ$, $\eta Q_i \cap Q^\circ$ is a $C$-non-degenerate rectangular box, for $C=C(n)$, satisfying 
\begin{align*}
\diam(\eta Q_i \cap Q^\circ) \simeq \delta_{Q_i}.
\end{align*}

\begin{define} \label{keycuberef}
A cube $Q \in CZ^\circ$ is called a keystone cube if for any $Q' \in CZ^\circ$ with $Q' \cap 100Q \neq \emptyset$, we have $\delta_Q \leq \delta_{Q'}$.
\end{define}

We will use the following notation: 
\begin{align*}
    & CZ^\circ = \{Q_i\}_{i \in I} := \{ \text{the collection of CZ cubes} \} \in \Pi(Q^\circ);\\
    &K_{CZ} : = \bigcup_{Q \in CZ^\circ} Q ; \text{ and}\\
    & CZ_{key}= \{Q_s\}_{s \in \bar{I}} := \{ Q_s \in CZ^\circ: Q_s \text{ is a keystone cube} \}.
\end{align*}
For $i \in I$ ($I$ is the indexing set of $CZ^\circ$ as above), let $x_i$ denote the center of $Q_i$, i.e., $x_i := \ctr(Q_i)$. We denote the set of CZ basepoints by
\begin{align*}
\mathfrak{B}_{CZ} = \{ x_i \}_{i \in I}.    
\end{align*}

\begin{lem}
Let $Q_s$ be a keystone cube. Then
\[
\left| \{ s' \in \bar{I}: 10Q_s \cap 10Q_{s'} \neq \emptyset \} \right| \leq C.
\]
\label{10q}
\end{lem}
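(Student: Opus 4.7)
The plan is to show that any keystone $Q_{s'}$ with $10Q_s \cap 10Q_{s'} \neq \emptyset$ must have the same sidelength as $Q_s$, and then use a volume/disjointness argument to cap the count by a constant depending only on $n$.

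First I would extract the key distance bound: if $x \in 10Q_s \cap 10Q_{s'}$, then because each of $10Q_s$, $10Q_{s'}$ is an $\ell^\infty$-ball around its center of radius $5\delta_{Q_s}$, $5\delta_{Q_{s'}}$ respectively, we get $|x_s - x_{s'}|_\infty \leq 5(\delta_{Q_s}+\delta_{Q_{s'}})$. I would then split into two cases based on the comparison of sidelengths. If $\delta_{Q_{s'}}\leq \delta_{Q_s}$, then $|x_s-x_{s'}|_\infty \leq 10\delta_{Q_s}$, so $x_{s'}\in Q_{s'}\cap 100Q_s$; the keystone property of $Q_s$ then forces $\delta_{Q_s}\leq \delta_{Q_{s'}}$, so equality holds. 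If instead $\delta_{Q_{s'}}> \delta_{Q_s}$, then $|x_s-x_{s'}|_\infty \leq 10\delta_{Q_{s'}}$, so $x_s\in Q_s\cap 100Q_{s'}$, and applying the keystone property of $Q_{s'}$ yields $\delta_{Q_{s'}}\leq \delta_{Q_s}$, a contradiction. Hence the second case cannot occur, and every such $Q_{s'}$ satisfies $\delta_{Q_{s'}}=\delta_{Q_s}$.

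With all qualifying keystone cubes having common sidelength $\delta_{Q_s}$, I would then observe that they are dyadic cubes of equal size with pairwise disjoint interiors (since any two distinct CZ cubes have disjoint interiors), and each center $x_{s'}$ lies in $B_\infty(x_s,10\delta_{Q_s})$. Consequently each such cube is contained in $B_\infty(x_s, 10\delta_{Q_s} + \delta_{Q_s}/2) \subset 21 Q_s$, a set of Lebesgue measure $(21\delta_{Q_s})^n$. Since each cube has volume $\delta_{Q_s}^n$ and their interiors are disjoint, the number of such $Q_{s'}$ is at most $21^n$, giving the desired bound $C=C(n)$.

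The argument is really just a bookkeeping exercise once the two keystone properties are invoked in the right order; I do not see any subtle obstacle. The one thing to be careful about is being precise with the $\ell^\infty$ radii of the dilated cubes (noting the convention $|\cdot|=|\cdot|_\infty$ and $\diam(Q)=\delta_Q$ established in the notation section), so that the containments $Q_{s'}\subset 100Q_s$ and $Q_s\subset 100Q_{s'}$ are genuinely justified rather than merely plausible.
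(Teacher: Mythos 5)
Your proof is correct and takes essentially the same route as the paper: force $\delta_{Q_{s'}} = \delta_{Q_s}$ via the keystone property of whichever cube is larger, then bound the count of equal-sidelength disjoint dyadic cubes near $Q_s$. Your version is a bit more explicit than the paper's (which does this via a WLOG on sidelengths and leaves the final counting as a one-line remark), but the idea is identical.
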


\begin{proof}
Let $s, s' \in \bar{I}$ with $10Q_s \cap 10 Q_{s'} \neq \emptyset$. Suppose, without loss of generality, $\delta_{Q_s} \leq \delta_{Q_{s'}}$. Then $10Q_s \cap 10Q_{s'} \neq \emptyset$ implies $100 Q_{s'} \cap Q_s \neq \emptyset$. But because $Q_{s'}$ is keystone, this implies $\delta_{Q_{s'}} \leq \delta_{Q_s}$. So $\delta_{Q_s} = \delta_{Q_{s'}}$. There are at most $C$ dyadic cubes $Q_{s'}$ that satisfy this condition and $10Q_s \cap 10Q_{s'} \neq \emptyset$ for fixed $s$. This completes the proof.
\end{proof}

\subsection{Keystone Points}
\label{subsec:kpt}
\begin{define}\label{keypt:def}
We define the set of keystone points as $K_p:= Q^\circ \setminus K_{CZ}$.
\end{define}

\begin{lem}
The set $K_p$ is closed. 
\label{kpclosed}
\end{lem}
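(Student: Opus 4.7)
The plan is to prove the equivalent statement that $\R^n \setminus K_p = (\R^n \setminus Q^\circ) \cup K_{CZ}$ is open in $\R^n$. Fix $y \in K_{CZ}$, and let $Q_i \in CZ^\circ$ be the unique CZ cube with $y \in Q_i$. Because the half-open cube $Q_i$ sits inside the $\R^n$-topological interior of the enlarged cube $1.3 Q_i$, there exists $r > 0$ with $B(y,r) \subset \mathrm{int}(1.3 Q_i)$. The task reduces to showing $B(y,r) \cap Q^\circ \subset K_{CZ}$ after possibly shrinking $r$.

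Given $z \in B(y,r) \cap Q^\circ$, let $\tilde Q$ be the (unique) dyadic cube of sidelength $\delta_{Q_i}$ containing $z$. Since $B(y,r) \subset 1.3 Q_i$, $\tilde Q$ equals $Q_i$ or is one of the finitely many dyadic neighbors of $Q_i$ at the same scale. The case $\tilde Q = Q_i$ gives $z \in Q_i \subset K_{CZ}$ immediately. For a neighbor $\tilde Q$, I would split on whether $\tilde Q$ is OK. If $\tilde Q$ is OK, then the maximal OK dyadic ancestor of $\tilde Q$ inside $Q^\circ$ belongs to $CZ^\circ$ and contains $z$, so $z \in K_{CZ}$. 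If $\tilde Q$ is not OK, then I would descend to dyadic subcubes $Q'' \ni z$ at very fine scales and apply Lemma~\ref{smallok}: since $p > n$, the bound $(\e_0/C_0)(30\delta_{Q''})^{n/p-m}$ tends to $+\infty$ as $\delta_{Q''} \to 0$, while $\mu(3Q'')$ is uniformly controlled for $z \in B(y,r)$ (the fact that $Q_i$ is OK pins down the local behavior of $\mu$ near $y$, ruling out uncontrolled concentration), so the smallness hypothesis of Lemma~\ref{smallok} is eventually satisfied and produces an OK $Q''$, hence a CZ cube containing $z$.

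The main obstacle is the interaction between the half-open structure of dyadic cubes and possible concentration of $\mu$ just beyond a face of $Q_i$: when $y$ lies on an upper face of $Q_i$, points of $B(y,r) \cap Q^\circ$ can cross into an adjacent dyadic cell that is not itself OK. Resolving this requires carefully combining Lemma~\ref{goodgeo2} (so that $B(y,r)$ meets only $O(1)$ CZ cubes at scale $\simeq \delta_{Q_i}$, all of comparable size by the good-geometry estimate) with Lemma~\ref{smallok} at arbitrarily fine dyadic scales, plus the structural input that $Q_i \in CZ^\circ$ bounds the nearby mass of $\mu$. The remaining case of $y \in \mathrm{Cl}(Q^\circ) \setminus Q^\circ$ is handled separately: since $\supp(\mu) \subset \tfrac{1}{10} Q^\circ$, small dyadic cubes near $\partial Q^\circ$ have $\mu(3Q) = 0$ and are OK by Lemma~\ref{smallok}, hence contained in CZ cubes, placing a punctured neighborhood of $y$ inside $(\R^n \setminus Q^\circ) \cup K_{CZ}$ and forbidding $y$ from being an accumulation point of $K_p$.
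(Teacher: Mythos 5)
Your proposal has a genuine gap in the ``$\tilde Q$ not OK'' branch. The claim that ``the fact that $Q_i$ is OK pins down the local behavior of $\mu$ near $y$, ruling out uncontrolled concentration'' is not a valid deduction, and without it the appeal to Lemma~\ref{smallok} fails. Being OK is a statement about the shape of the convex sets $\sigma_J(\cdot,\mu|_{3Q_i})$ (existence of an $(\mab,x,\e_0/C_0,30\delta_{Q_i})$-basis); it does \emph{not} imply any finiteness or uniform bound on $\mu(3Q'')$ for small cubes $Q''$ near $y$. For example, when $0 \notin \mab$ the measure $\mu$ can carry an infinite atom inside $3Q_i$ and $Q_i$ can still be OK, since the basis polynomials $P^\al$ ($|\al| \geq 1$) need not be nonvanishing at the atom. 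And even leaving atoms aside, nothing forces $\mu(3Q'')$ to be bounded independently of $\delta_{Q''}$ for cubes $Q''$ near $y$ whose triples $3Q''$ escape $3Q_i$. So the premise ``$\mu(3Q'')$ is uniformly controlled'' cannot be justified from $Q_i$ being OK, and Lemma~\ref{smallok} is the wrong tool here.

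The repair — and the route the paper actually takes — is to bypass any measure-size estimate entirely and use the \emph{monotonicity} of OK-ness, Lemma~\ref{okmonlem}: if $Q_i$ is OK and $3Q'' \subset 3Q_i$, then $Q''$ is OK, with no smallness hypothesis needed. Concretely, shrink $r$ so that $B(y,r) \subset 2Q_i$, and for any $z \in B(y,r) \cap Q^\circ$ pick a dyadic cube $Q'' \ni z$ with $\delta_{Q''} < \delta_{Q_i}/100$; then $3Q'' \subset 3Q_i$ and Lemma~\ref{okmonlem} gives $Q''$ OK, hence $Q''$ has a CZ ancestor and $z \in K_{CZ}$. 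This also makes your dichotomy on whether the equal-scale neighbor $\tilde Q$ is OK unnecessary. Your handling of $\Cl(Q^\circ)\setminus Q^\circ$ via $\supp(\mu) \subset \tfrac{1}{10}Q^\circ$ and Lemma~\ref{smallok} is fine and matches the role of inequality~\eqref{kpinf} in the paper's argument that $K_p \subset \tfrac19 Q^\circ$.
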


\begin{proof}
Let $x \in K_p$. Then every dyadic cube containing $x$ is \underline{not OK}. As a consequence of Lemma \ref{smallok}, 
\begin{align}
\mu(B(x, \eta)) = \infty \quad \quad (\mbox{for all } \eta>0, \; x \in K_p). \label{kpinf} 
\end{align}
To see this, suppose for sake of contradiction that $\mu(B(x, \eta)) \leq A < \infty$ for some $\eta > 0$. Then, consider a dyadic cube $Q \subset Q^\circ$ with $x \in Q$ and $3Q \subset B(x,\eta)$. Then $\mu(3Q) \leq A$. If $\delta_Q$ is sufficiently small, depending on $A,m,n,p$, it follows that $\mu(3Q) \leq (\epsilon_0/C_0)^p (30 \delta_Q)^{n-mp}$, since $n-mp < 0$, and hence $Q$ is OK by Lemma \ref{smallok}. Thus, $x$ belongs to an OK cube, so $x$ belongs to a CZ cube, hence $x \in K_{CZ}$, contradicting that $x \in K_p$.

In combination with (\ref{aqsubset}), (\ref{kpinf}) implies $K_p \subset \Cl( \frac{1}{10} Q^\circ) \subset \frac{1}{9} Q^\circ$. 

Let $Q \in CZ^\circ$. We claim that for any $x \in 2Q \cap Q^\circ$, there exists $Q' \in CZ^\circ$ satisfying $x \in Q'$ -- in particular, $2 Q \cap Q^\circ \subset K_{CZ}$. To see this, let $Q'' \subset Q^\circ$ be a dyadic cube satisfying $x \in Q''$ and $\delta_{Q''}< \delta_Q/100$. Then $3Q'' \subset 3Q$. Since $Q$ is OK, we have that $Q''$ is OK, thanks to Lemma \ref{okmonlem}. Hence, by definition of the CZ decomposition, $Q''$ must have a dyadic ancestor $Q'$ with $Q ' \in CZ^\circ$. Then $x \in Q'' \subset Q'$, completing the proof of the claim.

Let $x \in K_{CZ}$ be arbitrary. Then $x \in Q$ for some $Q \in CZ^\circ$. Note that
\[
B(x,\delta_Q/3) \cap Q^\circ \subset 2 Q \cap Q^\circ  \subset K_{CZ}.
\]
Hence, $K_{CZ}$ is relatively open in $Q^\circ$. Thus, $K_p = Q^\circ \setminus K_{CZ}$ is relatively closed in $Q^\circ$. Since $K_p \subset \frac{1}{9} Q^\circ$, we have that $K_p$ is closed. 

\end{proof}

Below, we write $|S|$ for the Lebesgue measure of a measurable set $S \subset \R^n$.
\begin{lem}
If $\ma \neq \emptyset$ then $|K_p| = 0$. \label{measkplem}
\end{lem}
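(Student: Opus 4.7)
The plan is to argue by contradiction using the Lebesgue density theorem together with the auxiliary polynomials and functions from Lemma \ref{lem:polybasis} and Proposition \ref{phiprop}. Suppose that $|K_p|>0$. Then $K_p$ admits some point $y$ of Lebesgue density $1$, i.e., $|K_p \cap Q_\eta|/|Q_\eta| \to 1$ as $\eta \to 0$, where $Q_\eta$ denotes the cube of sidelength $\eta$ centered at $y$. Since $K_p \subset \frac{1}{9}Q^\circ \subset 100 Q^\circ$, Section \ref{auxpolysec} applies at $y$: for each $\al \in \ma$ there exists $\varphi_y^\al \in L^{m,p}(\R^n)$ with $J_y \varphi_y^\al = P_y^\al$, $\|\varphi_y^\al\|_{\J(0,\mu)} \leq C\e/C_0$, and in particular $\int_{\R^n}|\varphi_y^\al|^p\,d\mu < \infty$. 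Moreover, $(P_y^\al)_{\al \in \ma}$ is an $(\ma, y, C\e/C_0, 1)$-basis for $\sigma_J(y,\mu)$, so $\partial^\al P_y^\al(y) = 1$ for each $\al \in \ma$.

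The central observation I would use is that any continuous $F$ with $\int |F|^p\,d\mu < \infty$ must vanish identically on $K_p$: indeed if $z \in K_p$ and $F(z) \neq 0$, continuity gives $|F| \geq |F(z)|/2$ on some neighborhood $U$ of $z$, forcing $\int_U |F|^p\,d\mu \geq (|F(z)|/2)^p \mu(U) = +\infty$ by \eqref{kpinf}, a contradiction. Applying this to $\varphi_y^\al$, which is continuous by the Sobolev embedding $L^{m,p}(\R^n) \subset C^{m-1,1-n/p}_{loc}(\R^n)$ (valid since $p>n$), I conclude that $\varphi_y^\al|_{K_p} \equiv 0$ for every $\al \in \ma$.

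Now fix any $\al_0 \in \ma$ (possible since $\ma \neq \emptyset$). The $C^{m-1,1-n/p}$ Taylor expansion at $y$ reads $\varphi_y^{\al_0}(x) = P_y^{\al_0}(x) + R(x)$ with $|R(x)| \lesssim |x-y|^{m-n/p}\|\varphi_y^{\al_0}\|_{L^{m,p}(\R^n)} = o(|x-y|^{m-1})$. Since $\varphi_y^{\al_0}$ vanishes on $K_p$, this gives $|P_y^{\al_0}(x)| = o(|x-y|^{m-1})$ for $x \in K_p$ near $y$, hence $\|P_y^{\al_0}\|_{L^\infty(K_p \cap Q_\eta)} = o(\eta^{m-1})$ as $\eta \to 0$. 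I would then pass from $K_p \cap Q_\eta$ to the full cube $Q_\eta$ via a norm-equivalence argument on $\mpp$: since $|K_p \cap Q_\eta|/|Q_\eta| \to 1$, there is a universal constant $c>0$ such that $\|P\|_{L^\infty(Q_\eta)} \leq c^{-1}\|P\|_{L^\infty(K_p \cap Q_\eta)}$ for all $P \in \mpp$ once $\eta$ is small. Hence $\|P_y^{\al_0}\|_{L^\infty(Q_\eta)} = o(\eta^{m-1})$. Writing $P_y^{\al_0}(y+h) = \sum_{|\beta|\leq m-1} c_\beta h^\beta$, the rescaling $h = \eta h'$ with $h' \in Q_1$ yields $|c_\beta|\eta^{|\beta|} = o(\eta^{m-1})$ for every $\beta$, which forces $c_\beta = 0$ for all $|\beta| \leq m-1$. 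In particular $c_{\al_0} = 1/\al_0! = 0$, contradicting $\partial^{\al_0} P_y^{\al_0}(y) = 1$.

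The main obstacle I anticipate is the norm-equivalence step: one needs the uniform bound that for every Borel $E \subset Q_1$ with $|E|\geq 1/2$ and every $P \in \mpp$, $\|P\|_{L^\infty(Q_1)} \leq C\|P\|_{L^\infty(E)}$ with $C$ depending only on $m,n,p$. I would prove this by a weak-$*$ compactness argument: if it failed, then sequences $(E_k, P_k)$ with $\|P_k\|_{L^\infty(Q_1)}=1$, $|E_k|\geq 1/2$, and $\|P_k\|_{L^\infty(E_k)}\to 0$ would, after passing to subsequences, yield $P_k \to P$ in $\mpp$ (by finite-dimensionality) with $\|P\|_{L^\infty(Q_1)} = 1$ and $\mathbf{1}_{E_k} \rightharpoonup \mathbf{1}_E$ weakly in every $L^q(Q_1)$ with $|E|\geq 1/2$, giving $\int_E |P|^p\,dx = 0$ and therefore $P \equiv 0$, the desired contradiction.
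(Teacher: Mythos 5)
Your argument is correct but follows a genuinely different route from the paper's. The paper's proof of Lemma~\ref{measkplem} fixes a Lebesgue density point $x_0$ of $K_p$, then uses Bernstein's inequality and the mean value theorem to locate a small ball $B(y,c_1 r) \subset B(x_0,r)$ on which $|P_{x_0}^\al|$, and hence $|\varphi_{x_0}^\al|$ (after subtracting the Sobolev remainder), is bounded below by a positive multiple of $r^{m-1}$; since $|K_p \cap B(x_0,r)| > (1-c_1^n)|B(x_0,r)|$, the ball $B(y,c_1 r)$ must meet $K_p$, so \eqref{kpinf} gives $\mu(B(y,c_1 r))=\infty$ and $\int |\varphi_{x_0}^\al|^p \, d\mu = \infty$, contradicting $\|\varphi_{x_0}^\al\|_{\J(0,\mu)} \leq C\e$. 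You instead establish that $\varphi_y^{\al_0}$ vanishes identically on $K_p$ (continuity plus \eqref{kpinf}), which via the Sobolev/Taylor remainder forces $|P_y^{\al_0}| \lesssim \eta^{m-n/p}$ on $K_p \cap Q_\eta$; a Remez-type inequality then upgrades this to the full cube $Q_\eta$ once $|K_p\cap Q_\eta|/|Q_\eta|\geq 1/2$, and the coefficient-rescaling argument kills every Taylor coefficient, contradicting $\p^{\al_0}P_y^{\al_0}(y)=1$. Your route is more modular and in effect re-derives a stronger form of the paper's later Lemma~\ref{lem:auxpolyzeros} as an intermediate step, at the cost of proving a Remez inequality; the paper's route stays entirely inside a single ball and avoids that auxiliary estimate. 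One small imprecision in your sketch: weak-$*$ limits of indicator functions $\mathbf{1}_{E_k}$ need not be indicators, only weights $w$ with $0 \leq w \leq 1$ and $\int w \geq 1/2$, but this is harmless since $\int w|P|^p = 0$ with $w \geq 0$ nontrivial still forces the polynomial $P$ to vanish on a set of positive measure, hence identically. (Alternatively and more simply, $E_k \subset \{|P| \leq \delta_k\}$ with $\delta_k := \|P_k\|_{L^\infty(E_k)} + \|P-P_k\|_{L^\infty(Q_1)} \to 0$, so $|\{P=0\}| \geq 1/2$ directly.)
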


\begin{proof}
Suppose for sake of contradiction that $|K_p|>0$ and $\mathcal{A} \neq \emptyset$. Recall from (\ref{basis}) and (\ref{bound}) in Section \ref{subsubsec:aux_poly} that for all $x \in Q^\circ$, there exists a family of polynomials,  $(P_x^\al)_{\al \in \ma}$ such that $(P_x^\al)_{\al \in \ma}$ forms an $(\ma,x, C\e/C_0, 1)$-basis for $\sigma_J(x,\mu)$ satisfying 
\begin{align}
    |\p^\beta P_x^\al(x)| \leq C  \quad \quad (\al \in \ma, \; \beta \in \mm). \label{bdkp}
\end{align} 
Let $x_0 \in K_p$ be a Lebesgue point of $\goodchi_{K_p}$; then $\lim_{r \to 0} \frac{|K_p \cap B(x_0,r)|}{|B(x_0,r)|} = 1$. We will fix universal constants $c_1, c_2 \in (0,1)$ (determined only by $m$, $n$, and $p$) momentarily. Choose any $r \in (0,1)$ satisfying $r^{1-n/p} < c_2/(2 \e)$ and $\frac{|K_p \cap B(x_0,r)|}{|B(x_0,r)|} > 1 - c_1^n$. Since $\mathcal{A} \neq \emptyset$, we may fix $\al \in \ma$. Let 
\begin{align*}
    &M: = \max_{\beta \in \mm} \max_{z \in B(x_0,r)}  |\p^\beta P_{x_0}^\al(z)|r^{|\beta|},\\
    &y := \arg \max_{z \in B(x_0,r/2)} |P_{x_0}^\al(z)|.
\end{align*}
By Bernstein's Inequality for polynomials, there exists a universal constant $C>0$, such that \\$\| \partial^\beta P(z) \|_{L^\infty(B(x_0,r))} \leq C r^{-|\beta|} \| P \|_{L^\infty (B(x_0,r/2))}$ for all multi-indices $\beta \in \mm$ and $(m-1)^{\text {rst}}$ degree polynomials $P \in \mathcal{P}$. Thus, by the definition of $M$, 
\begin{equation}\label{eqn:lowerbd1}
|P_{x_0}^\al(y)| = \| P^\alpha_{x_0} \|_{L^\infty(B(x_0,r/2))} \geq c'M
\end{equation}
for a universal constant $c' \in (0,1)$. For any $\eta \in (0,1/2)$ we have $ B(y,\eta r) \subset B(x_0,r)$. Thus, since $|\nabla P_{x_0}(z)| \leq \sqrt{n} M/r$ for $z \in B(x_0,r)$ (see the definition of $M$), by the mean value theorem,
\[
|P_{x_0}^\al(y) - P_{x_0}^\al(x)| \leq ( \sqrt{n} M/r) |y-x| \leq (\sqrt{n} M/r) \eta r  = \sqrt{n} M \eta \;\; \mbox{for }   x \in B(y,\eta r).
\]
We fix the universal constant $c_1 \in (0,1/2)$ given by $c_1 = \frac{c'}{4\sqrt{n}}$. Taking $\eta = c_1$ in the above, and using \eqref{eqn:lowerbd1}, we learn that $B(y,c_1 r) \subset B(x_0,r)$, and
\[
|P_{x_0}^\al(x)| \geq |P_{x_0}^\al(y)| - | P_{x_0}^\al(x) - P_{x_0}^\al(y)| >c' M/2 \;\; \mbox{for }  x \in B(y, c_1 r).
\]
Furthermore, by the basis property of $(P^\al_{x_0})$, we have $\p^\al P_{x_0}^\al(x_0) = 1$, and so $M \geq r^{|\al|} \geq r^{m-1}$ (see the definition of $M$). Hence,
\begin{align}
    |P_{x_0}^\al(x)|>c' r^{m-1} /2 \;\; \text{for }x \in B(y, c_1 r). \label{ballbd}
\end{align}

Because $P_{x_0}^\al \in C \e \cdot \sigma_J(x_0,\mu)$, there exists $\varphi_{x_0}^\al \in L^{m,p}(\R^n)$ satisfying $J_{x_0}\varphi_{x_0}^\al = P_{x_0}^\al$ and $\|\varphi_{x_0}^\al\|_{L^{m,p}(\R^n)} \leq \|\varphi_{x_0}^\al\|_{\J(0, \mu)} \leq C \e$. If $x \in B(y,c_1r)$  then $x \in B(x_0,r)$, so $|x-x_0| \leq r$. By the Sobolev Inequality and (\ref{ballbd}),
\begin{align}
    |\varphi_{x_0}^\al(x)| &\geq |P_{x_0}^\al(x)| - C \|\varphi_{x_0}^\al\|_{L^{m,p}(\R^n)} r^{m-n/p} \nonumber \\
    &> c'r^{m-1}/2 - C' \e r^{m-n/p} \nonumber \\
    &= C'r^{m-1} (c_2 - \e r^{1-n/p}) \geq C' c_2 r^{m-1}/2, \;\; \mbox{for } x \in B(y,c_1 r),\label{posbd}
\end{align}
where we now fix the universal constant $c_2 := c'/(2C')$ so that the equality in the last line is valid, and we recall our choice of $r$ satisfying $r^{1-n/p} <  c_2/(2 \e)$ to justify the inequality in the last line.

Because $\frac{|K_p \cap B(x_0,r)|}{|B(x_0,r)|} > 1 - c_1^n$ and $B(y,c_1 r) \subset B(x_0,r)$, we have that $K_p \cap \inte(B(y,c_1 r)) \neq \emptyset$. Thus, by (\ref{kpinf}) we learn that $\mu(B(y, c_1 r))=\infty$. In combination with (\ref{posbd}), this implies
\begin{align*}
    \int_{B(y,c_1 r)} |\varphi_{x_0}^\al|^p d \mu = \infty,
\end{align*}
contradicting $\|\varphi_{x_0}^\al\|_{\J(0,\mu)} \leq C \e$. This completes the proof by contradiction, and the lemma follows.
\end{proof}

\begin{lem}
For $Q_i \in CZ^\circ$, we have $\inte(3Q_i) \cap K_p = \emptyset$, and consequently $\delta_{Q_i} \leq \dist(Q_i,K_p)$. \label{kpdist}
\end{lem}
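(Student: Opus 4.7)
The plan is to establish the disjointness $\inte(3Q_i) \cap K_p = \emptyset$ first and then read off the distance inequality from $\ell^\infty$ cube geometry. For the disjointness, I will suppose toward contradiction that some $x \in \inte(3Q_i) \cap K_p$ exists and produce a CZ cube containing $x$, which contradicts $x \in K_p = Q^\circ \setminus K_{CZ}$.

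To construct that CZ cube, note first that $K_p \subset Q^\circ$, so $x \in Q^\circ$, and each point of $(0,1]^n$ lies in a unique dyadic subcube of $Q^\circ$ at every dyadic scale $\leq 1$. Since $x$ sits in the \emph{open} set $\inte(3Q_i)$, I can pick a dyadic cube $Q'' \subset Q^\circ$ containing $x$ with $\delta_{Q''}$ small enough that $3Q'' \subset \inte(3Q_i) \subset 3Q_i$. Because $Q_i$ is CZ, and hence OK, Lemma \ref{okmonlem} applied to $3Q'' \subset 3Q_i$ gives that $Q''$ is OK. I then walk up the finite chain of dyadic ancestors of $Q''$ inside $Q^\circ$, which terminates at $Q^\circ$ itself; by Lemma \ref{nottrivial}, $Q^\circ$ is not OK, so there is a largest OK cube $Q^*$ in the chain. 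By maximality, every dyadic cube in $Q^\circ$ properly containing $Q^*$ is non-OK, so $Q^* \in CZ^\circ$; and $x \in Q'' \subset Q^*$ places $x$ in $K_{CZ}$, the desired contradiction.

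The distance inequality is then purely geometric. Any $y \in K_p$ lies outside $\inte(3Q_i)$, so $|y - \ctr(Q_i)| \geq \tfrac{3}{2}\delta_{Q_i}$ in the $\ell^\infty$ norm, while any $x \in Q_i$ satisfies $|x - \ctr(Q_i)| \leq \tfrac{1}{2}\delta_{Q_i}$; the triangle inequality yields $|x-y| \geq \delta_{Q_i}$, and taking the infimum gives $\dist(Q_i, K_p) \geq \delta_{Q_i}$. The only delicate step is the climb-up producing $Q^*$: it relies on monotonicity of the OK property (Lemma \ref{okmonlem}) together with the crucial fact that $Q^\circ$ itself is not OK (Lemma \ref{nottrivial}, which in turn rests on the inductive assumption \eqref{amaxa}); without the latter, the chain of OK ancestors need not terminate at a CZ cube.
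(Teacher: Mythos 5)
Your argument is correct and follows essentially the same route as the paper: both reduce to showing that a hypothetical point of $\inte(3Q_i) \cap K_p$ sits in an OK dyadic cube (via Lemma \ref{okmonlem}) and hence in a CZ cube, contradicting membership in $K_p$. The only presentational difference is that the paper compresses the ancestor-climb step by citing the fact, established in the proof of Lemma \ref{kpclosed}, that every OK dyadic subcube of $Q^\circ$ is contained in $K_{CZ}$, whereas you re-derive that step explicitly from Lemma \ref{okmonlem} and Lemma \ref{nottrivial}; your explicit $\ell^\infty$ triangle-inequality computation for the distance bound is also a fine (if routine) filling-in of the paper's ``consequently.''
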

\begin{proof}
For sake of contradiction suppose there exists $Q_i \in CZ^\circ$ with $\inte(3 Q_i) \cap K_p \neq \emptyset$. Then there exists a dyadic cube $Q$ satisfying $3Q \subset 3Q_i$ and $Q \cap K_p \neq \emptyset$. The cube $Q_i$ is OK, thus $Q$ is OK, thanks to Lemma \ref{okmonlem}. Thus, $Q \subset K_{CZ}$, so $Q \cap K_p$ must be empty, a contradiction. 
\end{proof}

\begin{lem}
There exist universal constants $C \geq 1$ and $c \in (0,1)$ such that the following holds. Let $Q \in CZ^\circ$. Then there exists a sequence of cubes, $\Se(Q) \subset CZ^\circ$, that either is (i) finite, satisfying, $\Se(Q) = \{Q^k\}_{k=1}^L$, where $Q^L$ is a keystone cube, or (ii) infinite, satisfying $\Se(Q) = \{Q^k\}_{k \in \N}$ and for $\ctr(Q^k) = x^k$, $\lim_{k \to \infty} x^k = x' \in K_p$. Regardless, the sequence $ \Se(Q) = \{Q^k\}$ satisfies
\begin{align}
&Q^{1} = Q, \label{ase0} \\
&Q^k \lra Q^{k+1} && (\mbox{for } Q^k, Q^{k+1} \in \Se(Q)); \text{ and} \label{ase1}\\
&\delta_{Q^k} \leq C\cdot c^{k-\ell} \delta_{Q^\ell} &&(\mbox{for } Q^k, Q^{\ell} \in \Se(Q), \; k \geq \ell). \label{ase2}
\end{align} \label{ageolm}
\end{lem}

\begin{proof}
Let $Q \in CZ^\circ$. If $Q$ is a keystone cube, then $\Se(Q) = \{Q^1 = Q\}$ satisfies the conclusion of the lemma.

Suppose $Q$ is not a keystone cube; then there exists $Q' \in CZ^\circ$ satisfying $100Q \cap Q' \neq \emptyset$ and $\delta_{Q'} \leq \frac{1}{2} \delta_Q$. Let $Q^{1,0}:=Q$ and
\begin{align*}
Q^{2,0} \in \arg \min_{Q'} \left\{\dist(Q^{1,0}, Q'):\begin{aligned} &Q' \in CZ^\circ, \; 100Q^{1,0} \cap Q' \neq \emptyset,& \\ & \text{and } \delta_{Q'} \leq (1/2) \delta_{Q^{1,0}} \end{aligned} \right\}.
\end{align*}
We call $Q^{2,0}$ constructed by this procedure a ``junior partner'' of $Q^{1,0}$.

Let $s: [0,1] \to \R^n$ be an affine map with $s(0) \in \Cl(Q^{1,0})$, $s(1) \in \Cl(Q^{2,0})$, and $|s(1) - s(0)|_2 = \dist_2(Q^{1,0}, Q^{2,0})$. Then $s((0,1))$ meets finitely many CZ cubes, $Q^{1,1}, \dots , Q^{1,K_1}$, where $K_1<C$, $Q^{1,k} \lra Q^{1,k+1}$, $Q^{1,K_1} \lra Q^{2,0}$, and $\delta_Q \leq \delta_{Q^{1,i}} \leq C \delta_Q$ for $i \in 1,\dots,K_1$, and $\delta_{Q^{2,0}} \leq \frac{1}{2} \delta_{Q^{1,0}}$. Hence, after removing repeated cubes, $\{Q=Q^{1,0}, Q^{1,1}, \dots , Q^{1,K_1}, Q^{2,0}\}$ satisfies $Q^{1,k} \lra Q^{1,k+1}$, and $\delta_{Q^{1,k}} \leq C\cdot c^{k-l} \delta_{Q^{1,l}}$ for $0 \leq l \leq k$. If $Q^{2,0}$ is a keystone cube, we stop, producing a finite sequence $\Se(Q)$ (which we relabel $\{Q^k\}_{k = 1}^L$). Otherwise, we repeat this process. 

We construct $\Se(Q)$ by concatenating the sequences of cubes connecting successive junior partners, $\Se(Q) := \{ Q^{1,0}, \dots, Q^{1, K_1}, Q^{2,0},\dots,Q^{2,K_2}, Q^{3, 0}, \dots \}$. Relabel $\Se(Q) = \{Q^k\}_{k=1}^L$ if a junior partner is keystone, stopping the process and producing a finite sequence that terminates at a keystone cube. Otherwise, if no junior partner is keystone, relabel $\Se(Q)= \{Q^k\}_{k \in \N}$. By construction, $\Se(Q)$ satisfies (\ref{ase0}) and (\ref{ase1}), Also, $\Se(Q)$ satisfies  (\ref{ase2}) because the subsequence of junior partners in $\Se(Q)$ satisfies $\delta_{Q^{k,0}} \leq 2^{j-k} \delta_{Q^{j,0}}$ for $k > j$, and because there are at most $C$ many cubes in $\Se(Q)$ connecting consecutive junior partners.

It remains to verify conclusion (ii) of the lemma in the event that $\Se(Q)$ is infinite. Suppose $\Se(Q)$ is infinite. Let $\Se(Q)= \{Q^k\}_{k \in \N}$. For each $k \in \N$, define $x^k := \ctr(Q^k)$. Because of (\ref{ase2}), 
\begin{align}
    \lim_{k \to \infty} \delta_{Q^k} = 0. \label{deltashrink}    
\end{align}
Because of (\ref{ase1}) and (\ref{ase2}), we have for $j>k$,
\begin{align*}
    |x^k-x^j| &\leq 3 \sum_{i =k}^{j} \delta_{Q^i} \\
    & \leq 3 \sum_{i=k}^j C c^{i-k} \delta_{Q^k} \\
    &\lesssim \delta_{Q^k}.
\end{align*}
In light of (\ref{deltashrink}), this proves the sequence $\{x^k\}_{k \in \N}$ is Cauchy, and there exists $x' \in \Cl(Q^\circ)$ such that $x^k \overset{k \to \infty}{\longrightarrow} x'$. Because of (\ref{g2}), $x' \notin \p Q^\circ$. 

It remains to show that $x' \in K_p$. Suppose for sake of contradiction that $x' \notin K_p$; then $x' \in Q'$ for some $Q' \in CZ^\circ$. Since $x^k = \ctr(Q^k) \rightarrow x' \in Q'$ as $k \rightarrow \infty$, there exists $K_0 \in \N$ such that for $k>K_0$, $Q^k$ belongs to the neighbor set of $Q'$, $Q^k \in \mathcal{N}(Q') := \{ Q'' \in CZ^\circ : Q'' \leftrightarrow Q'\}$. But by Good Geometry of $CZ^\circ$, any cube $Q'' \in \mathcal{N}(Q')$ has sidelength $\delta_{Q''} \geq \frac{1}{2} \delta_{Q'}$, contradicting (\ref{deltashrink}).  Hence, $x' \in K_p$, as desired.

\end{proof}

\subsection{Partition of Unity}\label{subsec:pou}
We have defined the collection of CZ cubes, $CZ^\circ = \{Q_i\}_{i \in I}$, such that $K_{CZ} = \bigcup_{i \in I} Q_i$. Due to the Good Geometry of $CZ^\circ$, (\ref{goodgeo}), (\ref{g1}), and (\ref{g2}), there exists a partition of unity, $\{\theta_i\}_{i \in I} \subset C^\infty(K_{CZ})$, satisfying 
\begin{description}
\setlength{\itemindent}{0em}
\item[(POU1)] $0 \leq \theta_i \leq 1$;
\item[(POU2)] $\theta_i$ vanishes on $K_{CZ} \setminus (1.1)Q_i$;
\item[(POU3)] $\left|\p^{\al} \theta_i\right| \leq C \delta_{Q_i}^{-|\al|}$ whenever $|\al| \leq m$; and
\item[(POU4)] $\sum_{i \in I}  \theta_i \equiv 1$ on $K_{CZ}$.
\end{description}
For the construction of such a partition of unity, see \cite{st}, page 170.

\subsection{Local Extension Operators} \label{subsec:loc_ext_op}
In this section, we apply the inductive hypothesis to construct local extension operators for functions defined on Borel subsets $E_i$ of $3Q_i$. 

Let $Q_i \in CZ^\circ$ and $E_i \subset 3Q_i$ be Borel. Because $Q_i$ is OK, there exists $\ma_i < \ma$ such that for all $x \in \supp(\mu) \cap 3Q_i$, $\sigma_J(x, \mu|_{3Q_i})$ contains an $(\ma_i, x, \e/C_0, 30 \delta_{Q_i})$-basis. Because $E_i \subset 3Q_i$ we have $\sigma_J(x, \mu|_{3Q_i}) \subset \sigma_J(x, \mu|_{E_i})$. Since $\e < \e_0$, for all $x \in \supp(\mu|_{E_i})$, 
\[
\sigma_J(x,\mu|_{E_i}) \mbox{ contains an } \left(\ma_i, x, \e_0/C_0, 10\cdot(3\delta_{Q_i})\right)\mbox{-basis}.
\]
The restricted measure $\mu|_{E_i}$ is Borel regular, and $\diam(\supp(\mu|_{E_i})) \leq 3 \delta_{Q_i}$ (since $E_i \subset 3 Q_i$), so by the consequence of the inductive hypothesis, \eqref{indhyp}, the Extension Theorem for $(\mu|_{E_i}, 3 \delta_{Q_i})$ holds. Because the seminorms for the spaces $\J(\mu|_{E_i}; 3\delta_{Q_i})$ and $\J(\mu|_{E_i}; \delta_{Q_i})$ are equivalent up to universal constant factors, we have that the Extension Theorem for $(\mu|_{E_i}, \delta_{Q_i})$ holds. 

Thus, there exist a linear map $T_i: \J(\mu|_{E_i};\delta_{Q_i}) \to L^{m,p}(\R^n)$, a map $M_i: \J(\mu|_{E_i};\delta_{Q_i}) \to \R_+$, and countable collections of Borel sets $\{ A_\ell^i \}_{\ell \in \N}, A_\ell^i \subset \supp(\mu|_{E_i})$, and of linear maps $\{\phi_\ell^i : \J(\mu|_{E_i};\delta_{Q_i}) \to \R\}_{\ell \in \N}$, and $\{\lambda_\ell^i : \J(\mu|_{E_i};\delta_{Q_i}) \to L^p(d\mu)\}_{\ell \in \N}$, that satisfy for each $(f,P) \in \J(\mu|_{E_i}; \delta_{Q_i})$:
\begin{align*}
&\textbf{(AL1) }\|f,P\|_{\J(\mu|_{E_i}; \delta_{Q_i})} \leq  \|T_i(f,P), P\|_{\J(f, \mu|_{E_i}; \delta_{Q_i})} \leq C \cdot \|f,P\|_{\J(\mu|_{E_i}; \delta_{Q_i})}; \\
&\textbf{(AL2) } c \cdot M_i(f,P) \leq \|T_i(f,P), P\|_{\J(f, \mu|_{E_i}; \delta_{Q_i})} \leq C \cdot M_i(f,P); \text{ and}\\
& \textbf{(AL3) }M_i(f,P) = \Big(\sum_{\ell \in \mathbb{N}} \int_{A_\ell^i} |\lambda_\ell^i(f,P)-f|^p d\mu + \sum_{\ell \in \mathbb{N}}|\phi_\ell^i(f,P)|^p \Big)^{1/p}.
\end{align*}
We obtain the particular form \textbf{(AL3)} for $M_i$ because $\ma_i \neq \emptyset$, a consequence of the fact that $\ma_i < \ma$ and $\emptyset$ is maximal under the order on multi-index sets. Thus, the map $M_i$ has the form  (\ref{aextthm3plus}).

Further, from the Extension Theorem for $(\mu|_{E_i}, \delta_{Q_i})$ we know that the maps $T_i$ and $M_i$ are $\Omega_i'$-constructible. Thus there exists a collection of linear functionals $\Omega_i' = \{\omega^i_{s}\}_{s \in \Upsilon^i} \subset \J(\mu|_{E_i})^*$, satisfying that the collection of sets $\{\supp(\omega^i_s)\}_{s \in \Upsilon^i}$ has $C$-bounded overlap, and for each $x \in \R^n$, there exists a finite subset $\Upsilon^i_x \subset \Upsilon^i$ and a collection of polynomials $\{v^i_{s,x} \}_{s \in \Upsilon^i_x} \subset \mpp$ such that $|\Upsilon^i_x| \leq C$ and 
\begin{align*}
\textbf{(AL4) }J_x T_i(f,P) = \sum_{s \in \Upsilon^i_x} \omega^i_{s}
(f) \cdot v^i_{s,x} +\ot^i_x(P),
\end{align*}
where $\ot_x^i: \mpp \to \mpp$ is a linear map.

Similarly, for each $\ell \in \N$ and $y \in \supp(\mu)$, there exists a finite subset $\bar{\Upsilon}^i_{\ell,y} \subset {\Upsilon}^i$ and constants $\{\eta^{\ell,i}_{s,y}\}_{s \in \bar{\Upsilon}^i_{\ell,y}} \subset \R$ such that $|\bar{\Upsilon}^i_{\ell,y}| \leq C$, and the map $(f,P) \mapsto \lambda_\ell(f,P)(y)$ has the form
\begin{align*}
\textbf{(AL5) } \lambda_\ell^i(f,P)(y) = \sum_{s \in \bar{\Upsilon}^i_{\ell,y}} \eta^{\ell,i}_{s,y} \cdot \omega^i_{s}(f) + \widetilde{\lambda}^i_{y,\ell}(P), 
\end{align*}
where $\widetilde{\lambda}^i_{y,\ell} : \mpp \rightarrow \R$ is a linear functional. 

And for each $\ell \in \N$, there exists a finite subset $\bar{\Upsilon}^i_{\ell} \subset {\Upsilon}^i$ and constants $\{\eta^{\ell,i}_{s}\}_{s \in \bar{\Upsilon}^i_{\ell}} \subset \R$ such that $|\bar{\Upsilon}^i_{\ell}| \leq C$, and the map $\phi^i_\ell$ has the form 
\begin{align*}
\textbf{(AL6) }\phi^i_\ell(f,P) = \sum_{s \in \bar{\Upsilon}^i_{\ell}} \eta_{s}^{\ell,i} \cdot \omega^i_{s}(f) + \widetilde{\lambda}^i_{\ell}(P), 
\end{align*}
where $\widetilde{\lambda}^i_{\ell} : \mpp \rightarrow \R$ is a linear functional.

\begin{remark}\label{rem:J_space}
Notice that $\J(\mu) \subset \J(\mu|_{E_i})$ with an inequality of norms, i.e., $\| f \|_{\J(\mu|_{E_i})} \leq \| f \|_{\J(\mu)}$ for any Borel measurable $f : \R^n \rightarrow \R$. Thus, $\J(\mu|_{E_i})^*$ naturally embeds in $\J(\mu)^*$. In particular, the $\Omega_i'$ may be regarded as a family of functionals in $\J(\mu)^*$.
\end{remark}

If $\omega \in \J(\mu)^*$ then $\supp(\omega) \subset \supp(\mu)$. Therefore, for any choice of $E_i \subset 3 Q_i$, the collection of functionals $\Omega_i' = \{ \omega^i_s\}_{s \in \Upsilon^i}$ satisfies
\begin{equation}\label{supp_cond_Omega:eqn}
\supp(\omega^i_s) \subset \supp(\mu|_{E_i}) \subset \supp(\mu) \cap \Cl(E_i) \quad \mbox{ for all } i \in I, s \in \Upsilon^i.
\end{equation}

\section{Preliminary Estimates and Technical Tools}
\label{sec:prel_est}

\subsection{Estimates for Auxiliary Polynomials}
\label{subsec:est_aux_poly}
Recall from (\ref{basis}) and (\ref{bound}), we have constructed $(P_x^\al)_{\al \in \ma}$, for all $x \in 100 Q^\circ$, such that 
\begin{equation}\label{basis2}
\begin{aligned}
&(P_x^\al)_{\al \in \ma} \mbox{ forms an } (\ma, x, \bar{C} \e, 1)\mbox{-basis for } \sigma_J(x,\mu), \mbox{ with } \\
&|\p^\beta P_x^\al(x)| \leq \bar{C} \mbox{ for } \al \in \ma, \beta \in \mm. 
\end{aligned} 
\end{equation}
Here, $\bar{C}$ is a universal constant, determined only by $m$, $n$, $p$. As in Section 12 of \cite{arie3}, we have:

\begin{lem} There exists a universal constant $C_3$ such that the following holds. Let $Q \in CZ^\circ$, and let $\widehat{\delta} \in [\delta_Q/2,1]$. Then
\begin{align}
|\p^\beta P_y^\al (y)| \leq C_3 \widehat{\delta}^{|\al|- |\beta|} \; \; (\al \in \ma, \: \beta \in \mm, \: y \in 3Q^+). \label{alocalbound}
\end{align}
In particular,
\begin{align}
|\p^\beta P_y^\al (y)| \leq C_3  \delta_Q^{|\al|- |\beta|} \; \; (\al \in \ma, \: \beta \in \mm, \: y \in 3Q^+). \label{localbound}
\end{align} \label{localbdlm}
\end{lem}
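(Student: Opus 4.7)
The plan is to prove \eqref{alocalbound} by contradiction, showing that its failure would force $Q^+$ to satisfy the OK condition of Definition \ref{ok_defn}, contradicting $Q \in CZ^\circ$. The tool will be Lemma \ref{bigdir}, applied to the restricted measure $\mu|_{3Q^+}$, the set $E := 3Q^+$, and the globally defined family $(P_x^\al)_{\al\in\ma,\,x\in E}$ from \eqref{basis2}, at scale $\widehat\delta$. I will fix a small universal constant $\e_1 \in (0,c_1]$ (with $c_1$ from Lemma \ref{newdir}), set $C_3 := \e_1^{-D-1}$, and, since $\e = \e(\ma)$ remains free to shrink, further assume $\bar C \e \le \e_1^{2D+2}$, so that \textbf{(D1)} of Lemma \ref{bigdir} holds with $\e_2 = \bar C \e$.

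First I would certify the basis hypothesis \textbf{(D5)} at scale $\widehat\delta$ for $\sigma_J(x,\mu|_{3Q^+})$. Since $\mu|_{3Q^+}$ is dominated by $\mu$, we have $\sigma_J(x,\mu) \subset \sigma_J(x,\mu|_{3Q^+})$; together with the monotonicity \eqref{monotone}, this upgrades the basis from \eqref{basis2} into an $(\ma, x, \bar C \e, \widehat\delta)$-basis for $\sigma_J(x,\mu|_{3Q^+})$ at every $x \in E$. The condition $\supp(\mu|_{3Q^+}) \subset E$ is clear, and $\diam(E) = 6\delta_Q \le 12\widehat\delta$ follows from $\widehat\delta \ge \delta_Q/2$; the mild relaxation from $10\widehat\delta$ to $12\widehat\delta$ in \textbf{(D4)} costs only a universal constant factor through the Sobolev inequality in the proof of Lemma \ref{newdir}, which the constants $c_1, C_1$ harmlessly absorb. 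If \eqref{alocalbound} fails at some $(y_0,\al_0,\beta_0)$, then hypothesis \eqref{bigdir1} holds with threshold $\e_1^{-D-1}$, so Lemma \ref{bigdir} produces $\mab < \ma$ and, for every $x \in E$, an $(\mab, x, C_1\e_1, \widehat\delta)$-basis for $\sigma_J(x,\mu|_{3Q^+})$.

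The final step is to rescale this basis to scale $30\delta_{Q^+} = 60\delta_Q$, matching Definition \ref{ok_defn} for $Q^+$. When $60\delta_Q \le \widehat\delta$, monotonicity \eqref{monotone} suffices; otherwise, Lemma \ref{edlm} with dilation factor $k = 60\delta_Q/\widehat\delta \le 120$ (by $\widehat\delta \ge \delta_Q/2$) yields an $(\mab, x, 120^m C_1 \e_1, 60\delta_Q)$-basis. Fixing $\e_1$ universally small so that $120^m C_1 \e_1 \le \e_0/C_0$ then certifies $Q^+$ as OK with label $\mab < \ma$, contradicting the fact that for $Q \in CZ^\circ$ every proper dyadic ancestor of $Q$ in $Q^\circ$ --- in particular $Q^+$ (which exists in $Q^\circ$ by Lemma \ref{nottrivial}) --- is not OK. The specialization \eqref{localbound} follows by setting $\widehat\delta = \delta_Q$. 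I expect the main obstacle to be purely bookkeeping: one must choose $\e_1$ first, then $C_3 = \e_1^{-D-1}$, then shrink $\e(\ma)$ to make $\bar C \e \le \e_1^{2D+2}$, all without conflicting with parameter choices already made in the induction, and verify that the slight overshoot of the $\diam(E) \le 10 \widehat\delta$ hypothesis is indeed absorbed into the universal constants of Lemmas \ref{newdir}--\ref{bigdir}.
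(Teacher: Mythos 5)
Your proposal follows the same strategy as the paper: argue by contradiction, apply Lemma~\ref{bigdir} with $E=3Q^+$ and measure $\mu|_{3Q^+}$, deduce an $(\mab,x,C_1\e_1,\cdot)$-basis for $\mab < \ma$, rescale to $30\delta_{Q^+}$, and conclude $Q^+$ is OK -- contradicting $Q \in CZ^\circ$. The choice $C_3 = \e_1^{-D-1}$, the role of the universal constant $\e_1$, and the flexibility to shrink $\e(\ma)$ are all in agreement with the paper's argument.

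The one place your plan departs, and where it has a genuine (though small) deficiency, is the scale at which you invoke Lemma~\ref{bigdir}. You apply it at scale $\widehat\delta$, at which point \textbf{(D4)} requires $\diam(3Q^+) = 6\delta_Q \leq 10\widehat\delta$, i.e.\ $\widehat\delta \geq 0.6\,\delta_Q$. The lemma statement, however, only gives $\widehat\delta \geq 0.5\,\delta_Q$, so for $\widehat\delta \in [\delta_Q/2, 3\delta_Q/5)$ hypothesis \textbf{(D4)} fails. Your remedy -- asserting that the passage from $10\widehat\delta$ to $12\widehat\delta$ is ``harmlessly absorbed'' by the constants in Lemma~\ref{newdir} -- is correct in spirit, but it is not an application of the cited lemma; it is a claim that a modified lemma (with $12$ in place of $10$) holds with possibly different universal constants. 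If you want to keep a clean citation of Lemma~\ref{bigdir} as stated, you should run it at an inflated scale. The paper uses $\delta = 120\widehat\delta$ (which forces a mild case split: for $\widehat\delta \in [1/4,1]$, \eqref{alocalbound} already follows directly from \eqref{basis2} and is handled separately, since $120\widehat\delta \leq 30$ is needed to shrink the inflated basis via \eqref{monotone}). A smaller inflation such as $\delta := 2\widehat\delta$ would also work for the \textbf{(D4)} issue, at the cost of an extra $2^m$ from Lemma~\ref{edlm} when producing the \textbf{(D5)} basis, but either choice lets you invoke the lemma verbatim. Aside from this scale-bookkeeping, the remainder of your argument -- the final rescaling step distinguishing $60\delta_Q \leq \widehat\delta$ (use \eqref{monotone}) from $60\delta_Q > \widehat\delta$ (use Lemma~\ref{edlm} with $k \leq 120$), the requirement $120^m C_1\e_1 \leq \e_0/C_0$, and the deduction of \eqref{localbound} from \eqref{alocalbound} by specializing $\widehat\delta = \delta_Q$ -- is sound and matches the paper.
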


\begin{proof}
Inequality (\ref{localbound}) follows from (\ref{alocalbound}) by setting $\widehat{\delta} = \delta_Q$.

We now prove inequality (\ref{alocalbound}).

If $\widehat{\delta} \in [1/4, 1]$ then (\ref{alocalbound}) follows from (\ref{basis2}) because $3 Q^+ \subset 10Q \subset 100 Q^\circ$ for $Q \in CZ^\circ$. So we may assume $\widehat{\delta} \in [\delta_Q/2,1/4]$. 

Let $C' := 30^m \bar{C}$. By \eqref{basis2} and Lemma \ref{edlm}, $(P_x^\al)_{\al \in \ma}$ forms an $(\ma, x, C' \e, 30)$-basis for $\sigma_J(x,\mu)$ for $x \in 100Q^\circ$. By \eqref{monotone}, since $120 \widehat{\delta} \leq  30$, $\sigma_J(x,\mu) \subset \sigma_J(x,\mu|_{3Q^+})$, and $10Q \subset 100 Q^\circ$, we have that 
\begin{equation}\label{basisprop}
(P_x^\al)_{\al \in \ma} \mbox{ forms an }(\ma, x, C' \e, 120 \widehat{\delta})\mbox{-basis for }\sigma_J(x,\mu|_{3Q^+})\mbox{ for all }x \in 10Q.
\end{equation}

Define $\e_1 = \min \{c_1, \e_0/(C_0C_1) \}$, a universal constant, where $c_1, C_1$ are the constants from Lemma \ref{bigdir}, and $C_0$ is the constant from Lemma \ref{sjins}. For the sake of contradiction, suppose (\ref{alocalbound}) fails to hold for a sufficiently large constant $C_3$. Thus, we may assume
\begin{align}\label{basisprop2}
\max \{ |\p^\beta P_z^\al(z)| \cdot (120 \widehat{\delta})^{|\beta| -|\al|}: z \in 3 Q^+, \; \al \in \ma, \; \beta \in \mm \} > \e_1^{-D-1}.
\end{align}

By taking $\e$ small enough, we may assume that $C' \e \leq \e_1^{2D+2}$. We claim that the hypotheses of Lemma \ref{bigdir} hold with the parameters
\begin{align*}
\Big(\e_1,\e_2,\delta,\ma,\mu, E, &\left(\pt_x^\al \right)_{\al \in \ma, x \in E} \Big): = \Big(\e_1, C'\e, 120 \widehat{\delta}, \ma, \mu|_{3Q^+}, \Cl(3Q^+), \big(P_x^\al\big)_{\al \in \ma, x \in 3Q^+} \Big).
\end{align*}
Specifically, we have to check the conditions \textbf{(D1)-(D5)} in Lemma \ref{newdir} and condition \eqref{bigdir1} for the above choice of parameters. Note that $\textbf{(D1)}$ is satisfied because $\epsilon_1 \leq c_1$ and $C' \epsilon \leq \epsilon_1^{2D+2}$, while \textbf{(D2)} and \textbf{(D3)} do not mention any conditions on the parameters, hence are trivially satisfied. Further, \textbf{(D4)} is satisfied because $\supp(\mu|_{3Q^+}) \subset \Cl(3 Q^+)$ and $\diam(3Q^+) = 6 \delta_Q \leq 120 \widehat{\delta}$. Note \textbf{(D5)} is satisfied due to \eqref{basisprop}, since $\Cl(3 Q^+) \subset 10Q$. Finally, \eqref{bigdir1} is satisfied thanks to \eqref{basisprop2}.

Thus, we may apply Lemma \ref{bigdir} to deduce that there exists $\mab < \ma$ such that $\sigma_J(x, \mu|_{3Q^+})$ contains an $(\mab, x, C_1 \e_1, 120\widehat{\delta})$-basis for all $x \in 3Q^+$. We apply (\ref{monotone}), using that $\delta_{Q^+} = 2 \delta_Q \leq 4 \widehat{\delta}$ and $C_1 \e_1 \leq \e_0/C_0$, to deduce that $\sigma_J(x, \mu|_{3Q^+})$ contains an $(\mab, x,  \e_0/C_0, 30\delta_{Q^+})$-basis for all $x \in 3 Q^+$. Therefore, the cube $Q^+$ is OK (see Definition \ref{ok_defn}), contradicting that $Q \in CZ^\circ$. This completes the proof of (\ref{alocalbound}).

\end{proof}

\begin{lem}
\label{newestlm}
There exists a universal constant $C_4$ such that the following holds. Let $x \in K_p$. For $\delta \in (0,1)$,
\begin{align}
    |\p^\beta P_x^\al(x)| \leq C_4 \delta^{|\al| - |\beta|} \quad \quad (\al \in \ma, \beta \in \mm). \label{acobd}
\end{align}
In particular $\p^\beta P_x^\al(x) = 0$ for $|\al|>|\beta|$.
\end{lem}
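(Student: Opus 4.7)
The plan is to derive the bound by contradiction, exploiting the fact that $x \in K_p$ forces every dyadic cube containing $x$ to be not OK. The strategy is: if the derivative bound fails at some scale $\delta$, then Lemma \ref{bigdir} applied locally on $3Q$ for a small dyadic cube $Q \ni x$ produces for some $\mab < \ma$ an $(\mab, y, \e_0/C_0, 30\delta_Q)$-basis of $\sigma_J(y, \mu|_{3Q})$ at every $y \in \Cl(3Q)$, which directly contradicts $Q$ being not OK.

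First I would dispatch the case $|\beta| \geq |\al|$: by \eqref{bound}, $|\p^\beta P_x^\al(x)| \leq \bar{C}$, and since $|\al|-|\beta| \leq 0$ with $\delta \in (0,1)$, we have $\delta^{|\al|-|\beta|} \geq 1$, so the bound holds with $C_4 = \bar{C}$. For the main case $|\beta| < |\al|$, I would fix universal constants $\e_1 := \min\{c_1, \e_0/(10^m C_0 C_1)\}$ (with $c_1, C_1$ from Lemma \ref{bigdir}) and $C_4 := \e_1^{-D-1}$. Assuming for contradiction that $|\p^\beta P_x^\al(x)| > C_4 \delta^{|\al|-|\beta|}$ for some $\delta \in (0,1)$, I would pick a dyadic cube $Q \subset Q^\circ$ with $x \in Q$ and $\delta_Q \leq \delta/3$; since $x \in K_p$, such $Q$ is not OK.

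Then I would apply Lemma \ref{bigdir} with parameters $(\e_1, \bar{C}\e, 3\delta_Q, \ma, \mu|_{3Q}, \Cl(3Q), (P_y^\al)_{\al \in \ma, y \in \Cl(3Q)})$. Conditions \textbf{(D1)}--\textbf{(D4)} are routine, and \textbf{(D5)} follows from \eqref{basis2} combined with \eqref{monotone} (since $\sigma_J(y,\mu) \subset \sigma_J(y, \mu|_{3Q})$ and the scale drops from $1$ to $3\delta_Q \leq 1$). The nontrivial verification is \eqref{bigdir1}: taking $y = x \in 3Q$ and using that $(3\delta_Q)^{|\beta|-|\al|} \geq \delta^{|\beta|-|\al|}$ (since $3\delta_Q \leq \delta$ and $|\beta|-|\al|<0$), I obtain $|\p^\beta P_x^\al(x)|(3\delta_Q)^{|\beta|-|\al|} > C_4 = \e_1^{-D-1}$. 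Lemma \ref{bigdir} then produces some $\mab<\ma$ such that $\sigma_J(y, \mu|_{3Q})$ admits an $(\mab, y, C_1\e_1, 3\delta_Q)$-basis at every $y \in \Cl(3Q)$; scaling up by a factor of $10$ via Lemma \ref{edlm} and the choice of $\e_1$ converts this into an $(\mab, y, \e_0/C_0, 30\delta_Q)$-basis, which contradicts $Q$ being not OK.

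Concluding, $|\p^\beta P_x^\al(x)| \leq C_4 \delta^{|\al|-|\beta|}$ for every $\delta \in (0,1)$; letting $\delta \to 0^+$ when $|\al| > |\beta|$ forces $\p^\beta P_x^\al(x) = 0$, proving the final assertion. The main obstacle I anticipate is the bookkeeping with scaling constants: one must pick $\e_1$ so that $10^m C_1 \e_1 \leq \e_0/C_0$ after the rescaling of Lemma \ref{edlm}, and take the standing constant $\e = \e(\ma)$ small enough that $\bar{C}\e \leq \e_1^{2D+2}$ to satisfy \textbf{(D1)} of Lemma \ref{bigdir}. Both constraints can be met with universal choices, so the difficulty is really just careful constant-tracking rather than anything conceptual.
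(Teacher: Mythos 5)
Your proposal is correct and follows essentially the same route as the paper: both proofs argue by contradiction, apply Lemma \ref{bigdir} to the localized data $(\mu|_{3Q}, \Cl(3Q), (P_y^\al))$ for a small dyadic cube $Q \ni x$, and then scale up via Lemma \ref{edlm} to produce an $(\mab, y, \e_0/C_0, 30\delta_Q)$-basis contradicting that $Q$ is not OK. The only difference is cosmetic bookkeeping: the paper assumes $\p^\beta P_x^\al(x) = \eta \neq 0$, chooses a scale $\delta_1 < \eta\e_1^{D+1}$, and applies Lemma \ref{bigdir} at scale $\delta_Q$ with the rescaling factor $30$ (hence $\e_1 \leq \e_0/(30^m C_0 C_1)$), whereas you contradict the quantitative bound directly at scale $3\delta_Q$ with rescaling factor $10$ (hence $\e_1 \leq \e_0/(10^m C_0 C_1)$) and then send $\delta \to 0^+$ to extract the vanishing; both routes are valid and logically equivalent.
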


\begin{proof}
Let $x \in K_p$ and $\delta\in (0,1)$. Recall from (\ref{basis2}), we have
\begin{align}
 |\p^\beta P_{x}^\al (x)| \leq C  \quad \quad (\al \in \ma, \; \beta \in \mm). \label{bound_1} 
\end{align}
For $|\beta| \geq |\al|$, (\ref{bound_1}) implies (\ref{acobd}). So it suffices to prove (\ref{acobd}) for $|\al| > |\beta|$. We will prove that $\p^\beta P_x^\al(x) = 0$ for $|\al|>|\beta|$. This will complete the proof of \eqref{acobd}, and with it, the proof of the lemma.

Suppose for sake of contradiction that there exist $\beta \in \mm$, $\al \in \ma$, $|\al| > |\beta|$ such that $|\p^\beta P_x^\al(x)|>\eta>0$. Fix $\e_1 < \min\{ c_1, \e_0/(30^m C_0 C_1)\}$, for $c_1$ and $C_1$ as in Lemma \ref{newdir}, and $C_0$ is the constant from Lemma \ref{sjins}. Fix $\delta_1<\eta \e_1^{D+1}$ with $\delta_1 < 1$. Fix a dyadic cube $Q \subset Q^\circ$ with $x \in Q$ and $\delta_Q < \delta_1$. Observe that $\delta_Q^{|\beta| - |\al|} > \delta_1^{|\beta|-|\al|} \geq \delta_1^{-1}$. So,
\begin{align}
\max \left\{|\p^{\hat{\beta}} P_y^{\hat{\al}} (y)|\delta_Q^{|\hat{\beta}|-|\hat{\al}|}: y \in 3Q  \text{, } \hat{\al} \in \ma \text{, } \hat{\beta} \in \mm \right\} &\geq |\p^\beta P_x^\al(x)|\delta_Q^{|\beta|- |\al|} \nonumber \\
&>\eta \delta_1^{-1} >\e_1^{-D-1}. \label{acohbigdir}
\end{align}
We fix $\e_2 < \e_1^{2D+2}$.  From  (\ref{basis2}), and since $5Q \subset 100 Q^\circ$, $\left(P_y^\al \right)_{\al \in \ma}$ forms an $(\ma, y, C\e, 1)$-basis for $\sigma_J(y, \mu)$ for all $y \in 5Q$. We can assume $C\e < \e_2$. Note that $\delta_Q < \delta_1 < 1$. From (\ref{monotone}), $\left(P_y^\al \right)_{\al \in \ma}$ forms an $(\ma, y, \e_2, \delta_Q)$-basis for $\sigma_J(y, \mu|_{3Q})$ for all $y \in 5Q$. In combination with (\ref{acohbigdir}), we see the hypotheses of Lemma \ref{bigdir} hold with parameters
\begin{align*}
\Big(\e_1,\e_2,\delta,\ma,\mu, E, \left(\pt_x^\al \right)_{\al \in \ma, x \in E} \Big) = \Big(\e_1,\e_2,\delta_Q,\ma,\mu|_{3Q}, \Cl(3Q), \left(P_x^\al \right)_{\al \in \ma, x \in 3Q} \Big).
\end{align*}
Hence there exists $\mab<\ma$ so that for every $y \in 3Q$, $\sigma_J(y,\mu|_{3Q})$ contains an $(\mab, y, C_1 \e_1, \delta_Q)$-basis. Because $C_1\e_1 < \e_0 /(30^mC_0 )$, we apply Lemma \ref{edlm} to deduce that for every $y \in 3Q$, $\sigma_J(y,\mu|_{3Q})$ contains an $(\mab, y, \e_0/C_0, 30 \delta_Q)$-basis, indicating that $Q$ is OK, thus $Q$ is contained in a CZ cube. But $x \in Q$, so $x \in K_{CZ}$. This contradicts that $x \in K_p$, completing the proof of \eqref{acobd} by contradiction.
\end{proof}

In Proposition \ref{phiprop} of Section \ref{subsubsec:aux_poly}, we defined a family of functions $(\varphi_x^\al)_{\al \in \ma, x \in Q^\circ} \subset L^{m,p}(\R^n)$, related to the $(P^\al_x)_{\al \in \ma, x \in Q^\circ}$, satisfying (\ref{phi1})-(\ref{phi3}). In particular, $J_x \varphi_x^\al = P_x^\al$ for $x \in Q^\circ$, $\al \in \ma$. 

\begin{lem}\label{lem:auxpolyzeros}
We have
\[
P^\al_x(x) = \varphi^\al_x(x) = 0 \quad \mbox{for } x \in K_p, \; \al \in \ma.
\]
\end{lem}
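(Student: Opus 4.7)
The plan is to reduce this identity to Lemma \ref{newestlm} together with the monotonicity of $\ma$ established in Section \ref{monotonicsec}.

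First I would observe that the relation $\varphi_x^\al(x) = P_x^\al(x)$ is immediate: by Proposition \ref{phiprop} we have $J_x \varphi_x^\al = P_x^\al$, and evaluating any $(m-1)$-jet at its basepoint yields the value of the underlying function, so $\varphi_x^\al(x) = (J_x \varphi_x^\al)(x) = P_x^\al(x)$. Thus it suffices to prove $P_x^\al(x) = 0$ for $x \in K_p$ and $\al \in \ma$.

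Next I would argue that $0 \notin \ma$, so every $\al \in \ma$ satisfies $|\al| \geq 1$. Indeed, in Section \ref{monotonicsec} it was shown (using \eqref{amaxa} and \eqref{amainlmi}) that $\ma$ is monotonic, and by the standing assumption of the inductive step we have $\ma \neq \mm$. If $0$ were in $\ma$, then monotonicity would give $0 + \gamma \in \ma$ for every $\gamma \in \mm$ with $|\gamma| \leq m-1-|0| = m-1$, i.e., $\ma = \mm$, a contradiction. Hence $|\al| \geq 1 > 0 = |\beta|$ when we take $\beta = 0$.

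Now I would apply Lemma \ref{newestlm} with this choice of $\beta = 0$. For $x \in K_p$ and $\al \in \ma$, the lemma states that $\p^\beta P_x^\al(x) = 0$ whenever $|\al| > |\beta|$, which is exactly our situation. Therefore $P_x^\al(x) = \p^0 P_x^\al(x) = 0$, and combined with the first step this yields $\varphi_x^\al(x) = 0$ as well. There is no real obstacle here; the lemma is essentially a one-line corollary of Lemma \ref{newestlm} once one has noted that $0 \notin \ma$, which is why I would present the argument in this compact form.
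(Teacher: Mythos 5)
Your proof is correct and matches the paper's argument in substance: both rely on monotonicity of $\ma$ together with $\ma \neq \mm$ to deduce $0 \notin \ma$, and then on Lemma \ref{newestlm} (the paper applies the bound \eqref{acobd} directly with $\delta \to 0$, while you cite the ``in particular'' clause of that lemma, which is the same fact). Your additional sentence explicitly justifying $\varphi_x^\al(x) = P_x^\al(x)$ via $J_x\varphi_x^\al = P_x^\al$ is a harmless and reasonable bit of bookkeeping that the paper leaves implicit.
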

\begin{proof}
Recall that $\ma \subset \mm$, $\ma \neq \mm$, and $\ma$ is monotonic. Therefore, $\ma$ does not contain the zero multi-index. Hence, $|\al| > 0$ for $\al \in \ma$. Due to \eqref{acobd}, we have $|P_x^\al(x)| \leq C \delta^{|\al|}$ for all $\delta \in (0,1)$. Hence, $P_x^\al(x) = 0$. The result follows.
\end{proof}

\subsection{Estimates for Local Solutions}
\label{sec:est_ext}

Recall the norm defined on $\mpp$:
\begin{align*}
|P|_{x,\delta} = \Big( \sum_{|\al| \leq m-1} |\p^\al P(x)|^p \cdot \delta^{n +(|\al| - m)p} \Big)^{1/p}.
\end{align*}
Recall $I$ is the indexing set for the CZ decomposition $CZ^\circ = \{Q_i\}_{i \in I}$. For $P \in \mpp$ and $i \in I$, define 
\begin{align}
|P|_i := |P|_{x_i,\delta_{Q_i}}. \label{inorm}  
\end{align}
By applying (\ref{sob}) to the measure $\mu|_{Q_i}$ and domain $U= Q_i$, for $F \in L^{m,p}(\R^n)$ and $x \in Q_i$,
\begin{align}
|J_{x}F-P|_{x,\delta_{Q_i}} \leq C \|F,P\|_{\J_*(f,\mu|_{Q_i};Q_i)}. \label{normest}
\end{align}

\begin{lem} Let $Q_i \in CZ^\circ$. Then for $P \in \mpp$,
\begin{align}
\|0, P\|_{\J(\mu|_{1.1Q_i}; \delta_{Q_i})} \lesssim |P|_i =  |P|_{x_i, \delta_{Q_i}}. \label{locest1}
\end{align}
Moreover, if $P \in \mpp$ satisfies $\p^\al P(x_i) = 0$ for all $\al \in \ma$, then
\begin{align}
\|0, P\|_{\J(\mu|_{9{Q_i}}; \delta_{Q_i})} \simeq  |P|_i =  |P|_{x_i, \delta_{Q_i}}. \label{locest2}
\end{align}
\end{lem}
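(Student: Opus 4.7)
For both inequalities, the upper bound is proved by exhibiting an explicit near-optimizer. Fix a smooth cutoff $\theta \in C_c^\infty(\R^n)$ with $\theta \equiv 1$ on $1.1 Q_i$ (respectively on $9 Q_i$), $\supp(\theta) \subset 1.2 Q_i$ (respectively $\subset 9.1 Q_i$), and $|\p^\beta \theta| \leq C\,\delta_{Q_i}^{-|\beta|}$ for $|\beta| \leq m$. Set $F := (1-\theta)P$, which lies in $L^{m,p}(\R^n)$ since $F$ agrees with the polynomial $P$ outside $\supp(\theta)$. By construction, $F$ vanishes on $1.1 Q_i$ (respectively $9 Q_i$), killing the $L^p(d\mu)$ term of $\|F,P\|_{\J(0,\mu;\delta_{Q_i})}$. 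Since $P$ has degree at most $m-1$, $\p^m F = -\p^m(\theta P)$, and each resulting Leibniz term carries at least one derivative on $\theta$; combined with $|\p^\beta \theta| \lesssim \delta_{Q_i}^{-|\beta|}$ and the polynomial bound $\|\p^{m-\beta}P\|_{L^p(1.2Q_i)} \lesssim |P|_i\,\delta_{Q_i}^{|\beta|}$ obtained from Lemma \ref{lplem} applied to the lower-order derivatives of $P$, this yields $\|F\|_{L^{m,p}(\R^n)} \lesssim |P|_i$. Finally, $F - P = -\theta P$ is compactly supported, so $\|F - P\|_{L^p(\R^n)}/\delta_{Q_i}^m \lesssim \|P\|_{L^p(1.2 Q_i)}/\delta_{Q_i}^m \lesssim |P|_i$ again by Lemma \ref{lplem}. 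Summing these contributions proves \eqref{locest1}, and the same argument with the larger cutoff gives the upper half of \eqref{locest2}.

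For the lower bound in \eqref{locest2}, fix $F \in L^{m,p}(\R^n)$ arbitrary and set $A := \|F,P\|_{\J(0,\mu|_{9Q_i};\delta_{Q_i})}$. The Sobolev inequality \eqref{sob2} controls the full jet error: $|J_{x_i}F - P|_{x_i,\delta_{Q_i}} \lesssim A$. Combined with the hypothesis $\p^\al P(x_i) = 0$ for $\al \in \ma$, this gives $|\p^\al F(x_i)| = |\p^\al(F-P)(x_i)| \lesssim A\,\delta_{Q_i}^{m-|\al|-n/p}$ for each $\al \in \ma$. Form the auxiliary correction $P' := \sum_{\al \in \ma}\p^\al F(x_i)\cdot P_{x_i}^\al$. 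By \eqref{localbound}, $|P_{x_i}^\al|_{x_i,\delta_{Q_i}} \lesssim \delta_{Q_i}^{|\al|+n/p-m}$, and combining with the bound on $|\p^\al F(x_i)|$ yields $|P'|_{x_i,\delta_{Q_i}} \lesssim A$. The basis identities $\p^\beta P_{x_i}^\al(x_i) = \delta_{\al\beta}$ for $\al,\beta \in \ma$ imply that the remainder $R := J_{x_i}F - P'$ has vanishing $\ma$-derivatives at $x_i$, so both $P$ and $R$ live in the $\ma$-complementary subspace of $\mpp$.

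Assembling via triangle inequality, $|P|_i \leq |P - J_{x_i}F|_i + |P'|_i + |R|_i \lesssim A + |R|_i$, so the crux is bounding $|R|_i$ by $A$. This is the \emph{main obstacle}. The key observation is that $R = J_{x_i}G$ where $G := F - \sum_{\al \in \ma}\p^\al F(x_i)\cdot \varphi_{x_i}^\al$, with $\varphi_{x_i}^\al$ from Proposition \ref{phiprop} satisfying $\|\varphi_{x_i}^\al\|_{\J(0,\mu)} \lesssim \e$. I expect $|R|_i \lesssim A$ to follow by applying a Sobolev-type estimate to $G$ and bounding its $L^{m,p}$- and $L^p(d\mu)$-seminorms in terms of $A$ plus an $\e$-small contribution from $\sum_\al \p^\al F(x_i)\,\varphi_{x_i}^\al$. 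The subtlety is that the full $\J(0,\mu;\delta_{Q_i})$-functional also carries an $L^p(\R^n)$ term, which I would handle by first localizing $\varphi_{x_i}^\al$ against a cutoff supported near $Q_i$ (preserving its jet at $x_i$) so that the resulting modified functions lie in $L^p(\R^n)$; the localization cost is controlled by the good geometry of $CZ^\circ$ from Section \ref{aczdecomp}.
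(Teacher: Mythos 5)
Your upper bound (inequality \eqref{locest1}, and the easy half of \eqref{locest2}) is essentially the paper's argument: define $F_0 := (1-\theta)P$ for a suitable cutoff and estimate the three terms of the $\J$-functional. That part is fine.

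The lower bound is where you hit real trouble, and it is not a gap that can be patched by the Sobolev machinery you invoke. Your reduction leaves you needing to show $|R|_i \lesssim A$, where $R = J_{x_i}G$ is a polynomial with vanishing $\ma$-derivatives at $x_i$ and $A := \|F,P\|_{\J(0,\mu|_{9Q_i};\delta_{Q_i})}$. But this is exactly the statement you set out to prove, applied to $R$ in place of $P$: the upper bound \eqref{locest1} lets you bound $\|0,R\|_{\J(\mu|_{9Q_i};\delta_{Q_i})}$ by $|R|_i$, and Sobolev lets you bound $|R|_i$ by $L^{m,p}$- and $L^p$-seminorms of $G$, but any attempt to close the loop produces $|P|_i \lesssim A + |P|_i + (\text{error})$, which is circular. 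Tracing through the scaling also exposes a mismatch: the bound $\|\varphi_{x_i}^\al\|_{\J(0,\mu)} \lesssim \e$ from Proposition \ref{phiprop} is a unit-scale estimate (on $Q^\circ$), and it does not automatically localize to a $\delta_{Q_i}$-scaled bound with the right power of $\delta_{Q_i}$.

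More fundamentally, the lower bound cannot be a consequence of Sobolev inequalities and jet estimates alone, because your argument never uses the hypothesis $Q_i \in CZ^\circ$. Without that hypothesis the statement is simply false: if $\mu|_{9Q_i} \equiv 0$, then $\|0,P\|_{\J(\mu|_{9Q_i};\delta_{Q_i})} = 0$ (take $F := P$), while $|P|_i$ can be any nonzero number for a nonzero $P$ with vanishing $\ma$-derivatives. The content of \eqref{locest2} is a rigidity statement---$\mu|_{9Q_i}$ is ``rich enough'' to detect $P$ in the complementary directions---and the only source of that richness is the fact that $Q_i^+$ is not OK. The paper's proof is accordingly by contradiction: supposing $\|0,P'\|_{\J(\mu|_{9Q_i};\delta_{Q_i})} \leq \e_1^{D+1}|P'|_i$, it normalizes $P'$, runs a ``biggest gap'' argument over the multi-indices $\beta$ to extract a new index $\bar{\al} \notin \ma$ and a polynomial $P^{\bar{\al}}$ satisfying conditions of type \textbf{(D6)} from Lemma \ref{newdir}, and then assembles an $(\mab, x, \e_0/C_0, 30\delta_{Q_i^+})$-basis for $\sigma_J(x,\mu|_{3Q_i^+})$ for all $x \in \supp(\mu) \cap 3Q_i^+$ --- which would make $Q_i^+$ OK, contradicting $Q_i \in CZ^\circ$. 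Any correct proof needs to reach for that CZ structure; your sketch never does.
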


\begin{proof}
For the proof of (\ref{locest1}), let $F_0 = (1-\theta) P$, where $\theta \in C^\infty(\R^n)$, $\theta|_{9{Q_i}} = 1$, $\supp(\theta) \subset 10{Q_i}$, and $|\p^\al \theta(x)| \leq C \delta_{Q_i}^{-|\al|}$. Then
\begin{align*}
\|0, P\|_{\J(\mu|_{1.1{Q_i}}; \delta_{Q_i})}^p &\leq \|0, P\|_{\J(\mu|_{9{Q_i}}; \delta_{Q_i})}^p \\
&\leq \|F_0, P\|_{\J(0,\mu|_{9{Q_i}}; \delta_{Q_i})}^p \\
&= \|F_0 \|_{L^{m,p}(\R^n)}^p + \int_{9 Q_i} | F_0 |^p d \mu + \| F_0 - P  \|_{L^p(\R^n)}^p/\delta_{Q_i}^{mp} \\
&= \|F_0 - P \|_{L^{m,p}(\R^n)}^p + \int_{9 Q_i} | F_0  |^p d \mu + \| F_0 - P  \|_{L^p(\R^n)}^p/\delta_{Q_i}^{mp}.
\end{align*}
Note $F_0 = 0$ on $9Q_i$, and $F_0 = P$ on $\R^n \setminus 10 Q_i$. Thus, continuing from the above bounds, and using the Taylor expansion $P(x) = \sum_{|\al| \leq m-1} P(x_i) \cdot (x-x_i)^\al/\al!$,
\begin{align*}
\|0, P\|_{\J(\mu|_{1.1{Q_i}}; \delta_{Q_i})}^p & \leq \| \theta P \|_{L^{m,p}(10 Q_i)}^p + \| \theta P  \|_{L^p(10{Q_i})}^p/\delta_{Q_i}^{mp} \\
&\leq C |P|_i^p.
\end{align*}
This completes the proof of (\ref{locest1}).

We now prove (\ref{locest2}). Note, the proof of \eqref{locest1} above shows that $\| 0, P \|_{\J(\mu|_{9 Q_i}; \delta_{Q_i})} \lesssim | P |_i$. Thus, it suffices to establish the reverse inequality.  We assume for contradiction that there exists $P' \in \mpp$, $P' \neq 0$ satisfying $\p^\al P'(x_i) = 0$ for all $\al \in \ma$ and
\begin{align}
\|0,P'\|_{\J(\mu|_{9{Q_i}}; \delta_{Q_i})} \leq \e_1^{D+1} |P'|_i, \label{abcont}
\end{align}
for a universal constant $\epsilon_1$. We will later choose $\e_1>0$ small enough so that we reach a contradiction.  Define 
\[
P:=P' \cdot \Big(\max_{\beta \in \mm} \big\{ |\p^\beta P'(x_i)|\delta_{Q_i}^{|\beta| + n/p -m} \big\} \Big)^{-1}.
\]
Note that 
\begin{equation}\label{stuff}
\max_{\beta \in \mm} \big\{ |\p^\beta P(x_i)|\delta_{Q_i}^{|\beta|+ n/p-m} \big\} =1,
\end{equation}
and thus $|P|_i = |P|_{x_i, \delta_{Q_i}} \leq C$.  Further, since $P = \gamma P'$ for some $\gamma \in \R$, from \eqref{abcont}, $\|0,P\|_{\J(\mu|_{9{Q_i}}; \delta_{Q_i})} \leq \e_1^{D+1} |P|_i \leq C \e_1^{D+1}$, and so
\begin{align}
P \in C\e_1^{D+1} \sigma(\mu|_{9{Q_i}}, \delta_{Q_i}). \label{albar0}
\end{align}
Also, $P$ satisfies $\p^\al P(x_i) = 0$ for all $\al \in \ma$. For each integer $\ell \geq 0,$ we define $\Delta_\ell \subset \mm$ by
\[
\D_{\ell}=\left\{\al \in \mm: \left|\p^{\al} P (x_i) \right| \delta_{Q_i}^{ |\al|+ n/p-m} \in (\e_1^{\ell}, 1]\right\}.
\]
Note that $\D_\ell \subset \D_{\ell+1}$ for each $\ell \geq 0$ and that $\D_{\ell} \neq \emptyset$ for $\ell \geq 1$. Since $\mm$ contains $D$ elements, there exists $0 \leq \ell_* \leq D$ with $\D_{\ell_{*}}=\Delta_{\ell_*+1} \neq \emptyset .$ Let $\bar{\al} \in \mm$ be the maximal element of $\D_{\ell_*}$. Because $P$ satisfies $\p^\al P(x_i) = 0$, we have $\bar{\al} \notin \ma$. Further, because $\bar{\al} \in \Delta_{\ell_*}$, we have
\begin{align}
& \left|\p^{\bar{\al}} P(x_i)\right| \delta_{Q_i}^{|\bar{\al}| + n/p-m} > \e_1^{\ell_*}; \text{ and} \label{albar1} \\ 
& \left|\p^{\beta} P(x_i)\right| \delta_{Q_i}^{ |\beta|+ n/p - m} \leq \e_1^{\ell_*+1} \quad (\beta \in \mm, \beta>\bar{\al}), \label{albar2}
\end{align}
where \eqref{albar2} follows because for every $\beta \in \mm$ with $\beta>\bar{\al}$, we have $\beta \notin \D_{\ell_*}=\Delta_{\ell_*+1}$. Define $P^{\bar{\al}}:=P \cdot \left(\p^{\bar{\al}} P(x_i)\right)^{-1}$. Then because $\ell_* \leq D$, from (\ref{stuff}), (\ref{albar0}), (\ref{albar1}), and (\ref{albar2}), we have 
\begin{align}
&P^{\bar{\al}} \in C \e_1 \delta_{Q_i}^{ |\bar{\al}|+ n/p-m} \cdot \sigma(\mu|_{9{Q_i}}, \delta_{Q_i}); \label{ab0} \\
&\p^{\bar{\al}}P^{\bar{\al}}(x_i) = 1 \label{ab1};\\
&|\p^\beta P^{\bar{\al}}(x_i)| \leq \e_1 \delta_{Q_i}^{|\bar{\al}| - |\beta|} && (\beta \in \mm, \beta>\bar{\al}); \text{ and} \label{ab2} \\
&|\p^\beta P^{\bar{\al}}(x_i)| \leq \e_1^{-D} \delta_{Q_i}^{|\bar{\al}| - |\beta|} && (\beta \in \mm). \label{ab3}
\end{align}
From (\ref{ab0}), there exists $\varphi^{\bar{\al}} \in L^{m,p}(\R^n)$ satisfying
\begin{align}
\|\varphi^{\ab}, P^{\ab}\|_{\J(0,\mu|_{9{Q_i}}; \delta_{Q_i})} &= \Big( \|\varphi^{\ab}\|_{L^{m,p}(\R^n)}^p + \int_{9{Q_i}}|\varphi^{\ab}|^p d\mu  + \|\varphi^{\ab} - P^{\ab} \|_{L^p(\R^n)}^p/\delta_{Q_i}^{mp} \Big)^{1/p} \nonumber \\
&\leq C \e_1 \delta_{{Q_i}}^{ |\ab| + n/p-m}. \label{ab4}
\end{align}
For $x \in \supp(\mu) \cap 9{Q_i}$, set $P_x^{\ab}:= J_x\varphi^{\ab}$. 

Because $\|\varphi^{\ab}\|_{\J(0,\mu|_{9{Q_i}})} \leq \|\varphi^{\ab}, P^{\ab}\|_{\J(0,\mu|_{9{Q_i}}; \delta_{Q_i})}$, we have
\begin{align}
P_x^{\bar{\al}} \in C \e_1 \delta_{{Q_i}}^{|\bar{\al}|+n/p-m} \cdot \sigma_J(x, \mu|_{9{Q_i}}). \label{AB0}
\end{align}
Also, $|x - x_i| \leq C \delta_{Q_i}$, so for $\beta \in \mm$ we have
\begin{align}
|\p^\beta P_x^{\ab} (x) - & \p^\beta P^{\ab} (x_i)| = |\p^\beta \varphi^{\ab}(x) - \p^\beta P^{\ab}(x_i)| \nonumber \\
&\leq |\p^\beta \varphi^{\ab}(x) - \p^\beta P^{\bar{\al}}(x)| +|\p^\beta P^{\bar{\al}} (x)- \p^\beta P^{\ab}(x_i)| \nonumber \\
&\overset{(\ref{sob2})}{\lesssim} \| \varphi^{\ab}, P^{\ab} \|_{\J(0,\mu|_{9{Q_i}}; \delta_{Q_i})} \delta_{Q_i}^{m- |\beta| - n/p} + \Bigg| \sum_{0 < |\omega| \leq m-1-|\beta|} \p^{\beta + \omega} P^{\bar{\al}}(x_i) \frac{(x-x_i)^{\omega}}{\omega!}  \Bigg| \nonumber \\
&\overset{(\ref{ab4})}{\lesssim} \e_1 \delta_{Q_i}^{|\ab| - |\beta|} + \Bigg| \sum_{0 < |\omega| \leq m-1-|\beta|} \p^{\beta + \omega} P^{\bar{\al}}(x_i) \frac{(x-x_i)^{\omega}}{\omega!}  \Bigg| \nonumber \\
& \;\;\overset{\eqref{ab2}, \eqref{ab3}}{\lesssim} \left\{
\begin{aligned}
\epsilon_1 \delta_{Q_i}^{|\bar{\al}| - |\beta|} \;\; \mbox{if } \beta \geq \bar{\al}  \\
\epsilon_1^{-D} \delta_{Q_i}^{|\bar{\al}| - |\beta|} \;\; \mbox{for any } \beta,
\end{aligned}
\right.
\label{ab5}
\end{align}
where in the last line we have used that $\beta + \omega > \beta$ provided that $0 < |\omega| \leq m-1-|\beta|$, and hence, $\beta + \omega > \bar{\alpha}$ provided $\beta \geq \bar{\alpha}$. We insert (\ref{ab5}) into (\ref{ab1})-(\ref{ab3}) to deduce,
\begin{align}
&|\p^\beta P_x^{\bar{\al}}(x) - \delta_{\bar{\al} \beta}| \leq C \e_1 \delta_{Q_i}^{|\bar{\al}| - |\beta|} && (\beta \in \mm, \beta \geq \bar{\alpha}) \label{AB1}; \text{ and} \\
&|\p^\beta P_x^{\bar{\al}}(x)| \leq C \e_1^{-D} \delta_{Q_i}^{|\bar{\al}| - |\beta|} && (\beta \in \mm) \label{AB2}
\end{align}

We will use $\ab \in \mm$ and $P^{\ab}$ to construct $\mab < \ma$ and an $(\mab, x, \e_0/C_0, \delta_{Q_i^+})$-basis for $\sigma_J(x, \mu|_{3Q_i^+})$ for all $x \in \supp(\mu) \cap 3 Q_i^+$, indicating that $Q_i^+$ is OK, a contradiction. So (\ref{abcont}) cannot hold, and we have $\|0, P\|_{\J(\mu|_{9{Q_i}}; \delta_{Q_i})} \simeq  |P|_i$ for all $P \in \mpp$. This next portion of the proof follows Section 13 of \cite{arie3}.

Fix $x \in \supp(\mu) \cap 3 Q_i^+ \subset Q^\circ$. Recall from (\ref{basis2}) and (\ref{localbound}), that the auxiliary polynomials $(P_x^\al)_{\al \in \ma}$ form an $(\ma,x, C\e, 1)$-basis for $\sigma_J(x,\mu)$, satisfying:
\begin{align}
& P_x^\al \in C\e \cdot \sigma_J(x,\mu); \label{b0} \\
&\p^\beta P^\al_x (x) = \delta_{\al \beta} && ( \al, \beta \in \ma); \label{b1} \\
&|\p^\beta P^\al_x (x)| \leq C \e && (\al \in \ma, \beta \in \mm, \beta > \al); \label{b2} \\
&|\p^\beta P^\al_x (x)| \leq C && (\al \in \ma, \beta \in \mm); \text{ and} \label{b3} \\
&|\p^\beta P_x^\al(x)| \lesssim  \delta_{Q_i}^{|\al| - |\beta|} &&(\al \in \ma, \beta \in \mm). \label{b4a}
\end{align}

Define 
\[
\pt_x^{\ab} := P_x^{\ab} - \sum_{\al \in \ma, \al < \ab} \p^\al P_x^{\ab}(x) \cdot P_x^\al.
\]
We have $\sigma_J(x,\mu) \subset \sigma_J(x, \mu|_{9{Q_i}})$, $\delta_{Q_i} <1$, and $m-n/p > m -1 \geq |\al|$. So from (\ref{AB0}), (\ref{AB2}), and (\ref{b0}), we have
\begin{align}
\pt_x^{\ab} &\in \Big( C \e_1 \delta_{Q_i}^{|\ab|+ n/p -m} + \sum_{\al \in \ma, \al < \ab} (C \e_1^{-D} \delta_{Q_i}^{|\ab| - |\al|})(C\e) \Big) \cdot \sigma_J(x,\mu|_{9{Q_i}}) \nonumber \\
&\implies \pt_x^{\ab}\in C \left( \e_1 + \e_1^{-D} \e \right) \delta_{Q_i}^{|\ab| + n/p -m} \cdot \sigma_J(x, \mu|_{9{Q_i}}). \label{b5}
\end{align}
For $\beta \in \ma$, $\beta < \ab$, from (\ref{b1}),
\begin{align}
\p^\beta \pt_x^{\ab}(x) = \p^\beta P_x^{\ab}(x) - \p^\beta P_x^{\ab}(x) = 0. \label{b6}
\end{align}
For $\beta \in \mm$, $\beta \geq \ab$, from (\ref{AB1}), (\ref{AB2}), and (\ref{b2}),
\begin{align}
|\p^\beta \pt_x^{\ab}(x) - \delta_{\beta \ab}| &\leq |\p^\beta P_x^{\ab}(x) - \delta_{\beta \ab}| + |\p^\beta (P_x^{\ab}- \pt_x^{\ab})(x)| \nonumber \\
&\leq |\p^\beta P_x^{\ab}(x) - \delta_{\beta \ab}| + \sum_{\al \in \ma, \al < \ab}|\p^\al P_x^{\ab}(x)||\p^\beta P_x^\al(x)| \nonumber \\
&\leq C \e_1 \delta_{Q_i}^{|\ab|-|\beta|} + \sum_{\al \in \ma, \al < \ab} (C\e_1^{-D} \delta_{Q_i}^{|\ab| -|\al|})(C\e) \nonumber \\
&\lesssim (\e_1 + \e_1^{-D} \e) \delta_{Q_i}^{|\ab|-|\beta|}, \label{b7}
\end{align}
where the last inequality uses that $\delta_{Q_i} < 1$ and $|\beta| \geq |\al|$ for $\beta \geq \bar{\al} > \al$.

For $\beta \in \mm$ arbitrary, from (\ref{AB2}), (\ref{b4a}),
\begin{align}
|\p^\beta \pt_x^{\ab}(x)| &\leq |\p^\beta P_x^{\ab}(x)| + \sum_{\al \in \ma, \al < \ab}|\p^\al P_x^{\ab}(x)||\p^\beta P_x^\al(x)| \nonumber \\
&\leq C \e_1^{-D} \delta_{Q_i}^{|\ab|-|\beta|} + \sum_{\al \in \ma, \al < \ab} (C\e_1^{-D} \delta_{Q_i}^{|\ab| -|\al|})(C\delta_{Q_i}^{|\al| - |\beta|}) \nonumber \\
&\lesssim \e_1^{-D} \delta_{Q_i}^{|\ab| - |\beta|}. \label{b8}
\end{align}

Define 
\[
\mab = \{\ab\} \cup \{ \al \in \ma: \al < \ab \}.
\]
Then the minimal element of the symmetric difference $\mab \Delta \ma$ is $\bar{\al}$, which is in $\mab$. So  $\mab < \ma$, by definition of the order relation on multiindex sets.
We may assume $\e < \e_1^{D+1}$. Then, for small enough $\e_1$, (\ref{b7}) implies $\p^{\ab} \pt_x^{\ab}(x) \geq 1/2$. So $\ph_x^{\ab} : = \pt_x^{\ab} / (\p^{\ab} \pt_x^{\ab}(x))$ is well-defined, and due to  (\ref{b5})-(\ref{b8}), this polynomial satisfies:
\begin{align}
&\ph_x^{\ab} \in C (\e_1 + \e_1^{-D} \e) \delta_{Q_i}^{|\ab| + n/p - m} \cdot \sigma_J(x,\mu|_{9{Q_i}}); \label{bb0} \\
&\p^\beta \ph_x^{\ab}(x) = \delta_{\beta \ab} && (\beta \in \mab); \label{bb1} \\
&|\p^\beta \ph_x^{\ab}(x)| \leq C(\e_1 + \e_1^{-D} \e) \delta_{Q_i}^{|\ab|- |\beta|} && (\beta \in \mm, \beta> \ab); \text{ and} \label{bb2} \\
&|\p^\beta \ph_x^{\ab}(x)| \leq C \e_1^{-D} \delta_{Q_i}^{|\ab|- |\beta|} && (\beta \in \mm). \label{bb3}
\end{align}
For $\al \in \mab \setminus \{ \bar{\al}\}$, define 
\[
\ph_x^\al = P_x^\al - \p^{\ab}P_x^\al (x) \cdot \ph_x^{\ab}.
\]
Notice, if $\al \in \mab \setminus \{ \bar{\al}\}$ then $\al < \bar{\al}$. Also, $\delta_{Q_i} < 1$. Thus, thanks to \eqref{b2}, $|\p^{\ab}P_x^\al (x)| \leq C \e \leq C \e \delta_{Q_i}^{|\al| - |\ab|}$. Therefore, from  (\ref{b0})-(\ref{b3}), and (\ref{bb0})-(\ref{bb3}), for any $\al \in \mab \setminus \{ \bar{\al}\}$,
\begin{align}
&\ph_x^{\al} \in \left[ C \e  + C \e \delta_{Q_i}^{|\al| - |\ab|}  (\e_1 + \e_1^{-D} \e) \delta_{Q_i}^{|\ab| + n/p - m} \right] \cdot \sigma_J(x, \mu|_{9{Q_i}}); \label{bc0}\\
&\p^\beta \ph_x^{\al}(x) = \delta_{\al \beta} &&(\beta \in \mab); \text{ and} \label{bc1} \\
&|\p^\beta \ph_x^{\al}(x)| \leq C \e + C \e \delta_{Q_i}^{|\al| - |\ab|}  C\e_1^{-D} \delta_{Q_i}^{|\ab|- |\beta|} && (\beta \in \mm, \beta> \al). \label{bc2}
\end{align}
We suppose $\e < \e_1^{D+1}$. Since $|\al| + n/p - m <0$, and $\delta_{Q_i} < 1$, (\ref{bc0}) implies,
\begin{align}
\ph_x^{\al} \in C (\e_1 + \e_1^{-D} \e) \delta_{Q_i}^{|\al| + n/p - m} \cdot \sigma_J(x, \mu|_{9{Q_i}}). \label{bc00}
\end{align}
Similarly, because $\delta_{Q_i} < 1$ and $|\al| - | \beta| \leq 0$ for $\beta > \al$, \eqref{bc2} implies
\begin{align}
|\p^\beta \ph_x^{\al}(x)| \leq C \e_1  \delta_{Q_i}^{|\al|- |\beta|} \quad (\beta \in \mm, \beta> \al). \label{bc01}
\end{align}

Recall $x \in \supp(\mu) \cap 3{Q_i}^+$ is arbitrary. Together, (\ref{bb0})-(\ref{bb3}), (\ref{bc1}), (\ref{bc00}), and (\ref{bc01}) imply that, for each $x \in \supp(\mu) \cap 3 {Q_i}^+$, $\{ \ph_x^\al \}_{\al \in \mab}$ forms an $(\mab, x, C'(\e_1 + \e_1^{-D}\e), 30\delta_{{Q_i}^+})$-basis for $\sigma_J(x, \mu|_{9{Q_i}})$, and hence for $\sigma_J(x, \mu|_{3{Q_i}^+})$. Fix a universal constant $\e_1 > 0$, small enough so that the preceding arguments hold, and so that $\e_1 < \frac{\e_0}{2C' C_0}$. Recall that we have assumed $\e < \e_1^{D+1}$. Thus, $\{ \ph_x^\al \}_{\al \in \mab}$ forms an $(\mab, x, \e_0/ C_0, 30\delta_{{Q_i}^+})$-basis for $\sigma_J(x, \mu|_{3{Q_i}^+})$ for each $x \in \supp(\mu) \cap 3{Q_i}^+$, indicating that ${Q_i}^+$ is OK. But this contradicts the assumption that $Q_i$ is a CZ cube. We have reached the desired contradiction. This completes the proof of (\ref{locest2}), and with it, the proof of the lemma.
\end{proof}

\subsection{Patching Estimates}

\subsubsection{Patching Estimate on $K_{CZ}$}
Recall that we have defined a collection of disjoint dyadic cubes $CZ^\circ = \{Q_i\}_{i \in I} \in \Pi(Q^\circ)$ contained in $Q^\circ = (0,1]^n$. We set $K_{CZ} = \bigcup_{i \in I} Q_i$. Then $K_{CZ}$ is a relatively open subset of $Q^\circ$. Indeed, we showed that $K_p$ is a closed set in $\R^n$, and $K_p \subset \supp(\mu) \subset \frac{1}{10} Q^\circ$.

We associate to each cube $Q_i \in CZ^\circ$ a basepoint $x_i = \mathrm{ctr}(Q_i)$. We define the polynomial norms $|P|_i := |P|_{x_i,\delta_{Q_i}}$.

Recall that a Whitney field $\vec{P} \in Wh(\bpt)$ is an indexed collection of polynomials, $\vec{P} = \{ P_x \}_{x \in \bpt}$, associated to the CZ basepoints $\bpt = \{x_i\}_{i \in I}$.

For a relatively open set $\Omega \subset Q^\circ$ with $K_{CZ} \subset \Omega$, and for $\vec{P} \in Wh(\bpt)$, $f \in \J(\mu)$, and $F \in L^{m,p}(\Omega)$, we define: 
\begin{align}\label{new_Jfunc1:eqn}
&\|F,\vec{P}\|_{\J_*(f, \mu;\Omega,CZ^\circ)} =  \Big( \|F\|_{L^{m,p}(\Omega)}^p + \int_{\Omega} |F-f|^p d\mu  + \sum_{i \in I}  \|F-P_{x_i}\|_{L^p(Q_i)}^p/\delta_{Q_i}^{mp} \Big)^{1/p}; \text{ and} \\
&\|f,\vec{P}\|_{\J_*(\mu; \Omega,CZ^\circ)} = \inf \left\{ \|F,\vec{P}\|_{\J_*(f, \mu;\Omega,CZ^\circ)} : F \in L^{m,p}(\Omega) \right\}. \label{new_Jfunc2:eqn}
\end{align}
We define the seminormed vector space:
\begin{equation} \label{Jstar_space_2B:defn}
    \J_*(\mu;\Omega,CZ^\circ) = \left\{ (f,\vec{P}): f \in \J(\mu) \text{, } \vec{P} \in Wh(\bpt), \;  \|f,\vec{P}\|_{\J_*(\mu; \Omega, CZ^\circ)} < \infty \right\}. 
\end{equation}
In the previous definitions, we have in mind to take $\Omega = K_{CZ}$ or $\Omega = Q^\circ$.

\begin{lem}
Suppose we are given a collection of functions $\{G_i\}_{i \in I} \subset L^{m,p}(\R^n)$ and a Whitney field $\vec{P} = (P_{x_i})_{i \in I} \in Wh(\bpt)$.

Define $G: K_{CZ} \to \R$ by $G(x)= \sum_{i \in I} G_i(x) \cdot \theta_i(x)$, where $\{\theta_i\}_{i \in I}$ is a partition of unity satisfying \textbf{(POU1)-(POU4)} (see Section \ref{subsec:pou}). Then
\begin{align}
\|G, \vec{P} \|_{\J_*(f,\mu;K_{CZ},CZ^\circ)}^p \lesssim &  \sum_{i \in I} \|G_i, P_{x_i}\|_{\J_*(f,\mu|_{1.1Q_i \cap Q^\circ};1.1Q_i \cap Q^\circ)}^p + \sum_{i, i' \in I, \; i \lra i'} |P_{x_i} - P_{x_{i'}}|_i^p. \label{apatch1}
\end{align}
\label{kczpatchlm}
Here, we write $i \lra i'$ to denote that $\Cl(Q_i) \cap \Cl(Q_{i'}) \neq \emptyset$, and the $\J_*$-functionals on the right-hand side of (\ref{apatch1}) are defined in (\ref{Jstar1A:defn}).
\end{lem}

\begin{proof}

For $x \in Q_{i'}$, if $x \in \supp(\theta_i)$ then $x \in 1.1 Q_i$, hence $1.1 Q_i \cap Q_{i'} \neq \emptyset$ and thus $i \lra i'$ by the good geometry of the CZ cubes. Thus, by the condition $\sum_i \theta_i = 1$ on $K_{CZ}$, we deduce that $G(x) = G_{i'}(x) + \sum_{i \in I, 
\; i \lra i'} (G_i-G_{i'})(x) \cdot \theta_i(x)$. By the Leibniz rule, for any multiindex $\gamma$ such that $|\gamma| = m$, we have 
\[
\partial^\gamma G(x) = \partial^\gamma G_{i'}(x) +  \sum_{(\alpha,\beta ) : \alpha + \beta = \gamma} \sum_{i \in I : 
\; i \lra i'} \partial^\beta (G_i - G_{i'})(x) \partial^\alpha \theta_i(x) .
\]
Then, taking $p$'th powers, summing on $\gamma$ with $|\gamma| = m$, and integrating over $x \in Q_{i'}$, we have
\begin{align*}
\|G\|_{L^{m,p}(Q_{i'})}^p &\lesssim \|G_{i'}\|_{L^{m,p}(Q_{i'})}^p  + \sum_{(\alpha,\beta ): |\al| + |\beta| = m} \sum_{i \in I : i \lra i'} \int_{Q_{i'}} |\p^\beta(G_i -G_{i'})(x)|^p |\p^\al \theta_i(x)|^p dx. 
\end{align*}
Now note, by \textbf{(POU1)-(POU4)}, $| \partial^\alpha \theta_i(x)| \lesssim \delta_{Q_i}^{-|\alpha|}$, and $\theta_i$ is supported on $1.1 Q_i$. So, 
\begin{align}
\|G\|_{L^{m,p}(Q_{i'})}^p &\lesssim \|G_{i'}\|_{L^{m,p}(Q_{i'})}^p  + \sum_{(\alpha,\beta ): |\al| + |\beta| = m} \sum_{i \in I : i \lra i'} \delta_{Q_i}^{-|\alpha| p}  \int_{Q_{i'} \cap 1.1 Q_i} |\p^\beta(G_i -G_{i'})(x)|^p dx. \label{patch1}
\end{align}
If $\al = 0$ in the previous sum, then $|\beta| = m$, so
\[
\int_{Q_{i'} \cap 1.1 Q_i} |\p^\beta(G_i -G_{i'})(x)|^p dx \leq C \big( \|G_{i'}\|_{L^{m,p}(Q_{i'})}^p +  \|G_i\|_{L^{m,p}(1.1Q_i \cap Q^\circ)}^p \big).
\]
Now suppose $|\al|>0$ in the previous sum. Then  $|\beta| \leq m-1$, and 
\[
|\p^\beta(G_i -G_{i'})(x)| \leq |\p^\beta(G_i -P_{x_i})(x)| + |\p^\beta(P_{x_i} -P_{x_{i'}})(x)|+|\p^\beta(G_{i'} -P_{x_{i'}})(x)|.
\]
Thus, by integrating over $x \in Q_{i'} \cap 1.1 Q_i$, we can apply (\ref{sob}) twice, on the rectangle $1.1 Q_i \cap Q^\circ$ and on the square $Q_{i'}$, to obtain
\begin{align*}
\int_{Q_{i'} \cap 1.1 Q_i} |\p^\beta(G_i -G_{i'})&(x)|^p dx \leq C \Big(\|G_i, P_{x_i}\|_{\J_*(f,\mu|_{1.1Q_i \cap Q^\circ};1.1Q_i \cap Q^\circ)}^p \delta_{Q_i}^{mp  -|\beta|p} \\ 
 &+ \|G_{i'}, P_{x_{i'}}\|_{\J_*(f,\mu|_{Q_{i'}};Q_{i'})}^p \delta_{Q_i}^{mp  -|\beta|p} +  \max_{x \in 1.1 Q_i \cap Q_{i'}} |\p^\beta(P_{x_i} - P_{x_{i'}})(x)|^p\delta_{Q_i}^{n} \Big).
\end{align*}
Now because $|\al| + |\beta| = m$,
\begin{align*}
\delta_{Q_i}^{-|\alpha| p}  \int_{Q_{i'} \cap 1.1 Q_i} & |\p^\beta(G_i -G_{i'})(x)|^p  dx \leq C\cdot \Big(\|G_i, P_{x_i}\|_{\J_*(f,\mu|_{1.1Q_i \cap Q^\circ};1.1Q_i \cap Q^\circ)}^p \\
& +\|G_{i'}, P_{x_{i'}}\|_{\J_*(f,\mu|_{Q_{i'}};Q_{i'})}^p + \max_{x \in 1.1 Q_i \cap Q_{i'}} |\p^\beta(P_{x_i} - P_{x_{i'}})(x)|^p\delta_{Q_i}^{n + |\beta|p - mp}\Big).
\end{align*}
We use (\ref{movept}) to bound the third term in the parentheses  by $|P_{x_i} - P_{x_{i'}}|_{x_i, \delta_{Q_i}}^p = |P_{x_i} - P_{x_{i'}} |_i^p$. Returning to \eqref{patch1},
\begin{align*}
\|G\|_{L^{m,p}(Q_{i'})}^p  &\lesssim \|G_{i'}\|_{L^{m,p}(Q_{i'})}^p \\
&\quad +  \sum_{i \in I, i \lra i'} \Big( \|G_i, P_{x_i}\|_{\J_*(f,\mu|_{1.1Q_i \cap Q^\circ};1.1Q_i \cap Q^\circ)}^p +\|G_{i'}, P_{x_{i'}}\|_{\J_*(f,\mu|_{Q_{i'}};Q_{i'})}^p + |P_{x_i} - P_{x_{i'}}|_i^p \Big).
\end{align*}
We can bound $\|G_{i'}\|_{L^{m,p}(Q_{i'})}^p \leq  \|G_{i'}, P_{x_{i'}}\|_{\J_*(f,\mu|_{1.1Q_{i'} \cap Q^\circ};1.1Q_{i'} \cap Q^\circ)}^p$. Thus, by summing on $i' \in I$, and using that for each $i \in  I$ there are at most $C$ many  $i' \in I$ with $i \lra i'$, we have
\begin{align}
\|G\|_{L^{m,p}(K_{CZ})}^p &= \sum_{i' \in I} \|G\|_{L^{m,p}(Q_{i'})}^p \nonumber \\
& \lesssim \sum_{i \in I} \|G_i, P_{x_i}\|_{\J_*(f,\mu|_{1.1Q_i \cap Q^\circ};1.1Q_i \cap Q^\circ)}^p + \sum_{(i,i') : i \lra i'} |P_{x_i} - P_{x_{i'}}|_i^p. \label{apatchlmp}
\end{align}

Because $\theta_i \leq 1$, $\supp(\theta_i) \subset 1.1Q_i$, and $\sum \theta_i = 1$ on $K_{CZ}$, we have
\begin{align}
\int_{K_{CZ}} |G-f|^p d\mu &= \int_{K_{CZ}} \Big|\sum_{i \in I} (G_i - f)\cdot \theta_i\Big|^p d\mu \nonumber \\
&\lesssim \sum_{i \in I} \int_{1.1Q_i \cap Q^\circ} |G_i - f|^p d\mu \nonumber \\
& \lesssim \sum_{i \in I} \|G_i, P_{x_i}\|_{\J_*(f,\mu|_{1.1Q_i \cap Q^\circ};1.1Q_i \cap Q^\circ)}^p. \label{ap1}
\end{align}

First applying that $\sum \theta_{i'} = 1$ on $K_{CZ}$, and then  $x \in \supp(\theta_i') \implies x \in 1.1 Q_{i'}$, with (\ref{goodgeo}), (\ref{g1}), and Lemma \ref{polynorm:lem}, we obtain,
\begin{align}
\sum_{i \in I} \|G-P_{x_i}\|_{L^p(Q_i)}^p  / \delta_{Q_i}^{mp} &= \sum_{i \in I} \big\| \sum_{i' \in I: i' \lra i}(G_{i'} - P_{x_i})\theta_{i'}\big\|_{L^p(Q_i)}^p/\delta_{Q_i}^{mp} \nonumber \\
& \lesssim \sum_{i \in I} \sum_{i' \in I: i' \lra i} \left[ \|G_{i'} - P_{x_{i'}}\|_{L^p(1.1Q_{i'}\cap Q^\circ)}^p/\delta_{Q_{i'}}^{mp}+\|P_{x_i} - P_{x_{i'}}\|_{L^p(Q_i)}^p/\delta_{Q_i}^{mp} \right] \nonumber \\
& \lesssim \sum_{i \in I}  \|G_i-P_{x_i}\|_{L^p(1.1Q_i\cap Q^\circ)}^p / \delta_{Q_i}^{mp}  +  \sum_{(i,i') : i' \lra i} |P_{x_i} - P_{x_{i'}}|_i^p \nonumber \\
& \lesssim \sum_{i \in I} \|G_i, P_{x_i} \|_{\J_*(f,\mu|_{1.1Q_i\cap Q^\circ};1.1Q_i\cap Q^\circ)}^p  +  \sum_{(i,i') : i' \lra i} |P_{x_i} - P_{x_{i'}}|_i^p . \label{apatchlp}
\end{align}

From (\ref{apatchlmp}), (\ref{ap1}), and (\ref{apatchlp}), we conclude,
\begin{align*}
\|G, \vec{P} \|_{\J_*(f,\mu;K_{CZ}, CZ^\circ)}^p &= \|G\|_{L^{m,p}(K_{CZ})}^p + \int_{K_{CZ}} |G-f|^p d\mu  + \sum_{i \in I} \|G-P_{x_i}\|_{L^p(Q_i)}^p/\delta_{Q_i}^{mp} \\
&\lesssim \sum_{i \in I} \|G_i, P_{x_i}\|_{\J_*(f,\mu|_{1.1Q_i \cap Q^\circ};1.1Q_i \cap Q^\circ)}^p + \sum_{i' \lra i} |P_{x_i} - P_{x_{i'}}|_i^p.
\end{align*}
\end{proof}

\subsubsection{Patching Estimate on $Q^\circ$}
\label{sec:patchQ}

Recall that $K_{CZ} \subset Q^\circ$ and $K_p = Q^\circ \setminus K_{CZ}$. We showed that $K_p$ is a closed set in $\R^n$, and $K_p \subset \supp(\mu) \subset \frac{1}{10} Q^\circ$.

We have defined $\mathfrak{B}_{CZ} = \{ x_i \}_{i \in I}$, the set of all CZ basepoints, with $x_i$ the center of $Q_i$ for each $Q_i \in CZ^\circ$.

Given $\vec{S} \in Wh(K_p)$, $\vec{P} \in Wh(\bpt)$, we regard $(\vec{P},\vec{S}) \in Wh( \bpt \cup K_p)$ as a Whitney field on $K_p \cup \bpt$. 

\begin{lem}
Fix a collection of functions $\{G_i\}_{i \in I} \subset L^{m,p}(\R^n)$, and two Whitney fields $\vec{R}=(R_{x_i})_{i \in I} \in Wh(\bpt)$ and $\vec{S} = (S_x)_{x \in E} \in Wh(K_p)$. Let $G:Q^\circ \to \R^n$ be defined as
\begin{align*}
    G(x) = \begin{cases} \sum_{i \in I} G_i(x) \cdot \theta_i(x) & x \in K_{CZ} \\
    S_x(x) & x \in K_p,
    \end{cases}
\end{align*}
where $\{\theta_i \}_{i \in I}$ is a partition of unity satisfying \textbf{(POU1)-(POU4)} (see Section \ref{subsec:pou}). 

If $(G,\vec{R}, \vec{S})$ satisfy the conditions 
\[
\|G,\vec{R} \|_{\J_*(f,\mu;K_{CZ},CZ^\circ)} + \|\vec{S}\|_{L^{m,p}(K_p)} < \infty \mbox{ and } (\vec{S}, \vec{R})\in C^{m-1,1-n/p}(K_p \cup \bpt),
\]
then $G \in L^{m,p}(Q^{\circ})$, $J_xG = S_x$ for all $x \in K_p$, and
\begin{align}
    \| G \|_{L^{m,p}(Q^\circ)} \lesssim \|G,\vec{R} \|_{\J_*(f,\mu;K_{CZ},CZ^\circ)} + \|\vec{S}\|_{L^{m,p}(K_p)}. \label{apest}
\end{align} \label{lmppatch}
\end{lem}

\begin{remark}
In later applications of Lemma \ref{lmppatch}, the hypothesis $\|G,\vec{R} \|_{\J_*(f,\mu;K_{CZ},CZ^\circ)} < \infty$ will be verified using Lemma \ref{kczpatchlm}.
\end{remark}

\begin{proof}

By assumption, $ \|\vec{S}\|_{L^{m,p}(K_p)} < \infty$, hence for any $\eta>0$, there exists $H \in L^{m,p}(\R^n)$ satisfying $J_x H = S_x$ for all $x \in K_p$ and $ \|H\|_{L^{m,p}(\R^n)} \leq \|\vec{S}\|_{L^{m,p}(K_p)}+\eta$.

By assumption, $\|G,\vec{R} \|_{\J_*(f,\mu;K_{CZ},CZ^\circ)} < \infty$. In particular, $G \in L^{m, p}(K_{CZ})$. Thus, $J_x G \in \mpp$ is well-defined for $x \in K_{CZ}$. 

Define $\vec{P} \in Wh(Q^\circ)$ as
\begin{align*}
    P_x : = \begin{cases} J_x G &x \in K_{CZ} \\
    S_x = J_x H  & x \in K_p.
    \end{cases}
\end{align*}
By definition of $G$, observe that
\begin{equation}\label{eqn:pre1}
    P_x(x) = G(x) \mbox{ for all } x \in Q^\circ.
\end{equation}

Fix $\delta >0$, and fix a cube $\widehat{Q} \subset \R^n$ with $\delta_{\widehat{Q}} \leq \delta$. We will show that for all $x,y \in \widehat{Q} \cap Q^\circ$ we have
\begin{align}
|P_x - P_y|_{y, \delta} \lesssim \; &
\|H\|_{L^{m,p}(21\widehat{Q})}  +\sup_{Q_i \subseteq 35\widehat{Q}} \{ \|G,R_{x_i}\|_{\J_*(f,\mu;Q_i)} \} \nonumber \\
& + \sup \left\{ \|G\|_{L^{m,p}(\inte(B(z,r))\cap Q^\circ)}  : z \in 7\widehat{Q}, r \leq 7\delta, \inte(B(z,r)) \cap Q^\circ \subset K_{CZ} \right\}.
\label{patchbd}
\end{align}
Here, as usual, $B(z,r) = \{ w \in \R^n : |w-z| \leq r\}$, and $| \cdot |$ is the $\ell^\infty$ (sup) metric on $\R^n$. Thus, $B(z,r)$ is a closed cube centered at $z$ of sidelength $2r$. 

Fix $x,y,\widehat{Q},\delta$ as above. We shall split the proof of \eqref{patchbd} into cases depending on the relative positions of $x$, $y$, and $K_p$. Observe that 
\begin{equation}
    \label{eqn:xy}
    |x-y| \leq \delta_{\widehat{Q}} \leq \delta.
\end{equation}

\underline{Case 1:} Suppose $x,y \in K_p$. We apply \eqref{normdi} and the Sobolev Inequality on $\widehat{Q}$,
\begin{align}
    |P_x - P_y|_{y, \delta} &= |J_xH - J_yH|_{y, \delta} \leq |J_x H - J_y H|_{y,|x-y|} \lesssim \|H\|_{L^{m,p}(\widehat{Q})}. \label{case1p}
\end{align}
This completes the proof of  (\ref{patchbd}) in Case 1.

\underline{Case 2:} Suppose $x,y \in K_{CZ}$ satisfy $\dist(y,K_p) > 3|y-x|$ or $\dist(x,K_p) > 3|y-x|$. Because $|P|_{y,\delta} \simeq |P|_{x,\delta}$ for $\delta \geq |x-y|$ by (\ref{movept}), we may assume without loss of generality the first case occurs. By the triangle inequality, $B(y, |y-x|) \subset \R^n \setminus K_p$. Thus,  we can apply \eqref{normdi} and the Sobolev Inequality on $B(y, |y-x|) \cap Q^\circ \subset K_{CZ}$, and obtain
\begin{align*}
    |P_x - P_y|_{y, \delta} = |J_xG - J_yG|_{y, \delta}  \leq |J_x G - J_y G|_{y, |x-y|} \lesssim \|G\|_{L^{m,p}(B(y,|y-x|) \cap Q^\circ)}.
\end{align*}
Therefore, we can upper bound $|P_x - P_y|_{y, \delta}$ by the second supremum in (\ref{patchbd}). This completes the proof of (\ref{patchbd}) in Case 2.

\underline{Case 3:} Suppose $y \in K_{CZ}$ and $x \in K_p$, or $y \in K_p$ and $x \in K_{CZ}$. As in Case 2, without loss of generality, $y \in K_{CZ}$ and $x \in K_p$. Because $K_p$ is closed (see Lemma \ref{kpclosed}), there exists $z_y \in K_p$ satisfying $\dist(y,K_p) = |z_y-y|$. Because $x \in K_p$, and from (\ref{eqn:xy}), we have 
\begin{align}
&|z_y-y| \leq |x-y| \leq \delta_{\widehat{Q}} \leq \delta; \label{z1} \\
&B(y,|z_y-y|) \subset 3\widehat{Q}; \label{z2} \\
&|x-z_y| \leq |x-y| + |y-z_y| \leq 2 \delta_{\widehat{Q}} \leq 2 \delta. \label{z2.5}
\end{align} 

Because $z_y$ is a closest point of $K_p$ to $y$, $\inte B(y,|z_y-y|) \subset \R^n \setminus K_p$. We write $[y,z)$ for the segment $\{ y + t(z-y) : 0 \leq t < 1\}$. Then there exists a sequence $\{z_y^k\}_{k \in \N} \subset  \R^n$ satisfying
\begin{align}
& z_y^k \in [y,z_y) \subset \inte B(y,|z_y-y|) \subset \R^n \setminus K_p \text{ for all } k \in \N; \label{z0} \\    
&\lim_{k \to \infty} z_y^k = z_y. \label{zk}
\end{align} 
Observe that
\begin{equation}\label{z000}
\dist(z_y^k, K_p) \leq |z_y^k - z_y| \rightarrow 0 \mbox{ as } k \rightarrow \infty.
\end{equation}
By \eqref{z0} and \eqref{z1}, we have 
\begin{equation}
|z_y^k -y| \leq |z_y-y| \leq  |x-y| \leq \delta_{\widehat{Q}} \leq \delta \qquad (k \in \N).  \label{z3}
\end{equation}
Observe that $z_y^k \in [y, z_y) \subset Q^\circ$ by convexity of $Q^\circ$, so $z_y^k \in (\R^n \setminus K_p) \cap Q^\circ = K_{CZ}$. Thus, we can define a map $\tau_y:\N \to I$, such that
\begin{align*}
    \tau_y(k) =i \quad \quad \text{ if } z_y^k \in Q_i \in CZ^\circ.
\end{align*}
Consequently,
\begin{align}
    &\delta_{Q_{\tau_y(k)}} \leq \dist(z_y^k, K_p) \leq |z_y^k - z_y| \leq \dist(y,K_p) = |y - z_y| \leq |y-x| \leq \delta_{\widehat{Q}} \leq \delta \qquad (k \in \N);  \label{z4} \\
    &Q_{\tau_y(k)} \subset B(y, 2|y-x|) \subset 5\widehat{Q} \qquad (k \in \N). \label{z5}
\end{align}
For the proof of \eqref{z4}, we use Lemma \ref{kpdist}, which implies $\delta_{Q_{\tau_y(k)}} \leq \dist(Q_{\tau_y(k)}, K_p) \leq \dist(z_y^k, K_p) $, where the second inequality uses that $z_y^k \in Q_{\tau_y(k)}$; further, $\dist(z_y^k, K_p) \leq |z_y^k - z_y| \leq \dist(y, K_p) = |y-z_y|$ because $z_y^k$ is on the segment connecting $y$ to a nearest point $z_y$ of $K_p$, and the remaining inequalities in \eqref{z4} are immediate from \eqref{z1}. Lastly, the first inclusion of \eqref{z5} uses that $z_y^k\in Q_{\tau_y(k)}$, $|z_y^k-y| \leq |x-y|$ and $\delta_{Q_{\tau_y(k)}} \leq |x-y|$ (see \eqref{z3} and \eqref{z4}); the second inclusion of  \eqref{z5} uses that $y \in \widehat{Q}$ and $|y-x| \leq \delta_{\widehat{Q}}$ (see \eqref{z1}).

Let $k \in \N$. (We will later send $k \rightarrow \infty$.) Because $x \in K_p$ and $y \in K_{CZ}$, we have $P_x = S_x$ and $P_y = J_y G$. By (\ref{z3}), we have $|y-z_y^k| \leq |y-z_y| \leq |y-x| \leq \delta$. By applying the triangle inequality, and then \eqref{movept},
\begin{align}
    |P_x -  P_y|_{y,\delta} & = |S_x - J_yG|_{y,\delta} \nonumber \\
    &\leq |S_x - S_{z_y}|_{y,\delta}+|S_{z_y} - R_{x_{\tau_y(k)}}|_{y,\delta} + |R_{x_{\tau_y(k)}} - J_{z_y^k}G|_{y,\delta} + |J_{z_y^k}G - J_yG|_{y,\delta} \nonumber \\
     &\overset{\eqref{movept}}{\lesssim} |S_x - S_{z_y}|_{x,\delta}+|S_{z_y} - R_{x_{\tau_y(k)}}|_{z_y, \delta} + |R_{x_{\tau_y(k)}} - J_{z_y^k}G|_{z_y^k,\delta}  + |J_{z_y^k}G - J_yG|_{y, \delta} .\label{z6}
\end{align}
We analyze the four terms on the right-hand side of \eqref{z6}, one by one.

From (\ref{z1}), since $y \in \widehat{Q}$, we deduce $z_y \in 3\widehat{Q}$. Also, note that $|x-z_y| \leq 2 \delta$, according to (\ref{z2.5}). We apply (\ref{normdi}), and then the Sobolev Inequality on $3 \widehat{Q}$ to estimate
\begin{align}
    |S_x - S_{z_y}|_{x,\delta} \lesssim |S_x - S_{z_y}|_{x,|x-z_y|} = |J_xH - J_{z_y}H|_{x,|x-z_y|}\lesssim \|H\|_{L^{m,p}(3\widehat{Q})}. \label{z6.5}
\end{align}

Because $(\vec{S}, \vec{R}) \in C^{m-1,1-n/p}(K_p \cup \bpt)$, we have $R_{x_i} \rightarrow S_z$ whenever $x_i \in \bpt$, $x_i \rightarrow z$, $z \in K_p$. Observe, $|x_{\tau_y(k)} - z^k_y| \leq \delta_{Q_{\tau_y(k)}}$ (as both  $x_{\tau_y(k)}$ and $z^k_y$ belong to  $Q_{\tau_y(k)})$. Further, $\delta_{Q_{\tau_y(k)}} \leq \dist(z_y^k, K_p) \rightarrow 0$ as $k \rightarrow \infty$ (see \eqref{z000} and \eqref{z4}). When combined with (\ref{zk}), this implies $x_{\tau_y(k)} \rightarrow z_y$ as $k \rightarrow \infty$. Hence, $R_{x_{\tau_y(k)}} \rightarrow S_{z_y} $ as $k \rightarrow \infty$. Thus,
\begin{align}
    \lim_{k \to \infty} |S_{z_y} - R_{x_{\tau_y(k)}}|_{z_y, \delta} = 0. \label{z7}
\end{align}

From (\ref{z4}), $\delta_{Q_{\tau_y(k)}} \leq \delta$, So, applying (\ref{normdi}) and then (\ref{normest}), using that $z^k_y$ belongs to $Q_{\tau_y(k)}$, we have
\begin{align}
    |R_{x_{\tau_y(k)}} - J_{z_y^k}G|_{z_y^k,\delta} \leq |R_{x_{\tau_y(k)}} - J_{z_y^k}G|_{z_y^k,\delta_{Q_{\tau_y(k)}}} \lesssim \|G,R_{x_{\tau_y(k)}}\|_{\J_*(f,\mu;Q_{\tau_y(k)})}. \label{z8}
\end{align}

We recall that $|z_y^k - y| \leq \delta$ (see \eqref{z3}). Also note that by (\ref{z0}), $\inte (B(y,|z_y-y|)) \cap Q^\circ \subset (\R^n \setminus K_p) \cap Q^\circ = K_{CZ}$. So we can apply (\ref{normdi}) and then the Sobolev inequality on $\inte (B(y,|z_y-y|)) \cap Q^\circ$, and deduce
\begin{align}
    |J_{z_y^k}G - J_yG|_{y,\delta} \leq |J_{z_y^k}G - J_yG|_{y, |z_y^k - y|} \lesssim \|G\|_{L^{m,p}( \inte (B(y,|z_y-y|)) \cap Q^\circ)}. \label{z9}
\end{align}

Recalling (\ref{z5}), (\ref{z1}), (\ref{z0}), we let $k \rightarrow \infty$ in (\ref{z6}), and use (\ref{z6.5})-(\ref{z9}) to conclude 
\begin{align}
     |P_x - P_y|_{y,\delta}  \lesssim \; & \|H\|_{L^{m,p}(3\widehat{Q})}+\sup_{Q_i \subseteq 5\widehat{Q}} \{ \|G,R_{x_i}\|_{\J_*(f,\mu;Q_i)} \} \nonumber \\
     & + \sup \left\{ \|G\|_{L^{m,p}( \inte(  B(z,r)) \cap Q^\circ)} : z \in \widehat{Q}, r \leq \delta, \inte(B(z,r)) \cap Q^\circ \subset K_{CZ} \right\}. \label{case3}
\end{align}
This completes the proof of inequality (\ref{patchbd}) in Case 3.

\underline{Case 4:} Suppose $x,y \in K_{CZ}$ satisfy $\dist(y,K_p) \leq 3|y-x|$ and $\dist(x,K_p) \leq 3|y-x|$. Because $K_p$ is closed, there exist $z_x, z_y \in K_p$ such that  $\dist(x,K_p) = |z_x-x|$ and $\dist(y,K_p) = |z_y-y|$. Hence, we have 
\begin{align}
|z_x-x|, |z_y-y| \leq 3 |y-x| \leq 3 \delta_{\widehat{Q}} \leq 3 \delta. \label{az1}    
\end{align}
Consequently,
\begin{align}
    &|z_x -z_y| \leq |z_x -x| +|x-y|+|y-z_y| \leq 7 \delta_{\widehat{Q}} \leq 7\delta, \label{az2}
\end{align}
and $z_x, z_y \in 7 \widehat{Q}$ because $x,y \in \widehat{Q}$. Considering (\ref{az1}) and (\ref{az2}), we apply (\ref{movept}) and (\ref{normdi}) to estimate
\begin{align*}
    |P_x - P_y|_{y,\delta} \lesssim |P_x - P_{z_x}|_{x,7 \delta} + |P_{z_x} - P_{z_y}|_{z_x,7 \delta} + |P_{z_y} - P_y|_{y,7 \delta}.
\end{align*}
Because $z_x, z_y \in K_p$, while $z,y \in K_{CZ}$, we apply inequalities (\ref{case1p}) of Case 1 and (\ref{case3}) of Case 3 with the cube  $7 \widehat{Q}$ playing the role of $\widehat{Q}$, and $7 \delta$ playing the role of $\delta$, to further reduce this:
\begin{align*}
     |P_x - P_y|_{y,\delta} \lesssim \;& \|H\|_{L^{m,p}(21\widehat{Q})}+\sup_{Q_i \subseteq 35\widehat{Q}} \{ \|G,R_{x_i}\|_{\J_*(f,\mu;Q_i)} \}\\
     & + \sup \left\{ \|G\|_{L^{m,p}( \inte (B(z,r)) \cap Q^\circ )} : z \in 7\widehat{Q}, r \leq 7\delta, \inte(B(z,r)) \cap Q^\circ \subset K_{CZ} \right\}.
\end{align*}
This completes the proof of the inequality (\ref{patchbd}) in Case 4.

Since Cases 1--4 are exhaustive, we have proven (\ref{patchbd}).

We prepare to apply Corollary \ref{brlm} to show that $G \in L^{m,p}(Q^\circ)$.  Fix $\delta >0$ and fix a congruent $\delta$-packing $\pi \in \Pi_{\simeq}(Q^\circ)$. Thus, $\pi$ is a family of cubes in $Q^\circ$ of equal sidelength $\delta$, with pairwise disjoint interiors. 

For the next calculation we use the terminology of local approximation error, $E(G,Q)$, in Section \ref{subsec:char_ss}. For $\widehat{Q} \in \pi$, $z \in \widehat{Q}$, $\delta = \delta_{\widehat{Q}}$, we have
\begin{align*}
E(G,\widehat{Q})/\delta^m \leq \|G-P_z\|_{L^p(\widehat{Q})}/\delta^m &\leq \| G - P_z \|_{L^\infty(\widehat{Q})} \delta^{n/p - m} \\
&\leq \sup_{x,y \in \widehat{Q}} \{|G(y) - P_x(y)| \}\delta^{n/p-m} \\
&\leq \sup_{x, y \in \widehat{Q}} |P_y - P_x|_{y, \delta}, 
\end{align*}
where the last line follows from \eqref{eqn:pre1}. Combining this with (\ref{patchbd}),
\begin{align*}
      \sum_{\widehat{Q} \in \pi} & \big(E(G,\widehat{Q}\big)/ \delta^m)^p \lesssim \sum_{\widehat{Q} \in \pi} \sup_{x, y \in \widehat{Q}} |P_y - P_x|_{y, \delta}^p \\
      &\lesssim \sum_{\widehat{Q} \in \pi} \Big( \|H\|_{L^{m,p}(21\widehat{Q})}+\sup_{Q_i \subseteq 35\widehat{Q}} \{ \|G,R_{x_i}\|_{\J_*(f,\mu;Q_i)} \}  + \sup_{\substack{z \in 7\widehat{Q}, r \leq 7\delta\\ \inte(B(z,r)) \cap Q^\circ \subset K_{CZ}}} \{ \|G\|_{L^{m,p}(\inte(B(z,r)) \cap Q^\circ)} \}\Big)^p\\
      &\lesssim \|G, \vec{R}\|_{\J_*(f,\mu;K_{CZ}, CZ^\circ)}^p + \|H\|_{L^{m,p}(\R^n)}^p \\
      &\lesssim \|G, \vec{R}\|_{\J_*(f,\mu;K_{CZ}, CZ^\circ)}^p +(\|\vec{S}\|_{L^{m,p}(K_p)}+\eta)^p,
\end{align*}
where the second to last inequality follows because $y \in 7 \widehat{Q}$, $r \leq 7 \delta \implies B(y,r) \subset 35 \widehat{Q}$, and because $\{35 \widehat{Q}\}_{\widehat{Q} \in \pi}$ has bounded overlap (recall $\pi$ consists of cubes with pairwise disjoint  interiors and equal sidelength), and the last inequality follows because $\| H \|_{L^{m,p}(\R^n)} \leq \|\vec{S}\|_{L^{m,p}(K_p)}+\eta$. 
We let $\eta \to 0$, then take the supremum over $\pi \in \Pi_{\simeq}(Q^\circ)$ and apply Corollary \ref{brlm} to conclude $G \in L^{m,p}(Q^\circ)$ and 
\begin{align*}
    \|G\|_{L^{m,p}(Q^\circ)} \lesssim \|G, \vec{R}\|_{\J_*(f,\mu;K_{CZ}, CZ^\circ)} +\|\vec{S}\|_{L^{m,p}(K_p)}.
\end{align*}

Next, we will show $J_x G = S_x$ for all $x \in K_p$. We claim that $\vec{P} \in Wh(Q^\circ)$ satisfies $|P_x-P_y|_{y,|x-y|} \leq A < \infty$ for all $x,y \in Q^\circ$. Let $x,y \in Q^\circ$, and fix $Q \subset Q^\circ$ with $x,y \in Q$ and $\delta_Q = |x-y|$. By (\ref{patchbd}), we have
\begin{align*}
|P_x - P_y|_{y, |x-y|} &\lesssim 
\|H\|_{L^{m,p}(21Q)}  +\sup_{Q_i \subseteq 35Q} \{ \|G,R_{x_i}\|_{\J_*(f,\mu;Q_i)} \} + \sup_{\substack{z \in 7Q, r \leq 7\delta\\ \inte(B(z,r)) \cap Q^\circ \subset K_{CZ}}} \{ \|G\|_{L^{m,p}(\inte(B(z,r)) \cap Q^\circ)} \} \\
&\lesssim 
\|H\|_{L^{m,p}(\R^n)}  +  \|G,\vec{R}\|_{\J_*(f,\mu;K_{CZ}, CZ^\circ)}   +  \|G\|_{L^{m,p}(K_{CZ})} \\
&\lesssim \|H\|_{L^{m,p}(\R^n)} +  \|G,\vec{R}\|_{\J_*(f,\mu;K_{CZ}, CZ^\circ)} < \infty.
\end{align*}

Since $G(x)=P_x(x)$ for $x \in Q^\circ$ (see (\ref{eqn:pre1}), we can apply Lemma \ref{whitlm} to deduce $J_x G = P_x$ for all $x \in Q^\circ$. Since $P_x = S_x$ for $x \in K_p$, we have $J_x G = S_x$ for all $x \in K_p$. This completes the proof of Lemma \ref{lmppatch}.

\end{proof}

\subsubsection{Patching Estimates for Restriction of $\mu$}

We state variants of the last two lemmas for the restriction of the measure $\mu$ to a Borel set $E \subset \R^n$. Their proofs follow from the proofs of Lemma \ref{kczpatchlm} and Lemma \ref{lmppatch} with the measure $\mu$ replaced by $\mu|_E$.

\begin{lem}
Suppose we are given a collection of functions $\{G_i\}_{i \in I} \subset L^{m,p}(\R^n)$, a Whitney field $\vec{P} = (P_{x_i})_{i \in I} \in Wh(\bpt)$, and a Borel set $E \subset Q^\circ$.

Define $G: K_{CZ} \to \R$ by $G(x)= \sum_{i \in I} G_i(x) \cdot \theta_i(x)$, where $\{\theta_i\}_{i \in I}$ is a partition of unity satisfying \textbf{(POU1)-(POU4)} (see Section \ref{subsec:pou}). Then
\begin{align*}
\|G, \vec{P} \|_{\J_*(f,\mu|_E;K_{CZ},CZ^\circ)}^p \lesssim &  \sum_{i \in I} \|G_i, P_{x_i}\|_{\J_*(f,\mu|_{1.1Q_i \cap E};1.1Q_i \cap Q^\circ)}^p  + \sum_{i, i' \in I, \; i \lra i'} |P_{x_i} - P_{x_{i'}}|_i^p. 
\end{align*}
\label{rkczpatchlm}
\end{lem}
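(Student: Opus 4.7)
The plan is to mirror the proof of Lemma \ref{kczpatchlm} with $\mu$ replaced by $\mu|_E$ throughout, while carefully tracking how the restriction to $E$ interacts with the localization to the cubes $1.1Q_i$ on the right-hand side. The key identity is that for any Borel sets $S, A \subset \R^n$ and any nonnegative Borel measurable $\varphi$, we have $\mu|_E|_S = \mu|_{S \cap E}$, so $\int_S \varphi \, d\mu|_E = \int_{S \cap E} \varphi \, d\mu = \int_S \varphi \, d\mu|_{S \cap E}$. In particular, the restriction of $\mu|_E$ to $1.1Q_i$ is precisely $\mu|_{1.1Q_i \cap E}$, which is the measure appearing on the right-hand side of the lemma.

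I would decompose $\|G, \vec{P}\|_{\J_*(f, \mu|_E; K_{CZ}, CZ^\circ)}^p$ into its three pieces: $\|G\|_{L^{m,p}(K_{CZ})}^p$, $\int_{K_{CZ}} |G-f|^p d\mu|_E$, and $\sum_i \|G-P_i\|_{L^p(Q_i)}^p/\delta_{Q_i}^{mp}$. The first and third terms do not involve the measure at all, so the estimates \eqref{apatchlmp} and \eqref{apatchlp} from the proof of Lemma \ref{kczpatchlm} apply verbatim; they conclude in terms of $\|G_i\|_{L^{m,p}(1.1Q_i \cap Q^\circ)}^p$ and $\|G_i - P_i\|_{L^p(1.1Q_i \cap Q^\circ)}^p/\delta_{Q_i}^{mp}$, each dominated by $\|G_i, P_i\|_{\J_*(f, \mu|_{1.1Q_i \cap E}; 1.1Q_i \cap Q^\circ)}^p$ by nonnegativity of the missing $L^p(d\mu|_{1.1Q_i \cap E})$ contribution. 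For the middle term, I would repeat the computation \eqref{ap1} with $\mu|_E$ in place of $\mu$, using the partition of unity bounds $0 \leq \theta_i \leq 1$, $\sum_i \theta_i = 1$ on $K_{CZ}$, $\supp(\theta_i) \subset 1.1 Q_i$, and the identity above:
\begin{align*}
\int_{K_{CZ}} |G-f|^p d\mu|_E
&\lesssim \sum_{i \in I} \int_{1.1Q_i \cap Q^\circ} |G_i - f|^p d\mu|_E \\
&= \sum_{i \in I} \int_{1.1Q_i \cap Q^\circ} |G_i - f|^p d\mu|_{1.1Q_i \cap E} \\
&\leq \sum_{i \in I} \|G_i, P_i\|_{\J_*(f, \mu|_{1.1Q_i \cap E}; 1.1Q_i \cap Q^\circ)}^p.
\end{align*}
Summing the three estimates and absorbing the $\sum_{i \lra i'} |P_i - P_{i'}|_i^p$ overlap term as in the original argument yields the claimed bound.

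I do not foresee a substantive obstacle: no step of the original proof uses any property of $\mu$ beyond Borel regularity, which is inherited by $\mu|_E$, and every application of the Sobolev inequality \eqref{sob} in the proof of Lemma \ref{kczpatchlm} bounds derivatives in terms of the $L^{m,p}$ and $L^p$ pieces of $\J_*$, which are measure-independent. Thus the routine bookkeeping around the restriction-to-$E$ identity is the only additional ingredient needed.
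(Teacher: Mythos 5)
Your proposal is correct and takes essentially the same approach as the paper, which simply remarks that the proof of Lemma \ref{rkczpatchlm} follows from the proof of Lemma \ref{kczpatchlm} with $\mu$ replaced by $\mu|_E$ throughout. Your added bookkeeping—noting $\mu|_E|_{1.1Q_i \cap Q^\circ} = \mu|_{1.1Q_i \cap E}$ (using $E \subset Q^\circ$) and that the $L^{m,p}$ and polynomial-deviation terms in \eqref{apatchlmp} and \eqref{apatchlp} are measure-independent—is exactly the justification the paper leaves implicit.
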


\begin{lem}
Fix a collection of functions $\{G_i\}_{i \in I} \subset L^{m,p}(\R^n)$, and Whitney fields $\vec{R}=(R_{x_i})_{i \in I} \in Wh(\bpt)$ and $\vec{S} = (S_x)_{x \in E} \in Wh(K_p)$. Let $E \subset Q^\circ$ be a Borel set. Let $G:Q^\circ \to \R^n$ be defined as
\begin{align*}
    G(x) = \begin{cases} \sum_{i \in I} G_i(x) \cdot \theta_i(x) & x \in K_{CZ} \\
    S_x(x) & x \in K_p,
    \end{cases}
\end{align*}
where $\{\theta_i \}_{i \in I}$ is a partition of unity satisfying \textbf{(POU1)-(POU4)}. 

If $(G,\vec{R}, \vec{S})$ satisfies the conditions $\|G,\vec{R} \|_{\J_*(f,\mu|_E;K_{CZ},CZ^\circ)} + \|\vec{S}\|_{L^{m,p}(K_p)} < \infty$ and  $(\vec{S}, \vec{R})\in C^{m-1,1-n/p}(K_p \cup \bpt)$ then $G \in L^{m,p}(Q^{\circ})$, $J_xG = S_x$ for all $x \in K_p$, and
\begin{align*}
    \| G \|_{L^{m,p}(Q^\circ)} \lesssim \|G,\vec{R} \|_{\J_*(f,\mu|_E;K_{CZ},CZ^\circ)} + \|\vec{S}\|_{L^{m,p}(K_p)}.
\end{align*} \label{rlmppatch}
\end{lem}

\section{Further Constraints on Extension}
\label{sec:further_cons}

Let $Q^\circ$, $CZ^\circ = \{Q_i\}_{i \in I}$, $K_{CZ}$, and $K_p$ be as defined in Section \ref{sec:cz_decomp}. Let $\mathfrak{B}_{CZ} = \{ x_i \}_{i \in I}$ be the set of all CZ basepoints.

\subsection{Definition and properties of the space $\J_*(\mu;Q^\circ, CZ^\circ; K_p)$.}

For  $(f,\vec{P},\vec{S}) \in \J(\mu) \times Wh(\bpt)\times Wh(K_p)$, we define: 
\begin{equation}\label{Jstar_func_3A:defn}
\|f,\vec{P}, \vec{S}\|_{\J_*(\mu;Q^\circ, CZ^\circ;K_p)} = \inf \left\{ \|F,\vec{P}\|_{\J_*(f, \mu;Q^\circ,CZ^\circ)} : \begin{aligned}& F \in L^{m,p}(Q^\circ) \text{ and} \\ 
& J_x(F) = S_x \text{ for all } x \in K_p \end{aligned} \right\}.
\end{equation}
Here, we have used the $\J_*(f, \mu;\Omega,CZ^\circ)$ functional defined in \eqref{new_Jfunc1:eqn}, with $\Omega = Q^\circ$.

We define the seminormed vector space:
\begin{align} \label{Jstar_space_3A:defn}
   \J_*(\mu;Q^\circ, CZ^\circ; K_p) = \left\{ (f,\vec{P},\vec{S}): \begin{aligned} &f \in \J(\mu), \; \vec{P} \in Wh(\bpt), \; \vec{S} \in Wh(K_p),  \\
   &\|f,\vec{P}, \vec{S}\|_{\J_*(\mu;Q^\circ,CZ^\circ;K_p)} < \infty \end{aligned} \right\}.
\end{align}

The next result gives a compatibility condition between the Whitney fields $\vec{P}$ and $\vec{S}$ whenever $(f,\vec{P}, \vec{S}) \in \J_*(\mu;Q^\circ, CZ^\circ; K_p)$.

\begin{prop}\label{spinprop}
Let $(f,\vec{P}, \vec{S}) \in \J_*(\mu;Q^\circ, CZ^\circ; K_p)$. Then $(\vec{P},\vec{S}) \in Wh(\bpt \cup K_p)$, and
\begin{align}
    \|(\vec{P},\vec{S})\|_{C^{m-1,1-n/p}(\bpt \cup K_p)} \leq C_H \|f, \vec{P}, \vec{S}\|_{\J_*(\mu;Q^\circ, CZ^\circ; K_p)}. \label{spineq}
\end{align}
Here, $C_H > 0$ is a universal constant, determined only by $m$, $n$, and $p$.
\end{prop}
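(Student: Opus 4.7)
The plan is to bound each pairwise quantity $|R_x - R_y|_{x,|x-y|}$, where $R_x := S_x$ for $x \in K_p$ and $R_{x_i} := P_i$ for $x_i \in \mathfrak{B}_{CZ}$, by the norm $\|f,\vec{P},\vec{S}\|_{\J_*(\mu;Q^\circ,CZ^\circ;K_p)}$, since that pointwise Hölder-type quantity is exactly what the $C^{m-1,1-n/p}(K_p \cup \mathfrak{B}_{CZ})$ seminorm of a Whitney field controls (compare the hypothesis of Lemma \ref{whitlm}). To do this, I fix an almost-optimal competitor $F \in L^{m,p}(Q^\circ)$ with $J_xF=S_x$ for all $x \in K_p$ and
\[
\|F,\vec{P}\|_{\J_*(f,\mu;Q^\circ,CZ^\circ)} \leq 2\,\|f,\vec{P},\vec{S}\|_{\J_*(\mu;Q^\circ,CZ^\circ;K_p)},
\]
and for each pair $x,y \in K_p \cup \mathfrak{B}_{CZ}$ I insert the ``bridge'' $J_xF, J_yF$ and estimate the three resulting pieces.

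The middle piece $|J_xF-J_yF|_{x,|x-y|}$ is handled uniformly in all cases by the Sobolev inequality on the cube $Q^\circ$: summing the pointwise bound $|\partial^\alpha(J_xF-J_yF)(x)| \leq C|x-y|^{m-|\alpha|-n/p}\|F\|_{L^{m,p}(Q^\circ)}$ with the weights from \eqref{pnorm} shows the powers of $|x-y|$ cancel, so the middle piece is $\lesssim \|F\|_{L^{m,p}(Q^\circ)}$. The endpoint pieces $|R_z-J_zF|_{x,|x-y|}$ vanish when $z \in K_p$ by the trace condition $S_z = J_zF$, so only the basepoint case $z=x_i \in \mathfrak{B}_{CZ}$ requires work: I first use \eqref{movept} to move the basepoint of the polynomial norm from $x$ to $x_i$ (trivially legal since $|x-x_i| \leq |x-y|$), and then use the monotonicity \eqref{normdi} to replace $|x-y|$ by $\delta_{Q_i}$, after which \eqref{normest} gives $|P_i-J_{x_i}F|_{x_i,\delta_{Q_i}} \lesssim \|F,P_i\|_{\J_*(f,\mu|_{Q_i};Q_i)}$, which is controlled by $\|F,\vec{P}\|_{\J_*(f,\mu;Q^\circ,CZ^\circ)}$.

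The crucial geometric fact that legitimizes the use of \eqref{normdi} is the lower bound $|x-x_i| \gtrsim \delta_{Q_i}$ whenever $x \in K_p \cup \mathfrak{B}_{CZ}$ and $x \neq x_i$. For $x = x_j \in \mathfrak{B}_{CZ}$ this follows from the disjointness of the CZ cubes: two distinct CZ cubes of sidelengths $\delta_{Q_i},\delta_{Q_j}$ with centers $x_i, x_j$ satisfy $|x_i-x_j|_\infty \geq \tfrac12(\delta_{Q_i}+\delta_{Q_j})$. For $x \in K_p$ the point $x$ lies in $Q^\circ \setminus K_{CZ}$, hence $x \notin Q_i$, and since $x_i = \ctr(Q_i)$ and we use the $\ell^\infty$ norm, $|x-x_i| \geq \delta_{Q_i}/2$. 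These are the only two configurations where an endpoint-piece appears, so the required scale comparison always holds.

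Assembling the three estimates in the three cases (both endpoints in $K_p$; both in $\mathfrak{B}_{CZ}$; one of each) and taking the supremum over $x,y$, then the infimum over admissible $F$, yields \eqref{spineq}. The only step that requires genuine thought is verifying the geometric lower bound $|x-x_i| \gtrsim \delta_{Q_i}$ in both configurations; once that is in place, the rest is a routine assembly of the Sobolev inequality, \eqref{movept}, \eqref{normdi}, and \eqref{normest}.
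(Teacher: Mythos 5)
Your proposal is correct and follows essentially the same route as the paper: fix a near-optimal competitor $F$ with the trace property $J_xF = S_x$ on $K_p$, bridge through $J_xF, J_yF$, control the middle term by the Sobolev inequality, and control the endpoint terms via \eqref{movept}, \eqref{normdi}, and \eqref{normest} together with the scale comparison $|x-x_i|\gtrsim\delta_{Q_i}$. The paper obtains that scale comparison by citing Lemma \ref{kpdist} (giving $\dist(x_i,K_p)\geq\delta_{Q_i}$) and disjointness of CZ cubes, which is exactly what your geometric check reproves.
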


\begin{proof}
Let $\eta >0$ be arbitrary. Let $H \in L^{m,p}(Q^\circ)$ satisfy $J_x H = S_x$ for all $x \in K_p$, and 
\begin{align}
\|H, \vec{P}\|_{\J_*(f,\mu; Q^\circ, CZ^\circ)} \leq  \|f,\vec{P},\vec{S}\|_{\J(\mu; Q^\circ, K_{CZ}; K_p)} +\eta. \label{j0}
\end{align}
\underline{Case 1:} As a consequence of the Sobolev Inequality, if $x, y \in K_p$, then
\begin{align}
|S_x - S_y|_{y,|y-x|} = |J_x H - J_y H|_{y, |y-x|} \lesssim \|H\|_{L^{m,p}(Q^\circ)} \lesssim \|H, \vec{P}\|_{\J_*(f,\mu; Q^\circ, CZ^\circ)}. \label{hj1}
\end{align}
\underline{Case 2:} If $x \in K_p$ and $y \in \bpt$, then $y = x_i$ for some $i \in I$, and thanks to Lemma \ref{kpdist}, $\delta_{Q_i} \leq \dist(x_i, K_p) \leq |x-x_i|$. Hence from the Sobolev Inequality, \eqref{normdi}, and (\ref{normest}),
\begin{align}
    |S_x -P_{x_i}|_{x_i,|x-x_i|} &\leq |J_xH - J_{x_i}H|_{x_i,|x-x_i|} + |J_{x_i}H - P_{x_i}|_{x_i,\delta_{Q_i}}\nonumber \\
    &\lesssim \|H\|_{L^{m,p}(Q^\circ)} + \|H,P_{x_i}\|_{\J_*(f,\mu;Q_i)} \nonumber \\
    &\lesssim \|H, \vec{P}\|_{\J_*(f,\mu; Q^\circ, CZ^\circ)}. \label{hj2}
\end{align}
Consequently, using \eqref{movept}, we get that $|S_x -P_{x_i}|_{x,|x-x_i|} \lesssim \|H, \vec{P}\|_{\J_*(f,\mu; Q^\circ, CZ^\circ)}$.\\
\underline{Case 3:} Similar to Case 2, if $x,y \in \bpt$ are distinct, then $y = x_i$, $x = x_j$ for some $i,j \in I$, and $\delta_{Q_i}, \delta_{Q_j} \leq C |x_i-x_j|$. From the Sobolev Inequality, (\ref{movept}), \eqref{normdi}, and (\ref{normest}),
\begin{align}
    |P_{x_{j}} -P_{x_i}|_{x_i,|x_j-x_i|} &\leq |J_{x_j}H - J_{x_i}H|_{x_i,|x_j-x_i|} + |J_{x_i}H - P_{x_i}|_{x_i,\delta_{Q_i}} + |J_{x_j}H - P_{x_{j}}|_{x_j,\delta_{Q_j}}\nonumber \\
    &\lesssim \|H\|_{L^{m,p}(Q^\circ)} + \|H,P_{x_i}\|_{\J_*(f,\mu;Q_i)} +\|H,P_{x_{j}}\|_{\J_*(f,\mu;Q_j)} \nonumber \\
    &\lesssim \|H, \vec{P}\|_{\J_*(f,\mu; Q^\circ, CZ^\circ)}. \label{hj3}
\end{align}
From (\ref{hj1}), (\ref{hj2}), and (\ref{hj3}), we have 
\begin{align*}
    \|(\vec{P},\vec{S})\|_{C^{m-1,1-n/p}(\bpt \cup K_p)} \lesssim \|H, \vec{P}\|_{\J_*(f,\mu; Q^\circ, CZ^\circ)}.
\end{align*}
In light of (\ref{j0}), and since $\eta > 0$ is arbitrary, we have proven inequality (\ref{spineq}).
\end{proof}

\subsection{Coherency}\label{subsec:coh}

We start by introducing two pieces of terminology. Recall we have fixed a multiindex set $\ma \subset \mm$.
 
\begin{define}[Coherency]
Let $K \subset \R^n$ and $P_0 \in \mpp$. We say that a Whitney field $(P_x)_{x \in K} \in Wh(K)$ is coherent with $P_0$ if  $\p^\al P_x(x) = \p^\al P_0(x)$ for all $x \in K$, and for all $\al \in \ma$.
\end{define}
\begin{define}[$K$-Coherency]
Let $K \subset Q^\circ \subset \R^n$ and $P_0 \in \mpp$. We say that a function $F \in C^{m-1,1-n/p}(Q^\circ)$ is $K$-coherent with $P_0$ if $\p^\al J_xF(x) = \p^\al F(x)= \p^\al P_0(x)$  for all $x \in K$, and for all $\al \in \ma$.
\end{define}

The next result will be used in Section \ref{subsec:kpt_jet}, to give the proofs of the lemmas therein.

\begin{prop} For $(f,P_0) \in \J(\mu; \delta_{Q^\circ})$,
\begin{align}
\inf \Bigg\{ \| f, \vec{P}, \vec{S} \|_{\J_*(\mu; Q^{\circ}, CZ^\circ;K_p)}: \begin{aligned} &\vec{P} \in Wh(\bpt) \text{ and } \vec{S} \in Wh(K_p) \text{ satisfy }  \nonumber \\
&(\vec{P},\vec{S}) \in Wh(\bpt \cup K_p) \text{ is coherent with } P_0 
\end{aligned}
\Bigg\} \leq C \| f, P_0 \|_{\J(\mu;\delta_{Q^{\circ}})}. 
\end{align}
\label{acohlm}
\end{prop}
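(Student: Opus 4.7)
The plan is to exhibit specific Whitney fields $\vec{P} \in Wh(CZ^\circ)$ and $\vec{S} \in Wh(K_p)$ that are coherent with $P_0$, together with a competing function $G \in L^{m,p}(Q^\circ)$ satisfying $J_y G = S_y$ on $K_p$, and then invoke the patching Lemmas \ref{kczpatchlm} and \ref{lmppatch} to bound $\|G,\vec{P}\|_{\J_*(f,\mu;Q^\circ,CZ^\circ)}$ by the right-hand side. When $\ma = \emptyset$ the coherency condition is vacuous: take $F \in L^{m,p}(\R^n)$ near-optimal for $\|f,P_0\|_{\J(\mu;\delta_{Q^\circ})}$ and set $P_i := J_{x_i} F$, $S_y := J_y F$, $G := F$; the conclusion follows from Sobolev on each $Q_i$ and bounded overlap of $\{1.1 Q_i\}$. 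Henceforth assume $\ma \neq \emptyset$, so $0 \notin \ma$ by monotonicity and $|K_p| = 0$ by Lemma \ref{measkplem}.

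Pick $F \in L^{m,p}(\R^n)$ with $\|F,P_0\|_{\J(f,\mu;\delta_{Q^\circ})} \leq 2\|f,P_0\|_{\J(\mu;\delta_{Q^\circ})} =: 2\Lambda$, and set $c_y^\al := \p^\al(F-P_0)(y)$ for $\al \in \ma$, $y \in Q^\circ$. Using the auxiliary basis $(P_y^\al)_{\al \in \ma}$ from Lemma \ref{lem:polybasis}, which satisfies $\p^\beta P_y^\al(y)=\delta_{\al\beta}$ for $\al,\beta \in \ma$, define
\begin{align*}
S_y := J_y F - \sum_{\al \in \ma} c_y^\al \cdot P_y^\al \;\; (y \in K_p), \qquad P_i := J_{x_i} F - \sum_{\al \in \ma} c_{x_i}^\al \cdot P_{x_i}^\al \;\; (i \in I).
\end{align*}
A direct calculation using the basis property yields $\p^\beta S_y(y) = \p^\beta P_0(y)$ and $\p^\beta P_i(x_i) = \p^\beta P_0(x_i)$ for $\beta \in \ma$, establishing coherency of $(Pr(\vec{P}),\vec{S})$ with $P_0$. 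To construct $G$, choose a smooth cutoff $\widetilde{\theta}_i$ with $\widetilde{\theta}_i \equiv 1$ on $1.2 Q_i$, $\supp \widetilde{\theta}_i \subset 1.3 Q_i$, and $|\p^\gamma \widetilde{\theta}_i| \lesssim \delta_{Q_i}^{-|\gamma|}$; using the functions $\varphi_{x_i}^\al$ from Proposition \ref{phiprop} (with $J_{x_i}\varphi_{x_i}^\al = P_{x_i}^\al$ and $\|\varphi_{x_i}^\al\|_{\J(0,\mu)} \lesssim \epsilon$), put
\begin{align*}
G_i := F - \sum_{\al \in \ma} c_{x_i}^\al \cdot \widetilde{\theta}_i \cdot \varphi_{x_i}^\al,
\end{align*}
so that $J_{x_i} G_i = P_i$. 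Finally let $G(x) := \sum_i G_i(x) \theta_i(x)$ for $x \in K_{CZ}$ and $G(x) := S_x(x)$ for $x \in K_p$, with $\{\theta_i\}$ the partition of unity satisfying \textbf{(POU1)}--\textbf{(POU4)}.

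To apply Lemma \ref{lmppatch} I must first verify $(\vec{S}, Pr(\vec{P})) \in C^{m-1,1-n/p}(K_p \cup \bpt)$: for $y,y' \in K_p \cup \bpt$ I decompose $S_y - S_{y'} = (J_y F - J_{y'} F) - \sum_\al (c_y^\al P_y^\al - c_{y'}^\al P_{y'}^\al)$ and estimate the first summand by Sobolev applied to $F$ on $\R^n$ while controlling the correction via the uniform estimates $|\p^\beta P_{x_i}^\al(x_i)| \lesssim \delta_{Q_i}^{|\al|-|\beta|}$ from Lemma \ref{localbdlm} at CZ basepoints and $\p^\beta P_y^\al(y) = 0$ for $|\beta|<|\al|$ at keystone points from Lemma \ref{newestlm}. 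The task then reduces to bounding three quantities: (i) $\sum_i \|G_i,P_i\|_{\J_*(f,\mu|_{1.1Q_i \cap Q^\circ};1.1Q_i \cap Q^\circ)}^p$, (ii) $\sum_{i \lra i'} |P_i - P_{i'}|_i^p$, and (iii) $\|\vec{S}\|_{L^{m,p}(K_p)}^p$. For (i) and (ii), split each local functional into contributions from $F$ and from the correction $\sum_\al c_{x_i}^\al \widetilde{\theta}_i \varphi_{x_i}^\al$, invoking $\|\varphi_{x_i}^\al\|_{\J(0,\mu)} \lesssim \epsilon$ and local Sobolev on $1.1 Q_i$ to bound $|c_{x_i}^\al| \lesssim \|F,P_0\|_{\J_*(f,\mu|_{1.1Q_i};1.1Q_i)}\, \delta_{Q_i}^{m-|\al|-n/p}$, then summing over $i$ by bounded overlap of $\{1.1Q_i\}$. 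For (iii), exhibit the extension $H := F - \Psi$, where $\Psi$ is built by the same piecewise procedure with $F$ deleted from the definition of $G_i$; the $L^{m,p}$-norm of $\Psi$ is then bounded through the Brudnyi characterization (Corollary \ref{brlm}) together with the same scale-sensitive bounds on the $c_{x_i}^\al$'s.

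The main obstacle is the scale-sensitive bookkeeping in (i): the coefficients $c_{x_i}^\al$ are only guaranteed to satisfy the global bound $|c_{x_i}^\al| \lesssim \Lambda$ via Sobolev on $\R^n$, while the local norms of $\widetilde{\theta}_i \varphi_{x_i}^\al$ in $L^{m,p}(1.1 Q_i)$, in $L^p(d\mu|_{1.1 Q_i})$, and in $L^p(1.1Q_i)/\delta_{Q_i}^m$ each involve different powers of $\delta_{Q_i}$ coming from the cutoff $\widetilde{\theta}_i$ and from the pointwise estimates on $\varphi_{x_i}^\al$. Making the contributions summable over the potentially infinite $CZ^\circ$ requires replacing the uniform bound by the cube-dependent local Sobolev estimate on $c_{x_i}^\al$ and then absorbing the combined factors using bounded overlap. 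A secondary obstacle is item (iii): a direct application of Lemma \ref{lmppatch} would be circular, so $\|\vec{S}\|_{L^{m,p}(K_p)}$ must be estimated via an \emph{independent} extension candidate, obtained by running the same patching construction again with $F$ removed, and shown to lie in $L^{m,p}(Q^\circ)$ by Corollary \ref{brlm}.
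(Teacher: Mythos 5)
Your core construction — subtracting off the $\ma$-components of $J_y F$ using the auxiliary basis $(P_y^\al)$ and the global functions $\varphi^\al$ — is the same device the paper uses. The paper also defines $F_i := F - \sum_{\al \in \ma} \p^\al F(x_i)\,\varphi_{x_i}^\al$ and patches with the partition of unity, and uses the same keystone-point jet $J_y F - \sum_{\al} \p^\al F(y)\,P_y^\al$ on $K_p$. Two remarks: the extra cutoff $\widetilde\theta_i$ in your $G_i$ is unnecessary (the partition of unity $\theta_i$ already localizes the correction to $1.1 Q_i$, and $J_{x_i} G_i = P_i$ holds without it), and the ``scale-sensitive bookkeeping'' in your item (i) is not actually an obstacle in the cutoff-free construction: the $\delta_{Q_i}^{-m}\|G_i - P_i\|_{L^p}$ term is absorbed by Sobolev once $J_{x_i} G_i = P_i$, and the correction term is handled by the global bound $\|\varphi^\al\|_{\J(0,\mu)} \lesssim 1$ together with bounded overlap of $\{1.1Q_i\}$; no cube-dependent rescaling of $c_{x_i}^\al$ is needed.

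The real gap is your item (iii) and the resulting circularity, which you correctly identify but do not actually resolve. You propose to bound $\|\vec S\|_{L^{m,p}(K_p)}$ via $H := F - \Psi$, with $\Psi$ the patched correction, and to control $\|\Psi\|_{L^{m,p}(Q^\circ)}$ by Brudnyi. Even granting that, you still need the jet identity $J_y \Psi = \sum_\al c_y^\al P_y^\al$ for every $y \in K_p$ in order to conclude $J_y H = S_y$. That identity does not come for free from the Brudnyi membership of $\Psi$; it requires a Whitney-extension argument (Lemma \ref{whitlm}), which in turn needs the $C^{m-1,1-n/p}$ H\"older modulus of the full Whitney field of $\Psi$ on $Q^\circ$. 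Once you have that, you have done essentially all the work needed to apply Brudnyi and Whitney directly to the patched function $G$ (or $\bar F$) itself — at which point Lemma \ref{lmppatch}, and the detour through a separate $\|\vec S\|_{L^{m,p}(K_p)}$ estimate, become superfluous. This is exactly the route the paper takes: it defines the Whitney field $R_y$ of $\bar F$ on all of $Q^\circ$, proves the key transfer estimate $|R_x - R_y|_{y,|x-y|}$ (Lemma \ref{aqbdprop}) — whose crucial cancellation relies on the monotonicity identity $\p^\beta J_x F_x(y) = 0$ for $\beta \in \ma$, a step your sketch glosses over — then applies Brudnyi (Corollary \ref{brlm}) directly to $\bar F$ and Whitney (Lemma \ref{whitlm}) to get $J_y\bar F = R_y$ on $K_p$, never invoking Lemma \ref{lmppatch} and never pre-bounding $\|\vec S\|_{L^{m,p}(K_p)}$. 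Reorganizing your argument along those lines removes the circularity.
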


The rest of this section is devoted to the proof of Proposition \ref{acohlm}.

\subsubsection{Proof of Proposition \ref{acohlm}}

Let  $(f,P_0) \in \J(\mu; \delta_{Q^\circ})$ be given. 

Given $G \in L^{m,p}(\R^n)$, we will define a function $F \in L^{m,p}(Q^{\circ})$, and the Whitney fields $\vec{P} \in Wh(\bpt)$, and $\vec{S} \in Wh(K_p)$, which satisfy the following properties:
\begin{align}
&\|F, \vec{P} \|_{\J_*(f,\mu;Q^{\circ}, CZ^\circ)} \leq C \| G, P_0 \|_{\J(f,\mu;\delta_{Q^{\circ}})},\label{acoh}\\ 
&(\vec{P}, \vec{S}) \mbox{ is coherent with } P_0,  \label{acoh1} \\
&J_{x}F = S_x \mbox{ for all } x \in K_p. \label{acoh2}
\end{align}
Notice that \eqref{acoh} and \eqref{acoh2} imply that $\| f, \vec{P}, \vec{S} \|_{\J_*(\mu; Q^\circ, CZ^\circ; K_p)} \leq C \| G, P_0 \|_{\J(f, \mu ; \delta_{Q^\circ})}$. By taking the infimum in this inequality over $G \in L^{m,p}(\R^n)$, the proposition follows. For $G \in L^{m,p}(\R^n)$, 
\begin{align*}
\|G-P_0,0\|_{\J(f-P_0,\mu;\delta_{Q^\circ})} = \|G, P_0\|_{\J(f,\mu;\delta_{Q^\circ})}.
\end{align*}
Therefore, in the proof of \eqref{acoh}--\eqref{acoh2} it suffices to assume that
\[
P_0 = 0.
\]
We now explain how to construct $F,\vec{P}, \vec{S}$ that satisfy \eqref{acoh}--\eqref{acoh2} for $P_0 = 0$. For reference, see (\ref{barf}) for the definition of $F$, see (\ref{defpi}) for the definition of $\vec{P} = (P_{x_i})_{i \in I}$, and see (\ref{barjet}) for the definition of $\vec{S} = (S_x)_{x \in K_p}$.

We shall make use of the auxiliary polynomials $P_x^\al \in \mpp$ and functions $\varphi_x^\al \in L^{m,p}(\R^n)$ ($x \in Q^\circ$, $\al \in \ma$) defined in Section \ref{subsubsec:aux_poly}. Recall $(P_x^\al)$ satisfies (\ref{basis})-(\ref{bound}) and $(\varphi_x^\al)$ satisfies (\ref{phi1})-(\ref{phi3}), while $J_x \varphi_x^\al = P_x^\al$. Each $\varphi_{x}^\al$ is defined in terms of another family of functions $(\varphi^\beta)_{\beta \in \ma}$, via (\ref{phi1}) which states that
\begin{align*}
    \varphi_{x}^\al= \sum_{\beta \in \ma} A^{x}_{\al \beta} \cdot \varphi^\beta,
\end{align*}
where $(A^{x}_{\al \beta})_{\al, \beta \in \mm}$ is a $(C, C\e)$ near-triangular matrix. The inverse of a near-triangular matrix is near-triangular with comparable parameters. So, for any $x, y \in Q^\circ$ and $ \al \in \ma$, we can write $\varphi_y^\al$ as a bounded linear combination of $(\varphi_x^\beta)_{\beta \in \ma}$. Precisely, there exist coefficients $(\omega_{\al \beta}^{xy})_{\al, \beta \in \ma, x,y \in Q^\circ} \subset \R$ such that
\begin{align}
&\varphi_y^\al = \sum_{\beta \in \ma} \omega_{\al \beta}^{xy} \cdot \varphi_x^\beta \; && (\al \in \ma), \mbox{ and }  \nonumber\\
&|\omega_{\al \beta}^{xy}| \leq C \; && (\al, \beta \in \ma). \label{omega}
\end{align}
From \eqref{phi2}, we have $J_{x_i} \varphi^\al_{x_i} = P^\al_{x_i}$. It follows from equation (\ref{localbound}) in Lemma \ref{localbdlm}  that the functions $\varphi^\al_{x_i}$ are locally bounded on $3Q_i^+$. Indeed, by taking $Q = Q_i$ and $y=x_i$ in (\ref{localbound}), we have 
\begin{align}
|\p^\beta \varphi_{x_i}^\al (x_i)| = |\p^\beta P_{x_i}^\al (x_i)| \leq C \delta_{Q_i}^{|\al| - |\beta|} \quad \quad (\al \in \ma, \; \beta \in \mm, \; i \in I). \label{cobd} 
\end{align}

We shall define functions $F_i$ on the CZ cubes, and patch them together on $K_{CZ}$ using a partition of unity. Define $F_i: \R^n \to \R$ and $P_{x_i} \in \mpp$ for each $i \in I$ as
\begin{align}
&F_i (x) = G(x) - \sum_{\al \in \ma} \p^\al G(x_i) \cdot \varphi_{x_i}^\al(x); \text{ and}
\label{deffi}\\
&P_{x_i} = J_{x_i}F_i = J_{x_i} G - \sum_{\al \in \ma} \p^\al G(x_i) \cdot P_{x_i}^\al . \label{defpi}
\end{align}
From (\ref{basis}), the polynomials $(P^\al_{x_i})_{\al \in \ma}$ satisfy $ \partial^\beta P^\al_{x_i}(x_i) = \delta_{\al \beta}$ for $\al, \beta \in \ma$. Thus,
\begin{equation}\label{Pcoh}
\p^\beta P_{x_i}(x_i) = \partial^\beta F_i(x_i) = 0 \mbox{ for }  i \in I, \beta \in \ma.
\end{equation}

Now we define $F: Q^\circ \to \R$. 
\begin{align}
F(x) = \begin{cases} 
\sum_{i \in I} F_i(x) \cdot \theta_i(x) & x \in K_{CZ}, \\
G(x) & x \in K_p.
\end{cases} \label{barf}    
\end{align}
where $\{\theta_i\}_{i \in I}$ is a partition of unity satisfying \textbf{(POU1)-(POU4)} (see Section \ref{subsec:pou}). By construction, $F \in C^{m-1}_{loc}(K_{CZ})$, and hence, $J_yF$ is well-defined for $y \in K_{CZ}$. 

For any $y \in Q^\circ$, we define the polynomial
\begin{align}
    S_y: = \begin{cases}
    J_yF & y \in K_{CZ}, \\
    J_yG - \sum_{\al \in \ma} \p^\al G(y) P_y^\al  & y \in K_p .
    \end{cases} \label{barjet}
\end{align} 
By restricting the family of polynomials $(S_y)_{y \in Q^\circ}$ to $K_p$, we obtain the Whitney field $\vec{S} \in Wh(K_p)$.

The basis $(P_y^\al)_{\al \in \ma}$ satisfies $\partial^\beta P_y^\al(y) = \delta_{\al \beta}$ for $\al, \beta \in \ma$. Hence, $\partial^\beta S_y(y) = 0$ for $y \in K_p$, $\beta \in \ma$. Thus,
\begin{equation}\label{Rcoh}
    (S_y)_{y \in K_p} \mbox{ is coherent with } P_0 = 0.
\end{equation}
The cutoff functions $\theta_i$ satisfy  $x_i \in \supp(\theta_j)$ only if $j = i$, and $J_{x_i} \theta_i = 1$, so from  (\ref{barjet}), (\ref{barf}), (\ref{defpi}), $S_{x_i} = J_{x_i}F = J_{x_i}F_i = P_{x_i}$.  Thus,
\begin{align}\label{barjet_prop}
S_{x_i} = P_{x_i} \qquad (i \in I).
\end{align}
By Lemma \ref{lem:auxpolyzeros}, we have $P^\al_y(y) = 0$ for $y \in K_p$, so by definition of $F$ (see \eqref{barf}), we have $S_y(y) = J_y G(y) = G(y) = F(y)$ for $y \in K_p$. Evidently, also $S_y(y) = F(y)$ for $y \in K_{CZ}$. Thus,
\begin{align}\label{barjet_prop2}
S_y(y) =F(y) \qquad (y \in Q^\circ).
\end{align}

\begin{lem}
Let $x,y \in K_p \cup \bpt$. Let $U \subset \R^n$ be a domain such that $x,y \in U$, and such that $U$ is the union of two $\eta$-non-degenerate boxes with an interior point in common (in particular, $U$ can be a cube, with $\eta = 1$). Then
\begin{align}
    |S_x - S_y|_{y,|y-x|} \lesssim_\eta \|G\|_{L^{m,p}(U)} + \|G, P_0\|_{\J(f,\mu;\delta_{Q^\circ})} \cdot \sum_{\beta \in \ma} \|\varphi^\beta\|_{L^{m,p}(U)}, \label{aqbd}
\end{align}
where $(\varphi^\beta)_{\beta \in \ma}$ is as defined in Proposition \ref{phiprop} of Section \ref{subsubsec:aux_poly}, and the constants in $\lesssim_\eta$ depend only on $m,n,p$ and $\eta$.
\label{aqbdprop}
\end{lem}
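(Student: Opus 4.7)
The plan is to reduce $|R_x - R_y|_{y,|y-x|}$ to Sobolev seminorms on $U$ by constructing an auxiliary function $\tilde G \in L^{m,p}(\R^n)$ whose jets at $x$ and $y$ are exactly $R_x$ and $R_y$, and then applying the Sobolev inequality on $U$.

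By Proposition \ref{phiprop}, one can rewrite $R_z = J_z G_z$ for $z \in \{x, y\}$, where $G_z := F - \sum_{\beta \in \ma} c_\beta^z \varphi^\beta$ and $c_\beta^z := \sum_{\al \in \ma} \p^\al F(z)\, A^z_{\al\beta}$. The pointwise estimate \eqref{sob2} applied to $F - P_0$ gives $|c_\beta^z| \lesssim \|F, P_0\|_{\J(f,\mu;\delta_{Q^\circ})}$ (under the reduction to $P_0 = 0$ used in the proof of Proposition \ref{acohlm}), so
\[
\|G_z\|_{L^{m,p}(U)} \lesssim \|F\|_{L^{m,p}(U)} + \|F, P_0\|_{\J(f,\mu;\delta_{Q^\circ})} \sum_{\beta \in \ma} \|\varphi^\beta\|_{L^{m,p}(U)}.
\]

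Setting $d := |y - x|$, introduce a cutoff $\theta \in C_c^\infty(\R^n)$ with $\theta \equiv 1$ on $B(x, d/3)$, $\theta \equiv 0$ outside $B(x, 2d/3)$, and $|\p^\al \theta| \lesssim d^{-|\al|}$. Define $\tilde G := \theta G_x + (1-\theta) G_y = G_y + \theta h$, where $h := G_x - G_y = \sum_\beta (c_\beta^y - c_\beta^x) \varphi^\beta$. Since $\theta \equiv 1$ near $x$ and $\theta \equiv 0$ near $y$, $J_x \tilde G = R_x$ and $J_y \tilde G = R_y$; the Sobolev inequality on $U$ therefore yields
\[
|R_x - R_y|_{y, d} = |J_x \tilde G - J_y \tilde G|_{y, d} \lesssim \|\tilde G\|_{L^{m,p}(U)} \leq \|G_y\|_{L^{m,p}(U)} + \|\theta h\|_{L^{m,p}(U)}.
\]

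To bound $\|\theta h\|_{L^{m,p}(U)}$, subtract the polynomial $P := J_x h \in \mpp$ and split $\theta h = \theta(h - P) + \theta P$. The piece $\theta(h - P)$ is handled via the Leibniz rule together with the pointwise Sobolev bound $|\p^{\omega'}(h - J_x h)(z)| \lesssim |z - x|^{m - |\omega'| - n/p}\|h\|_{L^{m,p}(U)}$ on $\supp \theta \subset B(x, Cd) \cap U$; the cutoff derivative powers $d^{-|\omega|}$ balance exactly against the Sobolev decay and the volume factor to give $\|\theta(h - P)\|_{L^{m,p}(U)} \lesssim \|h\|_{L^{m,p}(U)} \lesssim \|F, P_0\|_{\J(f,\mu;\delta_{Q^\circ})} \sum_\beta \|\varphi^\beta\|_{L^{m,p}(U)}$. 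The piece $\theta P$ requires controlling the polynomial derivatives $\p^\mu h(x) = \sum_\beta (c_\beta^y - c_\beta^x)\, \p^\mu \varphi^\beta(x)$ by Sobolev-based bounds on $\p^\mu \varphi^\beta$ over $U$, along with the inequality $d \leq \diam(U)$. The main obstacle is this last step, which requires careful bookkeeping of derivative exponents together with a case analysis: for $y \in K_p$ one exploits the vanishing properties in Lemma \ref{lem:auxpolyzeros} and Lemma \ref{newestlm} (so that $J_y \varphi^\beta$ has its low-order Taylor coefficients equal to zero), while for $y \in \bpt$ one invokes the cube-scale bounds of Lemma \ref{localbdlm}, in each case matching the powers of $d$ produced by Taylor expansion of $J_x h$ around $y$ against the Sobolev decay factor $|x - y|^{m - |\mu| - n/p}$.
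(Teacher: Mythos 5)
Your overall strategy is a reasonable alternative to the paper's: build an auxiliary function $\tilde G = \theta G_x + (1-\theta) G_y$ whose jets at $x$ and $y$ are $R_x$ and $R_y$, apply the Sobolev inequality on $U$, and bound $\|\tilde G\|_{L^{m,p}(U)}$ by splitting $\theta h$ into a remainder piece $\theta(h - J_xh)$ and a polynomial piece $\theta J_xh$. The paper does not take this detour through $\tilde G$: it applies the triangle inequality directly to $|\p^\gamma(R_y-R_x)(y)| \leq |\p^\gamma(F_x-F_y)(y)| + |\p^\gamma(J_xF_x-F_x)(y)|$ and controls each piece. Both routes ultimately hinge on the same core estimate, so the two approaches converge; yours merely packages the Sobolev application slightly differently.

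That said, there is a genuine gap in how you propose to handle the polynomial piece. You write $\p^\mu h(x) = \sum_{\beta\in\ma}(c_\beta^y - c_\beta^x)\,\p^\mu\varphi^\beta(x)$ and suggest controlling it via decay of $\p^\mu\varphi^\beta$. The trouble is that the scalars $c_\beta^y - c_\beta^x$ are only bounded by $\|F,P_0\|_{\J(f,\mu;\delta_{Q^\circ})}$; they carry no factor of $|x-y|^{m-|\beta|-n/p}$, and the bound $|\p^\mu\varphi^\beta(x)| \lesssim d^{\alpha_0-|\mu|}$ (with $\alpha_0 = \min_{\al\in\ma}|\al|$) is not enough to make $|J_xh|_{x,d}$ bounded when $m - n/p > \alpha_0$. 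The required smallness is not a property of the individual factors in this decomposition; it comes from a cancellation that your chosen expansion hides. The paper expands $h = F_x - F_y$ in the $\varphi_x^\beta$ basis (equivalently the $\varphi_y^\beta$ basis), obtaining coefficients $-\p^\beta F_y(x)$ (resp.\ $\p^\beta F_x(y)$). These \emph{do} decay like $d^{m-|\beta|-n/p}$, but only because $\ma$ is \emph{monotonic}: equation \eqref{lm11X} shows $\p^\beta J_y F_y(x) = 0$ for $\beta\in\ma$, so $\p^\beta F_y(x) = \p^\beta(F_y - J_yF_y)(x)$ and the Sobolev inequality applies. Your proposal never invokes monotonicity of $\ma$, which is precisely the hypothesis that rescues the estimate — without it the bound on $|J_xh|_{x,d}$ would require controlling $|R_x - J_xG_y|_{x,d}$, which is essentially the quantity you are trying to prove, i.e.\ the argument becomes circular. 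The vanishing lemmas (\ref{lem:auxpolyzeros}, \ref{newestlm}, \ref{localbdlm}) you cite are indeed used, but only to control the \emph{basis-polynomial} derivatives $\p^\mu P_y^\beta(y)$; the decay of the \emph{coefficients} is a separate and crucial step that your sketch does not supply.
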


\begin{proof}
We define $F_y \in L^{m,p}(\R^n)$ ($y \in Q^\circ$) by
\begin{align}
    F_y(x) = G(x)- \sum_{\al \in \ma} \p^\al G(y) \cdot \varphi_{y}^\al(x). \label{aco0}
\end{align}
Observe, by definition of the polynomial $S_y$ in \eqref{barjet} and by property (\ref{phi2}) of $\varphi_y^\al$, that $S_y = J_y F_y$ for $y \in K_p$. On the other hand, note $F_{x_i} = F_i$ defined in \eqref{deffi}. Hence, by \eqref{barjet_prop}, $S_{x_i} = P_{x_i} = J_{x_i} F_i = J_{x_i} F_{x_i} $ for $i \in I$. Thus, 
\[
S_x = J_x F_x \qquad (x \in K_p \cup \bpt).
\]

Because $P_0=0$, by (\ref{sob}), we have 
\begin{align}
|\p^\beta G(z)|= |\p^\beta (G-P_0)(z)| \lesssim \|G,P_0\|_{\J(f,\mu;\delta_{Q^\circ})} \quad ( \beta \in \ma, \; z \in Q^\circ). \label{lm11c}
\end{align}

For distinct $x,y \in K_p \cup \bpt$, we want to bound
\[
|S_y-S_x|_{y,|x-y|} = \big(\sum_{\gamma \in \mm} |\p^\gamma (S_y - S_x)(y)|^p |x-y|^{|\gamma|p+n-mp}\big)^{1/p}.
\]
For $\gamma \in \mm$, we have
\begin{align}
|\p^\gamma (S_y - S_x)(y)| &= |\p^\gamma(F_y - J_x F_x)(y)| \nonumber \\
&\leq |\p^\gamma(F_x - F_y)(y)| + |\p^\gamma (J_x F_x - F_x)(y)| \label{lm10}
\end{align}
Now apply (\ref{aco0}) and (\ref{omega}) to estimate
\begin{align}
|\p^\gamma(F_x - F_y)(y)| &=  \Big|\p^\gamma \Big(\sum_{\al \in \ma} \p^\al G(x) \varphi_{x}^\al - \sum_{\al \in \ma} \p^\al G(y) \varphi_{y}^\al \Big)(y)\Big| \nonumber \\
&=\Big|\p^\gamma \Big(\sum_{\al \in \ma} \p^\al G(x) \sum_{\beta \in \ma}( \omega_{\al \beta}^{yx} \cdot \varphi_{y}^\beta) - \sum_{\beta \in \ma} \p^\beta G(y) \cdot \varphi_{y}^\beta \Big)(y)\Big| \nonumber \\
& \leq \sum_{\beta \in \ma} \Big|\sum_{\al \in \ma} \p^\al G(x) \cdot \omega_{\al \beta}^{yx} - \p^\beta G(y) \Big| |\p^\gamma \varphi_{y}^\beta (y) | \nonumber \\
&= \sum_{\beta \in \ma} |\p^\beta F_{x}(y)| |\p^\gamma \varphi_{y}^\beta (y) | \label{lm11}
\end{align}
Because $\ma$ is monotonic, if $\beta \in \ma$ and $\beta + \al \in \mm$ then $\beta+\al \in \ma$, so
\begin{equation}\label{lm11X}
\p^\beta J_xF_x(y) = \sum_{\beta+\al \in \mm} \p^{\beta+\al} F_x(x)\frac{(y - x)^\al}{\al!} =0 \quad \quad (\beta \in \ma).
\end{equation}
We claim that $|\p^\gamma \varphi_{y}^\beta (y) | \lesssim |y-x|^{|\beta| - |\gamma|}$ for $\gamma \in \mm$, $\beta \in \ma$. Indeed, this estimate follows by (\ref{acobd}) if $y \in K_p$, and by (\ref{alocalbound}) if $y=x_i$ for some $i \in I$ (note: if $y = x_i$ then $x \notin Q_i$, as $y,x$ are distinct, hence $\delta_{Q_i}/2 \leq |y-x| \leq 1$). Thus, using \eqref{lm11X} in \eqref{lm11}, and then using the Sobolev Inequality, we bound
\begin{align}
    |\p^\gamma(F_x - F_y)(y)| &\leq \sum_{\beta \in \ma} |\p^\beta F_{x}(y)| |\p^\gamma \varphi_{y}^\beta (y) | \nonumber\\
    &= \sum_{\beta \in \ma} |\p^\beta (F_x-J_xF_x)(y)| |\p^\gamma \varphi_y^\beta (y) | \nonumber \\
    &\lesssim \sum_{\beta \in \ma} \|F_x\|_{L^{m,p}(U)} |y-x|^{m- |\beta| - n/p} |y-x|^{|\beta| - |\gamma|} \nonumber \\
    & \lesssim \|F_x\|_{L^{m,p}(U)} |y-x|^{m- |\gamma| - n/p}. \label{lm11b}
\end{align}
On the other hand, by the Sobolev Inequality,
\[
|\p^\gamma (J_x (F_x) - F_x)(y)| \lesssim \| F_x \|_{L^{m,p}(U)} |x-y|^{m-|\gamma| - n/p}.
\]
Combining the previous inequalities in (\ref{lm10}), we have
\begin{equation}\label{lm12}
|\p^\gamma (S_y - S_x)(y)| \lesssim \| F_x \|_{L^{m,p}(U)} |x-y|^{m-|\gamma| - n/p} \quad (\gamma \in \mm).
\end{equation}

From (\ref{phi1}), we have $\varphi_x^\al := \sum_{\beta \in \ma} A^{x}_{\al \beta} \cdot \varphi^\beta$, where $(A^{x}_{\al \beta})_{\al, \beta \in \ma}$ is a $(C, C\e)$-near triangular matrix, and in particular $|A^x_{\al \beta}| \leq C$. Hence $\|\varphi_{x}^\al\|_{L^{m,p}(U)} \lesssim \sum_{\beta \in \ma} \|\varphi^\beta\|_{L^{m,p}(U)}$ for all $\al \in \ma$; applying this and (\ref{lm11c}), in the definition of $F_x$, we have
\begin{align}
 \|F_x\|_{L^{m,p}(U)} &\leq  \|G\|_{L^{m,p}(U)}+ \Big\| \sum_{\al \in \ma} \p^\al G(x) \varphi^\al_x \Big\|_{L^{m,p}(U)} \nonumber \\
 & \lesssim \|G\|_{L^{m,p}(U)} + \|G,P_0\|_{\J(f,\mu;\delta_{Q^\circ})} \sum_{\beta \in \ma} \|\varphi^\beta\|_{L^{m,p}(U)}. \label{aco8}
\end{align}
Substituting (\ref{aco8}) into (\ref{lm12}),
\begin{align}
     |\p^\gamma(S_y - S_x)(y)| \cdot & |x-y|^{|\gamma| + n/p - m} \lesssim \|G\|_{L^{m,p}(U)} + \|G,P_0\|_{\J(f,\mu;\delta_{Q^\circ})} \sum_{\beta \in \ma} \|\varphi^\beta\|_{L^{m,p}(U)}. \label{lm11d}
\end{align}
Thus, by definition of the norm $| \cdot |_{y, |x-y|}$, we conclude that
\begin{align*}
    |S_y - S_x|_{y, |x-y|}^p &= \sum_{\gamma \in \mm} |\p^\gamma (S_x - S_y)(y)|^p |y-x|^{|\gamma|p+n-mp} \\
    &\lesssim \|G\|_{L^{m,p}(U)}^p + \|G,P_0\|_{\J(f,\mu;\delta_{Q^\circ})}^p \cdot \sum_{\beta \in \ma} \| \varphi^\beta \|_{L^{m,p}(U)}^p.
\end{align*}
This completes the proof of (\ref{aqbd}).
\end{proof}

\begin{lem}
The function $F|_{K_{CZ}}: K_{CZ} \to \R$ and Whitney field $\vec{P} \in Wh(\bpt)$ satisfy
\begin{align}
   \|F, \vec{P}\|_{\J_*(f,\mu;K_{CZ}, CZ^\circ)} \lesssim \|G,P_0\|_{\J(f,\mu;\delta_{Q^\circ})}. \label{kczbd}
\end{align}
\end{lem}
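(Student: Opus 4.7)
The plan is to apply Lemma \ref{kczpatchlm} (patching estimate on $K_{CZ}$) with $G_i := F_i$ and $\vec{P} := \vec{\bar{P}}$, which yields
\[
\|\bar{F}, \vec{\bar{P}}\|_{\J_*(f,\mu;K_{CZ},CZ^\circ)}^p \lesssim \sum_{i \in I} \|F_i, \bar{P}_i\|_{\J_*(f,\mu|_{1.1Q_i \cap Q^\circ};1.1Q_i \cap Q^\circ)}^p + \sum_{i \lra i'} |\bar{P}_i - \bar{P}_{i'}|_i^p.
\]
Each of the two resulting sums will be bounded by $\|F, P_0\|_{\J(f,\mu;\delta_{Q^\circ})}^p$. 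Throughout, I use the reduction $P_0 = 0$ made above, and the consequent uniform estimate $|\p^\al F(x_i)| \lesssim \|F, P_0\|_{\J(f,\mu;\delta_{Q^\circ})}$ from \eqref{lm11c} for all $i \in I$, $\al \in \ma$.

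\textbf{First sum.} Using \eqref{phi1}, I rewrite $F_i = F - \sum_{\beta \in \ma} C^i_\beta \varphi^\beta$, where $C^i_\beta := \sum_{\al \in \ma} \p^\al F(x_i) A^{x_i}_{\al \beta}$, and $|C^i_\beta| \lesssim \|F, P_0\|_{\J(f,\mu;\delta_{Q^\circ})}$ uniformly in $i$ (since $A^{x_i}$ is $(C, C\e)$-near triangular). Each of the three contributions to $\|F_i, \bar{P}_i\|_{\J_*(f,\mu|_{1.1Q_i \cap Q^\circ};1.1Q_i \cap Q^\circ)}^p$ (namely $\|F_i\|_{L^{m,p}}^p$, $\int|F_i - f|^p d\mu$, and $\|F_i - \bar{P}_i\|_{L^p}^p/\delta_{Q_i}^{mp}$, all taken on $1.1 Q_i \cap Q^\circ$) splits into an $F$-piece and a $\varphi^\beta$-piece. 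The $F$-pieces sum (by the bounded overlap of $\{1.1 Q_i \cap Q^\circ\}$ from \eqref{g1}) to $\lesssim \|F, P_0\|_{\J(f,\mu;\delta_{Q^\circ})}^p$. The $\varphi^\beta$-pieces sum (by the uniform bound on $|C^i_\beta|$ and again by bounded overlap) to $\lesssim \|F, P_0\|_{\J(f,\mu;\delta_{Q^\circ})}^p \cdot \|\varphi^\beta\|_{\J(0, \mu)}^p \lesssim \|F, P_0\|_{\J(f,\mu;\delta_{Q^\circ})}^p$, using \eqref{mat0_new}. For the third contribution, note $\bar{P}_i = J_{x_i} F_i$, so the Sobolev inequality \eqref{SobLp} on the $C_b$-non-degenerate rectangular box $1.1 Q_i \cap Q^\circ$ (of diameter $\simeq \delta_{Q_i}$) absorbs $\|F_i - \bar{P}_i\|_{L^p(1.1Q_i\cap Q^\circ)}/\delta_{Q_i}^m$ into $\|F_i\|_{L^{m,p}(1.1 Q_i \cap Q^\circ)}$, already handled.

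\textbf{Second sum.} By \eqref{barjet_prop}, $\bar{P}_i = R_{x_i}$, so the summand equals $|R_{x_i} - R_{x_{i'}}|_i$. I apply Lemma \ref{aqbdprop} with $x := x_i$, $y := x_{i'}$, and $U := U_{i, i'}$ a mild enlargement of $Q_i \cup Q_{i'}$, which is a union of two $C_b$-non-degenerate boxes with a common interior point by the good geometry \eqref{g1}. For $i \neq i'$ with $i \lra i'$, the good geometry \eqref{goodgeo} gives $|x_i - x_{i'}| \simeq \delta_{Q_i}$, so by \eqref{movept} and \eqref{normdi} we have $|\bar{P}_i - \bar{P}_{i'}|_i \simeq |R_{x_i} - R_{x_{i'}}|_{x_{i'}, |x_i - x_{i'}|}$. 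Summing the $p$-th powers and using the bounded overlap of $\{U_{i,i'}\}$ together with $\|\varphi^\beta\|_{L^{m,p}(\R^n)} \leq \|\varphi^\beta\|_{\J(0, \mu)} \leq \e/C_0 \leq C$, Lemma \ref{aqbdprop} yields
\[
\sum_{i \lra i'} |\bar{P}_i - \bar{P}_{i'}|_i^p \lesssim \|F\|_{L^{m,p}(\R^n)}^p + \|F, P_0\|_{\J(f,\mu;\delta_{Q^\circ})}^p \sum_{\beta \in \ma} \|\varphi^\beta\|_{L^{m,p}(\R^n)}^p \lesssim \|F, P_0\|_{\J(f,\mu;\delta_{Q^\circ})}^p.
\]

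\textbf{Main obstacle.} The delicate point is the first sum: one must organize $F - F_i = \sum_{\al \in \ma} \p^\al F(x_i) \varphi_{x_i}^\al$ so that the bounded overlap of $\{1.1 Q_i \cap Q^\circ\}$ can be combined with the uniform-in-$i$ bound $|C^i_\beta| \lesssim \|F, P_0\|$; this is achieved by pulling out the $i$-independent functions $\varphi^\beta$ via the near-triangular matrix expansion \eqref{phi1}, so that the $\varphi^\beta$-piece of $F_i$ becomes a bounded coefficient times a fixed $L^{m,p}$-function that can be summed via bounded overlap and the uniform bound \eqref{mat0_new}.
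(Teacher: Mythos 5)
Your proof is correct and follows essentially the same route as the paper: applying Lemma \ref{kczpatchlm} to reduce to the two sums, controlling the patching sum via Lemma \ref{aqbdprop} on $U_{ii'} = (1.1Q_i \cup 1.1Q_{i'}) \cap Q^\circ$ and bounded overlap, and handling the first sum by expanding $F_i = F - \sum_{\beta} C^i_\beta \varphi^\beta$ with the near-triangular matrix to pull out the $i$-independent functions $\varphi^\beta$, then summing via bounded overlap and \eqref{mat0_new}. The minor stylistic difference — you absorb the $L^p$ correction term via \eqref{SobLp} on $1.1Q_i \cap Q^\circ$ explicitly, while the paper states it on $Q_i$ — does not change the substance.
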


\begin{proof}
By (\ref{apatch1}),
\begin{align*}
\|F, \vec{P}\|_{\J_*(f,\mu;K_{CZ},CZ^\circ)}^p \lesssim \sum_{i \in I} \|F_i, P_{x_i}\|_{\J_*(f,\mu|_{1.1Q_i \cap Q^\circ};1.1Q_i \cap Q^\circ)}^p + \sum_{i \lra i'} |P_{x_i} -P_{x_{i'}}|_i^p  \\
\end{align*}
Because $J_{x_i} F_i = P_{x_i}$, by (\ref{SobLp}), $\| F_i - P_{x_i} \|_{L^p(Q_i)}/\delta_{Q_i}^m \lesssim \| F_i \|_{L^{m,p}(Q_i)}$, hence we have
\begin{align}
\|F, \vec{P}\|_{\J_*(f,\mu;K_{CZ},CZ^\circ)}^p &\lesssim \sum_{i \in I} \left[ \|F_i\|_{L^{m,p}(1.1Q_i\cap Q^\circ)}^p + \int_{1.1Q_i} |F_i - f|^p d\mu \right] + \sum_{i \lra i'} |P_{x_i} -P_{x_{i'}}|_i^p. \label{aco5}
\end{align}

We now bound the term $\sum_{i \lra i'} |P_{x_i} -P_{x_{i'}}|_i^p$ in \eqref{aco5}. Recall from \eqref{barjet_prop} that $P_{x_i} = S_{x_i}$. If $i \lra i'$ then $Q_i$ and $Q_{i'}$ are neighboring CZ cubes, hence, $U_{i i'} := (1.1Q_i \cup 1.1 Q_{i'}) \cap Q^\circ$ is the union of two $C$-non-degenerate boxes with a common interior point, and $x_i,x_{i'} \in U_{i i'}$. Note also $|x_i-x_{i'}| \simeq \delta_{Q_i}$, by the good geometry of the CZ decomposition. Hence, from Lemma \ref{aqbdprop}, applied with $U = U_{ii'}$, for any $i \lra i'$,
\begin{align}
|P_{x_i} -P_{x_{i'}}|_i^p &= |S_{x_i} - S_{x_{i'}}|^p_{x_i, \delta_{Q_i}} \simeq |S_{x_i} - S_{x_{i'}}|^p_{x_i, |x_i-x_{i'}|} \nonumber \\
&\lesssim \| G \|_{L^{m,p}((1.1 Q_i \cup 1.1 Q_{i'}) \cap Q^\circ)}^p  + \| G,P_0 \|_{\J_*(f, \mu; Q^\circ)}^p \sum_{\beta \in \ma} \| \varphi^\beta \|_{L^{m,p}((1.1 Q_i \cup 1.1 Q_{i'}) \cap Q^\circ)}^p. \label{co3}
\end{align}
By summing \eqref{co3} over all $i,i' \in I$ with $i \lra i'$, and by using the bounded overlap of the regions $(1.1Q_i \cup 1.1 Q_{i'}) \cap Q^\circ \subset Q^\circ$, we find that
\begin{align*}
\sum_{i \lra i'} |P_{x_i} -P_{x_{i'}}|_i^p \lesssim \| G \|_{L^{m,p}(Q^\circ)}^p + \| G,P_0 \|_{\J_*(f, \mu; Q^\circ)}^p \sum_{\beta \in \ma} \| \varphi^\beta \|_{L^{m,p}(Q^\circ)}^p. 
\end{align*}
Further, from (\ref{mat0_new}) we have $\| \varphi^\beta \|_{L^{m,p}(Q^\circ)} \leq \| \varphi^\beta \|_{\J(0,\mu)}  \lesssim 1$ for $\beta \in \ma$. Also, $\| G \|_{L^{m,p}(Q^\circ)} \leq \| G,P_0 \|_{\J_*(f, \mu; Q^\circ)}$. Hence,
\begin{align}
\sum_{i \lra i'} |P_{x_i} -P_{x_{i'}}|_i^p \lesssim \| G,P_0 \|_{\J_*(f, \mu; Q^\circ)}^p. \label{aco6}
\end{align}

Recall $P_0=0$. For $\al \in \ma$, by (\ref{sob}), 
\begin{align}
|\p^\al G(x_i)|= |\p^\al (G-P_0)(x_i)| \leq C \|G,P_0\|_{\J_*(f,\mu;Q^\circ)}. \label{poly0}
\end{align}
From (\ref{phi1}), we have $\varphi_{x_i}^\al := \sum_{\beta \in \ma} A^{x_i}_{\al \beta} \cdot \varphi^\beta$ ($\al \in \ma$), where $(A^{x_i}_{\al \beta})_{\al, \beta \in \ma}$ is a $(C, C\e)$-near triangular matrix. In particular, $|A^{x_i}_{\al \beta}| \leq C$. We also have, from (\ref{mat0_new}), $\| \varphi^\beta \|_{L^p(d \mu)} \leq \|\varphi^\beta\|_{\J(0,\mu)} \lesssim 1$ for $\beta \in \ma$. Thus, by definition of $F_i$ (see \eqref{deffi}), and by the triangle inequality, we deduce that
\begin{align}
\sum_{i \in I}  \int_{1.1Q_i} |F_i-f|^p d\mu &\lesssim \sum_{i \in I} \Bigg[ \int_{1.1Q_i} |G-f|^p d\mu + \sum_{\al \in \ma} | \p^\al G(x_i)|^p \sum_{\beta \in \ma} |A_{\al \beta}^{x_i}|^p \cdot \int_{1.1Q_i} |\varphi^\beta|^p d\mu \Bigg] \nonumber \\
&\lesssim \sum_{i \in I} \Bigg[ \int_{1.1Q_i} |G-f|^p d\mu +  \|G,P_0\|_{\J_*(f,\mu;Q^\circ)}^p \sum_{\beta \in \ma} \int_{1.1Q_i} |\varphi^\beta|^p d\mu \Bigg] \nonumber \\
&\lesssim  \int_{Q^\circ} |G-f|^p d\mu + \|G,P_0\|_{\J_*(f,\mu;Q^\circ)}^p \sum_{\beta \in \ma}  \int_{Q^\circ} |\varphi^\beta|^p d\mu \nonumber \\
&\lesssim  \|G,P_0\|_{\J_*(f,\mu;Q^\circ)}^p. \label{co85}
\end{align}
Similarly, by instead using that $\| \varphi^\beta \|_{L^{m,p}(Q^\circ)} \leq \|\varphi^\beta\|_{\J(0,\mu)} \lesssim 1$ for $\beta \in \ma$, and by definition of $F_i$,  we have
\begin{align}
\sum_{i \in I}  \| F_i \|_{L^{m,p}(1.1 Q_i \cap Q^\circ)}^p &\lesssim \sum_{i \in I} \Bigg[  \| G \|_{L^{m,p}(1.1 Q_i \cap Q^\circ)}^p + \sum_{\al \in \ma} | \p^\al G(x_i)|^p \sum_{\beta \in \ma} |A_{\al \beta}^{x_i}|^p \|\varphi^\beta\|^p_{L^{m,p}(1.1 Q_i \cap Q^\circ)} \Bigg] \nonumber \\
&\lesssim \sum_{i \in I} \Bigg[\| G \|_{L^{m,p}(1.1 Q_i \cap Q^\circ)}^p   + \|G,P_0\|_{\J_*(f,\mu;Q^\circ)}^p \sum_{\beta \in \ma} \|\varphi^\beta\|^p_{L^{m,p}(1.1 Q_i \cap Q^\circ)} \Bigg] \nonumber \\
&\lesssim  \| G \|_{L^{m,p}(Q^\circ)}^p + \|G,P_0\|_{\J_*(f,\mu;Q^\circ)}^p \sum_{\beta \in \ma} \| \varphi^\beta \|_{L^{m,p}(Q^\circ)}^p \nonumber \\
&\lesssim  \|G,P_0\|_{\J_*(f,\mu;Q^\circ)}^p. \label{co86}
\end{align}

We substitute inequalities \eqref{aco6}, \eqref{co85}, and \eqref{co86} into \eqref{aco5} to deduce:
\begin{align*}
\|F, \vec{P}\|_{\J_*(f,\mu;K_{CZ}, CZ^\circ)}^p \lesssim \|G,P_0\|_{\J_*(f,\mu;Q^\circ)}^p.
\end{align*}
We apply (\ref{j5}) to conclude:
\begin{align*}
\|F, \vec{P} \|_{\J_*(f,\mu;K_{CZ}, CZ^\circ)} \lesssim\| G, P_0 \|_{\J(f,\mu;\delta_{Q^{\circ}})}.
\end{align*}
This completes the proof of \eqref{kczbd}. The proof of the lemma is complete.
\end{proof}

\begin{lem}
The polynomials $S_x$ ($x \in Q^\circ$) defined in (\ref{barjet}) satisfy
\[
\sup_{x,y \in Q^\circ} |S_x - S_y|_{x,|y-x|} \lesssim \|G,P_0\|_{\J(f,\mu;\delta_{Q^\circ})}.
\]
\label{holderbarf}
\end{lem}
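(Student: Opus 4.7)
The plan is to prove the bound by case analysis on whether $x, y$ lie in $K_p$ or $K_{CZ}$. Before the cases, I would record three global estimates that will drive the argument: (i) $\|F\|_{L^{m,p}(Q^\circ)} \leq \|F,P_0\|_{\J(f,\mu;\delta_{Q^\circ})}$, directly from the $\J$-functional definition; (ii) $\|\varphi^\beta\|_{L^{m,p}(Q^\circ)} \leq \|\varphi^\beta\|_{\J(0,\mu)} \lesssim 1$ for each $\beta \in \ma$, by (\ref{mat0_new}); and (iii) $\|\bar{F}\|_{L^{m,p}(K_{CZ})} \lesssim \|F,P_0\|_{\J(f,\mu;\delta_{Q^\circ})}$, from the preceding Lemma (see (\ref{kczbd})). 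The case $x, y \in K_p$ then follows at once from Lemma \ref{aqbdprop} applied with $U = Q^\circ$ together with (\ref{movept}) to change basepoint from $y$ to $x$.

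When exactly one of $x, y$ lies in $K_p$, say $x \in K_p$ and $y \in Q_i \in CZ^\circ$ (the other case being symmetric), Lemma \ref{kpdist} yields $\delta_{Q_i} \leq |y-x|$, so $|x-x_i|, |y-x_i| \lesssim |y-x|$. I would telescope $R_x - R_y = (R_x - R_{x_i}) + (R_{x_i} - R_y)$: Lemma \ref{aqbdprop} with $U = Q^\circ$ controls the first piece, since $x, x_i \in K_p \cup \bpt$; and the Sobolev inequality on $Q_i$ controls the second, since $R_{x_i} - R_y = J_{x_i}\bar{F} - J_y\bar{F}$ with $x_i, y \in Q_i$. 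In both pieces, basepoint and scale are adjusted to $(x, |y-x|)$ using (\ref{movept}) and (\ref{normdi}).

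For $x, y \in K_{CZ}$ with $x \in Q_i$, $y \in Q_j$, I split by the relative scale. When $|y-x| \gtrsim \delta_{Q_i}$, telescope through both basepoints $R_x - R_y = (R_x - R_{x_i}) + (R_{x_i} - R_{x_j}) + (R_{x_j} - R_y)$: the outer differences are bounded by Sobolev on $Q_i, Q_j$, and the middle difference $R_{x_i} - R_{x_j}$ by Lemma \ref{aqbdprop} on a $C_b$-non-degenerate rectangular box $U \subset Q^\circ$ containing $x_i, x_j$ with $\diam(U) \simeq |y-x|$. When $|y-x| \ll \delta_{Q_i}$, (\ref{g1}) forces $Q_j \in \{Q_i\} \cup \mathcal{N}(Q_i)$ with $\delta_{Q_j} \simeq \delta_{Q_i}$; when $Q_i = Q_j$ or $Q_i, Q_j$ share a codimension-one face, Sobolev on $Q_i$ or on the rectangular box $Q_i \cup Q_j$ bounds $|J_x\bar{F} - J_y\bar{F}|_{x,|y-x|}$ by $\|\bar{F}\|_{L^{m,p}(Q_i \cup Q_j)} \lesssim \|F,P_0\|_{\J(f,\mu;\delta_{Q^\circ})}$.

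The main obstacle is the corner-sharing sub-case (where $Q_i, Q_j$ are neighbors but $\Cl(Q_i) \cap \Cl(Q_j)$ has codimension $\geq 2$) combined with $|y-x| \ll \delta_{Q_i}$: here $Q_i \cup Q_j$ is not a union of $C_b$-non-degenerate boxes with a common interior point, so the paper's Sobolev inequality does not apply to the union, and telescoping through $x_i, x_j$ fails because the telescoping bound picks up a factor $(|x-x_i|/|y-x|)^{mp-|\al|p-n}$ which blows up. To handle it, I would exploit that $\bar{F}$ is $C^{m-1}$ throughout $K_{CZ}$ (since each $F_k \in L^{m,p}(\R^n) \subset C^{m-1}_{loc}(\R^n)$ and the partition of unity is smooth), and split through a point $c \in \Cl(Q_i) \cap \Cl(Q_j)$ close to both $x$ and $y$: Sobolev on $Q_i$ controls $R_x - R_c$, Sobolev on $Q_j$ controls $R_c - R_y$ (using the consistent continuous extension of $\bar{F}$ across the common boundary), and the global bound (\ref{kczbd}) closes the estimate.
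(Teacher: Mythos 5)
Your case decomposition mirrors the paper's proof of Lemma~\ref{holderbarf}, and the three global estimates you record at the start are exactly the ones that drive the argument; the $K_p$--$K_p$ and $K_p$--$K_{CZ}$ cases are handled as in the paper. Where you depart is in flagging a ``corner-sharing obstacle,'' and this is a misdiagnosis of what the Sobolev Inequality is actually applied to. In the small-separation regime $|y-x| \ll \delta_{Q_i}$ with $y \in Q_i$, both $x$ and $y$ lie in a fixed dilate $\eta Q_i$ of $Q_i$ with $\eta$ close to $1$ (that is the paper's observation ``then $x \in 1.1Q_i$''), and the Remark after Lemma~\ref{goodgeo2} records that $\eta Q_i \cap Q^\circ$ is a \emph{single} $C(n)$-non-degenerate rectangular box of diameter $\simeq \delta_{Q_i}$. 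The Sobolev Inequality is applied on that single box; the codimension of $\Cl(Q_i) \cap \Cl(Q_j)$ is irrelevant, and the quantity written $\|\bar F\|_{L^{m,p}(Q_j \cup Q_i)}$ is a stand-in for something dominated by $\|\bar F\|_{L^{m,p}(\eta Q_i \cap Q^\circ)} \lesssim \|\bar F\|_{L^{m,p}(K_{CZ})}$, which (\ref{kczbd}) controls. Your proposed patch --- routing through a corner point $c \in \Cl(Q_i) \cap \Cl(Q_j)$ and applying Sobolev separately on $Q_i$ and on $Q_j$ --- is therefore unnecessary; it also requires the one-sided jets of $\bar F$ from $Q_i$ and from $Q_j$ to agree at $c$ and to equal $R_c$, and an extension of the Sobolev estimate up to the closed boundary of $Q_i$, neither of which your sketch supplies. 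Replace that paragraph with the single-box estimate on $\eta Q_i \cap Q^\circ$ and your argument coincides with the paper's.
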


\begin{proof}
Let $x,y \in Q^\circ$. We will prove the desired inequality by considering the geometry of $x$ and $y$ in relation to $K_p$ and $K_{CZ}$:

\underline{Case 1:} Suppose $x,y \in K_{CZ}$. Let $y \in Q_i \in CZ^\circ$, $x \in Q_j \in CZ^\circ$. Suppose $\delta_{Q_i} > 21 |y-x|$; then $x \in 1.1Q_i$. By applying the Sobolev inequality, we have
\begin{align}
    |S_x - S_y|_{x,|y-x|} &= |J_xF - J_yF|_{x,|y-x|} \lesssim \|F\|_{L^{m,p}(( 1.1Q_i)\cap Q^\circ)} \leq \|F\|_{L^{m,p}(K_{CZ})}  . \label{case1a}
\end{align}
Similary, if $\delta_{Q_j} > 21 |y-x|$; then $y \in 1.1Q_j$. By applying the Sobolev inequality, we have
\begin{align}
    |S_x - S_y|_{x,|y-x|} &= |J_xF - J_yF|_{x,|y-x|} \lesssim \|F\|_{L^{m,p}((1.1Q_j)\cap Q^\circ)} \leq \|F\|_{L^{m,p}(K_{CZ})}. \label{case1b}
\end{align}
Now suppose $\delta_{Q_i}, \delta_{Q_j} \leq 21 |y-x|$; then $|x_i - x_j| \leq \delta_{Q_i} + |x-y| + \delta_{Q_j} \leq 43 |x-y|$. We apply (\ref{movept}), the Sobolev Inequality, and Lemma \ref{aqbdprop} with $U=Q^\circ$ to deduce
\begin{align}
    |S_x - S_y|_{x,|y-x|} &\lesssim |S_y - S_{x_i}|_{y,\delta_{Q_i}}+ |S_{x_i} - S_{x_j}|_{x_i,|x_i-x_j|}+|S_{x_j} - S_x|_{x,\delta_{Q_j}} \nonumber \\
    &=|J_yF - J_{x_i}F|_{y,\delta_{Q_i}}+ |S_{x_i} - S_{x_j}|_{x_i,|x_i-x_j|} +|J_{x_j}F - J_xF|_{x,\delta_{Q_j}} \nonumber \\
    &\lesssim \|F\|_{L^{m,p}(Q_i)}+ \|F\|_{L^{m,p}(Q_j)} + \|G\|_{L^{m,p}(Q^\circ)}   + \|G, P_0\|_{\J(f,\mu;\delta_{Q^\circ})} \cdot \sum_{\beta \in \ma} \|\varphi^\beta\|_{L^{m,p}(Q^\circ)} \nonumber \\
    & \lesssim \|F\|_{L^{m,p}(K_{CZ})} + \|G, P_0\|_{\J(f,\mu;\delta_{Q^\circ})}, \label{case1c}
\end{align}
where in the last line we use (\ref{mat0_new}). By (\ref{kczbd}), we have $\|F\|_{L^{m,p}(K_{CZ})} \lesssim \|G,P_0\|_{\J(f,\mu;\delta_{Q^\circ})}$. Considering (\ref{case1a}), (\ref{case1b}), and (\ref{case1c}), we conclude for $x,y \in K_{CZ}$,
\begin{align}
    |S_x - S_y|_{x,|y-x|} &\lesssim  \|F\|_{L^{m,p}(K_{CZ})} + \|G,P_0\|_{\J(f,\mu;\delta_{Q^\circ})}\lesssim \|G,P_0\|_{\J(f,\mu;\delta_{Q^\circ})}. \label{case1}
\end{align}

\underline{Case 2:} Suppose $x \in K_p$, $y \in K_{CZ}$. Let $y \in Q_i \in CZ^\circ$. Because $|x-x_i| \lesssim |y-x|$ and $\delta_{Q_i} \leq |y-x|$ (see Lemma \ref{kpdist}), we can apply (\ref{movept}) and then the Sobolev Inequality to deduce
\begin{align}
    |S_x - S_y|_{x,|y-x|} &\lesssim |S_x - S_{x_i}|_{x,|x_i-x|} + |S_{x_i} - S_y|_{x_i,\delta_{Q_i}} \nonumber \\
    &=|S_x - S_{x_i}|_{x,|x_i-x|} + |J_{x_i} F - J_y F|_{x_i,\delta_{Q_i}} \nonumber \\
    & \lesssim |S_x - S_{x_i}|_{x,|x_i-x|} +\|F\|_{L^{m,p}(Q_i)}. \label{case2}
\end{align}
We apply Lemma \ref{aqbdprop} with $U=Q^\circ$, recalling for $\al \in \ma$, we have $\|\varphi^\al\|_{L^{m,p}(Q^\circ)} \leq \e/C_0$ (see (\ref{mat0_new})), to deduce
\begin{align}
    |S_x - S_{x_i}|_{x,|x_i-x|} &\lesssim \|G\|_{L^{m,p}(Q^\circ)} + \|G, P_0\|_{\J(f,\mu;\delta_{Q^\circ})} \cdot \sum_{\beta \in \ma} \|\varphi^\beta\|_{L^{m,p}(Q^\circ)} \nonumber \\ 
    &\lesssim \|G,P_0\|_{\J(f,\mu; \delta_{Q^\circ})}. \label{corwq} 
\end{align}
Substituting this into (\ref{case2}), and using again that $\| F \|_{L^{m,p}(K_{CZ})} \leq \| G, P_0 \|_{\J(f,\mu; \delta_{Q^\circ})}$, we have 
\begin{align*}
    |S_x - S_y|_{x,|y-x|} \lesssim \|G,P_0\|_{\J(f,\mu; \delta_{Q^\circ})}. 
\end{align*}

\underline{Case 3:} Suppose $x,y \in K_p$; then as in (\ref{corwq}) in Case 2, we can apply Lemma \ref{aqbdprop} to deduce
\begin{align*}
    |S_x - S_y|_{x,|y-x|} \lesssim \|G,P_0\|_{\J(f,\mu; \delta_{Q^\circ})}. 
\end{align*}
This completes the proof of Lemma \ref{holderbarf}.
\end{proof}

\begin{lem}
The function $F$ defined in (\ref{barf}) belongs to $L^{m,p}(Q^\circ)$, and 
\begin{align}
    \|F\|_{L^{m,p}(Q^\circ)} \lesssim \|G,P_0\|_{\J(f,\mu;\delta_{Q^\circ})}. \label{acohbd}
\end{align}
\end{lem}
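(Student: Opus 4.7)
The plan is to apply Lemma \ref{lmppatch} with $G = \bar{F}$, $\vec{R} = \vec{\bar{P}}$, and $\vec{S} := (R_x)_{x \in K_p}$, so that (\ref{acohbd}) will follow from the lemma's conclusion together with the inputs established above. First I would check that $\bar{F}$ has the piecewise form required by Lemma \ref{lmppatch}: on $K_{CZ}$ this is the definition (\ref{barf}), while on $K_p$ we need $\bar{F}(x) = R_x(x)$, which reduces to $F(x) = J_x F(x) - \sum_{\al \in \ma}\p^\al F(x)\, P_x^\al(x) = F(x)$, using Lemma \ref{lem:auxpolyzeros} to kill the sum.

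Next I would verify the three hypotheses of Lemma \ref{lmppatch} with bounds by $\|F,P_0\|_{\J(f,\mu;\delta_{Q^\circ})}$: (i) the H\"older condition $(\vec{S},Pr(\vec{\bar{P}})) \in C^{m-1,1-n/p}(K_p \cup \bpt)$ by restricting Lemma \ref{holderbarf} to $x,y \in K_p \cup \bpt$; (ii) the CZ patching bound $\|\bar{F},\vec{\bar{P}}\|_{\J_*(f,\mu;K_{CZ},CZ^\circ)} \lesssim \|F,P_0\|_{\J(f,\mu;\delta_{Q^\circ})}$, which is (\ref{kczbd}); and (iii) the trace bound $\|\vec{S}\|_{L^{m,p}(K_p)} \lesssim \|F,P_0\|_{\J(f,\mu;\delta_{Q^\circ})}$.

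The hard part will be (iii): producing a function $H \in L^{m,p}(\R^n)$ with $J_xH = R_x$ for every $x \in K_p$ and $\|H\|_{L^{m,p}(\R^n)} \lesssim \|F,P_0\|_{\J(f,\mu;\delta_{Q^\circ})}$. My approach is to set $H := F - \sum_{\al \in \ma} g^\al \cdot \varphi^\al$, where each $g^\al$ is a smooth surrogate for $\p^\al F$ built from a Whitney decomposition $\{W_j\}$ of $\R^n \setminus K_p$: on each $W_j$ take $g^\al|_{W_j}$ to be (a mollification of) the value $\p^\al F(z_j)$ at a chosen basepoint $z_j \in K_p$ nearest $W_j$, and glue with a Whitney partition of unity so that for $x \in K_p$, $J_x g^\al$ agrees to order $m-1-|\al|$ with the constant $\p^\al F(x)$. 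Then $J_x(g^\al \varphi^\al) = \p^\al F(x) \cdot J_x \varphi^\al = \p^\al F(x) \cdot P_x^\al$ for $x \in K_p$, which gives $J_x H = R_x$ as required. The $L^{m,p}$-estimate follows by writing $\|H\|_{L^{m,p}(\R^n)} \lesssim \|F\|_{L^{m,p}(\R^n)} + \sum_\al \|g^\al\|_{L^\infty} \|\varphi^\al\|_{L^{m,p}(\R^n)} + \text{(lower-order cross terms)}$, using $|\p^\al F| \lesssim \|F,P_0\|_{\J(f,\mu;\delta_{Q^\circ})}$ from (\ref{sob2}) (since $P_0 = 0$), the bound $\|\varphi^\al\|_{\J(0,\mu)} \lesssim 1$ from (\ref{mat0_new}), and the bounded overlap of Whitney cubes to handle the cross terms.

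As a fallback in case the explicit construction of $H$ proves delicate, one can bypass Lemma \ref{lmppatch} and instead apply Brudnyi's characterization (Corollary \ref{brlm}) directly to $\bar{F}$, using $\bar{F}(y) = R_y(y)$ to bound $E(\bar{F}, \bar{Q})/\delta_{\bar Q}^m \lesssim \sup_{x,y \in \bar{Q}}|R_y - R_x|_{y,\delta_{\bar Q}}$ on each congruent $\delta$-packing cube, and then running the four-case analysis from the proof of Lemma \ref{lmppatch} with Lemma \ref{aqbdprop} in the role of the Sobolev bound on $H$, and with the Sobolev inequality on $\bar{F}|_{K_{CZ}}$ (via (\ref{kczbd})) handling intra-CZ cases; summing via bounded overlap of neighborhoods recovers (\ref{acohbd}) without ever producing $H$ explicitly.
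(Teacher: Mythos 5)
Your primary route, applying Lemma \ref{lmppatch} with an explicitly constructed extension $H$ of $\vec{S} = (R_x)_{x\in K_p}$, has a concrete gap in the jet-matching step. You write $J_x(g^\al \varphi^\al) = \p^\al F(x)\cdot J_x\varphi^\al = \p^\al F(x) \cdot P_x^\al$, but the second equality is false: by \eqref{mat0.1} the jet $J_x\varphi^\al$ is $\hat{P}_x^\al$, whereas $P_x^\al = J_x\varphi_x^\al$ (Proposition \ref{phiprop}, \eqref{phi2}) with $\varphi_x^\al = \sum_\beta A_{\al\beta}^x\varphi^\beta$ an $x$-dependent linear combination. So even with a perfect surrogate $g^\al$, your $H$ would have $J_xH = J_xF - \sum_\al \p^\al F(x)\,\hat{P}_x^\al \neq R_x$ in general; one would have to target instead $g^\beta(x) \approx \sum_\al A^x_{\al\beta}\p^\al F(x)$, and the matrix $A^x$ has no established regularity in $x$ to make the gluing work. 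Separately, even granting a correct jet target, the claimed bound $\|g^\al\|_{L^\infty}\|\varphi^\al\|_{L^{m,p}}$ does not reach the cross terms $\p^\gamma g^\al\cdot\p^\eta\varphi^\al$ with $|\gamma|+|\eta|=m$: a Whitney-glued constant field has $|\p^\gamma g^\al| \gtrsim \mathrm{dist}(\cdot,K_p)^{m-|\al|-n/p-|\gamma|}$ near $K_p$, which for $|\al|=m-1$, $|\gamma|\geq 1$ blows up like $\mathrm{dist}^{-n/p-|\gamma|+1}$, and controlling this requires a compensating vanishing of $\p^\eta\varphi^\al$ near $K_p$ that you would have to prove (it is not a consequence of \eqref{mat0_new}).

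Your fallback is the correct argument, and it is in fact the paper's own proof: apply Corollary \ref{brlm} directly to $\bar F$, use $\bar F(y) = R_y(y)$ (see \eqref{barjet_prop2}) to bound $E(\bar F,\bar Q)/\delta_{\bar Q}^m \lesssim \sup_{x,y\in\bar Q}|R_y-R_x|_{y,\delta_{\bar Q}}$, then split into cases by the position of $x,y$ relative to $K_p$ and $K_{CZ}$, invoking Lemma \ref{aqbdprop} for the keystone cases and the Sobolev inequality on $\bar F|_{K_{CZ}}$ together with \eqref{kczbd} for the intra-CZ case, and sum by bounded overlap of dilated packing cubes. The paper organizes this as Cases 1, 2, 3a, 3b rather than the four cases of Lemma \ref{lmppatch}, but the structure is the one you describe. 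You should present this as the proof, not a backup: the obstruction to building $H$ here is not mere delicacy but circularity — the existence of a bounded $H$ with $J_xH = R_x$ on $K_p$ is essentially the conclusion of the coherency proposition, so using it as an input at this stage begs the question.
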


\begin{proof}
We will use Corollary \ref{brlm}. Fix $\pi \in \Pi_{\simeq}(Q^\circ)$, a congruent $\delta$-packing of $Q^\circ$, with $\delta \leq \delta_{Q^\circ}$.

For $\widehat{Q} \in \pi$, $\delta_{\widehat{Q}} = \delta$, by H\"{o}lder's inequality,
\begin{align}
E(F,\widehat{Q})\delta^{-m} \leq \sup_{x \in \widehat{Q}} \|F-S_x\|_{L^p(\widehat{Q})}\delta^{-m} &\leq \sup_{x \in \widehat{Q}}\| F  -S_x \|_{L^\infty(\widehat{Q})} \delta^{n/p-m} \nonumber \\
&\leq \sup_{x, y \in \widehat{Q}} |S_y - S_x|_{y, \delta}, \label{appxerror}
\end{align}
where the last line follows because $F(y) = S_y(y)$ for $y \in Q^\circ$ -- see \eqref{barjet_prop2}.

Fix $\widehat{Q} \in \pi$ and $x, y \in \widehat{Q}$. Then, $|x-y| \leq \delta_{\widehat{Q}} = \delta$.

If $\widehat{Q} \subset K_{CZ}$, then by the Sobolev Inequality,
\begin{equation}
    |S_x - S_y|_{y, \delta} = |J_xF - J_yF|_{y, \delta} \lesssim \|F\|_{L^{m,p}(\widehat{Q})}. \label{Qhatinkcz}
\end{equation}

Now suppose $\widehat{Q} \cap K_p \neq \emptyset$. We will show
\begin{align}
|S_x - S_y|_{y, \delta} \lesssim 
    \; & \|G,P_0\|_{\J(f,\mu;\delta_{Q^\circ})} \cdot \sum_{\beta \in \ma} \| \varphi^\beta \|_{L^{m,p}(100\widehat{Q} \cap Q^\circ)} \nonumber\\
    & + \|G\|_{L^{m,p}(100\widehat{Q} \cap Q^\circ)} +  \sup_{1.1 Q_i \subset 100 \widehat{Q}} \|F\|_{L^{m,p}(1.1 Q_i\cap Q^\circ)}. \label{maxbd}
\end{align}

\underline{Case 1:} Let $x,y \in K_p$. Then apply (\ref{aqbd}) with $U = \widehat{Q}$,
\begin{align*}
    |S_x - S_y|_{y, \delta} \lesssim \|G\|_{L^{m,p}(\widehat{Q})} + \|G, P_0\|_{\J(f,\mu;\delta_{Q^\circ})} \cdot \sum_{\beta \in \ma} \|\varphi^\beta\|_{L^{m,p}(\widehat{Q})},
\end{align*}
which implies (\ref{maxbd}).

\underline{Case 2:} Let $y \in Q_i \in CZ^\circ$ and $\delta_{Q_i} > 21 |x-y|$ or $x \in Q_j \in CZ^\circ$ and $\delta_{Q_j} > 21 |x-y|$. Because $|P|_{x,\delta} \simeq |P|_{y,\delta}$ for $|x-y| \leq \delta$, we may assume without loss of generality that $y \in Q_i$, $\delta_{Q_i}>21|x-y|$. Then $x \in 1.1Q_i \subset K_{CZ}$.  As $x,y  \in K_{CZ}$, we have $S_x = J_x F$ and $S_y = J_y F$.  Because $\widehat{Q} \cap K_p \neq \emptyset$ and $x \in \widehat{Q}$, it follows that $\dist(x,K_p) \leq \delta_{\widehat{Q}}$. Also, by Lemma \ref{kpdist}), we have $\dist(Q_i,K_p) \geq \delta_{Q_i}$. Thus,
\[
\delta_{\widehat{Q}} \geq \dist(x, K_p ) \geq \dist(Q_i, K_p) \geq \delta_{Q_i}.
\]
Since $Q_i \cap \widehat{Q} \neq \emptyset$, we have $1.1 Q_i \subset 100 \widehat{Q}$, and by the Sobolev Inequality,
\begin{align*}
    |S_x - S_y|_{y, \delta} = |J_xF - J_yF|_{y, \delta}  &\lesssim \|F\|_{L^{m,p}(1.1 Q_i \cap Q^\circ)},
\end{align*}
completing the proof of (\ref{maxbd}).

\underline{Case 3:} Let $y \in Q_i \in CZ^\circ$ and $\delta_{Q_i} \leq 21 |x-y| \leq  21 \delta_{\widehat{Q}} = 21 \delta$. Then $1.1 Q_i \subset 100 \widehat{Q}$. Recall $J_{x_i}F = S_{x_i}$. Then from (\ref{movept}), By the Sobolev inequality,
\[
|S_{x_i} - S_y|_{y,\delta} = | J_{x_i} F - J_y F|_{y,\delta} \lesssim \| F \|_{L^{m,p}(Q_i)}.
\]
Thus, by the triangle inequality and \eqref{movept},
\begin{align}
|S_x - S_y|_{y, \delta} \lesssim |S_x - S_{x_i}|_{x,\delta} + |S_{x_i} - S_y|_{y, \delta} \lesssim |S_x - S_{x_i}|_{x,\delta} + \| F \|_{L^{m,p}(Q_i)}. \label{lmpcase3}
\end{align}
We continue this bound by splitting into subcases.

\quad \quad \underline{Subcase 3a:} Suppose $x \in K_p$. Note that $|x-x_i| \leq |x-y| + |y-x_i| \leq |x-y| + \delta_{Q_i} \leq 22 \delta$. Further, $x,x_i \in 100 \widehat{Q} \cap Q^\circ$. Thus,  from (\ref{normdi}), and (\ref{aqbd}) with $U = 100\widehat{Q} \cap Q^\circ$,
\begin{align*}
    |S_{x_i} - S_x|_{x,\delta} &\lesssim |S_{x_i} - S_x|_{x,|x-x_i|}  \\
    &\lesssim \|G\|_{L^{m,p}(100\widehat{Q} \cap Q^\circ)} + \|G, P_0\|_{\J(f,\mu;\delta_{Q^\circ})} \cdot \sum_{\beta \in \ma} \|\varphi^\beta\|_{L^{m,p}(100\widehat{Q} \cap Q^\circ)}.
\end{align*}
Substituting this into (\ref{lmpcase3}), we have
\begin{align*}
    |S_x - S_y|_{y, \delta} &\lesssim \|F\|_{L^{m,p}(Q_i)} +\|G\|_{L^{m,p}(100\widehat{Q} \cap Q^\circ)}+\|G, P_0\|_{\J(f,\mu;\delta_{Q^\circ})} \cdot \sum_{\beta \in \ma} \|\varphi^\beta\|_{L^{m,p}(100\widehat{Q} \cap Q^\circ)},
\end{align*}
completing the proof of (\ref{maxbd}).

\quad \quad \underline{Subcase 3b:} Suppose  $x \notin K_p$. Then $x \in Q_j$ for some $Q_j \in CZ^\circ$ and because of Case 2, we can assume $\delta_{Q_j} \leq 21 |x-y| \leq 21 \delta$. Hence, $1.1 Q_j \subset 100\widehat{Q}$. Also, $|x-x_j| \leq \delta_{Q_j}  \leq 21 \delta$, and 
\[
|x_i-x_j| \leq |x_j - x| + |x-y| + |y - x_i | \leq \delta_{Q_j} + |x-y| + \delta_{Q_i} \leq 43 \delta.
\]
Thus, from (\ref{movept}), (\ref{normdi}), and (\ref{aqbd}) with $U = 100\widehat{Q} \cap Q^\circ$, and the Sobolev Inequality,
\begin{align}
    |S_x- S_{x_i}|_{x,\delta} &\lesssim |S_{x_i} - S_{x_j}|_{x_j,\delta} + |S_{x_j} - S_x|_{x,\delta} \nonumber\\
    &\lesssim |S_{x_i} - S_{x_j}|_{x_j,|x_i - x_j|}  + |J_{x_j} F - J_x F|_{x,\delta} \nonumber \\
    &\lesssim \|G\|_{L^{m,p}(100\widehat{Q} \cap Q^\circ)}+\|G, P_0\|_{\J(f,\mu;\delta_{Q^\circ})} \cdot \sum_{\beta \in \ma} \|\varphi^\beta\|_{L^{m,p}(100\widehat{Q} \cap Q^\circ)} + \|F\|_{L^{m,p}(Q_j)}. \label{lmpcase31}
\end{align}
By substituting (\ref{lmpcase31}) into (\ref{lmpcase3}), we have
\begin{align*}
    |S_x - S_y|_{y, \delta} &\lesssim \|F\|_{L^{m,p}(Q_i)}  + \|F\|_{L^{m,p}(Q_j)} + \|G\|_{L^{m,p}(100\widehat{Q} \cap Q^\circ)}+\|G, P_0\|_{\J(f,\mu;\delta_{Q^\circ})} \cdot \sum_{\beta \in \ma} \|\varphi^\beta\|_{L^{m,p}(100\widehat{Q} \cap Q^\circ)}. 
\end{align*}
This completes the proof of (\ref{maxbd}).

Consequently, using \eqref{appxerror}, (\ref{Qhatinkcz}), and \eqref{maxbd}, we have 
\begin{align}
    \sum_{\widehat{Q} \in \pi} \big(E(F,\widehat{Q}\big)/ \delta_{\widehat{Q}}^m)^p &\lesssim \sum_{\widehat{Q} \in \pi} \sup_{x, y \in \widehat{Q}} |S_x - S_y|_{y, \delta}^p \nonumber \\
    &\lesssim \sum_{\widehat{Q} \in \pi, \widehat{Q} \cap K_p \neq \emptyset} \Big(\sup_{1.1 Q_i \subset 100 \widehat{Q}} \|F\|_{L^{m,p}(1.1 Q_i \cap Q^\circ)}^p + \|G\|_{L^{m,p}(100\widehat{Q} \cap Q^\circ)}^p \nonumber \\
    &\qquad+\|G, P_0\|_{\J(f,\mu;\delta_{Q^\circ})}^p \cdot \sum_{\beta \in \ma} \|\varphi^\beta\|_{L^{m,p}(100\widehat{Q} \cap Q^\circ)}^p \Big) + \sum_{\widehat{Q} \in \pi, \widehat{Q} \subset K_{CZ}} \|F\|_{L^{m,p}(\widehat{Q})}^p \nonumber \\
    &\lesssim \|F\|_{L^{m,p}(K_{CZ})}^p+\|G\|_{L^{m,p}(Q^\circ)}^p+\|G, P_0\|_{\J(f,\mu;\delta_{Q^\circ})}^p \cdot \sum_{\beta \in \ma} \|\varphi^\beta\|_{L^{m,p}(Q^\circ)}^p, \label{acoh10} 
\end{align}
where the last inequality follows because for $Q_i \in CZ^\circ$, $| \{\widehat{Q} \in \pi: 1.1 Q_i \subset 100 \widehat{Q} \}| \leq C$, and since $\{100 \widehat{Q} : \widehat{Q} \in \pi \}$ has $C$-bounded overlap (because $\pi$ is a congruent packing). From (\ref{mat0_new}), we have $\| \varphi^\beta \|_{L^{m,p}(Q^\circ)} \leq \e/C_0$ for all $\beta \in \ma$. From (\ref{kczbd}), we have $\| F \|_{L^{m,p}(K_{CZ})} \lesssim \| G , P_0 \|_{\J(f,\mu; \delta_{Q^\circ})}$. Therefore, from (\ref{acoh10}),
\begin{align*}
    \sum_{\widehat{Q} \in \pi} \big(E(F,\widehat{Q}\big)/ \delta_{\bar{Q}}^m)^p &\lesssim  \|G, P_0\|_{\J(f, \mu; \delta_{Q^\circ})}^p.
\end{align*}
Now by taking the supremum over $\pi \in \Pi_{\simeq}(Q^\circ)$, and applying Corollary \ref{brlm}, we conclude $F \in L^{m,p}(Q^\circ)$ and
\begin{align*}
    \|F\|_{L^{m,p}(Q^\circ)} \lesssim \|G, P_0\|_{\J(f, \mu; \delta_{Q^\circ})}. 
\end{align*}
This completes the proof of the lemma.
\end{proof}

We complete the section by giving the proof of conditions  (\ref{acoh}) -- (\ref{acoh2}).

We apply inequalities (\ref{kczbd}) and (\ref{acohbd}) and the identity $F|_{K_p} = G|_{K_p}$ (see (\ref{barf})) to bound
\begin{align*}
    \|F, \vec{P}\|_{\J_*(f, \mu; Q^\circ,CZ^\circ)}^p &\lesssim \|F, \vec{P}\|_{\J_*(f, \mu; K_{CZ},CZ^\circ)}^p+ \|F\|_{L^{m,p}(Q^\circ)}^p+\int_{K_p} |F-f|^p d\mu \\
    & \lesssim \|G,P_0\|_{\J(f, \mu; \delta_{Q^\circ})}^p.
\end{align*}
This establishes the inequality (\ref{acoh}) for $P_0 =0$, as desired.

As a consequence of Lemma \ref{holderbarf}, the Whitney field $(S_y)_{y \in Q^\circ} \in Wh(Q^\circ)$ defined in (\ref{barjet}) is in $C^{m-1,1-n/p}(Q^\circ)$, and thanks to \eqref{barjet_prop2} it satisfies $S_x(x) = F(x)$ for all $x \in Q^\circ$. Due to Lemma \ref{whitlm}, the function $F$ satisfies 
\begin{equation}\label{eqn:stuff}
J_xF = S_x, \quad  x \in Q^\circ.
\end{equation}

Then \eqref{acoh2} holds, thanks to (\ref{eqn:stuff}). From \eqref{Rcoh}, we have $\vec{S} = (S_y)_{y \in K_p}$ is coherent with $P_0 = 0$. From \eqref{Pcoh}, we have $\vec{P}$ is coherent with $P_0=0$. Thus,  $(\vec{P}, \vec{S}) \in Wh(\bpt \cup K_p)$ is coherent with $P_0=0$, proving \eqref{acoh1}. This completes the proof of  (\ref{acoh}) -- (\ref{acoh2}), thus completing the proof of Proposition \ref{acohlm}.

\subsection{Keystone Point Jets}
\label{subsec:kpt_jet}
Let $(f,P_0) \in \J(\mu; \delta_{Q^\circ})$. The goal of this section is to associate to the data $(f,P_0)$ a Whitney field $\vec{R}^* \in Wh(K_p)$, determined linearly by $(f,P_0)$, and satisfying the properties outlined below in Lemma \ref{kptlm}, Lemma \ref{kptlm_loc}, and Corollary \ref{acohkptcor}. These results will be  used later, in Section \ref{sec:opt_wh_field} and Section \ref{sec:proof_ml}.

Recall that we have fixed a multi-index set $\ma \subset \mm$. In the previous subsection we introduced the following notation: Given $K \subset \R^n$, a Whitney field $\vec{R} = (R_x)_{x \in K} \in Wh(K)$ is coherent with $P_0$ provided that $ \p^\al R_x(x)= \p^\al P_0(x)$ for all $x \in K$, $\al \in \ma$. A function $H \in C^{m-1,1-n/p}(\R^n)$ is $K$-coherent with $P_0$ provided that $ \p^\al H(x)= \p^\al P_0(x)$ for all $x \in K$ and for all $\al \in \ma$.

\begin{lem} For each $(f,P_0) \in \J(\mu; \delta_{Q^\circ})$, there exists a Whitney field $\vec{R}^* = \vec{R}^*(f,P_0) \in Wh(K_p)$ with the following properties.
\begin{enumerate}
\item $\vec{R}^*$ is coherent with $P_0$.
\item If $H \in L^{m,p}(\R^n)$ satisfies $\|H\|_{\J(f,\mu)} < \infty$ and $H$ is $K_p$-coherent with $P_0$, then
    \begin{align}
    J_x H = R^*_x \quad \forall x \in K_p. \label{kpt}
    \end{align}
\item $\displaystyle \| \vec{R}^* \|_{L^{m,p}(K_p)}^p + \int_{K_p} | R^*_x(x) - f(x)|^p d \mu \lesssim \| f , P_0 \|_{\J(\mu; \delta_{Q^\circ})}^p$.
\item $\vec{R}^*$ depends linearly on $(f,P_0)$.
\end{enumerate}
\label{kptlm}
\end{lem}

\begin{proof}

Due to Proposition \ref{acohlm}, for any $\eta > 0$ there exists $H_1 \in \J(f,\mu)$ that is $K_p$-coherent with $P_0$ and satisfies $\| H_1 \|_{\J(f,\mu)} \lesssim \| f, P_0 \|_{\J(\mu; \delta_{Q^\circ})} +\eta < \infty$. Define $\vec{R}^* \in Wh(K_p)$ as
\begin{align}
    \vec{R}^* = (R^*_x)_{x \in K_p}: = (J_x H_1)_{x \in K_p}. \label{kptr}
\end{align}
Next, we verify properties 1--4 of the lemma. In particular, we show that $\vec{R}^*$ is independent of $\eta$ and the choice of $H_1$ (property 2), and furthermore that $\vec{R}^*$ is a linear function of $(f,P_0)$ (property 4).

\textbf{Proof of property 1:} By definition, $R^*_x = J_x H_1$ for $x \in K_p$, where $H_1$ is $K_p$-coherent with $P_0$. Thus, $\vec{R}^*$ is coherent with $P_0$.

\textbf{Proof of property 2:}

We shall make use of the auxiliary polynomials $\left(P_y^\al \right)_{\al \in \ma}$ ($y \in Q^\circ$), defined in Lemma \ref{lem:polybasis}. In particular, from (\ref{basis}) we know that $\left(P_y^\al \right)_{\al \in \ma}$ forms an $(\ma,y, C\e/C_0, 1)$-basis for $\sigma_J(y,\mu)$ for all $y \in 100 Q^\circ$. 


Fix $\e_1 > 0$ satisfying $\e_1 < \min\{c_1, \e_0/(30^m C_0 C_1) \}$ and $\e_1 < \min \{(50^m C_3)^{-1/(D+1)}, C_4^{-1/(D+1)} \}$ where $C_3$ is from (\ref{alocalbound}), $C_4$ is from (\ref{acobd}),  $c_1$ and $C_1$ are from Lemma \ref{newdir}, and $D= |\mm|$. Fix $\e_2 <  \e_1^{2D+2}$.

For the sake of contradiction, suppose that (\ref{kpt}) does not hold. Thus, there exist $x\in K_p$ and $H_2 \in L^{m,p}(\R^n)$ satisfying $\|H_2\|_{\J(f,\mu)} < \infty$ such that 
\begin{equation}\label{qbd0}
\p^\al(H_1 - H_2)(x) = 0 \mbox{ for all } \al \in \ma, \mbox{ but } J_x(H_1 - H_2) \neq 0.
\end{equation} 
Let 
\[
\mb: = \{ \beta \in \mm \setminus \ma: \; \p^\beta(H_1-H_2)(x) \neq 0 \} = \{ \beta_i: i \in \{ 1, \dots, k \} \},
\] 
where $\beta_i$ are ordered:
\begin{align*}
\beta_1 < \beta_2 < \dots < \beta_k.
\end{align*}
Note that \eqref{qbd0} implies $\mb$ is nonempty. Also, $k = |\mb| \leq |\mm| = D$. Choose $\delta \in (0,1)$ such that for all $\beta \in \mb$,
\begin{align}
\delta^{m-|\beta| - n/p} \left( \|H_1\|_{\J(f,\mu)} + \|H_2\|_{\J(f,\mu)} \right) <  \e_2 |\p^\beta(H_1 - H_2)(x)|. \label{delta}
\end{align}
Choose a dyadic cube $Q \subset Q^\circ$ with $x \in Q$ and $\delta_Q < \delta<1$. Let 
\[
j: = \arg \max \{ | \p^{\beta_i}(H_1 - H_2)(x)| \cdot \delta_Q^{|\beta_i|}\e_1^{-i} : i \in \{ 1, \dots, k \}  \}.
\]
Then for all $i \in \{1,\cdots,k\}$,
\begin{equation}
\frac{| \p^{\beta_i}(H_1 - H_2)(x)|}{| \p^{\beta_j}(H_1 - H_2)(x)|} \leq \e_1^{i-j} \delta_Q^{|\beta_j|-|\beta_i|} \leq 
\left\{
\begin{aligned}
&\e_1 \delta_Q^{|\beta_j|-|\beta_i|} \qquad &&\mbox{if } \beta_i > \beta_j, \; i > j, \\
&\e_1^{-D} \delta_Q^{|\beta_j|-|\beta_i|} \qquad &&\mbox{if } \beta_i < \beta_j, \; i < j.
\end{aligned}
\right.
\label{qbd}
\end{equation}
Define $P_x^{\beta_j} : = J_x(H_1 -H_2) / (\p^{\beta_j} (H_1 - H_2)(x))$. By definition of $\mb$ and (\ref{qbd0}), $\partial^\beta P_x^{\beta_j}(x) = 0$ for $\beta \in \mm \setminus \mb = \ma$. Combining this with (\ref{qbd}),  we have
\begin{align}
&\p^{\beta_j} P_x^{\beta_j}(x) = 1; \label{qpt0} \\ 
&|\p^\beta P_x^{\beta_j}(x) | \leq \e_1 \delta_Q^{|\beta_j|-|\beta|} &&(\beta \in \mm, \; \beta> \beta_j); \text{ and} \label{qpt1}\\
& |\p^{\beta_i} P_x^{\beta_j}(x) | \leq \e_1^{-D}\delta_Q^{|\beta_j|-|\beta_i|} &&(\beta \in \mm). \label{qpt2}
\end{align}
Then from (\ref{delta}) and since $\delta_Q < \delta$, if $\varphi := (H_1-H_2)/| \partial^{\beta_j} (H_1-H_2)(x)|$ then $J_x \varphi = P_x^{\beta_j}$ and
\begin{align*}
\| \varphi \|_{\J(0, \mu|_{3Q})} = \frac{\|H_1 - H_2\|_{\J(0, \mu|_{3Q})}}{|\p^{\beta_j}(H_1 - H_2)(x)|} &\leq  \frac{\|H_1\|_{\J(f, \mu|_{3Q})}+\|H_2\|_{\J(f, \mu|_{3Q})}}{\delta_Q^{m - |\beta_j| - n/p} (\|H_1\|_{\J(f, \mu)}+\|H_2\|_{\J(f, \mu)} )} \cdot \e_2 \\
&\leq \e_2 \delta_Q^{|\beta_j|+n/p -m}.
\end{align*}
Hence,
\begin{align}
P_x^{\beta_j} \in \e_2 \delta_Q^{|\beta_j|+n/p -m} \cdot \sigma_J(x, \mu|_{3Q}). \label{qpt01}
\end{align}
From (\ref{qpt0})-(\ref{qpt01}), we see that $(\delta, x_0, \bb, \pt_{x_0}^{\bb}, \mu): = (\delta_Q, x, \beta_j, P_x^{\beta_j}, \mu|_{3Q})$ satisfies \textbf{(D6)} of Lemma \ref{newdir}. 

In (\ref{basis}), we saw that $\left(P_y^\al \right)_{\al \in \ma}$ forms an $(\ma,y, C\e, 1)$-basis for $\sigma_J(y,\mu)$ for all $y \in 100 Q^\circ$. We can assume $C\e < \e_2$. If $Q \subset Q^\circ$ then $4Q \subset 100 Q^\circ$. So, by (\ref{monotone}), 
\begin{equation}\label{qpt02}
\left(P_y^\al \right)_{\al \in \ma} \mbox{ forms an } (\ma, y, \e_2, \delta_Q)\mbox{-basis for  } \sigma_J(y, \mu|_{3Q}) \mbox{ for all } y \in 10Q.
\end{equation}

Fix $y \in 10Q$. Suppose first $y \in K_{CZ}$, so that $y \in Q_i$ for some $Q_i \in CZ^\circ$. Because $Q$ is not OK, we must have $\delta_{Q_i}/100 \leq \delta_Q <1$, and so, by applying (\ref{alocalbound}) for the cube $Q_i$ and for some $\widehat{\delta} \in [\delta_{Q_i}/2,1]$ satisfying $\delta_Q \leq \widehat{\delta} \leq 50 \delta_Q$, we deduce that
\begin{align}
    |\p^\beta P^\al_y(y)| \leq C_3 \widehat{\delta}^{|\al| - |\beta|} \leq 50^m C_3 \delta_Q^{|\al|-|\beta|} \quad \quad (\al \in \ma, \; \beta \in \mm). \label{d7kcz}
\end{align}
Now suppose $y \in K_p$. Then from (\ref{acobd}) with $\delta = \delta_Q$,
\begin{align}
    |\p^\beta P^\al_y(y)| \leq C_4 \delta_Q^{|\al|-|\beta|} \quad \quad (\al \in \ma, \; \beta \in \mm). \label{d7kp}
\end{align}
Because $\e_1 < \min \{(50^m C_3)^{-1/(D+1)}, C_4^{-1/(D+1)} \}$, in light of (\ref{d7kcz}) and (\ref{d7kp}), we have
\begin{equation}\label{qpt03}
    |\p^\beta P^\al_y(y)| \leq \e_1^{-D-1} \cdot \delta_Q^{|\al|-|\beta|} \qquad (y \in 10Q, \al \in \ma, \beta \in \mm)
\end{equation}
It is now evident from (\ref{qpt0})-(\ref{qpt01}), \eqref{qpt02}, and \eqref{qpt03} that properties \textbf{(D1)-(D7)} of Lemma \ref{newdir} hold with parameters:
\begin{align*}
\Big(\e_1,\e_2, \delta, \; \ma,\mu, E, &\left(\pt_x^\al \right)_{\al \in \ma, x \in E}, x_0, \bb, \pt_{x_0}^{\bb} \Big): = \Big(\e_1, \e_2, \delta_Q, \ma, \mu|_{3Q}, 10Q, \left(P_x^\al \right)_{\al \in \ma, x \in 10Q}, x, \beta_j, P_x^{\beta_j} \Big).
\end{align*}
Thus there exists $\mab<\ma$ so that for every $y \in 10Q$, $\sigma_J(y,\mu|_{3Q})$ contains an $(\mab, y, C_1 \e_1, \delta_Q)$-basis. Because $ C_1 \e_1 <  \e_0 /( 30^m C_0)$, we apply Lemma \ref{edlm} to deduce that for every $y \in 3Q$, $\sigma_J(y,\mu|_{3Q})$ contains an $(\mab, y, \e_0/C_0, 30 \delta_Q)$-basis, indicating that $Q$ is OK. This contradicts that $x \in K_p$ and $x \in Q$.

This completes the proof by contradiction of (\ref{kpt}). So we have proven property 2.

\textbf{Proof of property 3:} By definition,  $R_x^*(x) = H_1(x)$ for all $x \in K_p$, where $\| H_1 \|_{\J(f,\mu)} \lesssim \| f , P_0 \|_{\J(\mu; \delta_{Q^\circ}} +\eta$, and $\eta > 0$ is arbitrary. By property 2, $\vec{R}^*$ is independent of $\eta$. Then:
\begin{align*}
\int_{K_p} |R^*_x(x) - f(x)|^p d \mu(x) = \int_{K_p} |H_1 - f |^p d \mu &\leq \int_{\R^n} |H_1 - f |^p d \mu \leq \| H_1 \|_{\J(f,\mu)}^p \lesssim (\| f,P_0 \|_{\J(\mu; \delta_{Q^\circ})}+\eta)^p.
\end{align*}
Furthermore, since $R_x^* = J_x H_1$ for all $x \in K_p$, by definition of the $L^{m,p}(K_p)$ trace norm on $Wh(K_p)$, we have
\[
\| \vec{R}^* \|_{L^{m,p}(K_p)} \leq  \| H_1 \|_{L^{m,p}(\R^n)} \leq \| H_1 \|_{\J(f,\mu)} \lesssim \| f,P_0 \|_{\J(\mu; \delta_{Q^\circ})} +\eta.
\]
Now let $\eta \rightarrow 0$ in the previous inequalities. This completes the proof of property 3.

\textbf{Proof of property 4:} To complete the proof of the lemma, we will show that $\vec{R}^* = \vec{R}^*(f,P_0)$ depends linearly on $(f,P_0)$.

Let $(f_1,P_1)$, $(f_2,P_2) \in \J(\mu;\delta_{Q^\circ})$. Fix $H_j \in \J(f_j, \mu)$ such that $H_j$ is $K_p$-coherent with $P_j$ for $j=1,2$. By property 3, we have that $\vec{R}^*(f_j,P_j) = (J_x H_j)_{x \in K_p}$ for $j=1,2$.

Then, for $\lambda \in \R$, $H_1 + \lambda H_2 \in \J(f_1+\lambda f_2, \mu)$, and for $\al \in \ma$, $x \in K_p$, 
\begin{align*}
    \p^\al(H_1 + \lambda H_2)(x) &= \p^\al H_1(x) + \lambda \p^\al H_2(x) \\
    & = \p^\al P_1(x) + \lambda \p^\al P_2(x),
\end{align*}
indicating that $H_1 + \lambda H_2$ is $K_p$-coherent with $P_1 + \lambda P_2$. Because of (\ref{kpt}), 
\[
\vec{R}^*(f_1 + \lambda f_2, P_1 + \lambda P_2) = \{J_x (H_1 + \lambda H_2)\}_{x \in K_p} = \vec{R}^*(f_1,P_1)+ \lambda \vec{R}^*(f_2,P_2).
\]
Therefore, the map $(f,P_0) \mapsto \vec{R}^*(f,P_0)$ is linear, completing the proof of property 4.

\end{proof}

\begin{lem}\label{kptlm_loc}
Let $x \in K_p$ and $r > 0$. Set $\mu_{x,r} : = \mu|_{B(x,r)}$. Fix $(f,P_0) \in \J(\mu; \delta_{Q^\circ})$. Suppose $H \in L^{m,p}(\R^n)$, $\| H \|_{\J(f,\mu_{x,r})} < \infty$, and $H$ is $K_p$-coherent with $P_0$. Then $R^*_x(f,P_0) = J_x H$.
\end{lem}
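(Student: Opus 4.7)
The plan is to reduce this to Lemma \ref{kptlm}(3), which is the global version. First I would invoke Proposition \ref{acohlm} to produce some $H_1 \in L^{m,p}(\R^n)$ with $\|H_1\|_{\J(f,\mu)} \lesssim \|f,P_0\|_{\J(\mu;\delta_{Q^\circ})} < \infty$ and $H_1$ being $K_p$-coherent with $P_0$. Applying Lemma \ref{kptlm}(3) to $H_1$ already gives $J_x H_1 = R^*_x(f,P_0)$, so it suffices to establish $J_x H = J_x H_1$. Setting $g := H - H_1 \in L^{m,p}(\R^n)$, both $K_p$-coherencies yield $\p^\al g(y) = 0$ for every $y \in K_p$ and every $\al \in \ma$, while a direct Minkowski inequality in the sum space gives $\|g\|_{\J(0,\mu|_{3Q})} \leq \|H\|_{\J(f,\mu|_{3Q})} + \|H_1\|_{\J(f,\mu|_{3Q})}$ for every cube $Q$.

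Assume for contradiction that $J_x g \neq 0$, and let $\mb := \{\beta \in \mm \setminus \ma : \p^\beta g(x) \neq 0\}$, which is nonempty. I would then rerun the proof of Lemma \ref{kptlm}(3) verbatim, taking the difference $H_1 - H_2$ there to be our $g$ and replacing $H_2$ with $H$. The only place in that proof where global finiteness of the second function is invoked is in choosing $\delta \in (0,1)$ so that
\[
\delta^{m-|\beta|-n/p}\bigl(\|H_1\|_{\J(f,\mu)} + \|H\|_{\J(f,\mu_{x,r})}\bigr) < \e_2\, |\p^\beta g(x)| \quad (\beta \in \mb);
\]
such a $\delta$ exists because $m - |\beta| - n/p \geq 1 - n/p > 0$ and $|\p^\beta g(x)| > 0$ for $\beta \in \mb$. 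I would then pick a dyadic cube $Q \subset Q^\circ$ with $x \in Q$, $\delta_Q < \delta$, and $\delta_Q < r/2$ (so that $3Q \subset B(x,r)$), which gives
\[
\|g\|_{\J(0,\mu|_{3Q})} \leq \|H\|_{\J(f,\mu|_{3Q})} + \|H_1\|_{\J(f,\mu|_{3Q})} \leq \|H\|_{\J(f,\mu_{x,r})} + \|H_1\|_{\J(f,\mu)}.
\]
The polynomial $P_x^{\beta_j} := J_x g / \p^{\beta_j} g(x)$ (with $\beta_j \in \mb$ chosen exactly as in the proof of Lemma \ref{kptlm}(3)) therefore lies in $\e_2 \delta_Q^{|\beta_j|+n/p-m} \cdot \sigma_J(x, \mu|_{3Q})$, recovering the critical estimate \eqref{qpt01}.

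From here the remainder is a direct quotation from Lemma \ref{kptlm}(3): the auxiliary basis $(P_y^\al)_{\al \in \ma}$ supplied by \eqref{basis}, together with the edge-length bounds \eqref{alocalbound} and \eqref{acobd}, verifies the remaining conditions \textbf{(D1)}--\textbf{(D7)} of Lemma \ref{newdir}. Lemma \ref{newdir} then yields some $\mab < \ma$ and an $(\mab, y, C_1 \e_1, \delta_Q)$-basis of $\sigma_J(y, \mu|_{3Q})$ for every $y \in 3Q$, and rescaling via Lemma \ref{edlm} promotes this to an $(\mab, y, \e_0/C_0, 30\delta_Q)$-basis. So $Q$ is OK, forcing $x \in K_{CZ}$, which contradicts $x \in K_p$.

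The main obstacle is really just bookkeeping: one must verify that the proof of Lemma \ref{kptlm}(3) only ever uses the restricted quantity $\|H_2\|_{\J(f,\mu|_{3Q})}$, never the global $\|H_2\|_{\J(f,\mu)}$, so that the local hypothesis $\|H\|_{\J(f,\mu_{x,r})} < \infty$ suffices once $\delta_Q$ has been taken small enough to ensure $3Q \subset B(x,r)$. All of the genuine analytic content --- the construction of the contradicting auxiliary basis and its clash with the CZ decomposition at the point $x \in K_p$ --- is reused without change from Lemma \ref{kptlm}(3).
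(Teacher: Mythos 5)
Your proof is correct and follows essentially the same route as the paper: both pass to a contradiction argument identical to Lemma \ref{kptlm}(3), with the single additional constraint that $\delta_Q$ be small enough to force $3Q \subset B(x,r)$ so that $\mu|_{3Q} = \mu_{x,r}|_{3Q}$, which is exactly where the local hypothesis $\|H\|_{\J(f,\mu_{x,r})} < \infty$ substitutes for global finiteness. Your explicit verification that the global quantity $\|H_2\|_{\J(f,\mu)}$ enters the original argument only through the choice of $\delta$ and the subsequent ratio estimate is a careful restatement of the same observation the paper makes tersely.
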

\begin{proof}
We employ a proof by contradiction, following the proof of property 2 of Lemma \ref{kptlm},  with the measure $\mu$ replaced by $\mu_{x,r}$ in this proof. We make one change in our previous proof: When we choose the dyadic cube $Q \subset Q^\circ$ with $x \in Q$, we impose the additional condition $\delta_Q < r/3$. This condition implies $3 Q \subset B(x,r)$, so that $\mu_{x,r}|_{3Q} = \mu|_{3Q}$. Following our previous proof, we reach the conclusion that $\sigma_J(y, \mu_{x,r}|_{3Q})$ contains an $(\mab, y, \epsilon_0/C_0, 30 \delta_Q)$-basis for all $y \in 3Q$, for some $\mab < \ma$. So, $\sigma_J(y, \mu|_{3Q})$ contains an $(\mab, y, \epsilon_0/C_0, 30 \delta_Q)$-basis for all $y \in 3Q$, indicating that $Q$ is OK, a contradiction.
\end{proof}

As a consequence of Proposition \ref{acohlm}, the Whitney field $\vec{R}^* \in Wh(K_p)$ satisfies the following condition:

\begin{cor}
For $(f,P_0) \in \J(\mu; \delta_{Q^\circ})$, and for $\vec{R}^* = \vec{R}^*(f,P_0)$ as in Lemma \ref{kptlm}, 
\begin{align*}
\inf &\left\{ \| f, \vec{P}, \vec{R}^* \|_{\J_*(\mu; Q^{\circ}, CZ^\circ;K_p)}: \begin{aligned} &\vec{P} \in Wh(\bpt) \text{ satisfies} \nonumber \\
&\vec{P} \text{ is coherent with } P_0
\end{aligned}
\right\}  \lesssim \| f, P_0 \|_{\J(\mu;\delta_{Q^{\circ}})}. 
\end{align*} 
\label{acohkptcor}
\end{cor}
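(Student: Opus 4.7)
The plan is to deduce Corollary \ref{acohkptcor} essentially as an immediate consequence of Proposition \ref{acohlm} together with property (3) of Lemma \ref{kptlm}. The key observation is that the Whitney field $\vec{\bar{S}} \in Wh(K_p)$ produced by Proposition \ref{acohlm} must already coincide with the canonical Whitney field $\vec{R}^*(f,P_0)$ attached to $(f,P_0)$; no modification of the construction is needed.

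First I would apply Proposition \ref{acohlm} to $(f,P_0)$ to obtain $\bar{F} \in L^{m,p}(Q^{\circ})$, $\vec{\bar{P}} \in Wh(CZ^\circ)$, and $\vec{\bar{S}} \in Wh(K_p)$ such that $\|\bar{F},\vec{\bar{P}}\|_{\J_*(f,\mu;Q^{\circ},CZ^\circ)} \lesssim \|f,P_0\|_{\J(\mu;\delta_{Q^\circ})}$, $J_x\bar{F} = \bar{S}_x$ for all $x \in K_p$, and $(Pr(\vec{\bar{P}}),\vec{\bar{S}})$ is coherent with $P_0$. The last two properties force $\bar{F}$ itself to be $K_p$-coherent with $P_0$: indeed, $\partial^{\alpha} \bar{F}(x) = \partial^{\alpha} J_x\bar{F}(x) = \partial^{\alpha} \bar{S}_x(x) = \partial^{\alpha} P_0(x)$ for all $\alpha \in \ma$ and $x \in K_p$.

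Next I would extend $\bar{F}$ to a function $\tilde{F} \in L^{m,p}(\R^n)$ by applying the Sobolev Extension Theorem for Cubes, obtaining $\tilde{F}|_{Q^\circ} = \bar{F}$ with $\|\tilde{F}\|_{L^{m,p}(\R^n)} \lesssim \|\bar{F}\|_{L^{m,p}(Q^\circ)}$. Since $\supp(\mu) \subset Q^\circ$, we have $\int_{\R^n}|\tilde{F}-f|^p d\mu = \int_{Q^\circ}|\bar{F}-f|^p d\mu$, so $\|\tilde{F}\|_{\J(f,\mu)} \lesssim \|\bar{F},\vec{\bar{P}}\|_{\J_*(f,\mu;Q^\circ,CZ^\circ)} < \infty$. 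Moreover, because $K_p \subset Q^\circ$, the jet $J_x \tilde{F} = J_x \bar{F}$ for every $x \in K_p$, so $\tilde{F}$ is still $K_p$-coherent with $P_0$. Property (3) of Lemma \ref{kptlm} applied to $H = \tilde{F}$ now yields $J_x \tilde{F} = R^*_x$ for every $x \in K_p$, and chaining equalities gives $\bar{S}_x = J_x \bar{F} = J_x \tilde{F} = R^*_x$. Hence $\vec{\bar{S}} = \vec{R}^*(f,P_0)$.

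With this identification in hand, $\bar{F}$ is an admissible competitor in the infimum defining $\|f,\vec{\bar{P}},\vec{R}^*\|_{\J_*(\mu;Q^\circ,CZ^\circ;K_p)}$ (since $J_x \bar{F} = R^*_x$ for $x \in K_p$), and $\vec{\bar{P}}$ is admissible in the outer infimum of the Corollary (since $Pr(\vec{\bar{P}})$ is coherent with $P_0$). Therefore
\[
\|f,\vec{\bar{P}},\vec{R}^*\|_{\J_*(\mu;Q^\circ,CZ^\circ;K_p)} \leq \|\bar{F},\vec{\bar{P}}\|_{\J_*(f,\mu;Q^\circ,CZ^\circ)} \lesssim \|f,P_0\|_{\J(\mu;\delta_{Q^\circ})},
\]
which is the desired estimate. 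I do not foresee a serious obstacle; the only subtlety is the passage from $\bar{F} \in L^{m,p}(Q^\circ)$ to a global $\tilde{F} \in L^{m,p}(\R^n)$ needed to invoke Lemma \ref{kptlm}(3), but this is handled by Stein's extension together with the compact support of $\mu$.
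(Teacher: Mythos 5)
Your argument is correct and follows the approach the paper clearly intends, as signaled by the phrase ``as a consequence of Proposition \ref{acohlm}'' preceding the corollary. The central observation — that property (3) of Lemma \ref{kptlm} forces the keystone Whitney field of any admissible near-optimizer to coincide with $\vec{R}^*$ — is exactly the point, and you correctly flag and handle the one genuine subtlety: Lemma \ref{kptlm}(3) requires a \emph{global} function $H \in L^{m,p}(\R^n)$, whereas the functional $\|\cdot\|_{\J_*(\mu;Q^\circ,CZ^\circ;K_p)}$ infimizes over $F \in L^{m,p}(Q^\circ)$, so the Stein extension step followed by the observation that $\supp(\mu) \subset Q^\circ$ and $K_p \subset \frac{1}{9}Q^\circ$ (so jets on $K_p$ are unchanged) is indeed necessary.

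One minor imprecision worth tidying: Proposition \ref{acohlm} as stated only bounds an infimum; it does not literally hand you a triple $(\bar{F}, \vec{\bar{P}}, \vec{\bar{S}})$. The cleaner reading is: assume $\|f,P_0\|_{\J(\mu;\delta_{Q^\circ})} < \infty$ (else the claim is vacuous), invoke Proposition \ref{acohlm} to get a coherent pair $(\vec{P},\vec{S})$ with $\|f,\vec{P},\vec{S}\|_{\J_*(\mu;Q^\circ,CZ^\circ;K_p)} \lesssim \|f,P_0\|_{\J(\mu;\delta_{Q^\circ})}$, then pick $F \in L^{m,p}(Q^\circ)$ with $J_x F = S_x$ on $K_p$ nearly achieving the inner infimum, and proceed as you do. With that minor rewording the proof is complete and matches the paper.
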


\section{Optimal Local Extension}
\label{sec:opt_local_ext}

In this section, we prove a general result, Lemma \ref{alinearmap2}, on the optimization of certain $L^p$-type norms by linear maps. In Section \ref{subsec:locext}, we apply this result to construct a Whitney field $\vec{R}'$ on the keystone basepoint set $\bkpt := \{ x_s \}_{s \in \bar{I}}$. We shall apply Lemma \ref{alinearmap2} once more, later, in Section \ref{sec:proof_maintheorems}, when we give the proofs of the main theorems. 

\subsection{Optimization by Linear Maps}
\begin{lem}[Linear Map Lemma] Let $(\nu,X,\Sigma)$ be a measure space, and let $V$ be a vector space. Let $k \geq 1$, let $p \geq 1$, and let $\Lambda : V \times \R^{k} \to L^p(d \nu)$ be a linear map. Then there exists a linear map $\xi: V \to \R^k$, satisfying:
\begin{align}
\int_{X} |\Lambda(v, \xi(v))|^p d \nu \leq C \inf_{w \in \R^k}  \int_{X} |\Lambda(v, w)|^p d \nu, \quad \mbox{ for all } v \in V, \label{linopt1}
\end{align}
where $C$ depends only on $k$ and $p$. \label{alinearmap}
\end{lem}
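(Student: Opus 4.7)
The plan is to reduce the Linear Map Lemma to a question about near-optimal approximation by a finite-dimensional subspace of $L^p(d\nu)$, where bounded linear projections exist by classical functional analysis. First I will decompose $\Lambda$ using its linearity: setting $\phi(v) := \Lambda(v,0)$ and $f_i := \Lambda(0, e_i)$ for the standard basis of $\R^k$, we may write
\[
\Lambda(v, w) = \phi(v) + \sum_{i=1}^k w_i f_i =: \phi(v) + A(w),
\]
where $\phi: V \to L^p(d\nu)$ and $A: \R^k \to L^p(d\nu)$ are linear. Let $W := A(\R^k) \subset L^p(d\nu)$, a subspace of dimension $k' \leq k$. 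I will reduce to the case in which $A$ is injective by picking a linear complement $N^c$ of $\ker A$ in $\R^k$: any linear map $\bar\xi: V \to N^c$ valid in the reduced setting extends by zero on $\ker A$ to a linear $\xi: V \to \R^k$, since $\Lambda(v, w)$ depends only on the $N^c$-component of $w$. After passing to the quotient of $L^p(d\nu)$ by $\nu$-null functions (so $L^p(d\nu)$ is a Banach space), I therefore assume $A: \R^k \to W$ is an isomorphism.

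The essential step is to construct a bounded linear projection $P: L^p(d\nu) \to W$ whose operator norm is controlled by $k$ alone. For this I will invoke Auerbach's lemma applied to the $k'$-dimensional normed space $W$: there exist a basis $(u_1, \dots, u_{k'})$ of $W$ and dual functionals $(u_i^*) \subset W^*$ with $\|u_i\| = \|u_i^*\| = 1$ and $u_i^*(u_j) = \delta_{ij}$. Extending each $u_i^*$ by Hahn-Banach to $\widetilde u_i^* \in L^p(d\nu)^*$ with the same norm, the operator $P(y) := \sum_i \widetilde u_i^*(y)\, u_i$ is a linear projection onto $W$ satisfying $\|P\|_{L^p \to L^p} \leq k' \leq k$.

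With $P$ in hand, I will define
\[
\xi(v) := -A^{-1}(P(\phi(v))),
\]
manifestly a composition of linear maps, hence linear in $v$. A direct substitution gives $\Lambda(v, \xi(v)) = \phi(v) - P(\phi(v))$. Since $P$ fixes every element of $W$, for every $w \in \R^k$,
\[
\phi(v) - P(\phi(v)) = (\phi(v) + A(w)) - P(\phi(v) + A(w)),
\]
and therefore
\[
\|\Lambda(v, \xi(v))\|_{L^p(d\nu)} \leq (1 + \|P\|)\,\|\Lambda(v, w)\|_{L^p(d\nu)} \leq (1+k)\,\|\Lambda(v, w)\|_{L^p(d\nu)}.
\]
Taking the infimum over $w \in \R^k$ and raising to the $p$-th power yields \eqref{linopt1} with $C = (1+k)^p$. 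I do not anticipate a substantive obstacle: the construction rides entirely on Auerbach's lemma, a standard finite-dimensional tool, and the rest is linearity bookkeeping plus the routine reduction to injective $A$.
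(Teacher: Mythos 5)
Your proof is correct, and it takes a genuinely different route from the paper's. The paper handles the case $k=1$ directly: it writes $\Lambda(v,w) = \hat\Lambda(v) - a\cdot w$, defines $\xi(v)$ by the explicit weighted-average formula
\[
\xi(v) = \frac{\int_{\{a\neq 0\}} (\hat\Lambda(v)/a)\,|a|^p\,d\nu}{\int_{\{a\neq 0\}}|a|^p\,d\nu},
\]
and verifies near-optimality using Jensen's inequality, giving constant $1+2^p$; the general $k$ is then obtained by iterating this one-variable optimization $k$ times, yielding a constant of order $(1+2^p)^k$. Your argument is instead a genuine functional-analysis argument: you isolate the finite-dimensional range $W = A(\R^k)\subset L^p(d\nu)$, use Auerbach's lemma plus Hahn--Banach to manufacture a bounded linear projection $P: L^p(d\nu)\to W$ with $\|P\|\le k$, and then read off $\xi(v) = -A^{-1}(P(\phi(v)))$. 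The key identity $\Lambda(v,\xi(v)) = (I-P)(\Lambda(v,w))$ for all $w$ makes near-optimality immediate. What your route buys: it treats all $k$ at once (no iteration), it never uses the specific integral structure of $L^p$ (so it proves a more general statement for any Banach space target), and the resulting constant $(1+k)^p$ grows only polynomially in $k$ rather than exponentially. What the paper's route buys: it is entirely elementary (no Hahn--Banach, no Auerbach, just one integral identity and Jensen), and it produces a concrete closed-form $\xi$ rather than a nonconstructive projection. Your reduction to injective $A$ (restricting $w$ to a complement $N^c$ of $\ker A$) and the quotient by $\nu$-null functions are both carried out correctly; the argument is complete.
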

\begin{proof}
We will show this is true when $k=1$; then this can be iterated for the full result. We factor the linear map $\Lambda :V \times \R \rightarrow L^p(d \nu)$, as follows:  $\Lambda(v,w) = \hat{\Lambda}(v) - a \cdot w$ for a linear map $\hat{\Lambda}: V \to L^p(d\nu)$ and $a \in L^p(d\nu)$. Note, if $\| a \|_{L^p(d \nu)} = 0$ then we can take $\xi(v) = 0$, and the conclusion of the lemma will be satisfied. Thus, we may assume $\| a \|_{L^p(d \nu)} \neq 0$. Define $\xi: V \to \R$:
\begin{align*}
\xi(v) := \frac{\int_{\{a \neq 0\}}\frac{\hat{\Lambda}(v)}{a}|a|^p d\nu}{\int_{\{a \neq 0\}} |a|^p d\nu}.
\end{align*} 
Note the convergence of the integral in the numerator, by H\"{o}lder's inequality. Then $v \mapsto \xi(v)$ is linear.

For any $w \in \R$, we claim that $\int_{X} |\Lambda(v, \xi(v))|^p d\nu \leq C\int_{X} |\Lambda(v, w)|^p d\nu$. To see this, first note 
\begin{align*}
\int_{X} |\Lambda(v, w)|^p d\nu &= \int_{X} |\hat{\Lambda}(v) - a \cdot w|^p d\nu \\
&= \int_{\{a = 0\}} |\hat{\Lambda}(v)|^p d\nu + \int_{\{a \neq 0\}} |\hat{\Lambda}(v) - a \cdot w|^p d\nu.
\end{align*}
We bound
\begin{align*}
\int_{\{a \neq 0\}} |\hat{\Lambda}(v) - a \cdot \xi(v)|^p d\nu  &=  \int_{\{a \neq 0\}} \left|\frac{\hat{\Lambda}(v)}{a} -  \xi(v)\right|^p|a|^p d\nu \nonumber \\
& \leq 2^p \cdot \Big( \int_{\{a \neq 0\}} \left|\frac{\hat{\Lambda}(v)}{a} -  w\right|^p|a|^p d\nu + \int_{\{a \neq 0\}} \left| \frac{\int_{\{a \neq 0\}}\frac{\hat{\Lambda}(v)}{a}|a|^p d\nu}{\int_{\{a \neq 0\}}|a|^p d\nu} -  w\right|^p|a|^p d\nu \Big) \nonumber \\
& \leq (1+2^p) \cdot \int_{\{a \neq 0\}} \left|\frac{\hat{\Lambda}(v)}{a} -  w\right|^p|a|^p d\nu \\
&= (1+2^p) \cdot \int_{\{a \neq 0\}} \left|\hat{\Lambda}(v) -  a \cdot w\right|^p d\nu , 
\end{align*}
where the last inequality follows by applying Jensen's Inequality to the second term in the previous line. Thus, adding $\int_{\{a = 0\}} | \hat{\Lambda}(v)|^p d \nu$ to both sides of the above estimate, we obtain \eqref{linopt1} in the case $k=1$, with constant $C = 1 + 2^p$. By iterating $k$ times, we reach the conclusion of the lemma with a constant $C=(1+2^p)^k$.

\end{proof}

\begin{lem}[Linear Map Lemma II] Let $\mu_0$ be a Borel regular measure on $\R^n$, and let $V$ be a vector space. Let $k \geq 1$, and for each $\ell \in \N$, let $\lambda_\ell : V \times \R^{k} \to L^p(d \mu_0)$ be a linear map, and let $\phi_\ell :V \times \R^{k} \to \R$ be a linear functional. Let $\Psi : V \times \R^{k} \rightarrow \R^N$ be a linear map, such that $w \mapsto \Psi(0,w)$ is surjective ($N \leq k$).

Suppose that the functional
\[
M(v,w) := \sum_{\ell=1}^\infty \| \lambda_\ell(v,w) \|_{L^p(d \mu_0)}^p + \sum_{\ell=1}^\infty | \phi_\ell(v,w) |^p
\]
is finite for every $(v,w) \in V \times \R^k$.

Then there exists a linear map $\xi: V \to \R^k$, satisfying:
\begin{align*}
&\Psi(v,\xi(v)) = 0 \in \R^N,\\
&M(v, \xi(v)) \leq C \inf \{ M(v,w) : w \in \R^k, \; \Psi(v,w) = 0 \}, \quad \mbox{ for all } v \in V,
\end{align*}
where $C$ depends only on $k$ and $p$. \label{alinearmap2}
\end{lem}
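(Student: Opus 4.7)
The plan is to reduce the constrained, multi-summand optimization problem to the single-integral optimization already handled by Lemma \ref{alinearmap}. I will break the proof into a constraint-elimination step and a combining-into-one-$L^p$ step.

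First, I would handle the constraint $\Psi(v,w)=0$ by parametrizing the affine solution set linearly in $v$. Since the linear map $\Psi_0 : \R^k \to \R^M$ defined by $\Psi_0(w) := \Psi(0,w)$ is surjective, choose a linear right inverse $R : \R^M \to \R^k$ (so $\Psi_0 \circ R = \mathrm{id}_{\R^M}$) and a linear isomorphism $L : \R^{k-M} \to \ker \Psi_0$. Define $w_0(v) := -R(\Psi(v,0))$; then $\Psi(v,w_0(v)) = \Psi(v,0) + \Psi_0(w_0(v)) = 0$ by linearity, and every $w \in \R^k$ with $\Psi(v,w)=0$ has the form $w = w_0(v) + Lu$ for a unique $u \in \R^{k-M}$. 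Substituting,
\begin{align*}
\tilde{\lambda}_\ell(v,u) &:= \lambda_\ell(v,\, w_0(v)+Lu),\\
\tilde{\phi}_\ell(v,u) &:= \phi_\ell(v,\, w_0(v)+Lu)
\end{align*}
are linear in $(v,u)$, and writing $\tilde{M}(v,u)$ for the corresponding functional we have
\[
\inf\{\,M(v,w) : \Psi(v,w)=0\,\} = \inf_{u \in \R^{k-M}} \tilde{M}(v,u).
\]
Thus, once a near-optimal linear map $\tilde{\xi}:V \to \R^{k-M}$ is produced for the unconstrained problem for $(\tilde\lambda_\ell, \tilde\phi_\ell)$, we set $\xi(v) := w_0(v) + L\tilde{\xi}(v)$; this is linear, satisfies $\Psi(v,\xi(v)) = 0$, and $M(v,\xi(v)) \lesssim \inf\{M(v,w):\Psi(v,w)=0\}$.

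Next, I would reduce the unconstrained problem to a single application of Lemma \ref{alinearmap} by packing the countable family of summands into one measure space. Let $X := (\R^n \times \N) \sqcup \N$, equipped with the measure $\nu$ that restricts to $d\mu_0$ on each copy $\R^n \times \{\ell\}$ and to counting measure on the second component $\N$. Define the linear map $\Lambda : V \times \R^{k-M} \to L^p(d\nu)$ by
\begin{align*}
\Lambda(v,u)(x,\ell) &:= \tilde{\lambda}_\ell(v,u)(x) \qquad ((x,\ell) \in \R^n \times \N),\\
\Lambda(v,u)(\ell) &:= \tilde{\phi}_\ell(v,u) \qquad (\ell \in \N).
\end{align*}
By construction $\int_X |\Lambda(v,u)|^p d\nu = \tilde{M}(v,u)^p$, which is finite by the standing hypothesis, so $\Lambda$ genuinely takes values in $L^p(d\nu)$. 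Applying Lemma \ref{alinearmap} with this $\Lambda$ yields a linear map $\tilde{\xi} : V \to \R^{k-M}$ with
\[
\tilde{M}(v,\tilde{\xi}(v))^p \leq C \inf_{u \in \R^{k-M}} \tilde{M}(v,u)^p,
\]
with $C$ depending only on $k-M$ and $p$, hence only on $k$ and $p$. Combining with the reduction above gives the desired $\xi$.

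The only conceptual obstacle is the packing step: verifying that $\Lambda$ is a well-defined linear map into $L^p(d\nu)$ despite the countable sum, which is exactly what the a priori finiteness of $M$ buys us. The rest is bookkeeping about surjectivity of $\Psi(0,\cdot)$ and linearity of the right inverse $R$, which is routine finite-dimensional linear algebra.
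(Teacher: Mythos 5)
Your proposal is correct and follows essentially the same route as the paper: both handle the constraint by affinely parametrizing the solution set $\{w : \Psi(v,w)=0\}$ (you via a right inverse and kernel parametrization, the paper via Gaussian elimination on coordinates — these are equivalent), and both then pack the countable family of $L^p(d\mu_0)$-terms and scalar terms into a single $L^p(d\nu)$ on a product/disjoint-union measure space so that Lemma \ref{alinearmap} applies. Your $X = (\R^n \times \N) \sqcup \N$ with its hybrid measure is the same measure space as the paper's $(\R^n \sqcup \{z\}) \times \N$ with $(\mu_0 + \delta_z)\times \mu_1$.
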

\begin{proof}
We first suppose $N = 0$, i.e., the constraint map $\Psi$ is trivial. To prove Lemma \ref{alinearmap2} in this case, we apply Lemma \ref{alinearmap} to the product measure $\nu =  (\mu_0 + \delta_z) \times \mu_1 $ on the space $X = \left( \R^n \sqcup \{z\} \right) \times \N$, where $\mu_0$ is a given Borel regular measure on $\R^n$, $\delta_z$ is a Dirac delta measure supported at the point $z$ ($z \notin \R^n$), and $\mu_1$ is the counting measure on $\N$.

Now suppose $ N > 0$. Write $\Psi(v,w) = \Psi_1(v) - \Psi_2(w)$, for linear maps $\Psi_1 : V \to \R^N$ and $\Psi_2 : \R^k \to \R^N$. The condition that $w \mapsto \Psi(0,w)$ is surjective ensures that $\Psi_2$ is surjective. By Gaussian elimination, after possibly permuting the coordinates of $w = (w_1,\dots,w_k)$, the constraint set 
\[
\{ (v,w) : \Psi(v,w) = 0 \} = \{ (v,w) : \Psi_2(w) = \Psi_1(v) \} \subset V \times \R^k
\]
can be written in the form 
\begin{align*}
\{ (v,w^1,w^2) : v \in V, \; \; & w^1=(w_1,\dots, w_\ell) \in \R^\ell, \; w^2 = (w_{\ell+1},\dots, w_k) = \psi(v, w_1,\dots,w_\ell) \in \R^{k-\ell} \}
\end{align*}
for some $\ell \leq k$, and for a linear map $\psi : V \times \R^\ell \rightarrow \R^{\ell-k}$. (The condition that $\Psi_2$ is surjective ensures that the linear system $\Psi_2(w) = \Psi_1(v)$ admits a solution $w$ for every $v \in V$.)

We obtain the conclusion of the lemma then, by applying the version of Lemma \ref{alinearmap2} without constraints to the functional $\widetilde{M} : V \times \R^\ell \to \R_+$ given by 
\[
\widetilde{M}(v,w_1,\dots,w_\ell) := M(v,w_1,\dots,w_\ell, \psi(v,w_1,\dots,w_\ell)).
\]
\end{proof}

\subsection{Local Extension}\label{subsec:locext}

Fix a keystone cube $Q_s$. (See Definition \ref{keycuberef}.)  We will construct a polynomial $R_{x_s}' \in \mpp$ that is coherent with $P_0$ and approximately minimizes the expression $\|f, R_{x_s}'\|_{\J(\mu|_{9Q_s}, \delta_{Q_s})}$.

\begin{lem}
Let $Q_s \in CZ_{key}$, and let $(f,P_0) \in \J(\mu; \delta_{Q^\circ})$. There exists $R_{x_s}' \in \mathcal{P}$ that depends linearly on $(f|_{9Q_s},P_0)$ and satisfies $\p^\al R_{x_s}'(x_s) = \p^\al P_0(x_s)$ for all $\al \in \ma$, and 
\begin{align}
\|f, R_{x_s}'\|_{\J(\mu|_{9Q_s}, \delta_{Q_s})} \lesssim \inf_{R \in \mpp} \left\{ \|f, R\|_{\J(\mu|_{9Q_s}, \delta_{Q_s})}:  \p^\al R(x_s) = \p^\al P_0 (x_s)  \;\; \forall \al \in \ma  \right\}. \label{alocjet}
\end{align} \label{alocjetlm}
\end{lem}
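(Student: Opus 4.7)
The plan is to produce $R_s'$ by applying the Linear Map Lemma II (Lemma~\ref{alinearmap2}) to optimize the functional $R \mapsto \|f, R\|_{\J(\mu|_{9Q_s}, \delta_{Q_s})}^p$ over $R \in \mpp$ subject to the linear coherence constraints $\p^\al R(x_s) = \p^\al P_0(x_s)$ for $\al \in \ma$. Parameterize $R$ by $w = (\p^\beta R(x_s))_{\beta \in \mm} \in \R^D$, and take the constraint map $\Psi(f, P_0, w) = (w_\al - \p^\al P_0(x_s))_{\al \in \ma}$, which is clearly surjective in $w$. If the target functional can be represented, up to universal constants, in the sum-of-linear-maps form $\sum_\ell \|\lambda_\ell(f,R)\|_{L^p(d\mu)}^p + \sum_\ell |\phi_\ell(f,R)|^p$ required by Lemma~\ref{alinearmap2}, then that lemma returns a linear map $\xi(f|_{9Q_s}, P_0)$ defining the jet of $R_s'$ at $x_s$, and the estimate (\ref{alocjet}) follows from the near-optimality guarantee of Lemma~\ref{alinearmap2}.

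To obtain the required representation, I will apply the inductive hypothesis to the measure $\mu|_{9Q_s}$ at scale $\widehat{\delta} \simeq \delta_{Q_s}$, yielding an Extension Theorem for $(\mu|_{9Q_s}, \widehat{\delta})$ whose functional $M_{9Q_s}$ is of the required form and satisfies $M_{9Q_s}(f, R) \simeq \|f, R\|_{\J(\mu|_{9Q_s}, \delta_{Q_s})}$ (by \eqref{j7} and the comparability of $\widehat\delta$ and $\delta_{Q_s}$). To invoke the inductive hypothesis, I must exhibit some $\ma' < \ma$ such that $\sigma_J(x, \mu|_{9Q_s})$ admits an $(\ma', x, \e_0/C_0, 10\widehat\delta)$-basis at every $x \in \supp(\mu|_{9Q_s})$. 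The keystone property of $Q_s$ ensures that every CZ cube $Q_j$ meeting $9Q_s$ has $\delta_{Q_j} \simeq \delta_{Q_s}$ and is OK; the OK-ness supplies an $(\ma_j, x, \e_0/C_0, 30\delta_{Q_j})$-basis for $\sigma_J(x, \mu|_{3Q_j})$ with some $\ma_j < \ma$ at each $x \in 3Q_j \cap \supp(\mu)$, and monotonicity \eqref{monotone} transfers this to a basis for $\sigma_J(x, \mu|_{9Q_s})$ at the same points. For $x \in K_p \cap 9Q_s$ (which, by Lemma~\ref{kpdist}, can lie only in $9Q_s \setminus 3Q_s$), the condition $\mu(B(x,\eta)) = \infty$ for every $\eta > 0$ already forces $\sigma_J(x, \mu|_{9Q_s})$ to be constrained by a label strictly below $\ma$, by an argument modeled on the proofs of Lemma~\ref{newestlm} and Lemma~\ref{kptlm}. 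Taking the maximum of all such labels supplies the uniform $\ma' < \ma$.

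The main obstacle is this uniform basis verification across $\supp(\mu|_{9Q_s})$ with a single label $\ma' < \ma$: one must combine the local bases from OK-ness of neighboring CZ cubes with the additional keystone-point restrictions in a way compatible with the inductive framework. Once this is established, Lemma~\ref{alinearmap2} completes the construction: define $R_s'$ to be the polynomial whose jet at $x_s$ has coefficients $\xi(f|_{9Q_s}, P_0)$. By construction, $R_s'$ depends linearly on $(f|_{9Q_s}, P_0)$, is coherent with $P_0$ at $x_s$, and satisfies the near-optimality estimate (\ref{alocjet}).
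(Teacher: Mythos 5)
Your high-level plan — parameterize $R$ by its jet coefficients at $x_s$, encode coherence as a linear constraint $\Psi$, and apply Linear Map Lemma II — is the right endgame and matches the paper. But the route you propose to put $\|f,R\|_{\J(\mu|_{9Q_s},\delta_{Q_s})}$ into the required sum-of-linear-maps form has a genuine gap.

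You propose to apply the inductive hypothesis \eqref{indhyp} \emph{directly} to the measure $\mu|_{9Q_s}$, i.e., to exhibit a single label $\mab < \ma$ for which $\sigma_J(x,\mu|_{9Q_s})$ has an $(\mab,x,\e_0/C_0,10\widehat{\delta})$-basis at every $x\in\supp(\mu|_{9Q_s})$. You claim this follows from the OK-ness of the nearby CZ cubes $\widehat{Q}^j$ via monotonicity \eqref{monotone}. That transfer fails: \eqref{monotone} passes a basis for $\sigma$ to a basis for any $\sigma'\supset\sigma$, but OK-ness gives bases for $\sigma_J(x,\mu|_{3\widehat{Q}^j})$, and since $9Q_s\not\subset 3\widehat{Q}^j$ in general (both cubes have sidelength $\simeq\delta_{Q_s}$), one has $\sigma_J(x,\mu|_{9Q_s})\subset\sigma_J(x,\mu|_{3\widehat{Q}^j})$ — the wrong inclusion. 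A basis for the larger $\sigma$-set does not descend to the smaller one. Moreover, even if each $x$ had a basis for \emph{some} $\ma_j < \ma$, there is no mechanism by which "taking the maximum" of the $\ma_j$ (which need not all agree) yields a common basis label; having an $\ma_j$-basis does not give an $\ma'$-basis for $\ma' \neq \ma_j$. Your discussion of $K_p\cap 9Q_s$ is also moot: since $Q_s$ is keystone, $100Q_s\cap K_p=\emptyset$, so there are no keystone-point constraints to reconcile.

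The paper circumvents exactly this by \emph{not} applying the inductive hypothesis to $\mu|_{9Q_s}$ at all. Instead it proves the local decomposition \eqref{aoptlocext},
\[
\|f,P^1\|_{\J(\mu|_{9Q_s};\delta_{Q_s})}^p \simeq \inf_{\vec{P}_*}\Bigl\{ \sum_{i=1}^k \|f,P^i\|_{\J(\mu|_{D_i};\delta_{\widehat{Q}^i})}^p + \sum_{i,j}|P^i-P^j|_{x^i,\delta_{\widehat{Q}^i}}^p \Bigr\},
\]
where $D_j = 1.1\widehat{Q}^j\cap 9Q_s\subset 3\widehat{Q}^j$, so that the inductive hypothesis \textbf{(AL1)--(AL3)} applies to each piece $\mu|_{D_j}$ separately (with its own label $\ma_j<\ma$, no uniformity needed), and the finitely many coupling terms $|P^i-P^j|_{x^i,\delta_{\widehat{Q}^i}}$ stitch the pieces together. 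The bounded number $k$ of such cubes — a consequence of the keystone property — is what keeps the resulting family of linear maps finite and the constants universal. Establishing \eqref{aoptlocext} (via the patching lemmas \ref{rkczpatchlm} and \ref{rlmppatch}) is the missing core of this proof; only then does Lemma \ref{alinearmap2} apply in the two-step fashion you outline.
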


\begin{proof}
Due to the good geometry of the CZ decomposition, $\delta_Q \rightarrow 0$ as $Q \rightarrow x$, $Q \in CZ^\circ$, $x \in K_p$. If $100Q_s \cap K_p \neq \emptyset$, then $100 Q_s$ intersects CZ cubes of arbitrarily small sidelength, due to the previous remark, which contradicts that $Q_s$ is a keystone cube. Therefore, $100 Q_s \cap K_p = \emptyset$. 

Let $CZ_s := \{\widehat{Q}^j\}_{1 \leq j \leq k}$ be the collection of all $Q \in CZ^\circ$ satisfying $1.1 Q \cap 100 Q_s \neq \emptyset$, and with $\widehat{Q}^1 = Q_s$. Let $x^j := \mathrm{ctr}(\widehat{Q}^j)$ and $D_j:=1.1\widehat{Q}^j \cap 9Q_s$ for  $j=1,\dots,k$. Then $\{D_j\}_{1 \leq j\leq k}$ is a cover of $9 Q_s$ by axis-parallel rectangles.

Let $P^1 \in \mathcal{P}$, and let $\vec{P}_* := (P^j)_{2 \leq j \leq k} \in \mathcal{P}^{k-1}$ be a $(k-1)$-tuple of elements of $\mathcal{P}$.

We apply \textbf{(AL1)-(AL3)} (see Section \ref{subsec:loc_ext_op}) to the CZ cube $\widehat{Q}^i$ and the Borel set $E_i = D_i \subset 3 \widehat{Q}^i$.  So, for each $i=1,2,\dots, k$, there exist a linear map $T_i:\J(\mu|_{D_i}) \times \mathcal{P} \rightarrow L^{m,p}(\R^n)$, a functional $M_i:\J(\mu|_{D_i}) \times \mathcal{P} \rightarrow \R_+$, and countable collections  of Borel sets $\{ A_\ell^i \}_{\ell \in \N}$ with
\begin{equation}\label{A_ell:eqn}
A_\ell^i \subset \supp(\mu|_{D_i}) \subset \supp(\mu|_{9Q_s}),
\end{equation}
and of linear maps $\phi_\ell^i: \J(\mu|_{D_i}) \times \mathcal{P} \to \R$, and $\lambda_\ell^i: \J(\mu|_{D_i}) \times \mathcal{P} \to L^p(d\mu)$ ($\ell \in \mathbb{N}$), that satisfy: For all $f \in \J(\mu|_{D_i})$ and $P \in \mpp$,
\begin{align*}
 &\textbf{(i) } M_i(f,P) \simeq \|T_i(f,P),P\|_{\J(f,\mu|_{D_i};\delta_{\widehat{Q}^i})} \simeq \|f,P\|_{\J(\mu|_{D_i};\delta_{\widehat{Q}^i})}; \text{ and} \\
 &\textbf{(ii) }M_i(f,P) = \Big(\sum_{\ell \in \N} \int_{A_\ell^i} |\lambda_\ell^i(f,P)-f|^p d\mu+\sum_{\ell \in \N} |\phi_\ell^i(f,P)|^p  \Big)^{1/p} < \infty.
\end{align*}

We now show that: For any $f \in \J(\mu|_{9 Q_s})$ and  $P^1 \in \mathcal{P}$,
\begin{align}
\|f & ,P^1\|_{\J(\mu|_{9Q_s}; \delta_{Q_s})}^p  \simeq \inf_{\vec{P}_* \in \mathcal{P}^{k-1} } \left\{ \sum_{i=1}^k \|f, P^i \|_{\J(\mu|_{D_i}; \delta_{\widehat{Q}^i})}^p + \sum_{i, j=1}^k |P^i - P^j|_{x^i,\delta_{\widehat{Q}^i}}^p  \right\}. \label{aoptlocext}
\end{align}


We prepare to apply Lemma \ref{rlmppatch} for the proof of the upper bound in (\ref{aoptlocext}).

We fix $P^1 \in \mpp$, and let $\vec{P}_* = (P^j)_{2 \leq j \leq k}$ be arbitrary.

We define a Whitney field $\vec{P} = (P_{x_i})_{i \in I} \in Wh(\bpt)$ by letting 
\[
P_{x_i} = \left\{\begin{aligned}
&P^j \mbox{ if } Q_i = \widehat{Q}^j,\mbox{ some } j=  1, \dots, k\mbox{; else} \\
&P^1 \mbox{ if } Q_i \in CZ^\circ \setminus \{\widehat{Q}^j\}_{1 \leq j \leq k}.
\end{aligned}
\right.
\]
Define $\vec{S} \in Wh(K_p)$ by letting $S_x = P^1$ for all $x \in K_p$. 
Define
\begin{align*}
F_i(x) = \left\{
        \begin{array}{ll}
        T_j(f,P^j) & \quad \mbox{if } Q_i = \widehat{Q}^j, \mbox{ some } j=  1, \dots, k \\
        P^1(x) & \quad \mbox{if } Q_i \in CZ^\circ \setminus \{\widehat{Q}^j\}_{1 \leq j \leq k}.
        \end{array}
    \right.
\end{align*}
Let $\{ \theta_i \}_{i \in I}$ be a partition of unity satisfying \textbf{(POU1)-(POU4)} (see Section \ref{subsec:pou}). Define $F: Q^\circ \to \R$ as
\begin{align*}
    F(x) = \begin{cases} \sum_{i \in I} \theta_i(x) \cdot F_i(x) & x \in K_{CZ} \\
    S_x(x) = P^1(x) & x \in K_p.
    \end{cases}
\end{align*} 

Because $\vec{S}$ is constant on $K_p$, $\|\vec{S} \|_{L^{m,p}(K_p)} = 0$. Further, $(\vec{P},\vec{S}) \in C^{m-1,1-n/p}(\bpt \cup K_p)$, because $(\vec{P},\vec{S})$ is constant except at finitely many points. Hence by Lemma \ref{rlmppatch}, if  $\|F,\vec{P}\|_{\J_*(f,\mu|_{9Q_s};K_{CZ},CZ^\circ)} < \infty$ then $F \in L^{m,p}(Q^\circ)$ and
\begin{align}\label{aole3.5}
    \|F\|_{L^{m,p}(Q^\circ)} \lesssim \|F,\vec{P}\|_{\J_*(f,\mu|_{9Q_s};K_{CZ},CZ^\circ)}.
\end{align}

We prepare to estimate $\|F,\vec{P}\|_{\J_*(f,\mu|_{9Q_s};K_{CZ},CZ^\circ)}$ and establish it to be finite. By Lemma \ref{rkczpatchlm},
\begin{align}
   \|F,\vec{P}\|_{\J_*(f,\mu|_{9Q_s};K_{CZ},CZ^\circ)}^p \lesssim & \sum_{i \in I} \|F_i, P_{x_i}\|_{\J_*(f,\mu|_{9Q_s \cap 1.1 Q_i} ;1.1 Q_i \cap Q^\circ)}^p  +\sum_{i \lra i'} |P_{x_i} - P_{x_{i'}}|_{x_i,\delta_{Q_i}}^p.\label{aole4}
\end{align}

From the definitions of $F_i$ and $P_{x_i}$, we see that all but finitely many of the terms in either of the preceding sums are equal to zero. Indeed,  for $Q_i \notin CZ_s = \{\widehat{Q}^j\}_{1 \leq j \leq k}$, $F_i = P_{x_i} = P^1$ and the support of the restricted measure $\mu|_{9Q_s}$, is disjoint from $1.1 Q_i$. Therefore, $\| F_i, P_{x_i} \|_{\J_*(f, \mu|_{9Q_s \cap 1.1Q_i}; 1.1Q_i \cap Q^\circ)} = 0$ for $Q_i \notin CZ_s$. Furthermore, by definition of $P_{x_i}$, note that $P_{x_i} \neq P_{x_{i'}}$ implies either $Q_i \in CZ_s$ or $Q_{i'} \in CZ_s$. Hence, the sum on the right-hand side of (\ref{aole4}) is finite. By reindexing the sums in (\ref{aole4}) to be over $j \in \{1,2,\dots,k\}$, and using (\ref{aole3.5}), we conclude that
\begin{align}
   \|F\|_{L^{m,p}(Q^\circ)}^p &\lesssim  \|F,\vec{P}\|_{\J_*(f,\mu|_{9Q_s};K_{CZ},CZ^\circ)}^p \nonumber \\
   &\lesssim  \sum_{j=1}^k \|T_j(f,P^j), P^j\|_{\J_*(f,\mu|_{9Q_s \cap 1.1\widehat{Q}^j} ;1.1 \widehat{Q}^j \cap Q^\circ)}^p +\sum_{j,j'=1}^k |P^j - P^{j'}|_{x^j,\delta_{\widehat{Q}^j}}^p.\label{aole4.5}
\end{align}

Note that $9 Q_s \cap \supp(\mu) \subset 9 Q_s \cap Q^\circ \subset K_{CZ}$, since $Q_s$ is keystone. Hence,
\begin{align}
    \int_{9 Q_s} | F -f |^p d \mu \leq \|F,\vec{P}\|_{\J_*(f,\mu|_{9Q_s};K_{CZ},CZ^\circ)}^p. \label{aole5}
\end{align}
Also, by Lemma \ref{polycubelm2},
\begin{align}
    \|F-P^1\|_{L^p(9 Q_s \cap Q^{\circ})}^p/\delta_{Q_s}^{mp} &\lesssim \| F \|_{L^{m,p}(9Q_s \cap Q^\circ)}^p + \|F-P^1\|_{L^p(Q_s)}^p/\delta_{Q_s}^{mp} \nonumber \\
    &\lesssim \|F,\vec{P}\|_{\J_*(f,\mu|_{9Q_s};K_{CZ},CZ^\circ)}^p,\label{aole6}
\end{align}
where the last line follows because $\widehat{Q}^1 = Q_s$, and so $P_{x_{s}} = P^1$ by definition of $\vec{P}$.  

Combining \eqref{aole4.5}, \eqref{aole5}, and \eqref{aole6}, we have
\begin{align*}
\| F,   P^1 \|_{\J_*(f, \mu|_{9 Q_s}; 9 Q_s \cap Q^\circ)}^p &\simeq \| F \|_{L^{m,p}(9 Q_s \cap Q^\circ)}^p + \int_{9 Q_s} |F- f |^p d \mu + \| F - P^1 \|_{L^p(9 Q_s \cap Q^\circ)}^p/\delta_{Q_s}^{mp}  \\
&  \lesssim  \sum_{j=1}^k \| T_j(f,P^j), P^j \|_{\J_*(f, \mu|_{D_j}; 1.1 \widehat{Q}^j \cap Q^\circ)}^p + \sum_{j,j'=1}^k |P^j - P^{j'}|_{x^j,\delta_{\widehat{Q}^j}}^p.
\end{align*}
By property \textbf{(i)} of $T_j$, and estimate \eqref{j5}, relating the $\J_*$ and $\J$-functionals, 
\begin{align*}
    \| T_j(f,P^j), P^j \|_{\J_*(f, \mu|_{D_j}; 1.1 \widehat{Q}^j \cap Q^\circ)}^p &\lesssim \| T_j(f,P^j), P^j \|_{\J(f,\mu|_{D_j};\delta_{\widehat{Q}^j})} \simeq \|f,P^j\|_{\J(\mu|_{D_j};\delta_{\widehat{Q}^j})},
\end{align*}
and thus, 
\begin{align*}
\| F,  P^1 \|^p_{\J_*(f, \mu|_{9 Q_s}; 9 Q_s \cap Q^\circ)}  \lesssim \sum_{j=1}^k \|f,P^j\|_{\J(\mu|_{D_j};\delta_{\widehat{Q}^j})}^p+ \sum_{j,j'=1}^k |P^j - P^{j'}|_{x^j,\delta_{\widehat{Q}^j}}^p.
\end{align*}
Thanks to \eqref{j8} and the definition of the $\J_*$-functional as an infimum, 
\[
\| f, P^1 \|_{\J(\mu|_{9Q_s}; \delta_{Q_s})} \simeq \| f, P^1 \|_{\J_*(\mu|_{9Q_s}; 9Q_s \cap Q^\circ)} \leq \| F,  P^1 \|^p_{\J_*(f, \mu|_{9 Q_s}; 9 Q_s \cap Q^\circ)}.
\]
Combining the above inequalities, we take the infimum over $P^2,\dots, P^k \in \mpp$, and obtain the upper bound in (\ref{aoptlocext}):
\begin{align*}
\|f,&P^1\|_{\J(\mu|_{9Q_s}, \delta_{Q_s})}^p \lesssim \inf \left\{ \sum_{j=1}^k \|f, P^j \|_{\J(\mu|_{D_j}; \delta_{\widehat{Q}^j})}^p + \sum_{j,j' =1}^k |P^j - P^{j'}|_{x^j,\delta_{\widehat{Q}^j}}^p: P^2,\dots,P^k \in \mpp  \right\}.
\end{align*}

Now, to complete the proof of (\ref{aoptlocext}), we'll show the reverse inequality:
\begin{align*}
 \|f,&P^1\|_{\J(\mu|_{9Q_s}, \delta_{Q_s})}^p \gtrsim \inf \left\{ \sum_{j=1}^k \|f, P^j \|_{\J(\mu|_{D_j}; \delta_{\widehat{Q}^j})}^p + \sum_{j,j' =1}^k |P^j - P^{j'}|_{x^j,\delta_{\widehat{Q}^j}}^p: P^2,\dots,P^k \in \mpp  \right\}.
\end{align*}
By taking $P^j = P^1$ for $j = 2, \dots, k$, we learn that
\begin{align*}
\inf \Big\{ \sum_{j=1}^k  \|f, P^j \|_{\J(\mu|_{D_j}; \delta_{\widehat{Q}^j})}^p + \sum_{j,j' =1}^k |P^j - P^{j'}&|_{x^j,\delta_{\widehat{Q}^j}}^p: P^2,\dots,P^k \in \mpp  \Big\} \\ 
&\leq \sum_{j=1}^k \|f, P^1 \|_{\J(\mu|_{D_j}; \delta_{\widehat{Q}^j})}^p \simeq \sum_{j=1}^k \|f, P^1 \|_{\J(\mu|_{D_j}; \delta_{Q_s})}^p,
\end{align*}
where the last line uses that $\delta_{Q_s} \simeq \delta_{\widehat{Q}^j}$ for $j=1,2,\dots,k$. Finally, note that $D_j \subset 9Q_s$, so
\begin{align*}
\sum_{j=1}^k \|f, P^1 \|_{\J(\mu|_{D_j}; \delta_{Q_s})}^p \leq k \|f, P^1 \|_{\J(\mu|_{9Q_s}; \delta_{Q_s})}^p.
\end{align*}
Since $k$ is bounded by a universal constant, this completes the proof of (\ref{aoptlocext}).

We prepare to apply (\ref{aoptlocext}) to construct the polynomial $R_{x_s}'$. By definition of the $|\cdot|_{x,\delta}$ norm (\ref{pnorm}), and conditions \textbf{(i)} and \textbf{(ii)} relating to properties of $T_i$ and $M_i$, we approximate the expression inside the infimum in (\ref{aoptlocext}) as follows:
\begin{align}
\sum_{i=1}^k \|f, &P^i \|_{\J(\mu|_{D_i}; \delta_{\widehat{Q}^i})}^p +  \sum_{i, j =1}^k |P^i - P^j|_{x^i,\delta_{\widehat{Q}^i}}^p \nonumber \\ 
& \simeq \sum_{i=1}^k \sum_{\ell \in \N} \big( \int_{A_\ell^i} |\lambda_\ell^i(f,P^i)-f|^p d \mu +|\phi_\ell^i(f,P^i)|^p \big)+ \sum_{i,j=1}^k \sum_{\al \in \mm} c_{\al,i,j} | \partial^\al(P^i - P^j)(x^i)|^p, \label{n1}
\end{align}
for some constants $c_{\al,i,j} \geq 0$. For $i,j \in \{1, 2, \dots ,k\}$, we define $\phi^{i,j, \al}: \J(\mu|_{9Q_s}) \times \mathcal{P} \times \mathcal{P}^{k-1} \to \R$ as $\phi^{i,j,\al}(f,P^1,\vec{P}_*) := c_{\al,i,j}^{1/p} \partial^\al(P^i - P^j)(x^i)$. By substituting these functionals into the right-hand side of (\ref{n1}) and reindexing the sum, we have
\begin{align}\label{exp1}
&\sum_{i=1}^k \|f, P^i \|_{\J(\mu|_{D_i}; \delta_{\widehat{Q}^i})}^p +  \sum_{i, j =1}^k |P^i - P^j|_{x^i,\delta_{\widehat{Q}^i}}^p 
\simeq M(f,P^1, \vec{P}_*)^p, \mbox{ where} \\
&M(f,P^1, \vec{P}_*)^p := \sum_{\ell \in \N} \int_{A_\ell} |\lambda_{\ell}(f,P^1,\vec{P}_*)-f|^p d \mu +  | \phi_\ell(f,P^1, \vec{P}_*)|^p, \nonumber
\end{align}
for countable collections of Borel sets $\{A_\ell\}_{\ell \in \N}$ with $A_\ell \subset \supp(\mu|_{9Q_s})$ (see \eqref{A_ell:eqn}), and of linear maps $\phi_\ell : \J(\mu|_{9Q_s}) \times \mathcal{P} \times \mathcal{P}^{k-1} \to \R$, and $\lambda_\ell : \J(\mu|_{9Q_s}) \times \mathcal{P} \times \mathcal{P}^{k-1} \to L^p(d\mu)$ ($\ell \in \mathbb{N}$). In combination with (\ref{aoptlocext}),
\begin{align}
\|f,P^1\|_{\J(\mu|_{9Q_s}; \delta_{Q_s})}^p \simeq  \inf \left\{ M(f,P^1,\vec{P}_*): \vec{P}_* \in \mathcal{P}^{k-1} \right\}.  \label{aole1}
\end{align}
According to \eqref{exp1}, the functional $M(f,P^1,\vec{P}_*)$ is finite-valued for every $(f,P^1,\vec{P}_*) \in \J(\mu|_{9Q_s}) \times \mpp \times \mpp^{k-1}$. So we are justified to apply Lemma \ref{alinearmap2} (with a trivial constraint map $\Psi$) to determine $\vec{P}_*=(P^2,\dots , P^k) := \xi(f,P^1) \in \mathcal{P}^{k-1}$ depending linearly on $(f|_{9Q_s},P^1) $ and satisfying
\begin{align}
M(f,P^1,\xi(f,P^1)) \leq C \cdot \inf \left\{ M(f,P^1,\vec{P}_*): \vec{P}_* \in \mathcal{P}^{k-1}  \right\}, \label{aole2}
\end{align}
where $C$ is a constant determined by $p$, $k$, and $D$. Observe that both $k$ and $D = \dim \mpp$ are bounded by constants depending on $m$ and $n$. Hence, $C$ is bounded by a constant determined by $p$, $m$, and $n$.

From (\ref{aole1}) and (\ref{aole2}),
\begin{align}
\inf_{P^1 \in \mpp} \Bigg\{ \|&f, P^1\|_{\J(\mu|_{9Q_s}; \delta_{Q_s})}: \p^\al P^1(x_s) = \p^\al P_0 (x_s) \text{ } \forall \al \in \ma
\Bigg\} \nonumber \\
&\simeq \inf_{P^1 \in \mpp} \Bigg\{ M(f,P^1, \xi(f,P^1)) : \p^\al P^1(x_s) = \p^\al P_0 (x_s) \;\forall \al \in \ma\Bigg\}. \label{aole3}
\end{align}
We apply Lemma \ref{alinearmap2} again, to the functional $M(f,P_0,P^1) := M(f,P^1,\xi(f,P^1))$, which is independent of $P_0$, where the constraint map $\Psi : \J(\mu) \times \mpp \times \mpp \rightarrow \R^{|\ma|}$ is chosen so that $\Psi(f,P_0,P^1) = 0$ encodes the constraints $\p^\al P^1(x_s) = \p^\al P_0 (x_s)$ (all $\alpha \in \ma$). Thus we determine $R_{x_s} = \xi_s(f, P_0)  \in \mathcal{P}$ depending linearly on $(f|_{9Q_s},P_0)$ and satisfying $\partial^\al R_{x_s}(x_s) = \partial^\al P_0(x_s)$ for all $\al \in \ma$, and
\begin{align*}
M(f,R_{x_s}, \xi(f,R_{x_s}))\leq C \cdot \inf_{P^1 \in \mpp } \Bigg\{ M(f,P^1, \xi(f,P^1)) : \p^\al P^1(x_s) = \p^\al P_0 (x_s) \; \forall \al \in \ma\Bigg\},
\end{align*}
where $C= C(p, D)$. Therefore, in light of (\ref{aole3}), $R_{x_s}' := R_{x_s}$ satisfies (\ref{alocjet}). This completes the construction and verification of the properties of the polynomial $R_{x_s}'$.

\end{proof}

\section{Decomposition of the Functional}
\label{sec:decomp_func}

\begin{lem}
Let $Q^\circ$, $CZ^\circ$, $K_{CZ}$, and $K_p$ be defined as in Section \ref{sec:cz_decomp}.  Let $\J_*(\mu;Q^\circ,CZ^\circ;K_p)$ be the space defined in \eqref{Jstar_space_3A:defn}.

\textbf{Existence of a Bounded Linear Map.} There exists a linear map $T: \J_*(\mu;Q^\circ,CZ^\circ;K_p) \to L^{m,p}(Q^{\circ})$ satisfying the following conditions. For any $(f,\vec{P},\vec{S}) \in \J_*(\mu;Q^\circ,CZ^\circ;K_p)$,
\begin{align}\label{top1}
    &J_x T(f,\vec{P},\vec{S}) = S_x \mbox{ for all } x \in K_p, \mbox{ and}\\
 &\|T(f,\vec{P},\vec{S}), \vec{P}\|_{\J_*(f,\mu; Q^{\circ}, CZ^\circ)} \leq C \cdot \|f,\vec{P}, \vec{S}\|_{\J_*(\mu; Q^{\circ}, CZ^\circ;K_p)}.  \label{top2}
\end{align}

\textbf{Characterization of the Function Space.} For  $f \in \J(\mu)$, $\vec{P} \in Wh(\bpt)$ and $\vec{S} \in Wh(K_p)$, consider the functionals $\mathcal{S}_0(f,\vec{P},\vec{S}) \leq \mathcal{S}_1(f,\vec{P},\vec{S}) \leq \mathcal{S}_2(f,\vec{P},\vec{S})$ valued in $[0,\infty]$, given by
\begin{align}
\mathcal{S}_0(f, \vec{P}, \vec{S}) &:=   \sum_{i \in I} \|f,P_{x_i}\|_{\J(\mu|_{1.1Q_i}; \delta_{Q_i})}^p  + \sum_{i \lra i'} |P_{x_i} - P_{x_{i'}}|_i^p   + \int_{K_p} |S_x(x) - f(x)|^p d \mu, \nonumber \\
\mathcal{S}_1(f, \vec{P}, \vec{S}) &:=   \mathcal{S}_0(f, \vec{P}, \vec{S}) + 1_{\ma = \emptyset} \cdot \|\vec{S} \|_{L^{m,p}(K_p)}^p,\label{func_decomp1}\\
\mathcal{S}_2(f,\vec{P},\vec{S}) &:=   \mathcal{S}_0(f, \vec{P}, \vec{S}) + \|\vec{S} \|_{L^{m,p}(K_p)}^p.\nonumber
\end{align}
Then $(f,\vec{P}, \vec{S}) \in \J_*(\mu; Q^{\circ}, CZ^\circ;K_p)$ if and only if
\begin{equation}\label{func_decomp2}
(\vec{P},\vec{S}) \in C^{m-1,1-n/p}(\bpt \cup K_p) \quad \mbox{and} \quad \mathcal{S}_2(f,\vec{P},\vec{S}) < \infty.
\end{equation}
Further, for all $(f,\vec{P},\vec{S}) \in \J_*(\mu;Q^\circ,CZ^\circ;K_p)$, 
\begin{align}
\|f,\vec{P}, \vec{S}\|_{\J_*(\mu; Q^{\circ}, CZ^\circ;K_p)}^p \simeq \mathcal{S}_1(f, \vec{P}, \vec{S}). \label{adecomp}
\end{align} 
In \eqref{func_decomp1}, the indicator term $1_{\ma = \emptyset} \|\vec{R^*}\|_{L^{m,p}(K_p)}^p$ is included in the expression $\mathcal{S}_1(f,\vec{R},\vec{R}^*)$ if and only if $\ma = \emptyset$.

\label{adecomplm}
\end{lem}

\begin{proof}
We establish half of (\ref{func_decomp2}). Specifically, we show that $(f,\vec{P}, \vec{S}) \in \J_*(\mu; Q^{\circ}, CZ^\circ;K_p)$ implies
\begin{equation} \label{nadecomp}
(\vec{P},\vec{S}) \in C^{m-1,1-n/p}(\bpt \cup K_p) \quad \mbox{and} \quad \mathcal{S}_2(f,\vec{P},\vec{S}) \lesssim \|f,\vec{P}, \vec{S}\|_{ \J_*(\mu; Q^{\circ}, CZ^\circ;K_p)}^p< \infty.
\end{equation}
Let $(f,\vec{P},\vec{S}) \in \J_*(\mu;Q^\circ,CZ^\circ;K_p)$; then from \eqref{spineq}, we have $(\vec{P},\vec{S}) \in C^{m-1,1-n/p}(\bpt \cup K_p)$. Let $\eta > 0$; then there exists $H \in L^{m,p}(Q^{\circ})$ satisfying 
\begin{align}
&J_xH = S_x \mbox{ for all } x \in K_p\mbox{, and}\\
&\|H,\vec{P}\|_{\J_*(f,\mu;Q^\circ, CZ^\circ)} \leq  \|f,\vec{P},\vec{S} \|_{\J_*(\mu; Q^{\circ}, CZ^\circ;K_p)} +\eta < \infty. \label{goodh}
\end{align}
As a consequence of Lemma \ref{restwh}, we have
\begin{align}
\|\vec{S}\|_{L^{m,p}(K_p)} \lesssim \|H\|_{L^{m,p}(Q^\circ)} \leq \|H,\vec{P}\|_{\J_*(f,\mu;Q^\circ, CZ^\circ)} \lesssim \|f,\vec{P},\vec{S} \|_{\J_*(\mu; Q^{\circ}, CZ^\circ;K_p)} +\eta < \infty. \label{vecsbd}    
\end{align}
Also,
\begin{align}
    \int_{K_p} | S_x(x) - f(x) |^p d \mu  &\leq \int |H-f|^p d \mu \leq \|H,\vec{P}\|_{\J_*(f,\mu;Q^\circ, CZ^\circ)}^p \nonumber \\
    &\lesssim (\|f,\vec{P},\vec{S} \|_{\J_*(\mu; Q^{\circ}, CZ^\circ;K_p)} +\eta)^p < \infty. \label{bdfors}
\end{align}
By Lemma \ref{polycubelm2},
\begin{align}
\|H - P_{x_i}\|_{L^p(1.1Q_i \cap Q^{\circ})}^p/\delta_{Q_i}^{mp} \lesssim \|H - P_{x_i}\|_{L^p(Q_i)}^p/\delta_{Q_i}^{mp} + \| H \|_{L^{m,p}(1.1Q_i \cap Q^\circ)}^p. \label{badeco1}
\end{align}
Because of (\ref{goodgeo}), (\ref{movept}), and (\ref{normdi}), it holds that $|P|_i \simeq |P|_{i'}$ for all $P \in \mpp$ if $i \lra i'$. Thus, by the Sobolev Inequality, (\ref{normest}), and the bounded overlap of the cubes $\{1.1Q_i\}_{i \in I}$, we have
\begin{align}
\sum_{i \lra i'} |P_{x_i} - P_{x_{i'}}|_i^p &\overset{(\ref{movept})}{\lesssim}  \sum_{i \lra i'} \left\{ |P_{x_i} - J_{x_i}H|_i^p + |J_{x_i}H - J_{x_{i'}}H|_i^p + |J_{x_{i'}}H - P_{x_{i'}}|_{i'}^p \right\} \nonumber \\
&\overset{\text{SI/}(\ref{normest})}{\lesssim} \sum_{i \in I} \|H,P_{x_i}\|_{\J_*(f,\mu|_{Q_i};Q_i)}^p + \sum_{i \lra i'} \|H\|_{L^{m,p}((1.1Q_i \cup 1.1Q_{i'}) \cap Q^\circ)}^p \nonumber \\
&\lesssim \sum_{i \in I} \|H,P_{x_i}\|_{\J_*(f,\mu|_{Q_i};Q_i)}^p + \| H \|_{L^{m,p}(K_{CZ})}^p. \label{badeco2}
\end{align}
We apply (\ref{j8}) (for the $C$-non-degenerate rectangular box $1.1Q_i \cap Q^\circ$), (\ref{badeco2}), (\ref{badeco1}), and (\ref{goodh}) to estimate
\begin{align}
\sum_{i \in I} \|f, &P_{x_i}\|_{\J(\mu|_{1.1Q_i}; \delta_{Q_i})}^p \overset{(\ref{j8})}{\lesssim} \sum_{i \in I} \|f, P_{x_i}\|_{\J_*(\mu|_{1.1Q_i}; 1.1Q_i \cap Q^{\circ})}^p \nonumber\\
& \; \leq \sum_{i \in I} \|H, P_{x_i}\|_{\J_*(f,\mu|_{1.1Q_i}; 1.1Q_i \cap Q^{\circ})}^p  \nonumber\\
& \; = \sum_{i \in I} \Big[ \|H\|_{L^{m,p}(1.1Q_i \cap Q^\circ)}^p + \int_{1.1Q_i}|H-f|^p d\mu + \|H - P_{x_i}\|_{L^p(1.1Q_i \cap Q^{\circ})}^p/\delta_{Q_i}^{mp} \Big] \nonumber\\
& \overset{(\ref{badeco1})}{\lesssim}\sum_{i \in I} \Big[ \|H\|_{L^{m,p}(1.1Q_i \cap Q^\circ)}^p + \int_{1.1Q_i}|H-f|^p d\mu  + \|H - P_{x_i}\|_{L^p(Q_i)}^p/\delta_{Q_i}^{mp} \Big]  \nonumber\\
& \; \lesssim \|H\|_{L^{m,p}(K_{CZ})}^p + \int_{K_{CZ}} |H-f|^p d\mu + \sum_{i \in I} \|H - P_{x_i}\|_{L^p(Q_i)}^p/\delta_{Q_i}^{mp} \nonumber\\
&\; \; =\|H, \vec{P} \|_{\J_*(f, \mu; K_{CZ}, CZ^\circ)}^p \nonumber \\
& \overset{(\ref{goodh})}{\leq}  (\|f,\vec{P},\vec{S} \|_{\J_*(\mu; Q^{\circ}, CZ^\circ;K_p)} +\eta )^p. \label{badeco3} 
\end{align}
Combining \eqref{vecsbd}, \eqref{bdfors}, \eqref{badeco2}, and \eqref{badeco3}, and letting $\eta \rightarrow 0$, we have established (\ref{nadecomp}) and consequently the sufficiency of (\ref{func_decomp2}).

Next we describe the construction of the map $T$. By applying  \textbf{(AL1)-(AL3)} (see Section \ref{subsec:loc_ext_op}) to the cube $Q_i \in CZ^\circ$ and  subset $E_i = 1.1Q_i \subset 3Q_i$, we obtain the existence of a linear map $T_i: \J(\mu|_{1.1Q_i};\delta_{Q_i}) \to L^{m,p}(\R^n)$, a functional $M_i: \J(\mu|_{1.1Q_i};\delta_{Q_i}) \to \R_+$, and countable collections of  Borel sets $\{A_\ell^i\}_{\ell \in \N}, A_\ell^i \subset \supp(\mu|_{1.1Q_i})$, and of linear maps $\{\phi_\ell^i: \J(\mu|_{1.1 Q_i} ;\delta_{Q_i}) \to \R\}_{\ell \in \N}$, and $\{\lambda_\ell^i: \J(\mu|_{1.1 Q_i} ;\delta_{Q_i}) \to L^p(d\mu)\}_{\ell \in \N}$, with the following properties.

Given $(f,P_{x_i}) \in \J(\mu|_{1.1Q_i};\delta_{Q_i})$,
\begin{align} 
&\|f,P_{x_i}\|_{\J(\mu|_{1.1Q_i}; \delta_{Q_i})} \simeq \|T_i(f,P_{x_i}), P_{x_i}\|_{\J(f,\mu|_{1.1Q_i};\delta_{Q_i})}  \simeq M_i(f,P_{x_i}) \label{amf1}; \text{ and} \\
&M_i(f,P_{x_i}) = \Big(\sum_{\ell \in \N} \int_{A_\ell^i} |\lambda_\ell^i(f,P_{x_i})-f|^pd\mu +\sum_{\ell \in \N} |\phi_\ell^i(f,P_{x_i})|^p \Big)^{1/p}. \label{amf2}
\end{align}
Further, the maps $T_i$ and $M_i$ are $\Omega_i'$-constructible for a family of linear functionals $\Omega_i' \subset \J(\mu|_{1.1Q_i})^*$, i.e., the maps satisfy \textbf{(AL4)-(AL6)} for $E_i = 1.1 Q_i$

Given $(f,\vec{P}, \vec{S}) \in \J(\mu)\times Wh(\bpt) \times Wh(K_p)$, define a function $T(f,\vec{P},\vec{S}): Q^{\circ} \to \R$ as
\begin{align}
T(f,\vec{P},\vec{S})(x) = \begin{cases} \sum_{i \in I} T_i(f,P_{x_i})(x) \cdot \theta_i(x) & x \in K_{CZ} \\
S_x(x) & x \in K_p \end{cases} \label{tfdef}
\end{align}
where $\{\theta_i\}_{i \in I}$ satisfy \textbf{(POU1)-(POU4)} (see Section \ref{subsec:pou}). We apply (\ref{apatch1}), (\ref{j5}), and (\ref{amf1}) to deduce
\begin{align}
\|\tf, \vec{P} \|_{\J_*(f,\mu;K_{CZ}, CZ^\circ)}^p 
&\overset{(\ref{apatch1})}{\lesssim} \sum_{i \in I} \|T_i(f,P_{x_i}), P_{x_i}\|_{\J_*(f,\mu|_{1.1Q_i \cap Q^\circ};1.1Q_i\cap Q^\circ)}^p + \sum_{i' \lra i} |P_{x_i} - P_{x_{i'}}|_i^p \nonumber \\
&\overset{(\ref{j5})}{\lesssim} \sum_{i \in I} \|T_i(f,P_{x_i}), P_{x_i}\|_{\J(\mu|_{1.1Q_i}; \delta_{Q_i})}^p + \sum_{i' \lra i} |P_{x_i} - P_{x_{i'}}|_i^p \nonumber \\
&\overset{(\ref{amf1})}{\lesssim}  \sum_{i \in I} \|f, P_{x_i}\|_{\J(\mu|_{1.1Q_i}; \delta_{Q_i})}^p + \sum_{i' \lra i} |P_{x_i} - P_{x_{i'}}|_i^p  . \label{bamf3} 
\end{align}

We have not proved the map $T$ is bounded yet, but we return to the proof of the necessity of (\ref{func_decomp2}): Suppose $
(\vec{P},\vec{S}) \in C^{m-1,1-n/p}(\bpt \cup K_p)$ and $\mathcal{S}_2(f,\vec{P},\vec{S}) < \infty$. In light of (\ref{bamf3}) and the definition of the $\mathcal{S}_2$ functional,
\[
\|\tf, \vec{P} \|_{\J_*(f,\mu;K_{CZ}, CZ^\circ)}^p + \| \vec{S} \|_{L^{m,p}(K_p)}^p \lesssim \mathcal{S}_2(f,\vec{P},\vec{S})  < \infty.
\]
Consequently, we may apply Lemma \ref{lmppatch} to deduce that the function $\tf$ defined in \eqref{tfdef} satisfies
\begin{align}
&\tf \in L^{m,p}(Q^\circ), \label{tfmember} \\
& J_x\tf = S_x \mbox{ for all } x \in K_p, \mbox{ and} \label{tfjet}\\
&\| \tf \|_{L^{m,p}(Q^\circ)} \lesssim \|\tf, \vec{P} \|_{\J_*(f,\mu;K_{CZ}, CZ^\circ)} + \|\vec{S}\|_{L^{m,p}(K_p)} < \infty. \label{tfbound}
\end{align}
Because $\tf(x) = S_x(x)$ for $x \in K_p$, we have
\begin{align}
    \int_{K_p} |\tf - f|^p d \mu &= \int_{K_p} | S_x(x) - f(x)|^p d \mu(x) \leq \mathcal{S}_2 (f, \vec{P},\vec{S}) < \infty. \label{tfbound2}
\end{align}
In combination with (\ref{tfmember})-(\ref{tfbound}) and (\ref{bamf3}), we bound
\begin{align}
\|f,\vec{P},\vec{S} &\|_{\J_*(\mu; Q^{\circ},CZ^\circ;K_p)}^p \leq \|\tf, \vec{P} \|_{\J_*(f,\mu;Q^\circ, CZ^\circ)}^p \nonumber \\ 
& \quad \quad  \leq \| \tf, \vec{P} \|_{\J_*(f,\mu; K_{CZ}, CZ^\circ)}^p + \| \tf \|_{L^{m,p}(Q^\circ)}^p + \int_{K_p} |\tf - f|^p d \mu
\nonumber \\
& \quad \quad \lesssim \sum_{i \in I} \|f, P_{x_i}\|_{\J(\mu|_{1.1Q_i}; \delta_{Q_i})}^p + \sum_{i' \lra i} |P_{x_i} - P_{x_{i'}}|_i^p + \|\vec{S}\|_{L^{m,p}(K_p)}^p + \int_{K_p} |S_x(x) - f(x)|^p d\mu \nonumber \\
& \quad \quad \lesssim \mathcal{S}_2 (f, \vec{P},\vec{S}) <\infty, \label{nb2}
\end{align}
completing the proof of (\ref{func_decomp2}). 

Suppose $(f,\vec{P}, \vec{S}) \in \J_*(\mu; Q^{\circ}, CZ^\circ;K_p)$; we proved the extension operator $\tf$ defined in (\ref{tfdef}) satisfies $J_x T(f,\vec{P},\vec{S}) = S_x$ for all $x \in K_p$, and
\begin{equation} \label{nb3}
\mathcal{S}_2(f,\vec{P},\vec{S}) \lesssim \|f,\vec{P}, \vec{S}\|_{ \J_*(\mu; Q^{\circ}, CZ^\circ;K_p)}^p
\end{equation}
(see (\ref{nadecomp})). In combination with (\ref{nb2}), this proves (\ref{top1}) and (\ref{top2}): 
\begin{align*}
\|\tf, \vec{P} \|_{\J_*(f,\mu;Q^\circ, CZ^\circ)} \lesssim \|f,\vec{P}, \vec{S}\|_{ \J_*(\mu; Q^{\circ}, CZ^\circ;K_p)},
\end{align*}
as well as (\ref{adecomp}) in the case $\ma = \emptyset$: In this case, $\mathcal{S}_2(f,\vec{P},\vec{S})=\mathcal{S}_1(f,\vec{P},\vec{S})$. Together with (\ref{nb2}) and (\ref{nb3}), we have
\begin{align*}
\|f,\vec{P}, \vec{S}\|_{\J_*(\mu; Q^{\circ}, CZ^\circ;K_p)}^p \simeq \mathcal{S}_1(f, \vec{P}, \vec{S}).
\end{align*} 

Suppose $\ma \neq \emptyset$. 
Thanks to Lemma \ref{measkplem}, the set $K_p$ has Lebesgue measure $0$, and so 
\begin{align*}
    \| \tf \|_{L^{m,p}(Q^\circ)} = \| \tf \|_{L^{m,p}(K_{CZ})} \qquad (\ma \neq \emptyset),
\end{align*}
and therefore,
\begin{align}
    & \| \tf, \vec{P}  \|_{\J_*(f,\mu;Q^{\circ}, CZ^\circ)}^p = \| \tf \|_{\J_*(f,\mu;K_{CZ}, CZ^\circ)}^p + \int_{K_p} |\tf - f|^p d \mu. \label{tfeq2}
\end{align}
Combining \eqref{bamf3}, \eqref{tfbound2}, \eqref{tfeq2}, 
\begin{align}
\|\tf,  \vec{P} \|_{\J_*(f,\mu;Q^{\circ}, CZ^\circ)}^p &\overset{\eqref{tfeq2}}{=} \|\tf, \vec{P} \|_{\J_*(f,\mu;K_{CZ}, CZ^\circ)}^p + \int_{K_p} |\tf - f|^p d \mu \nonumber \\
&\overset{\eqref{tfbound2}}{=} \|\tf, \vec{P} \|_{\J_*(f,\mu;K_{CZ}, CZ^\circ)}^p + \int_{K_p} | S_x(x) - f(x)|^p d \mu(x) \nonumber \\
&\overset{\eqref{bamf3}}{\lesssim}  \sum_{i \in I} \|f, P_{x_i}\|_{\J(\mu|_{1.1Q_i}; \delta_{Q_i})}^p + \sum_{i \lra i'} |P_{x_i} - P_{x_{i'}}|_i^p + \int_{K_p} | S_x(x) - f(x)|^p d \mu(x) \nonumber \\
&= \mathcal{S}_1(f,\vec{P},\vec{S}). \label{amf3a}
\end{align}
Therefore,
\[
\|f,\vec{P},\vec{S} \|_{\J_*(\mu; Q^{\circ},CZ^\circ;K_p)}^p \leq \|\tf,  \vec{P} \|_{\J_*(f,\mu;Q^{\circ}, CZ^\circ)}^p \lesssim \mathcal{S}_1 (f, \vec{P}, \vec{S}).
\]
Combining this estimate with (\ref{nb3}) and $\mathcal{S}_1 (f, \vec{P}, \vec{S}) \leq \mathcal{S}_2(f,\vec{P},\vec{S})$, we have proven \eqref{adecomp} under the assumption that $\ma \neq \emptyset$, completing the proof of Lemma \ref{adecomplm}.

\end{proof}

\section{Optimal Whitney Field}\label{sec:opt_wh_field}

We consider the set of all Calder\'{o}n-Zygmund cubes $CZ^\circ = \{Q_i\}_{i \in I}$ in $Q^\circ$. We denote $K_{CZ} = \bigcup_{i \in I} Q_i$, and $K_p = Q^\circ \setminus K_{CZ}$. Then $CZ^\circ$ is a partition of the set $K_{CZ}$ into disjoint dyadic cubes. We denote $CZ_{key} = \{Q_s\}_{s \in \bar{I}} \subset \{Q_i\}_{i \in I}$ for the set of keystone cubes (see page \pageref{keycuberef} for the definition and basic properties of keystone cubes). Recall that $\bpt = \{x_i \}_{i \in I}$ and $\bkpt = \{x_s\}_{s \in \bar{I}}$ are the sets of centers of CZ cubes, and keystone cubes, respectively.

 For each $Q_i \in CZ^\circ$, we apply Lemma \ref{ageolm} to produce a sequence of CZ cubes $\Se(Q_i)$. Then either $\Se(Q_i)= \{Q^{i,k}\}_{k=1}^L$ and $Q^{i,L} \in CZ_{key}$ or $\Se(Q_i)= \{Q^{i,k}\}_{k \in \N}$ and $x^{i,k} \to x \in K_p$ as $k \rightarrow \infty$, where $x^{i,k}$ is the center of $Q^{i,k}$. In either case, $Q^{i,1} = Q_i$, $Q^{i,k} \lra Q^{i,k+1}$ for each $k$, and for $1 \leq \ell \leq k$,
\begin{align}
&\delta_{Q^{i,k}} \leq C\cdot c^{k-\ell} \delta_{Q^{i,\ell}}, \label{geoseq}
\end{align}
for universal constants $C \geq 1$ and $c \in (0,1)$.

Now, define a mapping $\ka: \bpt \to \bkpt \cup K_p$ by
\begin{align}
\ka(x_i) = 
&\begin{cases} 
x_s & \quad \Se(Q_i)= \{Q^{i,k}\}_{k=1}^L, \text{ and } Q^{i,L}=Q_s \in CZ_{key} \\
x & \quad \Se(Q_i)= \{Q^{i,k}\}_{k \in \N}, \text{ and } \lim_{k \to \infty}x^{i,k} = x \in K_p.
\end{cases}\label{defn:ka}
\end{align}


The next lemma contains further elementary properties of the sequences $\Se(Q_i)$, and of the mapping $\ka$, that will be used throughout the section.

\begin{lem}\label{keygeomlem}
For each $i \in I$, let $x_i$ be the center of $Q_i$, and let $x^{i,j}$ be the center of the cube $Q^{i,j}$ in the sequence $\Se(Q_i)$.
\begin{enumerate}
    \item If $\ka(x_i) \neq x_i$ then $|x_i-\ka(x_i)| \simeq \delta_{Q_i}$.
    \item $|x_i - \ka(x_i)| \lesssim |x_i - x|$ for any $x \in K_p$.
    \item If $Q^{i,j} \in \Se(Q_i)$ then $Q^{i,j} \subset C Q_i$ and  $|x^{i,j} - \ka(x_i)| \lesssim \delta_{Q^{i,j}}$.
    \item For any keystone cube $Q_s$ and $x_i \in \ka^{-1}(x_s)$, $Q_s \subset CQ_i$, and in particular  $\delta_{Q_i} \geq c\delta_{Q_s}$. Furthermore, for any fixed $\delta > 0$ and $s \in \bar{I}$,
    \[
    |\{ i\in I :  x_i \in \ka^{-1}(x_s), \delta_{Q_i} = \delta \}| \leq C.
    \]
\end{enumerate}
\end{lem}
\begin{proof}
(1): Because $\ka(x_i) \neq x_i$, the sequence $\Se(Q_i)$ has length at least $2$, and $\ka(x_i)$ does not belong to $\inte(Q_i)$ (either $\ka(x_i) \in Q_s$ for $s \neq i$, or $\ka(x_i) \in K_p$). Recall the sequence $\Se(Q_i) = \{Q^{i,k}\}_{k \geq 1}$ (finite or infinite) of CZ cubes satisfies  $Q^{i,k} \lra Q^{i,k+1}$ and $Q^{i,1} = Q_i$. Hence, $|x^{i,k} - x^{i,k+1}| \lesssim \delta_{Q^{i,k}}$, $x^{i,1}=x_i$, and either $x^{i,L} = \ka(x_i)$ for some $L < \infty$ or $x^{i,k} \rightarrow \ka(x_i)$ as $k \rightarrow \infty$. We use inequality (\ref{geoseq}) for $\ell=1$ to bound:
\begin{align*}
    |x_i - \ka(x_i)| &\leq \sum_{k \geq 1} |x^{i,k} - x^{i,k+1}| \lesssim \sum_{k \geq 1} \delta_{Q^{i,k}} \lesssim \sum_{k \geq 1} \delta_{Q_i} c^k \lesssim \delta_{Q_i}.
\end{align*}
Because $x_i = \ctr(Q_i)$ and $\ka(x_i)$ does not belong to $\inte(Q_i)$, we also have $\delta_{Q_i} \lesssim |x_i-\ka(x_i)|$.

(2): Let $x \in K_p$. We may assume $\ka(x_i) \neq x_i$. Because $Q_i \subset K_{CZ}$ and $x_i = \ctr(Q_i)$, we have $\dist(x_i,K_p) \gtrsim \delta_{Q_i}$. In combination with (1),
\[
|x-x_i| \geq \dist(x_i,K_p) \gtrsim \delta_{Q_i} \gtrsim |x_i - \ka(x_i)|.
\]

(3): Let $Q^{i,j} \in \Se(Q_i)$. From (\ref{geoseq}), $\delta_{Q^{i,j}} \lesssim \delta_{Q_i}$ and $\dist(Q^{i,j},Q_i) \lesssim \sum_{k=1}^j \delta_{Q^{i,k}} \lesssim \delta_{Q_i}$, implying there exists $C>0$ such that $Q^{i,j} \subset CQ_i$. We use (1) to bound 
\[
|x^{i,j} - \ka(x_i)| \leq |x^{i,j}-x_i| + |x_i -\ka(x_i)| \lesssim \delta_{Q_i}.
\]

(4): If $Q_s$ is a keystone cube and $x_i \in \ka^{-1}(x_s)$ then $\Se(Q_i)$ is finite and $Q_s = Q^{i,L}$ for some $L < \infty$. In light of (\ref{geoseq}), $\delta_{Q_s} = \delta_{Q^{i,L}} \lesssim \delta_{Q_i}$. From (3) applied with $j=L$, we have $Q_s \subset CQ_i$. By a volume counting argument,
\[
|\{ x_i \in \ka^{-1}(x_s): \delta_{Q_i} = \delta \}| \leq | \{ Q_i: \delta_{Q_i} = \delta \text{ and } |x_i-x_s| \simeq \delta \}|  \leq C.
\]

\end{proof}

The next result concerns a certain  operator mapping Whitney fields on $K_p \cup \bpt$ to Whitney fields on $\bpt$.

\begin{lem}
Given $\vec{P}^*=(P^*_x)_{x \in K_p} \in Wh(K_p)$ and  $\vec{P} = (P_{x_i})_{i \in I} \in Wh(\bpt)$, define $\vec{P}^{**} \in Wh(\bpt)$ by
\[
P^{**}_{x_i}: = \left\{ 
\begin{aligned}
&P_{\ka(x_i)} && \ka(x_i) \in \bkpt\\
&P^*_{\ka(x_i)} && \ka(x_i) \in K_p.
\end{aligned}
\right.
\]
If $(\vec{P}^*, \vec{P}) \in C^{m-1,1-n/p}(K_p \cup \mathfrak{B}_{CZ})$, then for $F \in L^{m,p}(Q^\circ)$, 
\begin{align}
    \sum_{i \in I} |P_{x_i}-P^{**}_{x_i}|_i^p \lesssim \|F, \vec{P} \|_{\J_*(f, \mu; K_{CZ}, CZ^\circ)}^p. \label{acapprop}
\end{align}
\end{lem}

\begin{proof}
From Lemma \ref{keygeomlem}, $|x_i-\ka(x_i)| \lesssim \delta_{Q_i}$. Therefore, from (\ref{movept}), we have
\begin{align}
    \sum_{i \in I} |P_{x_i}-P^{**}_{x_i}|_i^p = \sum_{i \in I} |P_{x_i}-P^{**}_{x_i}|_{x_i,\delta_{Q_i}}^p &\lesssim \sum_{i \in I} |P_{x_i}-P^{**}_{x_i}|_{\ka(x_i), \delta_{Q_i}}^p \nonumber \\
    &= \sum_{i \in I} \sum_{\al \in \mm} |\p^\al(P_{x_i} -P^{**}_{x_i})(\ka(x_i))|^p \delta_{Q_i}^{n-mp+|\al|p}. \label{cap1}
\end{align}

We will compare polynomials $P^{i,k}$ ($k \geq 1$) associated to the cubes $Q^{i,k} \in \Se(Q_i)$ to bound (\ref{cap1}). We define these polynomials in terms of the Whitney field $\vec{P} \in Wh(\bpt)$, Note that each $Q^{i,k} \in \Se(Q_i)$ is in $CZ^\circ$, hence, $Q^{i,k} = Q_j$ for $j \in I$ -- then we define $P^{i,k} = P_{x_{j}}$.

Evidently, $Q^{i,1} = Q_i$, so 
\[
P^{i,1} = P_{x_i}.
\]

If $\Se(Q_i) = \{Q^{i,k}\}_{1 \leq k \leq L}$ is finite, then its terminal cube is a keystone cube, $Q^{i,L} = Q_{s}$, for $\ka(x_i) = x_s \in \bkpt$. In this case, $P^{i,L} = P_{\ka(x_i)}  = P^{**}_{x_i}$. So,
\[
P^{i,L} =  P^{**}_{x_i} \mbox{ if } \Se(Q_i) = \{Q^{i,k}\}_{1 \leq k \leq L} \mbox{ is finite}.
\]

If $\Se(Q_i) = \{Q^{i,k}\}_{ k \geq 1}$ is infinite, then $x^{i,k} = \ctr(Q^{i,k})$ converges to $x = \ka(x_i) \in K_p$ as $k \rightarrow \infty$. Because $(\vec{P}^*, \vec{P}) \in C^{m-1,1-n/p}(K_p \cup \mathfrak{B}_{CZ})$, also $P^{i,k}$ converges to $P^*_x$ as $k \rightarrow \infty$. But $P^*_x = P^{**}_{x_i}$, by definition of $\vec{P}^{**}$. Thus,
\[
\lim_{k \rightarrow \infty} P^{i,k} = P^{**}_{x_i} \mbox{ if } \Se(Q_i) \mbox{ is infinite}.
\]

Evidently, by use of the telescoping series formula, and by the above properties, for $\al \in \mm$ and $i \in I$,
\begin{align*}
    |\p^\al&(P_{x_i} - P^{**}_{x_i})(\ka(x_i))| \leq \sum_{k \geq 1} |\p^\al(P^{i,k} -P^{i,k+1})(\ka(x_i))|,
\end{align*} 
where we write $\sum_{k \geq 1}$ to indicate the summation $\sum_{k=1}^{L-1}$ in the case $\Se(Q_i) = \{Q^{i,k}\}_{1 \leq k \leq L}$ is finite, and the summation $\sum_{k=1}^\infty$ in the case $\Se(Q_i)$ is infinite

Fix a universal constant $\e' \in (0, 1-n/p)$, and let $p' \in (1,\infty)$ be the conjugate exponent of $p \in (1,\infty)$, so that $1/p + 1/p'=1$. We apply H\"older's inequality to deduce
\begin{align}
    |\p^\al(P_{x_i} -P^{**}_{x_i})(\ka(x_i))| &\leq \bigg( \sum_{k \geq 1} |\p^\al(P^{i,k} -P^{i,k+1})(\ka(x_i))|^p \delta_{Q^{i,k}}^{-mp+n+|\al|p+\e'p} \bigg)^{\frac{1}{p}} \cdot \bigg( \sum_{k \geq 1}\delta_{Q^{i,k}}^{(m-n/p-|\al|-\e')p'} \bigg)^{\frac{1}{p'}} \nonumber \\
    &\lesssim \delta_{Q_i}^{m-n/p-|\al|-\e'} \big( \sum_{k \geq 1} |\p^\al(P^{i,k} -P^{i,k+1})(\ka(x_i))|^p \delta_{Q^{i,k}}^{-mp+n+|\al|p+\e'p} \big)^{\frac{1}{p}}, \label{cap2}
\end{align}
where the last inequality uses that $m-n/p-|\al|-\e' > 0$ for $|\al| \leq m-1$, so (\ref{geoseq}) implies 
\begin{align*}
\big( \sum_{k \geq 1}\delta_{Q^{i,k}}^{(m-n/p-|\al|-\e')p'} \big)^{1/p'} \lesssim \delta_{Q_i}^{m-n/p-|\al|-\e'}.    
\end{align*}
Because $Q^{i,k} \in \Se(Q_i)$, we have $|x^{i,k} - \ka(x_i)| \lesssim \delta_{Q^{i,k}}$ (see  Lemma \ref{keygeomlem}). By substituting (\ref{cap2}) into (\ref{cap1}), using the definition of the $| \cdot|_{\ka(x_i),\delta_{Q^{i,k}}}$ norm, and then applying (\ref{movept}), 
\begin{align}
     \sum_{i \in I} |P_{x_i}-P^{**}_{x_i}|_i^p  &\lesssim  \sum_{i \in I}  \delta_{Q_i}^{-\e'p} \sum_{k \geq 1} \sum_{\al \in \mm} |\p^\al(P^{i,k} -P^{i,k+1})(\ka(x_i))|^p \delta_{Q^{i,k}}^{-mp+n+|\al|p+\e'p} \nonumber \\
     &=  \sum_{i \in I}  \delta_{Q_i}^{-\e'p} \sum_{k \geq 1}  |P^{i,k} -P^{i,k+1}|_{\ka(x_i),\delta_{Q^{i,k}}} \delta_{Q^{i,k}}^{\e'p} \nonumber \\
     &\lesssim  \sum_{i \in I} \delta_{Q_i}^{-\e'p} \sum_{k \geq 1} |P^{i,k} -P^{i,k+1}|_{x^{i,k},\delta_{Q^{i,k}}}^p \cdot \delta_{Q^{i,k}}^{\e'p} \nonumber \\
     &\lesssim \sum_{\substack{j,j' \in I \\ j \lra j'}} \sum_{\substack{i \in I  \\ Q_j \in \Se(Q_i)}} \Big(\frac{\delta_{Q_j}}{\delta_{Q_i}}\Big)^{\e'p} |P_{x_{j}} - P_{x_{j'}}|_{x_j,\delta_{Q_j}}^p. \label{cap3}
\end{align}
For fixed $Q_j \in CZ^\circ$ and any dyadic length scale $\delta>0$,
\begin{align*}
| \{i \in I: Q_j \in \Se(Q_i), \; &\delta_{Q_i} = \delta \}| \leq |\{ i \in I : Q_j \subset CQ_i, \; \delta_{Q_i} =\delta \}| \leq C,
\end{align*}
where the first inequality uses property 3 in Lemma \ref{keygeomlem}, and the second inequality uses that $Q_i$ and $Q_j$ are dyadic cubes. Furthermore, by the first inequality above,
\[
| \{i \in I : Q_j \in \Se(Q_i), \; \delta_{Q_i} = \delta \} |= 0 \mbox{ if }\delta < c \delta_{Q_j}.
\]
Thus we may continue from (\ref{cap3}):
\begin{align}
     \sum_{i \in I} |P_{x_i}-P^{**}_{x_i}|_i^p  
     &\lesssim \sum_{j \lra j'} |P_{x_{j}} - P_{x_{j'}}|_j^p \sum \Big\{ \Big(\frac{\delta_{Q_j}}{\delta_{Q_i}}\Big)^{\e'p}: i \in  I, \; Q_j \in \Se(Q_i) \Big\} \nonumber \\
     &\lesssim \sum_{j \lra j'} |P_{x_{j}} - P_{x_{j'}}|_j^p \sum \Big\{ \Big(\frac{\delta_{Q_j}}{\delta}\Big)^{\e'p}: \delta \text{ a dyadic length scale, } \delta \geq c \delta_{Q_j} \Big\} \nonumber \\ 
     &\lesssim \sum_{j \lra j'} |P_{x_{j}} - P_{x_{j'}}|_j^p, \label{cap4}
\end{align}
where the final inequality follows because we are computing the sum of a geometric series, and $\e' > 0$ is a universal constant, dependent on $m$, $n$, and $p$. From the triangle inequality, the Sobolev Inequality, (\ref{sob}), and (\ref{movept}), for $F \in L^{m,p}(Q^\circ)$, 
\begin{align*}
    \sum_{j \lra j'} |P_{x_{j}} - P_{x_{j'}}|_j^p &\lesssim \sum_{j \lra j'} \left[ |P_{x_{j}} - J_{x_j}(F)|_j^p + |J_{x_{j'}}(F) - P_{x_{j'}}|_j^p + \|F\|_{L^{m,p}((1.1Q_j \cup 1.1Q_{j'})\cap Q^\circ)}^p \right] \\
    &\lesssim \sum_{j \lra j'} \left[ \|F,P_{x_{j}}\|_{\J_*(f, \mu; Q_j)}^p+ \|F\|_{L^{m,p}((1.1Q_j \cup 1.1Q_{j'})\cap Q^\circ)}^p \right]\\
    &\lesssim \|F,\vec{P}\|_{\J_*(f, \mu; K_{CZ}, CZ^\circ)}^p.
\end{align*}
Substituting this into (\ref{cap4}), we complete the proof of the lemma:
\[
\sum_{i \in I} |P_{x_i}-P^{**}_{x_i}|_i^p  \lesssim\|F,\vec{P}\|_{\J_*(f, \mu; K_{CZ}, CZ^\circ)}^p.
\]
\end{proof}

\subsection{Whitney Fields on $K_p$ and $\bpt$}

Fix the data $(f,P_0) \in \J(\mu; \delta_{Q^\circ})$.

From this data, we defined a Whitney field $\vec{R}^* \in Wh(K_p)$ in Section \ref{subsec:kpt_jet}. The defining properties of $\vec{R}^*$ are stated in Lemma \ref{kptlm}. We also defined a family of polynomials $R_{x_s}' \in \mpp$ ($s \in \bar{I}$), satisfying the conditions in Lemma \ref{alocjetlm}.  This Whitney field and these polynomials depend linearly on $(f,P_0)$, and they are coherent with $P_0$ in the sense that $\partial^\al R^*_x(x) = \partial^\al P_0(x)$ for all $x \in K_p$, $\al \in \ma$, while $\partial^\al R_{x_s}'(x_s) = \partial^\al P_0(x_s)$ for all $s \in \bar{I}$, $\al \in \ma$. Because $\ma$ is monotonic it holds that if $\partial^\al P(x) = 0$ for all $\al \in \ma$, then $\partial^\al P \equiv 0$ on $\R^n$ for all $\al \in \ma$, for any $P \in \mpp$. Therefore,
\begin{equation}\label{ccc:eqn}
\partial^\al (R_{x_s}' - P_0) \equiv 0, \;\; \partial^\al (R_x^* - P_0) \equiv 0 \mbox{ for all } x \in K_p, s \in \bar{I}, \al \in \ma.
\end{equation}

We define the Whitney field $\vec{R} = (R_{x})_{x \in \bpt} \in Wh(\bpt)$ ($\bpt = \{x_i\}_{i \in I}$) by
\begin{align}
    R_{x_i} := \begin{cases} R_{x_s}' & \mbox{if } \ka(x_i)=x_s \in \bkpt \\
    R_x^* &\mbox{if }  \ka(x_i) = x \in K_p,
    \end{cases} \label{vecr}
\end{align}
where $\ka : \bpt \rightarrow \bkpt \cup K_p$ is the mapping defined in \eqref{defn:ka}.

Evidently, the Whitney field $(\vec{R}^*,\vec{R})\in Wh(K_p \cup \mathfrak{B}_{CZ})$ depends linearly on $(f,P_0)$.  From \eqref{ccc:eqn}, it holds that $\partial^\alpha (R_{x_i} - P_0) \equiv 0$ for $\al \in \ma$ and $x_i \in \bpt$ -- thus, $\vec{R}$ is coherent with $P_0$. As mentioned above, $\vec{R}^*$ is coherent with $P_0$. So, $(\vec{R}^*,\vec{R})$ is coherent with $P_0$.

\begin{lem}
The Whitney field $(\vec{R}^*,\vec{R}) \in Wh(K_p \cup \bpt)$ satisfies 
\begin{align*}
    \|(\vec{R}^*,\vec{R})\|_{C^{m-1,1-n/p}(K_p \cup \mathfrak{B}_{CZ})} \leq C \|f,P_0\|_{\J(\mu;\delta_{Q^\circ})}.
\end{align*} \label{vecholder}
\end{lem}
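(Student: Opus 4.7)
The plan is to introduce a single ambient function through which all the jets $R^*_x$ and $R_i$ can be compared. By Corollary \ref{acohkptcor}, there exist $F_0 \in L^{m,p}(Q^\circ)$ and $\vec{\widetilde P} = (\widetilde P_i)_{i \in I} \in Wh(CZ^\circ)$ with $Pr(\vec{\widetilde P})$ coherent with $P_0$, $J_x F_0 = R^*_x$ for every $x \in K_p$, and $\|F_0, \vec{\widetilde P}\|_{\J_*(f,\mu;Q^\circ,CZ^\circ)} \lesssim \|f,P_0\|_{\J(\mu;\delta_{Q^\circ})}$. Apply Stein's Sobolev Extension Theorem for cubes to extend $F_0$ to $F \in L^{m,p}(\R^n)$ with $F|_{Q^\circ} = F_0$ and $\|F\|_{L^{m,p}(\R^n)} \lesssim \|F_0\|_{L^{m,p}(Q^\circ)}$. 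Since $F$ is $K_p$-coherent with $P_0$ and $\|F\|_{\J(f,\mu)} < \infty$, Lemma \ref{kptlm} gives $J_x F = R^*_x$ for every $x \in K_p$.

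The main obstacle is the keystone estimate
\[
(\star) \qquad |R_s' - J_{x_s} F|_{x_s, \delta_{Q_s}} \lesssim \|f,P_0\|_{\J(\mu;\delta_{Q^\circ})} \quad (s \in \bar I).
\]
To establish $(\star)$, fix $s \in \bar I$. Testing the $\J$-functional at the coherent polynomial $\widetilde P_s$ with the function $F_0$, and combining the equivalence \eqref{j8} between $\|\cdot,\cdot\|_{\J(\mu|_{9Q_s};\delta_{Q_s})}$ and $\|\cdot,\cdot\|_{\J_*(\mu|_{9Q_s};9Q_s \cap Q^\circ)}$ with Lemma \ref{polycubelm} to pass $L^p$ control of $F - \widetilde P_s$ from $Q_s$ to $9Q_s \cap Q^\circ$, one obtains $\|f,\widetilde P_s\|_{\J(\mu|_{9Q_s};\delta_{Q_s})} \lesssim \|F_0,\vec{\widetilde P}\|_{\J_*(f,\mu;Q^\circ,CZ^\circ)} \lesssim \|f,P_0\|_{\J(\mu;\delta_{Q^\circ})}$. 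Lemma \ref{alocjetlm}, together with coherency of $\widetilde P_s$ with $P_0$, transfers this bound to $R_s'$, yielding $\|f,R_s'\|_{\J(\mu|_{9Q_s};\delta_{Q_s})} \lesssim \|f,P_0\|_{\J(\mu;\delta_{Q^\circ})}$. Sublinearity \eqref{sublinearj2} then gives $\|0,\widetilde P_s - R_s'\|_{\J(\mu|_{9Q_s};\delta_{Q_s})} \lesssim \|f,P_0\|_{\J(\mu;\delta_{Q^\circ})}$, and because $\widetilde P_s$ and $R_s'$ are both coherent with $P_0$ their difference satisfies $\partial^\al(\widetilde P_s - R_s')(x_s) = 0$ for all $\al \in \ma$, so Lemma \ref{locest2} upgrades this $\J$-bound to the polynomial norm bound $|\widetilde P_s - R_s'|_{x_s,\delta_{Q_s}} \lesssim \|f,P_0\|_{\J(\mu;\delta_{Q^\circ})}$. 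Combined with the Sobolev estimate $|J_{x_s}F - \widetilde P_s|_{x_s,\delta_{Q_s}} \lesssim \|F,\widetilde P_s\|_{\J_*(f,\mu|_{Q_s};Q_s)} \lesssim \|F,\vec{\widetilde P}\|_{\J_*(f,\mu;Q^\circ,CZ^\circ)}$ from \eqref{normest}, the triangle inequality in the $|\cdot|_{x_s,\delta_{Q_s}}$ norm proves $(\star)$.

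Finally, the Hölder inequality \eqref{spineq} for $(\vec{R}^*, Pr(\vec{R}))$ is verified by a case analysis on $x,y \in K_p \cup \bpt$, in each case comparing the associated polynomial to a jet of $F$ at a suitable reference point $z^\sharp$. For $z \in K_p$, set $z^\sharp := z$ so that $R^*_z = J_z F$; for $z = x_i \in \bpt$ with $\ka(i) \in K_p$, set $z^\sharp := \ka_\circ(i) \in K_p$, so $R_i = R^*_{\ka_\circ(i)} = J_{z^\sharp} F$; for $z = x_i \in \bpt$ with $\ka(i) = s \in \bar I$, set $z^\sharp := x_s$, so $(\star)$ gives $|R_i - J_{z^\sharp} F|_{x_s, \delta_{Q_s}} \lesssim \|f,P_0\|_{\J(\mu;\delta_{Q^\circ})}$. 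Using Lemma \ref{keygeomlem} together with the elementary facts $|x_i - x_j| \gtrsim \max(\delta_{Q_i},\delta_{Q_j})$ for distinct CZ basepoints and $\dist(x_i,K_p) \gtrsim \delta_{Q_i}$, one checks in every case that $|z^\sharp - z| \lesssim |x-y|$ and that the local scale $\delta_{Q_{\ka(i)}}$ appearing in the comparison is $\lesssim |x-y|$. Shifting basepoints via \eqref{movept}, rescaling via the monotonicity \eqref{normdi}, and applying the Sobolev Inequality to $F$ on a box containing $x^\sharp$ and $y^\sharp$ of diameter $\simeq |x-y|$ to bound $|J_{x^\sharp} F - J_{y^\sharp} F|_{y,|x-y|} \lesssim \|F\|_{L^{m,p}(\R^n)}$, a final triangle inequality concludes the proof.
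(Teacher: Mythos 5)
Your proposal is correct, and its core estimate is the same as the paper's: you bound $\|0,\widetilde P_s - R_s'\|_{\J(\mu|_{9Q_s};\delta_{Q_s})}$ via sublinearity and the near-optimality of $R_s'$ from Lemma~\ref{alocjetlm}, then upgrade to the polynomial norm via Lemma~\ref{locest2}, exactly as the paper does in its estimates (holder3)--(hold4). The organization of the remaining case analysis is genuinely different, though. The paper first invokes Proposition~\ref{spinprop} (via (\ref{holder})) to get H\"older control on the \emph{auxiliary} Whitney field $(\vec{R}^*, Pr(\vec{P}))$, and then in each case separately patches in the correction $|R_i - P_i|$ to pass from $\vec{P}$ to $\vec{R}$. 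You instead extend $F_0$ by Stein's theorem to an ambient $F \in L^{m,p}(\R^n)$, associate to each point $z \in K_p \cup \bpt$ a reference point $z^\sharp$ with $|z-z^\sharp| \lesssim \delta_{Q_i}$, compare $R^*_z$ or $R_i$ to $J_{z^\sharp}F$ via the keystone estimate, and then use the Sobolev Inequality on $F$ over $\R^n$ to compare $J_{x^\sharp}F$ and $J_{y^\sharp}F$. This entirely bypasses Proposition~\ref{spinprop} and eliminates the need to track $\vec{P}$ through the final case analysis; the trade is that you need the Stein extension and a slightly more careful bookkeeping of the reference scales (which your appeals to Lemma~\ref{keygeomlem}, (\ref{movept}), and (\ref{normdi}) handle correctly). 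Both proofs work; yours is arguably a cleaner bookkeeping.

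One cosmetic remark: you invoke Lemma~\ref{kptlm} (property 3) to conclude $J_x F = R^*_x$ for $x \in K_p$, but this is automatic — $K_p \subset \tfrac{1}{9}Q^\circ$ so $F$ agrees with $F_0$ on a neighborhood of each $x \in K_p$, whence $J_x F = J_x F_0 = R^*_x$ directly. The appeal to Lemma~\ref{kptlm} is harmless but unnecessary.
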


\begin{proof}
Let $\eta > 0$. From Corollary \ref{acohkptcor}, there exists $H \in L^{m,p}(Q^\circ)$ and $\vec{P} \in Wh(\bpt)$ satisfying $J_xH = R_x^*$ for all $x \in K_p$, $\vec{P}$ is coherent with $P_0$, and 
\begin{align}
    \|H,\vec{P}\|_{\J_*(f,\mu;Q^\circ,CZ^\circ)} \leq \|f,\vec{P}, \vec{R}^*\|_{\J_*(\mu;Q^\circ,CZ^\circ;K_p)} +\eta/2 \lesssim \|f,P_0\|_{\J(\mu; \delta_{Q^\circ})}+\eta. \label{hchoice}
\end{align}
Consequently, since $\vec{R}^* \in Wh(K_p)$ is coherent with $P_0$, also $H$ is $K_p$-coherent with $P_0$. From Proposition \ref{spinprop} and \eqref{hchoice}, we have
\begin{align}
    \|(\vec{R}^*,\vec{P})\|_{C^{m-1,1-n/p}(K_p \cup \mathfrak{B}_{CZ})} \lesssim \|f,P_0\|_{\J(\mu; \delta_{Q^\circ})}+\eta. \label{holder}
\end{align}
To prove Lemma \ref{vecholder}, we must prove the following inequalities:
\begin{equation}\label{vecholder2}
\left\{
    \begin{aligned}
    &|R_x^* - R_y^*|_{x, |x-y|} \leq C \| f,P_0 \|_{\J(\mu; \delta_{Q^\circ})} && x,y \in K_p \mbox{ distinct}; \\
    &|R_x^* - R_{x_i}|_{x_i, |x-x_i|} \leq C \| f,P_0 \|_{\J(\mu; \delta_{Q^\circ})} && x \in K_p, \; i \in I; \\
    &|R_{x_i} - R_{x_j}|_{x_j, |x_j-x_i|} \leq C \| f,P_0 \|_{\J(\mu; \delta_{Q^\circ})} && i,j \in I \mbox{ distinct}.
    \end{aligned}
\right.
\end{equation}
We proceed with the proof of \eqref{vecholder2} in cases, below.

\underline{Case 1:} For distinct $x,y \in K_p$, we apply (\ref{holder}):
\begin{align*}
   |R_x^* - R_y^*|_{x,|y-x|} \lesssim \|f,P_0\|_{\J(\mu; \delta_{Q^\circ})} +\eta.
\end{align*}
By letting $\eta \to 0$, we deduce the first line of \eqref{vecholder2}.

\underline{Case 2:} Suppose $x \in K_p$ and $i \in I$; we will prove the second line of \eqref{vecholder2}: First, suppose that $\ka(x_i) = y \in K_p$. Then by definition of $\vec{R}$ (see \eqref{vecr}), $R_{x_i} = R_y^*$. By part 2 of Lemma \ref{keygeomlem}, $|x - y| \leq |x-x_i| + |x_i - y| \lesssim |x-x_i|$. So, by (\ref{movept}), and the analysis of Case 1,
\begin{align}
|R_x^* - R_{x_i}|_{x,|x_i-x|} = |R_x^* - R_y^*|_{x,|x_i-x|} &\lesssim  |R_x^* - R_y^*|_{x,|x-y|} \lesssim \|f,P_0\|_{\J(\mu, \delta_{Q^\circ})}.\label{holder4}
\end{align}
This estimate proves the second line of \eqref{vecholder2} for $\ka(x_i) = y \in K_p$.

Next suppose that $\ka(x_i) = x_s \in \bkpt$. By definition of $\vec{R}$, $R_{x_i} = R'_{x_s}$, with $x_s \in \bpt$  the center of the keystone cube $Q_{s}$. We apply (\ref{holder}) to deduce
\begin{align}
    |R_x^* - R_{x_i}|_{x_i,|x_i-x|} &\leq  |R^*_x - P_{x_i}|_{x_i,|x_i-x|} + |R_{x_i} - P_{x_i}|_{x_i,|x_i-x|}\nonumber \\
    &\lesssim \|f,P_0\|_{\J(\mu, \delta_{Q^\circ})} + |R_{x_i} - P_{x_i}|_{x_i,|x_i-x|}+\eta \label{holder2}.
\end{align}
By part 2 of Lemma \ref{keygeomlem}, since $x \in K_p$, $|x_i - x_s| = |x_i - \ka(x_i)| \lesssim |x-x_i|$, and by part 4 of Lemma \ref{keygeomlem}, $\delta_{Q_{s}} \lesssim \delta_{Q_i} \leq |x-x_i|$, so we can apply (\ref{movept}) and (\ref{holder}) to deduce
\begin{align}
    |R_{x_i} - P_{x_i}|_{x_i,|x_i-x|} &= |R'_{x_s} - P_{x_i}|_{x_i,|x_i-x|} \nonumber \\
    &\lesssim |R'_{x_s} - P_{x_s}|_{x_{s},|x_i-x|} + |P_{x_s} - P_{x_i}|_{x_{s},|x_i-x_{s}|} \nonumber \\
    &\lesssim |R'_{x_s} - P_{x_s}|_{x_s,\delta_{Q_{s}}} +\|f,P_0\|_{\J(\mu, \delta_{Q^\circ})}+\eta. \label{holder1} 
\end{align}
 Because $R'_{x_s}, P_{x_s} \in \mpp$ are each coherent with $P_0$, and $\ma$ is monotonic, we have $\partial^\al (R'_{x_s}-P_{x_s}) \equiv 0$ for all $\al \in \ma$. We apply (\ref{movept}), (\ref{locest2}), and the triangle inequality to bound
\begin{align}
    |R'_{x_s} - P_{x_s}|_{x_s,\delta_{Q_s}} &\lesssim \|0,R'_{x_s} - P_{x_s}\|_{\J(\mu|_{9Q_s}; \delta_{Q_s})} \nonumber \\
    &\lesssim \|f,R'_{x_s}\|_{\J(\mu|_{9Q_s}; \delta_{Q_s})} + \|f,P_{x_s}\|_{\J(\mu|_{9Q_s)}; \delta_{Q_s})} \nonumber \\
    &\lesssim \|f,P_{x_s}\|_{\J(\mu|_{9Q_s}; \delta_{Q_s})} \label{holder3},
\end{align}
where the last inequality follows by the defining properties of $R'_{x_s}$ (see Lemma \ref{alocjetlm}), since $P_{x_s}$ is coherent with $P_0$. We apply (\ref{j8}) (for the $C$-non-degenerate rectangular box $9Q_s \cap Q^\circ$) and (\ref{cub2}) (for $R_1 = Q_s$, $R_2 = 9Q_s \cap Q^\circ$), and finally (\ref{hchoice}), to estimate
\begin{align}
\|f,P_{x_s}\|_{\J(\mu|_{9Q_s}, \delta_{Q_s})}^p &\overset{(\ref{j8})}{\lesssim}  \|f, P_{x_s}\|_{\J_*(\mu|_{9Q_s}; 9Q_s \cap Q^{\circ})}^p \nonumber \\
&\; \leq \|H, P_{x_s}\|_{\J_*(f, \mu|_{9Q_s}; 9Q_s \cap Q^{\circ})}^p \nonumber\\
&\; =  \|H\|_{L^{m,p}(9Q_s \cap Q^{\circ})}^p + \int_{9Q_s} |H-f|^p d\mu + \|H - P_{x_s}\|_{L^p(9Q_s \cap Q^{\circ})}^p / \delta_{Q_s}^{mp} \nonumber \\
&\overset{\eqref{cub2}}{\lesssim} \|H\|_{L^{m,p}(Q^\circ)}^p + \int_{9Q_s} |H-f|^p d\mu + \| H - P_{x_s} \|_{L^p(Q_s)}^p /\delta_{Q_s}^{mp} \nonumber \\
& \; \leq \|H,\vec{P}\|_{\J_*(f,\mu;Q^\circ,CZ^\circ)}^p \nonumber \\
&\overset{(\ref{hchoice})}{\lesssim} (\|f,P_0\|_{\J(\mu, \delta_{Q^\circ})}+\eta)^p.\label{hold4}
\end{align}

Combining \eqref{holder2}--\eqref{hold4}, we have for $\kappa(x_i) =x_s \in \bkpt$,
\begin{align*}
    |R_x^* - R_{x_i}|_{x_i,|x_i-x|} \lesssim \|f,P_0\|_{\J(\mu, \delta_{Q^\circ})}+3\eta.
\end{align*}
By letting $\eta \to 0$, we complete the proof of the second line of \eqref{vecholder2}. 

\underline{Case 3:} Let $i,j \in I$ be distinct. Then $x_i \in Q_i \in CZ^\circ$ and $x_j \in Q_j \in CZ^\circ$. We aim to prove the third line of  \eqref{vecholder2}: Suppose $\ka(x_i) = x \in K_p$, so that $R_{x_i}=R_x^*$. By Lemma \ref{keygeomlem}, $|x-x_i| \simeq \delta_{Q_i} \lesssim |x_i -x_j|$ and $|x-x_j|\leq |x-x_i| + |x_i - x_j| \lesssim |x_i-x_j|$, so from (\ref{movept}),
\begin{align*}
    |R_{x_i} -R_{x_j}|_{x_j,|x_j-x_i|} &\lesssim |R_x^* - R_{x_j}|_{x_j, |x-x_j|}\lesssim \|f,P_0\|_{\J(\mu, \delta_{Q^\circ})},
\end{align*}
where the last inequality follows by the analysis of Case 2. This proves the third line of \eqref{vecholder2} if $\ka(x_i) \in K_p$.

If $\ka(x_j) \in K_p$, then by \eqref{movept}, $|R_{x_i} -R_{x_j}|_{x_j,|x_j-x_i|} \lesssim |R_{x_i} -R_{x_j}|_{x_i,|x_j-x_i|}$, and we repeat the preceding analysis to prove the third line of \eqref{vecholder2} in this case.

Now suppose $\ka(x_i) = x_s, \ka(x_j) = x_t \in \bkpt$. Then $R_{x_i} = R'_{x_s}$ and $R_{x_j} = R'_{x_t}$. Because $\delta_{Q_s} \lesssim \delta_{Q_i} \simeq |x_i-x_s|\lesssim |x_i-x_j|$ (see Lemma \ref{keygeomlem}), we can apply (\ref{movept}) and (\ref{holder}) to deduce
\begin{align}
    |R_{x_i} - P_{x_i}|_{x_i,|x_i-x_j|} &= |R'_{x_s} - P_{x_i}|_{x_i,|x_i-x_j|} \nonumber \\
    &\lesssim |R'_{x_s} - P_{x_s}|_{x_s,|x_i-x_j|} + |P_{x_s} - P_{x_i}|_{x_s,|x_i-x_s|} \nonumber \\
    &\lesssim |R'_{x_s} - P_{x_s}|_{x_s,\delta_{Q_s}} +\|f,P_0\|_{\J(\mu, \delta_{Q^\circ})} +\eta. \label{aholder1} 
\end{align}
From \eqref{holder3} and \eqref{hold4}, we have
\begin{align}\label{aholder2}
|R'_{x_s} - P_{x_s}|_{x_s,\delta_{Q_s}} \lesssim \|f,P_0\|_{\J(\mu, \delta_{Q^\circ})} +\eta.
\end{align}

Combining \eqref{aholder1}, \eqref{aholder2}, we learn that
\[
|R_{x_i} - P_{x_i}|_{x_i,|x_i-x_j|} \lesssim \|f,P_0\|_{\J(\mu, \delta_{Q^\circ})} + 2\eta.
\]
Similarly,
\[
|R_{x_j} - P_{x_j}|_{x_j,|x_i-x_j|} \lesssim \|f,P_0\|_{\J(\mu, \delta_{Q^\circ})}+2 \eta.
\]
We apply the triangle inequality and \eqref{movept}, and then the preceding estimates and (\ref{holder}) to deduce
\begin{align*}
    |R_{x_i} - R_{x_j}|_{x_i, |x_i - x_j|} &\lesssim |R_{x_i} - P_{x_i}|_{x_i, |x_i - x_j|} + |P_{x_i} - P_{x_{j}}|_{x_i, |x_i - x_j|} + |P_{x_{j}} - R_{x_j}|_{x_j, |x_i - x_j|} \\
    &\lesssim \|f,P_0\|_{\J(\mu, \delta_{Q^\circ})}+5 \eta.
\end{align*}
By letting $\eta \to 0$, we complete the proof of the third line of \eqref{vecholder2} in Case 3, concluding the proof of the lemma.

\end{proof}

\begin{prop}
For $(f,P_0) \in \J(\mu; \delta_{Q^\circ})$, we have $(f,\vec{R},\vec{R}^*) \in \J_*(\mu; Q^{\circ}, CZ^\circ;K_p)$, and
\begin{align*}
\|f,\vec{R}, \vec{R}^* \|_{\J_*(\mu; Q^{\circ}, CZ^\circ;K_p)} \leq C \|f, P_0\|_{\J(\mu;\delta_{Q^{\circ}})}.
\end{align*} 
Furthermore, $(\vec{R},\vec{R}^*)$ depends linearly on $(f,P_0)$.
\label{aoptfieldlm}
\end{prop}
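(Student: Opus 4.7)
The linearity of $(\vec R, \vec R^*)$ in $(f, P_0)$ is immediate from construction: $\vec R^*$ is linear by Lemma \ref{kptlm}, each $R_s'$ is linear by Lemma \ref{alocjetlm}, and (\ref{vecr}) assembles these into $\vec R$. For the norm bound I will work through Lemma \ref{adecomplm}: since $(\vec R^*, Pr(\vec R)) \in C^{m-1,1-n/p}(K_p \cup \bpt)$ by Lemma \ref{vecholder}, it suffices to prove that $\mathcal{S}_1(f, \vec R, \vec R^*) \lesssim \|f, P_0\|^p_{\J(\mu; \delta_{Q^\circ})}$ (finiteness of $\mathcal{S}_2$ will be an automatic byproduct), whereupon Lemma \ref{adecomplm} simultaneously delivers membership in $\J_*(\mu; Q^\circ, CZ^\circ; K_p)$ and the desired estimate.

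My first step is to fix, via Corollary \ref{acohkptcor}, an auxiliary pair $(H, \vec P)$ with $J_x H = R^*_x$ on $K_p$, $Pr(\vec P)$ coherent with $P_0$, and $\|H, \vec P\|_{\J_*(f, \mu; Q^\circ, CZ^\circ)} \lesssim \|f, P_0\|_{\J(\mu; \delta_{Q^\circ})}$. This already places $(f, \vec P, \vec R^*) \in \J_*(\mu; Q^\circ, CZ^\circ; K_p)$, so (\ref{AimpB}) supplies the a priori bound
\[
\sum_{i \in I} \|f, P_i\|^p_{\J(\mu|_{1.1Q_i}; \delta_{Q_i})} + \sum_{i \lra i'} |P_i - P_{i'}|_i^p + \|\vec R^*\|^p_{L^{m,p}(K_p)} \lesssim \|f, P_0\|^p_{\J(\mu; \delta_{Q^\circ})}.
\]
The contributions $\int_{K_p} |R^*_x(x) - f(x)|^p d\mu$ and $\|\vec R^*\|^p_{L^{m,p}(K_p)}$ to $\mathcal{S}_1$ are controlled by Lemma \ref{kptlm}(2). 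For the two CZ sums in $\mathcal{S}_1$ I would pass from $\vec R$ to $\vec P$ via sublinearity (\ref{sublinearj2}) and (\ref{locest1}), writing $\|f, R_i\|_{\J(\mu|_{1.1Q_i}; \delta_{Q_i})} \leq \|f, P_i\|_{\J(\mu|_{1.1Q_i}; \delta_{Q_i})} + C|R_i - P_i|_i$, and $|R_i - R_{i'}|_i \leq |R_i - P_i|_i + |P_i - P_{i'}|_i + |P_{i'} - R_{i'}|_{i'}$ (the base-point change in the last term uses (\ref{movept}) together with $|x_i - x_{i'}| \simeq \delta_{Q_i} \simeq \delta_{Q_{i'}}$ for $i \lra i'$). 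This reduces the task to proving $\sum_{i \in I} |R_i - P_i|_i^p \lesssim \|f, P_0\|^p_\J$.

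For this residual sum I would apply the estimate (\ref{acapprop}) with $\vec{P^*} := \vec R^*$ and the $\vec P$ above, whose hypothesis $(\vec R^*, Pr(\vec P)) \in C^{m-1,1-n/p}(K_p \cup \bpt)$ is supplied by Proposition \ref{spinprop}. That estimate gives a projected Whitney field $\vec{\bar P}$ with $\sum_i |P_i - \bar P_i|_i^p \lesssim \|H, \vec P\|^p_{\J_*(f, \mu; K_{CZ}, CZ^\circ)} \lesssim \|f, P_0\|^p_\J$. Comparing the definitions in (\ref{vecr}) and the acapprop projection, $R_i - \bar P_i$ vanishes when $\kappa(i) \in K_p$ (both equal $R^*_{\kappa(i)}$) and equals $R_s' - P_s$ when $\kappa(i) = s \in \bar I$. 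Since $R_s'$ and $P_s$ are both coherent with $P_0$ and $\ma$ is monotonic, $R_s' - P_s$ has vanishing $\ma$-derivatives at $x_s$. Taylor-expanding $R_s' - P_s$ at $x_s$, using $|x_i - x_s| \lesssim \delta_{Q_i}$ (Lemma \ref{keygeomlem}(3)), the bounded-overlap property of Lemma \ref{keygeomlem}(4), and the crucial fact that $n + |\al|p - mp < 0$ for all $\al \in \mm$ (valid because $p > n$ and $|\al| \leq m-1$), a geometric summation over $i \in \kappa^{-1}(s)$ yields $\sum_{i: \kappa(i) = s} |R_s' - P_s|_i^p \lesssim |R_s' - P_s|^p_{x_s, \delta_{Q_s}}$. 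Applying (\ref{locest2}) (permissible since $R_s' - P_s$ has vanishing $\ma$-derivatives at $x_s$) and then sublinearity together with the near-optimality (\ref{alocjet}) of $R_s'$ among $P_0$-coherent polynomials gives $|R_s' - P_s|^p_{x_s, \delta_{Q_s}} \lesssim \|f, P_s\|^p_{\J(\mu|_{9Q_s}; \delta_{Q_s})}$.

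The main obstacle will be the concluding inequality $\sum_{s \in \bar I} \|f, P_s\|^p_{\J(\mu|_{9Q_s}; \delta_{Q_s})} \lesssim \|f, P_0\|^p_\J$. Since $Q_s$ is keystone, $9 Q_s \cap Q^\circ \subset K_{CZ}$ and is covered by at most $C$ CZ cubes $Q_j$ with $\delta_{Q_j} \simeq \delta_{Q_s}$ (via Lemma \ref{10q} together with the $1.1 Q_j \cap 100 Q_s \neq \emptyset$ counting argument from the proof of Lemma \ref{alocjetlm}). I would reproduce the patching inequality (\ref{aoptlocext}) to control $\|f, P_s\|^p_{\J(\mu|_{9Q_s}; \delta_{Q_s})}$ by the local norms $\|f, P_j\|^p_{\J(\mu|_{1.1Q_j}; \delta_{Q_j})}$ plus neighbor jet-differences $|P_j - P_{j'}|_j^p$ over those $Q_j$. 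Each CZ cube participates in this expansion for boundedly many keystone $s$, so the sum over $s \in \bar I$ is dominated by the quantities already controlled in the first display above, hence $\lesssim \|f, P_0\|^p_\J$. Assembling every piece yields $\mathcal{S}_1(f, \vec R, \vec R^*) \lesssim \|f, P_0\|^p_\J$, and Lemma \ref{adecomplm} then concludes the proof.
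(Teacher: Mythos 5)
Your proposal is essentially correct and follows the same geometric core as the paper's proof: linearity from construction, $(\vec{R}^*,Pr(\vec{R}))$ H\"older-continuity via Lemma \ref{vecholder}, passage through the projected Whitney field $\vec{\bar{P}}$ via (\ref{acapprop}), and the keystone summation using $|\beta|p + n - mp < 0$, (\ref{locest2}), (\ref{alocjet}), and the bounded-multiplicity of $\ka^{-1}(s)$.

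The one place you deviate is the terminal estimate $\sum_{s \in \bar{I}} \|f,P_s\|^p_{\J(\mu|_{9Q_s};\delta_{Q_s})} \lesssim \|f,P_0\|^p_{\J}$. You propose to re-run the patching inequality (\ref{aoptlocext}) and then sum over keystone $s$ using bounded overlap of the families $CZ_s$. This works, but it is a more circuitous route than the paper's, and it hides two small subsidiary arguments that you don't spell out: (i) (\ref{aoptlocext}) produces an \emph{all-pairs} jet-difference sum $\sum_{j,j'\in CZ_s}|P^j - P^{j'}|^p$, which must be reduced to \emph{neighbor} differences $\sum_{i \lra i'}|P_i - P_{i'}|_i^p$ by triangle-inequalitying through chains near $Q_s$ and tracking the base-point/scale changes via (\ref{movept}); and (ii) the bounded overlap of $\{CZ_s\}_{s \in \bar{I}}$, which requires verifying that any $Q_j$ with $1.1 Q_j \cap 100 Q_s \neq \emptyset$ has $\delta_{Q_j} \simeq \delta_{Q_s}$ (combining the keystone condition, good geometry along chains, and $100 Q_s \cap K_p = \emptyset$) and then applying a volume count. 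Both facts are true and in the paper's spirit, but neither is stated as a citable lemma. The paper's route sidesteps both: it simply takes the auxiliary near-optimal extension $H$ from Corollary \ref{acohkptcor}, writes $\|f,P_s\|_{\J(\mu|_{9Q_s};\delta_{Q_s})} \leq \|H,P_s\|_{\J_*(f,\mu|_{10Q_s};10Q_s \cap Q^\circ)}$, controls the $\|H-P_s\|_{L^p(10Q_s\cap Q^\circ)}$ terms via (\ref{cub2}), and sums using the cited bounded overlap of $\{10Q_s\}_{s\in\bar{I}}$ among keystone cubes (Lemma \ref{10q}). If you keep your version, supply the two geometric facts above; otherwise substitute the paper's shorter estimate.

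A minor organizational difference: you triangle-inequality the $\mathcal{S}_1$ CZ sums against $\vec{P}$ and then split $|R_i - P_i|$ into $|R_i - \bar{P}_i| + |\bar{P}_i - P_i|$; the paper triangle-inequalities directly against $\vec{\bar{P}}$ and invokes (\ref{apbarp}) to package the $\vec{\bar{P}} \to \vec{P}$ comparison. The two are equivalent.
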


\begin{proof}
From Lemma \ref{kptlm}, Lemma \ref{alocjetlm}, and the definition of $\vec{R}$ in (\ref{vecr}), $(\vec{R},\vec{R}^*)$ depends linearly on $(f,P_0)$.

Because of Corollary \ref{acohkptcor}, it suffices to show that for any $\vec{P} \in Wh(\bpt)$, satisfying $\vec{P}$ is coherent with $P_0$  and $(f,\vec{P},\vec{R}^*) \in \J_*(\mu;Q^\circ,CZ^\circ;K_p)$, we have 
\begin{align}\label{goal1}
    \|f,\vec{R},\vec{R}^*\|_{\J_*(\mu; Q^{\circ}, CZ^\circ;K_p)}  \leq C \|f,\vec{P},\vec{R}^*\|_{\J_*(\mu; Q^{\circ}, CZ^\circ;K_p)}.
\end{align}

Let $\vec{P}= \{P_{x_i}\}_{i \in I} \in Wh(\bpt)$ satisfy $\vec{P}$ is coherent with $P_0$ and $(f,\vec{P},\vec{R}^*) \in \J_*(\mu;Q^\circ,CZ^\circ;K_p)$. Observe that the Whitney field $(\vec{R}^*, \vec{P}) \in Wh(K_p \cup \mathfrak{B}_{CZ})$ is in the class $C^{m-1,1-n/p}(K_p \cup \bpt)$, thanks to Proposition \ref{spinprop}. Define $\vec{P}^{**} \in Wh(\bpt)$ by 
\begin{equation}
P^{**}_{x_i}: = \left\{ 
\begin{aligned}
&P_{\ka(x_i)} && \mbox{ if } \ka(x_i) \in \bkpt\\
&R^*_{\ka(x_i)} && \mbox{ if } \ka(x_i) \in K_p.
\end{aligned}
\right.\label{vecP**}
\end{equation}

We will demonstrate that $(f, \vec{P}^{**}, \vec{R}^*) \in \J_*(\mu;Q^\circ,CZ^\circ;K_p)$. To see this, suppose that $F \in L^{m,p}(Q^\circ)$ is arbitrary, satisfying $J_x(F)=R^*_x$ for all $x \in K_p$. Recalling \eqref{new_Jfunc1:eqn}, and applying (\ref{acapprop}) to the pair of Whitney fields $(\vec{R}^*,\vec{P})$, we have
\begin{align*}
   \|F,\vec{P}^{**}\|_{\J_*(f,\mu; Q^{\circ}, CZ^\circ)}^p
    &= \|F\|_{L^{m,p}(Q^\circ)}^p + \int_{Q^\circ} |F-f|^p d\mu + \sum_{i \in I} \|F-P^{**}_{x_i}\|_{L^p(Q_i)}^p/\delta_{Q_i}^{mp}\\
    &\lesssim \|F\|_{L^{m,p}(Q^\circ)}^p + \int_{Q^\circ} |F-f|^p d\mu + \sum_{i \in I} \left\{ \|F-P_{x_i}\|_{L^p(Q_i)}^p/\delta_{Q_i}^{mp}+ |P_{x_i}-P^{**}_{x_i}|_i^p \right\}\\
    &= \|F,\vec{P}\|_{\J_*(f,\mu; Q^{\circ}, CZ^\circ)}^p + \sum_{i \in I}  |P_{x_i}-P^{**}_{x_i}|_i^p \\ 
    & \overset{(\ref{acapprop})}{\lesssim} \|F,\vec{P}\|_{\J_*(f,\mu; Q^{\circ}, CZ^\circ)}^p.
\end{align*}
Here, the first $\lesssim$ uses the triangle inequality as well as the fact that $\| P \|_{L^p(Q_i)} \delta_{Q_i}^{-m} \simeq |P|_i$ for $P \in \mpp$. By taking the infimum in the above inequality over $F \in L^{m,p}(Q^{\circ})$ satisfying $J_x(F) = R_x^*$ for all $x \in K_p$,
\begin{align}
    \|f,\vec{P}^{**}, \vec{R}^*\|_{\J_*(\mu; Q^{\circ}, CZ^\circ;K_p)}^p \lesssim \|f,\vec{P},\vec{R}^*\|_{\J_*(\mu; Q^{\circ}, CZ^\circ;K_p)}^p < \infty. \label{apbarp}
\end{align}
Thus, $(f, \vec{P}^{**}, \vec{R}^*) \in \J_*(\mu;Q^\circ,CZ^\circ;K_p)$, as claimed.

From Lemma \ref{vecholder}, we have $(\vec{R}^*, \vec{R}) \in C^{m-1,1-n/p}(K_p \cup \mathfrak{B}_{CZ})$. Therefore, by Lemma \ref{adecomplm}, to demonstrate that $(f, \vec{R}, \vec{R}^*) \in \J_*(\mu;Q^\circ,CZ^\circ;K_p)$ it suffices to prove that
\begin{align}
\mathcal{S}_2(f,\vec{R},\vec{R}^*) := \sum_{i \in I} \|f,R_{x_i}  \|_{\J(\mu|_{1.1Q_i}; \delta_{Q_i})}^p  & + \sum_{i \lra i'}  |R_{x_i} - R_{x_{i'}}|_i^p \nonumber  \\
& + \|\vec{R}^*\|_{L^{m,p}(K_p)}^p  + \int_{K_p} |R^*_x(x) - f(x)|^p d \mu < \infty. \label{goal2}
\end{align}

By the triangle inequality,  (\ref{locest1}), (\ref{movept}),  (\ref{adecomp}),  \eqref{func_decomp1}, and (\ref{apbarp}),
\begin{align}
\sum_{i \in I} & \|f,R_{x_i}\|_{\J(\mu|_{1.1Q_i}; \delta_{Q_i})}^p + \sum_{i \lra i'} |R_{x_i} - R_{x_{i'}}|_i^p  \nonumber \\
&\lesssim \sum_{i \in I} \Big\{\|f,P^{**}_{x_i}\|_{\J(\mu|_{1.1Q_i}; \delta_{Q_i})}^p  + \|0,P^{**}_{x_i} - R_{x_i}\|_{\J(\mu|_{1.1Q_i}; \delta_{Q_i})}^p \Big\}  \nonumber  \\
& \qquad \qquad + \sum_{i \lra i'} \Big\{ |R_{x_i} - P^{**}_{x_i}|_i^p + |P^{**}_{x_i} - P^{**}_{x_{i'}}|_i^p + |R_{x_{i'}} - P^{**}_{x_{i'}}|_i^p \Big\} \nonumber \\
&\overset{(\ref{locest1}), (\ref{movept})}{\lesssim} \sum_{i \in I} \Big\{\|f,P^{**}_{x_i}\|_{\J(\mu|_{1.1Q_i}; \delta_{Q_i})}^p  + |P^{**}_{x_i} - R_{x_i}|_i^p \Big\} \nonumber \\
& \qquad \qquad + \sum_{i \lra i'} \Big\{ |R_{x_i} - P^{**}_{x_i}|_i^p + |P^{**}_{x_i} - P^{**}_{x_{i'}}|_i^p + |R_{x_{i'}} - P^{**}_{x_{i'}}|_{i'}^p \Big\} \nonumber \\
&\quad \; \; \lesssim  \sum_{i \in I} \|f,P^{**}_{x_i}\|_{\J(\mu|_{1.1Q_i}; \delta_{Q_i})}^p  + \sum_{i \lra i'}  |P^{**}_{x_i} - P^{**}_{x_{i'}}|_i^p +  \sum_{i \in I}  |P^{**}_{x_i} - R_{x_i}|_i^p \nonumber \\
&\; \overset{(\ref{adecomp}), \eqref{func_decomp1}}{\lesssim} \|f,\vec{P}^{**},\vec{R}^*\|_{\J_*(\mu; Q^{\circ}, CZ^\circ;K_p)}^p + \sum_{i \in I} |R_{x_i} - P^{**}_{x_i}|_i^p \nonumber\\
&\quad \overset{(\ref{apbarp})}{\lesssim} \|f,\vec{P},\vec{R}^*\|_{\J_*(\mu; Q^{\circ}, CZ^\circ;K_p)}^p + \sum_{i \in I} |R_{x_i} - P^{**}_{x_i}|_i^p. \label{aoptfield}
\end{align}

We wish to bound the sum $\sum_{i \in I} |R_{x_i} - P^{**}_{x_i}|_i^p$. If $\ka(x_i)  \in K_p$, then $R_{x_i} = R_{\kappa(x_i)}^* = P^{**}_{x_i}$, by \eqref{vecr} and \eqref{vecP**}, and so the summand vanishes. Else, suppose $\ka(x_i) \in \bkpt$. Then $R_{x_i} = R'_{\ka(x_i)}$ and $P^{**}_{x_i} = P_{\ka(x_i)}$, by \eqref{vecr} and \eqref{vecP**}. Hence,
\begin{align}
    \sum_{i \in I} |R_{x_i} - P^{**}_{x_i}|_i^p = \sum_{i \in I: \ka(x_i) \in \bkpt} |R_{\kappa(x_i)}' - P_{\ka(x_i)}|_i^p. \label{aowf1}
\end{align}
For $\ka(x_i) \in \bkpt$, we have $|x_i - \ka(x_i)| \leq C\delta_{Q_i}$ (see Lemma \ref{keygeomlem}), so, by recalling the definition of the $|\cdot|_i$ polynomial norm, and by (\ref{movept}), 
\begin{align*}
\sum_{i \in I: \ka(x_i) \in \bkpt} |R_{\ka(x_i)}' - P_{\ka(x_i)}|_{x_i, \delta_{Q_i}}^p  & \leq \sum_{i \in I: \ka(x_i) \in \bkpt} |R_{\ka(x_i)}' - P_{\ka(x_i)}|_{\ka(x_i), \delta_{Q_i}}^p \\
&=  \sum_{s \in \bar{I}} \sum_{x_i \in \ka^{-1}(x_s)} |P_{x_s} - R_{x_s}'|_{x_s, \delta_{Q_i}}^p  \\
&= \sum_{s \in \bar{I}} \sum_{x_i \in \ka^{-1}(x_s)} \sum_{|\beta| \leq m-1} |\p^\beta(P_{x_s} - R_{x_s}')(x_s)|^p \cdot \delta_{Q_i}^{|\beta|p + n -mp}. 
\end{align*}
By Lemma \ref{keygeomlem}, for any $s \in \bar{I}$ and $x_i \in \ka^{-1}(x_s)$, we have $\delta_{Q_i} \geq c\delta_{Q_s}$; furthermore, for any dyadic lengthscale $\delta > 0$, and fixed $s$,
\begin{align*}
    | \{ i \in I : x_i \in \ka^{-1}(x_s), \delta_{Q_i} = \delta \}| \leq C.
\end{align*}
Thus, in combination with the inequality, $|\beta|p + n -mp <0$ for all $\beta \in \mm$, we have
\begin{align*}
 \sum_{x_i \in \ka^{-1}(x_s)} \delta_{Q_i}^{|\beta|p + n -mp} \lesssim   \delta_{Q_s}^{|\beta|p + n -mp}.
\end{align*}
So, using the previous three equation lines, we reduce (\ref{aowf1}),
\begin{align}
\sum_{i \in I} |R_{x_i} - P^{**}_{x_i}|_i^p &\lesssim  \sum_{s \in \bar{I}} \sum_{|\beta| \leq m-1} |\p^\beta(P_{x_s} - R_{x_s}')(x_s)|^p \delta_{Q_s}^{|\beta|p + n -mp}  =\sum_{s \in \bar{I}} |P_{x_s} - R_{x_s}'|_s^p. \label{aowf5}
\end{align}
Because $\vec{P}$ and $\vec{R}$ in $Wh(\mathfrak{B}_{CZ})$ are both coherent with $P_0$ , we can manipulate the right-hand side of the inequality (\ref{aowf5}) using (\ref{locest2}), (\ref{sublinearj}), (\ref{alocjet}), and (\ref{j8}), to obtain
\begin{align}
\sum_{i \in I} |R_{x_i} - P^{**}_{x_i}|_i^p &\overset{(\ref{locest2})}{\lesssim}  \sum_{s \in \bar{I}} \|0, P_{x_s} - R_{x_s}'\|_{\J(\mu|_{9Q_s}, \delta_{Q_s})}^p \nonumber \\
&\overset{(\ref{sublinearj})}{\lesssim} \sum_{s \in \bar{I}} \|f, P_{x_s}\|_{\J(\mu|_{9Q_s}, \delta_{Q_s})}^p +  \|f, R_{x_s}'\|_{\J(\mu|_{9Q_s}, \delta_{Q_s})}^p \nonumber \\
&\overset{(\ref{alocjet})}{\lesssim} \sum_{s \in \bar{I}} \|f, P_{x_s}\|_{\J(\mu|_{9Q_s}, \delta_{Q_s})}^p \nonumber \\
&\overset{(\ref{j8})}{\lesssim} \sum_{s \in \bar{I}}  \|f, P_{x_s}\|_{\J_*(\mu|_{9Q_s}; 10Q_s \cap Q^{\circ})}^p. \label{aowf2}
\end{align}
Consider an arbitrary $H \in L^{m,p}(Q^{\circ})$ satisfying $J_xH = R_x^*$ for all $x \in K_p$. Recall from Lemma \ref{10q}, for any keystone cube $Q_s$, $\left| \{ s' \in \bar{I}: 10Q_s \cap 10Q_{s'} \neq \emptyset \} \right| \leq C$. So from (\ref{aowf2}), we have
\begin{align}
\sum_{i \in I} |R_{x_i} - P^{**}_{x_i}|_i^p
&\lesssim \sum_{s \in \bar{I}}  \|H, P_{x_s}\|_{\J_*(f,\mu|_{10Q_s};10Q_s \cap Q^{\circ})}^p \nonumber \\
&\lesssim   \|H\|_{L^{m,p}(Q^{\circ})}^p + \int |H-f|^p d\mu + \sum_{s \in \bar{I}} \|H - P_{x_s}\|_{L^p(10Q_s \cap Q^{\circ})}^p / \delta_{Q_s}^{mp}. \label{aowf3}
\end{align}
Now apply (\ref{cub2}) (with $R_1 = Q_s$ and $R_2 = 10 Q_s \cap Q^\circ$), to obtain
\[
    \|H - P_{x_s}\|_{L^p(10Q_s \cap Q^{\circ})}^p / \delta_{Q_s}^{mp} \lesssim \|H - P_{x_s}\|_{L^p(Q_s)}^p / \delta_{Q_s}^{mp} + \|H\|_{L^{m,p}(10 Q_s \cap Q^\circ)}^p.
\]
Then summing on $s$, and using the bounded overlap condition on $\{10 Q_s : s \in \bar{I}\}$ again, we obtain
\begin{align*}
\sum_{s \in \bar{I}} \|H - P_{x_s}\|_{L^p(10Q_s \cap Q^{\circ})}^p / \delta_{Q_s}^{mp} \lesssim \sum_{s \in \bar{I}}  \|H - P_{x_s}\|_{L^p(Q_s)}^p / \delta_{Q_s}^{mp} + \|H\|_{L^{m,p}(Q^\circ)}^p
\end{align*}
Using this in (\ref{aowf3}),
\begin{align*}
\sum_{i \in I} |R_{x_i} - P^{**}_{x_i}|_i^p &\lesssim  \|H\|_{L^{m,p}(Q^{\circ})}^p + \int |H-f|^p d\mu + \sum_{i \in I} \|H - P_{x_i}\|_{L^p(Q_i)}^p / \delta_{Q_i}^{mp} \\
&= \|H, \vec{P} \|_{\J_*(f,\mu;Q^{\circ}, CZ^\circ)}^p.
\end{align*} 
Taking the infimum with respect to $H \in L^{m,p}(Q^{\circ})$ satisfying $J_xH = R_x^*$ for all $x \in K_p$, we have
\begin{equation}\label{int1}
\sum_{i \in I} |R_{x_i} - P^{**}_{x_i}|_i^p \lesssim  \|f, \vec{P},\vec{R}^* \|_{\J_*(\mu; Q^{\circ}, CZ^\circ;K_p)}^p.
\end{equation}
Furthermore, for any $H \in L^{m,p}(Q^{\circ})$ satisfying $J_xH = R_x^*$ for all $x \in K_p$, we have
\begin{align*}
\| \vec{R}^* \|_{L^{m,p}(K_p)}^p & + \int_{K_p} | R^*_x(x) - f(x)|^p d \mu \lesssim \| H \|_{L^{m,p}(Q^\circ)}^p + \int_{K_p} |H - f|^p d \mu \leq \| H, \vec{P} \|_{\J_*(f, \mu; Q^\circ, CZ^\circ)}^p.
\end{align*}
For the first inequality, we have used Lemma \ref{restwh}. The second inequality follows by definition of the $\J_*(f, \mu; Q^\circ, CZ^\circ)$ functional. Now, taking the infimum over all $H$ in the previous inequality,
\begin{equation}\label{th1}
\| \vec{R}^* \|_{L^{m,p}(K_p)}^p + \int_{K_p} | R^*_x(x) - f(x)|^p d \mu \lesssim  \|f, \vec{P},\vec{R}^* \|_{\J_*(\mu; Q^{\circ}, CZ^\circ;K_p)}^p.
\end{equation}

From (\ref{aoptfield}), \eqref{int1}, \eqref{th1}, we conclude that $\mathcal{S}_2(f,\vec{R},\vec{R}^*)$ defined in \eqref{goal2} satisfies
\begin{align*}
& \mathcal{S}_2(f,\vec{R},\vec{R}^*) \lesssim \|f, \vec{P},\vec{R}^* \|_{\J_*(\mu; Q^{\circ}, CZ^\circ;K_p)}^p < \infty.
\end{align*}
Thus, we have proven \eqref{goal2}, as desired.

We conclude by Lemma \ref{adecomplm} that $(f,\vec{R},\vec{R}^*) \in \J_*(\mu; Q^{\circ}, CZ^\circ;K_p)$ and
\begin{align*}
\|f, \vec{R},\vec{R}^* \|_{\J_*(\mu; Q^{\circ}, CZ^\circ;K_p)}^p &\simeq \mathcal{S}_1(f,\vec{R},\vec{R}^*) \leq \mathcal{S}_2(f,\vec{R},\vec{R}^*) \lesssim \|f, \vec{P},\vec{R}^* \|_{\J_*(\mu; Q^{\circ}, CZ^\circ;K_p)}^p.
\end{align*}
This completes the proof of \eqref{goal1}. This completes the proof of the proposition.
\end{proof}

\section{Proof of the Main Lemma for $\mathcal{A}$.}
\label{sec:proof_ml}

The next lemmas tell us that the Main Lemma for $\mathcal{A}$ is true. That is, we shall establish the Extension Theorem for $(\mu,\delta)$. Recall, we have rescaled and translated $\mu$ and $\delta$, so that $\diam(\supp(\mu)) < \delta = 1/10$ (see (\ref{aqsubset})).

\begin{lem}
There exist a linear map $T: \J(\mu;1/10) \to L^{m,p}(\R^n)$, a map $M: \J(\mu;1/10) \to \R_+$, $K \subset \Cl(\supp(\mu))$, a linear map $\vec{S}: \J(\mu) \to Wh(K)$, and a countable collection of linear maps $\{\lambda_\ell\}_{\ell \in \mathbb{N}}$, $\lambda_\ell: \J(\mu;1/10) \to L^p(d\mu)$, that satisfy for each $(f,P_0) \in \J(\mu;1/10)$, (\ref{aet1}), (\ref{aet2}), and (\ref{aet3}) hold with $\delta = 1/10$. Furthermore,if $\ma \neq \emptyset$ then $K = \emptyset$, and so the map $M$ satisfies (\ref{aextthm3plus}). \label{localpflm}
\end{lem}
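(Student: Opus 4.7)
The plan is to assemble the pieces established in the preceding sections. After rescaling we may assume $\delta = \tfrac{1}{10}$ and $\supp(\mu) \subset \tfrac{1}{10}Q^\circ$, so that $\|\cdot\|_{\J(\mu;\delta)} \simeq \|\cdot\|_{\J(\mu;\delta_{Q^\circ})}$ by the monotonicity \eqref{j7} and the equivalence \eqref{j3}. Given $(f,P_0) \in \J(\mu;\delta)$, first invoke Proposition \ref{aoptfieldlm} to produce Whitney fields $\vec{R}=\vec{R}(f,P_0) \in Wh(CZ^\circ)$ and $\vec{R}^*=\vec{R}^*(f,P_0) \in Wh(K_p)$, both linear in $(f,P_0)$, with $\|f,\vec{R},\vec{R}^*\|_{\J_*(\mu;Q^\circ,CZ^\circ;K_p)} \lesssim \|f,P_0\|_{\J(\mu;\delta_{Q^\circ})}$. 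Feed this data into the linear patching operator from Lemma \ref{adecomplm} to obtain $T_0(f,\vec{R},\vec{R}^*) \in L^{m,p}(Q^\circ)$ satisfying $J_x T_0 = R_x^*$ for all $x \in K_p$ together with $\|T_0,\vec{R}\|_{\J_*(f,\mu;Q^\circ,CZ^\circ)} \lesssim \|f,\vec{R},\vec{R}^*\|_{\J_*(\mu;Q^\circ,CZ^\circ;K_p)}$. Finally extend from $Q^\circ$ to $\R^n$ so as to agree with $P_0$ outside $Q^\circ$: apply Stein's Sobolev extension operator for cubes to $T_0$, then patch with $P_0$ by the cutoff construction of Lemma \ref{lem:1}, exactly as in the proof of \eqref{j3}. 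Let $T(f,P_0)$ denote the result; this is a linear map $\J(\mu;\delta) \to L^{m,p}(\R^n)$, and chaining the three bounds above with the trivial lower estimate $\|T(f,P_0),P_0\|_{\J(f,\mu;\delta)} \geq \|f,P_0\|_{\J(\mu;\delta)}$ (immediate from the infimum definition of the sum-space seminorm) yields \eqref{aet1}.

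To construct $M$, expand via Lemma \ref{adecomplm}:
\begin{align*}
\|f,\vec{R},\vec{R}^*\|_{\J_*(\mu;Q^\circ,CZ^\circ;K_p)}^p \simeq \mathcal{S}_1(f,\vec{R},\vec{R}^*),
\end{align*}
the right-hand side being the sum of the local pieces $\sum_i \|f,R_i\|^p_{\J(\mu|_{1.1Q_i};\delta_{Q_i})}$, the glue pieces $\sum_{i \lra i'} |R_i - R_{i'}|^p_i$, the keystone-point error $\int_{K_p}|R_x^*(x)-f(x)|^p\,d\mu$, and, only when $\ma=\emptyset$, $\|\vec{R}^*\|_{L^{m,p}(K_p)}^p$. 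By the inductive hypothesis \textbf{(AL1)--(AL3)} applied to each $(\mu|_{1.1Q_i},\delta_{Q_i})$ (licit because $Q_i$ is OK), each local piece is equivalent to $\sum_\ell \|\lambda_\ell^i(f,R_i)\|^p_{L^p(d\mu)} + \sum_\ell |\phi_\ell^i(f,R_i)|^p$ with $\phi_\ell^i,\lambda_\ell^i$ linear; composition with the linear map $(f,P_0) \mapsto R_i$ turns these into linear functionals or $L^p(d\mu)$-valued linear maps in $(f,P_0)$. Each glue piece expands into finitely many terms $|\partial^\al(R_i-R_{i'})(x_i)|^p\,\delta_{Q_i}^{|\al|p+n-mp}$, each the absolute $p$th power of a linear functional in $(f,P_0)$. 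The keystone-point error equals $\|\lambda_{\mathrm{kp}}(f,P_0)\|^p_{L^p(d\mu)}$ for the single linear map $\lambda_{\mathrm{kp}}(f,P_0):=(R_x^*(x)-f(x))\chi_{K_p}(x)$, which lies in $L^p(d\mu)$ by property~2 of Lemma \ref{kptlm}. Reindexing these countable contributions into two families $\{\phi_\ell\}_{\ell \in \N}, \{\lambda_\ell\}_{\ell \in \N}$ of linear maps on $\J(\mu;\delta)$, we define $M(f,P_0)$ via \eqref{aet3}.

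It remains to handle the Whitney-norm term and the two cases for $\ma$. When $\ma \neq \emptyset$, Lemma \ref{measkplem} gives $|K_p|=0$, so the $\|\vec{R}^*\|_{L^{m,p}(K_p)}^p$ contribution is absent from $\mathcal{S}_1$, and we set $K:=\emptyset$ with $\vec{S}$ trivial, recovering \eqref{aextthm3plus}. When $\ma = \emptyset$ the $K_p$-coherence condition with $P_0$ is vacuous, so property~3 of Lemma \ref{kptlm} shows $\vec{R}^*$ depends only on $f$; we set $K := K_p$, which by \eqref{kpinf} is contained in $\supp(\mu) \subset \Cl(\supp(\mu))$, and $\vec{S}(f) := \vec{R}^*(f)$, a linear map $\J(\mu) \to Wh(K)$, so that the $\|\vec{S}(f)\|_{L^{m,p}(K)}^p$ entry of \eqref{aet3} captures exactly $\|\vec{R}^*\|_{L^{m,p}(K_p)}^p$. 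Estimate \eqref{aet2} then follows by chaining $M(f,P_0)^p \simeq \mathcal{S}_1 \simeq \|f,\vec{R},\vec{R}^*\|^p_{\J_*(\mu;Q^\circ,CZ^\circ;K_p)} \simeq \|f,P_0\|^p_{\J(\mu;\delta)} \simeq \|T(f,P_0),P_0\|^p_{\J(f,\mu;\delta)}$.

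The main obstacle will be bookkeeping: reindexing the countable family of linear functionals contributed by each CZ cube, together with the glue pairs and the single keystone-error term, into two coherent sequences $\{\phi_\ell\}, \{\lambda_\ell\}$ of linear maps while preserving both linearity and the sum structure prescribed by \eqref{aet3}. A secondary technical check is that $(R_x^*(x)-f(x))\chi_{K_p}(x) \in L^p(d\mu)$ for every $(f,P_0) \in \J(\mu;\delta)$, which follows from property~2 of Lemma \ref{kptlm} combined with Proposition \ref{aoptfieldlm}; no other new input is needed, since every other ingredient has already been established in the preceding sections.
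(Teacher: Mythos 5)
Your proposal follows the paper's high-level strategy (Proposition \ref{aoptfieldlm} to produce linear Whitney fields $\vec{R},\vec{R}^*$; Lemma \ref{adecomplm} to patch; a cutoff to extend from $Q^\circ$; expansion of $\mathcal{S}_1$ via \textbf{(AL1)--(AL3)} to produce $\{\lambda_\ell\},\{\phi_\ell\}$), but it has a genuine gap, and it is not cosmetic.

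The chain you propose — $M(f,P_0)^p \simeq \mathcal{S}_1 \simeq \|f,\vec{R},\vec{R}^*\|^p_{\J_*(\mu;Q^\circ,CZ^\circ;K_p)} \simeq \|f,P_0\|^p_{\J(\mu;\delta)}$ — is false at the third link, in the direction $\gtrsim$. Proposition \ref{aoptfieldlm} gives only $\|f,\vec{R},\vec{R}^*\|_{\J_*} \lesssim \|f,P_0\|_{\J(\mu;\delta)}$; the reverse inequality does not hold because $\|\cdot,\cdot,\cdot\|_{\J_*(\mu;Q^\circ,CZ^\circ;K_p)}$ never sees $P_0$. Concretely, $\vec{R}$ and $\vec{R}^*$ are coherent with $P_0$ only in the $\ma$ directions; for $\al \in \mm\setminus\ma$ (e.g., the constant term, since $0 \notin \ma$), replacing $P_0$ by $P_0+c$ changes $\|f,P_0\|_{\J(\mu;\delta)}$ but leaves $\vec{R},\vec{R}^*$ (hence $\mathcal{S}_1$) untouched. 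The same issue sinks the upper bound in \eqref{aet1}: after the cutoff, you must control $\|T_0 - P_0\|_{L^p(Q^\circ)}/\delta^m$ (this is what the $\J(f,\mu;\delta)$ functional asks for), and $\|T_0,\vec{R}\|_{\J_*(f,\mu;Q^\circ,CZ^\circ)}$ controls $\|T_0 - R_i\|_{L^p(Q_i)}/\delta_{Q_i}^m$, not $\|T_0 - P_0\|_{L^p}$.

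What is missing is the term $|R_1 - P_0|_{x_1,\delta_{Q_1}}^p$ for a large CZ cube $Q_1$ with $\delta_{Q_1}\simeq 1$. The paper's proof devotes roughly half its length (inequalities \eqref{f1} and \eqref{r1}--\eqref{r9}) to establishing $\|R_1-P_0\|_{L^p(Q_1)} \lesssim \|f,P_0\|_{\J(\mu;\delta_{Q^\circ})}$ — the argument splits into cases $\ka(1) \in K_p$ vs.\ $\ka(1)\in\bar I$, and uses \eqref{locest2}, the coherence of $R_1$ and $P_{\ka(1)}$ with $P_0$, the near-optimality of $R'_s$, and monotonicity of $\ma$. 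This yields the correct equivalence $\|f,P_0\|_{\J(\mu;\delta_{Q^\circ})}^p \simeq \|f,\vec{R},\vec{R}^*\|_{\J_*}^p + |R_1 - P_0|_{x_1,\delta_{Q_1}}^p$ (equation \eqref{r10}), and $M$ must then include the finitely many linear functionals $\zeta_\al(f,P_0) = c_\al\, \p^\al(R_1-P_0)(x_1)$ built from this term (see \eqref{zeta}). Without them, your $M$ cannot satisfy the lower bound in \eqref{aet2}. The rest of your bookkeeping (reindexing the $\lambda_\ell^i$, $\phi_\ell^i$, the glue terms, the keystone-point error $\lambda'$, and the split by $\ma=\emptyset$ vs.\ $\ma\neq\emptyset$) is correct and matches the paper, but the $|R_1-P_0|$ term is not optional.
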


\begin{proof}

We recall from Proposition \ref{aoptfieldlm}, there exists $(\vec{R}, \vec{R}^*)$ depending linearly on $(f,P_0)$, so that $(f,\vec{R},\vec{R}^*) \in \J_*(\mu; Q^\circ, CZ^\circ; K_p)$, and
\begin{align}\label{f4}
\|f,\vec{R}, \vec{R}^* \|_{\J_*(\mu; Q^{\circ}, CZ^\circ;K_p)} \leq C \|f, P_0\|_{\J(\mu;\delta_{Q^{\circ}})}.
\end{align} 
Because $(f, \vec{R}, \vec{R}^*) \in \J_*(\mu;Q^\circ,CZ^\circ;K_p)$, we can apply Lemma \ref{adecomplm} to produce $T(f, \vec{R}, \vec{R}^*) \in L^{m,p}(Q^\circ)$ satisfying $J_xT(f,\vec{R}, \vec{R}^*) = R_x$ for $x \in K_p$, and
\begin{align}
\|T(f,\vec{R}, \vec{R}^*), \vec{R}\|_{\J_*(f,\mu; Q^{\circ}, CZ^\circ)} &\leq C \|f,\vec{R}, \vec{R}^*\|_{\J_*(\mu; Q^{\circ}, CZ^\circ;K_p)}. \label{f5}
\end{align}

Define $T:\J(\mu;\delta) \to L^{m,p}(\R^n)$ as $T(f,P_0): = \theta \cdot T(f, \vec{R}, \vec{R}^*) + (1-\theta)P_0$, where $\theta$ is a smooth cutoff function satisfying $\supp(\theta) \subset Q^\circ$, $\theta |_{0.99Q^\circ}=1$, $|\theta(x)| \leq 1$, and $|\p^\al \theta(x)| \leq C$ for $\al \in \mm$. Then we apply (\ref{j1}) and the definition of the $\J_*(f,\mu; Q^\circ)$ and $\J_*(f,\mu; Q^{\circ}, CZ^\circ)$ functionals to estimate 
\begin{align}
\|T(f,P_0), P_0\|_{\J(f, \mu; \delta_{Q^\circ})}  &\lesssim
\|T(f, \vec{R}, \vec{R}^*),P_0\|_{\J_*(f,\mu; Q^\circ)} \nonumber \\
&\lesssim
\|T(f, \vec{R}, \vec{R}^*),\vec{R}\|_{\J_*(f,\mu; Q^{\circ}, CZ^\circ)} + \|T(f, \vec{R}, \vec{R}^*)-P_0 \|_{L^p(Q^\circ)}. \label{f6} 
\end{align}
Let $Q_1 \in CZ^\circ$ satisfy $\delta_{Q_1} \geq c$ (such a cube exists by (\ref{g2}) -- recall $\delta_{Q^\circ} = 1$). Then, using \eqref{cub1} and \eqref{cub2} with $R_1 = Q_1$, $R_2 = Q^\circ$,
\begin{align}
   \|\tfr  - & P_0\|_{L^p(Q^\circ)} \leq \|\tfr - R_{x_1} \|_{L^p(Q^\circ)}  +\|R_{x_1} -P_0 \|_{L^p(Q^\circ)} \nonumber  \\
    &\lesssim \|\tfr\|_{L^{m,p}(Q^\circ)} + \| \tfr -R_{x_1}\|_{L^p(Q_1)} +\|R_{x_1} -P_0 \|_{L^p(Q_1)} \nonumber \\
    &\lesssim \|\tfr\|_{L^{m,p}(Q^\circ)} + \|\tfr - R_{x_1}\|_{L^p(Q_1)}/\delta_{Q_1}^m +\|R_{x_1} -P_0 \|_{L^p(Q_1)} \nonumber \\
    &\lesssim \|\tfr, \vec{R}\|_{\J_*(f,\mu;Q^\circ, CZ^\circ)} + \| R_{x_1} -P_0 \|_{L^p(Q_1)}. \label{f1}
\end{align}

Combining \eqref{f4}, \eqref{f5}, \eqref{f6}, and \eqref{f1}, we have
\begin{align}
\|T(f,P_0),P_0\|_{\J(f, \mu; \delta_{Q^\circ})} &\lesssim \|f,\vec{R}, \vec{R}^* \|_{\J_*(\mu; Q^{\circ}, CZ^\circ;K_p)} + \| R_{x_1} -P_0 \|_{L^p(Q_1)} \nonumber\\
&\lesssim \|f, P_0\|_{\J(\mu;\delta_{Q^{\circ}})} + \| R_{x_1} -P_0 \|_{L^p(Q_1)}.
\label{f0}
\end{align}

From Corollary \ref{acohkptcor}, for $\eta>0$ there exist $H \in L^{m,p}(Q^\circ)$ and $\vec{P} \in Wh(\bpt)$ satisfying $J_xH = R_x^*$ for all $x \in K_p$,  $\vec{P}$ is coherent with $P_0$, and 
\begin{align}
    \|H,\vec{P}\|_{\J_*(f,\mu;Q^\circ,CZ^\circ)} \leq \|f,\vec{P}, \vec{R}^*\|_{\J_*(\mu;Q^\circ,CZ^\circ;K_p)} +\eta/2 \lesssim \|f,P_0\|_{\J(\mu; \delta_{Q^\circ})} +\eta. \label{hchoice2}
\end{align} 
Consequently, since $\vec{R}^*$ is coherent with $P_0$, also $H$ is $K_p$-coherent with $P_0$.

We now work to estimate the term $\| R_{x_1} - P_0 \|_{L^p(Q_1)}$ in \eqref{f0}.

\emph{Case I:} First suppose that $\ka(x_1) = x \in K_p$. Recalling the definition of $\vec{R} = (R_{x_i})_{i \in I}$ in (\ref{vecr}), we have $R_{x_1}=R_x^* = J_xH$. Due to the fact that $\delta_{Q_1} \simeq 1$, we have $\| P \|_{L^p(Q_1)} \simeq | P |_{x_1,\delta_{Q_1}}$ for any polynomial $P \in \mathcal{P}$. Also, because $\vec{R}^*$ is coherent with $P_0$, and $\ma$ is monotonic, we have $\p^\al(R_x^*-P_0) \equiv  0$ for all $\al \in \ma$. By these remarks,  (\ref{locest2}), and the triangle inequality, we have
\begin{align*}
    \| R_{x_1} - P_0 \|_{L^p(Q_1)}^p  &=\|R_x^* - P_0\|_{L^p(Q_1)}^p \nonumber \simeq |R_x^* -P_0|_{x_1,\delta_{Q_1}}^p \overset{(\ref{locest2})}{\simeq}  \|0,R_x^* -P_0\|_{\J(\mu|_{9Q_1};\delta_{Q_1})}^p    \\
    &\;\; \leq \|f,R_x^* \|_{\J(\mu;\delta_{Q^\circ})}^p + \|f,P_0\|_{\J(\mu;\delta_{Q^\circ})}^p.
\end{align*}
Continuing by using (\ref{j3}), (\ref{SobLp}), and (\ref{hchoice2}), we obtain
\begin{align*}
\|f,R_x^*  \|_{\J(\mu;\delta_{Q^\circ})}^p
     \overset{(\ref{j3})}{\lesssim} \|f,R_x^* \|_{\J_*(\mu;Q^\circ)}^p   & \leq \|H,J_xH \|_{\J_*(f,\mu;Q^\circ)}^p  \\
     & \;\; = \|H\|_{L^{m,p}(Q^\circ)}^p + \| H - f \|_{L^p(d \mu)}^p +  \| H - J_x H \|_{L^p(Q^\circ)}^p   \\
    &\overset{(\ref{SobLp})}{\lesssim} \|H\|_{L^{m,p}(Q^\circ)}^p + \| H - f \|_{L^p(d \mu)}^p  \\
    &\; \; \leq \|H,\vec{P}\|_{\J_*(f,\mu;Q^\circ, CZ^\circ)}^p    \\
    &\overset{(\ref{hchoice2})}{\lesssim} (\|f,P_0\|_{\J(\mu;\delta_{Q^\circ})}+\eta)^p.
\end{align*}
Combining the previous two lines, and letting $\eta \to 0$, we have
\begin{align}
    \| R_{x_1} - P_0 \|_{L^p(Q_1)}  \lesssim \|f,P_0\|_{\J(\mu;\delta_{Q^\circ})}  \qquad (\mbox{if } \ka(x_1) \in K_p) \label{f2}.
\end{align}

\emph{Case II:} Next suppose  that $\ka(x_1) = x_s \in \bkpt$. Recalling the definition of $\vec{R} = (R_{x_i})_{i \in I}$ in (\ref{vecr}), we have $R_{x_1} = R_{x_s}'$, where $Q_s$ is a keystone cube, and $\delta_{Q_s} \lesssim \delta_{Q_1} \simeq 1$. 
We have
\begin{align}
    \| R_{x_1} -P_0 \|_{L^p(Q_1)} &= \|R_{x_s}'-P_0\|_{L^p(Q_1)} \leq \|R_{x_s}'-P_{x_s}\|_{L^p(Q_1)} +\|P_{x_s}-P_0\|_{L^p(Q_1)}. \label{r1}
\end{align}

In what follows, we bound the two terms on the right-hand side of \eqref{r1}.

We first look to bound the term $\|P_{x_s}-P_0\|_{L^p(Q_1)}$ on the right-hand side of \eqref{r1}. Because $\vec{P}$ is coherent with $P_0$, in particular, $P_{x_s}$ is coherent with $P_0$. Hence, because $\ma$ is monotonic, $\p^\al(P_{x_s}-P_0) \equiv 0$ for all $\al \in \ma$. We apply (\ref{locest2}), the triangle inequality, and $\delta_{Q_1} \simeq \delta_{Q^\circ}=1$ to deduce
\begin{align}
    \|P_{x_s}-P_0\|_{L^p(Q_1)} &\lesssim |P_{x_s} -P_0|_{x_1, \delta_{Q_1}}\lesssim \|0, P_{x_s} - P_0\|_{\J(\mu|_{9Q_1}; \delta_{Q_1})} \nonumber \\
    &\leq \|f, P_{x_s}\|_{\J(\mu|_{9 Q_1}; \delta_{Q_1})}+\|f, P_0\|_{\J(\mu|_{9Q_1}; \delta_{Q_1})} \nonumber \\
    &\lesssim \|f, P_{x_s}\|_{\J(\mu|_{9Q_1}; \delta_{Q_1})}+\|f, P_0\|_{\J(\mu; \delta_{Q^\circ})}. \label{r2}
\end{align} 
Applying (\ref{j8}) (note that $9Q_1 \cap Q^\circ$ is $C$-non-degenerate), 
\begin{align}
   \|f,  P_{x_s}\|_{\J(\mu|_{9Q_1}; \delta_{Q_1})}^p &\lesssim  \|f, P_{x_s}\|_{\J_*(\mu; 9{Q_1}\cap Q^\circ)}^p \nonumber \\
   &\leq \|H, P_{x_s}\|_{\J_*(f, \mu; 9{Q_1}\cap Q^\circ)}^p \nonumber \\
   &=\|H\|_{L^{m,p}(9Q_1\cap Q^\circ)}^p + \int_{9Q_1} |H-f|^p d\mu + \|H-P_{x_s}\|_{L^p(9Q_1\cap Q^\circ)}^p/\delta_{Q_1}^{mp}. \label{r3}
\end{align}
By applying \eqref{cub2}, with $R_1 = Q_s$ and $R_2 = 9Q_1 \cap Q^\circ$, we have
\begin{equation}
    \|H-P_{x_s}\|_{L^p(9Q_1\cap Q^\circ)}/\delta_{Q_1}^{m} \lesssim \|H\|_{L^{m,p}(9Q_1\cap Q^\circ)} + \|H - P_{x_s}\|_{L^p(Q_s)}/\delta_{Q_s}^m. \label{r4}
\end{equation}

Substituting (\ref{r4}) into (\ref{r3}), and using \eqref{hchoice2}, we see
\begin{align*}
    \|f, P_{x_s}\|_{\J(\mu|_{9Q_1}; \delta_{Q_1})}^p &\lesssim \| H \|_{L^{m,p}(Q^\circ)}^p + \| H  - P_{x_s} \|_{L^p(Q_s)}^p/\delta_{Q_s}^{mp} + \int_{9Q_1} |H-f|^p d \mu \\
    &\lesssim  \|H,\vec{P}\|_{\J_*(f,\mu;Q^\circ, CZ^\circ)}^p \overset{\eqref{hchoice2}}{\lesssim} (\|f,P_0\|_{\J(\mu; \delta_{Q^\circ})}+\eta)^p.
\end{align*}
Substituting the previous equation into (\ref{r2}), we have
\begin{align}
    \|P_{x_s}-P_0\|_{L^p(Q_1)} \lesssim \|f,P_0\|_{\J(\mu; \delta_{Q^\circ})} +\eta. \label{r5}
\end{align}

We next look to bound the term $\|R_{x_s}'-P_{x_s}\|_{L^p(Q_1)}$ on the right-hand side of \eqref{r1}.  Because $R_{x_s}'$ and $P_{x_s}$ are each coherent with $P_0$, and because $\ma$ is monotonic, $\p^\al(R_{x_s}'-P_{x_s}) \equiv 0$ for all $\al \in \ma$. Using $\delta_{Q_s} \lesssim \delta_{Q_1} \simeq 1$, $|x_s-x_1| \lesssim  \delta_{Q_1}$, \eqref{movept} and \eqref{normdi}, (\ref{locest2}), and the triangle inequality, we deduce
\begin{align}
    \|R_{x_s}'-P_{x_s}\|_{L^p(Q_1)} \simeq |R_{x_s}'-P_{x_s}|_{x_1,\delta_{Q_1}} 
    &\lesssim |R_{x_s}'-P_{x_s}|_{x_s,\delta_{Q_s}} \nonumber \\ 
    &\lesssim \|0, R_{x_s}'-P_{x_s}\|_{\J(\mu|_{9Q_s};\delta_{Q_s})} \nonumber \\
    &\leq \|f,R_{x_s}'\|_{\J(\mu|_{9Q_s};\delta_{Q_s})}+\|f,P_{x_s}\|_{\J(\mu|_{9Q_s};\delta_{Q_s})}  \label{r6}
\end{align}
Recalling our choice of $R_{x_s}'$ in Lemma \ref{alocjetlm} (because $P_{x_s}$ is assumed to be coherent with $P_0$), we must have
\begin{align}
\|f,R_{x_s}'\|_{\J(\mu|_{9Q_s};\delta_{Q_s})} \lesssim \|f,P_{x_s}\|_{\J(\mu|_{9Q_s};\delta_{Q_s})}. \label{r0}  
\end{align}
Applying (\ref{j8}) (note: $10Q_s \cap Q^\circ$ is $C$-non-degenerate), (\ref{cub2}) with $R_1 = Q_s$ and $R_2 = 10 Q_s \cap Q^\circ$, and (\ref{hchoice2}), we have 
\begin{align}
    \|f, P_{x_s}\|_{\J(\mu|_{9Q_s}  ;\delta_{Q_s})}^p  &\overset{(\ref{j8})}{\lesssim} \|f,P_{x_s}\|_{\J_*(\mu|_{9Q_s};10Q_s \cap Q^\circ)}^p \leq \|H, P_{x_s}\|_{\J_*(f, \mu|_{9Q_s}; 10Q_s \cap Q^\circ)}^p \nonumber \\
    & \; \; \lesssim \| H \|_{L^{m,p}(10Q_s \cap Q^\circ)}^p + \int_{10 Q_s} |H - f|^p d \mu + \| H - P_{x_s} \|_{L^p(10 Q_s \cap Q^\circ)}^p/\delta_{Q_s}^{m,p} \nonumber\\
    &\overset{(\ref{cub2})}{\lesssim} \| H \|_{L^{m,p}(10Q_s \cap Q^\circ)}^p + \int_{10 Q_s} |H - f|^p d \mu + \| H - P_{x_s} \|_{L^p{ (Q_s })}^p/\delta_{Q_s}^{m,p} \nonumber \\
    &\;\;\leq \|H,\vec{P}\|_{\J_*(f,\mu;Q^\circ,CZ^\circ)} \nonumber \\
    &\overset{(\ref{hchoice2})}{\lesssim} \|f, P_0\|_{\J(\mu; \delta_{Q^\circ})}+\eta. \label{r7}
\end{align}
Substituting (\ref{r5}), \eqref{r6}, (\ref{r0}), and (\ref{r7}) into (\ref{r1}), and letting $\eta \to 0$ in (\ref{hchoice2}), we have 
\begin{equation} \label{f3}
     \|R_{x_1}-P_0\|_{L^p(Q_1)} \lesssim \|f, P_0\|_{\J(\mu; \delta_{Q^\circ})} \qquad (\mbox{if } \ka(x_1) \in \bkpt).
\end{equation}
That completes the analysis of Cases I and II.

Combining \eqref{f2} and \eqref{f3}, since either $\ka(x_1) \in \bkpt$ or $\ka(x_1) \in K_p$, we have shown
\begin{align}
     \|R_{x_1}-P_0\|_{L^p(Q_1)} \lesssim \|f, P_0\|_{\J(\mu; \delta_{Q^\circ})}. \label{r8}
\end{align}
Using this in (\ref{f0}), we conclude that
\begin{align}\label{r9}
   \|T(f,P_0),P_0\|_{\J(f, \mu; 1/10)} \simeq  \|T(f,P_0),P_0\|_{\J(f, \mu; \delta_{Q^\circ})} \lesssim \|f, P_0\|_{\J(\mu; \delta_{Q^\circ})}\simeq \|f, P_0\|_{\J(\mu; 1/10)},
\end{align}
where we have used that $\delta_{Q^\circ} = 1 \simeq 1/10$. Thus, we have proven the upper bound $\|T(f,P_0), P_0\|_{\J(f, \mu; \delta)} \leq C \cdot \|f,P_0\|_{\J(\mu; \delta)}$ in (\ref{aet1}), for $\delta=1/10$. The matching lower bound is an immediate consequence of the definition of the $\J(\cdots)$ functional.

We now prepare to approximate the quantity $\| f, P_0 \|_{\J(\mu; \delta_{Q^\circ})}$. From \eqref{f4} and \eqref{r8} we have
\[
\|f,\vec{R}, \vec{R}^* \|^p_{\J_*(\mu; Q^{\circ}, CZ^\circ;K_p)} + \| R_{x_1} - P_0 \|_{L^p(Q_1)}^p \lesssim \|f, P_0\|_{\J(\mu;\delta_{Q^{\circ}})}^p.
\]
Furthermore, from \eqref{f0} we have
\begin{align*}
\|f, P_0\|_{\J(\mu;\delta_{Q^{\circ}})}^p &\leq  \| T(f,P_0), P_0 \|_{\J(f,\mu; \delta_{Q^\circ})}^p \lesssim  \|f,\vec{R}, \vec{R}^* \|^p_{\J_*(\mu; Q^{\circ}, CZ^\circ;K_p)} + \| R_{x_1} -P_0 \|_{L^p(Q_1)}^p.
\end{align*}
Combining the above two inequalities, and using $\| R_{x_1} - P_0 \|_{L^p(Q_1)} \simeq | R_{x_1} - P_0 |_{x_1,\delta_{Q_1}}$, we have
\begin{equation}\label{r10}
\|f, P_0\|_{\J(\mu;\delta_{Q^{\circ}})}^p \simeq  \|f,\vec{R}, \vec{R}^* \|^p_{\J_*(\mu; Q^{\circ}, CZ^\circ;K_p)} + | R_{x_1} - P_0 |_{x_1,\delta_{Q_1}}^p. 
\end{equation}

In Lemma \ref{adecomplm} we constructed an equivalent expression $\mathcal{S}_1(f,\vec{R},\vec{R}^*)$ for $\|f,\vec{R}, \vec{R}^* \|^p_{\J_*(\mu; Q^{\circ}, CZ^\circ;K_p)}$. Namely, we showed that $\|f,\vec{R}, \vec{R}^* \|^p_{\J_*(\mu; Q^{\circ}, CZ^\circ;K_p)} \simeq \mathcal{S}_1(f,\vec{R},\vec{R}^*)$, with
\begin{align*}
    &\mathcal{S}_1(f,\vec{R},\vec{R}^*) = \sum_{i \in I} \|f,R_{x_i}\|_{\J(\mu|_{1.1Q_i}; \delta_{Q_i})}^p + \sum_{i \lra i'} |R_{x_i} - R_{x_{i'}}|_i^p    + \int_{K_p}|R^*_x(x) -f(x)|^p d\mu  + 1_{\ma = \emptyset}  \|\vec{R^*}\|_{L^{m,p}(K_p)}^p.
\end{align*}
The indicator term $1_{\ma = \emptyset} \|\vec{R^*}\|_{L^{m,p}(K_p)}^p$ is included in the expression $\mathcal{S}_1(f,\vec{R},\vec{R}^*)$ if and only if $\ma = \emptyset$.

Next we apply (\ref{amf1}) and (\ref{amf2}) to replace the $\Sigma_{i \in I}$ sum in $\mathcal{S}_1$ by an equivalent expression, involving the countable collections of Borel sets $\{A_\ell^i\}_{\ell \in \N}, A_\ell^i \subset \supp(\mu|_{1.1Q_i})$, and of linear maps $\{\phi_\ell^i: \J(\mu|_{1.1 Q_i} ; \delta_{Q_i}) \to \R\}_{\ell \in \N}$, and $\{\lambda_\ell^i: \J(\mu|_{1.1 Q_i} ; \delta_{Q_i}) \to L^p(d\mu)\}_{\ell \in \N}$ (see (\ref{amf1}) and (\ref{amf2})). We also use the definition of the $| \cdot |_i$ polynomial norm to show that
\begin{align}
    &\|f,\vec{R}, \vec{R}^* \|^p_{\J_*(\mu; Q^{\circ}, CZ^\circ;K_p)} \simeq \widehat{\mathcal{S}}_1(f,\vec{R},\vec{R}^*), \nonumber \\
    & \widehat{\mathcal{S}}_1(f,\vec{R},\vec{R}^*) := \sum_{i \in I} \sum_{\ell \in \N} \big( \int_{A_\ell^i} |\lambda_\ell^i(f,R_{x_i})-f|^p d\mu + |\phi_\ell^i(f,R_{x_i})|^p \big)+ \sum_{i \lra i'} \sum_{\al \in \mm} c_{\al,i,i'} | \partial^\al(R_{x_i} - R_{x_{i'}})(x_i)|^p \nonumber  \\
    & \qquad\qquad\qquad + \int_{K_p}|R^*_x(x) -f(x)|^p d\mu  + 1_{\ma = \emptyset} \cdot \|\vec{R^*}\|_{L^{m,p}(K_p)}^p,\label{amffin}
\end{align}
for some constants $c_{\al,i,i'} \geq 0$. 

Recall that the Whitney fields $\vec{R} = (R_{x_i})$ and $\vec{R}^* = (R^*_x)$ depend linearly on $(f,P_0)$. Thus, for $i \lra i'$, we can define the linear functional $\phi^{i,i',\al}:\J(\mu;\delta_{Q^\circ}) \to \R$ by 
\begin{equation}
    \label{phi_func:defn}
\phi^{i,i',\al}(f,P_0):= c_{\al,i,i'}^{1/p}  \cdot \p^\al(R_{x_i} - R_{x_{i'}})(x_i).
\end{equation}
Also,  define the map $\lambda': \J(\mu; \delta_{Q^\circ}) \to L^p(d\mu)$ by
\begin{align}
    \lambda'(f,P_0)(x) = \begin{cases} R_x^*(x) & x \in K_p \\
    0 & x \in \R^n \setminus K_p.
    \end{cases} \label{lamp}
\end{align}   
We have  
\[
\int_{K_p}|R^*_x(x) -f(x)|^p d\mu = \int_{K_p} | \lambda'(f,P_0)-f|^p d \mu. 
\]
Making substitutions of these maps in equation (\ref{amffin}), we have
\begin{align}
\|f,\vec{R}, \vec{R}^*\|_{\J_*(\mu; Q^{\circ}, CZ^\circ;K_p)}^p &\simeq 1_{\ma = \emptyset} \cdot \|\vec{R^*}\|_{L^{m,p}(K_p)}^p +  \sum_{i \in I}  \sum_{\ell \in \N} \big( \int_{A_\ell^i} |\lambda_\ell^i(f,R_{x_i})-f|^p d\mu + |\phi_\ell^i(f,R_{x_i})|^p \big) \nonumber\\
& \qquad\qquad\qquad + \sum_{i \lra i'} \sum_{\al \in \mm} |\phi^{i,i',\al}(f,P_0)|^p + \int_{K_p}|\lambda'(f,P_0)-f|^p d\mu.
\label{amffin2}
\end{align}

Returning to \eqref{r10}, we write
\begin{align}
|R_{x_1} - P_0|_{x_1,\delta_{Q_1}}^p = \sum_{\al \in \mm} | \zeta_\al(f,P_0)|^p, \label{zeta}
\end{align}
for  linear functionals $\zeta_\al : \J(\mu; \delta_{Q^\circ}) \rightarrow \R$ of the form
\begin{equation}\label{zeta2}
    \zeta_\al(f,P_0):= c_\al (\p^\al (R_{x_1} -P_0)(x_1)) \qquad (\al \in \mm).
\end{equation}

Combining \eqref{r10}, \eqref{amffin2}, and \eqref{zeta}, and reindexing the sums,
\begin{align}
\|f,P_0\|_{\J(\mu; \delta_{Q^{\circ}})}^p & \simeq  \|f,\vec{R}, \vec{R}^* \|^p_{\J_*(\mu; Q^{\circ}, CZ^\circ;K_p)} + | R_{x_1} - P_0 |_{x_1,\delta_{Q_1}}^p \nonumber \\
&\simeq 1_{\ma = \emptyset} \cdot \|\vec{R^*}\|_{L^{m,p}(K_p)}^p +  \sum_{i \in I}  \sum_{\ell \in \N} \big( \int_{A_\ell^i} |\lambda_\ell^i(f,R_{x_i})-f|^p d\mu + |\phi_\ell^i(f,R_{x_i})|^p \big) \nonumber\\
& \qquad + \sum_{i \lra i'} \sum_{\al \in \mm} |\phi^{i,i',\al}(f,P_0)|^p + \int_{K_p}|\lambda'(f,P_0)-f|^p d\mu + \sum_{\al \in \mm} | \zeta_\al(f,P_0)|^p  \nonumber\\
&= 1_{\ma = \emptyset} \cdot \|\vec{R^*}\|_{L^{m,p}(K_p)}^p + \sum_{\ell \in \N} \int_{A_\ell} |\lambda_\ell(f,P_0)-f|^p d\mu + \sum_{\ell \in \N} | \phi_\ell(f,P_0)|^p, 
\label{amffin3}
\end{align}
for certain Borel sets $A_\ell \subset \supp(\mu)$ and linear maps $\phi_\ell: \J(\mu;\delta_{Q^\circ}) \to \R$, $\lambda_\ell: \J(\mu;\delta_{Q^\circ}) \to L^p(d\mu)$ ($\ell \in \mathbb{N}$). 

We define the map $M: \J(\mu; \delta_{Q^\circ}) \to \R_+$ as the $(1/p)^{th}$ power of the right hand side of (\ref{amffin3}). By the above, and the fact that $\|f,P_0\|_{\J(\mu; \delta_{Q^{\circ}})} \simeq \|f,P_0\|_{\J(\mu; 1/10)}$ and \\ $\| T(f,P_0),P_0 \|_{\J(f,\mu; \delta_{Q^\circ})} \simeq \|T(f,P_0),P_0 \|_{\J(f,\mu; 1/10)}$, we have
\[
M(f,P_0) \simeq  \|f,P_0\|_{\J(\mu; 1/10)} \simeq \| T(f,P_0),P_0 \|_{\J(f,\mu; 1/10)},
\]
establishing \eqref{aet2}.

Next we show that $M$ has the form \eqref{aet3}.

First suppose $\ma = \emptyset$. Then, by Lemma \ref{kptlm}, if $H \in L^{m,p}(\R^n)$ and $ \|H\|_{\J(f,\mu)} < \infty$, then $\vec{R}^* = (R_x^*)_{x \in K_p} = (J_xH)_{x \in K_p}$ -- indeed, any $H$ is $K_p$-coherent with $P_0$, vacuously, if $\ma = \emptyset$. This remark shows that $\vec{R}^*(f,P_0) = \vec{R}^*(f)$ depends only on $f$ when $\ma = \emptyset$. By definition of $M(f,P_0)$ via \eqref{amffin3},
\[
M(f,P_0) = \left(\| \vec{R}^*(f) \|_{L^{m,p}(K_p)}^p + \sum_{\ell \in \N} \int_{A_\ell}  |\lambda_\ell(f,P_0)-f|^p d\mu + \sum_{\ell \in \N} | \phi_\ell(f,P_0)|^p \right)^{1/p}.
\]

Therefore,  $M(f,P_0)$ has the form \eqref{aet3}, with $K:=K_p$, and with $\vec{S}(f):=\vec{R}^*(f)$ depending linearly on $f$.

If $\ma \neq \emptyset$ then $M(f,P_0) = \left(\sum_{\ell \in \N} \int_{A_\ell} |\lambda_\ell(f,P_0)-f|^p d\mu + \sum_{\ell \in \N} | \phi_\ell(f,P_0)|^p \right)^{1/p}$ via \eqref{amffin3}, so $M$ has the form \eqref{aet3} with $K = \emptyset$. In this case, the map $M$ has the form in (\ref{aextthm3plus}).

Thus, the maps $T$ and $M$ satisfy \eqref{aet1}--\eqref{aet3},  while $M$ has the form \eqref{aextthm3plus} provided $\ma \neq \emptyset$, as desired.

\end{proof}

In the previous lemma, we established the main properties of $M$ and $T$ in the Extension Theorem for $(\mu,\delta=1/10)$. Next, we establish properties of the Whitney field $ \vec{R}^*(f,P_0) \in Wh(K_p)$. In particular, the next lemma is a tool used in the proof that the map $T$ is $\Omega'$-constructible.

\begin{lem}\label{Rstarlm}
For $x \in K_p$, the linear map $(f,P_0) \in \J(\mu;\delta_{Q^\circ}) \mapsto R_x^*(f,P_0) \in \mpp$ defined in Lemma \ref{kptlm} has the form
\[
R_x^*(f,P_0) = \sum_{\al \in \mm} \omega^\al_{x}(f) \cdot v_{\al} + \ot_x(P_0),
\]
where $\ot_x : \mpp \rightarrow \mpp$ is a linear map, $\{v_\al\}_{\al \in \mm}$ is a basis for $\mpp$, and $\omega^\al_{x}: \J(\mu) \rightarrow \R$ ($\al \in \mm$) are linear functionals satisfying $\supp(\omega^\al_{x}) \subset \{x\}$.
\end{lem}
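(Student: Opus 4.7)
The plan is to extract the lemma directly from Lemma \ref{kptlm} (particularly property 4 on linearity) together with the localization statement in Lemma \ref{kptlm_loc}. No new analysis is needed; the argument only reorganizes what has already been established.

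First, by property 4 of Lemma \ref{kptlm}, the assignment $(f,P_0)\mapsto R_x^*(f,P_0)\in\mpp$ is linear. I will set $\ot_x(P_0):=R_x^*(0,P_0)$, which yields a linear map $\ot_x:\mpp\to\mpp$, and split
\[
R_x^*(f,P_0)=R_x^*(f,0)+\ot_x(P_0)
\]
by linearity. Fixing the basis $\{v_\al\}_{\al\in\mm}$ of $\mpp$, I expand $R_x^*(f,0)=\sum_{\al\in\mm}\omega_x^\al(f)\,v_\al$; each coefficient $\omega_x^\al:\J(\mu)\to\R$ is then a linear functional, because $f\mapsto R_x^*(f,0)$ is linear with values in $\mpp$. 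This yields the claimed decomposition. Note that fixing $P_0=0$ reduces the admissibility condition for $(f,P_0)\in \J(\mu;\delta_{Q^\circ})$ to $f\in\J(\mu)$, so the $\omega_x^\al$ really are defined on all of $\J(\mu)$.

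The one substantive step is checking the support property $\supp(\omega_x^\al)\subset\{x\}$. For this I will use Lemma \ref{kptlm_loc}: let $f\in\J(\mu)$ be supported in an open set $E\subset\R^n$ with $x\notin E$, and choose $r>0$ so that $B(x,r)\cap E=\emptyset$, i.e., $f\equiv 0$ on $B(x,r)$. Take the trial function $H\equiv 0\in L^{m,p}(\R^n)$; it is trivially $K_p$-coherent with $P_0=0$, and
\[
\|H\|_{\J(f,\mu_{x,r})}^p=\int_{B(x,r)}|f|^p\,d\mu=0<\infty.
\]
Lemma \ref{kptlm_loc} (applied with $P_0=0$) then forces $R_x^*(f,0)=J_xH=0$, so every $\omega_x^\al(f)=0$. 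Unwinding the definition of support, this shows $\supp(\omega_x^\al)\subset\{x\}$.

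There is no serious obstacle here; the main work is already contained in Lemmas \ref{kptlm} and \ref{kptlm_loc}. The only minor point to watch is to formulate the support argument for $\omega_x^\al$ with $P_0=0$ so that the $P_0$-dependence is cleanly packaged into $\ot_x$ and does not interfere with the vanishing of $R_x^*(f,0)$ when $f$ is supported away from $x$.
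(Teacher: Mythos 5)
Your proposal is correct and follows essentially the same route as the paper's proof: split $R_x^*$ by linearity into a part depending on $f$ and a part depending on $P_0$, expand the $f$-part in a basis, and use Lemma \ref{kptlm_loc} to localize. The only (minor and welcome) simplification is that by setting $P_0 = 0$ you may take $H \equiv 0$ directly, whereas the paper fixes an arbitrary $K_p$-coherent $H$ with $\|H\|_{\J(f_1,\mu)} < \infty$ and shows $R_x^*(f_1, P_0) = R_x^*(f_2, P_0)$ whenever $f_1 = f_2$ near $x$; both arguments rest on the same localization lemma. One small caution in wording: "$x \notin E$" for $E$ open does not by itself guarantee $B(x,r) \cap E = \emptyset$ for some $r > 0$; you should instead say you verify the support condition for open sets $E$ with $\dist(x,E) > 0$ (which is enough, since every $y \neq x$ lies in such an $E$), a fix you implicitly make when you "choose $r > 0$ so that $B(x,r) \cap E = \emptyset$."
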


\begin{proof}
Fix $x \in K_p$. We write $R^*_x(f,P_0) = \lambda_x(f) + \ot_x(P_0)$ for linear maps $\lambda_x : \J(\mu) \rightarrow \mpp$ and $\ot_x : \mpp \rightarrow \mpp$. Fix a basis $\{ v_\al\}_{\al \in \mm}$ for $\mpp$, and write
\[
\lambda_x(f) = \sum_{\al \in \mm} \omega^\al_{x}(f) \cdot v_\al
\]
for linear functionals $\omega^\al_{x} \in \J(\mu)^*$. To demonstrate that $\supp( \omega^\al_{x}) \subset \{x\}$, we will show for any open neighborhood $U \ni x$ and any $f_1, f_2 \in \J(\mu; \delta_{Q^\circ})$ satisfying
\begin{align}
    f_1|_{U} = f_2|_{U} \label{ball},
\end{align}
we have $\lambda_{x,\al}(f) = \lambda_{x,\al}(f)$ ($\al \in \mm$). Equivalently, it suffices to show that for any $f_1,f_2$ as in \eqref{ball}, we have
\[
    R_x^*(f_1,P_0) = R_x^*(f_2,P_0).
\]
Because of (\ref{ball}), there exists $\eta>0$ such that $f_1|_{B(x,\eta)}=f_2|_{B(x,\eta)}$. Set $\mu_{x, \eta} = \mu|_{B(x,\eta)}$.  Thus, for any $H \in L^{m,p}(\R^n)$, $\| H \|_{\J(f_1,\mu_{x,\eta})} = \| H \|_{\J(f_2, \mu_{x,\eta})}$. Due to Proposition \ref{acohlm}, there exists $H \in L^{m,p}(\R^n)$ such that $\| H \|_{\J(f_1,\mu)} < \infty$ and $H$ is $K_p$-coherent with $P_0$. According to the above, $\| H \|_{\J(f_j,\mu_{x,\eta})} < \infty$ for $j=1,2$. We apply Lemma \ref{kptlm_loc} to deduce that $R^*_x(f_1,P_0) = J_x H = R^*_x(f_2,P_0)$, concluding the proof. 
\end{proof}

At the end of the proof of Lemma \ref{localpflm}, we showed that if $\ma = \emptyset$ then $\vec{R}^*(f,P_0) = \vec{R}^*(f)$ is independent of $P_0$. Therefore, we have:

\begin{cor}
For $\ma = \emptyset$, $\vec{R}^*(f,P_0) = \vec{R}^*(f)$, and consequently, for $x \in K_p$,
\[
R_x^*(f) = \sum_{\al \in \mm} \omega^\al_{x}(f) \cdot v_{\al},
\]
where $\{v_\al\}_{\al \in \mm}$ is any basis for $\mpp$, and $\omega^\al_{x}: \J(\mu) \rightarrow \R$ ($\al \in \mm$) are linear functionals satisfying $\supp(\omega^\al_{x}) \subset \{x\}$. \label{rxcor}
\end{cor}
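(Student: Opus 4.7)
The plan is to obtain the two claims of the corollary essentially as immediate consequences of Lemma \ref{kptlm} and Lemma \ref{Rstarlm}, with no new constructions needed. First I would establish that $\vec{R}^*$ depends only on $f$ when $\ma = \emptyset$. Recall from Lemma \ref{kptlm}, property 3, that $\vec{R}^* = \vec{R}^*(f,P_0)$ is characterized by the identity $R_x^* = J_x H$ for any $H \in L^{m,p}(\R^n)$ with $\| H \|_{\J(f,\mu)} < \infty$ that is $K_p$-coherent with $P_0$. When $\ma = \emptyset$, the $K_p$-coherence condition ``$\partial^\al H(x) = \partial^\al P_0(x)$ for all $x \in K_p$, all $\al \in \ma$'' is vacuous. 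So any $H \in L^{m,p}(\R^n)$ with $\| H \|_{\J(f,\mu)} < \infty$ is $K_p$-coherent with every $P_0$, and such an $H$ exists because $f \in \J(\mu)$. Consequently the characterization of $R^*_x$ makes no reference to $P_0$, and $\vec{R}^* = \vec{R}^*(f)$.

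Next, I would deduce the decomposition of $R_x^*(f)$ from Lemma \ref{Rstarlm}. Fix $x \in K_p$ and a basis $\{v_\al\}_{\al \in \mm}$ of $\mpp$. By Lemma \ref{Rstarlm}, there exist linear functionals $\omega^\al_x : \J(\mu) \to \R$ with $\supp(\omega^\al_x) \subset \{x\}$ and a linear map $\widetilde{\omega}_x : \mpp \to \mpp$ such that
\[
R_x^*(f,P_0) = \sum_{\al \in \mm} \omega^\al_x(f) \cdot v_\al + \widetilde{\omega}_x(P_0)
\]
for every $(f,P_0) \in \J(\mu;\delta_{Q^\circ})$. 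Setting $f = 0$, the first part of the corollary gives $R^*_x(0,P_0) = R^*_x(0)$ for every $P_0$, so $\widetilde{\omega}_x(P_0) = R^*_x(0) - \sum_{\al} \omega^\al_x(0) v_\al$ is independent of $P_0$. Since $\widetilde{\omega}_x$ is linear, this forces $\widetilde{\omega}_x \equiv 0$, and evaluating at $P_0 = 0$ also gives $R^*_x(0) = \sum_\al \omega^\al_x(0) v_\al$. Therefore
\[
R_x^*(f) = \sum_{\al \in \mm} \omega^\al_x(f) \cdot v_\al,
\]
with the functionals $\omega^\al_x$ inherited from Lemma \ref{Rstarlm} and hence satisfying $\supp(\omega^\al_x) \subset \{x\}$. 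There is no real obstacle here; both statements reduce to observing that the $P_0$-dependence in the constructions becomes trivial in the degenerate case $\ma = \emptyset$.
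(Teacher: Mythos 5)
Your proof is correct and follows the same reasoning the paper intends: the $K_p$-coherence condition is vacuous for $\ma = \emptyset$, so $\vec{R}^*$ depends only on $f$ by property 3 of Lemma \ref{kptlm}, and the $P_0$-independence then forces the linear map $\widetilde{\omega}_x$ in Lemma \ref{Rstarlm} to vanish. The paper leaves these two steps implicit; your write-up merely spells them out.
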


In the next two lemmas, we verify that the maps $T$ and $M$ constructed in Lemma \ref{localpflm} are $\Omega'$-constructible, as claimed in the Extension Theorem for $(\mu,\delta)$, where $\Omega'$ is a set of linear functionals on $\J(\mu)$ whose supports have bounded overlap.

\begin{lem}
The map $T$ in Lemma \ref{localpflm} is $\Omega'$-constructible: There exists a collection of linear functionals $\Omega'=\{\omega_{t}\}_{t \in \Upsilon} \subset \J(\mu)^*$ satisfying the collection of sets $\{\supp(\omega_t)\}_{t \in \Upsilon}$ has $C$-bounded overlap, and for each $y \in \R^n$, there exists a finite subset $\Upsilon_y \subset \Upsilon$ and a collection of polynomials $\{v_{t,y} \}_{t \in \Upsilon_y} \subset \mpp$ such that $|\Upsilon_y| \leq C$ and 
\begin{align}\label{Tconst}
J_y T(f,P_0) =  \sum_{t \in \Upsilon_y}  \omega_{t} (f) \cdot v_{t,y} +  \ot_y (P_0),
\end{align}
where $\ot_y: \mpp \to \mpp$ is a linear map. \label{tconstlm}
\end{lem}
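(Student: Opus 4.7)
The plan is to unravel the construction of $T$ in Lemma \ref{localpflm} and identify three families of linear functionals whose union will serve as the master family $\Omega'$. Recall $T(f,P_0) = \theta \cdot T(f, \vec{R}, \vec{R}^*) + (1-\theta) P_0$, where $T(f, \vec{R}, \vec{R}^*)$ is patched from local extensions $T_i(f, R_i)$ on $K_{CZ}$ (via the partition of unity $\{\theta_i\}$) and agrees with $\vec{R}^*$ as a Whitney field on $K_p$. The three families are: (a) the functionals $\Omega_i' = \{\omega_s^i\}_{s \in \Upsilon^i}$ for each $i \in I$, coming from the inductive $\Omega_i'$-constructibility of $T_i$ in \textbf{(AL4)}, whose supports lie in $1.1 Q_i$; (b) functionals $\{\omega_{s, \beta}\}_{\beta \in \mm}$ for each keystone cube $Q_s$ ($s \in \bar{I}$) representing the $f$-dependent part of $R_s'$ via $R_s'(f,P_0) = \sum_{\beta \in \mm} \omega_{s,\beta}(f)\, v_\beta + \hat{\omega}_s^{\text{key}}(P_0)$ (this decomposition exists since $R_s'$ depends linearly on $(f|_{9Q_s}, P_0)$ by Lemma \ref{alocjetlm}), with supports in $9 Q_s$; and (c) the functionals $\{\omega_x^\al\}_{\al \in \mm}$ for each $x \in K_p$ provided by Lemma \ref{Rstarlm}, which yield $R_x^*(f,P_0) = \sum_{\al \in \mm} \omega_x^\al(f)\, v_\al + \ot_x(P_0)$ with $\supp(\omega_x^\al) \subset \{x\}$.

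For the bounded overlap of $\{\supp(\omega_t)\}_{t \in \Omega'}$, I would argue family by family. Within each $\Omega_i'$, the supports have $C$-bounded overlap by the inductive hypothesis, and across different $i$ the cubes $\{1.1 Q_i\}_{i \in I}$ have $C(n)$-bounded overlap by \eqref{g1}, so family (a) has bounded overlap. Within (b), Lemma \ref{10q} gives that for any keystone cube $Q_s$, at most $C$ keystone cubes $Q_{s'}$ satisfy $10 Q_s \cap 10 Q_{s'} \neq \emptyset$, so the cubes $\{9 Q_s\}_{s \in \bar{I}}$ overlap with bounded multiplicity; adding the factor $|\mm|$ for the range of $\beta$ keeps the bound universal. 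Within (c), the singleton supports $\{x\}$ are pairwise disjoint, and each carries exactly $|\mm|$ functionals. Summing the three bounds gives $C'$-bounded overlap overall for $\Omega'$.

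Next I would verify the representation \eqref{Tconst} by cases on $y \in \R^n$. For $y \notin Q^\circ$, $T(f,P_0) \equiv P_0$ near $y$, giving the trivial representation with $\Upsilon_y = \emptyset$ and $\ot_y(P_0) = P_0$. For $y \in Q^\circ$, I expand $J_y T(f,P_0) = J_y\theta \odot_y J_y T(f, \vec{R}, \vec{R}^*) + J_y(1-\theta) \odot_y P_0$, which reduces matters to computing $J_y T(f, \vec{R}, \vec{R}^*)$. When $y \in K_p$ this is exactly $R_y^*$, which has the required form via family (c). When $y \in K_{CZ}$, setting $I_y := \{i \in I : y \in \supp(\theta_i)\}$ (cardinality at most $C(n)$ by \eqref{g1}), we have
\[
J_y T(f,\vec{R},\vec{R}^*) = \sum_{i \in I_y} J_y \theta_i \odot_y J_y T_i(f, R_i),
\]
and \textbf{(AL4)} gives $J_y T_i(f, R_i) = \sum_{s \in \Upsilon^i_y} \omega^i_s(f)\, v^i_{s,y} + \ot^i_y(R_i)$ with $|\Upsilon^i_y| \leq C$. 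Depending on whether $\ka(i) \in \bar I$ or $\ka(i) \in K_p$, the polynomial $R_i$ equals $R'_{\ka(i)}$ or $R^*_{\ka_\circ(i)}$, and substituting from families (b) or (c) converts $\ot^i_y(R_i)$ into an $\Omega'$-linear combination applied to $f$ plus a linear function of $P_0$. Summing over $I_y$ and absorbing all $P_0$-linear contributions into a single map $\ot_y$ gives the representation \eqref{Tconst} with $|\Upsilon_y| \leq C(n)\cdot (C + |\mm|) \leq C$.

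The main obstacle is the bookkeeping in the case $y \in K_{CZ}$: the substitution $R_i = R_{\ka(i)}$ drags in functionals from (b) or (c) that may be supported at $\ka_\circ(i)$ rather than near $y$, so one must check both that each such functional already lies in the master family $\Omega'$ and that only boundedly many of them appear in the formula at $y$. The former is automatic from the construction of $\Omega'$; the latter follows because $|I_y| \leq C(n)$, $\ka$ is a function, and each $R_{\ka(i)}$ contributes at most $|\mm|$ functionals. Bounded overlap of $\Omega'$ is a global, $y$-independent property that I establish once and for all in the second paragraph, so no further care is needed as $y$ varies.
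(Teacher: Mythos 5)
Your proposal follows essentially the same approach as the paper: the same three-way split of the representation (local $\Omega_i'$ from \textbf{(AL4)}, functionals encoding $R_s'$ for keystone cubes, and the keystone-point functionals from Lemma \ref{Rstarlm}), the same bounded-overlap argument using \eqref{g1} and Lemma \ref{10q}, and the same case analysis on $y \in (Q^\circ)^c$, $y \in K_p$, $y \in K_{CZ}$ with the final substitution of $R_i$ via $\kappa(i)$ into $\ot^i_y(R_i)$. The verification of \eqref{Tconst} and the $|\Upsilon_y| \leq C$ count are correct.

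One small caution, not a flaw in the proof you gave but relevant for the role of $\Omega'$ in the paper: the collection $\Omega'$ defined here is reused verbatim in the subsequent lemma to establish that $M$ is also $\Omega'$-constructible, and that lemma needs one additional family of functionals beyond the three you list, namely the point evaluations $\omega_x(f) := f(x)$ for $x \in \supp(\mu)$ (the paper's $\Omega_3$), which arise when expanding $\lambda'(f,P_0)(x) = R_x^*(x) - f(x)$. These point-evaluation functionals have pairwise disjoint singleton supports, so adding them does not disturb the bounded-overlap bound and does not affect the $J_y T$ representation; but if you define $\Omega'$ without them, you cannot discharge the $M$-constructibility claim with the same collection. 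So the master family $\Omega'$ should be declared to include these extra functionals even though your verification of \eqref{Tconst} never touches them.
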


\begin{proof}
Any functional $\oh \in \J(\mu;\delta)^*$ admits a unique decomposition $\oh = \omega + \widetilde{\omega}$, where $\omega \in \J(\mu)^*$ and $\ot \in \mpp^*$.
We have defined $T:\J(\mu;\delta) \to L^{m,p}(\R^n)$ as $T(f,P_0): = \theta \cdot T(f, \vec{R}, \vec{R}^*) + (1-\theta)P_0$, where $\theta$ is a smooth function satisfying $\theta \equiv 1$ on $0.9 Q^\circ$ and $\supp(\theta) \subset Q^\circ$. Consequently,
\begin{align}\label{hhh1}
J_y T(f,P_0) = \begin{cases} P_0 &y \in (Q^\circ)^c \\
J_yT(f,\vec{R},\vec{R^*}) \odot_y J_y\theta  + (1 - J_y \theta) \odot_y P_0 & y \in Q^\circ.
\end{cases}
\end{align}
Here, $\odot_y$ is the jet product on $\mpp$ defined by $P \odot_y P' = J_y(PP')$. Recall, from (\ref{tfdef}) in the proof of Lemma \ref{adecomplm}, we defined $T(f,\vec{R},\vec{R}^*) \in L^{m,p}(Q^\circ)$ as 
\begin{align}\label{hhh2}
T(f,\vec{R},\vec{R}^*)(x) = \begin{cases} \sum_{i \in I} T_i(f,R_{x_i})(x) \cdot \theta_i(x) & x \in K_{CZ} \\
R^*_x(x) & x \in K_p \end{cases} 
\end{align}
where $\{\theta_i\}_{i \in I}$ is a partition of unity satisfying \textbf{(POU1)-(POU4)} (see Section \ref{subsec:pou}), the maps $T_i(f,R_{x_i})$ are defined just above (\ref{amf1}), and the Whitney fields $\vec{R}= (R_{x_i})_{i \in I}$, $ \vec{R}^* =(R^*_y)_{y \in K_p}$ are defined in \eqref{vecr} and Lemma \ref{kptlm}, depending linearly on $(f,P_0)$. By Lemma \ref{adecomplm},
\begin{equation}\label{hhh3}
J_y\tfr = R_y^* \mbox{ for all } y \in K_p.
\end{equation}

In Lemma \ref{Rstarlm} we describe the form of the linear maps $(f,P_0) \rightarrow R^*_y(f,P_0)$ ($y \in K_p$). There exists a basis $\{v^\gamma\}_{\gamma \in \mm}$ for $\mpp$, a linear map $\widetilde{\omega}_y : \mpp \to \mpp$, and linear functionals $\omega_y^\gamma : \J(\mu) \rightarrow \R$ satisfying  $\supp(\omega_y^\gamma) \subset \{y\}$ ($\gamma \in \mm$), such that 
\begin{equation}\label{hhh4}
R_y^*(f,P_0) = \sum_{\gamma \in \mm} \omega_y^\gamma(f) \cdot v^\gamma + \widetilde{\omega}_y(P_0) \qquad (y \in K_p).
\end{equation}

In (\ref{vecr}), we defined $\vec{R} = (R_{x_i})_{i \in I} \in Wh(\bpt)$ by
\begin{align}\label{Rdefn}
    R_{x_i} := \begin{cases} R_{x_s}' &\ka(x_i)=x_s \in \bkpt \\
    R_z^* &\ka(x_i) = z \in K_p,
    \end{cases}
\end{align}
We now discuss the form of the linear maps $(f,P_0) \mapsto R_{x_s}'(f,P_0)$ ($s \in \bar{I}$), defined in Lemma \ref{alocjetlm}.  Recall that $R_{x_s}'  \in \mathcal{P}$ depends linearly on $(f|_{9Q_s},P_0)$. Thus, given a basis $\{v^\gamma\}_{\gamma \in \mm}$ for $\mpp$, there exist linear functionals $\omega_{x_s}^\gamma :  \J(\mu|_{9 Q_s}) \rightarrow \R$ satisfying 
\begin{equation}\label{supp_cond_Omega2:eqn}
\supp(\omega_{x_s}^\gamma) \subset \supp(\mu|_{9Q_s}) \subset 10Q_s  \quad (\gamma \in \mm),
\end{equation}
and there exists a linear map $\ot_{x_s} : \mpp \rightarrow \mpp$, such that
\begin{equation}\label{hhh5}
    R_{x_s}'(f,P_0) = \sum_{\gamma \in \mm} \omega_{x_s}^\gamma(f) \cdot v^\gamma + \ot_{x_s}(P_0).
\end{equation}

We now discuss the form of the linear maps $T_i$, defined just above (\ref{amf1}). For each $i \in I$, the map $T_i$ is $\Omega_i'$-constructible, satisfying \textbf{(AL4)} for $E_i = 1.1 Q_i$ (see Section \ref{subsec:loc_ext_op}). Thus, there exists a collection of linear functionals
\begin{align}
    \Omega_i' = \{\omega^i_{t}\}_{t \in \Upsilon^i} \subset \J(\mu|_{1.1Q_i})^* \subset \J(\mu)^* \quad \mbox{(see Remark \ref{rem:J_space})}, \label{Omegai}
\end{align}
 such that the collection of sets $\{\supp(\omega^i_t)\}_{t \in \Upsilon^i}$ has $C$-bounded overlap, and by \eqref{supp_cond_Omega:eqn}, 
\begin{equation} \label{supp_cond_Omega3:eqn}
\supp(\omega^i_t) \subset \supp(\mu|_{1.1Q_i}) \subset 1.3 Q_i  \mbox{ for all } i \in I, t \in \Upsilon^i.
\end{equation}
Further, for each $y \in \R^n$, there exists a finite subset $\Upsilon^i_y \subset \Upsilon^i$ and a collection of polynomials $\{v^i_{t,y} \}_{t \in \Upsilon^i} \subset \mpp$ such that $|\Upsilon^i_y| \leq C$ and
\begin{align}
J_y T_i(f,R_{x_i}) = \sum_{t \in \Upsilon^i_y} \omega^i_{t}(f) \cdot v^i_{t,y}+\ot^i_y(R_{x_i}), \label{jety}
\end{align}
where $\ot^i_y: \mpp \to \mpp$ is a linear map.

We define collections of functionals in $\J(\mu)^*$:
\begin{align*}
    &\Omega_0: = \{ \omega_z^\gamma: z \in K_p, \; \gamma \in \mm \} \mbox{ indexed by } \Upsilon_0 := \{ (z,\gamma) : z \in K_p, \; \gamma \in \mm \};\\
    &\Omega_1: = \{ \omega^\gamma_{x_s}: x_s \in \bkpt, \; \gamma \in \mm \}  \mbox{ indexed by } \Upsilon_1 := \{ (x_s, \gamma) : x_s \in \bkpt, \; \gamma \in \mm \}; \text{ and } \\
    &\Omega_2: =  \bigcup_{i \in I} \Omega'_i = \{ \omega^i_t: i \in I, \; t \in \Upsilon^i \}  \mbox{ indexed by } \Upsilon_2 := \{ (i,t) : i \in I, \; t \in \Upsilon^i \}.
\end{align*}
We define the complete collection of functionals as
\begin{align}
    \Omega' : = \Omega_0 \cup \Omega_1 \cup \Omega_2 \subset \J(\mu)^*, \label{Om}
\end{align}
indexed by 
\begin{align*}
\Upsilon = \Upsilon_0 \cup \Upsilon_1 \cup \Upsilon_2.
\end{align*}

Because $\supp(\omega_z^\gamma) \subset \{z\}$,  $\{ \supp(\omega) : \omega \in \Omega_0\}$ has $C$-bounded overlap for $C = | \mm|$, a universal constant. 

For $x_s \in \bkpt$, $\supp(\omega_{x_s}^\gamma) \subset 10Q_s$ (see \eqref{supp_cond_Omega2:eqn}). Because of Lemma \ref{10q}, $\left| \{ s' \in \bar{I}: 10Q_s \cap 10Q_{s'} \neq \emptyset \} \right| \leq C$ for fixed $s$. Thus, the supports of the functionals in $\Omega_1$ have $C$-bounded overlap. 

From (\ref{g1}), $x \in 1.3Q$ for at most $C$ cubes $Q \in CZ^\circ$. The supports of the functionals in each collection $\Omega_i'$ are contained in $1.3Q_i$ (see \eqref{supp_cond_Omega3:eqn}), and have $C$-bounded overlap. Thus, the supports of the functionals in $\Omega_2$ have $C$-bounded overlap.

We now show that the map $T : \J(\mu; \delta_{Q^\circ}) \rightarrow L^{m,p}(\R^n)$ is $\Omega'$-constructible. We do so by verifying the identity  \eqref{Tconst}  for each $y \in \R^n$, for the set of functionals $\Omega' \subset \J(\mu)^*$ defined above.

For $y \in \R^n \setminus Q^\circ$, we have by \eqref{hhh1},
\[
J_y T(f,P_0) = P_0.
\]
Thus, the identity \eqref{Tconst} holds with $\Upsilon_y = \emptyset$ and $\ot_y(P_0) = P_0$.

For $y \in K_p$, we have $J_y \theta = 1$ (recall $K_p \subset \frac{1}{9} Q^\circ$ and $\theta \equiv 1$ on $0.9 Q^\circ$). So,  by \eqref{hhh1}, \eqref{hhh3}, \eqref{hhh4},
\begin{align*}
    J_y T(f,P_0) = R^*_y(f,P_0) =\sum_{\gamma \in \mm} \omega_y^\gamma(f) \cdot v^\gamma + \ot_y(P_0).
\end{align*} 
Thus, the identity \eqref{Tconst} holds with $\Upsilon_y = \{ (y, \gamma) : \gamma \in \mm\} \subset \Upsilon$.

Finally, suppose $y \in Q^\circ \setminus K_p$. Then $y \in Q_j \in CZ^\circ$ for a unique $j \in I$. Since $\supp(\theta_i) \subset 1.1 Q_i$, by the good geometry of the CZ cubes, we have that $y \in \supp(\theta_i)$ if and only if $i \lra j$. Thus,
\begin{align}
J_yT(f,\vec{R},\vec{R}^*) &= J_y \Big( \sum_{i : i \lra j} T_i(f,R_{x_i}) \cdot \theta_i \Big) = \sum_{i : i \lra j} J_yT_i(f,R_{x_i}) \odot_y J_y\theta_i.\label{c1}
\end{align}
If $i \in I$ is such that $\ka(x_i) = z \in K_p$, then $R_{x_i} = R_{z}^*$ by definition of $R_{x_i}$ in \eqref{Rdefn}. So, by \eqref{hhh4},
\begin{equation}\label{vecR_form1:eqn}
R_{x_i}(f,P_0) = \sum_{\gamma \in \mm} \omega_{\ka(x_i)}^\gamma(f) \cdot v^\gamma + \ot_{\ka(x_i)}(P_0) \quad (\mbox{if } \ka(x_i) \in K_p).
\end{equation}
On the other hand, if $i \in I$ is such that $\ka(x_i) =x_s \in \bkpt$, then $R_{x_i} = R'_{x_s}$ by definition of $R_{x_i}$ in \eqref{Rdefn}. So, by \eqref{hhh5}, 
\begin{equation}\label{vecR_form2:eqn}
R_{x_i}(f,P_0) = \sum_{\gamma \in \mm} \omega_{\ka(x_i)}^\gamma(f) \cdot v^\gamma + \ot_{\ka(x_i)}(P_0) \quad (\mbox{if } \ka(x_i) \in \bkpt).
\end{equation}
Using the above identities in \eqref{jety}, we have, for all $i \in I$,
\begin{align}
J_y T_i(f,R_{x_i}) &= \sum_{t \in \Upsilon_y^i} \omega^i_t(f) v^i_{t,y} +  \ot^i_y\Big(\sum_{\gamma \in \mm} \omega_{\ka(x_i)}^\gamma(f) \cdot v^\gamma + \ot_{\ka(x_i)}(P_0)\Big) \nonumber \\
    & = \sum_{t \in \Upsilon_y^i} \omega^i_t(f) v^i_{t,y} +  \sum_{\gamma \in \mm} \omega_{\ka(x_i)}^\gamma(f) \cdot \ot^i_y(v^\gamma) + \ot^i_y\big(\ot_{\ka(x_i)}(P_0)\big). \label{ri}
\end{align}

We return to the formula \eqref{c1}. We substitute (\ref{ri}) into (\ref{c1}) to write, for $y \in Q_j$,
\begin{align*}
J_yT(f,\vec{R},\vec{R}^*)
& = \sum_{i : i \lra j} \sum_{t \in \Upsilon^i_y} \omega^i_{t}(f) \cdot \left\{ v^i_{t,y}  \odot_y J_y \theta_i \right\} \\
& \qquad +\sum_{i : i \lra j} \sum_{\gamma \in \mm} \omega_{\ka(x_i)}^\gamma(f) \cdot \left\{ \ot^i_y(v^\gamma)\odot_y J_y \theta_i \right\}  +  \ot^i_y\big(\ot_{\ka(x_i)}(P_0)\big) \odot_y J_y \theta_i.
\end{align*}
Using \eqref{hhh1}, we write, for $y \in Q_j$,
\begin{align}
J_yT(f,P_0)
& = \sum_{i : i \lra j} \sum_{t \in \Upsilon^i_y} \omega^i_{t}(f) \cdot \left\{ v^i_{t,y}  \odot_y J_y \theta_i \odot_y J_y \theta  \right\} \nonumber \\
& \qquad +\sum_{i : i \lra j} \sum_{\gamma \in \mm} \omega_{\ka(x_i)}^\gamma(f) \cdot \left\{ \ot^i_y(v^\gamma)\odot_y J_y  \theta_i \odot_y J_y \theta  \right\} \nonumber \\
& \qquad +  \ot^i_y\big(\ot_{\ka(x_i)}(P_0)\big) \odot_y J_y \theta_i \odot_y J_y \theta  + (1 - J_y \theta) \odot_y P_0. \label{finalform} 
\end{align}
For $y \in Q^\circ \setminus K_p$, there is a unique $j = j(y) \in I$ so that $y \in Q_j$; we define,
\begin{align*}
   & \Upsilon_{1,y}:= \left\{(i,t): i \in I, \; i \lra j(y), \; t \in \Upsilon_y^i \right\}; \\
   &\Upsilon_{2,y} := \left\{ (z, \gamma): \ka(x_i) = z \in K_p \mbox{ for some } i \in I, \; i \lra j(y), \;  \gamma \in \mm \right\}; \text{ and } \\
   &\Upsilon_{3,y}:= \left\{ (x_s, \gamma): \ka(x_i) = x_s \in \bkpt \mbox{ for some } i \in I, \; i \lra j(y), \;  \gamma \in \mm\right\}.   
\end{align*}
Note that the last term on the right-hand side of \eqref{finalform} is just a linear map applied to $P_0$. By splitting the second sum on $i \in I$ on the right-hand side of \eqref{finalform} into cases depending on whether $\ka(x_i) = z \in K_p$ or $\ka(x_i) = x_s \in \bkpt$, and using the definitions of the index sets $\Upsilon_{r,y}$ ($r=1,2,3$), we have
\begin{align}
J_yT(f,P_0) & = \sum_{(i,t) \in \Upsilon_{1,y}} \omega^i_{t}(f) \cdot \big(v^i_{t,y} \odot_y J_y(\theta_i) \odot_y J_y (\theta)  \big) \nonumber \\
& \qquad + \sum_{(z, \gamma) \in \Upsilon_{2,y}} \omega_z^\gamma(f) \cdot  \sum_{i \in I: \; \ka(x_i) = z, i \lra j}  \ot^i_y(v_z^\gamma)\odot_y J_y(\theta_i) \odot_y J_y (\theta) 
\nonumber \\
& \qquad + \sum_{(x_s, \gamma) \in \Upsilon_{3,y}} \omega_{x_s}^\gamma(f) \cdot \sum_{i \in I: \; \ka(x_i) = x_s, i \lra j}  \ot^i_y(v^\gamma)\odot_y J_y(\theta_i)\odot_y J_y (\theta)  + \ot_y(P_0), \label{finalform2}
\end{align}
for some linear map $\ot_y: \mpp \to \mpp$.

Using \eqref{finalform2}, we see that the identity \eqref{Tconst} holds with $\Upsilon_y : = \Upsilon_{1,y} \cup \Upsilon_{2,y} \cup \Upsilon_{3,y}$. Note that
\begin{align*}
|\Upsilon_y|\leq C,    
\end{align*}
because $|\mm| = D$, $|\{i \in I: i \lra j \}| \leq C$ for fixed $j$, from (\ref{g1}), and $|\Upsilon^i_y| \leq C$ for fixed $(i,y)$.  
\end{proof}

\begin{lem}
The map $M$ defined in Lemma \ref{localpflm} is $\Omega'$-constructible, where $\Omega' = \{\omega_s\}_{s \in \Upsilon}$ is the collection defined in Lemma \ref{tconstlm}. Precisely, the objects $\lambda_\ell$, $\phi_\ell$, and $\vec{S}$ arising in the description of $M$ in \eqref{aet3}, satisfy the following: 

\textbf{(1)} For each $\ell \in \N$ and $y \in \supp(\mu)$, there exists a finite subset $\bar{\Upsilon}_{\ell,y} \subset {\Upsilon}$ and constants $\{\eta_{s,y}\}_{s \in \bar{\Upsilon}_{\ell,y}}$ such that $|\bar{\Upsilon}_{\ell,y}| \leq C$, and the map $(f,P_0) \mapsto \lambda_\ell(f,P_0)(y)$ has the form
\begin{align}
&\lambda_\ell(f,P_0)(y) = \sum_{s \in \bar{\Upsilon}_{\ell,y}} \eta_{s,y} \omega_{s}(f) + \widetilde{\lambda}_{y,\ell}(P_0), \label{m1}
\end{align}
where $\widetilde{\lambda}_{y,\ell} : \mpp \rightarrow \R$ is a linear functional. 

\textbf{(2)} For each $\ell \in \N$, there exists a finite subset $\bar{\Upsilon}_{\ell} \subset {\Upsilon}$ and constants $\{\eta_{s}\}_{s \in \bar{\Upsilon}_{\ell}}$ such that $|\bar{\Upsilon}_{\ell}| \leq C$, and the map $\phi_\ell$ has the form 
\begin{align}
&\phi_\ell(f,P_0) = \sum_{s \in \bar{\Upsilon}_{\ell}} \eta_{s} \omega_{s}(f) + \widetilde{\lambda}_{\ell}(P_0), \label{m2}
\end{align}
where $\widetilde{\lambda}_{\ell} : \mpp \rightarrow \R$ is a linear functional.

\textbf{(3)} For $y \in K$, there exist $\{\omega^\al_{y}\}_{\al \in \mm} \subset \Omega'$ satisfying for all $\al \in \mm$, $\supp(\omega^\al_{y}) \subset \{y\}$, and the map $f \mapsto S_y(f)$ has the form
\begin{align}
S_y(f) = \sum_{\al \in \mm} \omega^\al_{y}(f) \cdot v_{\al}, \label{m3}
\end{align}
where $\{v_\al\}_{\al \in \mm}$ is a basis for $\mpp$. 
\end{lem}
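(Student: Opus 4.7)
The plan is to unpack the explicit formula for $M$ produced in Lemma \ref{localpflm} and show that each of the linear maps $\lambda_\ell$, $\phi_\ell$, $\vec{S}$ arising there is already expressed, up to a linear dependence on $P_0$, as a bounded-depth sum of functionals drawn from the collection $\Omega' = \Omega_0 \cup \Omega_1 \cup \Omega_2 \cup \Omega_3$ defined in \eqref{Om}. Recall that after reindexing, $M(f,P_0)^p$ is assembled from four kinds of pieces: the local contributions $\lambda_\ell^i(f,R_i)$ and $\phi_\ell^i(f,R_i)$ supplied by \textbf{(AL5)}--\textbf{(AL6)}; the patching functionals $\phi^{i,i',\al}(f,P_0) = c_{\al,i,i'}^{1/p}\partial^\al(R_i-R_{i'})(x_i)$ for $i \lra i'$; the keystone-point residual $\lambda'(f,P_0)(y) = R^*_y(y) - f(y)$ on $K_p$; and, when present, the $\zeta_\al(f,P_0)$ introduced in \eqref{zeta}, together with the optional term $1_{\ma=\emptyset}\|\vec{R}^*\|_{L^{m,p}(K_p)}$.

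The key substitution, in both (1) and (2), is the representation of $R_i(f,P_0)$ itself. Splitting on whether $\ka(i) \in K_p$ or $\ka(i) \in \bar I$, I would combine \eqref{hhh4} and \eqref{hhh5} to write
\[
R_i(f,P_0) = \sum_{\gamma \in \mm} \omega_{\ka(i)}^\gamma(f)\cdot v^\gamma + \widetilde{\omega}_{\ka(i)}(P_0),
\]
with $\omega_{\ka(i)}^\gamma \in \Omega_0 \cup \Omega_1 \subset \Omega'$. Inserting this into \textbf{(AL5)} produces, for each $\ell \in \mathbb{N}$ and each $y \in \supp(\mu|_{1.1 Q_i})$,
\[
\lambda_\ell^i(f,R_i)(y) = \sum_{s \in \bar{\Upsilon}^i_{\ell,y}}\eta^{\ell,i}_{s,y}\omega^i_s(f) + \widetilde{\lambda}^i_{y,\ell}\!\Big(\sum_{\gamma}\omega_{\ka(i)}^\gamma(f) v^\gamma + \widetilde{\omega}_{\ka(i)}(P_0)\Big),
\]
which, after absorbing constants, has exactly the shape (\ref{m1}) with a bounded-size index set. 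The analogous substitution works for $\phi_\ell^i$ via \textbf{(AL6)}, and for $\phi^{i,i',\al}$ using the same representation of $R_i,R_{i'}$. The keystone-point residual becomes $\lambda'(f,P_0)(y) = \sum_\gamma \omega_y^\gamma(f) v^\gamma(y) + \widetilde{\omega}_y(P_0)(y) - \omega_y(f)$, where $\omega_y \in \Omega_3$ and $\omega_y^\gamma \in \Omega_0$; the $\zeta_\al$ terms depend only on $R_1$ and $P_0$ and are handled the same way.

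For property (1), evaluated at a fixed $y \in \supp(\mu)$, the only $i \in I$ contributing through the $\lambda_\ell^i$ piece are those with $y \in 1.1 Q_i$, and by \eqref{g1} there are at most $C$ such indices; the $\lambda'$-piece contributes at $y$ only if $y \in K_p$, and then only through functionals supported at $y$; so $|\bar{\Upsilon}_{\ell,y}| \leq C$. For property (2), the only $\phi_\ell$'s indexed by a particular $\ell$ come from one $(i,\ell)$-pair (with $|\bar{\Upsilon}^i_\ell| \leq C$ from \textbf{(AL6)}), from a pair of neighbors $(i,i')$ together with a multi-index $\al$ (bounded by good geometry and $|\mm|=D$), or from a single $\zeta_\al$, so $|\bar{\Upsilon}_\ell| \leq C$. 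Finally, property (3) is vacuous when $\ma \neq \emptyset$ since then $K = \emptyset$; when $\ma = \emptyset$ we have $K = K_p$ and $S_y(f) = R_y^*(f)$, and the required decomposition is Corollary \ref{rxcor}, which supplies functionals $\omega_y^\al \in \Omega_0$ with $\supp(\omega_y^\al) \subset \{y\}$.

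The main obstacle is purely bookkeeping: ensuring the universal cardinality bounds on $\bar\Upsilon_{\ell,y}$ and $\bar\Upsilon_\ell$, which requires carefully invoking the good geometry of $CZ^\circ$ (estimate \eqref{g1}), the bounded-overlap property of keystone cubes (Lemma \ref{10q}), the bound $|\Upsilon^i_y| \leq C$ from \textbf{(AL4)}, and the finiteness of $\mm$. No new estimates are needed; the proof amounts to assembling the already-constructed constituents and verifying that each index set that appears in the final expression for $\lambda_\ell$ and $\phi_\ell$ has cardinality controlled by a constant depending only on $m,n,p$.
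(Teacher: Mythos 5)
Your proposal is correct and follows essentially the same route as the paper: substitute the $\Omega'$-representation of $R_i(f,P_0)$ (coming from Lemma \ref{Rstarlm} and the definition of $R_s'$) into the constructibility formulas \textbf{(AL5)}/\textbf{(AL6)}, handle the $\phi^{i,i',\al}$ and $\zeta_\al$ pieces by the same substitution applied to $\partial^\al R_i$, treat the $\lambda'$ piece via Lemma \ref{Rstarlm} together with the point-evaluation functional $\omega_y \in \Omega_3$, and dispose of property (3) via Corollary \ref{rxcor}. The only minor deviation is that you invoke good geometry and Lemma \ref{10q} for the cardinality bounds, which is harmless but unnecessary here — after the reindexing in \eqref{amffin2}--\eqref{amffin3} each $\lambda_\ell$ (resp.\ $\phi_\ell$) corresponds to a \emph{single} constituent piece, so the bound $|\bar{\Upsilon}_{\ell,y}| \leq C$ (resp.\ $|\bar{\Upsilon}_\ell| \leq C$) comes directly from the corresponding bound in \textbf{(AL5)}/\textbf{(AL6)} plus $|\mm| = D$, without a union over $i$.
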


\begin{proof}
In the proof of Lemma \ref{localpflm}, we defined $M: \J(\mu; \delta) \rightarrow \R$ as the $(1/p)^{th}$ power of the right hand side of (\ref{amffin3}). Also, we defined $K = \emptyset$ for $\ma \neq \emptyset$, and $K = K_p$, $\vec{S}(f) = \vec{R}^*(f)$ for $\ma = \emptyset$. Then formula (\ref{m3}) follows immediately from Corollary \ref{rxcor}, in the case $\ma = \emptyset$; while formula (\ref{m3}) is vacuously true if $\ma \neq \emptyset$ (for then $K = \emptyset$).

We now investigate the form of the various terms arising on the right-hand side of \eqref{amffin3}. We shall demonstrate that each of these terms either involves a map of the form \eqref{m1} or \eqref{m2}. 

We first recall the form of the maps $\lambda_\ell^i(f,P)$ and $\phi_\ell^i(f,P)$ arising in \textbf{(AL5)} and \textbf{(AL6)} for $E_i = 1.1 Q_i$ (see Section \ref{subsec:loc_ext_op}). These maps arise in the reindexed sums on the right-hand side of \eqref{amffin3}.

By hypothesis \textbf{(AL5)}, for each $\ell \in \N$, $i \in I$, and $y \in \supp(\mu)$, there exists a finite subset $\bar{\Upsilon}^i_{\ell,y} \subset {\Upsilon}^i$ and constants $\{\eta^{\ell,i}_{s,y}\}_{s \in \bar{\Upsilon}^i_{\ell,y}} \subset \R$ such that $|\bar{\Upsilon}^i_{\ell,y}| \leq C$, and the map $(f,P) \mapsto \lambda_\ell^i(f,P)(y)$ has the form
\begin{align}
\lambda_\ell^i(f,P)(y) = \sum_{s \in \bar{\Upsilon}^i_{\ell,y}} \eta^{\ell,i}_{s,y} \cdot \omega^i_{s}(f) + \widetilde{\lambda}^i_{y,\ell}(P), \label{cm1}
\end{align}
where $\widetilde{\lambda}^i_{y,\ell} : \mpp \rightarrow \R$ is a linear functional. Also, from \textbf{(AL6)}, for each $\ell \in \N$, and $i \in I$, there exists a finite subset $\bar{\Upsilon}^i_{\ell} \subset {\Upsilon}^i$ and constants $\{\eta^{\ell,i}_{s}\}_{s \in \bar{\Upsilon}^i_{\ell}} \subset \R$ such that $|\bar{\Upsilon}^i_{\ell}| \leq C$, and the map $(f,P) \rightarrow \phi^i_\ell(f,P)$ has the form 
\begin{align}
\phi^i_\ell(f,P) = \sum_{s \in \bar{\Upsilon}^i_{\ell}} \eta_{s}^{\ell,i} \cdot \omega^i_{s}(f) + \widetilde{\lambda}^i_{\ell}(P), \label{cm2}
\end{align}
where $\widetilde{\lambda}^i_{\ell} : \mpp \rightarrow \R$ is a linear functional.

For each $i \in I$, either $\ka(x_i) \in K_p$ or $\ka(x_i) \in \bkpt$. In \eqref{vecR_form1:eqn} and \eqref{vecR_form2:eqn}, we have shown
\begin{align}
    R_{x_i}(f,P_0) = \sum_{\gamma \in \mm} \omega^\gamma_{\ka(x_i)}(f) \cdot v^\gamma + \ot_{\ka(x_i)}(P_0), \label{rid}
\end{align}
where $\{v^\gamma\}_{\gamma \in \mm}$ is a basis for $\mpp$, $ \omega_{\ka(x_i)}^\gamma$ ($\gamma \in \mm$) are linear functionals in $\Omega'$ (for the definition of $\Omega'$, see line \eqref{Om} and the containing paragraph), and where $\ot_{\ka(x_i)}: \mpp \to \mpp$ is a linear map. Consequently, 
\begin{align}
    \p^\al [R_{x_i}(f,P_0)] = \sum_{\gamma \in \mm} \omega^\gamma_{\ka(x_i)}(f) \cdot \p^\al [v^\gamma] + \p^\al[ \ot_{\ka(x_i)}(P_0)]. \label{rid2}
\end{align}

For $j \in \N$, after the reindexing in \eqref{amffin3}, a map $\lambda_j: \J(\mu; \delta) \to L^p(d\mu)$ arising in \eqref{amffin3} is either of the form $\lambda_j(f,P_0) = \lambda_\ell^i(f,R_{x_i})$ for  $i \in I$ and $\ell \in \N$, or $\lambda_j(f,P_0) =\lambda'(f,P_0)$, where $\lambda'$ is defined in (\ref{lamp}). 

In the former case, we substitute (\ref{rid}) into (\ref{cm1}) and write
\begin{align*}
\lambda_\ell^i(f,R_{x_i})(y) &=\sum_{s \in \bar{\Upsilon}^i_{\ell,y}} \eta^{\ell,i}_{s,y} \cdot \omega^i_{s}(f) + \widetilde{\lambda}^i_{y,\ell}(R_{x_i}) \\
    &=\sum_{s \in \bar{\Upsilon}^i_{\ell,y}} \eta^{\ell,i}_{s,y} \cdot \omega^i_{s}(f) + \widetilde{\lambda}^i_{y,\ell}\big(\sum_{\gamma \in \mm} \omega^\gamma_{\ka(x_i)}(f) \cdot v^\gamma + \ot_{\ka(x_i)}(P_0)\big) \\
    &=\sum_{s \in \bar{\Upsilon}^i_{\ell,y}} \eta^{\ell,i}_{s,y} \cdot \omega^i_{s}(f) + \sum_{\gamma \in \mm} \omega^\gamma_{\ka(x_i)}(f) \widetilde{\lambda}^i_{y,\ell}(v^\gamma) + \widetilde{\lambda}^i_{y,\ell}\big(\ot_{\ka(x_i)}(P_0)\big).
\end{align*}
Above, the number of terms in the two sums on the right-hand side  is at most $|\bar{\Upsilon}^i_{\ell,y}| + |\mm| \leq C$, and each of the functionals $\omega^i_{s}(f)$ and $\omega^\gamma_{\ka(x_i)}(f)$ belongs to the family $\Omega' = \{\omega_s\}_{s \in \Upsilon}$ defined in (\ref{Om}). Further, the third term on the right-hand side is a linear function of $P_0$. This proves the identity (\ref{m1}) for $\lambda_j$, provided that $\lambda_j(f,P_0) = \lambda_\ell^i(f,R_{x_i})$.

Now suppose $\lambda_j(f,P_0) =\lambda'(f,P_0)$ with $\lambda'$ defined in \eqref{lamp}. If $x \in \R^n \setminus K_p$ then $\lambda'(f,P_0)(x) = 0$. If $x \in K_p$, then from Lemma \ref{Rstarlm}, 
\begin{align*}
    \lambda'(f,P_0)(x)&= R_x^*(f,P_0)(x) \\
    &= \sum_{\al \in \mm} \omega_{x}^\al(f) \cdot v_{\al}(x) + \widetilde{\omega}_x(P_0)(x),
\end{align*}
where $\widetilde{\omega}_x: \mpp \to \mpp$ is a linear map. Above, the number of terms in the sum on the right-hand side is at most $|\mm| \leq C$, and each of the functionals $\omega^\al_{x}(f)$ belongs to the family $\Omega' = \{\omega_s\}_{s \in \Upsilon}$ defined in (\ref{Om}). Further, the second term on the right-hand side is a linear function of $P_0$. Therefore, we have succeeded in verifying the identity (\ref{m1}) for $\lambda_j$, provided that $\lambda_j(f,P_0) = \lambda'(f,P_0)$.

From (\ref{amffin3}) for $j \in \N$, a map $\phi_j: \J(\mu; \delta) \to \R$ is either of the form $\phi_j(f,P_0) = \phi_\ell^i(f,R_{x_i})$ for  $i \in I$ and $\ell \in \N$, $\phi_j(f,P_0) = \phi^{i,i',\al}(f,P_0)$ for $i \lra i'$ and $\al \in \mm$ (see \eqref{phi_func:defn}), or $\phi_j(f,P_0) = \zeta_\al(f,P_0)$ for $\al \in \mm$ (see (\ref{zeta2})). 

In the first case, if $\phi_j(f,P_0) = \phi_\ell^i(f,R_{x_i})$ for  $i \in I$ and $\ell \in \N$, we substitute (\ref{rid}) into (\ref{cm2}):
\begin{align*}
\phi_\ell^i(f,R_{x_i})   &=\sum_{s \in \bar{\Upsilon}^i_{\ell}} \eta^{\ell,i}_{s} \cdot \omega^i_{s}(f) + \widetilde{\lambda}^i_{\ell}(R_{x_i}) \\
    &=\sum_{s \in \bar{\Upsilon}^i_{\ell}} \eta^{\ell,i}_{s} \cdot \omega^i_{s}(f) + \widetilde{\lambda}^i_{\ell}\big(\sum_{\gamma \in \mm} \omega^\gamma_{\ka(x_i)}(f) \cdot v^\gamma + \ot_{\ka(x_i)}(P_0)\big) \\
    &=\sum_{s \in \bar{\Upsilon}^i_{\ell}} \eta^{\ell,i}_{s} \cdot \omega^i_{s}(f) + \sum_{\gamma \in \mm} \omega^\gamma_{\ka(x_i)}(f) \widetilde{\lambda}^i_{\ell}(v^\gamma) + \widetilde{\lambda}^i_{\ell}\big(\ot_{\ka(x_i)}(P_0)\big).
\end{align*}
In the expression above, the last term is a linear function of $P_0$, and the number of terms in both sums is at most $|\bar{\Upsilon}^i_{\ell}| + |\mm| \leq C$. Further, each of the functionals $\omega^i_{s}(f)$ and $\omega^\gamma_{\ka(x_i)}$ belongs to the family $\Omega' = \{\omega_s\}_{s \in \Upsilon}$ defined in (\ref{Om}). This proves the identity (\ref{m2}) for $\phi_j$, provided that $\phi_j(f,P_0) = \phi_\ell^i(f,R_{x_i})$.

Suppose $\phi_j(f,P_0) = \phi^{i,i',\al}(f,P_0)= c_{\al,i,i'}^{1/p} \p^\al_x(R_{x_i} - R_{x_{i'}})(x_i)$ for $i \lra i'$ and $\al \in \mm$ (see \eqref{phi_func:defn}). Using (\ref{rid2}), we have
\begin{align*}
\phi^{i,i',\al}(f,P_0)  & = c_{\al,i,i'}^{1/p}  \sum_{\gamma \in \mm} \omega^\gamma_{\ka(x_i)}(f) \cdot \p^\al_x [ v^\gamma](x_i)  - c_{\al,i,i'}^{1/p} \sum_{\gamma \in \mm} \omega^\gamma_{\ka(x_{i'})}(f) \cdot \p^\al_x [ v^\gamma](x_i)  \\
& \qquad +  c_{\al,i,i'}^{1/p} \cdot \p^\al_x [ \ot_{\ka(x_i)}(P_0) - \ot_{\ka(x_{i'})}(P_0)](x_i).
\end{align*}
In the expression above, the last term is a linear function of $P_0$, and the number of terms in both sums is at most $2 |\mm| \leq C$. Further, each of the functionals $\omega^\gamma_{\ka(x_i)}$ and $\omega^\gamma_{\ka(x_{i'})}$ belongs to the family $\Omega' = \{\omega_s\}_{s \in \Upsilon}$ defined in (\ref{Om}). This proves the identity (\ref{m2}) for $\phi_j$, provided that $\phi_j(f,P_0) = \phi^{i,i',\al}(f,P_0)$.

Finally, suppose $\phi_j(f,P_0) = \zeta_\al(f,P_0):= c_\al \p^\al (R_{x_1} -P_0)(x_1)$ for  $\al \in \mm$. Using (\ref{rid2}) at $x=x_1$, we have
\begin{align*}
    \zeta_\al(f,P_0) &=c_\al \sum_{\gamma \in \mm} \omega^\gamma_{\ka(x_i)}(f) \cdot \p^\al v^\gamma(x_1) +  c_\al \left( \p^\al \ot_{\ka(x_i)}(P_0)(x_1) -\p^\al P_0(x_1)\right).
\end{align*}
In the expression above, the last term is a linear function of $P_0$, and the number of terms in the sum is at most $ |\mm| \leq C$. Further, each of the functionals $\omega^\gamma_{\ka(x_i)}$ belongs to the family $\Omega' = \{\omega_s\}_{s \in \Upsilon}$ defined in (\ref{Om}). Therefore, we have verified the identity (\ref{m2}) for $\phi_j$, provided that $\phi_j(f,P_0) = \zeta_\al(f,P_0)$. 

This completes the proof that the map $M$ is $\Omega'$-constructible.
\end{proof}

The previous lemmas complete the inductive argument, begun in Section \ref{sec:ind_hyp}, by showing that under the assumption that the Main Lemma holds for all $\ma' < \ma$, the Main Lemma also holds for $\ma$. 

\section{Proofs of the Main Theorems}
\label{sec:proof_maintheorems}

\begin{proof}[Proofs of Theorems \ref{amainthm} and \ref{constructthm}]
Let $\mu$ be a compactly supported Borel regular measure on $\R^n$. \\
Fix $\delta> \diam(\supp(\mu))$, so that  $\supp(\mu)$ is contained in a cube of sidelength $\delta$. From (\ref{j3}) and (\ref{j4}),
\begin{align}
    \|f\|_{\J(\mu)} \simeq \inf_{P \in \mpp} \|f, P\|_{\J(\mu;\delta)}. \label{afin1}
\end{align}
Per the Extension Theorem for $(\mu, \delta)$ (Proposition \ref{aextthmeq}), there exist a linear map $T: \J(\mu;\delta) \to L^{m,p}(\R^n)$, a map $M: \J(\mu;\delta) \to \R_+$, a set $K \subset \Cl(\supp(\mu))$, a linear map $\vec{S}: \J(\mu) \to Wh(K)$, and countable families of  Borel sets $\{A_\ell\}_{\ell \in \N}, A_\ell \subset \supp(\mu)$, and linear maps $\{\phi_\ell: \J(\mu;\delta) \to \R\}_{\ell \in \mathbb{N}}$, and $\{\lambda_\ell: \J(\mu;\delta) \to L^p(d\mu)\}_{\ell \in \mathbb{N}}$, that satisfy the conclusion of the extension theorem, as described below.

For each $(f,P_0) \in \J(\mu;\delta)$,  
\begin{align}
&\textbf{(i) }\|f,P_0\|_{\J(\mu; \delta)} \leq  \|T(f,P_0), P_0\|_{\J(f, \mu; \delta)} \leq C \cdot \|f,P_0\|_{\J(\mu; \delta)};  \label{afin2} \\
&\textbf{(ii) } c \cdot M(f,P_0) \leq \|T(f,P_0), P_0\|_{\J(f, \mu; \delta)} \leq C \cdot M(f,P_0); \text{ and} \label{afin3} \\
&\textbf{(iii) }M(f,P_0) = \Big(\sum_{\ell \in \mathbb{N}} \int_{A_\ell} |\lambda_\ell(f,P_0)-f|^p d\mu+ \sum_{\ell \in \N} | \phi_\ell(f,P_0)|^p + \|\vec{S}(f)\|_{L^{m,p}(K)}^p \Big)^{1/p}. \label{afin4}
\end{align}
Hence,
\begin{align}
    \inf_{P \in \mpp} \|f, P\|_{\J(\mu; \delta)} &\simeq \inf_{P \in \mpp} M(f,P) \nonumber \\
    & = \inf_{P \in \mpp} \Big(  \sum_{\ell \in \N} \int_{A_\ell} |\lambda_\ell(f,P)-f|^p d\mu+ \sum_{\ell \in \N} | \phi_\ell(f,P)|^p + \|\vec{S}(f)\|_{L^{m,p}(K)}^p \Big)^{1/p} \label{afin5}.
\end{align}
We apply Lemma \ref{alinearmap2} (with trivial constraint map $\Psi$, i.e., with $N=0$) to compute $\xi(f) \in \mpp$, depending linearly on $f$ and satisfying:
\begin{align}
    \big(\sum_{\ell \in \N} \int_{A_\ell} |\lambda_\ell(f, \xi(f))-f|^p d\mu+& \sum_{\ell \in \N} | \phi_\ell(f,\xi(f))|^p\big)^{1/p} \nonumber \\ 
    &\lesssim \inf_{P \in \mpp} \big(\sum_{\ell \in \N} \int_{A_\ell} |\lambda_\ell(f, P)-f|^p d\mu+ \sum_{\ell \in \N} | \phi_\ell(f,P)|^p\big)^{1/p}. \label{afin6} 
\end{align}
Define $Mf:=M(f,\xi(f))$. Then
\begin{align}
    Mf &= \Big(\sum_{\ell \in \N} \int_{A_\ell} |\lambda_\ell(f, \xi(f))-f|^p d\mu + |\phi_\ell(f, \xi(f))|^p \Big)^{1/p} = \Big(\sum_{\ell \in \N} \big( \int_{A_\ell} |\zeta_{\ell}(f) -f|^p d\mu + |\psi_{\ell}(f) |^p \big) \Big)^{1/p}. \label{mfinal}
\end{align}
for linear maps $\zeta_{\ell}: \J(\mu) \to L^p(d\mu)$, and linear functionals $\psi_{\ell}: \J(\mu) \to \R$ ($\ell \in \N$), defined by $\zeta_\ell(f)= \lambda_\ell(f,\xi(f))$ and $\psi_\ell(f) = \phi_\ell(f,\xi(f))$. 

Because of (\ref{afin2})-(\ref{afin4}), $Tf:=T(f, \xi(f))$ satisfies
\begin{align}
    \|Tf, \xi(f)\|_{\J(f, \mu; \delta)} \simeq \| f, \xi(f) \|_{\J(\mu; \delta)} \simeq M(f, \xi(f))=Mf \label{afin7}.
\end{align}
Together, (\ref{afin1}) and (\ref{afin5})-(\ref{afin7}) imply that
\begin{align*}
    \|f\|_{\J(\mu)} \simeq \|Tf\|_{\J(f, \mu)} \simeq Mf,
\end{align*}
completing the proof of Theorem \ref{amainthm}.

Per the Extension Theorem for $(\mu, \delta)$, there exists a collection of linear functionals $\Omega'=\{\omega_{s}\}_{s \in \Upsilon} \subset \J(\mu)^*$ such that the collection of sets $\{\supp(\omega_s)\}_{s \in \Upsilon}$ has $C$-bounded overlap, and for each $y \in \R^n$, there exists a finite subset $\Upsilon_y \subset \Upsilon$ and a collection of polynomials $\{v_{s,y} \}_{s \in \Upsilon_y} \subset \mpp$ such that $|\Upsilon_y| \leq C$ and 
    \begin{align}
    J_y T(f,P_0) =  \sum_{s \in \Upsilon_y}  \omega_{s} (f) \cdot v_{s,y} +  \ot_y (P_0) \quad \quad \text{ for }(f,P_0) \in \J(\mu;\delta),
    \label{aconstructlm}
    \end{align}
where $\ot_y: \mpp \to \mpp$ is a linear map. 

We have defined $T(f): = T(f,\xi(f))$. Because $\xi: \J(\mu) \to \mpp$ is a linear map, we can write
\begin{align}
    \xi(f) = \sum_{\gamma \in \mm} \omega_\gamma (f) \cdot v^\gamma, \label{xi}
\end{align}
for linear functionals $\omega_\gamma : \J(\mu) \rightarrow \R$ ($\gamma \in \mm$), and where $\{v^\gamma\}_{\gamma \in \mm}$ is a basis for $\mpp$. Substituting this into (\ref{aconstructlm}), we have
\begin{align}
    J_y T(f) = J_yT(f,\xi(f)) &=  \sum_{s \in \Upsilon_y}  \omega_{s} (f) \cdot v_{s,y} +  \ot_y (\sum_{\gamma \in \mm} \omega_\gamma (f) \cdot v^\gamma) \nonumber \\
    &=  \sum_{s \in \Upsilon_y} \omega_{s} (f) \cdot v_{s,y} +   \sum_{\gamma \in \mm} \omega_\gamma (f) \cdot \ot_y(v^\gamma) \label{last}.
\end{align}
Define the collection of functionals $\Omega := \Omega' \cup \{\omega_\gamma\}_{\gamma \in \mm}$. Because $|\mm|=D$ and the collection of supports of the functionals in $\Omega'$ has $C$-bounded overlap, the collection of supports of the functionals in $\Omega$ has $C'$-bounded overlap. Because $|\Upsilon_y| \leq C$, the number of terms in both sums in \eqref{last} is at most $C$. Therefore, $ J_y T(f)$ is of the desired form in Theorem \ref{constructthm} (see \eqref{l4}). That is, we have shown that $T$ is $\Omega$-constructible.

We have defined $Mf:=M(f,\xi(f))$. Next we verify that the map $M$ is $\Omega$-constructible for the set $\Omega =  \J(\mu)^*$ defined above. We shall verify conditions \textbf{(1)--(3)} of Theorem \ref{constructthm}.

From the Extension Theorem for $(\mu,\delta)$, for $y \in K$, there exists $\{\omega^\al_{y}\}_{\al \in \mm} \subset \Omega'$ satisfying for all $\al \in \mm$, $\supp(\omega^\al_{y}) \subset \{y\}$, and the map $f \mapsto S_y(f)$ has the form
\begin{align*}
S_y(f) = \sum_{\al \in \mm} \omega^\al_{y}(f) \cdot v_{\al}, 
\end{align*}
where $\{v_\al\}_{\al \in \mm}$ is a basis for $\mpp$. This implies condition \textbf{(3)} of Theorem \ref{constructthm}, because $\Omega' \subset \Omega$. 

To prove that $M$ is $\Omega$-constructible, we must show that the maps $\zeta_{\ell}: \J(\mu) \to L^p(d\mu)$ and $\psi_{\ell}: \J(\mu) \to \R$ in (\ref{mfinal}) satisfy conditions \textbf{(1)} and \textbf{(2)} of Theorem \ref{constructthm}.

From the Extension Theorem for $(\mu, \delta)$, we have for each $\ell \in \N$ and $y \in \supp(\mu)$, there exists a finite subset $\bar{\Upsilon}_{\ell,y} \subset {\Upsilon}$ and constants $\{\eta^{\ell}_{s,y}\}_{s \in \bar{\Upsilon}_{\ell,y}} \subset \R$ such that $|\bar{\Upsilon}_{\ell,y}| \leq C$, and the map $ \lambda_\ell$ has the form
\begin{align*}
&\lambda_\ell(f,P)(y) = \sum_{s \in \bar{\Upsilon}_{\ell,y}} \eta^{\ell}_{s,y} \cdot \omega_{s}(f) + \widetilde{\lambda}_{y,\ell}(P), 
\end{align*}
where $\widetilde{\lambda}_{y,\ell} : \mpp \rightarrow \R$ is a linear functional. Using (\ref{xi}) in the previous equation, we deduce that the map $f \mapsto \zeta_\ell(f)(y)$ has the form
\begin{align*}
\zeta_\ell(f)(y)=\lambda_\ell(f,\xi(f))(y) &= \sum_{s \in \bar{\Upsilon}_{\ell,y}} \eta^{\ell}_{s,y} \cdot \omega_{s}(f) + \widetilde{\lambda}_{y,\ell}\bigg(\sum_{\gamma \in \mm} \omega_\gamma (f) \cdot v^\gamma\bigg) \\
&=\sum_{s \in \bar{\Upsilon}_{\ell,y}} \eta^{\ell}_{s,y} \cdot \omega_{s}(f) + \sum_{\gamma \in \mm} \omega_\gamma (f) \cdot \widetilde{\lambda}_{y,\ell}(v^\gamma).
\end{align*}
Therefore, $\zeta_\ell(f)$ has the form stated in condition \textbf{(1)} of Theorem \ref{constructthm}, for the set $\Omega = \Omega' \cup \{\omega_\gamma \}_{\gamma \in \mm}$.

Similarly, from the Extension Theorem for $(\mu,\delta)$, for $\ell \in \N$, there exists a finite subset $\bar{\Upsilon}_{\ell} \subset {\Upsilon}$ and constants $\{\eta^\ell_{s}\}_{s \in \bar{\Upsilon}_{\ell}} \subset \R$ such that $|\bar{\Upsilon}_{\ell}| \leq C$, and the functional $\phi_\ell$ has the form 
\begin{align*}
&\phi_\ell(f,P) = \sum_{s \in \bar{\Upsilon}_{\ell}} \eta_{s}^\ell \cdot \omega_{s}(f) + \widetilde{\lambda}_{\ell}(P), 
\end{align*}
where $\widetilde{\lambda}_{\ell} : \mpp \rightarrow \R$ is a linear functional. Again, using (\ref{xi}), we deduce that the map $\psi_\ell$ has the form
\begin{align*}
\psi_\ell(f)=\phi_\ell(f,\xi(f)) &= \sum_{s \in \bar{\Upsilon}_{\ell}} \eta_{s}^\ell \cdot \omega_{s}(f) + \widetilde{\lambda}_{\ell}\bigg(\sum_{\gamma \in \mm} \omega_\gamma (f) \cdot v^\gamma\bigg)\\
&= \sum_{s \in \bar{\Upsilon}_{\ell}} \eta_{s}^\ell \cdot \omega_{s}(f) + \sum_{\gamma \in \mm} \omega_\gamma (f) \cdot \widetilde{\lambda}_{\ell}( v^\gamma).
\end{align*}
Therefore, $\psi_\ell(f)$ has the form stated in condition \textbf{(2)} of Theorem \ref{constructthm}, for the set $\Omega = \Omega' \cup \{\omega_\gamma \}_{\gamma \in \mm}$.

This completes the proof of Theorem \ref{constructthm}.
\end{proof}

\begin{proof}[Proof of Theorem \ref{finitethm}]
Let $\mu$ be a finite Borel regular measure on $\R^n$ with compact support. By rescaling, we may assume $\supp(\mu) \subset \frac{1}{10}Q^\circ$ for $Q^\circ = (0,1]^n$. In the proof of the Extension Theorem for $(\mu,\delta=1/10)$, the Calder\'on-Zygmund decomposition $CZ^\circ$ defined in Section \ref{sec:cz_decomp} is finite and $\bigcup_{Q \in CZ^\circ} Q = Q^\circ$ (see Lemma \ref{nottrivial}). Therefore,  $K_p = \emptyset$ (see Definition \ref{keypt:def}). Consequently, $Mf:\J(\mu) \to \R$ in Theorem \ref{amainthm} has the form 
\begin{align*}
Mf = \big(\sum_{\ell = 1}^{K} \int_{A_\ell} |\zeta_\ell(f)-f|^p d\mu + |\psi_{\ell}(f) |^p \big)^{1/p}, 
\end{align*}
where for each $\ell$, $A_\ell \subset \supp(\mu)$ is a Borel set, $\zeta_\ell: \J(\mu) \to L^p(d\mu)$, and $\psi_\ell: \J(\mu) \to \R$ are linear maps, and $K$ is finite.
Again because the CZ decomposition is finite, in Theorem \ref{constructthm}, we have that the collection of linear functionals $\Omega = \{\omega_r\}_{r \in \Upsilon} \subset \J(\mu)^*$ is finite ($|\Upsilon|<\infty$).
\end{proof}

\begin{proof}[Proofs of Theorems \ref{introtracethm} and \ref{arbethm}]
Theorems  \ref{introtracethm} and \ref{arbethm} are immediate consequences of Theorems \ref{amainthm} and \ref{constructthm} applied to the Borel measure $\mu = \mu_E$ in \eqref{muE:defn}. For details, see the discussion after \eqref{muE:defn}.
\end{proof}

\newpage
\bibliographystyle{amsplain}
\bibliography{main}

\end{document}